\title{A Renormalisation Group Map for Short- and Long-ranged Weakly Coupled $|\varphi|^4$ Models in $d \ge 4$ at and Above the Critical Point}
\author{
  Jiwoon Park
  \footnote{E-mail: {\tt jp711@cantab.ac.uk},  orcid: {\tt 0000-0002-1159-2676}}  
}
\newcommand{\subjclass}[2][1991]{%
  \let\@oldtitle\@title%
  \gdef\@title{\@oldtitle\footnotetext{#1 \emph{Mathematics subject classification.} #2}}%
}
\date{\vspace*{-2em}}
\definecolor{grey}{rgb}{0.55, 0.55, 0.55}
\definecolor{gray}{rgb}{0.55, 0.55, 0.55}
\definecolor{darkmagenta}{rgb}{0.55, 0.0, 0.55}
\definecolor{magenta}{rgb}{0.85, 0.0, 0.55}
\newcommand{\dkm}{\color{darkmagenta}}
\newcommand{\Comp}{\operatorname{Comp}}
\newcommand{\Con}{\operatorname{Con}} 
\newcommand{\loc}{\operatorname{loc}}  
\newcommand{\Loc}{\operatorname{Loc}}  
\newcommand{\Cov}{\operatorname{Cov}}  
\newcommand{\Var}{\operatorname{Var}}  
\newcommand{\Rap}{\operatorname{Rp}} 
\newcommand{\Aut}{\operatorname{Aut}} 
\newcommand{\rg}{{\operatorname{RG}}} 
\newcommand{\pt}{{\operatorname{pt}}}
\newcommand{\lead}{\varpi}
\newcommand{\bulk}{\varnothing}
\newcommand{\stable}{{(\bs)}}
\newcommand{\alg}{{\operatorname{(alg)}}} 
\newcommand{\even}{\operatorname{even}} 
\newcommand{\odd}{\operatorname{odd}} 
\newcommand{\sym}{\operatorname{sym}} 
\renewcommand{\o}{{\sf o}}
\newcommand{\x}{{\sf x}}
\newcommand{\ox}{{\sf ox}}
\newcommand{\ba}{\textbf{a}}
\newcommand{\be}{\textbf{e}}
\newcommand{\bs}{\textbf{s}}
\newcommand{\rd}{{\rm d}}
\newcommand{\kae}{{\ka}}
\newcommand{\kbe}{{\kb}}
\newcommand{\kpe}{{\kp}}
\newcommand{\Eplus}{\E_+}
\newcommand{\scale}{r}
\newcommand{\thetaz}{\theta_{\zeta}}
\newcommand{\II}{\mathbb{I}}
\newcommand{\nnabla}{{\nabla\nabla}}
\newcommand{\asmpP}{{\bf A}_{\Phi}(\alpha)}
\newcommand{\asmpL}{{\bf A}_{\cL}}
\renewcommand{\hat}[1]{\widehat{#1}}
\renewcommand{\bar}[1]{\overline{#1}}
\newtheorem{problem}{Problem}
\newcommand{\st}{{\rm st}}
\newcommand{\rmb}{{\rm b}}
\newcommand{\tscale}{\scale}
\newcommand{\rmQ}{{\rm Q}}
\renewcommand{\AA}{\mathbb{A}}
\newcommand{\ratio}{\upsilon} 
\newcommand{\poly}{{\rm pl}}
\newcommand{\pexp}{\wp}
\renewcommand{\kl}{\mathfrak{l}}
\newcommand{\ssigma}{{\sigma\sigma}}
\begin{document}
\maketitle

\begin{abstract}
In this article, we construct and analyse a renormalisation group (RG) map for the weakly coupled $n$-component $|\varphi|^4$ model under periodic boundary conditions in dimension $d \ge 4$. Both short-range and long-range interactions with upper critical dimension four are considered. 
This extends and refines the RG map constructed by Bauerschmidt,  Brydges and Slade for the short-range model at $d=4$.
This extension opens the door to establishing the exact decay rate of correlation functions of all of the models discussed.
Furthermore,  incorporating a large-field decay estimate and comparing with the finite-size scaling results of Michta, Park, and Slade, our analysis provides strong evidence for the emergence of a plateau in systems of finite volume with periodic boundary conditions.
\end{abstract}

\setcounter{tocdepth}{1}
\tableofcontents

\section{Introduction}

Let $d \ge 4$ and $L, N \in \N$,  and the discrete torus $\Lambda_N = [0, L^N -1]^d \cap \Z^d$ equipped with periodic boundary conditions.
For functions $f,g \in (\R)^{\Lambda_N}$,  we let $(f,g) = \sum_{x \in \Lambda_N} f(x) g(x)$.
Let $n \in \Z_{>0} = \{1,2,3,\cdots \}$ denote the number of spin components and define the configuration space by
$\Omega_N = (\R^n )^{\Lambda_N}$.
Each $\varphi \in \Omega_N$ can be viewed as a function $\Lambda_N \rightarrow \R^n$,  with the value at the site $x \in \Lambda_N$ denoted by either $\varphi(x)$ or $\varphi_x$.

\begin{definition} \label{def:phi4}
Given $\eta \in [0,2)$,  $\nu \in \R$ and $g>0$,
the $|\varphi|^4$ model on $\Lambda_N$ (with periodic boundary condition) is the probability measure
\begin{align}
	\P_{g, \nu, N} (d \varphi) = \frac{1}{Z_{g,\nu, N}} e^{-H_N (\varphi)} d \varphi
\end{align}
where $Z_{g,\nu, N}$ is a normalisation constant and $H_{g,\nu,N}$ is the Hamiltonian given by
\begin{align}
	H_{g,\nu,N} (\varphi) &= \frac{1}{2} \big( \varphi,   (-\Delta)^{1-\eta/2}  \varphi \big) + V_{g,\nu,N} (\varphi)  ,   \label{eq:HgnuN} \\
	V_{g,\nu,N} (\varphi) &= \sum_{x \in \Lambda_N}  \frac{1}{2} \nu |\varphi(x)|^2  + \frac{1}{4} g |\varphi (x)|^4 .
	 \label{eq:VgnuN}
\end{align}
\end{definition}

When $\eta =0$,  the interaction is said to be \emph{short-range},  while for $\eta >0$,  it is \emph{long-range},  noting that $(-\Delta)^{1-\eta/2} (x,y) \asymp |x-y|^{-(d+2-\eta)}$ on $\Z^d$.  In the literature,  a broader class of interactions $\sum_{x,y \in \Lambda_N} J (x,y) \varphi_x \varphi_y$ with $J(x,y) \asymp |x-y|^{-(d+\tilde\alpha)}$ are also considered with $\tilde\alpha \in (0,2)$,  so we use $2-\eta = \tilde\alpha$ interchangeably.

The $|\varphi|^4$ model can also be understood as an unbounded spin $O(n)$-model with smooth distribution.  
Study of the spin $O(n)$-model has a long history dating back to the pioneering work of Ising.
In physics literature,  apart from a few integrable cases when $n=1$ (the Ising model) and $d \le 2$ 
\cite{O44C,  MW73},  the renormalisation group (RG) method have long been the standard framework for understanding the infrared limit \cite{ZZ21Q}.
In the rigorous mathematical physics literatures for the non-integrable $|\varphi|^4$ model,  there have been significant progress for $n=1$ via random current representation \cite{MR678000,  AD21, DP25N} random walk representations \cite{MR1219313,  MR643591} and lace expansions \cite{Saka14,BHH19}.
Rigorous RG methods have also been applied for weakly coupled (small $g,\nu$) models in $d=4$ with general $n\ge 1$ \cite{MR790736,  MR3269689,  MR882810}.

The RG method expresses the partition function $Z_{g,\nu,N}$ as the Gaussian expectation of $\exp(-V (\varphi))$,  
where $V$ is a quadratic modification of $V_{g,\nu,N}$.
When the Gaussian expectation has a natural decomposition into successive scale-progressive Gaussian integrals,  
the problem can be reduced to a study of the dynamical system induced by the integrals.  
In this article,  we study the action of each progressive integral on the space of potential functions extending \eqref{eq:VgnuN}, 
stated as Theorem~\ref{thm:contrlldRG}. 
The resulting RG map is a significant extension of that constructed in \cite{BBS5}. 
As demonstrated in \cite{MR3596766,  MR3459163,  MR3345374,MR3339164,MR3269689},  
such RG map can be used to rigorously determine the critical exponents of the model.

More broadly,  RG is a theoretical framework originating from \cite{PhysRevB.4.3174, PhysRevB.4.3184} designed to study models of statistical physics and probability theory scale-by-scale. 
Rigorous applications of RG is available in diverse contexts,  ranging from spin systems \cite{2006.04458},  height functions \cite{1910.13564},  random forests and dimers
\cite{BCH24P,  MR3637384},  lattice Coulomb gas \cite{MR2917175} and many more. 
In the construction of \cite{BBS5},  the model at length scale $L^j$ (or simply scale $j$) is described by polymer expansion of the RG coordinates $(V_j, K_j)$,  where $V_j$ represents the effective potential at scale $j$ and $K_j$ is a high-order error term. 
This construction is explained in Section~\ref{sec:polexp},  where Theorem~\ref{thm:contrlldRG},  the main theorem of this article,  is stated.
RG map is a transformation $(V_j, K_j) \mapsto (V_{j+1}, K_{j+1})$,  composed of roughly two main steps: the fluctuation integral and rescaling.  In the first step,  fluctuation of the spin field below length scale $L^j$ is integrated out.  We pre-determine these fluctuations using the finite-range decomposed covariance matrices in Section~\ref{sec:flucint}.
For the second step,  the resulting functions are rescaled and measured in scale-dependent function spaces.  These are explained in detail in Section~\ref{sec:sppolact}.

\subsection{Addressed problems}
\label{sec:addprobs}

As mentioned above,  various RG constructions of the $|\varphi|^4$ model were successfully applied in the literature to compute critical exponents and scaling limits at and above the critical point.  
Nevertheless compared to the success in physics literature,  much remains to be understood---particularly for non-weakly coupled models,  as well as for a unified treatment of $d\ge 4$ and long-range models. 
These are usually associated with the construction and estimates on the RG map,  and we introduce two extensions directly related to the problems we would like to address. 

Our first goal in this article is to carry over the RG analysis to dimensions $d\ge 5$,  long-range models with $\eta \in (0,1/2)$ (equivalently $\tilde\alpha \in (3/4,1)$).
Reflecting on \cite{MR3459163},  we expect that this would yield the exact decay of correlation functions using the method of observable fields---while the two-point correlation function for the Ising model was computed in \cite{CS15critical},  the general $n$-component model is still an open problem.

\begin{problem} \label{prob:1}
Let $L$ be sufficiently large and $g>0$ be sufficiently small and $\varphi$ be as in Definition~\ref{def:phi4}.   Then there exists a `critical point' $\nu_c = O(g)$ such that the infinite volume two-point correlation functions exists and satisfies 
\begin{align}
	\lim_{N\rightarrow \infty} \E_{g, \nu_c, N} [ \varphi_x \cdot \varphi_y ] =: \mathbb{C}_{g, \nu_c} (x,y) = c |x-y|^{-(d-2+\eta)} (1 + o(1))
\end{align}
as $|x-y| \rightarrow \infty$ for some constant $c>0$.
\end{problem}

Another context for this article is finite-size scaling.  
Recent progress has been made in understanding the finite-size scaling of $O(n)$-spin models and percolation models at the critical point,  both in and above the critical dimension 
\cite{CJN21,  HMS23,  MPS23,  LPS25T}.
These results suggest that spin fluctuations can generally be decomposed into a volume-dependent macroscopic fluctuation and a microscopic Gaussian fluctuation with a crossover between the two depending on the scaling regime.
For example,  the correlation function at the critical point has the decay of a massless free field $\asymp |x|^{-(d-2)}$ when the separation $|x|$ is sufficiently smaller than the diameter of the system,  while in the complementary regime it converges to a volume-dependent constant \cite{HMS23, park2025boundary},  called a plateau. 

However,  none of the available RG methods are suitable for proving the plateau,  because they do not guarantee the integrability of the error terms $K_j$ when the fluctuation field diverges at the critical point in a finite volume.  Thus we arrive at the following problem.  

\begin{problem} \label{prob:2}
Under the assumptions of Problem~\ref{prob:1},  in a finite volume
\begin{align}
	\E_{g, \nu_c, N} [ \varphi_x \cdot \varphi_y ] \sim \mathbb{C}_{g, \nu_c} (x,y) + B_N
\end{align}
as $|x-y| \rightarrow \infty$,  where $B_N = c_1 N^{1/2} L^{-2N}$ when $(d,\eta) = (4,0)$ and $B_N = c_2 g^{-1/2} L^{-dN/2}$ for some constants $c_1, c_2 > 0$.
\end{problem}

Finally,  we also mention the problem of scaling limits.  Currently,  the RG proof only allows us to prove the scaling limit in the torus slightly above the critical point \cite{MR3269689},  and we can address the following problem. 

\begin{problem} \label{prob:3}
Under the assumptions of Problem~\ref{prob:1} and $f \in C^\infty (\mathbb{T}^d)$ be such that $\int_{\mathbb{T}^d} f(x) \rd x = 0$,  let $f_N (x) = L^{-\frac{d-4+\eta}{2} N} f (L^{-N} x)$ for $x \in \Lambda_N$ and $\bar{f}_N = \sum_{x \in \Lambda_N} f_N (x) / |\Lambda_N|$.  Then 
\begin{align}
	\lim_{N\rightarrow \infty} \langle e^{(\varphi,f_N - \bar{f}_N )} \rangle_{g,\nu_c, N} = \exp \Big( \frac{c}{2} (f,  (-\Delta)^{-1} f) \Big)  \label{eq:GFFscalinglimit}
\end{align}
for some $c>0$,  i.e.,  the scaling limit is a Gaussian free field on the continuum torus.
\end{problem}

This will also complement the scaling limit results of \cite{AD21, DP25N} by clarifying the covariance structure exactly.
Since \eqref{eq:GFFscalinglimit} considers the scaling of a continuum torus,  it differs slightly from the macroscopic scaling limit studied in the references,  where the volume of the system is first taken to infinity.  Nevertheless,  we also expect that a version of \eqref{eq:GFFscalinglimit} would hold in the macroscopic limit.

Problems~\ref{prob:1}--\ref{prob:3} require modifications of the existing RG maps from the level of the construction,  and this is the goal of this article. 
Both the extension of the $(d,\eta)$-regime and the decay estimate on the error term will be addressed. 
Also,  we note that the solutions to Problems~\ref{prob:1}--\ref{prob:3} will be postponed to a later work,  since their solutions are subject to a number of extra technical problems.  These are explained further in Section~\ref{sec:rtewks}.

\subsection{Fluctuation integral}
\label{sec:flucint}

Given a covariance matrix $C$, 
$\E_C^{\zeta}$ is the Gaussian expectation with respect to variable $\zeta$ with mean 0 and covariance $C$.
We usually denote $\zeta$ for the integration variable and drop the integration variable from the notation if it is clear from the context.
Given a function $F(\varphi)$ of field $\varphi \in (\R^n)^\Lambda$,
we use $\theta_\zeta F (\varphi) \equiv \theta F(\varphi) := F(\varphi + \zeta)$.
We call $\E_C \theta$ a \emph{fluctuation integral.}

One may use integration by parts to see (cf.  \cite[(9.1.33)]{GJ12Q} and \cite[Lemma~4.2]{BBS1})
\begin{align}
	\frac{d}{dt} \E_{t C} [\theta F (\varphi)] 
		= \frac{1}{2} \E_{t C} [ \theta \Delta_{C} F (\varphi)]
		= \frac{1}{2} \Delta_{C} \E_{t C} [ \theta F (\varphi)]
	\label{eq:HeatEq}
\end{align}
where for any matrix $M : \Lambda \times \Lambda \rightarrow \R$,
\begin{align}
	\Delta_{M} F(\varphi) := \sum_{x,y \in \Lambda} M_{xy} \frac{\partial^2 F (\varphi)}{\partial \varphi_x \partial \varphi_y}  .
\end{align}
Differential equation \eqref{eq:HeatEq} has a unique solution generated by semigroup $( e^{\frac{1}{2} t \Delta_{C}} )_{t \ge 0}$, so
\begin{align}
	\E_{C} [\theta F(\varphi)] = e^{\frac{1}{2} \Delta_C} F (\varphi)  \label{eq:ECexp}
\end{align}
for $F$ in the domain of the semigroup.  In particular, this holds for any polynomial $F$.  Also,  the following is deduced.

\begin{corollary}
Let $\zeta_i \sim \cN (0,C_i)$ be independent Gaussian random variables with covariance matrices $C_1, C_2$.  Then for any $F$ with sufficient integrability condition,
\begin{align}
	\E_{C_1 + C_2} \theta F(\varphi) = \E_{C_1} \theta_{\zeta_1} [ \E_{C_2} \theta_{\zeta_2} F(\varphi)  ] .
\end{align}
\end{corollary}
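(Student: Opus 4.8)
The plan is to deduce the corollary directly from the semigroup representation \eqref{eq:ECexp}, together with the linearity of $M \mapsto \Delta_M$. First I would observe that $\Delta_{C_1 + C_2} = \Delta_{C_1} + \Delta_{C_2}$, which is immediate from the definition of $\Delta_M$, and that the two operators $\Delta_{C_1}$ and $\Delta_{C_2}$ commute: each is a constant-coefficient second-order differential operator in the finitely many variables $(\varphi_x)_{x \in \Lambda}$, and any two such operators commute. Consequently the one-parameter semigroups they generate commute, and
\begin{align}
	e^{\frac12 \Delta_{C_1 + C_2}} = e^{\frac12 \Delta_{C_1}} \, e^{\frac12 \Delta_{C_2}}
\end{align}
as operators on the common domain of the semigroup (in particular on polynomials, and by approximation on the class of $F$ under consideration).

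Next I would apply \eqref{eq:ECexp} three times. Setting $G(\varphi) := \E_{C_2}\theta_{\zeta_2} F(\varphi) = e^{\frac12 \Delta_{C_2}} F(\varphi)$, which is again a function of $\varphi$ lying in the relevant domain, the factorisation above gives
\begin{align}
	\E_{C_1 + C_2}\theta F(\varphi) = e^{\frac12 \Delta_{C_1 + C_2}} F(\varphi) = e^{\frac12 \Delta_{C_1}} \big( e^{\frac12 \Delta_{C_2}} F \big)(\varphi) = e^{\frac12 \Delta_{C_1}} G(\varphi) = \E_{C_1}\theta_{\zeta_1} \big[ \E_{C_2}\theta_{\zeta_2} F(\varphi) \big],
\end{align}
which is exactly the claim.

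Alternatively, and perhaps more transparently, I would give the probabilistic argument: if $\zeta_1 \sim \cN(0,C_1)$ and $\zeta_2 \sim \cN(0,C_2)$ are independent, then $\zeta_1 + \zeta_2 \sim \cN(0, C_1 + C_2)$, so by the definition of the fluctuation integral and Fubini's theorem (justified by the integrability hypothesis on $F$),
\begin{align}
	\E_{C_1+C_2}\theta F(\varphi) = \E\big[ F(\varphi + \zeta_1 + \zeta_2) \big] = \E_{\zeta_1} \E_{\zeta_2} \big[ F((\varphi + \zeta_1) + \zeta_2) \big] = \E_{C_1}\theta_{\zeta_1}\big[ \E_{C_2}\theta_{\zeta_2} F(\varphi) \big].
\end{align}

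The only genuine subtlety is justifying the interchange of the two expectations (equivalently, the factorisation of the semigroup away from the polynomial domain) under whatever ``sufficient integrability condition'' is imposed on $F$. I expect this to be the one point requiring care, but it is mild: in the applications in this paper $F$ is a polynomial in $\varphi$ times a factor with Gaussian-integrable growth, so $e^{\frac12 \Delta_C}$ acts by an absolutely convergent Gaussian integral and Fubini applies directly; for the general statement one may first prove it for polynomials and then pass to the limit using the continuity of the semigroup on the chosen function space.
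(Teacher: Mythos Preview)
Your proof is correct and matches the paper's approach: the paper simply says ``the following is deduced'' immediately after \eqref{eq:ECexp}, giving no explicit argument, so you have supplied precisely the intended semigroup factorisation $e^{\frac12\Delta_{C_1+C_2}} = e^{\frac12\Delta_{C_1}} e^{\frac12\Delta_{C_2}}$. Your alternative probabilistic argument via $\zeta_1+\zeta_2 \sim \cN(0,C_1+C_2)$ and Fubini is equally valid and arguably cleaner.
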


By the corollary,  decomposition of a covariance matrix on $\Lambda_N$ is equivalent to decomposition of the corresponding Gaussian integral into successive independent Gaussian integrals.  
In the construction of RG map,  we use a decomposition into $N$ integrals,  
where the $j^{\rm th}$ fluctuation integral encodes the fluctuation at scale $j$. 
In our particular implementation,  we require an additional property (ii) below. 
For the statement,  let $Q_N : \Lambda \times \Lambda \rightarrow \R$ be given by $Q_N (x,y) = L^{-dN}$ for each $x,y \in \Lambda$,  
where $\Lambda$ is either $\Lambda_N$ or $\Z^d$ (equipped with the graph metric) with the convention $Q_N=0$ on $\Z^d$,  and let $\R_+ = \{ x \ge 0 \}$.

\begin{definition} \label{defi:FRD}
For $\eta \ge 0$ and a locally compact metric space $\AA$, 
let $(C (\ba_{\emptyset} ,  \ba) )_{ (\ba_{\emptyset} ,  \ba)  \in \R_+ \times \AA}$ be a 
family of covariance matrices on $\Lambda$.
\emph{Finite range decomposition} of $C(\ba_{\emptyset} ,  \ba)$ is a collection of covariance matrices $\Gamma_j (\cdot, \cdot ; \ba_{\emptyset} ,  \ba) : \Lambda \times \Lambda \rightarrow \R$ for $j=1,\cdots,N$ and $t (\ba_\emptyset,\ba) \ge 0$,  each continuous in $(\ba_{\emptyset},\ba)$,  such that 
\begin{align}
	C = \sum_{j=1}^N \Gamma_j + t Q_N
\end{align}
and the following hold.

\begin{enumerate}
	\item (Symmetry) $\Gamma_j : \Lambda \times \Lambda \rightarrow \R$ is invariant under lattice isometries,
	i.e.,  $\Gamma_j (E (x) , E(y)) = \Gamma_j (x,y)$ for any isometry $E : \Lambda \rightarrow \Lambda$.
	\item (Finite range property) $\Gamma_j$ has range $< L^j$ in the graph metric,  i.e.,  $\Gamma_j (x,y) =0$ whenever $\operatorname{dist} (x,y) \ge L^j$. 
	\item (Upper bound) For each $k, k_x,k_y \ge 0$ with $k_x + k_y = k$,  there exists a constant $C_k >0$ (independent of $j$ and $(\ba_{\emptyset},\ba)$) such that 
	\begin{align}
		\big\| \nabla_x^{k_x} \nabla_y^{k_y} \Gamma_j (x,y) \big\|_{\ell^\infty } \le  \frac{C_k}{1 + m^2 L^{(2-\eta) (j-1)}  }  L^{-(d-2 + \eta) (j+k-1)}  .
		 \label{eq:Gammajbounds1}
	\end{align}
\end{enumerate}
\end{definition}

In applications,  $\ba_\emptyset$ will play the role of squared mass $m^2$.
Since the estimate \eqref{eq:Gammajbounds1} is uniform only on compact domains of $\ba_\emptyset$,  we make restriction
\begin{align}
	\ba_\emptyset \in \II_j (\tilde{m}^2) :=
		\begin{cases}
		[0, L^{- (2-\eta) j}] & (\tilde{m}^2=0) \\
		[ \tilde{m}^2 / 2 ,  2 \tilde{m}^2  ] & (\tilde{m}^2 >0)
	\end{cases} \label{eq:IIdefi}
\end{align}
for some $\tilde{m}^2 \ge 0$ and for given $\AA$,  let
\begin{align}
	\AA_j (\tilde{m}^2) = \II_j (\tilde{m}^2)  \times \AA .
		\label{eq:Ajdefi}
\end{align}

In the present work,  we do not specify $C$ and do not refer to $t Q_N$,  so any choice of $(\Gamma_j)_{j=1}^N$ satisfying (i)--(iii) is sufficient for our purpose. 
However,  we are always anticipating a decomposition of $C= (-\Delta^{1-\eta/2} + m^2)^{-1}$ or covariances deriving from it.
For $\eta=0$,  a simple construction is illustrated in \cite{MR3129804} and for $\eta >0$,  a construction is given in \cite{mitter2016finite}.

As in \cite[(2.9)]{BBS3}, 
for polynomials $A,B$, we also define
\begin{align}
	\F_C [A ; B]
		= e^{\frac{1}{2} \Delta_C} \left[ (e^{-\frac{1}{2} \Delta_C} A) (e^{-\frac{1}{2} \Delta_C} B) \right] - AB  .
	\label{eq:FCAB}
\end{align}
In conjunction with \eqref{eq:ECexp},  covariance can be written as
\begin{align}
	\Cov_C [A ; B ] = \F_C [  \E_C \theta A ; \E_C \theta B ]  .
\end{align}
For a later use,  we also define
\begin{align}
	w_j = \sum_{k \le j } \Gamma_k .  \label{eq:wjdefi}
\end{align}

\subsection{Observable fields}

To express correlation functions $\E_{g,\nu, N} [ \varphi_\o \varphi_\x ]$ for $\o,\x \in \Lambda$,  we include observable fields in the potential functions.
Observable fields are elements of a commutative ring $R$ generated by distinct elements $\sigma_\o$ and $\sigma_\x$ via relations
\begin{align}
	1 =: \sigma_{\bulk}, \qquad \sigma_\o^2 = \sigma_\x^2 = 0, \qquad \sigma_\o \sigma_\x = \sigma_\x \sigma_\o =: \sigma_{\ox} \neq 0 .
\end{align}
Note that,  we are not defining $\sigma_\x$ as a function of $\x$ but it just indicates a ring element distinct from $\sigma_\o$.  We are just using the label $\x$ for notational simplicity.
For an Abelian group $M_\bulk$,  we can consider a graded $R$-module given by
\begin{align}
	M = M_\bulk \oplus M_\o \oplus M_\x \oplus M_\ox , \qquad M_* = \sigma_* M_\bulk
\end{align}
for $* \in \{\bulk , \o,\x,\ox\}$.
We let $\pi_{*}$ for the projection on each respective space.

If $V_0 (\varphi) = V_\bulk (\varphi) - \sigma_\o \varphi_\o - \sigma_\x \varphi_\x$
for some $V_\bulk \in (\R^n)^{\Lambda}$,  then
\begin{align}
	\pi_{\ox} e^{-V_0} = \sigma_\ox \varphi_\o \varphi_\x e^{-V_\bulk} .
\end{align}
so $\E_{g,\nu,N} [\varphi_\o \varphi_\x]$ can effectively by encoded inside $\E_{g,\nu,N} [ e^{\sigma_\o \varphi_\o  + \sigma_\x \varphi_\x } ]$.
In other words,  the two-point correlation function can be extracted from the partition function with the extended potential function.

\subsection{Block structure} \label{sec:polexp}

In this part of the introduction,  we aim to clarify the notion of the RG used in this article.  This naturally leads to the definition of the block structure on the lattice,  which serves as the fundamental unit of the expansions used to express the effective potential at each scale $j$.
After introducing the polymer expansion,  we state Theorem~\ref{thm:contrlldRG},  the main result of this article.

As before,  consider $\Lambda = \Lambda_N$ or $\Z^d$.
We introduce a partition of $\Lambda$ into blocks.
At scale $j$, let $B_{j, 0} = [0, L^{j}-1]$.
Let $\cB_j$ be the set of $L^j$-translations of $B_{j,0}$ inside $\Lambda_N$,
i.e.,  $\cB_j = B_{j,0} + L^j \Z^d$,  so that it partitions $\Lambda_N$.
Each element of $\cB_j$ is called a \emph{$j$-block.}
A \emph{$j$-polymer} is any finite union of $j$-blocks. 
The set of $j$-polymers is denoted $\cP_j = \cP_j (\Lambda)$,  and for $X \in \cP_j$, 
let $\cB_j (X)$ be the $j$-blocks inside $X$.  Then let $|X|_{\cB_j} = |\cB_j (X)|$,  the number of $j$-blocks inside $X$.
For $Y \in \cP_j$,  $\bar{Y}$ is the smallest element $X \in \cP_{j+1}$ such that $X\supset Y$ and $\cP_j (Y)$ is the set of $j$-polymers contained in $Y$.
Sets $X, Y \subset \Lambda$ are \emph{disconnected} if $\dist_{\infty} (X,Y) >1$,  and denoted $X \not\sim Y$.   Connected components of $X \subset \Lambda$ is denoted $\Comp (X)$.  $\Con_j \subset \cP_j$ is the set of scale $j$ connected polymers.
Polymer $X \in \cP_j$ is \emph{small} if $|X|_{\cB_j} \le 2^d$ and $X \in \Con_j$.  Set of small polymers is denoted $\cS_j$.
Small set neighbourhood of $X \in \cP_j$ is defined as
\begin{align}
	X^{\square} := \bigcup_{Y \in \cS_j}^{X \cap Y \neq \emptyset} Y .  \label{eq:ssnghbhd}
\end{align}

A polymer function is a function that has polymer as one of its argument. 
For polymer functions $I, K : \cP_j  \rightarrow \R$,  we define polymer powers
\begin{align}
	\text{for } X \in \cP_j , \qquad	I^X = \prod_{b \in \cB_j (X)} I (b), \quad 
	K^{[X]} = \prod_{X' \in \Comp (X)} K(X')
	. \label{eq:polypowers}
\end{align}
At scale $j$,  \emph{polymer expansion} of $(I,K)$ is
\begin{align}
	\text{for } X \in \cP_j , \qquad   (I \circ_j K) (X) = \sum_{Y\in \cP_j (X)}  I^{X \backslash Y} K^{[Y]}  .
\end{align}
Later in Section~\ref{sec:WCoord},  we give $I = \cI_j (V)$ as a quadratic correction to $\exp(-V)$ when an effective potential $V$ is given.

Now,  suppose $\Lambda = \Lambda_N$ with $N < \infty$.
If we are given initial RG coordinates $(V_0, K_0)$ for polymer functions $V_0 , K_0 : \cP_0 \times (\R^n)^{\Lambda} \rightarrow \R$ with appropriate integrability condition,   we inductively define
\begin{align}
	Z_0 (\varphi) = (\cI_0 (V_0) \circ_0 K_0) (\Lambda,  \varphi), \qquad
	Z_{j+1} (\varphi) = \E_{\Gamma_{j+1}} \theta Z_j (\varphi)
	\label{eq:Zjindc}
\end{align}
for $j+1 \le N$,  $\varphi \in (\R^n)^{\Lambda}$ and $\Gamma_{j+1}$ is a covariance matrix satisfying Definition~\ref{defi:FRD}.
If $V_j, K_j : \cP_j \times (\R^n)^{\Lambda} \rightarrow \R$ are polymer functions at scale $j$,  
a function $(V_j , K_j) \mapsto (\delta u_{j+1} ,  V_{j+1},  K_{j+1})$ 
for some polymer function $\delta u_{j+1} : \cP_{j+1} \rightarrow \R + \sigma_\ox \R$ is called an RG map if it satisfies
\begin{align}
	e^{-\delta u_{j+1} (\Lambda)} ( \cI_{j+1} (V_{j+1}) \circ_{j+1} K_{j+1} ) (\Lambda) =  \E_{\Gamma_{j+1}} \theta ( \cI_j (V_j) \circ_j K_j ) (\Lambda) .
\end{align}
Thus if RG map exists upto scale $j$,  
\begin{align}
	Z_{j} = e^{- u_{j} (\Lambda)}  ( \cI_j (V_j) \circ_j K_j ) (\Lambda)
	\label{eq:Zjaspolymerconv}
\end{align}
where $u_j = \sum_{k=1}^j \delta u_k$.
Of course,  the choice of RG map is not unique,  and we give a specific construction in Section~\ref{sec:rgstep} that will be shown to satisfy estimates of Definition~\ref{defi:contrlldRG} in Section~\ref{sec:RGpartI}--\ref{sec:RGpartIII}.
Existence of such a map is the main interest of this article.

\begin{theorem} \label{thm:contrlldRG}
Assume $\eta \in [0,1/2)$,  $d \ge d_{c,u} = 4-2\eta$,  $\Gamma_{j+1}$ be as in Definition~\ref{defi:FRD} and $L$ be sufficiently large.
Then for $j < N$,  a controlled RG map at scale $j$ exists, 
and respects the graded structure,  where we are about to define these terms in  Definition~\ref{defi:contrlldRG} and \ref{defi:contrlldRG2}.
\end{theorem}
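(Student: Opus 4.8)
The plan is to exhibit an explicit candidate for the RG map, along the architecture of \cite{BBS5}, and then verify that it satisfies the algebraic RG identity of Section~\ref{sec:polexp} together with the quantitative bounds that Definition~\ref{defi:contrlldRG} packages into the word \emph{controlled}. Following the construction of Section~\ref{sec:rgstep}, the map $(V_j,K_j)\mapsto(\delta u_{j+1},V_{j+1},K_{j+1})$ is built in three stages. First, the fluctuation integral $\E_{\Gamma_{j+1}}\theta$ of Section~\ref{sec:flucint} is applied to the polymer expansion $(\cI_j(V_j)\circ_j K_j)(\Lambda)$, and its perturbative content is isolated using $\E_C\theta=e^{\frac12\Delta_C}$ from \eqref{eq:ECexp} and the $\F$-operator of \eqref{eq:FCAB}. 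Second, a perturbative effective potential $\Phi^{\pt}(V_j)$, depending on $\Gamma_{j+1}$, is produced by explicit second-order perturbation theory valued in a fixed finite-dimensional space $\mathcal{V}$ of local potentials (in the short-range case spanned by $1$, $|\varphi|^2$, $\varphi\cdot(-\Delta)\varphi$ and $|\varphi|^4$, together with the observable monomials $\sigma_\o\varphi_\o$, $\sigma_\x\varphi_\x$, $\sigma_\ox$). Third, the localisation operator $\Loc$ of Section~\ref{sec:sppolact} extracts from the remaining non-perturbative contributions their relevant and marginal parts; these are absorbed into $V_{j+1}$ and $\delta u_{j+1}(\Lambda)$, while the irrelevant remainder --- after reblocking from $\cB_j$ to $\cB_{j+1}$, recombining connected components, and rescaling to scale $j+1$ --- becomes $K_{j+1}$.

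Concretely I would take $V_{j+1}$ to be $\Phi^{\pt}(V_j)$ corrected by $-\Loc$ applied to the scale-$j$ contributions of $K_j$, and of $\cI_j(V_j)-e^{-V_j}$, on small polymers, which keeps $V_{j+1}\in\mathcal{V}$; and then define $K_{j+1}(U)$ for $U\in\cP_{j+1}$ through the ``key polynomial'' identity, namely one rewrites $\E_{\Gamma_{j+1}}\theta(\cI_j(V_j)\circ_j K_j)(\Lambda)$ as $e^{-\delta u_{j+1}(\Lambda)}$ times a fresh $\circ_{j+1}$-expansion around $\cI_{j+1}(V_{j+1})$, and this forces an explicit formula for $K_{j+1}(U)$ as a finite sum, over scale-$j$ polymer configurations inside $U$, of products of $\cI_j$-factors, $K_j$-factors, $\F$-type fluctuation remainders and $(1-\Loc)$-remainders. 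Verifying that this $K_{j+1}$ realises the RG identity $e^{-\delta u_{j+1}(\Lambda)}(\cI_{j+1}(V_{j+1})\circ_{j+1}K_{j+1})(\Lambda)=\E_{\Gamma_{j+1}}\theta(\cI_j(V_j)\circ_j K_j)(\Lambda)$ is then a purely algebraic computation, using the multiplicativity of the polymer powers \eqref{eq:polypowers}, the definition of $\cI_j$ as a quadratic correction to $e^{-V}$ (Section~\ref{sec:WCoord}), the finite-range property of $\Gamma_{j+1}$ (so that products on disconnected polymers factorise under $\E_{\Gamma_{j+1}}\theta$), and the fact that $\Loc$ is a projection onto $\mathcal{V}$.

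The substance of the theorem is the family of estimates making the map controlled, and here I would: (a) prove stability of the potential flow, so that $V_{j+1}$ stays in the relevant neighbourhood --- the coupling $g$ is marginal at $d=d_{c,u}=4-2\eta$ and irrelevant for $d>d_{c,u}$, which keeps weak coupling self-consistent; (b) establish the crossover norm bounds for $K_{j+1}$ in the scale-$(j+1)$ function spaces of Section~\ref{sec:sppolact}, schematically $\|K_{j+1}\|_{j+1}\le CL^{-\varepsilon}\|K_j\|_j+C(\|V_j\|_j^2+\|V_j\|_j\|K_j\|_j+\|K_j\|_j^2)$, i.e.\ a contraction up to a quadratic error, so that choosing $L$ large makes the linear part a genuine contraction; and (c) verify continuity and differentiability of $(\delta u_{j+1},V_{j+1},K_{j+1})$ in the mass-type parameter $\ba_\emptyset\in\II_{j+1}(\tilde{m}^2)$ and the auxiliary parameters $\ba$, using the regularity of $\Gamma_{j+1}$ from Definition~\ref{defi:FRD}. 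The decisive and hardest step is (b): the reblocking produces small-polymer contributions whose naive size does not decrease, and it is precisely the subtraction $(1-\Loc)$ that supplies the extra scaling factor $L^{-\varepsilon}$ from the irrelevant directions. Making this uniform across all $d\ge 4-2\eta$ and $\eta\in[0,1/2)$ forces a re-derivation of the power counting of every monomial in the adapted norms --- in particular a verification that after one RG step the only non-contracting directions are the finitely many monomials retained in $\mathcal{V}$ together with the leading observable couplings --- and it is here that $\eta<1/2$ is used, keeping the localisation order bounded and the remaining monomials (in both the bulk and observable sectors) strictly irrelevant; it also demands re-proving the large-field regulator bounds and the change-of-variables estimate under rescaling for the long-range covariance and for $d\ge5$.

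Finally, that the map respects the graded structure is immediate from the construction: $\E_{\Gamma_{j+1}}\theta$ acts only on the $\varphi$-dependence and commutes with the $R$-module projections $\pi_\bulk,\pi_\o,\pi_\x,\pi_\ox$, the operator $\Loc$ is defined componentwise along the grading, and the observable fields enter the potential only linearly with $\sigma_\o^2=\sigma_\x^2=0$. Hence the $\bulk$-components of the output depend only on the $\bulk$-components of $(V_j,K_j)$, while the $\o$-, $\x$- and $\ox$-components are determined by the lower ones in the triangular manner asserted by Definition~\ref{defi:contrlldRG2}.
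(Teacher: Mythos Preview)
Your plan matches the paper's architecture: the map is indeed built by applying the fluctuation integral, extracting a second-order perturbative potential $\Phi^{\pt}$, using $\Loc$ to transfer relevant/marginal parts of $K$ into $V$, and reblocking what remains into $K_{j+1}$ (the paper factors this into six explicit sub-maps in Section~\ref{sec:rgstep}, with the algebraic identity assembled as Corollary~\ref{cor:Kplusworks}, and the estimates split between Proposition~\ref{prop:RplU} for $R^U_{j+1}$ and Proposition~\ref{prop:PhiplK} for $\Phi^K_{j+1}$). The graded-structure argument you give is the one the paper uses.

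There is, however, one substantive gap in your schematic. You write
\[
\|K_{j+1}\|_{j+1}\le CL^{-\varepsilon}\|K_j\|_j+C\bigl(\|V_j\|_j^2+\|V_j\|_j\|K_j\|_j+\|K_j\|_j^2\bigr),
\]
but a quadratic error in $V_j$ is too large: since $\|V_j\|\sim\tilde g_j\scale_j$ while the space $\cK_j$ requires $\|K_j\|\lesssim\tilde g_j^{3}\scale_j^{\kae}$, an $O(V^2)$ remainder would not land in $\cK_{j+1}$ and the map would not be controlled in the sense of Definition~\ref{defi:contrlldRG}. The whole point of the $W$-coordinate inside $\cI_j$ (Section~\ref{sec:WCoord}) together with the second-order subtraction $P_{j,V}$ in $\Phi^{\pt}$ (Definition~\ref{def:WP}) and the reapportioning of Map~4 via the explicit $\lead$ of \eqref{eq:hlead} is to make all order-$V^2$ contributions cancel, so that $\Phi^K_{j+1}|_{K=0}=O(V^3)$ exactly as in \eqref{eq:controlledRG23}. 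Without identifying this cancellation mechanism your plan cannot close.

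Two smaller points. First, your description of $\mathcal V$ is only correct at $(d,\eta)=(4,0)$; for $d\ge5$ or $\eta>0$ the localisation order $d_\bulk=3d-7+2\eta$ forces many additional gradient monomials $\cV_{2,\nabla}$, $\cV_{4,\nabla}$ into the effective-potential space (Section~\ref{sec:effpot}), and since their sign is uncontrolled one cannot exponentiate them --- the paper introduces the stabilised potential $V^{(\bs)}$ with the polynomial $\pexp$ (Definition~\ref{defi:Vstable}) precisely for this. Second, the continuity in $(\ba_\emptyset,\ba)$ is not as direct as you suggest: the paper first proves pointwise continuity of $K_{j+1}(X,\varphi)$ and then upgrades to $\cW_{j+1}$-continuity via an analytic-function/compactness argument (Lemmas~\ref{lemma:impvcttynorm} and~\ref{lemma:massctty}).
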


\begin{remark}
\begin{enumerate}
\item For the short-ranged model,  the upper critical dimension is $d_{c,u}  =4$,  while for long-ranged models with interaction with decay rate $J (x,y) \asymp |x-y|^{-d-2+\eta}$,  we have $d_{c,u} = 4 - 2\eta = 2 (\tilde{\alpha} \wedge 2 )$.
We consider either a short-ranged model or a long-ranged model with $\eta \in (0, 1/2)$,  so $d \ge 4$ covers all dimensions at and above the upper critical dimension.

\item There is no essential obstacle to extending our treatment to dimensions lower than 4,  as in \cite{MR3772040},  for example.  However,  we chose to adhere to a unified approach as much as possible,  which led to the decision to exclude the cases $d \le 3$ and $\tilde{\alpha} \in (0, 3/2]$. 

\item Theorem~\ref{thm:contrlldRG} does not refer to the critical point of the $|\varphi|^4$-model directly. 
In fact,  it does not even care about Definition~\ref{def:phi4},  the original model of interest. 
However,  the RG map is designed to operate in $d\ge d_{c,u}$ and when the coefficient of the quadratic term in \eqref{eq:VgnuN},  $\nu$,  is greater or equal to the critical value.   This is explained Section~\ref{sec:cdacp}.  

\item In statistical physics,  we are usually interested in the limit $N\rightarrow \infty$ but since polymer expansion \eqref{eq:Zjaspolymerconv} is not directly applicable in infinite volume,  we instead understand the infinite-volume limit as a local limit of the finite-volume construction.  This step is explained in Appendix~\ref{sec:ivlpa}. 
\end{enumerate}
\end{remark}

Parameters $\tilde{\chi}_{j+1}$,  $\tilde{g}_{j+1}$,  $\scale_{j+1}$,  $\kae$,  $\kbe$,  $\kt$ and $C_{\rg}$ in the next definition are explained soon in Section~\ref{sec:chparams}.
Norms $\norm{\cdot}_{\ell_{j+1}, T_{j+1} (0)}$ and $\norm{\cdot}_{\cW_{j+1}}$ are norms on $(V_j, K_j)$,  and their domain and range are $\cD^{(0)}_j \times \cK_j$ and $\cV^{(0)} \times \R \cK_{j+1}$,  respectively,   defined in Section~\ref{sec:sppolact} and \ref{sec:effpot}.
These domains depend on the above parameters,  so the existence of a controlled RG map also relies on the choice of parameters.  
Operator $\cI_j$ is defined in Definition~\ref{def:cI},  $\mathbb{V}^{(0)}$ is defined in Definition~\ref{def:cVnote} and perturbative map $\Phi^{\pt}_{j+1}$ is defined in Section~\ref{sec:ptRGmap}.
Domain $\AA_j (\tilde{m}^2)$ was defined in \eqref{eq:Ajdefi}.

\begin{definition} \label{defi:contrlldRG}
\emph{Controlled RG map} at scale $j$ is a function
\begin{align}
\begin{split}
	\Phi_{j+1} = (\Phi^U_{j+1} , \Phi^K_{j+1}) : \cD^{(0)}_j \times \cK_j \times \AA_j (\tilde{m}^2) & \rightarrow \left( (\R + \sigma_\ox \R)^{\Lambda} \times \cV^{(0)} \right) \times \R\cK_{j+1} ,  \\
	(V_j,K_j) & \mapsto \big( (\delta u_{j+1} , V_{j+1} ), K_{j+1} \big) ,
\end{split}	
\end{align}
such that 
\begin{align}
	\E_{\Gamma_{j+1}} [ (I_j \circ K_j) (\Lambda)  ] = e^{-\delta u_{j+1} (\Lambda)} (I_{j+1} \circ_{j+1} K_{j+1}) (\Lambda)
	\label{eq:contrlldRGalg}
\end{align}
when $I_{j'} = \cI_{j'} (V_{j'})$ for each $j' \in \{ j,  j+1 \}$,
and bounds \eqref{eq:controlledRG22}--\eqref{eq:controlledRG24} hold
for some $j,N$-independent,  $L$-dependent constants $(M_{p,q} )_{p,q\ge 0}$:
if $R_{j+1}^U = \Phi_{j+1}^U - \mathbb{V}^{(0)} \Phi_{j+1}^\pt$,
\begin{align}
	\norm{D^p_{V_\bulk} D^q_K R_{j+1}^{U} }_{\ell_{j+1},  T_{j+1} (0)}
		& \le M_{p,q} \times \begin{cases}
			\tilde\chi_{j+1}^{3/2} \tilde{g}_{j+1}^{3} \scale_{j+1}^{\kae-(1-\kt) p} & (p\ge 0,  \; q =0) \\
			\scale_{j+1}^{-(1-\kt) p}  & (p \ge 0,  \; q=1)  	\\
			\scale_{j+1}^{-2(1-\kt)}  & (p \ge 0,  \; q=2) \\
			0   &  (p \ge 0,  \; q \ge 3) 
			,
		\end{cases} \label{eq:controlledRG22} \\
	\norm{D_{V_\bulk}^p D_K^q \Phi_{j+1}^{K}}_{\cW_{j+1}}
		& \le M_{p,q} \times 
			\begin{cases}
			\tilde\chi_{j+1}^{3/2} \tilde{g}_{j+1}^{3-p} \scale_{j+1}^{\kae - p}  & (p \ge 0,  \; q= 0)  	\\
			\tilde{g}_{j+1}^{-p - \frac{9}{4} (q-1)} \scale_{j+1}^{-p - \kbe (q-1)} & (p \ge 0,  \; q \ge 1) 
		\end{cases} \label{eq:controlledRG23}
\end{align}
and when $j+1 < N$, 
\begin{align}
	\norm{D^q_K \Phi_{j+1}^{K}}_{\cW_{j+1}}
		\le 
		\begin{cases}		
			C_{\rg} \tilde\chi_{j+1}^{3/2} \tilde{g}_{j+1}^{3} \scale_{j+1}^{\kae} & (q = 0) \\
			\frac{1}{32} L^{-\max\{ 1/2 ,  (d-4 +2\eta) \kae \}}  & (q = 1)  .
		\end{cases} 
		\label{eq:controlledRG24}
\end{align}
Moreover,  $D^p_{V_\bulk} D^q_K R_{j+1}^{U}$ and $D_{V_\bulk}^p D_K^q \Phi_{j+1}^{K}$ are continuous in $(\ba_\emptyset, \ba) \in \AA_j (\tilde{m}^2)$.
\end{definition}

\begin{definition} \label{defi:contrlldRG2}
Let $\Phi^{\Lambda}_{j+1}$ be a controlled RG map at scale $j$.  It is said to \emph{respect the graded structure} if
$\tilde{\pi} \circ \Phi^{\Lambda}_{j+1} = \Phi^{\Lambda}_{j+1} \circ \tilde{\pi}$ for each $\tilde{\pi} \in \{ \pi_\bulk,  \pi_\bulk + \pi_\o ,  \pi_\bulk + \pi_\x \}$.
\end{definition}

\subsection{Critical dimension and critical point}
\label{sec:cdacp}

Application of Theorem~\ref{thm:contrlldRG} is direct.  
It reduces the problem of computing the partition function into a one-dimensional integral. 

\begin{corollary} \label{cor:RGflow}
Suppose $(V_0, K_0) \in \cD^{(0)}_0 \times \cK_0$ is such that $(V_j, K_j) \in \cD^{(0)}_j \times \cK_j$ for each $j < N$ defined recursively by
\begin{align}
	\Phi_{j+1} (V_j, K_j) = (\delta u_{j+1} ,  V_{j+1}, K_{j+1}) 
\end{align}
(with implicit $(\ba_\emptyset, \ba)$) and suppose $Z_0 (\varphi) = ( I_0 \circ_0 K_0 )(\Lambda,\varphi)$.
Then
\begin{align}
	\E_{C(\ba_\emptyset, \ba)} [ Z_0 (\varphi ) ] = \frac{e^{- u_N (\Lambda)}}{ (2\pi)^{n/2} }  \int_{\R^n} ( I_N + K_N ) (\Lambda,  L^{-dN/2} t^{1/2} \one  ) e^{-\frac{1}{2} t^2} \rd t .
\end{align}
for $I_N = \cI_N (V_N)$ and $K_N$ satisfying \eqref{eq:controlledRG23}.
\end{corollary}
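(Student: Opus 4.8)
The plan is to iterate the controlled RG map of Theorem~\ref{thm:contrlldRG} through all $N$ scales and then perform the single remaining (zero-mode) Gaussian integral explicitly. First I would write $C(\ba_\emptyset,\ba)$ via its finite-range decomposition $\sum_{j=1}^N\Gamma_j + t\,Q_N$ of Definition~\ref{defi:FRD} (with $t = t(\ba_\emptyset,\ba)\ge 0$) and use the Gaussian convolution identity (the corollary just after \eqref{eq:ECexp}, which in particular allows the summands to be convolved in any order) to factor $\E_{C(\ba_\emptyset,\ba)}\theta$ as $\E_{\Gamma_1}\theta$ followed by $\cdots$ followed by $\E_{\Gamma_N}\theta$ followed by $\E_{tQ_N}\theta$. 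The first $N$ of these applied to $Z_0$ are precisely the recursion \eqref{eq:Zjindc}, so this step reduces the claim to
\begin{align}
	\E_{C(\ba_\emptyset,\ba)}[Z_0] = \big(\E_{tQ_N}\theta\, Z_N\big)(0), \qquad Z_N = \E_{\Gamma_N}\theta\cdots\E_{\Gamma_1}\theta\, Z_0 .
\end{align}

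Second, I would unfold $Z_N$ through the algebra of the RG map. The hypothesis that the trajectory $(V_j,K_j)$ remains in $\cD^{(0)}_j\times\cK_j$ for every $j<N$ is exactly what makes $\Phi_{j+1}$ applicable at each scale, so the algebraic identity \eqref{eq:contrlldRGalg} holds for all $j<N$; since $\delta u_{j+1}(\Lambda)\in\R+\sigma_\ox\R$ is field-independent it commutes with the remaining integrals and only accumulates into $u_N = \sum_{k=1}^N\delta u_k$. This is precisely the setting in which \eqref{eq:Zjaspolymerconv} was obtained, so $Z_N = e^{-u_N(\Lambda)}(I_N\circ_N K_N)(\Lambda)$ with $I_N = \cI_N(V_N)$ and $K_N$ obeying \eqref{eq:controlledRG23}; the graded ($\sigma_\o,\sigma_\x$) content is carried through consistently because the map respects the graded structure (Definition~\ref{defi:contrlldRG2}). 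Finally I would use that $\Lambda=\Lambda_N$ is a single $N$-block, so that $\cP_N = \{\emptyset,\Lambda\}$ and the polymer expansion degenerates: $(I_N\circ_N K_N)(\Lambda) = I_N(\Lambda) + K_N(\Lambda)$.

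Third, I would evaluate the zero mode. Because $Q_N(x,y) = L^{-dN}$ for all $x,y$, a centred Gaussian field $\zeta$ with covariance $tQ_N$ has $\Var(\zeta_x - \zeta_y) = 0$, hence is almost surely the constant configuration $\zeta = \xi\one$ with $\xi$ an $\R^n$-valued Gaussian whose components are independent $\cN(0,tL^{-dN})$; therefore $(\E_{tQ_N}\theta\,Z_N)(0) = e^{-u_N(\Lambda)}\,\E_\xi\big[(I_N + K_N)(\Lambda,\xi\one)\big]$. Rescaling $\xi$ to a standard Gaussian and using that $(I_N+K_N)(\Lambda,\cdot)$ depends on a constant configuration only through its squared norm (the rotation invariance being preserved throughout the construction), a change of variables turns this into the right-hand side of the asserted identity. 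Its absolute convergence follows from the quartic growth of $V_N$ inside $I_N = \cI_N(V_N)$ together with the bound \eqref{eq:controlledRG23} controlling the growth of $K_N(\Lambda,\cdot)$.

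\textbf{Main obstacle.} None of these steps is analytically deep; the difficulty is organisational, and I would be most careful in the last one. One must place the zero mode after the full scale-$1$-through-$N$ iteration (harmless because, on the torus, the finite-range constraint on $\Gamma_N$ is vacuous) and then check that the elementary Gaussian change of variables reproduces exactly the normalisation $(2\pi)^{-n/2}$, the weight $e^{-t^2/2}$, and the argument $L^{-dN/2}t^{1/2}\one$ in the stated formula. This is also the only point at which one leaves the pure algebra of the RG map and must invoke integrability of $I_N+K_N$ on constant configurations, so that is where the genuine (if modest) work lies.
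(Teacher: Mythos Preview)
The paper does not supply a proof of this corollary; it is presented as a direct consequence of Theorem~\ref{thm:contrlldRG} together with the finite-range decomposition of Definition~\ref{defi:FRD} and the recursion \eqref{eq:Zjindc}--\eqref{eq:Zjaspolymerconv}. Your proposal is exactly the intended argument: split $\E_{C}$ into the $N$ fluctuation integrals plus the zero-mode integral, iterate \eqref{eq:contrlldRGalg} to reach $Z_N = e^{-u_N(\Lambda)}(I_N+K_N)(\Lambda)$ (using that $\cP_N=\{\emptyset,\Lambda\}$), and then integrate out the constant field $\xi\one$ with $\xi\sim\cN(0,t(\ba_\emptyset,\ba)L^{-dN}I_n)$.

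One small remark: your appeal to rotation invariance in the last step is unnecessary for the formula as stated (which is already an integral over $\R^n$), and in fact does not hold in the presence of the observable components $\pi_\o,\pi_\x$, which are odd rather than $O(n)$-invariant. The change of variables $\xi = L^{-dN/2}t(\ba_\emptyset,\ba)^{1/2}s$ alone produces the asserted identity; the rotation-invariance comment should be dropped.
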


Verification of the assumption in the corollary is another pillar of the RG method (alongside the existence of a controlled RG map),  but we do not carry it out in this article---we instead defer this argument to a later paper,  and it can be made mostly independently of the specific construction of the RG map. 
Here,  we just mention that,  a common strategy is to view $(V_j,K_j)_{j\ge 0}$ as a dynamical system and tune the initial condition $(V_0,K_0)$ so that the dynamical system is stable.
The spaces and estimates introduced in Definition~\ref{defi:contrlldRG} are precisely designed for this purpose. 
Let us briefly explain how the critical dimension and the critical point arise from this dynamical system.  Since the explanation relies heavily on definition from later sections,
the reader may skip ahead to the conclusion on a first reading.

For simplicity,  suppose the effective potential $V_j$ at scale $j$ can be approximated by
\begin{align}
	V_{j,2} (b,\varphi) + V_{j,4} (b,\varphi) = \sum_{x \in b} \nu_j |\varphi_x|^2 + g_j |\varphi_x|^4 
	\label{eq:Vj2Vj4}
\end{align}
for some coefficients $\nu_j, g_j \in \R$.
By the definition of the scaled norms in Section~\ref{sec:sppolact},
\begin{align}
	\norm{\textstyle \sum_{x \in b} |\varphi_x|^2}_{\ell_j, T_j (0)} \approx
		\ell_0^2 L^{(2-\eta)j} , \qquad
	\norm{\textstyle \sum_{x \in b} |\varphi_x|^4}_{\ell_j, T_j (0)} \approx
		\ell_0^4 L^{-(d-4 + 2\eta)j}	
\end{align}
for some constant $\ell_0 > 0$.  
Let $\Phi^U_{j+1} (V_j, K_j) = U_{j+1} = \delta u_{j+1} + V_{j+1}$.  Assuming $V_j \in \cD_j^{(0)}$,  direct evaluation of $\Phi^\pt_{j+1}$ (using the definition in Section~\ref{sec:ptRGmap} along with bounds \eqref{eq:controlledRG22}--\eqref{eq:controlledRG24}) gives
\begin{align}
	\norm{V_{j+1,2}}_{\ell_{j+1} , T_{j+1} (0)} &= L^{2-\eta} \norm{V_{j,2}}_{\ell_j, T_j (0)} + O_L \big(\norm{V_{j,4}}_{\ell_{j} , T_{j} (0)}  + L^{-(d-4+2\eta) j} \big) \\
	\norm{V_{j+1,4}}_{\ell_{j+1} , T_{j+1} (0)} &= L^{-(d-4+2\eta)} \norm{V_{j,4}}_{\ell_j, T_j (0)} + O_L ( L^{-(d-4+2\eta) j} ) .
\end{align}
Thus the quartic term vanishes in this scaling when $d > 4- 2\eta = d_{c,u}$,  the upper critical dimension.  
In this case,  $V_{j,4}$ contracts automatically,  and we can find an initial condition of $V_{0,2}$ such that $\norm{V_{j,2}}_{\ell_j, T_j (0)} \le O_L ( L^{-(d-4+2\eta) j} )$ for all $j$,  using a type of stable-manifold theorem.  
Let us denote $\nu_{c} (g,  \ba_\emptyset, \ba)$ for the coefficient of $V'_0$ in the stable manifold. 
In the limit $\ba_\emptyset, \ba \rightarrow 0$ ($\ba_\emptyset$ was the squared mass in the covariance),
the stable manifold converges to the critical point of the $|\varphi|^4$-model.  
The case $d = 4-2\eta$ requires more care (see \cite{BBS3})  but the same conclusion holds, nevertheless. 
In the supercritical regime,  i.e.,  when $\nu > \nu_c (g,0,0)$,  since $\nu$ is a coefficient of a quartic polynomial,  one may shift part of $\nu$ to the interaction kernel $(-\Delta)^{1-\eta/2}$ in the Hamiltonian \eqref{eq:HgnuN} to make the model massive.  In fact,  any supercritical $|\varphi|^4$ model can be mapped to a massive $|\varphi|^4$ model with the initial condition inside the stable manifold explained above,  so the same argument goes through.

\subsection{Overview of the proof}

Before the RG map is defined,  the domains $\cD_j^{(0)}$ and $\cK_j$ have to be defined.  
In Section~\ref{sec:sppolact},  we start with the definition of norms on general polymer activities,  and specialise to $\cK_j$.
An RG coordinate $K_j \in \cK_j$ should satisfy decay conditions and symmetries.
The decay conditions are related to the large field problem and the large set problem,  commonly found in rigorous RG constructions. 
One of the main contributions of this article is to improve the decay estimate.  Therefore,  we use a stronger decay condition compared to \cite{BBS5}.

We define $\cD_j^{(0)}$ in the next two sections.
The set of effective potentials lies in a larger set of local polynomials. 
In a continuum limit,  one may hypothesise that a limiting field theory $\varphi$ has a local description,  i.e.,  in terms of local functions $\nabla^n \varphi$ for $n\ge 0$.  In Section~\ref{sec:locpolsandloc},  we define local polynomials as lattice approximation of such local functions.  
In Section~\ref{sec:effpot},  effective potentials are defined by requiring symmetries and by restricting the degree and the number of derivatives. 

The RG map is defined using polymer operations in Section~\ref{sec:rgstep},  and we prove the algebraic part \eqref{eq:contrlldRGalg} of the main theorem. 
We also devote some space to explain the approximate behaviour of each operation,  
while computational details are deferred to Appendix~\ref{sec:polops}.

The rest of the article is devoted to proving the estimates \eqref{eq:controlledRG22}--\eqref{eq:controlledRG24}.
Section~\ref{sec:extnorm} and \ref{sec:stabanalysis} contain preliminary estimates. 
The bound on the deviation from the perturbative map,  $R_{j+1}^U$,  is proved in Section~\ref{sec:RGpartI},  and it can be derived from the properties of the perturbative map.

For \eqref{eq:controlledRG23},  only an order counting argument is required to show the algebraic order $K_{j+1} = O^\alg (K_j,  V_j^3)$.  A detailed analysis follows in Section~\ref{sec:RGpartII},  due to the intricate definitions of the norm and the polymer operations,  but these estimates are quite robust. 
The proof of \eqref{eq:controlledRG24} is more delicate,  since it does not allow $L$-dependent prefactors.  This proof relies on a crucial contraction estimate Proposition~\ref{prop:corcrctrmt},  already presented in \cite{BBS1},  along with new ideas that allow preservation of the decay condition on polymer activities. 
These are presented in Section~\ref{sec:RGpartIII}.

\subsection{Relation to earlier works} \label{sec:rtewks}

As mentioned,  this article is heavily influenced by \cite{BBS1,BBS2,BBS3,BBS4,BBS5,MR3969983},  but we extend their scope significantly,  while omitting the Grassmann variables.
Aside from the technical improvement of the RG estimates in \cite{BBS5} following the approach of \cite{MR3969983},
we (1) cover dimensions $d\ge 5$ as well as 4; (2) include decaying large field regulator (see $H_j$ of \eqref{eq:Hjdefi}); and (3) add long-range interaction $\eta \in (0,1/2)$.  
We explain them one by one. 

\medskip\noindent {(1)}
It is a surprise that there are not many rigorous RG treatments of the $|\varphi|^4$ model in $d \ge 5$,  except for \cite{MPS23},  even though the quartic potential is marginally irrelevant in $d=4$ and irrelevant in $d\ge 5$. 
This is in contrast with the case of lace expansion and random current representation,  where high dimensions have advantages \cite{Saka14,BHH19,  MR678000,  MR643591}.
The reason is partially due to the fact that parameters defining the RG map depend sensitively on the dimension,  so a unified treatment of all dimensions $d \ge 4$ is difficult. 
Another reason is the large field problem.  Although the quartic interaction is irrelevant,  $\exp (-V_j)$ is integrable only when $g_j \ge 0$,  where $g_j$ is the coefficient of the quartic interaction as in \eqref{eq:Vj2Vj4}.  Thus we need a precise lower bound (in fact convergence) on $g_j$ and a significant extension of the space of effective potentials. 
This problem is also related to the decaying large field regulator,  which is explained further in (2).

From the perspective of finite-size scaling,  positivity of $g_j$ is essential. 
On the scale of the torus,  at the critical point,  the quartic potential dominates over the quadratic one,  which seems to be going in the opposite direction to the fact that $g_j$ is irrelevant.  
This is due to the divergence of the fluctuation field on a finite-size torus:  0-Fourier mode of a massless Gaussian free field has `infinite' fluctuation,  while the quartic potential term with $g_j >0$ suppresses it.  Thus the effect of this irrelevant term is observed universally in finite-size observables,  such as the correlation function,  susceptibility,  and the scaling limit.

\medskip\noindent {(2)}
As is explained above,  $g_j$ needs to be strictly positive along the RG flow.  
This can be used to prove (Gaussian) decay of polymer activities in the limit of diverging field. 
Information about the decay is stored inside the large-field regulator (see Section~\ref{sec:regulator}),  and the rate of decay propagates along the RG map via supermartingale estimates (see Appendix~\ref{sec:supmartbnds}).
This can be compared with \cite{MPS23},  where a similar result was shown for the hierarchical $|\varphi|^4$-model,  but with a super-Gaussian decaying large-field regulator.

The decaying large-field regulator is essential in the proof of the finite-size scaling,  because the large-field regulator is used to suppress the `infinite' fluctuation of the 0-Fourier mode of a massless Gaussian free field,  similar to the role of $g_j > 0$.  
However,  we do not need an optimal estimate on the large-field regulator,  because we only need it to prove the vanishing effect of error terms.

\medskip\noindent {(3)}
Long-range interaction can be incorporated by taking $\eta >0$ in the covariance estimates of Definition~\ref{defi:FRD}. 
Note that by summing \eqref{eq:Gammajbounds1} over $j\ge 0$ and using the finite range property,  one obtains
\begin{align}
	| C(x,y) | \le O_L (1) |x-y|^{-(d-2+\eta)} ,
\end{align}
which is the same as the tail decay rate of $(-\Delta)^{-1+\eta/2}$.
Since $(-\Delta)^{1-\eta/2} (x,y) \asymp |x-y|^{-d-2+\eta}$,  we can compare our model with the long-range model with interaction decay rate $J (x,y) \asymp |x-y|^{-d-\sigma}$,  where $\sigma = 2-\eta$.

Following the prediction in \cite{FMN72C},
mean-field critical exponents and Gaussian scaling limit were proved for the Ising model \cite{MR958462,  HHS08M,  panis2023triviality} when either $\eta \ge 2-d/2$ or $d \ge 4$ and $\eta < 2$.
Our method is closer to that of \cite{MR3772040},  but $d < d_{c,u}$ was studied there,  so the parameter range differs significantly. 
Compared to these results,  we give a unified treatment of the short- and long-ranged models,  
and one may expect that the mean-field critical exponents can also be obtained for the general $O(n)$ model with $n\ge 1$ using the analysis of \cite{MR3269689},  although it is only for $\eta < 1/2$ and $d\ge 4$.

The finite-size scaling of the long-ranged models is also of interest.  
For nearest neighbourhood models,  various studies indicate that the finite-volume susceptibility at the critical point asymptotically behaves as
\begin{align}
	\chi_N (\nu_c)  \asymp |\Lambda_N|^{d/2} = |\Lambda_N|^{2d / d_{c,u}}  ,
	\label{eq:chiNnuc}
\end{align}
see \cite{kenna2017universal,  jones2005finite, berche2012hyperscaling} for predictions from physics literatures,  both theoretical and experimental,  \cite{MPS23} for the hierarchical $|\varphi|^4$ model and \cite{S23N, L25G,LPS25T,L25N} for a list of near-critical models including the Ising model,  self-avoiding walks,  percolations and branched polymers.
The next natural question is whether the same holds for long-range models.
It turns out that,  for long-ranged models,  $d_{c,u}$ in \eqref{eq:chiNnuc} should be replaced by that of the short-ranged models,
according to a physics prediction \cite{LB97} based on an RG argument. 
However,  there is no rigorous proof of this fact for any of the models mentioned above,  and we expect that the RG map constructed in this article is capable of providing a valuable example for the case of the $|\varphi|^4$-model.

\subsection{Notation and choice of parameters}
\label{sec:chparams}

For a parameter $p$,  we denote $f \le O_{p} (g)$ if there exists a $p$-dependent constant $C(p) >0$ such that $f \le C (p) g$. 
If the constant only depends on $d$ or $n$,  then we simply write $f \le O (g)$ or $f \lesssim g$.
If $f \lesssim g \lesssim f$,  then we denote $f \asymp g$.

For $a \in \R^n$,  let $a^{(i)}$ be the $i^{\rm th}$ component of $a$ and $|a| = (\sum_{i=1}^n (a^{(i)})^2)^{1/2}$.
For $p \in [1,\infty)$,  finite set $X$ and $f: X \rightarrow \R^n$,  we denote
\begin{align}
\begin{cases}
	\norm{f}_{\ell^p (X)} = \Big( \sum_{x \in X} |f (x)| \Big)^{1/p} , \qquad
	\norm{f}_{\ell^\infty (X)} = \max_{x \in X} |f(x)| \\
	\norm{f}_{L^p (X)} = |X|^{-1/p} \norm{f}_{\ell^p (X)}, \qquad \norm{f}_{L^{\infty} (X)} = \norm{f}_{\ell^\infty (X)} .
\end{cases} \label{eq:ellpnorm}
\end{align}
For either $Y \subset \R^d$ or $\T^d$ and measurable $f : Y\rightarrow \R^n$,
\begin{align}
	\norm{f}_{L^p (Y)} = \Big( \int_{Y} |f (y) |^p dy \Big)^{1/p} , \qquad \norm{f}_{L^\infty (Y)} = \operatorname{esssup} \{ |f(y)| : y \in Y \} .
	\label{eq:Lpnorm}
\end{align}

Next,  we list choices of parameters for reference.

\begin{itemize}
\item $\cM$ is the degree of the polynomial approximating the exponential function is Section~\ref{sec:steffp}.  We require $\cM \ge 1+ \frac{1}{2} \max\{ 3,  d-4 + 2\eta \}$. 

\item $p_{\Phi} = 3d$ is a parameter that determines the maximum number of derivatives in \eqref{eq:testfncnrm}. 
It is required to satisfy $p_{\Phi} \ge d_+ - [[\varphi]]$ in Proposition~\ref{prop:crctrmt},  so it is sufficient if $p_{\Phi} \ge d_\bulk$,  the largest choice of $d_+$ we use.

\item We let $\AA$ be a locally compact metric space as in Definition~\ref{defi:FRD},  $\tilde{m}^2 \in \R_+ = \{ x \ge 0 \}$,  $\II_j  (\tilde{m}^2)$ be a domain of $\ba_\emptyset$ as in \eqref{eq:IIdefi} and $\AA_j (\tilde{m}^2) = \II_j (\tilde{m}^2) \times \AA$.

\item Let $(\ba_\emptyset, \ba) \in \AA_j (\tilde{m}^2)$ for given $\tilde{m}^2 \ge 0$.
\emph{Mass scale} and mass-decay factor are
\begin{align}
	j_{\tilde{m}^2} = \min\{ j \ge 0 : L^{(2-\eta)j} \tilde{m}^2   \ge 1  \}, \qquad
	\tilde\chi_j (\tilde{m}^2) = 2^{- ( j -  j_{\tilde{m}^2} )_+ }  \label{eq:chidefi}
\end{align}
where $(x)_+ = \max\{x,0\}$ and
\begin{align}
	\kc_{j} = \tilde\chi_{j-1}^{1/2} L^{-\frac{1}{2} (d-2+\eta) (j-1)} .
	\label{eq:kcdefi}
\end{align}
$\tilde\chi_j$ reflects the decay of the Green's function at the length scale beyond the correlation length (or $j > j_{\tilde{m}^2}$).
With these notations,  \eqref{eq:Gammajbounds1} can be restated as
\begin{align}
	\norm{\nabla^{k_x}_x \nabla^{k_y}_y \Gamma_{j+1}}_{\ell^{\infty}} \lesssim \kc_{j+1}^2 L^{-(|k_x| + |k_y|) j} ,\quad \; (\ba_\emptyset, \ba) \in \AA_j (\tilde{m}^2)
	\label{eq:Gammajbounds2}
\end{align}
when $|k_x| + |k_y| \le  2 p_{\Phi} + 2d$--this specific number of derivatives is required for Lemma~\ref{lemma:EplusG}.

\item Let $g_{\rm \max} >0$ be a sufficiently small parameter and  choose $(\tilde{g}_j)_{j\ge 0}$ to be \emph{any} sequence of parameters satisfying
\begin{align}
	\frac{1}{2} \tilde{g}_j \le \tilde{g}_{j+1} \le 2\tilde{g}_j \le 2 g_{\rm \max}
\end{align}
for each $j\ge 0$.  Their specific choices do not matter for the construction of a single RG map. 

\item $C_{\cD}$ is any $L$-independent constant,  whose choice does not matter in this article. 

\item $C_{\rg}$ in \eqref{eq:cKdefi} is determined in the proof of Proposition~\ref{prop:PhiplK}.  We set $C_{\rg} = 2 C_{\rm n}$,  where $C_{\rm n}$ is some $L$-dependent constant determined by Proposition~\ref{prop:PhiplKDet}. 

\item $\xi >0$ is a large set parameter chosen in Lemma~\ref{lemma:lgst}.

\item We choose $\ell_0 = L^{\frac{d + p_{\Phi}}{2}}$ and $k_0$  sufficiently small.
Lemma~\ref{lemma:EplusG} holds with sufficient condition $\ell_0 \ge L^{\frac{d+p_\Phi}{2}}$ and Lemma~\ref{lemma:K6bndbis} with $\ell_0^{-1/2} \ll L^{-2}$. 
Small choice of $k_0$ is required in the proof of Lemma~\ref{lemma:Q4nstb}.

\item $\alpha$ is a constant $\in [1, \bar{\alpha}]$,  unless it is specified otherwise,  where $\bar{\alpha}$ is determined by Lemma~\ref{lemma:VptinDDomain}.  They appear in $\cD_j (\alpha), \cK^a_j (\alpha)$ and $\D^a_j (\alpha)$,  and it will be set to 1 if omitted.

\item $\kappa >0$ is chosen sufficiently small in Lemma~\ref{lemma:thetazImtildeIpt},  independent of $L$.
It serves as the decay rate of polymer activities in the large field limit $\norm{\varphi}_{\ell^2} \rightarrow \infty$,  see \eqref{eq:Hjdefi}.

\item $\rho$ is an $L$-dependent small constant such that $\rho^{-1}$ is larger than any $L$-dependent constants we will see in this paper,  except for $(g_{\rm \max})^{-1}$.  It is determined in 
Lemma~\ref{lemma:cL21large},  \ref{lemma:cL3bndcmb},  \ref{lemma:pcbprecomb},  \ref{lemma:cnvbnd} and \ref{lemma:Rapbndv2}.
It serves as the rate of decay of polymer activities as a function of the size of the polymer,  see \eqref{eq:AjX}.
\end{itemize}

Finally,  we introduce the fraktur alphabets $\kae , \kpe, \kt$ used ubiquitously in this article as exponents of
\begin{align}
	\scale_j = L^{-(d-4+2\eta) j } .
\end{align}
The fraktur alphabets do not play any role at $(d,\eta) = (4,0)$,  but the exponents appear as a natural continuation of $d >4$ or $\eta >0$,  so we state them anyway.
We define
\begin{align}
	\kae = \begin{cases}
		3 & (d=4) \\
		\frac{2d- 7 + 2\eta}{d-4+2\eta} (1 - \epsilon (d))  & (d \ge 5) ,
	\end{cases}
	\qquad
	\kbe = \frac{2(1+\kae)}{3} , \qquad
	\kpe = \kae - \kbe 
	\label{eq:kaekpedefi}
\end{align}
for any $\epsilon (d) \in (0,  \frac{\min\{z(d,\eta),  1-2\eta\}}{2d-7 + 2\eta} )$ where $z(d ,\eta) = [\varphi] -  \lceil [\varphi]  \rceil +1$ and $[\varphi] = \frac{d-2+\eta}{2}$.  
These exponents satisfy
\begin{align}
	& 3\ge \kae > \kbe > 2,  \qquad \frac{3}{4} \ge \kpe > 0 ,   \label{eq:fraklb} \\
	& (d-4 + 2\eta) \kae (d) < 2 (d-4 + 2 \eta) + 2  \label{eq:kaerestrt}  \\
	& (d-4 + 2\eta) \kpe (d) < \frac{d}{4}  \label{eq:kperestrt}  .
\end{align}
The condition $\kbe > 2$ is required for Lemma~\ref{lemma:hatImthetaIo} and Corollary~\ref{cor:Qnorm2}.
The second bound is required for Lemma~\ref{lemma:K5bnd} and the third bound is required for Proposition~\ref{prop:PhiplK}. 
Exponents $\kae$ and $\kpe$ serve as decay rates of $K_j$ in Section~\ref{sec:RGCoordDom}.

We also use $\kt$ such that,  when $d - 4 + 2\eta \neq 0$,
\begin{align}
	0 < \kt < \min\Big\{ \frac{ 1 - 2\eta - \epsilon (2d-7+2\eta)}{2 (d-4+2\eta)} ,  \frac{1}{4( d-4+2\eta ) } \Big\} .
	\label{eq:ktcondition}
\end{align}
We need $(d-4+2\eta) \kt < 1/4$ for Lemma~\ref{lemma:eQh} and
$2(d-4+2\eta) \kt < 1-2\eta - \epsilon(2d-7+2\eta)$ for Lemma~\ref{lemma:bekVcases2}.

\section{Polymer activities}
\label{sec:sppolact}

RG map has coordinates $(V_j,K_j)$.  
As explained in Section~\ref{sec:polexp},  the RG coordinates are used to represent the effective potential functions at scale $j$ by
\begin{align}
	Z_j (\varphi) = e^{- u_j (\Lambda)} ( I_j \circ K_j \big) (\Lambda,   \varphi), \qquad I_j = e^{-V^\stable_j } (1 + W_j) .
	\label{eq:rgcoords}
\end{align}
$V_j^\stable$ and $W_j$ are defined as functions of $V_j$ in Section~\ref{sec:steffp} and \ref{sec:WCoord},  respectively.
$V_j$ consists of terms of degree $1$ in $g$ and $K_j$ consists of terms of degree $\ge 3$ in $g$.  $W_j$ consists the remaining,  of degree $\ge 2$ in $g$.  
These bounds are reflected in the definition of spaces
\begin{align}
	\D = \cD_j \times \cK_j \ni (V_j ,  K_j) .
\end{align}
We only define $\cK_j$ in this section. 
We always take either $\Lambda = \Z^d$ or $\Lambda_N$.

\subsection{Polymer activities}

For $X \subset \Lambda$ and field $\phi \in (\R^n)^{\Lambda}$, 
we define $\cN_{\bulk} (X)$ to be the set of smooth functions of $\phi$ that only depend on $\phi |_{X}$.
We also let
\begin{align}
	\cN (X) = \cN_\bulk (X) + \cN_\o (X) + \cN_\x (X) + \cN_\ox (X)
\end{align}
where
\begin{align}
	\cN_\# (X) = \one_{\# \in X}  \sigma_\# \cN_\bulk (X) ,  \;\; \# \in \{\o,\x\}, \qquad \cN_\ox (X) = \one_{\{\o,\x\} \subset X}   \sigma_{\ox} \cN_\bulk (X) .
\end{align}
Projection on each space is denoted $\pi_\bulk, \pi_\o, \pi_x$ and $\pi_{\ox}$,  respectively,  and we generally write $F \in \cN(X)$ as
\begin{align}
	F = F_\bulk + \sigma_\o F_\o + \sigma_\x F_\x + \sigma_\ox F_\ox , \qquad F_* = \sigma^{-1}_* \pi_* F .
\end{align}
Also,  we consider collections
\begin{align}
	\cN_j \equiv \cN_j^{\Lambda} = \big\{ F = (F(X) \in \cN (X^{\square}) : X \in \cP_j) \, :  \, F(X) = F^{[X]} \big\} ,  \label{eq:cNjdefi}
\end{align}
where we recall $F^{[X]}$ from \eqref{eq:polypowers} and $X^{\square}$ is taken at scale $j$.
An element of $\cN_j$ is called a \emph{polymer activity}.

\subsection{Test functions} 
\label{sec:latticefncs} 

The standard basis spanning $\Z^d$ is denoted $\hat{e}_+ = \{ e_1, \cdots, e_d \}$ and also $\hat{e} = \{ \pm e_i : i = 1, \cdots, d \}$.  We use the same notation on $\Lambda_N$--as long as we stay local inside $\Lambda_N$,  addition by elements of $\hat{e}$ is well defined. 
For $m \in \N$,  let $[m] = \{1,\cdots, m \}$.

We consider lattice functions with multiple arguments.
Recall that $n$ is the number of components of the spin field. 
For $Y \subset \Lambda$,  let $Y^{(1)}, \cdots, Y^{(n)}$ be identical copies of $Y$ and let $Y_\rmb = \sqcup_{i=1}^n Y^{(i)}$,  a disjoint union. 
The point $y$ inside $Y^{(i)}$ is denoted $y^{(i)}$.
\emph{Test functions} of $r$ variables,  $\Phi^{(r)} \equiv \Phi^{(r)} (Y)$,  is the set of functions
\begin{align}
	g^{(r)} : Y_\rmb^r \rightarrow \R,  \qquad (y_1, \cdots, y_r ; \beta_1, \cdots, \beta_r) \mapsto g^{(r)} (y^{(\beta_1)}_1 , \cdots,  y^{(\beta_r)}_r) 
\end{align}
for $y_i \in Y$ and $\beta_i \in [n]$. 
A test function is a collection $g = (g^{(r)} )_{r \ge 0} \in \otimes_{r \ge 0} \Phi^{(r)}(Y)$.  The set of test functions are denoted $\Phi \equiv \Phi (Y)$. 

For any set $X$,  we denote $X^{\mathbb{N}*} = \cup_{k=0}^{\infty} X^k$ be the set of finite sequences of elements in $X$.  For an element of $X^{\mathbb{N}*}$,  modulus sign $|\cdot|$ will be used to denote the length of the sequence.  For a lattice function with $r \ge 1$ variables,  we will consider derivatives by using sequences $\km = (m _i, \alpha_i )_{i=1}^r \subset \hat{e}^{\mathbb{N}*} \times [n]$.  Each $m_i$ is a derivative in the $i^{\rm th}$ variable,  $|m_i |$ is the order of derivative,  and $\alpha_i$ is the component number.  For the total order of derivative,  we use $q (\km) = \sum_{i=1}^r |m_i|$.
To be concrete,  for $g^{(r)} \in \Phi^{(r)}$ of $r$ species,  the $\km$-derivative of $g^{(r)}$ is
\begin{align}
	\nabla^{\km} g^{(r)} (x_1, \cdots, x_r ; \beta_1, \cdots, \beta_r) = \nabla_{1}^{m_1} \cdots \nabla_r^{m_r} g^{(r)} (x_1, \cdots, x_r ; \beta_1, \cdots, \beta_r) \prod_{i=1}^r \delta_{\alpha_i,\beta_i}
\end{align}
where $\nabla_i^{m_i}$ is the discrete derivative in the $i^{th}$ variable,
i.e.,  if $|m_i | = k$,  then we can write $m_i = (\mu_{i,1} , \cdots, \mu_{i, k}) \in \hat{e}^k$ so that,  for $f \in \R^{\Lambda}$
\begin{align}
	\nabla^{\mu_{i,l}} f (x) = f(x + \mu_{i,l}) - f(x),  \qquad 
	\nabla^{m_i} f = \nabla^{\mu_{i,k}} \cdots \nabla^{\mu_{i,1}} f .
\end{align}
For later use, we denote the set of all \emph{derivative indices} as $\bar{\ko}$. 
We also let $\bar{\ko}_+$ be the set of \emph{forward derivative indices},  that only contains $\km = (m_i,\alpha_i)_{i=1}^r \subset \hat{e}_+^{\N*} \times [n]$.

\subsubsection{Function spaces}

We use $\kh_\bulk \ge 0$ for a field scaling variable (whose choice will be given in Section~\ref{sec:fieldscv}) and $p_{\Phi}$ for the maximum degree of derivatives (defined in Section~\ref{sec:chparams}).
Sobolev norm of $g^{(r)} \in \Phi^{(r)}$ at scale $j$ is defined as
\begin{align} \label{eq:testfncnrm}
	\norm{g^{(r)}}_{\kh,  \Phi_j} 
	= \kh^{-r}_\bulk  \max_{ q(\km) \le p_{\Phi} } L^{q(\km)  j} \norm{\nabla^{\km} g^{(r)} }_{\ell^{\infty}} 
\end{align}
and for a test function $g = (g^{(r)} )_{r \ge 0}$, 
\begin{align}
	\norm{g}_{\kh,  \Phi_j}	 = \max_{r \ge 0}	\norm{g^{(r)}}_{\kh,  \Phi_j} .
\end{align}
For $X \subset \Lambda$, let
\begin{align}
	\Pi (X) 
		&= \{ f \in \Phi^{(1)}   \, : \,  f \text{ vanishes inside } X  \} \\
	\tilde{\Pi}_j (X) 
		&= \begin{cases}
			\{ f \in \Phi^{(1)}  \, : \,  f \text{ is a linear function inside } X \} & (j<N) \\
			\{ f \in \Phi^{(1)}  \, : \,  f \text{ is a constant function inside } X \} & (j=N)
		\end{cases}	  ,
\end{align}
with a restriction that $X$ does not wrap around the torus when $j < N$. 
Any spin field $\phi \in (\R^n)^{\Lambda}$ is an element of $\Phi^{(1)}$ via $(x ; \beta) \mapsto \phi^{(\beta)}_x$,  so we may consider semi-norms
\begin{align}
	\norm{\phi}_{\kh,\Phi_j (X)} 
		&= \inf\{ \norm{ \phi  - f }_{\kh,\Phi_j} : f \in \Pi (X) \} \\
	\norm{\phi}_{\kh,\tilde\Phi_j (X)} 
		&= 	\inf\{ \norm{ \phi  - f }_{\kh,\Phi_j} : f \in \tilde\Pi_j (X) \} .
		\label{eq:phitildePhi}
\end{align}
In these lines,  we also consider norms
\begin{align}
	\norm{\phi}_{L^p_j (X)}^p
		= L^{-jd} \norm{\phi}^p_{\ell^{p} (X)} , \qquad 
	\norm{\phi}_{L^{\infty}_j (X)} = \norm{\phi}_{\ell^{\infty} (X)}
\end{align}
for $p  \in [1,\infty)$ and $\norm{\phi}_{\ell^p (X)}$ was given in \eqref{eq:ellpnorm}.

\begin{remark}
When $j= N$,  we defined $\tilde{\Pi}$ differently because we need $X$ to not wrap around the torus in order to consider non-constant lattice polynomials on $X$,  but the smallest polymer at scale $N$ is $\Lambda$. 
This creates subtlety when we construct RG map at scale $N$.  
It also affects the definition of the large field regulator in \eqref{eq:tildeGdefi}.  Some related issues are clarified in the proofs of the inequalities in Appendix~\ref{sec:funcineq}.
\end{remark}

\subsection{Field scaling variables} \label{sec:fieldscv}

With $\ell_0, k_0$ and $\tilde{g}_j$ as in Section~\ref{sec:chparams},
\emph{fluctuation field scale} and \emph{large field scale} are defined as
\begin{align}
	\ell_{\bulk,j} = \ell_{0} L^{-\frac{d- 2+\eta}{2} j},
	\qquad 
	h_{\bulk,j} = k_{0} \tilde{g}_j^{-1/4} L^{-\frac{d}{4} j}  .
\end{align}
In order to incorporate the observable field into a single norm,  we use the \emph{coalescence scale}
\begin{align}
	j_{\ox} = \min\{ j\ge 0 : 3 \cdot 2^{d} L^j > \dist_{\infty} (\o,  \x) \} .  \label{eq:joxdefi}
\end{align}
\emph{Observable field scales} are given by
\begin{align} \label{eq:ellsigmadefi}
	\ell_{\sigma, j} &= \begin{cases} 
		\tilde{g}_j L^{(1-\frac{3}{2}\eta) j \wedge j_\ox} 2^{(j-j_\ox)_+} & (d=4) \\
		\tilde{g}_j L^{-(d-5+ \eta) j} & (d \ge 5)
	\end{cases} ,\qquad
		\ell_{\ssigma, j} = \ell_{\sigma, j}^2 \\
	h_{\sigma, j} &= \tilde{g}_j^{1/4} L^{\frac{d}{4} j} , \qquad 
		h_{\ssigma, j} = \tilde{g}_j^{1/2} L^{\frac{d}{2} j \wedge j_{\ox}} \big( L^{\frac{d}{2} (1-\epsilon') - (d-4+2\eta) \kpe} \big)^{(j-j_\ox)_+} 
\end{align}	
for sufficiently small $\epsilon' >0$ (that only depends on $d$ and $\eta$).
Usually, $(\kh_\bulk, \kh_\sigma,\kh_\ssigma)$ is used to denote either $(\ell_\bulk ,  \ell_\sigma,\ell_\ssigma)$ or $(h_\bulk,  h_\sigma, h_\ssigma)$ and $\kh$ is used to denote the pair $(\kh_\bulk, \kh_{\sigma}, \kh_\ssigma)$.
$\kh_1 < \kh_2$ means $\kh_{1,\bulk} < \kh_{2,\bulk}$,  $\kh_{1,\sigma} < \kh_{2,\sigma}$ and $\kh_{1,\ssigma} < \kh_{2,\ssigma}$.
$\kh \ge 0$ means $\kh_\bulk, \kh_\sigma, \kh_\ssigma \ge 0$. 
We denote $\kh' \lesssim \kh$ if $\kh'_{\bulk} \le \ratio \kh_{\bulk}$,  $ \kh'_{\sigma} \le \ratio \kh_{\sigma}$,  and $ \kh'_{\ssigma} \le \ratio \kh_{\ssigma}$ for some $L$-independent constant $\ratio$.

\subsection{Large field regulator}
\label{sec:regulator}

In \cite{BBS1},  the regulators $G$ and $\tilde{G}$ control divergence of polymer activities as the norm of $\varphi$ diverges:
for $X \subset \Lambda$ and $\varphi \in \R^{\Lambda}$, 
\begin{align}
	G_j (X, \varphi) 
		&= \begin{cases}
			\prod_{x \in X} \exp\Big( L^{-jd} \norm{\varphi}_{\ell_j  ,\Phi_j (B_x^{\square})}^2 \Big) & (j < N) \\
			\prod_{x \in X} \exp\Big( L \times L^{-jd} \norm{\varphi}_{\ell_j  ,\Phi_j (B_x^{\square})}^2 \Big) & (j = N) ,
		\end{cases}
		\label{eq:Gdefi}
		\\
	\tilde{G}_j (X,\varphi) 
		&= \begin{cases}
			\prod_{x \in X} \exp\Big(  \frac{1}{2} L^{-jd} \norm{\varphi}_{\ell_j , \tilde\Phi_j (B_x^{\square})}^2 \Big) & (j < N) \\
			\prod_{x \in X} \exp\Big( L \times  \frac{1}{2} L^{-jd} \norm{\varphi}_{\ell_j , \tilde\Phi_j (B_x^{\square})}^2 \Big) & (j = N) .
		\end{cases}	
		\label{eq:tildeGdefi}
\end{align}
In this article,  we will also require large field regulator to store information about decay.   For this purpose, we also define
\begin{align}
	H_j (X,\varphi) 
		&= e^{-\kappa \norm{\varphi / h_{j,\bulk}}^2_{L_j^2 (X)} }
		\label{eq:Hjdefi}
	\\
	\bar{G}_{j} (X,\varphi)
		&= H_j (X,\varphi) \tilde{G}_j (X,\varphi)
\end{align}
for a $\kappa >0$ fixed in Section~\ref{sec:chparams}.
In proofs,  it also helps to use
\begin{align}
	\bar{G}_j^{(\gamma)} (X,\varphi) =  H_j^{1/\gamma} (X,\varphi) \tilde{G}_j^{\gamma} (X,\varphi) ,\qquad \gamma >0 
\end{align}
and also
\begin{align}  \label{eq:cGkh}
	\cG_j^{(\gamma)} 	 (\cdot ; \kh)
		= \begin{cases}
			G_j^{\gamma} (\cdot) & (\kh = \ell) \\
			\bar{G}_j^{(\gamma)} (\cdot) & (\kh  = h)  ,
		\end{cases} 
	\qquad
	\cG_j 
		=  \cG_j^{(1)}	.
\end{align}
Sometimes,  we state general properties of \emph{set-multiplicative} polymer functions $\hat{\cG}$,  i.e.,  those satisfying
\begin{align}
	X \cap Y = \emptyset \quad\; \Rightarrow \quad\; \hat{\cG} (X \cup Y,\cdot)  =  \hat{\cG} (X,\cdot) \hat{\cG} (Y,\cdot) .
	\label{eq:setmultG}
\end{align}

\subsection{Large set regulator}

We also give weight on large polymers. 
Let $\rho$ be a parameter that we choose sufficiently small depending on $L$,  as in Section~\ref{sec:chparams}.
\emph{Large set regulator} is a polymer function defined as
\begin{align}
	A_j (X ) = \prod_{Z \in \Comp (X)} \rho^{ (|Z|_{\cB_j} -2^d)_+ }  .  \label{eq:AjX}
\end{align}

\subsection{Taylor norms}

For $\kh \ge 0$ and a real-valued smooth function $F_\bulk (\varphi)$ of $\varphi \in (\R^n )^{\Lambda}$,   its Taylor (semi-)norm is
\begin{align}
	\norm{D^r F_\bulk }_{\kh,  T_j^{(r)} ( \varphi)} 
		&= \sup \Big\{   D^n F_\bulk  (\varphi ; f^{(r)})  \,  : \, f^{(r)}\in \Phi^{(r)} ,  \,  \norm{f^{(r)}}_{\kh,  \Phi_j } \le 1  \Big\}  \label{eq:DrFnorm}
		\\
	\norm{F_\bulk }_{\kh, T_j (\varphi)} 
		&= \sum_{r=0}^{\infty} \frac{1}{r !} \norm{D^r F_\bulk  (\varphi)}_{\kh, T_j^{(r)} (\varphi)}  .
\end{align}
Equivalently,  we may take for $f = (f^{(r)})_{r\ge 0} \in \Phi_j$
\begin{align}
	\left\langle F_\bulk   ,  f \right\rangle_{\varphi} 
		&= \sum_{r = 0}^{\infty} \frac{1}{r !} D^r F_\bulk  (\varphi ; f^{(r)}) ,  \quad \text{and then}  \label{eq:functioncoupling} \\
	\norm{F_\bulk  }_{\kh, T_j (\varphi)} 
		& = \sup \Big\{ \left\langle F_\bulk  ,  f \right\rangle_{\varphi} \,:\, \norm{f}_{\kh, \Phi_j} \le 1  \Big\} .
\end{align}
If $F = \sum_{* \in \{ \bulk,\o,\x,\ox \}} \sigma_* F_*$ for real-valued smooth functions $F_* (\varphi)$'s,  we let
\begin{align}
	\norm{F}_{\kh, T_j (\varphi)} =  \norm{F_\bulk}_{\kh, T_j (\varphi)} + \kh_{\sigma} \big( \norm{F_\o}_{\kh, T_j (\varphi)} + \norm{F_\x}_{\kh, T_j (\varphi)} \big) + \kh_\ssigma \norm{F_\ox}_{\kh, T_j (\varphi)}
\end{align}
If we assume $K \in \cN_\bulk$,  then $K (X, \varphi)$ only depends on $\varphi|_{X^{\square}}$,  so
$\left\langle K (X) ,  f \right\rangle_{\varphi} = \left\langle K(X),  f - g \right\rangle_{\varphi}$ whenever $g |_{X^{\square}} \equiv 0$, 
and thus we have another equivalent formulation
\begin{align}
	\norm{K (X)}_{\kh, T_j (\varphi)} 
		& = \sup \Big\{ \left\langle K (X, \cdot) ,  f \right\rangle_{\varphi} \,:\, \norm{f}_{\kh, \Phi_j (X^{\square})} \le 1  \Big\}
		.
		\label{eq:TnormAlt2}
\end{align}

\subsubsection{Regulated norms} \label{sec:thenorms}

Recall that a polymer activity is a collection $F = (F(X) \in \cN (X^{\square}))$.
For a set-multiplicative $\hat{\cG} (X,\varphi)$ (see \eqref{eq:setmultG}) and given $a >0$,  let
\begin{align}
	\norm{F (X)}_{\kh,  T_j (\hat{\cG})} 
		&= \sup_{\varphi \in \Phi^{(1)} } \hat{\cG}^{-1} (X,\varphi) \norm{F (X)}_{\kh, T_j (\varphi)}  \\
	\norm{F}_{\kh,  F_j^a (\hat{\cG})} 
		&= \sup_{X \in \Con_j} A_j^{-a} (X) \norm{F (X)}_{\kh,  T(\hat{\cG})} \label{eq:normFj} \\
	\norm{F}_{\kh,  F_j^a (T_0)} 
		&=\sup_{X \in \Con_j} A_j^{-a} (X) \norm{F (X)}_{\kh, T_j (0)} 
	.
\end{align}
In practice, we will only use
\begin{align}	
	\label{eq:cWnormdefi}
	\norm{F}_{\cW_j^a (\ratio,  \gamma)} 
		&= \norm{F}_{\ratio \ell_j,  F_j^a (G_j^\gamma )} + \omega_j (h) \norm{F}_{\ratio h_j,   F_j^a ( \bar{G}^{(\gamma)}_{j} ) }  \\
	\norm{F}_{\cY_j^a (\ratio,  \gamma)} 
		&= \norm{F}_{\ratio \ell_j,  F_j^a (T_0 )} + \omega_j  (h) \norm{F}_{\ratio h_j,   F_j^a ( \bar{G}^{(\gamma)}_{j}  ) } 
		\label{eq:cYnormdefi}
\end{align}
for $\ratio > 0$,   where
\begin{align}
	\omega_j (\kh)
		= \begin{cases}
			1 & (\kh = \ell)	 \\
			\tilde{g}_j^{9/4} \tscale_j^\kbe  & (\kh = h) .
		\end{cases}
		\label{eq:omegadefi}
\end{align}
We choose $a=\ratio= \gamma =1$ in most part of the work,  and we omit $a$ or $\ratio$ or $\gamma$ if it is 1. 
These two norms are actually equivalent by the next lemma,  so can be used interchangeably.

\begin{lemma} \label{lemma:WYequiv}
For any $a, \ratio,  \gamma >0 $,   there exists $C \ge 1$ such that
\begin{align}
	\norm{F}_{\cY_j^a (\ratio,  \gamma)} 
		\le \norm{F}_{\cW_j^a (\ratio,  \gamma)} 
		\le C \norm{F}_{\cY_j^a (\ratio,  \gamma)} 
\end{align}
\end{lemma}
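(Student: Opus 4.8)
The plan is to compare the two norms term by term in their defining sums \eqref{eq:cWnormdefi} and \eqref{eq:cYnormdefi}. Both norms share an identical second summand, namely $\omega_j(h)\norm{F}_{\ratio h_j, F_j^a(\bar G_j^{(\gamma)})}$, so it suffices to compare the first summands $\norm{F}_{\ratio\ell_j, F_j^a(G_j^\gamma)}$ and $\norm{F}_{\ratio\ell_j, F_j^a(T_0)}$. The left inequality $\norm{F}_{\cY_j^a(\ratio,\gamma)}\le\norm{F}_{\cW_j^a(\ratio,\gamma)}$ is essentially immediate: since $G_j(X,0)=1$ (the regulator evaluated at the zero field is a product of $\exp(0)$'s, hence $1$), we have $\norm{F(X)}_{\ratio\ell_j,T_j(0)} = G_j^{-\gamma}(X,0)\,\norm{F(X)}_{\ratio\ell_j,T_j(0)} \le \sup_{\varphi}G_j^{-\gamma}(X,\varphi)\norm{F(X)}_{\ratio\ell_j,T_j(\varphi)} = \norm{F(X)}_{\ratio\ell_j,T_j(G_j^\gamma)}$, and then dividing by $A_j^a(X)$ and taking the supremum over $X\in\Con_j$ gives $\norm{F}_{\ratio\ell_j,F_j^a(T_0)}\le\norm{F}_{\ratio\ell_j,F_j^a(G_j^\gamma)}$. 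Adding the common second term yields the claim with constant $1$.

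For the right inequality I would show there is an $L$- and $j$-independent constant $C\ge1$ with $\norm{F(X)}_{\ratio\ell_j,T_j(G_j^\gamma)}\le C\,\norm{F(X)}_{\ratio\ell_j,T_j(0)}$ for every $X\in\Con_j$; dividing by $A_j^a(X)$ and taking suprema then gives $\norm{F}_{\ratio\ell_j,F_j^a(G_j^\gamma)}\le C\norm{F}_{\ratio\ell_j,F_j^a(T_0)}$, and adding the shared second summand finishes the proof. To obtain the pointwise bound, fix $X$ and $\varphi$, and use that $F(X)$ is a smooth function of the field together with the Taylor expansion \eqref{eq:functioncoupling}: for any test function $f$ with $\norm{f}_{\ratio\ell_j,\Phi_j}\le1$ one writes $\langle F(X),f\rangle_\varphi$ as a sum/integral of derivatives of $F(X)$ at the origin evaluated against shifted test functions built from $\varphi$ and $f$, so that $\norm{F(X)}_{\ratio\ell_j,T_j(\varphi)}$ is controlled by $\norm{F(X)}_{\ratio\ell_j,T_j(0)}$ times a factor that grows at most exponentially in $\norm{\varphi}_{\ratio\ell_j,\Phi_j(X^\square)}^2$ summed over the blocks of $X$ (here one uses that $F(X)$ depends only on $\varphi|_{X^\square}$, via \eqref{eq:TnormAlt2}). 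This exponential growth is by construction dominated by $G_j^\gamma(X,\varphi)$ as defined in \eqref{eq:Gdefi}, up to adjusting the constant in the exponent; for the $j=N$ case the extra factor $L$ in \eqref{eq:Gdefi} supplies the corresponding room. Taking $\hat\cG^{-1}(X,\varphi)=G_j^{-\gamma}(X,\varphi)$ times the resulting bound and supping over $\varphi$ then gives the pointwise inequality with a constant that can be absorbed into $C$.

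The main obstacle is the second step: making the comparison between $\norm{F(X)}_{T_j(\varphi)}$ and $\norm{F(X)}_{T_j(0)}$ quantitative with the \emph{exact} quadratic exponential rate that appears in the definition of $G_j$. The naive Taylor-remainder bound produces a Gaussian factor $\exp(c\norm{\varphi}^2_{\ell_j,\Phi_j(X^\square)})$ with an a priori constant $c$ that need not match the exponent $1$ (or $L$ at scale $N$) hard-coded into \eqref{eq:Gdefi}; reconciling these requires either a Cauchy-type estimate in a complexified field variable, or the standard trick of splitting $\varphi$ and using Young's inequality to trade a large constant against the quadratic form at the cost of an $L$-independent multiplicative factor, which is exactly the freedom the constant $C$ in the statement allows. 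Since this kind of regulator-versus-Taylor-norm estimate is standard in the Bauerschmidt–Brydges–Slade framework (this is morally \cite[Lemma~3.1-type]{BBS1} adapted to the present regulators), I expect the argument to go through routinely once the bookkeeping with the observable components $F_\o,F_\x,F_\ox$ and the field scales $(\kh_\bulk,\kh_\sigma,\kh_\ssigma)$ is handled in parallel; these components only contribute through the common second summand and through the diagonal structure of the $T_j$-norm, so they do not change the logic.
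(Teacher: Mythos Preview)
Your argument for the left inequality is fine. The approach to the right inequality, however, contains a genuine gap: the pointwise bound $\norm{F(X)}_{\ratio\ell_j,T_j(G_j^\gamma)}\le C\norm{F(X)}_{\ratio\ell_j,T_j(0)}$ you aim for is \emph{false} for general $F\in\cN(X^\square)$. For instance, take $F(\varphi)=\exp\bigl(c\,(\varphi^{(1)}(x_0))^2\bigr)$ with $x_0\in X^\square$ and $c\ell_{j,\bulk}^2>\gamma$. Then $\norm{F}_{\ratio\ell_j,T_j(0)}=e^{c(\ratio\ell_{j,\bulk})^2}<\infty$, whereas $\norm{F}_{\ratio\ell_j,T_j(\varphi)}\ge|F(\varphi)|$, and for constant fields this outgrows $G_j^\gamma(X,\varphi)$. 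The defect in your heuristic is that finiteness of $\norm{F}_{\kh,T_j(0)}=\sum_n\kh_\bulk^n\norm{D^nF(0)}/n!$ only controls the Taylor coefficients \emph{collectively}, and yields no growth bound on $\norm{F}_{\kh,T_j(\varphi)}$ once $\norm{\varphi}_{\kh,\Phi_j}>1$; the Gaussian factor you anticipate simply does not emerge from the Taylor expansion at $0$ alone, and no amount of constant-matching can repair this.

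The paper defers the proof to \cite[Lemma~2.4]{BBS5}. The actual mechanism there does \emph{not} discard the shared second summand: one bounds $\norm{F(X)}_{\ratio\ell_j,T_j(\varphi)}$ by interpolating between $\norm{F(X)}_{\ratio\ell_j,T_j(0)}$ and $\sup_{t\in[0,1]}\norm{F(X)}_{\ratio h_j,T_j(t\varphi)}$, in the manner of Lemma~\ref{lemma:smnrmintplt} with $(\kh',\kh)=(\ratio\ell_j,\ratio h_j)$. Since $\ell_{j,\bulk}/h_{j,\bulk}\asymp(\tilde g_j\scale_j)^{1/4}$, a sufficiently high power of this ratio compensates $\omega_j(h)^{-1}$, and the polynomial prefactor $(1+\norm{\varphi}_{\ell_j,\Phi_j})^a$ is then absorbed by $G_j^\gamma$. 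Both pieces of the $\cY$-norm are genuinely required on the right-hand side.
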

\begin{proof}
This is \cite[Lemma~2.4]{BBS5}.
\end{proof}

\subsection{Space of $K$ }
\label{sec:fafe}

High-order terms reside a space with certain symmetries.  Let
\begin{align}
	\Aut = \{ F : \Lambda \rightarrow \Lambda \, | \,  F \text{ is a metric-preserving bijection} \} , 
	\label{eq:Autdefn}
\end{align}
and $(R,F) \in O(n) \times \Aut$ acts on $\varphi \in (\R^n)^{\Lambda}$ via $FR(\varphi)_x = R ( \varphi_{F^{-1} x} )$.
Different symmetries act on different components of $K$:
for $K \in \cN_j$,  we say
\begin{itemize}
\item $K \in \cN_j^{\Aut}$ if $K(F (X) ,  F(\varphi)) = K(X,\varphi)$ for any $F \in \Aut$ and $X,  F(X) \in \cP_j$

\item $K \in \cN_j^{\even}$ if $K(X ,  R(\varphi)) = K(X,\varphi)$ for any $R \in O(n)$ and $X,  F(X) \in \cP_j$

\item $K \in \cN_j^{\odd}$ if $K(\cdot,-\varphi) = -K(\cdot,\varphi)$ and $K(\cdot ,  R(\varphi)) = K(\cdot,\varphi )$ for any $R \in O(n)$ that fixes $\varphi^{(1)}$
\end{itemize}
These symmetries are reflected via
\begin{align}
	\cN_j^{\sym} = \{ K \in \cN_{j}  : K_\bulk \in \cN_j^{\Aut} \cap \cN_j^{\even} ,  \;  K_\o,  \; K_\x \in \cN_j^{\odd},    \; K_\ox \in \cN_j^{\even}  \} .
	\label{eq:cNsymdefi}
\end{align}
$\cK_j$-space assumes extra bounds:
\begin{align}
	\cK_j (\alpha)
		= \left\{ \begin{array}{r}	
		K \in \cN_j^{\sym}  : \norm{K}_{\cW_j} < \alpha C_{\rg} \chi_j^{3/2} \tilde{g}_j^3 \tscale_j^\kae ,  \\
		K_\ox (X,\cdot) \equiv 0 \text{ if } j < j_\ox \; \text{and} \; X \in \cS_j 
		\end{array} \right\}
	\label{eq:cKdefi}
\end{align}
for $j_\ox$ as in \eqref{eq:joxdefi} and $C_{\rg,},  \alpha \in [1,\bar{\alpha}]$,  $\tilde{g}_j$ and $\tilde\chi_j$ as in Section~\ref{sec:chparams}.

\subsection{Polynomial bound}

For polynomials of $(\varphi_x)_{x \in \Lambda}$,  bounds can be obtained just from the polynomial coefficients.
To state these bounds,  we let,
for $X \in \cP$,
\begin{align}
	P_{j, \kh} (\varphi) 
		& = 1 + \norm{\varphi}_{\kh , \Phi_j } \label{eq:Pkh1} \\
	P_{j, \kh} (X, \varphi) 
		& = 1 + \norm{\varphi}_{\kh, \Phi_j (X^{\square})}  \label{eq:Pkh2} 
\end{align}

\begin{lemma} \label{lemma:polynorm}
If $F (\varphi)$ is a polynomial of degree $A \ge 0$ and number of derivatives $\le p_\Phi$, then
\begin{align}
	\norm{F}_{\kh,  T_j (\varphi) } \le \norm{F}_{\kh,  T_0 (\varphi) } P_{j,\kh}^A (\varphi)
	.
\end{align}
If $X \in \cB$ and $F (\varphi)$ only depends on $(\varphi_x : x \in X^\square)$,
then
\begin{align}
	\norm{F}_{\kh,  T_j (\varphi) } \le \norm{F}_{\kh,  T_j (0) } P_{j,\kh}^A (X, \varphi) .
\end{align}
\end{lemma}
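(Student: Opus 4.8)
The plan is to reduce everything to the behaviour of the Taylor norm on monomials and to unpack the definitions of $\norm{\cdot}_{\kh,\Phi_j}$ and $\norm{\cdot}_{\kh,\Phi_0}$, which differ only through the factors $L^{q(\km)j}$ versus $L^{q(\km)\cdot 0}=1$ and through the $X^\square$-localisation in the second formulation \eqref{eq:TnormAlt2}. First I would fix the polynomial structure: write $F(\varphi)=\sum_{k=0}^A F_k(\varphi)$ with $F_k$ homogeneous of degree $k$ in $\varphi$, so that $D^r F(\varphi;f^{(r)})$ is a polynomial in $\varphi$ of degree $\le A-r$ in each term; the coupling $\langle F,f\rangle_\varphi$ of \eqref{eq:functioncoupling} then expands into finitely many terms, each of the form (constant times derivative of $F$ at $0$) $\times$ (monomial in the $f^{(r)}$ entries evaluated against powers of $\varphi$). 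The key elementary observation is that if $g^{(r)}\in\Phi^{(r)}$ is a test function involving $q(\km)$ derivatives and $\varphi$ appears with total degree $m$, then pairing against $\varphi$ costs at most $\norm{\varphi}_{\kh,\Phi_j}^m$ after one accounts for the scale weights, precisely because $\norm{\varphi}_{\kh,\Phi_j}$ already carries the matching $L^{q j}$ factors; this is exactly the content of the definition \eqref{eq:testfncnrm}. So the engine of the proof is the submultiplicativity-type estimate
\begin{align}
	\norm{D^r F(\varphi)}_{\kh,T_j^{(r)}(\varphi)} \le \Big(\sup_{s\ge 0}\norm{D^s F(0)}_{\kh,T_0^{(s)}(0)}\Big)\, (1+\norm{\varphi}_{\kh,\Phi_j})^{A-r}\, (A-r)!\, / \text{(combinatorial factor)},
\end{align}
which after summing over $r$ with the $1/r!$ weights and a binomial identity collapses to $\norm{F}_{\kh,T_0(\varphi)} P_{j,\kh}^A(\varphi)$. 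The point is that $P_{j,\kh}(\varphi)=1+\norm{\varphi}_{\kh,\Phi_j}$ is exactly the generating factor that absorbs the degree-counting.

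Concretely, the steps in order are: (1) expand $\langle F,f\rangle_\varphi$ using the Taylor/coupling formula and reduce to monomials $\varphi^{(\beta_1)}_{x_1}\cdots\varphi^{(\beta_m)}_{x_m}$ with test-function slots; (2) prove the single-monomial bound $\norm{\varphi^{\otimes m}\text{-contraction}}_{\kh,\Phi_j}\le \norm{\varphi}_{\kh,\Phi_j}^m$ directly from \eqref{eq:testfncnrm}, using the Leibniz rule for discrete derivatives and the fact that a derivative landing on any one factor of $\varphi$ is controlled by the $L^{qj}$-weighted sup-norm in the definition of $\norm{\varphi}_{\kh,\Phi_j}$; (3) reassemble, matching the weight $\kh_\bulk^{-r}$ in the degree-$r$ Taylor component of $F$ at scale $j$ against the same weight at scale $0$, so that the scale-$0$ Taylor norm $\norm{F}_{\kh,T_0(\varphi)}$ emerges with no leftover powers of $\kh_\bulk$; (4) resum over $r$ with the $1/r!$ in the definition of $\norm{\cdot}_{\kh,T_j(\varphi)}$, and use $\sum_r \binom{A}{r}\norm{\varphi}_{\kh,\Phi_j}^{A-r}=(1+\norm{\varphi}_{\kh,\Phi_j})^A=P_{j,\kh}^A(\varphi)$ to close. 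For the second, localised statement with $X\in\cB$: since $F(\varphi)$ depends only on $\varphi|_{X^\square}$, one replaces $\norm{\cdot}_{\kh,\Phi_j}$ throughout by the localised $\norm{\cdot}_{\kh,\Phi_j(X^\square)}$ via \eqref{eq:TnormAlt2}, evaluates the derivatives of $F$ at $0$ to get $\norm{F}_{\kh,T_j(0)}$ (at scale $j$, since all derivatives are evaluated at the origin and no $\varphi$-dependence remains there), and picks up $P_{j,\kh}^A(X,\varphi)$ from the same binomial resummation.

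The main obstacle — really the only place care is needed — is step (2), keeping the scale-weight bookkeeping honest. A derivative $\nabla^{\km}$ applied to a product $\varphi^{(\beta_1)}_{x_1}\cdots\varphi^{(\beta_m)}_{x_m}$ distributes by the (discrete) Leibniz rule into a sum over how the individual $\mu$'s are allotted among the factors, and for each allotment the weight $L^{q(\km)j}=\prod_i L^{|m_i|j}$ must be distributed consistently so that each factor $\varphi$ absorbs exactly the weight matching the number of derivatives it received. The discrete Leibniz rule has the mild annoyance of shift terms (a forward difference $\nabla^\mu(fg)(x)=f(x+\mu)\nabla^\mu g(x)+(\nabla^\mu f)(x)g(x)$), so one must check these shifts stay inside $X^\square$ (guaranteed because $p_\Phi$-many shifts of a block land in its small-set neighbourhood) and do not alter the $\ell^\infty$-sup-norm bound. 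None of this is deep; it is the standard Faà di Bruno / Leibniz estimate for these norms, already implicit in the norm algebra of \cite{BBS1,BBS3}, and I expect the write-up to be short. I would also remark that the hypothesis "number of derivatives $\le p_\Phi$" is used precisely so that every term produced in the expansion is still a legal test function for \eqref{eq:testfncnrm} — no derivative of order exceeding $p_\Phi$ is ever tested.
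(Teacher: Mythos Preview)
Your expansion strategy for the first inequality is correct and is essentially the content of \cite[Proposition~3.10]{BBS1}, which the paper simply cites. However, you have been misled by a typo: the right-hand side should read $\|F\|_{\kh,T_j(0)}$ (scale-$j$ norm at the origin), not a scale-$0$ norm at $\varphi$---the paper's own proof of the second inequality applies the first in exactly this form, writing $\|F\|_{\kh,T_j(\varphi)}\le\|F\|_{\kh,T_j(0)}P_{j,\kh}^A(\varphi-f)$. With the correct reading there is no change of scale, so your step~(3) about matching scale-$j$ against scale-$0$ weights drops out entirely. Your discrete-Leibniz digression is also unnecessary: in $\|g^{(k)}\|_{\kh,\Phi_j}$ each lattice derivative $\nabla^{m_i}$ acts on its own tensor slot, so the product bound $\|\varphi^{\otimes(k-r)}\otimes g^{(r)}\|_{\kh,\Phi_j}\le\|\varphi\|_{\kh,\Phi_j}^{k-r}\|g^{(r)}\|_{\kh,\Phi_j}$ is immediate, with no shifts to track.

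For the second inequality your plan to rerun the expansion with the localised seminorm $\|\cdot\|_{\kh,\Phi_j(X^\square)}$ does work, but the paper's argument is a two-line reduction to the first bound and is worth knowing: since $F$ depends only on $\varphi|_{X^\square}$, one has $F(\varphi)=F(\varphi-f)$ and hence $\|F\|_{\kh,T_j(\varphi)}=\|F\|_{\kh,T_j(\varphi-f)}$ for every $f$ vanishing on $X^\square$; apply the first bound at the shifted point and then take the infimum over such $f$, which converts $P_{j,\kh}^A(\varphi-f)$ into $P_{j,\kh}^A(X,\varphi)$ by the very definition of $\|\varphi\|_{\kh,\Phi_j(X^\square)}$.
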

\begin{proof}
The first bound is \cite[Proposition~3.10]{BBS1}.
The second bound can be obtained once we realise that $F(\varphi) = F(\varphi - f)$ for any $f \in \R^\Lambda$ such that $f |_{X^{\square}} = 0$,  thus
\begin{align}
	\norm{F}_{\kh,  T_j (\varphi) } \le \norm{F}_{\kh,  T_j (0) } P_{j, \kh}^A (\varphi - f) .
\end{align}
Taking infimum over $f$, we have the desired bound.
\end{proof}

\subsection{Monotonicity}

There are some inequalities obtained for free due to monotonicity.

\begin{itemize}
\item We have scale monotonicity
\begin{align}
	\norm{f}_{\kh, \Phi_j} \le \norm{f}_{\kh, \Phi_{j+1}}, 
	\qquad 	\norm{f}_{\kh, \Phi_j (X)} \le \norm{f}_{\kh, \Phi_{j+1} (X)} \label{eq:scalemontcty}
\end{align}
and similarly for $\tilde{\Phi}$ semi-norms.
It thus follows that
\begin{align}
	\norm{F}_{\kh,T_{j+1} (\varphi)} 
		\le \norm{F}_{\kh,T_j (\varphi)}  .
		\label{eq:Tjmon}
\end{align}

\item Since $\ell_j \le h_j$, we have 
\begin{align}
	\norm{F}_{\ell_j , T_j (\varphi)} 
		\le \norm{F}_{h_j , T_j (\varphi)}
		. \label{eq:monot2}		
\end{align}

\item Let $\ratio >0$ and $L$ be sufficiently large.  
For both $\kh \in \{\ell, h\}$,  since $\ratio \kh_{+,\bulk} \le \kh_\bulk$ and $\ratio \kh_{+,\sigma} \le 2 \ratio L^{d/4} \kh_{\sigma}$,
\begin{align}
	\norm{F}_{\ratio\kh_+ ,  T_j(\varphi)} \le 2 \ratio L^{d/4} \norm{F}_{\kh ,  T_j(\varphi)} .  
	\label{eq:monot1}
\end{align}
\end{itemize}

\section{Local polynomials and localisation}
\label{sec:locpolsandloc}

There are two main goals in this section.  First is to define local polynomials. They are discrete expression of local field operators $\varphi,  \varphi \cdot \varphi,  \varphi \cdot \Delta \varphi$ etc.
The second is to define localisation,  a local polymer approximation of polymer activities. 
Constructions are as in \cite{BBS2},  but the parameters are different. 
In Section~\ref{sec:locbnds},  we state two main estimates Proposition~\ref{prop:locXBbdmt} and \ref{prop:corcrctrmt},  bounding the localisation as a linear operator.  
For application,  we make specific choice of parameters in Section~\ref{sec:appctrest},  giving Proposition~\ref{prop:corcrctrmt}.

\subsection{Local polynomials}
\label{sec:locpols}

We consider polynomials of $\varphi \in (\R^n)^{\Lambda}$ that have local expressions.
Recall that the set of derivative indices (respectively forward derivative indices) $\bar\ko$  (respectively $\bar\ko_+$) was defined in  Section~\ref{sec:latticefncs}.
We denoted $q (\km) = \sum_{k=1}^{p(\km)} i_k$ for the order of derivatives.
Each derivative index $\km = (m_i,  \alpha_i)_{i=1}^{p(\km)} \in \bar\ko$ corresponds to a monomial by \eqref{eq:fieldpolys}.

\begin{itemize}
\item \emph{Field monomial} of $f : \Lambda \rightarrow \R^n$ with index $\km \in \bar{\ko}$ is
\begin{align}
	M_x^{(\km)} (f) = \prod_{k=1}^{p (\km)} \nabla^{(m_k)}  f_x^{(\alpha_k)} = \prod_{k=1}^{p (\km)} \nabla^{\mu_{k,1} } \cdots \nabla^{ \mu_{k,i_k}}  f_x^{(\alpha_k)}
		\label{eq:fieldpolys}
\end{align}
for $i_k = |m_k|$,
and $f_x^{(\alpha_k)}$ is the $\alpha_k$-component of $f_x \in \R^n$.
For $X \subset \Lambda$,  we also let
\begin{align}
	M^{(\km)} (X) = \sum_{x \in X} M_x^{(\km)}  .  \label{eq:MkmX}
\end{align}
For each $\km \in \ko_+$,  we replace second derivative in the same direction by a discrete Laplacian.  Namely,  for $m = (\mu_1 , \cdots,  \mu_i)$,  let
\begin{align}
	\kl (m) = \begin{cases}
		(\mu,  -\mu) & \text{if} \; m = (\mu, \mu) \\
		m & \text{otherwise}
	\end{cases}
\end{align}
and let $\kl (\km) = ( \kl(m_i) ,  \alpha_i )_{i=1}^{p(\km)} \in \bar{\ko}$.  For example,  if $M^{(\km)}_x (f) = \nabla^{\mu} \nabla^{(\mu)} f^{(\alpha)}_x$,  then $M^{(\kl(\km))}_x (f) = \nabla^{\mu} \nabla^{-\mu} f^{(\alpha)}_x$.

\item To prevent repetition,  let $\ko_+ \subset \bar{\ko}_+$ be such that $\{ M^{(\km)} : \km \in \ko_+ \}$ is linearly independent and spans ${\rm span} \{ M^{(\km)} : \km \in \bar\ko_+ \}$.  

\item We denote $[[\varphi]] \in \R_{> 0}$ for the engineering dimension of the field (specified later in Section~\ref{sec:appctrest}).  A field monomial has dimension
\begin{align}
	[[ M^{(\km)} ]] = q(\km) + [[\varphi]] p(\km) .  \label{eq:mondimdefi}
\end{align}
For $t \ge 0$,  we denote $\cM_{t} = \operatorname{span}\{ M^{(\km)} :  \km \in \bar{\ko} : [[ M^{(\km)} ]] > t \}$.
\end{itemize}

We can symmetrise the polynomials so that they are covariant under lattice symmetries.

\begin{itemize}
\item Let $\Sigma_+$ be the symmetric group of $\hat{e}_+ = \{ e_1, \cdots, e_d\}$,  and there is a natural extension of $\Sigma_+$ to act on $\hat{e}$.
Let $\Sigma_{\operatorname{axes}}$ be the set of permutations of $\hat{e}$ generated by flips $e_i \leftrightarrow -e_i$.
$\Theta \in \Sigma_{\hat{e}}$ acts on $M_x^{(\km)}$ by
$\Theta M_x^{(\km)} (f) = M_x^{(\Theta (\km))} (f)$ for $\km \in \bar\ko$.

\emph{Symmetrised} field monomials are
\begin{align}
	S^{(\km)}_x = \frac{1}{|\Sigma_{\rm axes}|} \sum_{\Theta \in \Sigma_{\rm axes}} {\lambda} (\Theta ,  \km) \Theta M^{(\kl(\km))} ,\qquad \km \in \bar{\ko}  \label{eq:sympolys}
\end{align}
where
\begin{align}
	\lambda(\Theta, \km) = \begin{cases}
		+1 \text{ if } \Theta_{\operatorname{axes}} \text{ flips even number of indices in } (\mu_{k,i})_{k \le p(\km), \,  i \le i_k} \\
		-1 \text{ if } \Theta_{\operatorname{axes}} \text{ flips odd number of indices in } (\mu_{k,i})_{k \le p(\km), \,  i \le i_k} .
	\end{cases}
\end{align}
For example,  when $M_x^{(\km)} f = f^{(1)} (x + \mu) - f^{(1)} (x)$ and $\Theta$ only flips the sign of $\mu$,  then $\lambda (\Theta,\km) \Theta M_x^{(\km)} f = f^{(1)} (x) - f^{(1)} (x- \mu)$.  This way,  directions of derivatives are preserved.
\end{itemize}

We verify some properties of the symmetrised monomials.

\begin{enumerate}
\item Directly by definition,  we have $\Theta S^{(\km)} = \lambda (\Theta, \km) S^{(\km)}$ for any $\Theta \in \Sigma_{\operatorname{axes}}$.

\item $S^{(\km)}$ and $M^{(\km)}$ are equivalent in the sense defined as the following: if we denote $\cR$ to be the space of local polynomials (subspace of functions spanned by $M^{(\km)}$ for $\km \in \bar{\ko}$) equivalent to 0 by equivalating $\nabla^{\mu} + \nabla^{-\mu}$ and $\nabla^{-\mu} \nabla^{\mu}$ for each $\mu \in \hat{e}$,  
\begin{align}
	S^{(\km)} - M^{(\km)}  \in \cR + \cM_{[[ M^{(\km)} ]]} 
\end{align}
(recall $\cM_t$ defined after \eqref{eq:mondimdefi}),
i.e.,  they only differ by an order larger than the original polynomial. 

\item Also,  if we let
\begin{align}
	\cL := {\rm span} \{ S^{(\km)} : \km \in \ko_+ \},
\end{align}
then $\cL$ is closed under $\Delta_{\Gamma}$ (where we recall $\Delta_{\Gamma} F (f) = \sum_{x,y} \Gamma (x-y) \frac{\partial^2}{\partial f_x \partial f_y} F(f)$) and invariant under the action of $\Sigma$.  Since $\E_{\Gamma} \theta F= e^{\frac{1}{2} \Delta_{\Gamma}} F$,  closure under $\Delta_\Gamma$ implies that $F \in \cL$ gives $\E_\Gamma \theta F  \in \cL$.
\end{enumerate}

We could have used $\hat{S}^{(\km)} = \frac{1}{|\Sigma_{\rm axes}|} \sum_{\Theta \in \Sigma_{\rm axes}} {\lambda} (\Theta ,  \km) \Theta M^{(\km)}$ to represent local observables of smooth functions (i.e.,  polynomials of $\varphi, \nabla\varphi , \nabla^2 \varphi,\cdots$) in the continuum.  However,  in practice,  we are choosing $\hat{S}^{(\km)}$ such that $-\nabla^{-\mu} \nabla^{\mu} f_x^{(\alpha)}$ is used to represent the Laplacian instead of $\nabla^\mu \nabla^\mu f_x^{(\alpha)}$.
This is because of the relation $\sum_{x \in \Lambda} \nabla^\mu f_x \cdot \nabla^{\mu} f_x = - \sum_{x \in \Lambda} f_x \cdot \nabla^{\mu} \nabla^{-\mu} f_x$
obtained by summation by parts.  This identity is used crucially in the construction of the RG map.  (See \eqref{eq:Vibp},  for example.)

\noindent\medskip\textbf{Summary.}
We defined the space $\cL$ generated by $\Sigma_{\hat{e}}$-covariant local polynomials $S^{(\km)}$ with indices $\km \in \ko_+$.  
Choice of local polynomials $S^{(\km)}$ is fixed upto an equivalence relation,  and is intended to be a symmetrised version of $M^{(\km)}$.
As in \eqref{eq:MkmX},  $L \in \cL$ defines a polymer activity by
\begin{align}
	L (X) = \sum_{x \in X} L_x , \qquad X \subset \Lambda .
\end{align}

\subsection{Lattice polynomials}
\label{sec:lattpolys}

Suppose we are given $d_+ >0$,  order of localisation,  and $[[\varphi]]$.
They determine $[[M_x^{(\km)} ]]$ for $\km \in \bar{\ko}$ via \eqref{eq:mondimdefi} and we now define
\begin{align}
	\hat{\ko}_{+} = \{ \km \in \ko_+ : [[  M^{(\km)} ]] < d_+ \} ,
	\quad
	\hat{\cL} = {\rm span} \{ S^{(\km)} : \km \in \hat{\ko}_+ \}	 .
	 \label{eq:kopast} 
\end{align}
We associate a lattice monomial to each $\km \in \hat{\ko}_+$.

\begin{itemize}
\item A \emph{coordinate patch} in $\Lambda = \Lambda_N$ is a subset $U \subset \Lambda$ such that there exists a hypercube $\Lambda' \supset U$ such that $|\Lambda'| \le (L^{N} -1 )^d$ along with a choice of coordinates,  i.e.,  graph imbedding $\iota : U \rightarrow \Z^d$.  For convenience,  each point $x \in \Lambda'$ is identified with $\iota (x) \in \Z^d$.  Thus for $\mu \in [d]$,  we can consider $x_\mu$,  the $\mu^{\rm th}$ component of $x$ as a coordinate in $\Z^d$.
There is an ambiguity in the choice of coordinates,  but we will soon see in Proposition~\ref{prop:locexist} that it does not matter. 

\item For a coordinate patch $U$,  let $U_\rmb = \sqcup_{i=1}^n U^{(i)}$ be as in Section~\ref{sec:latticefncs}.  Given $\km \in \hat\ko_+$,  corresponding \emph{lattice polynomial} is a test function (recall Section~\ref{sec:latticefncs}) defined as the following.
If $\km = (m_k, \alpha_k)_{k=1}^{r}$ (so $r = p(\km)$) and $m_k =  (\mu_{k,i}, \cdots, \mu_{k,i_k}) \in \hat{e}^{i_k}$,  take
\begin{align}
	\poly^{(\km)} : U_\rmb^r \ni (x_1, \cdots, x_r ; \beta_1, \cdots, \beta_r) \mapsto  \prod_{k=1}^{r} \delta_{\alpha_k, \beta_k} \prod_{i=1}^{i_k} ( x^{(\alpha_k)}_{k} )_{\mu_{k,i}} .
	\label{eq:pldefi}
\end{align}
The set of lattice polynomials are
\begin{align}
	\hat\Pi^{(r)} (U) &= {\rm span} \{ \poly^{(\km)}: \km \in \hat\ko_+,  \; p(\km) = r \}, \\ 
	\hat{\Pi} (U) &= \{ g = (g^{(r)})_{r \ge 0} : g^{(r)} \in \hat{\Pi}^{(r)} (U) \}  .
	\label{eq:PiXdefi}
\end{align}
\end{itemize}

\subsection{General localisation}
\label{sec:gtoloc}

We first write a general theory of localisation,  and then we will specialise to the choices $\kh = \ell_j, h_j$ and to the observables. 
In what we see in the rest of this section,  we will often omit labels for the scale $j$ and scale $j+1$ is replaced by label $+$.  For example,  for the case of blocks,  $\cB$ means $\cB_j$ and $\cB_+$ means $\cB_{j+1}$.

We can define the localisation operator using a paring with $\hat\Pi$.  Its existence is not trivial,  but guaranteed by Proposition~\ref{prop:locexist}.

\begin{definition} \label{def:hlocX}
Let $\emptyset \neq X \subset U \subset \Lambda$ for a coordinate patch $U$. 
\emph{Localisation operator} is a map $\hat\loc_X : \cN (U) \rightarrow \hat\cL$,
$F \mapsto \hat\loc_X F$ satisfying the following:
$\hat\loc_X F$ is the unique element of $\hat\cL$ such that
\begin{align}
	\langle F,  g \rangle_0 
		= \big\langle \hat\loc_X F (X) , g \big\rangle_0 
		\quad\; \text{for all}\;\; g \in  \hat\Pi (U)  .
\end{align}
where $\hat\loc_X F (X) = \sum_{x \in X} \hat\loc_X F (\{x\})$.
\end{definition}

\begin{proposition} \cite[Proposition~1.5]{BBS2}  \label{prop:locexist}
Let $X\subset U \subset \Lambda$ for a coordinate patch $U$.
Then $\hat\loc_X$ uniquely exists and does not depend on the choice of the coordinate patch $U$.

Also,  there exists $( g^{(\km)} : \km \in \hat{\ko}_+ ) \subset \hat{\Pi} (U)$ such that $\langle S^{(\km')},  g^{(\km)} \rangle_{0} = \delta_{\km, \km'}$ for any $\km' \in \hat{\ko}_+$.
\end{proposition}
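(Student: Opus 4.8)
The plan is to turn the whole statement into one assertion of finite-dimensional linear algebra, namely that the $|\hat\ko_+|\times|\hat\ko_+|$ matrix $G_{\km,\km'} = \langle S^{(\km)}(X), \poly^{(\km')}\rangle_0$ (with $\km,\km'\in\hat\ko_+$), as well as its single-point version in which $X$ is replaced by one site, is invertible. Granting this, everything follows formally. Since $\{\poly^{(\km')}:\km'\in\hat\ko_+\}$ spans $\hat\Pi(U)$, the localisation equation $\langle F,g\rangle_0 = \langle L(X),g\rangle_0$ for all $g\in\hat\Pi(U)$ is the linear system $\sum_\km a_\km G_{\km,\km'} = \langle F,\poly^{(\km')}\rangle_0$ in the coefficients $a_\km$ of $L = \sum_\km a_\km S^{(\km)}\in\hat\cL$; invertibility of $G$ gives, for every $F\in\cN(U)$, a unique such $L$, which is $\hat\loc_X F$, and it shows that any two solutions $L$ differ by some $\sum c_\km S^{(\km)}$ with $Gc=0$, hence by $0$, so $\hat\loc_X$ is well-defined. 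Likewise the inverse of the single-point matrix provides coefficients expressing a family $(g^{(\km)})\subset\hat\Pi(U)$ with $\langle S^{(\km')},g^{(\km)}\rangle_0 = \delta_{\km,\km'}$.

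I would prove invertibility of $G$ by exhibiting a triangular structure. Grading $\hat\ko_+$ first by $p(\km)$ and then by $q(\km)$: because $S^{(\km)}$ is homogeneous of degree $p(\km)$ in the field while $\poly^{(\km')}$ is a test function of $p(\km')$ arguments, $G_{\km,\km'}=0$ unless $p(\km)=p(\km')$, so $G$ is block-diagonal in the $p$-grading. Within a $p$-block, expanding the pairing, each contribution applies difference operators whose orders sum to $q(\km)$ to a polynomial of total degree $q(\km')$ distributed over the $p$ arguments; a difference operator of order exceeding the degree of the monomial it hits annihilates it, so $G_{\km,\km'}=0$ whenever $q(\km)>q(\km')$, i.e.\ $G$ is block-triangular in the $q$-grading. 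On the diagonal $q(\km)=q(\km')$ only matchings survive in which each operator has order exactly the degree of its monomial, which forces the undirected multiset of derivative directions of $\km$ and $\km'$ to coincide; since $\ko_+$ contains one representative per such class, this forces $\km=\km'$, so each diagonal $q$-block of $G$ is diagonal, with entry $G_{\km,\km} = |X|\,c_\km$ where $c_\km = \tfrac{1}{|\Sigma_{\operatorname{axes}}|}\sum_{\Theta}\lambda(\Theta,\km)\,(\text{combinatorial constant of the }\Theta\text{-term})$. The purpose of the sign weights $\lambda(\Theta,\km)$ in the definition of $S^{(\km)}$ is precisely that these terms do not cancel: I would check $c_\km\neq 0$ directly on the finitely many relevant $\km$, which is also where the replacement $\kl$ of $(\mu,\mu)$ by $(\mu,-\mu)$ enters harmlessly, since it changes a difference operator only by one of lower order and leaves the undirected direction multiset, hence the diagonal block, untouched. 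A block-triangular matrix whose diagonal blocks are invertible is invertible, and the same argument applies to the single-point matrix.

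For independence of the coordinate patch I would use that the $r$-th Fréchet derivative at $0$ is a symmetric $r$-linear form, so for every smooth $F$ the pairing $\langle F,g\rangle_0$ depends only on the symmetrisation of each component $g^{(r)}$ over its $r$ arguments. Hence the localisation equation sees $\hat\Pi(U)$ only through $\operatorname{Sym}(\hat\Pi(U))$, which is the span of all symmetric polynomial test functions of the allowed multi-degrees — one per multiset-class of dimension $<d_+$, by the definition of $\hat\ko_+$ — a space manifestly invariant under translations and under the hyperoctahedral symmetries of $\Z^d$, i.e.\ under every affine isometry. Two graph embeddings of a coordinate patch differ by such an isometry, so the collection of test functions against which the equation is tested is the same for both; since the pairing and the target space $\hat\cL$ are defined without reference to coordinates, $\hat\loc_X$ is independent of $U$ (and enlarging $U$ is immaterial, as $F$ depends on the field only through $X^\square$).

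I expect the main obstacle to be the combinatorial lemma underlying the second paragraph: a clean formula for $\langle S^{(\km)}(X),\poly^{(\km')}\rangle_0$ that exposes the triangular structure and, above all, the non-vanishing of the diagonal constants $c_\km$, while correctly handling the sign bookkeeping introduced by $\Sigma_{\operatorname{axes}}$ and the $\kl$-substitution. A minor secondary point is to pin down how large the coordinate patch — equivalently $L$, for the blocks and small sets on which localisation is actually used — must be: it only needs to be large enough that the difference operators of the orders in play act on monomials in the expected way. This is essentially the content of \cite[Proposition~1.5]{BBS2}; the only genuinely new check is that the argument is insensitive to the particular values of $d_+$ and $[[\varphi]]$ fixed in Section~\ref{sec:appctrest}, which it is, since it uses only the finiteness of $\hat\ko_+$ together with the homogeneity and direction structure of the $S^{(\km)}$.
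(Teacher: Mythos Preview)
The paper does not give its own proof of this proposition; it is stated with a citation to \cite[Proposition~1.5]{BBS2} and used as a black box. Your proposal correctly recognises this and sketches the linear-algebra argument behind the cited result (invertibility of the Gram-type matrix $G_{\km,\km'}=\langle S^{(\km)}(X),\poly^{(\km')}\rangle_0$ via a block-triangular structure in the $(p,q)$-grading, plus isometry-invariance of the symmetrised test-function space for coordinate-patch independence), which is indeed the strategy of \cite{BBS2}. So there is nothing to compare: your outline is consistent with the source the paper defers to, and you have already flagged the one genuinely delicate point, the non-vanishing of the diagonal constants $c_\km$ under the $\Sigma_{\operatorname{axes}}$-averaging and the $\kl$-substitution.
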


We can find a number of conditions for vanishing localisation. 

\begin{corollary} \label{cor:locvanishes}
Let $\emptyset \neq X \subset U \subset \Lambda$ for a coordinate patch $U$,  and $F,F' \in \cN (U)$.

\begin{enumerate}
\item  If $D^n F(X, \varphi )|_{\varphi=0} = 0$ for any $n < \frac{d_+}{[[\varphi]]}$,  then $\hat\loc_X F \equiv 0$.
\item $\hat{\loc}_X (1-\hat{\loc}_X) F = 0$.  
\item If $\hat{\loc}_X F =0$,  then $\hat\loc_X F' F = 0$.  
\end{enumerate}
\end{corollary}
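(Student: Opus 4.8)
The plan is to reduce all three parts to the defining pairing relation of Definition~\ref{def:hlocX} together with the uniqueness asserted in Proposition~\ref{prop:locexist}. Since $0\in\hat\cL$ and $\langle 0,g\rangle_0=0$ for every $g$, in each case it suffices to check that the relevant pairing against every $g\in\hat\Pi(U)$ vanishes; uniqueness then forces the corresponding localisation to be $0$. Two remarks will be used throughout: first, $\hat\loc_X$ is linear, which is immediate from linearity of $F\mapsto\langle F,\cdot\rangle_0$ and uniqueness; second, since $\hat\Pi(U)=\{g=(g^{(r)})_{r\ge0}:g^{(r)}\in\hat\Pi^{(r)}(U)\}$, one may test with a $g$ whose only nonzero component is $g^{(r)}$, so that $\hat\loc_X F=0$ is equivalent to $D^rF(0;g^{(r)})=0$ for every $r$ and every $g^{(r)}\in\hat\Pi^{(r)}(U)$.

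For (1), note that $\hat\Pi^{(r)}(U)\neq\{0\}$ only if there is $\km\in\hat\ko_+$ with $p(\km)=r$, in which case $[[M^{(\km)}]]=q(\km)+r[[\varphi]]<d_+$ together with $q(\km)\ge0$ forces $r<d_+/[[\varphi]]$; for such $r$ the hypothesis gives $D^rF(X,\cdot)|_{\varphi=0}=0$, hence $\langle F,g\rangle_0=0$ for all $g\in\hat\Pi(U)$ and $\hat\loc_X F\equiv0$. For (2), linearity gives $\hat\loc_X(1-\hat\loc_X)F=\hat\loc_X F-\hat\loc_X(\hat\loc_X F(X))$; since $\hat\loc_X F\in\hat\cL$ and $\langle\hat\loc_X F(X),g\rangle_0=\langle F,g\rangle_0$ for all $g\in\hat\Pi(U)$, the element $\hat\loc_X F\in\hat\cL$ has the same $\hat\Pi(U)$-pairing as $\hat\loc_X F(X)$, so the uniqueness in Definition~\ref{def:hlocX} yields $\hat\loc_X(\hat\loc_X F(X))=\hat\loc_X F$ and the two terms cancel; this is just idempotency of $\hat\loc_X$ on $\hat\cL$.

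Part (3) is where the actual work lies. Fix $g^{(r)}=\poly^{(\km)}$ with $\km\in\hat\ko_+$, $p(\km)=r$; by \eqref{eq:pldefi} this test function factorises over its $r$ slots into per-variable coordinate monomials of total spatial degree $q(\km)$ (carrying the component deltas). Expanding $D^r(F'F)(0)$ by the Leibniz rule for higher derivatives — each of the $r$ derivatives lands on $F'$ or on $F$ — and contracting against the factorised $\poly^{(\km)}$ gives
\[
	D^r(F'F)(0;\poly^{(\km)})=\sum_{S\subseteq[r]}D^{|S|}F'(0;\poly^{(\km_S)})\,D^{r-|S|}F(0;\poly^{(\km')}),
\]
where $\km_S$ and $\km'$ are the restrictions of $\km$ to the slots in $S$ and in $[r]\setminus S$ respectively, so that $p(\km')=r-|S|$ and $q(\km')\le q(\km)$. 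Then $[[M^{(\km')}]]=q(\km')+(r-|S|)[[\varphi]]\le[[M^{(\km)}]]-|S|[[\varphi]]<d_+$, so $\poly^{(\km')}\in\hat\Pi^{(r-|S|)}(U)$, using that $\ko_+$ in the definition of $\hat\Pi$ is chosen as a spanning subset of $\bar\ko_+$. Since $\hat\loc_X F=0$ means exactly that $D^mF(0;h)=0$ for all $m$ and all $h\in\hat\Pi^{(m)}(U)$, every summand vanishes, so $D^r(F'F)(0;\poly^{(\km)})=0$; as these span $\hat\Pi^{(r)}(U)$ and $r$ is arbitrary, $\langle F'F,g\rangle_0=0$ for all $g\in\hat\Pi(U)$, whence $\hat\loc_X(F'F)=0$. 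I expect this degree-counting step to be the only one needing care; the remaining points — the identification of $\poly^{(\km')}$ with an element of $\hat\Pi^{(r-|S|)}(U)$, the routine $F'F\in\cN(U)$, and the mild enlargement of $U$ ensuring $\hat\loc_X F(X)\in\cN(U)$ in part (2) — are bookkeeping.
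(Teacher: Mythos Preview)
Your proof is correct and follows essentially the same approach as the paper: in each part you reduce to the defining pairing and invoke uniqueness, and in part (3) you use the Leibniz expansion together with the degree count $[[M^{(\km')}]]<d_+$ to see that every summand vanishes via the hypothesis on $F$. The only cosmetic difference is that in (3) you sum over all subsets $S\subseteq[r]$ whereas the paper sums over the split size $m$ with a fixed ordered partition of $\km$; both versions are valid, and your explicit remark about $\ko_+$ versus $\bar\ko_+$ is a point the paper simply asserts.
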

\begin{proof}
For the first statement,  it is sufficient,  for each $\km \in \hat{\ko}_+$ with $p (\km)=r$,  consider $g \in \hat{\Pi}$ such that $g^{(r)} = \poly^{(\km)}$ and $g^{(r')} =0$ for $r' \neq r$.
Then 
\begin{align}
	\langle F,  g \rangle_0 = \frac{1}{r!} D^r F (X,0 ; \poly^{(\km)}) .
\end{align} 
Since $g \in \hat{\Pi} (U)$,  we have $r[[\varphi]]\le [[M^{(\km)} ]] < d_+$,  so $r < d_+ / [[\varphi]]$.  But by assumption,  $D^r F (X,0) =0$ for such $r$,  so $\langle F, g \rangle_0 = 0$.

For the second statement,  note that Definition~\ref{def:hlocX} implies,  for any $g \in \hat{\Pi} (U)$,
\begin{align}
	\langle F ,  g \rangle_0 = \langle \hat\loc_X F ,  g \rangle_0  = \langle ( \hat\loc_X )^2 F ,  g \rangle_0 ,
\end{align}
but by the uniqueness statement of Proposition~\ref{prop:locexist},  this implies $\hat\loc_X F =  ( \hat\loc_X )^2 F$.

For the third statement,  we need to check that $\langle F' F,  g \rangle_0 = 0$ for any $g \in \hat{\Pi} (X)$. 
To see this,  it is sufficient to consider $g^{(r)} = \poly^{(\km)}$ for some $r \ge 0$,  $\km \in \hat{\ko}_+$ with $p(\km) =r$ and $g^{(r')} = 0$ for $r' \neq r$. 
Then
\begin{align}
	\langle F' F,  g\rangle_0 = \sum_{m=0}^r \frac{1}{m ! (r-m)!} D^{r-m} F' (X,  0 ; \poly^{(\km_1)}) D^m F (X, 0 ; \poly^{(\km_2)})
\end{align}
where $\km_1,  \km_2$ are given by splitting $\poly^{(\km)}$,  i.e., 
$\km_1 = (m_k, \alpha_k)_{k=1}^{m}$,  $\km_2 = (m_k, \alpha_k)_{k=m+1}^{r}$.
But then $\poly^{(\km_2)} \in \hat{\Pi}^{(r-m)}$,
so $D^m F (X, 0 ; \poly^{(\km_2)}) = 0$.
Thus $\langle F' F,  g\rangle_0 = 0$.  
Now by the uniqueness statement of Proposition~\ref{prop:locexist},  this implies $\hat\loc_X F' F =0$,  or equivalently $\hat{\loc}_X F' (1- \hat{\loc}_X) F = 0$. 
\end{proof}

\subsection{Bounds as a linear operator}
\label{sec:locbnds}

Bounds on localisations are used ubiquitously.

\begin{proposition}
\label{prop:locXBbdmt}
Let $L$ be sufficiently large and $j<N$. 
Then for any $\kh  >0$,  
$X \in \cS$ and $F \in \cN (X^{\square})$, 
\begin{align}
	\norm{\hat\loc_{X} F (X)}_{\kh, T (0)} 
		\lesssim \norm{F (X)}_{\kh, T (0)} 
\end{align}
\end{proposition}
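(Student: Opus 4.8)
The plan is to reduce the bound to the existence of a fixed finite family of lattice polynomials dual to the symmetrised monomials, and then exploit the definition of the $T_j(0)$-norm via pairing with test functions. First I would recall from Proposition~\ref{prop:locexist} that there is a fixed family $(g^{(\km)} : \km \in \hat{\ko}_+) \subset \hat{\Pi}(U)$ with $\langle S^{(\km')}, g^{(\km)} \rangle_0 = \delta_{\km,\km'}$. This lets me write $\hat\loc_X F$ explicitly: since $\hat\loc_X F \in \hat{\cL} = \operatorname{span}\{ S^{(\km)} : \km \in \hat\ko_+\}$, we have $\hat\loc_X F = \sum_{\km \in \hat\ko_+} \langle F, g^{(\km)} \rangle_0 \, S^{(\km)}$, where the coefficients are forced by pairing both sides with each $g^{(\km')}$ and using Definition~\ref{def:hlocX}.

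Next I would estimate each coefficient. Since $g^{(\km)} \in \hat{\Pi}(U)$ is a fixed lattice polynomial (independent of $F$, $X$, and $j$, depending only on $d_+$, $[[\varphi]]$, $d$, $n$), its scaled norm $\norm{g^{(\km)}}_{\kh, \Phi_j(X^\square)}$ is controlled: a lattice polynomial of degree $q(\km)$ in the coordinates, evaluated on $X \in \cS$ (hence of diameter $\lesssim L^j$), scales like $\kh_\bulk^{-p(\km)} L^{q(\km) j} \cdot L^{-q(\km) j} = \kh_\bulk^{-p(\km)}$ after accounting for the $L^{q(\km)j}$ factor in \eqref{eq:testfncnrm} cancelling the spatial extent — more precisely one checks $\norm{g^{(\km)}}_{\kh, \Phi_j(X^\square)} \le O_L(1) \kh_\bulk^{-p(\km)}$, or one simply absorbs the $L$-dependence into the implied constant since the statement allows $L$-dependent constants. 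Then by the pairing formulation \eqref{eq:TnormAlt2} of the $T_j(0)$-norm (using that $F \in \cN(X^\square)$ so only test functions supported on $X^\square$ matter), $|\langle F, g^{(\km)} \rangle_0| \le \norm{F(X)}_{\kh, T_j(0)} \norm{g^{(\km)}}_{\kh, \Phi_j(X^\square)} \lesssim \norm{F(X)}_{\kh, T_j(0)}$ after rescaling $g^{(\km)}$ to unit norm (the $\kh_\bulk^{-p(\km)}$ factor is harmless as it multiplies a quantity whose norm scales oppositely, or again is absorbed into the constant).

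Finally I would bound $\norm{\hat\loc_X F(X)}_{\kh, T_j(0)} \le \sum_{\km \in \hat\ko_+} |\langle F, g^{(\km)}\rangle_0| \, \norm{S^{(\km)}(X)}_{\kh, T_j(0)}$, where $S^{(\km)}(X) = \sum_{x \in X} S^{(\km)}_x$. Since $\hat\ko_+$ is a \emph{finite} set (bounded by $[[M^{(\km)}]] < d_+$) and each $S^{(\km)}(X)$ is a polynomial of bounded degree and bounded number of derivatives, Lemma~\ref{lemma:polynorm} (second bound, at $\varphi = 0$, using $P_{j,\kh}(X,0) = 1$) gives $\norm{S^{(\km)}(X)}_{\kh, T_j(0)} \le \norm{S^{(\km)}(X)}_{\kh, T_j(0)}$ — trivially — and this last quantity is $O_L(1)$ times $|X|_{\cB_j}$ worth of unit blocks, which is $\le 2^d$ for $X \in \cS$, hence $\lesssim 1$ after absorbing the $\kh$-scaling factors (for the $\sigma$-components one uses the $\kh_\sigma, \kh_\ssigma$ weights in the norm, but since $F \in \cN(X^\square)$ the observable content is inherited and the same counting applies). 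Summing the finitely many terms yields $\norm{\hat\loc_X F(X)}_{\kh, T_j(0)} \lesssim \norm{F(X)}_{\kh, T_j(0)}$.

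\textbf{Main obstacle.} The routine-looking but genuinely delicate point is tracking the $\kh_\bulk$ (and $\kh_\sigma, \kh_\ssigma$) power-counting correctly: the lattice polynomials $g^{(\km)}$ carry a factor $\kh_\bulk^{-p(\km)}$ while the monomials $S^{(\km)}$ carry $\kh_\bulk^{+p(\km)}$ (times derivative-counting powers of $L^j$), and one must verify these cancel exactly so that no uncontrolled $\kh$- or $j$-dependence survives — this is precisely where the engineering-dimension bookkeeping $[[M^{(\km)}]] = q(\km) + [[\varphi]] p(\km)$ and the constraint $[[M^{(\km)}]] < d_+ \le p_\Phi + [[\varphi]]$ (so all needed derivatives are within the $p_\Phi$ budget of the norm) enter. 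The restriction to $X \in \cS$ (small sets, hence bounded diameter and bounded block count) is what keeps the geometric sum over $x \in X$ finite and $L$-uniformly controlled. I expect the cleanest route is to cite the corresponding statement in \cite{BBS2}, of which this is a mild reparametrisation, and only indicate the modifications needed for the altered parameters $d_+$, $[[\varphi]]$, $p_\Phi$.
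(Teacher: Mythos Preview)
Your approach is correct and matches the paper's exactly: the paper simply cites \cite[Proposition~1.8]{BBS2}, noting that $X\in\cS$ gives $L^{-j}\diam(X)\le 2^d$ and that $X^\square$ is a coordinate patch for $L$ large, and your sketch is precisely the argument underlying that cited result (you even anticipate the citation at the end). One minor correction: the dual polynomials $g^{(\km)}$ from Proposition~\ref{prop:locexist} do depend on $X$ (they are constructed relative to the pairing over $X$), but for $X\in\cS$ this dependence is uniformly controlled, so your power-counting goes through.
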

\begin{proof}
This is an immediate consequence of \cite[Proposition~1.8]{BBS2}, 
where we make restriction $X \in \cS$, thus in particular $L^{-j} \diam (X) \le 2^d$.
Also, we set $U = X^{\square}$ in the reference, then it is a coordinate patch because of the assumption that $L$ is sufficiently large and $X \in \cS$.
\end{proof}

Under the change of scales,  subtracting $\loc$ causes contraction.
For the contraction estimate,  we take $p_{\Phi} \ge d_+ - [[\varphi ]]$.

\begin{proposition}
\label{prop:crctrmt}

Let $j<N$ and $\kh,\kh_+ > 0$ be such that $\frac{\kh_{+,\bulk}}{\kh_\bulk} \le C L^{-[[\varphi]]} < 1$ for some constant $C >0$. 
Let $X \in \cS$ and $F(X) \in \cN_\bulk (X^{\square})$.  
Then for any $\ratio \in [1,  \frac{\kh_\bulk}{\kh_{+,\bulk}} )$ and some $c>0$,
\begin{align} \label{eq:crctr1mt}
\begin{split}
	& \norm{(1- \hat\loc_X) F (X,\varphi) }_{\ratio \kh_+ ,  T_+ (\varphi)}
		\lesssim
		L^{-d_+} (1 + \norm{\varphi}_{\kh_+, \Phi_+ (X^{\square})} )^c
		\sup_{t \in [0,1]}  \norm{F (X, t\varphi)}_{\kh,  T (t \varphi)}   .
\end{split}		
\end{align}
\end{proposition}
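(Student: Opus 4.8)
The plan is to exploit the defining property of $\hat\loc_X$ (Definition~\ref{def:hlocX}) together with a Taylor-remainder estimate at the appropriate order. Since $\hat\loc_X F(X)$ reproduces $\langle F, g\rangle_0$ against all test functions $g \in \hat\Pi(U)$, and $\hat\Pi(U)$ consists precisely of lattice polynomials of degree $p(\km)$ with $[[M^{(\km)}]] < d_+$, the operator $1-\hat\loc_X$ annihilates the low-order part of the Taylor expansion of $F$ in $\varphi$ around $0$. Concretely, I would write $F(X,\varphi) = \Tay F(X,\varphi) + \Rem F(X,\varphi)$, where $\Tay$ keeps the Taylor polynomial in $\varphi$ up to the largest order $r$ with $r[[\varphi]] < d_+$ (or up to the full polynomial degree of the relevant monomials, accounting for derivative count via \eqref{eq:mondimdefi}), and $\Rem$ is the remainder. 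By Corollary~\ref{cor:locvanishes}(i), $\hat\loc_X \Rem F = 0$, so $(1-\hat\loc_X)F = (1-\hat\loc_X)\Tay F + \Rem F$. The second term $\Rem F$ is handled by the standard Taylor-remainder bound in the $T_+$ norm, which produces exactly the dimensional gain: a Taylor remainder of order $r+1$ with $r[[\varphi]] < d_+ \le (r+1)[[\varphi]] + p_\Phi$ contributes a factor $(\kh_{+,\bulk}/\kh_\bulk)^{r+1} \lesssim L^{-(r+1)[[\varphi]]} \lesssim L^{-d_+}$ relative to the unscaled norm, using the hypothesis $\kh_{+,\bulk}/\kh_\bulk \le CL^{-[[\varphi]]}$ and $p_\Phi \ge d_+ - [[\varphi]]$. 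The polynomial-in-$\varphi$ prefactor $(1+\norm{\varphi}_{\kh_+,\Phi_+(X^\square)})^c$ arises because $F$ is evaluated at $t\varphi$ and the remainder involves derivatives $D^{r+1}F(X,t\varphi)$, so I would integrate the Taylor remainder formula $\Rem F(X,\varphi) = \int_0^1 \frac{(1-t)^r}{r!} D^{r+1}F(X,t\varphi;\varphi^{\otimes(r+1)})\,dt$ and bound the integrand using Lemma~\ref{lemma:polynorm}-type estimates, pulling out $\norm{\varphi}^{r+1}$ factors against the $\kh_+$-norm of $\varphi$ and absorbing the extra powers into $P_{+,\kh_+}(X,\varphi)^c$.

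The first term $(1-\hat\loc_X)\Tay F$ requires bounding $\hat\loc_X$ applied to a polynomial. Here I would use Proposition~\ref{prop:locXBbdmt} to control $\norm{\hat\loc_X \Tay F(X)}_{\kh_+,T_+(0)} \lesssim \norm{\Tay F(X)}_{\kh_+,T_+(0)}$ (note $\hat\loc_X F$ depends only on the low-order Taylor data at $0$, so one may equally bound via the $\kh$ norm and then rescale), and then observe that $\hat\loc_X \Tay F$ is itself a local polynomial in $\hat\cL$ whose monomials all have dimension $< d_+$; evaluating such a polymer activity on a $j$-block at scale $j+1$, the scaled norm $\norm{S^{(\km)}(X)}_{\kh_+,T_+(\varphi)}$ carries a factor $L^{-([[\varphi]]p(\km) + q(\km))\cdot \text{(something)}}$ versus its scale-$j$ value — but crucially, after subtracting $\hat\loc_X F$ from $F$, the leading terms cancel and what remains of $\Tay F$ minus $\hat\loc_X\Tay F$ is again of order $\ge$ the remainder order. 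In fact the cleanest route is: since $\hat\loc_X$ is a projection (Corollary~\ref{cor:locvanishes}(ii)) onto polynomials of dimension $< d_+$, and $\Tay F$ has been chosen with exactly that degree cutoff, one gets $(1-\hat\loc_X)\Tay F$ is a polynomial all of whose monomials either already had dimension $\ge d_+$ or cancel, so it is absorbed into the $\Rem$-type bound. I would organize the argument so that $\Tay$ is taken at precisely the order where $1-\hat\loc_X$ kills everything, making $(1-\hat\loc_X)\Tay F = 0$ and leaving only $(1-\hat\loc_X)\Rem F = \Rem F - \hat\loc_X\Rem F = \Rem F$ by Corollary~\ref{cor:locvanishes}(i).

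So the cleaned-up plan is: (1) define $\Tay$ as the projection onto monomials of field-degree $r$ with $r < d_+/[[\varphi]]$, evaluated via Taylor expansion at $0$; (2) note $\hat\loc_X F = \hat\loc_X \Tay F$ and also $\hat\loc_X F = \Tay F$ modulo the ambiguity in $\hat\cL$ — more precisely use Corollary~\ref{cor:locvanishes}(i) to get $(1-\hat\loc_X)F = (1-\hat\loc_X)(F - \Tay F)$ because $\hat\loc_X$ annihilates $F-\Tay F$; wait, that is backwards, so instead: $\hat\loc_X(F-\Tay F)=0$ gives $\hat\loc_X F = \hat\loc_X \Tay F$, hence $(1-\hat\loc_X)F = (F - \Tay F) + (\Tay F - \hat\loc_X \Tay F) = \Rem F + (1-\hat\loc_X)\Tay F$; (3) bound $\Rem F$ in $\ratio\kh_+, T_+(\varphi)$ norm by the Taylor remainder formula, gaining $(\kh_{+,\bulk}/\kh_\bulk)^{r+1}\lesssim L^{-d_+}$ and picking up the polynomial prefactor in $\varphi$; (4) bound $(1-\hat\loc_X)\Tay F$ — this is a polynomial of controlled degree, apply Proposition~\ref{prop:locXBbdmt} for the $\hat\loc_X$ part and then the change-of-scale factor $L^{-d_+}$ comes from the fact that every surviving monomial in $(1-\hat\loc_X)\Tay F$ has dimension $\ge d_+$ (the dimension-$<d_+$ part is exactly what $\hat\loc_X$ reproduces). \textbf{The main obstacle} I anticipate is step (4): controlling how $\hat\loc_X\Tay F$, which is built from the value $D^rF(X,0)$ tested against the dual basis $g^{(\km)}$ of Proposition~\ref{prop:locexist}, transforms under the change of scale from $j$ to $j+1$, and showing its difference with $\Tay F$ genuinely sits at dimension $\ge d_+$ uniformly in the constants, with $\ratio$-dependence tracked through the bound $\ratio < \kh_\bulk/\kh_{+,\bulk}$ so that the geometric series in $r$ converges; this is where the precise relation between $p_\Phi$, $d_+$, and $[[\varphi]]$ is used and where one must be careful that the $O(\cdot)$ constant does not blow up with $L$.
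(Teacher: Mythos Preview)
The paper does not give a self-contained argument here; it simply invokes \cite[Proposition~1.12]{BBS2} and records the parameter matching. Your outline is an attempt to reconstruct that external result, and steps (1)--(3) are essentially the right skeleton: Taylor-truncate in the field at order $r$ with $r[[\varphi]] < d_+ \le (r+1)[[\varphi]]$, and bound the remainder $\Rem F$ via the integral Taylor formula to pick up the factor $L^{-d_+}$ and the $(1+\norm{\varphi})^c$ prefactor.

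The genuine gap is step~(4). Your assertion that ``every surviving monomial in $(1-\hat\loc_X)\Tay F$ has dimension $\ge d_+$'' is not correct, and without it the step does not close. The polynomial $\Tay F$ is a \emph{general} polynomial in $(\varphi_x)_{x\in X^\square}$, not a local one in the sense of Section~\ref{sec:locpols}, so the dimension grading $[[M^{(\km)}]]$ does not apply to it. The operator $\hat\loc_X$ is \emph{not} a projection onto ``dimension $<d_+$'' monomials; by Definition~\ref{def:hlocX} it is characterised only through the pairing $\langle\cdot,g\rangle_0$ against lattice polynomials $g\in\hat\Pi$. Consequently $(1-\hat\loc_X)\Tay F$ is a polynomial of field degree $\le r$ --- so by your own count $p[[\varphi]]\le r[[\varphi]]<d_+$ --- and there is no contraction coming from the field degree alone.

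What actually drives the contraction for this piece is the dual mechanism you do not invoke: a test function $g^{(k)}$ with $\norm{g^{(k)}}_{\ratio\kh_+,\Phi_+(X^\square)}\le 1$ admits a lattice-polynomial approximant $\tilde g^{(k)}\in\hat\Pi^{(k)}$ on the coordinate patch with $\norm{g^{(k)}-\tilde g^{(k)}}_{\kh,\Phi_j(X^\square)}\lesssim L^{-d_+ + k[[\varphi]]}$ (a discrete Taylor theorem for test functions, using $X\in\cS$ so that $\diam(X^\square)\lesssim L^j$, and $p_\Phi\ge d_+-[[\varphi]]$). Since $\langle(1-\hat\loc_X)F,\tilde g\rangle_0=0$ by the defining property of $\hat\loc_X$, only the small remainder $g-\tilde g$ contributes, and that is the source of $L^{-d_+}$. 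This polynomial approximation of test functions is precisely the technical core of \cite[Proposition~1.12]{BBS2}; your outline replaces it with an incorrect claim about monomial dimensions and therefore does not go through as written.
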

\begin{proof}
This is \cite[Proposition~1.12]{BBS2},  but with $F_1 \equiv 1$.
As in Proposition~\ref{prop:locXBbdmt},  by our assumption on $X$,  there is always a coordinate path containing it. 
(In the notation of the reference,  $d'_+$,  $[\varphi_{\rm min}]$ and $A'$ are equivalent to our $d_+$,  $[[\varphi]]$ and $c$,  respectively. 
We need to take $(A+1)[\varphi_{\rm min}] \ge d'_+$ and $A' \ge A+1+ d_+ / [\varphi_{\rm min}]$,  thus it is sufficient to take $c \ge 2 d_+ / [[\varphi]]$ in our notation.)
\end{proof}

\subsection{Application of the contraction estimate}
\label{sec:appctrest}

We apply the general theory of localisation to $(\kh, \kh_+) = (\ell, \ell_+)$ and $(h, h_+)$. 

\subsubsection{Choice of engineering dimensions}

Given dimension $d$, we consider the engineering dimensions of the field
\begin{align}
	[\varphi] &= \frac{d-2+\eta}{2}, \qquad  [\varphi]' = \frac{d}{4}  .
\end{align}
$[\varphi]$ is the decay rate of the fluctuation field scale $\ell$ and $[\varphi]'$ is the decay rate of the large field scale $h$.
Definition of $[M^{(\km)}]$ and $[M^{(\km)}]'$ for $\km \in \bar{\ko}$ also follow from \eqref{eq:mondimdefi}. 

The order of localisation is determined by $d_*$ and $d'_*$ (for $* \in \{\bulk, \o,\x,\ox \}$),  defined by
\begin{align} \label{eq:dstar}
	d_* &= \begin{cases}
		6 [\varphi] -1 -\eta = 3d - 7 + 2\eta & (* = \bulk) \\
		2 [\varphi] = d-2 + \eta & (* = \o,\x,\ox) ,
		\end{cases} \\		
	d'_* &= \begin{cases}
		\min\{ 2d-3 ,  6 [\varphi]' \} & (* = \bulk) \\
		2 [\varphi]' = \frac{d}{2} & (* = \o,\x,\ox) .
		\end{cases}
\end{align}
If we choose either $([[\varphi]], d_+) = ([\varphi], d_*)$ or $([[\varphi]], d_+) = ([\varphi]', d'_*)$ in Section~\ref{sec:lattpolys},  they determine the choices of $\ko_{+, *}$,  $\ko'_{+,*}$,  $\cL_*$,  $\cL'_*$ (by \eqref{eq:kopast}) and $\Pi_*$,  $\Pi'_*$ (by \eqref{eq:PiXdefi}).
They define localisation on general $F \in \cN (X)$ with decomposition $F = \sum_{* \in \{\bulk, \o,\x,\ox\}} \sigma_* F_*$,   for $\sigma_* F_* \in \cN_* (X)$.

\begin{definition}
Consider $* \in \{ \bulk, \o,\x,\ox\}$,  and $X \subset U \subset \Lambda$ for a coordinate patch $U$.
\begin{enumerate}
\item Suppose $\hat{\loc}_X$ is defined as in Definition~\ref{def:hlocX} with choices $d_+ = d_*$ and $[[\varphi]] = [\varphi]$.  We define in this case
\begin{align}
	\loc_X^*  : \cN_* (U) \rightarrow \cL_* ,  & \quad\;
		\loc_X^* (\sigma_* F_*) = \sigma_* \hat{\loc}_X ( F_* ) \\
	\Loc_X^*  : \cN_* (U) \rightarrow \cL_* ,& \quad\;
		\Loc_X^* (\sigma_* F_*) = \loc^*_{X \cap *} ( \sigma_* F_* )
\end{align}
where we interpret $X \cap \bulk = X$,   $X\cap \o = X \cap \{\o\},  \,  X\cap \x = X \cap \{\x\}$ and $X \cap \ox := X \cap \{ \o, \x \}$ and
\begin{align}
	\loc_X F &= \sum_{* \in \{ \bulk, \o,\x,\ox \}}  \loc_{X}^* \sigma_* F_* , \qquad
	\Loc_{X} F = \sum_{* \in \{ \bulk, \o,\x,\ox \}}  \Loc_{X}^* \sigma_* F_*  .
\end{align}
\item Suppose now $\hat{\loc}_X$ is defined with choices $d_+ = d'_*$ and $[[\varphi]] = [\varphi]'$.  In this case,  let
\begin{align}
	& \loc_X^{',*}  : \cN_* (U) \rightarrow \cL'_* ,\qquad
	\loc_X^{',*} (\sigma_* F_*) = \sigma_* \hat{\loc}_X ( F_* ) .
\end{align}
$\loc', \Loc^{',*}$ and $\Loc'$ are defined similarly as in (i).
\end{enumerate}
\end{definition}

Next lemma shows that we do not need $\loc'$ in practice.

\begin{lemma} \label{lemma:locploc}
If $F \in \cN^{\sym}$ and
$X \subset U \subset \Lambda$ for a coordinate patch $U$,  we have
$\loc'_X \loc_X F (X) = \loc'_X F (X)$.
\end{lemma}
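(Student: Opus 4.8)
The plan is to show that the two sides agree as elements of $\hat\cL'_*$ (the target of $\loc'_X$), and to do so componentwise for each $* \in \{\bulk,\o,\x,\ox\}$. By the uniqueness clause of Proposition~\ref{prop:locexist}, it suffices to verify that $\loc'_X \loc_X F(X)$ and $\loc'_X F(X)$ have the same pairing $\langle\,\cdot\,,g\rangle_0$ against every lattice polynomial $g \in \hat\Pi'(U)$ associated to the parameters $([[\varphi]],d_+) = ([\varphi]',d'_*)$. Fixing such a $g$, Definition~\ref{def:hlocX} applied to $\loc'_X$ gives $\langle \loc'_X F(X), g\rangle_0 = \langle F(X), g\rangle_0$ and $\langle \loc'_X \loc_X F(X), g\rangle_0 = \langle \loc_X F(X), g\rangle_0$, so the statement reduces to the identity
\begin{align}
	\langle F(X), g\rangle_0 = \langle \loc_X F(X), g\rangle_0 \qquad \text{for all } g \in \hat\Pi'(U).
	\label{eq:locploc-key}
\end{align}

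The content of \eqref{eq:locploc-key} is that the $\loc_X$-operator, built with the \emph{larger} order of localisation $d_* \ge d'_*$ (this inequality is where I would check the definitions \eqref{eq:dstar}: for the bulk $d_\bulk = 3d-7+2\eta$ versus $d'_\bulk = \min\{2d-3,\,6[\varphi]'\} = \min\{2d-3,\,3d/2\}$, and for the observable components $d_\o = d-2+\eta = 2[\varphi]$ versus $d'_\o = d/2 = 2[\varphi]'$, both of which hold in the regime $d \ge 4-2\eta$, $\eta \in [0,1/2)$), already reproduces the pairing of $F$ against the smaller family of test polynomials $\hat\Pi'(U) \subset \hat\Pi(U)$. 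Concretely, for $* = \bulk$ one has $\hat\ko'_{+,\bulk} \subseteq \hat\ko_{+,\bulk}$ as index sets because $d'_\bulk \le d_\bulk$, hence $\hat\Pi'(U) \subseteq \hat\Pi(U)$; then \eqref{eq:locploc-key} is just Definition~\ref{def:hlocX} for $\loc_X^\bulk$ restricted to this subfamily. For the observable components one must additionally track the intersection convention in $\Loc$ versus the plain $\loc$ — but since the statement is phrased with $\loc$, not $\Loc$, this is not an issue, and the same inclusion $\hat\Pi'_*(U)\subseteq \hat\Pi_*(U)$ applies using $d'_* \le d_*$. The symmetry hypothesis $F \in \cN^{\sym}$ enters only to guarantee that the relevant localisations land in the symmetrised spaces $\cL_*$, $\cL'_*$ and that the various parity/$O(n)$-covariance constraints are consistent, so that the uniqueness statement of Proposition~\ref{prop:locexist} may be invoked in the symmetrised setting.

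Assembling: for each $*$ and each $g \in \hat\Pi'_*(U)$, chain the two instances of Definition~\ref{def:hlocX} to get $\langle \loc'_X\loc_X F(X),g\rangle_0 = \langle \loc_X F(X),g\rangle_0 = \langle F(X),g\rangle_0 = \langle \loc'_X F(X),g\rangle_0$, where the middle equality is \eqref{eq:locploc-key} and uses $\hat\Pi'_*(U)\subseteq\hat\Pi_*(U)$. Summing over $*$ and applying uniqueness of the element of $\hat\cL'$ representing a given functional on $\hat\Pi'(U)$ yields $\loc'_X\loc_X F(X) = \loc'_X F(X)$. The main obstacle — really the only thing requiring care — is verifying the inclusion $\hat\ko'_{+,*}\subseteq\hat\ko_{+,*}$ in all cases, i.e. that $d'_* \le d_*$ for every component and every $(d,\eta)$ in the admissible range, and keeping the bookkeeping of which $([[\varphi]],d_+)$-pair each localisation uses straight; once that is in place the argument is a two-line pairing computation.
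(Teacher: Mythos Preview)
Your argument has a genuine gap in the bulk case. The claimed inclusion $\hat\ko'_{+,\bulk} \subseteq \hat\ko_{+,\bulk}$ does \emph{not} follow from $d'_\bulk \le d_\bulk$ alone, because the two localisations use different engineering dimensions: membership in $\hat\ko_{+,\bulk}$ requires $q(\km) + [\varphi]\,p(\km) < d_\bulk$, while membership in $\hat\ko'_{+,\bulk}$ requires $q(\km) + [\varphi]'\,p(\km) < d'_\bulk$, and since $[\varphi] > [\varphi]'$ the left-hand sides are not comparable. Concretely, at $d=6$, $\eta=0$ one has $[\varphi]=2$, $[\varphi]'=3/2$, $d_\bulk=11$, $d'_\bulk=9$; the index $\km$ with $p(\km)=5$, $q(\km)=1$ satisfies $5\cdot\tfrac32+1=8.5<9$ but $5\cdot 2+1=11\not<11$, so $\km\in\hat\ko'_{+,\bulk}\setminus\hat\ko_{+,\bulk}$. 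Thus $\hat\Pi'_\bulk(U)\not\subset\hat\Pi_\bulk(U)$ in general, and your chain of pairings breaks.

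This is precisely where the hypothesis $F\in\cN^{\sym}$ does real work --- contrary to your remark that it ``enters only to guarantee that the relevant localisations land in the symmetrised spaces''. The paper's proof uses the $O(n)$-evenness of $F_\bulk$ to conclude that $\langle F_\bulk(X),\mathrm{pl}^{(\km)}\rangle_0=0$ whenever $p(\km)$ is odd. Combined with $p(\km)<d'_\bulk/[\varphi]'\le 6$, this forces $p(\km)\le 4$, and \emph{then} the arithmetic $p(\km)[\varphi]+q(\km)<d'_\bulk+4([\varphi]-[\varphi]')\le d_\bulk$ goes through. So for the bulk component you must argue at the level of pairings against individual $\mathrm{pl}^{(\km)}$, using symmetry to kill the odd-degree ones, rather than asserting a set inclusion of index sets. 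For the observable components your inclusion argument is essentially correct (there $d_*=2[\varphi]$, $d'_*=2[\varphi]'$, and the inclusion follows from $[\varphi]'\le[\varphi]$), though again the justification is this specific structure, not merely $d'_*\le d_*$.
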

\begin{proof}
For $* \in \{\o,\x,\ox \}$,  we have inclusion $\Pi'_* \subset \Pi_*$ because $[\varphi]' \le [\varphi]$.  This inclusion implies
\begin{align}
	\langle \sigma_* F_* , g \rangle_0 
		= \langle \loc^*_X \sigma_* F_*  (X) , g \rangle_0
		\quad\; \text{for all}\;\; g \in  \Pi'_* (U) ,
\end{align}
and in particular $\loc^{',*}_X \loc^*_X = \loc^{',*}_X$. 

For $\pi_\bulk F$,  we have to make use of the symmetry of $F$.  
Suppose $\km \in \ko'_\bulk$ has $p(\km) = r$ and we test ${\rm pl}^{(\km)}$ against $\pi_\bulk F (X)$ by
\begin{align}
	\langle \pi_\bulk F (X),  {\rm pl}^{(\km)} \rangle_0 = D^{r} F_\bulk (X, \varphi ; {\rm pl}^{(\km)}) |_{\varphi = 0}  .
\end{align}
By the symmetry of $F_\bulk$,  we have $\langle \pi_\bulk F (X),  g^{(\km)} \rangle_0 = 0$ when $r \in 2\Z +1$,  so we choose $r \in 2\Z$.  Also,  $p (\km) < d'_\bulk / [\varphi]' \le 6$,  so we get a restriction $p(\km) \le 4$ and $q (\km) < d'_\bulk - p(\km) [\varphi]'$. 
Thus $p(\km) [\varphi] + q(\km) < d'_\bulk + 4([\varphi] - [\varphi']) \le d_\bulk$ and in particular,  $\km \in \ko_\bulk$.
Since any $g \in \Pi'_\bulk (U)$ can be expressed as a linear combination of ${\rm pl}^{(\km)}$,  we find
\begin{align}
	\langle  F_\bulk (X) ,  g \rangle_0 
		= \langle \loc^*_X  F_\bulk  (X) ,   g \rangle_0
\end{align}
and $\loc^{',\bulk}_X \loc^\bulk_X F (X) = \loc^{', \bulk}_X F (X)$. 
\end{proof}

\subsubsection{Contraction estimates}
\label{sec:ctrctnestimts}

For the contraction estimate,  we use
\begin{align}
	 \kh^{(\bulk)} = 1,  \quad  \kh^{(\o)} = \kh^{(\x)} = \kh_\sigma , \quad \kh^{(\ox)} = \kh_\ssigma  .  \label{eq:khprstar}
\end{align}

\begin{proposition}
\label{prop:corcrctrmt}

Let $j<N$,  $c$ be sufficiently large compared to $d_\bulk \vee d'_{\bulk}$,  $X \in \cS$ and $F \in \cN^{\sym} (X^{\square})$.
Then for any $\ratio \ge 1$,  $\kh  \in \{\ell, h\}$ and $* \in \{\bulk,\o,\x,\ox\}$,
\begin{align}
\label{eq:corcrctr1mt}
	& \norm{\pi_* (1- \Loc_X) F (X) }_{\ratio \kh_+ ,  T_+ (\varphi)} \\
		& \lesssim \;\; \Big( \frac{\kh_+^{(*)}}{\kh^{(*)}} \Big) \big( 1 + \norm{\varphi}_{\kh_+, \Phi_+ (X^{\square})} \big)^c
		\sup_{t \in [0,1]} \norm{F (X)}_{\kh ,  T (t \varphi)} 
			\times \begin{cases}
				L^{-d_*} & (\kh = \ell) \\
				L^{-d'_*} & (\kh = h) .
			\end{cases}
			\nonumber
\end{align}
\end{proposition}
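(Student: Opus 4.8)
The plan is to deduce Proposition~\ref{prop:corcrctrmt} from the scalar contraction estimate Proposition~\ref{prop:crctrmt} by applying it component-by-component in the graded decomposition $F = \sum_{* \in \{\bulk,\o,\x,\ox\}} \sigma_* F_*$, using Lemma~\ref{lemma:locploc} to replace $\Loc$ with the appropriate scalar $\loc$ on each component, and then rescaling the norms by the appropriate observable-field factor $\kh^{(*)}$.

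First I would fix $* \in \{\bulk,\o,\x,\ox\}$ and observe that $\pi_* (1-\Loc_X) F (X) = \sigma_* (1 - \hat\loc_{X\cap *}) F_* (X)$ by the definition of $\Loc_X^*$. The key algebraic point is that, via the norm definition $\norm{\sigma_* G}_{\kh,T_j(\varphi)} = \kh^{(*)} \norm{G}_{\kh,T_j(\varphi)}$ (reading off \eqref{eq:khprstar} and the definition of the $T_j$-norm on graded functions), the $\sigma_*$-weighted norm on the left of \eqref{eq:corcrctr1mt} equals $\kh_+^{(*)}$ times the scalar Taylor norm of $(1-\hat\loc_{X\cap *}) F_*(X)$ at scale $j+1$. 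Thus it suffices to bound the scalar quantity $\norm{(1 - \hat\loc_{X\cap *}) F_*(X)}_{\ratio \kh_+, T_+(\varphi)}$ by a constant times $L^{-d_*}$ (or $L^{-d'_*}$) $(1 + \norm{\varphi}_{\kh_+,\Phi_+(X^\square)})^c \sup_{t\in[0,1]} \norm{F_*(X)}_{\kh, T(t\varphi)}$, after which I divide by $\kh^{(*)}$ to match the ratio $\kh_+^{(*)}/\kh^{(*)}$ on the right. Here the scalar localisation $\hat\loc$ is taken with the data $([[\varphi]], d_+) = ([\varphi], d_*)$ when $\kh = \ell$ and $([[\varphi]], d_+) = ([\varphi]', d'_*)$ when $\kh = h$; the monotonicity of $\hat\loc$ under this choice is exactly why we get $L^{-d_*}$ versus $L^{-d'_*}$ in the two cases.

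Next I would verify the hypotheses of Proposition~\ref{prop:crctrmt} in each case. For $\kh = \ell$ we have $\ell_{+,\bulk}/\ell_{\bulk} = L^{-[\varphi]}$ exactly (from the definition $\ell_{\bulk,j} = \ell_0 L^{-[\varphi] j}$), so the hypothesis $\kh_{+,\bulk}/\kh_\bulk \le C L^{-[[\varphi]]} < 1$ holds with $[[\varphi]] = [\varphi]$; for $\kh = h$ likewise $h_{+,\bulk}/h_{\bulk} = (\tilde g_{j+1}/\tilde g_j)^{-1/4} L^{-[\varphi]'} \le 2^{1/4} L^{-[\varphi]'}$, so the hypothesis holds with $[[\varphi]] = [\varphi]'$ and a constant $C = 2^{1/4}$. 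The requirement $p_\Phi \ge d_+ - [[\varphi]]$ is met because $p_\Phi = 3d \ge d_\bulk = 3d - 7 + 2\eta$ and $p_\Phi \ge d'_\bulk \le 2d-3$ (here I use the parameter choices in Section~\ref{sec:chparams}). For the components $* \in \{\o,\x,\ox\}$ there is one subtlety: $\hat\loc_{X\cap *}$ acts on $F_*$ restricted to the at-most-one lattice point $X \cap \{\o\}$ (or $X \cap \{\o,\x\}$); when $X \cap * = \emptyset$ the term is simply $\sigma_* F_*(X)$ and one uses that $F_*(X,\cdot) \equiv 0$ on $\cN_*(X^\square)$ in that case (since $\cN_\#(X) = \one_{\#\in X}\sigma_\# \cN_\bulk(X)$), so the bound is trivial; when $X \cap * \ne \emptyset$ one applies Proposition~\ref{prop:crctrmt} with the coordinate patch $U = X^\square$, which is a genuine coordinate patch for $L$ large and $X \in \cS$ as noted in the proofs of Propositions~\ref{prop:locXBbdmt} and \ref{prop:crctrmt}. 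Since $F_*$ is the $\bulk$-type scalar appearing after stripping $\sigma_*$, and $F \in \cN^{\sym}$ transfers the required evenness/oddness to $F_*$ (which is needed only to pass from $\loc$ to $\loc'$ via Lemma~\ref{lemma:locploc} when $\kh = h$), the scalar hypotheses are all in place. Finally, taking $\sup_{t\in[0,1]}$ inside and recombining the graded pieces (summing over the four values of $*$ contributes only a dimensional constant) yields \eqref{eq:corcrctr1mt}, with the constant $c$ any value $\gtrsim d_\bulk \vee d'_\bulk$ as demanded by the cited proposition ($c \ge 2 d_+ / [[\varphi]]$ suffices).

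The main obstacle I anticipate is bookkeeping rather than conceptual: one must be careful that when $\kh = h$ the localisation operator built into $\Loc_X$ is $\loc_X$ (defined with $d_*, [\varphi]$), not $\loc'_X$ (defined with $d'_*, [\varphi]'$), yet the contraction one actually wants at the $h$-scale is governed by $d'_*$ and $[\varphi]'$. This is precisely where Lemma~\ref{lemma:locploc} is used: it tells us $\loc'_X \loc_X F(X) = \loc'_X F(X)$, hence $(1-\Loc_X)F = (1-\loc'_X)F + (\loc'_X - \Loc_X)F$ and on the relevant component the second term is itself of the form $(1-\loc'_X)(\text{something localised})$, which $\hat\loc'$ with parameter $d'_*$ contracts by $L^{-d'_*}$; alternatively one applies Proposition~\ref{prop:crctrmt} directly to $(1-\hat\loc'_{X\cap *})F_*$ after discarding $\loc_X F$ using that $\loc'$ annihilates the difference. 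Either route requires tracking which of $(d_*,[\varphi])$ or $(d'_*,[\varphi]')$ controls the power of $L$ in each case, and confirming the chain of inequalities $[\varphi]' \le [\varphi]$, $\ko'_{+,*} \subset \ko_{+,*}$ that makes the substitution legitimate; all of this is assembled from Lemma~\ref{lemma:locploc} and the definitions in \eqref{eq:dstar}.
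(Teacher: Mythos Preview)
Your proposal is correct and follows essentially the same route as the paper: treat the case $\kh = \ell$ by a direct application of Proposition~\ref{prop:crctrmt} with $([[\varphi]],d_+) = ([\varphi],d_*)$, and for $\kh = h$ use Lemma~\ref{lemma:locploc} to write $(1-\Loc_X)F = (1-\Loc'_X)(1-\Loc_X)F$, bound the inner factor via Proposition~\ref{prop:locXBbdmt}, and then apply Proposition~\ref{prop:crctrmt} with $([[\varphi]],d_+) = ([\varphi]',d'_*)$. The graded bookkeeping with the factors $\kh^{(*)}$ and the verification of the hypothesis $\kh_{+,\bulk}/\kh_\bulk \le C L^{-[[\varphi]]}$ are exactly as you describe.
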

\begin{proof}
By our assumption on $X$,  we can always find a coordinate patch containing $X^\square$. 
Then the case $\kh = \ell$ of \eqref{eq:corcrctr1mt} is direct from Proposition~\ref{prop:crctrmt},  when we take $(\kh,\kh_+) = (\ell,\ell_+)$,  $d_+ = d_*$,  $[[\varphi]]=[\varphi]$.

We turn our interest to the case $\kh = h$.  By Lemma~\ref{lemma:locploc},
\begin{align}
	( 1 - \Loc_X ) F = (1 - \Loc'_X) F + (\Loc'_X - \Loc_X) F = (1 - \Loc'_X) (1 -  \Loc_X) F
\end{align}
and also by Proposition~\ref{prop:locXBbdmt}, 
\begin{align}
	\norm{(1 -  \Loc_X) F (X)}_{h,T(\varphi)} \lesssim \norm{F (X)}_{h,T(\varphi)} ,
\end{align}
thus it will be sufficient to prove
\begin{align} 		\label{eq:corcrctr2mtlocp}
	& \norm{(1- \Loc'_X) F_* (X) }_{\ratio h_+ ,  T_+ (\varphi)} \\
	& \; \lesssim L^{-d'_{*}} 
		(1 + \norm{\varphi}_{h_+, \Phi_+ (X^{\square})} )^c \sup_{t \in [0,1]} \norm{F_\beta (X)}_{h ,  T (t \varphi)}  .		
		\nonumber
\end{align}
But this is direct from Proposition~\ref{prop:crctrmt} when we take $(\kh,\kh_+) = (h,h_+)$,  $d_+ = d'_*$,  $[[\varphi]]=[\varphi]'$.
\end{proof}

\section{Effective potentials}
\label{sec:effpot}

Effective potentials are local polynomials of $\varphi \in (\R^n)^{\Lambda}$ respecting additional symmetries.  
The space of effective potentials can be decomposed into
\begin{align}
	\cU = \R + \cU_{\ox} + \cV , \qquad	\cV = \cV_\bulk + \cV_\o + \cV_{\x} 
\end{align}
where $\cU_\bulk := \R + \cV_\bulk \subset \cL_\bulk$,  $\cV_\hash \subset \sigma_\hash \cL_\hash$ for $\hash \in \{\o,\x\}$ and $\cU_\ox \subset \sigma_\ox \cL_\ox$.
We decompose each space further by
\begin{align}
	\begin{cases}
	\cV_\bulk \subset \cL_\bulk & : \; \ko_2 \cup \ko_{2,\nabla} \cup \ko_4 \cup \ko_{4,\nabla} \\
	\cV_\hash \subset \sigma_\hash \cL_\hash & : \; \ko_1 \cup \ko_{1,\nabla} \\
	\cU_\ox \subset \sigma_\ox \cL_\ox & : \; \ko_0
	\end{cases}
\end{align}
for $\ko_i$'s and $\ko_{i,\nabla}$'s we are now about to define. 
The observable part is spanned by polynomials labelled by
\begin{align}
	\ko_{0} &= \{ \km \in \ko_\ox : p(\km)= q(\km)=0 \}  ,\\ 
	\ko_{1} &= \{ \km \in \ko_\x : p(\km) = 1,  q(\km) = 0 \} \\
	\ko_{1,\nabla} &= \big\{ \km \in \ko_\x : p(\km) = 1,   \;  q(\km) \in (0,  [\varphi] ) \cap \Z \big\}
\end{align}
respectively, and let
\begin{align}
	\cV_\hash &= \left\{ \!\! 
	\begin{array}{l}
		\sigma_\hash V_{\hash} : V_{\hash, x} = \sum_{\km \in \ko_1 \cup \ko_{1,\nabla}} \lambda^{(\km)}_{\hash} S_x^{(\km)} \one_{x= \hash} \text{ for some } \lambda_{\hash}^{(\km)} \in \R ,   \\
	    \;\;\; V_{\hash} (-\varphi) = - V_{\hash} (\varphi) ,  \; V_{\hash} (R \varphi) = V_{\hash} (\varphi) \text{ if } R \in  O(n) \text{ fixes } \varphi^{(1)} 
	\end{array}	\right\} , \\
	\cU_\ox &= \{ \sigma_\ox U_{\ox} : U_{\ox,x}(\varphi ) = (q_\o \one_{x = \o} + q_\x \one_{x = \x} )/2 \text{ for some } q_\o, q_\x \in \R \} .
\end{align}

The bulk part of the effective potential is labelled by
\begin{align}
\begin{split}
	\ko_{2} 
		&= \big\{ \km \in \ko_\bulk : p(\km) = 2,  \;  q (\km) = 0 \big\} ,  \\
	\ko_{2,\nabla} 
		&= \big\{ \km \in \ko_\bulk : p(\km) = 2,  \;  q (\km) \in (0,  d_\bulk - 2[\varphi]) \cap 2\Z \big\} ,  \\
	\ko_{4} 
		&= \big\{ \km \in \ko_\bulk : p(\km) = 4 ,  \; q(\km) = 0 \big\}  , \\
	\ko_{4,\nabla} 
		&= \big\{ \km \in \ko_\bulk : p(\km) = 4 ,  \; q(\km) \in (0,  d_\bulk - 4[\varphi] ) \cap 2\Z  \}  \big\}   .
\end{split}		\label{eq:kotfnabla}
\end{align}
Note that
\begin{align}
	\text{$q(\km) \le 2d -6$ for $\km \in \ko_{2,\nabla}$  and  $q(\km') \le d-4$ for $\km' \in \ko_{4, \nabla}$.}
	\label{eq:kotfnablarmk}
\end{align}
We require $\cV_\bulk$ to satisfy symmetries
\begin{align}
\begin{split}
	\cV_{\bulk} = \Big\{ V_\bulk \,  : \,  V_\bulk (\varphi) = \sum_{\km \in \ko_2 \cup \ko_4 \cup \ko_{2,\nabla} \cup \ko_{4,\nabla}} c_{\km} S^{(\km)} (\varphi) \text{ for some } c_{\km} \in \R,  \\
	V_{\bulk,x} (F (R \varphi)) = V_{\bulk, F^{-1} x} (\varphi) \text{ for } (R,F) \in O(n) \times \Aut  \Big\}
\end{split}
	\label{eq:cVbulkdefi}
\end{align}
with $F \in \Aut$ defined in \eqref{eq:Autdefn} acting on $\varphi$ via $F(\varphi)_x = \varphi_{F^{-1} x}$.  As mentioned,  we also take
\begin{align}
	\cU_{\bulk} = \{ u_{\bulk} + V_\bulk \; : \; u_\bulk \in \R,  \; V_\bulk \in \cV_\bulk  \} .
\end{align}
In practice,  we decompose
\begin{align}
	V_\bulk = V_2 + V_{2,\nabla} + V_{4} + V_{4,\nabla} \in \cV_2 + \cV_{2,\nabla} + \cV_{4} + \cV_{4,\nabla}  = \cV_\bulk
\end{align}
where each term has form $V_{\alpha} = \sum_{\km \in \ko_\alpha} c_\km S^{(\km)} (\varphi)$ and $\cV_{\alpha}$ is the space of such $V_\alpha$ for $\alpha \in \{2,  (2,\nabla),4,(4,\nabla) \}$.
We denote $\pi_\alpha$ for the projection on the respective space---see Remark~\ref{remark:projExists} for existence of such map.

We can also think of $\cV$ as a set of polymer functions $V$ with $V(X,\varphi) = \sum_{x \in X} V_x (\varphi)$ and $\cU$ be the set of functions $U (X) = V(X) + u_\bulk |X| + \sigma_\ox u_{\ox} (X)$.
In this case,  we denote $\pi_0$ for the projection on the constant part given by $u_\bulk |X| = \pi_0 U (X)$.

\begin{remark} \label{remark:projExists}
For any $\km \in \ko_\bulk$,  there exists projection $\pi_\km : \cV_\bulk \rightarrow \R S^{(\km)}$ such that $\pi_{\km'} \pi_{\km} V_\bulk = \delta_{\km, \km'} \pi_{\km} V_\bulk$.  This is because of the later part of Proposition~\ref{prop:locexist}: if $V_\bulk = \sum_{\km} c_\km S^{(\km)}$,  then there exists $g^{(\km)} \in \Pi_\bulk$ such that $c_{\km} = \langle V_\bulk ,  g^{(\km)} \rangle_0$.

By the same reasoning,  we can also find projections $\pi_{\km_2} : \cV_{\hash} \rightarrow \sigma_\hash \R S^{(\km_2)}$ for each $\km_2 \in \ko_{\hash}$ when $\hash \in \{ \o,\x \}$.
\end{remark}

\begin{remark}
The $O(n)$-invariance in \eqref{eq:cVbulkdefi} enforces $V_2$ and $V_4$ to be only of specific forms: there exist $\nu^{(\emptyset)},  g^{(\emptyset)}$ such that
\begin{align}
	V_{2} (\varphi)_x &= \frac{1}{2} \nu^{(\emptyset)} |\varphi_x |^2, \qquad
	V_4 (\varphi)_x = \frac{1}{4} g^{(\emptyset)} |\varphi_x |^4 .
\end{align}
This also happens to the other terms,  but the only other case we care is $p(\km_1)=q(\km_1) =2$.
These terms are marginal in any dimensions $d \ge 4$.
Due to symmetry considerations,  we find that such terms sum up to
\begin{align}
	\sum_{\km_1 \in \ko_{2,\nabla}}^{q(\km_1) = 2} \nu^{(\km_1)} S_x^{(\km_1)} (\varphi) = y_{\Delta} (\varphi_x \cdot \Delta \varphi_x) + y_{\nnabla} (\nabla \varphi_x \cdot \nabla \varphi_x)  \label{eq:yDeltannabla}
\end{align}
for some $y_\Delta,  y_\nnabla \in \R$.  
\end{remark}

In practice,  we denote the coefficients of $V_\bulk$ by $\vec{\nu} = (\nu^{(\km_1)})_{\km_1 \in \ko_2 \cup \ko_{2,\nabla}},  \vec{g} = (g^{(\km_2)})_{\km_2 \in \ko_4 \cup \ko_{4,\nabla}} \subset \R$, 
so that
\begin{alignat}{4}
	V_{2} (\varphi)_x &= \frac{1}{2} \nu^{(\emptyset)} |\varphi_x |^2 ,   \qquad &
	V_{2,\nabla} (\varphi)_x &= \frac{1}{2} \sum_{\km_1 \in \ko_{2,\nabla}} \nu^{(\km_1)} S_x^{(\km_1)} (\varphi) , \\
	V_4 (\varphi)_x &= \frac{1}{4} g^{(\emptyset)} |\varphi_x |^4 ,   \qquad &
	V_{4,\nabla} (\varphi)_x &= \frac{1}{4} \sum_{\km_2 \in \ko_{4,\nabla}} g^{(\km_2)}  S_x^{(\km_2)} (\varphi)  
	\label{eq:Vtfnab}
\end{alignat}
and using \eqref{eq:yDeltannabla},  the terms with $\km_1 \in \ko_{2,\nabla}$ and $q(\km_1) = 2$ can be written as
\begin{align}
	\sum_{\km_1 \in \ko_{2,\nabla}}^{q(\km_1) = 2} \nu^{(\km_1)} S^{(\km_1)}_x (\varphi) = y_{\Delta} (\varphi_x \cdot \Delta \varphi_x) + y_{\nnabla} (\nabla \varphi_x \cdot \nabla \varphi_x)  .
\end{align}
On the whole lattice,  summation by parts gives
\begin{align}
	\sum_{\km_1 \in \ko_{2,\nabla}}^{q(\km_1) = 2} \nu^{(\km_1)} S^{(\km_1)} (\Lambda, \varphi) 
		= \sum_{x \in \Lambda} (y_{\Delta} - y_\nnabla ) (\varphi_x \cdot \Delta \varphi_x)  ,   \label{eq:summbyparts}
\end{align}
so globally,  we can treat $y_\Delta$ and $y_\nnabla$ equivalently and it is what we do in the RG steps.

\begin{definition} \label{def:cVnote}
We define 
\begin{align}
	\cV_{\Delta} &= \{ V \in \cV_\bulk : V_x (\varphi) = y_\Delta (\varphi_x \cdot \Delta \varphi_x) ,  \; y_\Delta \in \R \} , \\
	\cV_{\nnabla} &= \{ V \in \cV_\bulk : V_x (\varphi) = y_\nnabla (\nabla \varphi_x \cdot \nabla \varphi_x)  ,  \; y_\nnabla \in \R \} 
\end{align}
and $\pi_{\Delta}, \pi_{\nnabla}$ be the projection of $\cV$ onto $\cV_\Delta$ and $\cV_{\nnabla}$,  respectively. 
We define
\begin{align}
	& \mathbb{V}^{(0)} : U_x (\varphi) \mapsto (1 - \pi_0 - \pi_{\ox} - \pi_\nnabla ) U_x (\varphi) - y_{\nnabla}) (\varphi_x \cdot \Delta \varphi_x)  \\
	& \cV^{(0)} = \mathbb{V}^{(0)} (\cU)
\end{align}
i.e.,  removing the constant terms and transferring the coefficient of $V_{\nnabla}$ to $V_\Delta$.
\end{definition}

\subsection{Norms on local polynomials}
\label{sec:nrmsolply}

We can use the Taylor norm to measure the effective potentials.  However, due to their explicit expression,  we can also use an explicitly defined norms,  equivalent to the Taylor norm (see Lemma~\ref{lemma:cLnorm}).

\begin{definition}
For $\kh = (\kh_\bulk, \kh_\sigma,\kh_{\ssigma})$,  we equip spaces $\cV_\bulk, \cV_\o,\cV_\x$ and $\cU_\ox$ with norm
\begin{align}
	\norm{V_\bulk}_{\cL_j (\kh)} 
		& = L^{jd} \max \big\{ \kh_{\bulk}^2 L^{- q (\km_1 ) j} | \nu^{(\km_1)} | ,  \;
		\kh_{\bulk}^4 L^{- q (\km_2) j} | g^{(\km_2)}|  \nnb
		& \qquad\qquad\qquad\quad :\,  \km_1 \in \ko_2 \cup \ko_{2,\nabla} ,  \,  \km_2 \in \ko_4 \cup \ko_{4,\nabla}  \big\} ,
		\\
	\norm{\sigma_\hash V_\hash}_{\cL_j (\kh)} 
		& = \kh_{\bulk} \kh_{\sigma} \max\{ L^{-q(\km) j} |\lambda_\#^{(\km)} | : \km \in \ko_1 \cup \ko_{1,\nabla} \}  ,   \qquad \# \in \{ \o, \x \} \\
	\norm{\sigma_\ox U_\ox}_{\cL_j (\kh)} 
		& = \kh_{\ssigma} \big( |q_\o | + |q_\x | \big)
\end{align}
and for $U = u_\bulk + \sum_{* \in \{\bulk,\o,\x\}} \sigma_* V_* + \sigma_\ox U_\ox \in \cU$,  
\begin{align}
	\norm{U}_{\cL_j (\kh)} = L^{jd} |u_\bulk| + \sum_{* \in \{ \bulk,\o,\x \}}  \norm{\sigma_* V_*}_{\cL_j (\kh)} + \norm{\sigma_\ox U_\ox}_{\cL_j (\kh)} .
	\label{eq:cLnorm}
\end{align}
\end{definition}

We also abbreviate $\norm{\cdot}_{\cL_j (\kh)}$ for $\norm{\cdot}_{\cL_j (\kh_j)}$.

\begin{lemma} \label{lemma:cLnorm}
For $\km \in \bar{\ko}$, 
\begin{align}
	\norm{M^{(\km)}_x (\varphi) }_{\kh, T_j (0)} \lesssim  L^{-q(\km) j} \kh_\bulk^{p(\km)}
\end{align}
for $b \in \cB$.  In particular,  for $V  \in \cV$,
\begin{align}
	\norm{V}_{\cL_j (\kh)} & \asymp \sup_{b \in \cB} \norm{  V (b,\varphi) }_{\kh_j, T_j (0)}
\end{align}
\end{lemma}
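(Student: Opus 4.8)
The plan is to reduce both parts of the lemma to the single‑monomial estimate $\norm{M^{(\km)}_x(\varphi)}_{\kh,T_j(0)}\lesssim L^{-q(\km)j}\kh_\bulk^{p(\km)}$, after which the two‑sided comparison follows from the triangle inequality and the duality furnished by Proposition~\ref{prop:locexist}. First I would prove this estimate (for $\km$ with $q(\km)\le p_\Phi$, as holds for all indices occurring in $\cV$). The key observation is that $M^{(\km)}_x$ is homogeneous of degree $p:=p(\km)$ in $\varphi$, so at $\varphi=0$ only the $r=p$ term of the Taylor series survives and $\norm{M^{(\km)}_x}_{\kh,T_j(0)}=\frac{1}{p!}\norm{D^pM^{(\km)}_x(0)}_{\kh,T_j^{(p)}(0)}$. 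Expanding the $p$‑th derivative, $D^pM^{(\km)}_x(0;f^{(p)})$ is a finite sum — over the $p!$ assignments of the factors $\nabla^{m_k}\varphi^{(\alpha_k)}_x$ to the arguments of $f^{(p)}$ and over the bounded‑size stencils of the discrete derivatives — of terms of the form $\pm\,\nabla^{\km'}f^{(p)}$ evaluated at stencil points, where $\km'$ collects all the derivatives that appeared and hence has total order $q(\km')=q(\km)$ and species count $p$. Rearranging the definition \eqref{eq:testfncnrm} of the test‑function norm bounds each such term by $\kh_\bulk^{p}L^{-q(\km)j}\norm{f^{(p)}}_{\kh,\Phi_j}$; the factor $1/p!$ cancels the $p!$ assignments and the number of stencil terms depends only on $d$ and $n$, so the supremum over $\norm{f^{(p)}}_{\kh,\Phi_j}\le1$ gives the claim. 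By \eqref{eq:sympolys} the symmetrised monomial $S^{(\km)}_x$ is an average over $\Sigma_{\rm axes}$ of terms $\pm\,M^{(\Theta\kl(\km))}_x$ with $p(\Theta\kl(\km))=p(\km)$ and $q(\Theta\kl(\km))=q(\km)$, so it obeys the same bound.

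Given this, the inequality $\sup_{b\in\cB}\norm{V(b,\varphi)}_{\kh,T_j(0)}\lesssim\norm{V}_{\cL_j(\kh)}$ is immediate. Writing $V(b,\varphi)=\sum_{x\in b}V_x(\varphi)$ with $V_x$ a fixed local polynomial expanded in the $S^{(\km)}$‑basis via the coefficients $\vec\nu,\vec g$ (and the analogous observable coefficients $\lambda^{(\km)}_\hash$, $q_\o$, $q_\x$ for the $\sigma_\hash V_\hash$ and $\sigma_\ox U_\ox$ pieces), the triangle inequality, the finiteness of the index set, and the monomial bound give that $\norm{V(b,\varphi)}_{\kh,T_j(0)}$ is bounded by $L^{jd}$ times $\max_\km|c_\km|\,\kh_\bulk^{p(\km)}L^{-q(\km)j}$, with $c_\km$ the coefficient of $S^{(\km)}$ in $V_x$, uniformly in $b$ since $V_x$ is the same polynomial at every site. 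Here $L^{jd}$ is the number of sites in a $j$‑block, and the right‑hand side matches the definition \eqref{eq:cLnorm} up to the numerical prefactors in \eqref{eq:Vtfnab} and the fact that on graded $F$ the weights $\kh_\sigma,\kh_\ssigma$ enter exactly as in the definition of $\norm{\cdot}_{\kh,T_j(\varphi)}$.

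The reverse bound $\norm{V}_{\cL_j(\kh)}\lesssim\sup_{b\in\cB}\norm{V(b,\varphi)}_{\kh,T_j(0)}$ requires recovering each coefficient. The tool is the second part of Proposition~\ref{prop:locexist} (equivalently the projections $\pi_\km$ of Remark~\ref{remark:projExists}): there are lattice polynomials $g^{(\km)}\in\hat\Pi$, dual to the $S^{(\km')}$ in the pairing $\langle\cdot,\cdot\rangle_0$ and built from the coordinate monomials $\poly^{(\km')}$ of \eqref{eq:pldefi}. Testing $V(b,\cdot)$ against the translate of $g^{(\km)}$ centred on $b$ — which one may take supported in $b^\square$ since $\langle V(b),f\rangle_0$ depends only on $f|_{b^\square}$ by the locality of each $V_x$ — isolates $c_\km$ up to a constant depending only on $d$ and $n$, giving $|c_\km|\lesssim L^{-jd}\norm{V(b,\cdot)}_{\kh,T_j(0)}\,\norm{g^{(\km)}_b}_{\kh,\Phi_j}$; the observable coefficients are extracted the same way with $\Loc^*$ for $*\in\{\o,\x,\ox\}$ and the weights \eqref{eq:khprstar}. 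I expect the genuinely delicate point to be the estimate $\norm{g^{(\km)}_b}_{\kh,\Phi_j}\lesssim L^{jd}\kh_\bulk^{-p(\km)}L^{q(\km)j}$ \emph{with a constant depending only on $d$ and $n$}, since the lemma's $\asymp$ does not permit $L$‑dependent constants: one must use that $g^{(\km)}$ is a fixed, $L$‑independent combination of the $\poly^{(\km')}$ with $p(\km')=p(\km)$ and $q(\km')\le q(\km)$ — obtained by solving a linear system with dimensional entries — together with the elementary bound $L^{q(\km'')j}\norm{\nabla^{\km''}\poly^{(\km')}}_{\ell^\infty(b^\square)}\lesssim L^{q(\km')j}$ for a coordinate monomial of degree $q(\km')$ on a patch of diameter $\asymp L^j$ and any $\km''$ with $q(\km'')\le p_\Phi$. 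Assembling over the finitely many $\km$ and combining with the second paragraph proves the equivalence; the first assertion of the lemma is exactly the monomial bound of the first paragraph.
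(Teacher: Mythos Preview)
Your proposal is correct and follows essentially the same route that the paper's cited references take: the monomial estimate is obtained exactly as you describe, by observing homogeneity and pairing $D^pM^{(\km)}_x(0;\cdot)$ against test functions using the definition \eqref{eq:testfncnrm}, and the norm equivalence then follows from this estimate together with the finite-dimensionality of $\cV$ and the dual basis of Proposition~\ref{prop:locexist}. The paper's own proof simply cites \cite[(1.34)]{BBS5} and the framework of \cite{BBS2}, so you have written out what those references contain.

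One small bookkeeping point: your intermediate scalings for the dual test functions are slightly off. With the normalisation $\langle S^{(\km')}(b),g^{(\km)}\rangle_0=\delta_{\km,\km'}$ one has $\norm{g^{(\km)}}_{\kh,\Phi_j}\lesssim L^{-jd}\kh_\bulk^{-p(\km)}L^{q(\km)j}$ (the factor $L^{-jd}$ compensates the block sum in $S^{(\km')}(b)$), and then the duality gives $|c_\km|\le\norm{V(b)}_{\kh,T_j(0)}\norm{g^{(\km)}}_{\kh,\Phi_j}$ directly, without an additional $L^{-jd}$. Your two displayed estimates compose to the correct conclusion, but each is individually miscounted by a factor $L^{jd}$. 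This does not affect the validity of the argument; you have correctly identified that the $L$-independence of the equivalence constants is the only point requiring care, and your sketch of how to obtain it --- that $g^{(\km)}$ is a fixed ($L$-independent) linear combination of the $\poly^{(\km')}$ and that coordinate monomials on a block of side $L^j$ obey the right derivative bounds --- is the standard mechanism used in \cite{BBS2}.
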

\begin{proof}
The proof of the first inequality follows from the argument of \cite[(1.34)]{BBS5} (the space of effective potentials is extended,  but the setting of \cite{BBS2} was general enough to accommodate this extension).
The second relation follows from the first. 
\end{proof}

Although $\norm{\cdot}_{\cL_j}$-norm is natural in the perspective of Lemma~\ref{lemma:cLnorm},  it does not reflect the true decay rate of each coefficient.  Instead,  we have to classify $\ko_2 \cup \ko_{2,\nabla}$ according to the number of derivatives: let
\begin{align}
	\kA_0 &= \big\{ \km_1 \in \ko_{2} \big\} ,
	\\
	\kA_1 &= \big\{ \km_1 \in \ko_{2,\nabla} \; : \; 0 < q( \km_1 ) < 2[\varphi] \big\} ,
	\\
	\kA_2 &= \big\{ \km_1 \in \ko_{2,\nabla} \; : \; q( \km_1 ) = 2[\varphi] \in 2\Z \big\} ,	
	\\
	\kA_3 &= \big\{ \km_1 \in \ko_{2,\nabla} \; : \; 2[\varphi] < q( \km_1 ) \le 2 d-  6 \big\}  .
\end{align}
($\kA_2$ is empty if $[\varphi]$ is not an integer.)
In the next definition,  $\kt$ is as in Section~\ref{sec:chparams}.

\begin{definition} \label{defi:cVnormdefi}
Define
\begin{align}
	\label{eq:cVnormdefi}
	\norm{V_\bulk}_{\cV_j (\kh)} 
		& = \kh_{\bulk}^2  \max\big\{ L^{(d - q (\km_1)) j}  |\nu^{(\km_1)}|  \,  :\,  \km_1 \in \kA_0 \cup \kA_1 \cup \kA_2 \big\} 
		\nnb &\quad +
		\kh_{\bulk}^2 \scale_j^{\kt} L^{(d - 2 [\varphi] ) j} \max\big\{  |\nu^{(\km_1)}|  \,  :\,  \km_1 \in \kA_3 \big\} 
		\nnb &\quad +
		\kh_{\bulk}^4  L^{dj }  \max\big\{ 
		|g^{(\km_2)}|  \,  :\,   \,  \km_2 \in \ko_4  \big\} 
		\nnb &\quad +
		\kh_{\bulk}^4 \scale_j^{\kt} L^{ dj} \max\big\{ 
		 |g^{(\km_2)}|  \,  :\,   \,  \km_2 \in \ko_{4,\nabla}  \big\}
\end{align}
and for $\hash \in \{ \o ,  \x\}$,
\begin{align}
	\norm{\sigma_\hash V_{\hash}}_{\cV_j (\kh)} &= 
		\kh_\bulk \kh_{\sigma} \max\{ L^{-q (\km) j} |\lambda^{(\km)}_\hash  |  \, : \,  q (\km) < d - 2[\varphi]  \}  \\
		& \quad + 
		\kh_\bulk \kh_{\sigma} \scale_j^\kt \max\{ L^{- (d - 2[\varphi]) j } |\lambda^{(\km)}_\hash  |  \, : \,  q (\km) \in [ d - 2[\varphi] ,  [\varphi] ) \} .
\end{align}
For generic $U = u_\bulk +  \sum_{* \in \{\bulk,\o,\x\}} \sigma_* V_* + \sigma_\ox U_\ox \in \cU$,
\begin{align}
	\norm{U}_{\cV_j (\kh)} = L^{jd} |u_\bulk| + \sum_{* \in \{ \bulk,\o,\x \}}  \norm{\sigma_* V_*}_{\cV_j (\kh)} +  \norm{\sigma_\ox U_\ox}_{\cL_j (\kh)}   .
\end{align}
\end{definition}

We also abbreviate $\norm{\cdot}_{\cV_j (\kh)}$ for $\norm{\cdot}_{\cV_j (\kh_j)}$.
We collect some obvious properties of this norm in the next lemma.

\begin{lemma}
\label{lemma:cVnrmvslnrm}

For $V \in \cV_j$,
\begin{align}
	\norm{V}_{\cL_j (\ell)}
		\lesssim \norm{V}_{\cV_j (\ell) }
		& \lesssim \scale_j^{-1+\kt} \norm{V}_{\cL_j (\ell)}
		\label{eq:cVnrmvslnrm1} , \\
	\norm{V - \E_{j+1} \theta V}_{\cV_{j} (\ell)}
		& \lesssim \ell_0^{-2}  \tilde\chi_j \norm{V}_{\cV_j (\ell)}
		\label{eq:cVnrmvslnrm2} 
\end{align}
and
\begin{align}
	\norm{V}_{\cL_{j+1} (\ell) } \le L^2 \norm{V}_{\cL_j (\ell) } , \qquad 	
	\norm{V}_{\cV_{j+1} (\ell)} \le L^2 \norm{V}_{\cV_j (\ell)}  .
		\label{eq:cVnrmvslnrm3} 
\end{align}
\end{lemma}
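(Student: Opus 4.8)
All three assertions are read off the explicit formulae for $\norm{\cdot}_{\cL_j(\kh)}$ and $\norm{\cdot}_{\cV_j(\kh)}$ in Definition~\ref{defi:cVnormdefi}: each is a finite sum of maxima, over the index sets $\ko_2,\ko_{2,\nabla},\ko_4,\ko_{4,\nabla},\ko_1\cup\ko_{1,\nabla},\ko_0$, of the modulus of a coefficient of $V$ times an explicit monomial in $L^j$, $\ell_0$ and $\tilde{g}_j$; so the plan is to reduce every inequality to a coefficient-by-coefficient comparison of the two weight functions, together with a single analytic input on $\Gamma_{j+1}$ for \eqref{eq:cVnrmvslnrm2}.

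For \eqref{eq:cVnrmvslnrm1} I would first note that the $\cL_j$-weight and the $\cV_j$-weight of a coefficient agree except on the ``high-derivative'' indices, namely $\kA_3$, $\ko_{4,\nabla}$, and the observable directions with $q(\km)\ge d-2[\varphi]$; on each of these the $\cV_j$-weight is the $\cL_j$-weight times a factor $\scale_j^{\kt}L^{(q(\km)-q_0)j}$, where $q_0$ is the corresponding marginal derivative order ($2[\varphi]$, $0$, or $d-2[\varphi]$). Using the a priori bounds $q(\km)\le 2d-6$ on $\ko_{2,\nabla}$ and $q(\km)\le d-4$ on $\ko_{4,\nabla}$ recorded in \eqref{eq:kotfnablarmk}, and $q(\km)<[\varphi]$ on $\ko_{1,\nabla}$, together with $\scale_j=L^{-(d-4+2\eta)j}\le1$ (valid since $d\ge d_{c,u}$), each such factor is $\le C\,\scale_j^{-1+\kt}$, which gives the right-hand inequality. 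For the left-hand inequality one checks instead that each factor is bounded below by a positive constant, i.e.\ that the gap $q(\km)-q_0$ exceeds $(d-4+2\eta)\kt$; this is precisely what the smallness conditions on $\kt$ in Section~\ref{sec:chparams} provide (and when $(d,\eta)=(4,0)$ every fraktur exponent is trivial, $\scale_j\equiv1$, and the two norms coincide).

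For \eqref{eq:cVnrmvslnrm2} the analytic input is \eqref{eq:ECexp}: since $V$ is a polynomial in $\varphi$ of degree $\le4$, $\E_{j+1}\theta V=e^{\frac12\Delta_{\Gamma_{j+1}}}V$ is a finite sum of terms obtained by contracting zero, one or two pairs of fields of $V$ through $\Gamma_{j+1}$, and the zero-contraction term is $V$ itself, so $V-\E_{j+1}\theta V$ collects the one- and two-contraction contributions. Each contraction of $\nabla^a\varphi_x$ with $\nabla^b\varphi_x$ produces the scalar $\nabla^a_x\nabla^b_y\Gamma_{j+1}(x,y)\big|_{y=x}$, of modulus $\lesssim\kc_{j+1}^2\,L^{-(|a|+|b|)j}=\tilde\chi_j\,L^{-(d-2+\eta)j}\,L^{-(|a|+|b|)j}$ by \eqref{eq:Gammajbounds2}--\eqref{eq:kcdefi}, and drops the degree by two, which gains a factor $\ell_{\bulk,j}^{-2}=\ell_0^{-2}L^{(d-2+\eta)j}$ when one passes to the $\cV_j(\ell)$-weight. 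Since $\Gamma_{j+1}$ commutes with $O(n)$ and $\cL$ is closed under $\Delta_\Gamma$ (Section~\ref{sec:locpols}), the result again lies in $\cV_j$ up to an additive constant and summation-by-parts identities of the type \eqref{eq:summbyparts}, so it remains to match the generated monomials against those of $V$: the $L^{\pm(d-2+\eta)j}$ factors cancel, the residual $L^{-(|a|+|b|)j}$ and the shift in derivative count are handled exactly as in \eqref{eq:cVnrmvslnrm1}, and the prefactor left over is precisely $\ell_0^{-2}\tilde\chi_j$; the two-contraction terms carry an extra $\ell_0^{-2}\tilde\chi_j\le1$ that is simply discarded. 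Summing over the finitely many monomials of $V$ gives \eqref{eq:cVnrmvslnrm2}.

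Finally, \eqref{eq:cVnrmvslnrm3} is immediate: in both norms the scale-$(j+1)$ weight of a coefficient is an explicit power of $L$ times its scale-$j$ weight, the ratio being $(\ell_{\bulk,j+1}/\ell_{\bulk,j})^2L^{d-q(\km)}=L^{2-\eta-q(\km)}\le L^2$ on the quadratic part, $(\ell_{\bulk,j+1}/\ell_{\bulk,j})^4L^{d-q(\km)}=L^{-(d-4+2\eta)-q(\km)}\le1$ on the quartic part, and $\le1$ on the observable parts (using $d\ge4$ and $\ell_{\sigma,j+1}/\ell_{\sigma,j}\lesssim L$ from Section~\ref{sec:fieldscv}), while the factor $\scale_{j+1}^{\kt}/\scale_j^{\kt}=L^{-(d-4+2\eta)\kt}\le1$ on the high-derivative indices only helps. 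The main obstacle is the derivative-order bookkeeping in \eqref{eq:cVnrmvslnrm1} and \eqref{eq:cVnrmvslnrm2}: one must track, monomial by monomial, how many derivatives sit on the surviving fields versus on the contracted pair, and check that the chosen $\kt$ makes the $\scale_j^{\kt}$ ``bonus'' both absorbable into $\scale_j^{-1+\kt}$ and dominated by the derivative-order gap in the opposite direction. Once Definition~\ref{defi:cVnormdefi}, \eqref{eq:kotfnablarmk} and the $\kt$-conditions of Section~\ref{sec:chparams} are laid side by side, the remaining estimates are routine.
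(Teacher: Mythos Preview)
Your proposal is correct and follows essentially the same approach as the paper: \eqref{eq:cVnrmvslnrm1} and \eqref{eq:cVnrmvslnrm3} are read directly off the definitions of the two norms together with the derivative-order bounds \eqref{eq:kotfnablarmk}, and \eqref{eq:cVnrmvslnrm2} is obtained by expanding $\E_{j+1}\theta V=e^{\frac12\Delta_{\Gamma_{j+1}}}V$ as a finite sum (since $\Delta_{\Gamma_{j+1}}^3V=0$), bounding each contraction by \eqref{eq:Gammajbounds2}, and observing that the gained factor is exactly $\tilde\chi_j\ell_0^{-2}$ while the reduction in derivative order can only help the $\cV_j$-weight. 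Your write-up is in fact more detailed than the paper's, which disposes of \eqref{eq:cVnrmvslnrm1} and \eqref{eq:cVnrmvslnrm3} in a single sentence.
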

\begin{proof}
Bounds \eqref{eq:cVnrmvslnrm1} and \eqref{eq:cVnrmvslnrm3} follow directly from the definitions of the norms and \eqref{eq:kotfnablarmk}.

For \eqref{eq:cVnrmvslnrm2},  by \eqref{eq:ECexp} and since $\Delta_{\Gamma_{j+1}}^3 V = 0$,
\begin{align}
	V_x (\varphi) - \mathbb{E}_{j+1} \theta V_x (\varphi) = ( V - e^{\frac{1}{2} \Delta_{\Gamma_{j+1}} } V )_x (\varphi) = - \Big( \frac{1}{2} \Delta_{\Gamma_{j+1}} V + \frac{1}{4} \Delta_{\Gamma_{j+1}}^2 V \Big)_x (\varphi) 
\end{align}
Each $\Delta_{\Gamma_{j+1}}$ replaces $\nabla^{(m_1)} \varphi_x^{(\alpha)} \nabla^{(m_2)} \varphi_x^{(\alpha)}$ (see notation \eqref{eq:fieldpolys}) by $\nabla_x^{(m_1)} \nabla_y^{(m_2)} \Gamma_{j+1} (x-y) |_{y=x}$.  
But by \eqref{eq:Gammajbounds2},
we have 
\begin{align}
	\big\| \nabla_x^{(m_1)} \nabla_y^{(m_2)} \Gamma_{j+1} (x-y) \big\|_{\ell^{\infty}} \lesssim \chi_j \kc^2_{j+1} L^{-( |m_1| + |m_2|)j} = \chi_j \Big( \frac{\ell_j}{\ell_0} \Big)^2 L^{-(|m_1| + |m_2|)j}  
\end{align}
while by Lemma~\ref{lemma:cLnorm},
\begin{align}
	\norm{\nabla^{(m_1)} \varphi_x^{(\alpha)} \nabla^{(m_2)} \varphi_x^{(\alpha)}}_{\ell_j,T_j (0)} \asymp \ell_j^2 L^{-(|m_1| + |m_2|)j}  ,
\end{align}
so the operator $(1- \E_{j+1} \theta)$ reduces the $\norm{\cdot}_{\ell_j, T_j (0)}$-norm by a multiple of $\tilde\chi_j \ell_0^{-2}$ (recall that $\ell_0$ is $L$-dependent).  Also,  replacing a $\nabla$ by $L^{-j}$ only decreases the $\norm{\cdot}_{\cV_j (\ell)}$-norm as one can see from Definition~\ref{defi:cVnormdefi},  so $(1- \E_{j+1} \theta)$ also reduces the $\norm{\cdot}_{\cV_j (\ell)}$-norm by $\tilde\chi_j \ell_0^{-2}$.
\end{proof}

\subsection{Domain of effective potentials}
\label{sec:RGCoordDom}

The size of the domains are determined by $\tilde{g}_j$,  
predetermined in Section~\ref{sec:chparams}.
For a parameter $C_{\cD} > 0$ and $\alpha \in [1/2 ,   \bar{\alpha}]$ (for some large fixed constant $\bar{\alpha}$),
we define domains
\begin{align}
\label{eq:cDbulk}
		\begin{split}
	\cD_{j,\bulk}  (\alpha)
		& = \Big\{ (\nu_{j}^{(\km_1)},  g_{j}^{(\km_2)}) \in \R^{\ko_2 \cup  \ko_{2, \nabla} \cup \ko_4 \cup \ko_{4, \nabla } } \; :  \\
			& \qquad  \qquad 
			|\nu_{j}^{(\km_1)}| \le \alpha C_{\cD} {\dkm L^{(q (\km_1) - 2 + \eta) j} } \scale_j \tilde{g}_j		\text{ if } \km_1 \in \kA_0 \cup \kA_1 \cup \kA_2 , \\
			& \qquad  \qquad 
			|\nu_{j}^{(\km_1)}| \le \alpha C_{\cD} \scale_j^{-\kt} \tilde{g}_j 
			\text{ if } \km_1 \in \kA_3  ,  \\	
			& \qquad \qquad 
			 g^{(\emptyset)}_j / \tilde{g}_j \in ((\alpha C_{\cD})^{-1},  \alpha C_{\cD}),  \;\;
			| g_{j}^{(\km_2)}| \le \alpha C_{\cD} \scale_j^{-\kt} \tilde{g}_j^{3/2}  \text{ if } \km_2 \in \ko_{4,\nabla}
			\Big\}
	\end{split}
\end{align}
and
\begin{align}
\begin{split}
	\cD_{j, \sigma} (\alpha)
		& = \Big\{ (\lambda_{j, \o}^{(\km)} , \lambda_{j, \x}^{(\km)} )_{\km \in \ko_1 \cup \ko_{1,\nabla}} \in (\R^2)^{\ko_1 \cup \ko_{1,\nabla}}  \;  : \; \\
		& \qquad  \qquad  |\lambda^{(\km)}_{j, \hash}| < \alpha C_{\cD} L^{q(\km) j}  \text{ if } q (\km) < d-2[\varphi] , \\
		& \qquad  \qquad  |\lambda^{(\km)}_{j, \hash}| < \alpha C_{\cD} \scale_j^\kt L^{(2-\eta) j}  \text{ if } q (\km) \ge d-2[\varphi]
		\Big\} 
\end{split}		
		 \\
	\cD_j (\alpha) &= \cD_{j,\bulk} (\alpha) \times \cD_{j,\sigma} (\alpha)
		.		
\end{align}
We permit $\alpha \le \bar{\alpha}]$ for flexibility and if $\alpha$ is omitted,  then it is considered $\alpha =1$:
\begin{align}
	\cD_{j} = \cD_j (1),  \qquad \cD_{j,\bulk} = \cD_{j,\bulk} (1), \qquad  \cD_{j,\sigma} = \cD_{j,\sigma} (1) . 
\end{align}
We say $V \in \cV$ is in $\cD_j$ if its coefficients are in $\cD_j$.
Decay rates of coefficients in $\cD_j$ are defined so that
\begin{align}
	\norm{V}_{\cV_j (\ell)} \lesssim \alpha \ell_0^4 \tilde{g} \scale \quad \text{whenever} \quad V \in \cD_j (\alpha)   . \label{eq:VjnormDj}
\end{align}
Finally,  we take
\begin{align}
	\cD_j^{(0)} (\alpha) = \cD_j (\alpha) \cap \cV^{(0)}
\end{align}
where we recall $\cV_j^{(0)}$ from Definition~\ref{def:cVnote},  i.e.,  they are spaces with the coefficient of $|\nabla \varphi|^2$ eliminated.

As we recall from \eqref{eq:cKdefi},  the high-order terms reside in the space $\cK_j (\alpha)$ for the same $\alpha \in [1, \bar{\alpha}]$. Then the full RG space is given by
\begin{align}
	\mathbb{D}_j (\alpha) = \cD^{(0)}_j (\alpha) \times \cK_j (\alpha)
	.   \label{eq:bbDdefi}
\end{align}
The domain restriction,  along with the parameter restrictions is summarised in the following.

\begin{equation} \stepcounter{equation}
	\tag{\theequation $\asmpP$} \label{asmp:Phi}
	\begin{split}
		\parbox[t]{\dimexpr\linewidth-8em}{
		Let $\tilde{m}^2 \ge 0$,   $(V,K) \in \mathbb{D}_j (\alpha)$,  $L$ be sufficiently large,  $\rho$ be suffiicently small depending on $L$ and $\tilde{g} > 0$ be sufficiently small depending on $L$ and $\rho$.
		}
	\end{split}
\end{equation}
\vspace{5pt}

\subsection{Stabilised effective potential}
\label{sec:steffp}

Since our effective potential $V$ is a quartic polynomial of $\varphi$,  its exponential $e^{-V}$ may not be integrable under Gaussian expectations.
In Section~\ref{sec:RGCoordDom},  we restricted $g^{(\emptyset)}>0$ so that $e^{-g^{(\emptyset)} |\varphi_x|^4}$ remains bounded.  However,  we cannot guarantee this for the gradient terms,  so we have to make them stay away from exponential.
We first define a polynomial approximation of the exponential. 

\begin{definition}
Having fixed $\cM$ as in Section~\ref{sec:chparams},  let
\begin{align}
	\pexp (x) = { \textstyle \sum_{k=0}^{\cM} } \,  x^k / k !  ,   \qquad x \in \R .
\end{align}
\end{definition}

Recall that $\pi_{4,\nabla} : \cU \rightarrow \cV_{4,\nabla}$ was a projection where $\cV_{4,\nabla}$ is the space spanned by polynomials $S^{(\km)}$ when $\km \in \ko_{4,\nabla}$.
Given $U \in \cU$,  we use conventions
\begin{align}
	U^{(1)} = (1 - \pi_{4,\nabla}) U,  \qquad U^{(2)} = \pi_{4,\nabla} U .
\end{align}

\begin{definition} \label{defi:Vstable}
The \emph{stabilised effective potential} is 
\begin{align}
	U^{(\bs_j)} (b) = U^{(1)} (b)  + U^{(2,\bs_j)} (b) \quad\; \text{where} \quad\; U^{(2,\bs_j)} (b) = - \log \pexp (-U^{(2)} (b)) 
\end{align}
for $b \in \cB_j$,
so that,  for $X \in \cP_j$,
\begin{align}
	e^{-U^{(\bs_j)} (X)}  =  e^{-U^{(1)} (X) } \prod_{b \in \cB_j (X) } \pexp (-U^{(2)} (b))   .
		\label{eq:Lstabledefi}
\end{align}
\end{definition}

\subsection{$W$-coordinate}
\label{sec:WCoord}

The RG flow of the $|\varphi|^4$ model contains terms that are irrelevant but are not of order 3 or more in $g$.
This can be compared to the hierarchical case \cite{MPS23},  where each term is only either relevant or is of order 3 or more in $g$.
These terms are crammed inside $W$,  which is a polynomial that can fortunately be written as an explicit function of $V$.

For a covariance matrix $C$,  recall $\F_C$ from \eqref{eq:FCAB}.
With the observables,  we also define
\begin{align}
	\F_{\pi,C} [A; B] &= \F_C [ A ; \pi_\bulk B] + \F_C [ (1- \pi_\bulk) A ; B] \label{eq:FpiC} \\
	\Cov_{\pi,C} [A; B] &= \F_{\pi,C} [\E_C \theta A ; \E_C \theta B] \label{eq:CovpiC}
\end{align}
For a given polynomial $U$, we define a polynomial
\begin{align}
	\mathbb{W}_{C,U} ( \{x\} ) = \frac{1}{2}
			(1 - \Loc_x ) \F_{\pi,C} [ U_x ; U (\Lambda) ] 
	\label{eq:bbWCV}
\end{align}
If $U \in \cU$,  then $\mathbb{W}_{C,U}$ is a polynomial of degree $\le 6$.  
Recalling $w_j$ from \eqref{eq:wjdefi} and given $U_j$ at sacle $j$,  we define the $W$-coordinate at scale $j$.

\begin{definition} \label{defi:WjDef}
Suppose $U_j \in \cU$ is given. 
For $x \in \Lambda$,  let
\begin{align}
	W_{j,x} =  \begin{cases}	
			\mathbb{W}_{w_j,U_j} ( \{ x \} ) & (j <N) \\
			e^{\frac{1}{2} \Delta_{\Gamma_N}} \mathbb{W}_{w_{N-1},  \tilde{U}_N} ( \{ x \} ) + \frac{1}{2} \mathbb{F}_{\pi, \Gamma_N} [ U_{N,x} ;  U_N (\Lambda) ] & (j = N)
		\end{cases} 
		\label{eq:WjDef}		
\end{align}
where $\tilde{U}_N = e^{-\frac{1}{2} \Delta_{\Gamma_N}} U_N$.
For $X \subset \Lambda$,  let $W_{j} (X) = \sum_{x \in X} W_{j,x}$.
\end{definition}

Actually,  if $V_j = (1-\pi_0 - \pi_\ox) U_j$,  then $W_{j,x} = \mathbb{W}_{w_j, V_j}$ since $U_j - V_j$ is a constant.
The choice of $W$ can be motivated by Remark~\ref{remark:EcIinformal}. 
For a more concrete statement,  see Lemma~\ref{lemma:LK3},  that uses an essential recursive property Lemma~\ref{lemma:WjQalt}.

\begin{lemma} \label{lemma:WjQalt}
For $U,U' \in \cU$,  let $W_0^\rmQ (U)$ and define inductively
\begin{align}
	W_j^{\rmQ} (U_x,  U'_y)
		= \begin{cases}
			(1- \Loc_x) \Big( e^{\frac{\Delta_{\Gamma_j}}{2} }  W_{j-1}^{\rmQ} ( e^{- \frac{\Delta_{\Gamma_j}}{2} } U_x ,  e^{-  \frac{\Delta_{\Gamma_j}}{2} } U'_y  ) + \frac{1}{2} \mathbb{F}_{\Gamma_j} (U_x, U'_y) \Big) & (j < N) \\
			e^{\frac{\Delta_{\Gamma_j}}{2} }  W_{j-1}^{\rmQ} ( e^{- \frac{\Delta_{\Gamma_j}}{2} } U_x ,  e^{-  \frac{\Delta_{\Gamma_j}}{2} } U'_y  ) + \frac{1}{2} \mathbb{F}_{\Gamma_j} (U_x, U'_y) & (j = N) .
		\end{cases}
		\label{eq:WjrmQdefi}
\end{align}
Then for any $j < N$,
\begin{align}
	W^{\rmQ}_{j} (U_x, U'_y) = \frac{1}{2} (1- \Loc_x) \F_{w_j} [ U_x ; U'_y ] .
\end{align}
\end{lemma}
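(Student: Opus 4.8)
The plan is to prove the identity by induction on $j$, the two engines being an algebraic composition (semigroup) rule for $\F$ and the fact that $\Loc_x$ is idempotent and commutes past $e^{\frac12\Delta_{\Gamma_j}}$ in the relevant sense. For the first engine I would record that, for covariance matrices $C_1,C_2$ on $\Lambda$ and polynomials $A,B$,
\[
	\F_{C_1+C_2}[A;B] = \F_{C_2}[A;B] + e^{\frac12\Delta_{C_2}}\F_{C_1}\big[e^{-\frac12\Delta_{C_2}}A;\, e^{-\frac12\Delta_{C_2}}B\big].
\]
This follows directly from \eqref{eq:FCAB}: since $M\mapsto\Delta_M$ is linear and $\Delta_{C_1},\Delta_{C_2}$ are commuting constant–coefficient second order operators in the $\varphi$ variables, $e^{\frac12\Delta_{C_1+C_2}}=e^{\frac12\Delta_{C_1}}e^{\frac12\Delta_{C_2}}$; substituting $A'=e^{-\frac12\Delta_{C_2}}A$, $B'=e^{-\frac12\Delta_{C_2}}B$, using $e^{\frac12\Delta_{C_1}}\big[(e^{-\frac12\Delta_{C_1}}A')(e^{-\frac12\Delta_{C_1}}B')\big]=\F_{C_1}[A';B']+A'B'$, and noting $e^{\frac12\Delta_{C_2}}(A'B')-AB=\F_{C_2}[A;B]$ gives the claim. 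Taking $C_1=w_{j-1}$, $C_2=\Gamma_j$ and using $w_j=w_{j-1}+\Gamma_j$, this becomes
\[
	\F_{w_j}[A;B] = \F_{\Gamma_j}[A;B] + e^{\frac{\Delta_{\Gamma_j}}{2}}\F_{w_{j-1}}\big[e^{-\frac{\Delta_{\Gamma_j}}{2}}A;\, e^{-\frac{\Delta_{\Gamma_j}}{2}}B\big].
\]

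For the second engine I would show $(1-\Loc_x)\,e^{\frac{\Delta_{\Gamma_j}}{2}}\,\Loc_x=0$ on the polynomials in question. Here $\Loc_x$ maps into $\hat{\cL}$ (and into $\hat{\cL}'$ on the observable components), and $\Loc_x$ fixes $\hat{\cL}$ by the uniqueness statement of Proposition~\ref{prop:locexist} (equivalently Corollary~\ref{cor:locvanishes}(ii)); moreover $e^{\frac12\Delta_{\Gamma_j}}$ maps $\hat{\cL}$ into itself, because each application of $\Delta_{\Gamma_j}$ either fixes a monomial or lowers its field degree by two without introducing new field derivatives, hence strictly lowers its engineering dimension, so the truncation $[[M^{(\km)}]]<d_+$ defining $\hat{\ko}_+$ is preserved — this is property~(3) of Section~\ref{sec:locpols} together with a dimension count. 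Consequently $e^{\frac{\Delta_{\Gamma_j}}{2}}\Loc_x(\cdot)\in\hat{\cL}$ is itself fixed by $\Loc_x$, which is exactly the asserted vanishing.

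With these in hand the induction is routine. For $j=0$ the sum $w_0=\sum_{k\le0}\Gamma_k$ is empty, so $\F_{w_0}=\F_0=0$ by \eqref{eq:FCAB}, which agrees with the initial value $W_0^{\rmQ}$. For $1\le j<N$, assume the claim at scale $j-1$ for all polynomial arguments; writing $\tilde A:=e^{-\frac{\Delta_{\Gamma_j}}{2}}A$ and inserting the induction hypothesis at the arguments $\tilde U_x,\tilde U'_y$ in \eqref{eq:WjrmQdefi},
\[
	W_j^{\rmQ}(U_x,U'_y) = (1-\Loc_x)\Big[\tfrac12\,e^{\frac{\Delta_{\Gamma_j}}{2}}(1-\Loc_x)\F_{w_{j-1}}[\tilde U_x;\tilde U'_y] + \tfrac12\,\F_{\Gamma_j}[U_x;U'_y]\Big].
\]
By the second engine the inner $(1-\Loc_x)$ may be removed, and then the composition rule identifies the bracket as $\tfrac12\F_{w_j}[U_x;U'_y]$, yielding $W_j^{\rmQ}(U_x,U'_y)=\tfrac12(1-\Loc_x)\F_{w_j}[U_x;U'_y]$, as claimed.

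I expect the one genuinely delicate point to be the second engine: checking carefully that $\Delta_{\Gamma_j}$ cannot push a monomial out of $\hat{\ko}_+$ (so that $\hat{\cL}$ is invariant under $e^{\frac12\Delta_{\Gamma_j}}$) and that $\Loc_x$ — including the observable pieces $\Loc^{*}_x$, which localise on $X\cap*$ — really does act as the identity on $\hat{\cL}$ at the level of the objects being localised, not merely on abstract local polynomials. Everything else is formal manipulation of the definition \eqref{eq:FCAB} and the recursion \eqref{eq:WjrmQdefi}.
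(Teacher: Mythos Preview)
Your argument is correct and is essentially the proof given in \cite[Lemma~4.6]{BBS4}, which the paper simply cites without reproducing. The two ingredients you isolate --- the composition identity
\[
	\F_{w_j}[A;B] = \F_{\Gamma_j}[A;B] + e^{\frac{\Delta_{\Gamma_j}}{2}}\F_{w_{j-1}}\big[e^{-\frac{\Delta_{\Gamma_j}}{2}}A;\, e^{-\frac{\Delta_{\Gamma_j}}{2}}B\big]
\]
and the vanishing $(1-\Loc_x)\,e^{\frac{\Delta_{\Gamma_j}}{2}}\,\Loc_x=0$ --- are exactly the ones used there, and your induction goes through as written.

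Your instinct about the delicate point is right: the invariance of $\hat{\cL}$ under $e^{\frac12\Delta_{\Gamma_j}}$ is what needs care. Your dimension-counting argument is fine for the bulk (the paper's property~(3) in Section~\ref{sec:locpols} gives closedness of $\cL$, and $\Delta_{\Gamma_j}$ strictly lowers $[[M^{(\km)}]]$, so the truncation defining $\hat{\cL}$ is preserved). For the observable components one has to note separately that $\Delta_{\Gamma_j}$ annihilates the degree-one observable monomials in $\cV_\hash$ and leaves constants in $\cU_\ox$ fixed, so no new observable terms can be created that escape $\hat{\cL}_*$; this is implicit in Lemma~\ref{lemma:FCVVobs}. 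One minor remark: the paper's statement leaves $W_0^{\rmQ}$ dangling --- your reading $W_0^{\rmQ}\equiv 0$, matching $w_0=0$ and $\F_0=0$, is the intended one.
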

\begin{proof}
This is \cite[Lemma~4.6]{BBS4}
(with $\cL_j = \frac{1}{2} \Delta_{\Gamma_j}$ in the reference and $\F_{\pi,\Gamma_j}$ replaced by $\F_{\Gamma_j}$,  and this is natural by of our definition of $W^{\rmQ}_j$).
\end{proof}

This completes the definition of $I_j$ in \eqref{eq:rgcoords} as the following.

\begin{definition}
\label{def:cI}
Given $U_j \in \cU$,  let $U_j^{(\bs_j)}$ be as in Definition~\ref{defi:Vstable} and $W_{j,x}$ be as in \eqref{eq:WjDef}.  We define
\begin{align}
	\cI_j : U_j \mapsto \exp\big( -U_j^{(\bs_j)} \big) ( 1+ W_j )
	.
\end{align}
\end{definition}

\subsection{Perturbative RG map}
\label{sec:ptRGmap}

Perturbative map is the leading order terms of the flow of $V$.
In linear order,  if we approximate $e^{-V_j^{(\bs_j)}} \simeq 1 - V_j$,  then $V_{j+1} \simeq - \log \E_{j+1} \theta e^{-V_j^{(\bs_j)}}$ can also be approximated by $\exp ( - \E_{j+1} \theta V_j^{(\bs_j)} )$.  With higher order considerations,  quadratic terms also need to be subtracted.  This is the role of $P_j$ to appear in the following definition.

\begin{definition} \label{def:WP} 
\cite[(2.11)--(2.13)]{BBS3}
Let $j +1 \le N$.
For $V_j \in \cV$,  \emph{pertubative RG map} is
\begin{align}
	\Phi_{j+1}^\pt : V_j \mapsto \E_{j+1} \theta V_{j} - P_{j,V}  ,
	\label{eq:PhiptUdf}
\end{align}
where $P_j (X) \equiv P_{j,V} (X) = \sum_{x \in X} P_{j,V,x}$ ($X \subset \Lambda$) is given by
\begin{align}
	P_{j,x} \equiv P_{j,V,x} 
		& = \begin{cases} 
			\Loc_x \E_{j+1} \theta W_{j,x} +  \frac{1}{2} \Loc_x \Cov_{\pi,  j+1} [ \theta V_{j,x},   \,  \theta V_{j}(\Lambda)  ] & (j+1 < N) \\
			0 & (j+1 = N)
		\end{cases}
		\label{eq:Pjx}
\end{align}
for $W_j$ as in \eqref{eq:WjDef} and $\Cov_{\pi,j+1}$ as in \eqref{eq:CovpiC}.  
\end{definition}

These definitions are motivated by the following informal statement.

\begin{remark} \label{remark:EcIinformal}
If $V_j \in \cD_j$ and $\cI$ is as in Definition~\ref{def:cI},
\begin{align}
	\E_{j+1} \big[ \,  \theta \cI_j (V_j) \,  \big] 
		= \cI_{j+1} (\Phi_{j+1}^\pt (V_j) ) + O(V^3_j)
\end{align}
For its proof,  see \cite[Proposition~2.1]{BBS3}.  
\end{remark}

\subsection{Full RG map on the effective potential}
\label{sec:fullRGU}

In the non-perturbative full RG map, 
we transfer relevant part of $K$ into the effective potential. 
This motivates the definitions
\begin{align}
	\hat{V}_j &= V_j - Q_j, \qquad 
	Q_j (b) = \sum_{X \in \cS_j : X \supset b}  ( \Loc_{X} K_j / I_j )(b)
		\label{eq:Qdefifts}	 
\end{align}  
for $b \in \cB_j$ and $I_j = \cI_j (V_j)$. 
Note that $1 / I_j$ is not defined for all $\varphi$,  but Definition~\ref{def:hlocX},  the definition of localisation,  only cares about expansion of a function on a neighbourhood of $\varphi =0$.  Since $I_j$ does not vanish on a neighbourhood of $\varphi=0$,  we can safely define $Q_j$.

\begin{lemma}
If $(V,K) \in \D_j (\alpha)$,  then $Q \in \cU$.
\end{lemma}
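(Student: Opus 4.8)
The plan is to verify that $Q_j(b)$, as defined in \eqref{eq:Qdefifts}, lies in the space $\cU$ of effective potentials, which means checking three things: (1) that each summand $(\Loc_X K_j / I_j)(b)$ is a well-defined local polynomial in $\cL$, (2) that it carries the correct grading (bulk, $\o$, $\x$, $\ox$ components landing in $\cL_\bulk$, $\sigma_\o\cL_\o$, $\sigma_\x\cL_\x$, $\sigma_\ox\cL_\ox$ respectively), and (3) that it respects the symmetry constraints built into $\cU$ (the $O(n)\times\Aut$-covariance of the bulk part, the $O(n)$-parity of the $\o,\x$ parts, and the restricted derivative ranges $\ko_0,\ko_1,\ko_{1,\nabla},\ko_2,\ko_{2,\nabla},\ko_4,\ko_{4,\nabla}$). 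The degree/derivative restrictions are exactly what the choice of order-of-localisation parameters $d_* = d_\bulk, d_\o,\dots$ in \eqref{eq:dstar} encodes, so most of the work is bookkeeping against those definitions.

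First I would address well-definedness: $1/I_j$ is smooth on a neighbourhood of $\varphi = 0$ since $I_j = e^{-V_j^{\stable}}(1+W_j)$ with $1+W_j = 1 + O(V_j)$ nonvanishing near $0$, so $K_j/I_j \in \cN(X^\square)$ there, and $\Loc_X$ only probes the germ at $\varphi=0$ (Definition~\ref{def:hlocX}), hence each $\Loc_X(K_j/I_j)$ is a genuine element of $\hat\cL$; summing the finitely many $X\in\cS_j$ containing $b$ keeps us in $\cL$. Next, for the grading: $K_j \in \cN_j^{\sym}$ by $K_j \in \cK_j(\alpha)$, and $I_j$ is purely bulk (it is built from $V_j$'s bulk-plus-observable data, but $\cI_j$ applied to $V_j\in\cD_j^{(0)}$ has the observable pieces entering only through the $W$-term — actually here we should note $V_j \in \cD_j^{(0)}$ has no observable component in the sense relevant to $I_j$, or more carefully track how $\sigma_\o,\sigma_\x$ propagate), so $K_j/I_j$ inherits the component decomposition of $K_j$, and $\Loc_X$ acts componentwise via $\Loc_X^*$ by its very definition, sending $\pi_* (K_j/I_j)$ into $\sigma_*\cL_*$.

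For the symmetry constraints I would invoke that $K_\bulk$ is $\Aut$- and $O(n)$-invariant while $I_j$ is manifestly $\Aut$-equivariant and $O(n)$-invariant (being a function of $V_j\in\cV_\bulk^{(0)}$ through $\cI_j$), so $\pi_\bulk(K_j/I_j)$ is $O(n)\times\Aut$-covariant; since $\loc_X^\bulk$ commutes with these symmetries (it is defined by pairing against $\hat\Pi_\bulk$, which is symmetry-stable), $\pi_\bulk Q_j(b)$ satisfies the covariance in \eqref{eq:cVbulkdefi}, and the derivative range $q(\km) < d_\bulk$ is exactly the cutoff in the definition of $\loc_X^\bulk$, matching $\ko_2\cup\ko_{2,\nabla}\cup\ko_4\cup\ko_{4,\nabla}$ after one checks $[[M^{(\km)}]] < d_\bulk = 3d-7+2\eta$ forces $p(\km)\le 4$ and the stated $q$-ranges (degree $6$ and higher monomials are killed since $6[\varphi] - 1 - \eta = d_\bulk$ is not $<d_\bulk$, leaving $p(\km)\in\{2,4\}$ for even monomials, $p(\km)=0$ being absorbed into the constant which is allowed in $\cU$ but here $\Loc_X$ on a bulk piece can produce a constant too — that is fine, $\cU_\bulk = \R + \cV_\bulk$). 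Similarly the $\o$ and $\x$ components: $K_\o, K_\x\in\cN_j^{\odd}$ gives odd parity under $\varphi\mapsto-\varphi$ and $O(n)$-invariance fixing $\varphi^{(1)}$, preserved by $\loc_X^\o,\loc_X^\x$, and $d_\o = d_\x = d-2+\eta = 2[\varphi]$ forces $p(\km)=1$ with $q(\km) < d-2+\eta - [\varphi] = [\varphi]$, i.e. exactly $\ko_1\cup\ko_{1,\nabla}$; the $\ox$ component with $K_\ox\in\cN_j^{\even}$ and $d_\ox = d-2+\eta$ but $p(\km)=q(\km)=0$ forced by... actually $d_\ox$ should constrain to $\ko_0$, a constant times $\sigma_\ox$, consistent with $\cU_\ox$.

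The main obstacle — really the only non-bookkeeping point — is the interplay between $\Loc_X$ (defined on a coordinate patch $U\supset X$) and the sum over all small $X$ containing $b$: one must be sure each $\loc_X^*$ is coordinate-patch-independent (Proposition~\ref{prop:locexist}) and that for $b\in\cB_j$, every $X\in\cS_j$ with $X\supset b$ has $X^\square$ contained in a single coordinate patch, which holds for $L$ large since small polymers have bounded diameter; then $Q_j(b) = \sum_{X\in\cS_j, X\supset b}(\Loc_X K_j/I_j)(b)$ is an honest finite sum of elements of $\cL$ with the right grading and symmetry, hence lies in $\cU$. I would also briefly note (or defer to the cited localisation lemmas) that applying $\Loc_X$ to $K_j/I_j$ — a ratio, not a polynomial — still lands in $\cL$ because $\Loc_X$ extracts a finite Taylor jet, so the non-polynomial nature of $K_j/I_j$ is immaterial; this is exactly the remark already made in the text preceding the lemma, so the proof can be quite short.

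\begin{proof}
Each summand $(\Loc_X K_j/I_j)(b)$ is well defined by the remark preceding the statement: $I_j = \cI_j(V_j)$ does not vanish on a neighbourhood of $\varphi = 0$, so $K_j/I_j \in \cN(X^\square)$ there, and $\Loc_X$ depends only on the germ at $\varphi = 0$ (Definition~\ref{def:hlocX}), hence $\Loc_X(K_j/I_j) \in \cL$ by Proposition~\ref{prop:locexist}. For $b \in \cB_j$ the sum ranges over the finitely many $X \in \cS_j$ with $X \supset b$, each with $X^\square$ inside a single coordinate patch for $L$ large, so $Q_j(b) \in \cL$ and is independent of the patch choices.

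Since $K_j \in \cK_j(\alpha) \subset \cN_j^{\sym}$, its components satisfy $K_{j,\bulk} \in \cN_j^{\Aut} \cap \cN_j^{\even}$, $K_{j,\o}, K_{j,\x} \in \cN_j^{\odd}$ and $K_{j,\ox} \in \cN_j^{\even}$; and $I_j$, built from $V_j \in \cD_j^{(0)} \subset \cV_\bulk^{(0)}$ via $\cI_j$, is $O(n)$-invariant and $\Aut$-equivariant and purely bulk. Hence $K_j/I_j$ inherits the component decomposition and symmetries of $K_j$. By definition $\Loc_X$ acts componentwise through the operators $\Loc_X^*$, each of which commutes with the relevant symmetries (being defined by pairing against the symmetry-stable spaces $\hat\Pi_*$) and outputs into $\cL_*$ with derivative cutoff $[[M^{(\km)}]] < d_*$. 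Unwinding \eqref{eq:dstar}: for $* = \bulk$, evenness kills odd $p(\km)$ and $6[\varphi]-1-\eta = d_\bulk$ leaves $p(\km) \in \{0,2,4\}$ with $q$ in the ranges defining $\ko_2 \cup \ko_{2,\nabla} \cup \ko_4 \cup \ko_{4,\nabla}$, so $\pi_\bulk Q_j(b) \in \cU_\bulk = \R + \cV_\bulk$ with the $O(n) \times \Aut$-covariance of \eqref{eq:cVbulkdefi}; for $* \in \{\o,\x\}$, oddness and $d_* = 2[\varphi]$ force $p(\km) = 1$, $q(\km) < [\varphi]$, i.e. $\km \in \ko_1 \cup \ko_{1,\nabla}$, with the parity constraints defining $\cV_\#$; for $* = \ox$, $p(\km) = q(\km) = 0$, giving a constant multiple of $\sigma_\ox$, i.e. an element of $\cU_\ox$.

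Therefore $Q_j(b) = \sum_{X \in \cS_j : X \supset b} (\Loc_X K_j/I_j)(b) \in \R + \cV_\bulk + \cV_\o + \cV_\x + \cU_\ox = \cU$.
\end{proof}
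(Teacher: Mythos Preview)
Your approach is essentially the paper's, but there are two slips worth flagging.

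First, $I_j$ is not purely bulk: $V_j \in \cD_j^{(0)}$ carries observable components $V_\o, V_\x$ (see the definition of $\cD_{j,\sigma}$), so $I_j = \cI_j(V_j)$ has nontrivial $\sigma_\o, \sigma_\x, \sigma_\ox$ parts. This is fixable --- what you actually need is that $\pi_\bulk(K/I) = K_\bulk/I_\bulk$ (from $\sigma_\#^2 = 0$) and that $I_\bulk$ has the required $O(n)\times\Aut$-covariance --- but as written the claim is false, and you already flagged the worry in your plan without resolving it.

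Second, and this is the substantive point: $\Loc_X$ does not commute with $\Aut$. Rather one has $F \circ \Loc_X = \Loc_{F(X)} \circ F$, so a single summand $\tilde Q_{X,\bulk}(b) := (\Loc_X K/I)_\bulk(b)$ satisfies only the covariance $\tilde Q_{F(X),\bulk}(F(b), F\varphi) = \tilde Q_{X,\bulk}(b,\varphi)$ and is \emph{not} itself an element of $\cV_\bulk$. The $\Aut$-invariance of $Q_\bulk(b)$ emerges only from summing over all small $X \supset b$, because $F$ permutes that index set: this is exactly the computation \eqref{eq:Qbulksymmetry} in the paper, and it is the one step that is not pure bookkeeping. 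Your proof asserts the conclusion (``with the $O(n)\times\Aut$-covariance of \eqref{eq:cVbulkdefi}'') without supplying this argument. The observable components do not need this, since $\cV_\o, \cV_\x, \cU_\ox$ are pinned at $\o,\x$ and carry no $\Aut$-constraint, which is why the paper dispatches them first and isolates the bulk case.
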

\begin{proof}
For brevity,  let $\Loc_{X} K/I = \tilde{Q}_X$. 
By the symmetries of $K \in \cN^{\sym}$ (see \eqref{eq:cNsymdefi}),  for $\hash \in \{\o,\x\}$,
\begin{align}
\begin{cases}
	\tilde{Q}_{F(X),\bulk} (F (b),  F R ( \varphi )) =  \tilde{Q}_{X,\bulk} (b, \varphi) \\
	\tilde{Q}_{X,\ox} (b,  R (\varphi) ) = \tilde{Q}_{X,\ox} (b,  \varphi )  \\
	\tilde{Q}_{X,\hash} (b,  -\varphi ) = -  \tilde{Q}_{X,\hash} (b,  \varphi ) , \quad \tilde{Q}_{X,\hash} (b, R' \varphi) = \tilde{Q}_{X,\hash} (b,\varphi)
\end{cases}
\end{align}
for any $F \in \Aut$,  $R,  R' \in O(n)$ such that $F(\cB) = \cB$,  $(R' \varphi )^{(1)} = \varphi^{(1)}$.
This already verifies $(\pi_\o + \pi_{\x} + \pi_\ox) Q \in \cV_\o + \cV_\x + \cU_{\ox}$. 
To see that $Q_\bulk \in \cV_{\bulk}$,  we have to check that $Q_\bulk$ is invariant under lattice isometries
Indeed,  for any $F \in \Aut$,
\begin{align}
	Q_\bulk (F(b), F(\varphi)) = \sum_{X \supset b}^{X \in \cS} Q_{F(X), \bulk} (F (b),  F (\varphi)) = Q_\bulk (b, \varphi) .  \label{eq:Qbulksymmetry}
\end{align}
\end{proof}

The full RG map on the effective potential is given by
\begin{align}
	U_{j+1} := \Phi_{j+1}^U (V_j ,K_j ) := \mathbb{V}^{(0)} \Phi_{j+1}^\pt (\hat{V}_j) \label{eq:Uplusdefi}
\end{align}
where we recall $\mathbb{V}^{(0)}$ from Definition~\ref{def:cVnote}.
If we let
\begin{align}
	\delta u_{j+1} = (\pi_0 + \pi_\ox) U_{j+1} , \qquad V_{j+1} = U_{j+1} - \delta u_{j+1} , 
\end{align}
and $u_{j+1} = u_j + \delta u_{j+1}$,  then it defines a flow of $(u_j,V_j)$.
The difference between the full and the perturbative RG map is expressed using
\begin{align}
	R_{j+1}^U (V_j,K_j) =  \Phi_{j+1}^\pt (\hat{V}_j) - \Phi_{j+1}^\pt  (V_j) .
\end{align}
The flow of $K$-coordinate,  $\Phi_{j+1}^K$,  is much more complicated,  and is deferred to Section~\ref{sec:rgstep}. 

\section{RG map}
\label{sec:rgstep}

Recall from Section~\ref{sec:polexp} that RG map is a function
\begin{align}
	\Phi_{j+1} = (\Phi^{U}_{j+1},\Phi^{K}_{j+1}) : (V_j , K_j) \mapsto (\delta u_{j+1} , V_{j+1},K_{j+1})
\end{align}
such that
\begin{align}
	Z_{j'} (\Lambda, \varphi) = e^{-u_{j'} (\Lambda ) } ( I_{j'} \circ_{j'} K_{j'} ) (\Lambda, \varphi),  \quad \text{for} \quad j' \in \{j,j+1\}
	\label{eq:Zjrecursive}
\end{align}
when $Z_{j+1}$ is defined via recursion \eqref{eq:Zjindc} and $u_{j+1} = u_j + \delta u_{j+1}$. 
We already defined $\Phi^{U}_{j+1} = (\delta u_{j+1} , V_{j+1})$ in Section~\ref{sec:fullRGU},  and we define $\Phi^K_{j+1}$ in this section.
This completes the first step of proving Theorem~\ref{thm:contrlldRG}.

The map $\Phi^K_{+}$ is a composition of six maps, 
resulting in six polymer activities $(K_{(i)})_{i=1}^6$, 
and we take $\Phi_+^K = K_{(6)}$ at the end. 
Each $K_{(i)}$ imitates that of \cite{BBS5},  but we significantly extend the space of admissible effective potentials and dimensions.
There are a number of different ways to shuffle the order of the maps, 
for example as in \cite{MR2917175,  1910.13564},
but we persist the order of \cite{BBS5} for easy referencing.

In this section,  these steps are defined only algebraically, whose analytic properties will be discussed later. 
The convergence of polymer expansion are guaranteed because of the finiteness of the system,  but the convergence of the Gaussian integrals will need to be justified later.  In this section,  one may simply consider Gaussian integrals as algebraic linear operations. 
Also,  we will see expressions such as
\begin{align}
	O^\alg (K) ,  \; O^\alg (V) ,  \; O^\alg (V^3,  K) , \cdots .
\end{align}
To be specific,  we give
\begin{align}
	\cI(V) ,  \cI(V)^{-1} = O^\alg (1),  \quad  \Loc (K) = O^\alg (K) , \quad W = O^\alg (V^2) ,
\end{align}
whose validity are based on the estimates of  Section~\ref{sec:locbnds} and \ref{sec:stabanalysis}.
They do not play any role in the proof of rigorous estimates except for identifying 
$D_K$ in Section~\ref{sec:RGpartIII},
but they provide useful guiding principles. 

From this section and on,  we omit the label $j$ for the scale and replace $j+1$ by $+$,  if not stated otherwise.  For example,  $\cB_j$ is just $\cB$ and $\cB_{j+1}$ is $\cB_+$.

\begin{figure}[h]
\begin{equation*}
\begin{tikzcd}[row sep=huge]
{} & {} & (U, K) \in \D 
\arrow[mapsto,  lld,  swap,  "\Phi^{(1)}_+"] 
\arrow[mapsto,  ld,  "\Phi^{(2)}_+"{pos=0.7,  yshift=8pt}]  
\arrow[mapsto,  d,  swap,  "\Phi^{(3)}_+"{pos=0.5,  yshift=3pt}] 
\arrow[mapsto,  rd,  swap,  "\Phi^{(4)}_+"{yshift=3pt}] 
\arrow[mapsto,  rrrd,  swap,  "\Phi^{(5)}_+"{pos=0.6,  yshift=2pt}] 
\arrow[mapsto,  rrrrd,  "\Phi^{(6)}_+"] 
& {} & {} & {} & {} 
\\
{ K_{(1)}  \arrow[mapsto,  r,  swap,  "\Phi^{(2)}_+"] } & 
{ K_{(2)} \arrow[mapsto,  r,  swap,  "\Phi^{(3)}_+ " ] } & 
{ K_{(3)} \arrow[mapsto,  r,  swap,  "\Phi^{(4)}_+"]} & 
{K_{(4)}} \arrow[mapsto,  rr,  swap,  "\Phi^{(5)}_+"]& 
{} &
{K_{(5)}} \arrow[mapsto,  r,  swap,  "\Phi^{(6)}_+"]& 
{K_{(6)} = K_+}
\end{tikzcd}
\end{equation*}
\caption{Map 1--Map 6 defining $K_+ = \Phi_+^K (U,K)$}
\end{figure}
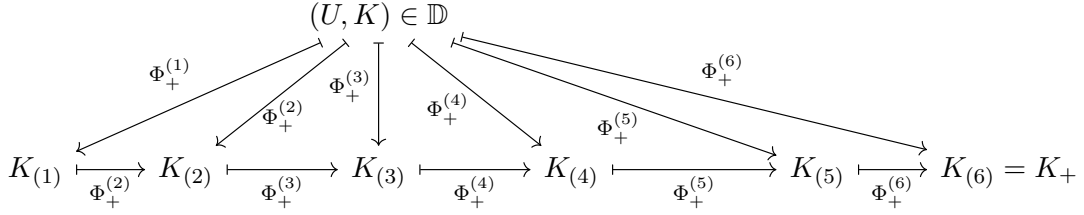

\subsection{Map 1}

We subtract from $K(X)$ its localisation when $X \in \cS \backslash \cB$ and add it back into $K(b)$ for $b \in \cB$.
For this purpose,  we use the reapportioning map $\Rap_J$ in Definition~\ref{def:reapp},  where for $b \in \cB$ and $X \in \cP$, 
\begin{align}
	J_b (X) = \one_{b \subset X \in \cS} \times
		\begin{cases}
			\Loc_{X} (K/I) (b) & (X \neq b) \\
			-  \sum_{Y \in \cP}^{Y \neq b} J_b (Y ) & (X = b)
		\end{cases}
		\label{eq:JBX}
\end{align}
so that it satisfies the requirement $\sum_{X : X \supset b} J_b (X) = 0$ for each $b \in \cB$
as in \eqref{eq:reappAssump}.  The first map is defined as
\begin{align}
	K_{(1)} =  \Phi_+^{(1)} (V,K) := \Rap_J [I,K]   \label{eq:K1}
\end{align}
Next corollary,  a low-order expansion of $K_{(1)}$,
is a direct consequence of Lemma~\ref{lemma:reapp},  since $J = O^\alg (K)$.

\begin{corollary}  \label{cor:K1}
With $K_{(1)} (X)$ given by \eqref{eq:K1}, 
\begin{align}
	(I \circ K) (\Lambda) = (I \circ K_{(1)}) (\Lambda)
\end{align}
and for $X \in \Con$,
\begin{align}
	K_{(1) } (X) = K(X) - I^X J(X) + O^\alg \big( K^2 \big)
	.
\end{align}
\end{corollary}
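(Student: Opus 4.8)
The plan is to unwind both assertions directly from the definition of the reapportioning map $\Rap_J$ and its basic properties recorded in Lemma~\ref{lemma:reapp}. First I would invoke Lemma~\ref{lemma:reapp} with the particular choice of $J$ in \eqref{eq:JBX}: the only thing to verify before applying the lemma is that this $J$ satisfies the structural requirement $\sum_{X : X \supset b} J_b(X) = 0$ for each $b \in \cB$, but this is built into the definition of $J_b(b)$ in the second line of \eqref{eq:JBX}, so the lemma applies verbatim and immediately yields the identity $(I \circ K)(\Lambda) = (I \circ K_{(1)})(\Lambda)$. This is the algebraic invariance that makes Map~1 legitimate as a step of the RG map.

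For the low-order expansion, I would again read off from Lemma~\ref{lemma:reapp} the formula expressing $\Rap_J[I,K](X)$ as a sum over ways of distributing the reapportioned pieces over the connected polymer $X$. The key input is that $J = O^\alg(K)$, which holds because each $J_b(X)$ is, up to the localisation operator $\Loc_X$ (which is $O^\alg(\mathrm{id})$ by the remarks in Section~\ref{sec:rgstep}) and multiplication by $I^{-1} = \cI(V)^{-1} = O^\alg(1)$, a single factor of $K$. Hence in the expansion of $\Rap_J[I,K](X)$, the term with no $J$-insertion is $K(X)$, the terms with exactly one $J$-insertion contribute $-I^X J(X)$ (the sign and the factor $I^X$ coming from the way $\Rap_J$ removes a block-local piece and multiplies the complementary blocks by $I$), and every remaining term carries at least two factors of $J$, hence is $O^\alg(K^2)$. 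Collecting these gives $K_{(1)}(X) = K(X) - I^X J(X) + O^\alg(K^2)$ for $X \in \Con$, as claimed.

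The only point requiring a little care — and the step I expect to be the main obstacle — is matching the precise combinatorial bookkeeping of Lemma~\ref{lemma:reapp} to the stated one-$J$ term $-I^X J(X)$: one must check that the coefficient and the polymer power $I^X$ come out exactly, rather than, say, $I^{X \setminus b}$ summed over blocks $b$, and that the $\Comp$-structure of $X \in \Con$ does not introduce extra terms at first order. Since $X$ is connected this is straightforward once the reapportioning formula is written out, but it is where the argument is not purely formal. Everything else is a direct substitution, so no genuine estimates are needed here; the analytic content of Map~1 is postponed to the later sections where $\norm{K_{(1)}}_{\cW_+}$ is controlled.
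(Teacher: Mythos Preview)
Your proposal is correct and follows essentially the same route as the paper: both parts are direct consequences of Lemma~\ref{lemma:reapp} together with the observation $J = O^\alg(K)$. The only comment is that your final paragraph anticipates more work than is needed: the expansion \eqref{eq:RapLin} in Lemma~\ref{lemma:reapp} already gives $\Rap_J[I,K](X) = K(X) - \bar{J}(X) + O^\alg(K^2, KJ, J^2)$ with $\bar{J}(X) = I^X J(X)$ by definition, so the coefficient and polymer power $I^X$ come for free and there is no combinatorial bookkeeping left to check---you just substitute $J = O^\alg(K)$ into the error term.
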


\subsection{Map 2}

The second map transfers relevant terms in $K (b)$ to $I$ by
replacing $V$ by $\hat{V}$ defined as in \eqref{eq:Qdefifts}. 
We let
\begin{align}
	K_{(2)} (X) &= \Phi_+^{(2)} (V,K, K_{(1)}) := \delta \hat{I} \circ K_{(1)} \label{eq:K2} \\
	& \hat{I} = \mathcal{I} (\hat{V}),  \qquad \delta \hat{I} = I  - \hat{I}   \label{eq:hatVhatI} .
\end{align}

\begin{lemma} \label{lemma:K2}
With $K_{(2)}$ given by \eqref{eq:K2},
\begin{align}
	 I \circ K_{(1)} = \hat{I} \circ  K_{(2)}  .
\end{align}
\end{lemma}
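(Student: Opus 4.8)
The statement to prove is the identity $I \circ K_{(1)} = \hat{I}\circ K_{(2)}$ of polymer activities (evaluated on $\Lambda$, or on any polymer), where $K_{(2)} = \delta\hat{I}\circ K_{(1)}$ with $\delta\hat{I} = I - \hat{I}$. The plan is to reduce everything to the associativity/compatibility properties of the circle product $\circ_j$. Recall that for polymer functions $A, B$ we have $(A\circ B)(X) = \sum_{Y\in\cP_j(X)} A^{X\setminus Y} K^{[Y]}$ when $B=K$, but here we are composing with a product of block functions, so the key structural fact I would invoke is the standard ``reblocking/associativity'' identity: if $A$ and $A'$ are both block functions (i.e.\ defined on $\cB_j$, extended to $\cP_j$ by $A^X = \prod_{b\in\cB_j(X)}A(b)$), then $(AA')\circ K = A\circ(A'\circ K)$ in the appropriate sense, and more precisely $A\circ(B\circ K) = (AB)\circ K$ holds whenever $B$ is a block function and $A$ is a block function. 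This is a purely algebraic manipulation of the defining sum; I expect it is either recorded verbatim in Appendix~\ref{sec:polops} or is an immediate consequence of the definitions there, so I would cite it rather than reprove it.

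Granting that, the proof is short. Write $I = \hat{I} + \delta\hat{I}$ as block functions on $\cB_j$, where $\hat I = \cI_j(\hat V)$ and $\delta\hat I = I - \hat I$. I want to expand $I\circ K_{(1)}$ by splitting each block factor of $I$ into $\hat I + \delta\hat I$. Concretely, for $X\in\cP_j$,
\begin{align}
	(I\circ K_{(1)})(X) = \sum_{Y\in\cP_j(X)} I^{X\setminus Y} K_{(1)}^{[Y]} = \sum_{Y\in\cP_j(X)} \Big(\prod_{b\in\cB_j(X\setminus Y)}(\hat I(b) + \delta\hat I(b))\Big) K_{(1)}^{[Y]}.
\end{align}
Expanding the product over blocks $b \in \cB_j(X\setminus Y)$ into a sum over subsets of blocks where we pick $\delta\hat I$, and regrouping so that the picked $\delta\hat I$-blocks play the role of ``new polymer pieces'', one recognises exactly the expansion of $\hat I \circ (\delta\hat I \circ K_{(1)}) = \hat I\circ K_{(2)}$. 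This is the same bookkeeping as in the proof that $(\hat I\,\delta\hat I^{\,\cdot})$-type reblocking works; the cleanest route is to apply the associativity identity $(\hat I\cdot\delta\hat I)\circ K_{(1)}$ is \emph{not} what we want — rather, one uses that $I = \hat I \cdot (1 + \delta\hat I/\hat I)$ pointwise near $\varphi=0$, but since division is awkward I would instead phrase it as: $I\circ K_{(1)} = \hat I \circ(\delta\hat I\circ K_{(1)})$ by the block-splitting identity applied with the two block functions $\hat I$ and $\delta\hat I$ and the polymer activity $K_{(1)}$. Then $\delta\hat I\circ K_{(1)} = K_{(2)}$ by definition \eqref{eq:K2}, giving the claim.

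The main obstacle, and the only real content, is pinning down the precise form of the block-splitting identity from Appendix~\ref{sec:polops} and checking its hypotheses: namely that $\hat I$ and $\delta\hat I = I - \hat I$ are genuine block functions (true, since $I = \cI_j(V_j)$ and $\hat I = \cI_j(\hat V_j)$ are built blockwise via Definition~\ref{def:cI}), and that $K_{(1)}\in\cN_j$ so that $K_{(1)}^{[Y]}$ makes sense and the reindexing of connected components is legitimate. One must also be slightly careful that the identity is an identity of formal/algebraic expressions in the polymer ring — which is all that is claimed here, since Section~\ref{sec:rgstep} works purely algebraically — so no analytic estimates or integrability are needed at this stage. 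With the associativity lemma in hand the rest is a one-line substitution.
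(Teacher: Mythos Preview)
Your proof is correct and follows the same approach as the paper: the identity is a one-line application of the block-splitting formula \eqref{eq:polybinom2} from Lemma~\ref{lemma:polybinom}, which gives $I \circ K_{(1)} = \hat{I} \circ ((I - \hat{I}) \circ K_{(1)}) = \hat{I} \circ K_{(2)}$. Your exposition is more verbose than needed (the asides about $(\hat I\cdot\delta\hat I)\circ K_{(1)}$ and the division $I = \hat I(1+\delta\hat I/\hat I)$ are unnecessary detours), but the core argument is identical.
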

\begin{proof}
Due to \eqref{eq:polybinom2},
$I \circ K_{(1)} = \hat{I} \circ ( (I - \hat{I}) \circ K_{(1)} ) = \hat{I} \circ K_{(2)}$. 
\end{proof}

\begin{lemma} \label{lemma:LK2}
We have
\begin{align}
	K_{(2)} (X)
		= \one_{X \in \Con} K_{(1)} (X) + ( D_{V} \cI (V ; Q) )^X \one_{|X|_{\cB} = 1}
		+ O^\alg (K^2 ) .
		\label{eq:LK2}
\end{align}
\end{lemma}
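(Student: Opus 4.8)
\textbf{Proof plan for Lemma~\ref{lemma:LK2}.}
The plan is to start from the definition $K_{(2)} = \delta\hat{I} \circ K_{(1)}$ and expand the polymer convolution $(\delta\hat{I} \circ K_{(1)})(X) = \sum_{Y \in \cP(X)} \delta\hat{I}^{X\backslash Y} K_{(1)}^{[Y]}$, then track which terms are of algebraic order $\le 1$ in $K$. First I would observe that $\delta\hat{I} = I - \hat{I} = \cI(V) - \cI(\hat{V})$, and since $\hat{V} = V - Q$ with $Q = O^\alg(K)$ (this follows from $Q_j(b) = \sum_{X \supset b, X \in \cS_j}(\Loc_X K_j/I_j)(b)$ and $\Loc(K) = O^\alg(K)$ as recorded in Section~\ref{sec:rgstep}), a Taylor expansion of $\cI$ in its argument gives $\delta\hat{I}(b) = D_V\cI(V; Q)(b) + O^\alg(K^2)$ for each $b \in \cB$; here I use that $\cI$ is a smooth (in fact polynomial, since $\cI(V) = e^{-V^{\bs}}(1+W)$ with $W$ a polynomial in $V$) function of the coefficients of $V$, so the remainder in the first-order Taylor expansion is quadratic in $Q$, hence $O^\alg(K^2)$.

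Next I would plug this into the polymer convolution. The term with $Y = X$ gives $K_{(1)}^{[X]}$, which equals $\one_{X \in \Con} K_{(1)}(X)$ when we restrict attention to connected $X$ (recall $K^{[X]} = \prod_{X' \in \Comp(X)} K(X')$, and for $X \in \Con$ this is just $K_{(1)}(X)$). The terms with $Y = X \backslash b$ for a single block $b$, i.e. $|X \backslash Y|_{\cB} = 1$, contribute $\delta\hat{I}(b) K_{(1)}^{[X\backslash b]}$; but $K_{(1)} = O^\alg(K)$, so to stay at algebraic order $\le 1$ in $K$ we need $K_{(1)}^{[X \backslash b]}$ to be the empty product, forcing $X = b$ to be a single block, which produces exactly $(D_V\cI(V;Q))^X \one_{|X|_{\cB}=1}$ modulo $O^\alg(K^2)$ (the empty-product convention and $|X|_{\cB}=1$ meaning $X = b$; note $(\cdot)^X$ over a single block is just evaluation at that block). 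All remaining terms either have $|X \backslash Y|_{\cB} \ge 2$, contributing a product of at least two factors of $\delta\hat{I} = O^\alg(K)$, or have $|X\backslash Y|_{\cB} \ge 1$ together with a nonempty $Y$ where $K_{(1)}^{[Y]} = O^\alg(K)$, so in every such case the contribution is $O^\alg(K^2)$. Collecting the surviving terms gives \eqref{eq:LK2}.

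The main obstacle, and the step requiring the most care, is the bookkeeping of the algebraic order across the polymer convolution: one must check that no term of order exactly one in $K$ is missed, in particular that the single-block contribution is correctly isolated and that the indicator $\one_{|X|_{\cB}=1}$ (rather than $\one_{|X\backslash Y|_{\cB}=1}$ with $Y$ allowed nonempty) is the right condition. This hinges on the fact that $K_{(1)}(X')$ carries an algebraic factor of $K$ for every component $X'$, so any nonempty $Y$ in the convolution already pushes the order to $\ge 2$ once combined with even a single $\delta\hat{I}$ factor; the only way to get a pure order-one term with a nontrivial $\delta\hat{I}$ part is $X$ itself being one block and $Y$ empty. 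I would also note in passing that $Q \in \cU$ (the preceding lemma) guarantees $D_V\cI(V;Q)$ is well-defined as a directional derivative within the space of effective potentials, so the expression on the right-hand side makes sense; the rest is the routine Taylor-with-remainder argument for $\cI$, whose quantitative form is deferred to the analytic estimates but whose algebraic content is exactly what is stated here.
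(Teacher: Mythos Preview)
Your proposal is correct and follows essentially the same approach as the paper: expand the polymer convolution $K_{(2)}=\delta\hat I\circ K_{(1)}$, isolate the $Y=X$ term and the $Y=\emptyset$, $|X|_{\cB}=1$ term, and use a first-order Taylor expansion of $\cI$ at $V$ in the direction $Q=O^{\alg}(K)$ to replace $\delta\hat I$ by $D_V\cI(V;Q)+O^{\alg}(K^2)$; the paper writes this remainder explicitly as $\int_0^1\int_0^1 t\,D_V^2\cI(V-stQ;Q^{\otimes 2})\,ds\,dt$. One small correction: $\cI(V)=e^{-V^{(\mathbf s)}}(1+W)$ is smooth but not polynomial in the coefficients of $V$ (the factor $e^{-V^{(1)}}$ is genuinely exponential); smoothness is all that is needed, and your argument goes through unchanged with that adjustment.
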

\begin{proof}
By definition,
\begin{align}
	K_{(2)} (X) 
	= \one_{X \in \Con} K_{(1)} (X) + ( \delta \hat{I} )^{X} \one_{|X|_{\cB} = 1} + O^{\alg} (K^2,  K \delta \hat{I}) 
	.
\end{align}
For $\delta \hat{I}$,
\begin{align}
	\delta \hat{I} 
		= D_V \cI (V ; Q) + \int_0^1 \int_0^1 t D_V^2 \cI (V - stQ ; Q^{\otimes 2}) ds dt  ,
\end{align}
while $Q = O^\alg (K)$,  so $\delta \hat{I} = D_V \cI (V ; Q) + O^\alg(K^2)$,  and we have \eqref{eq:LK2}. 
\end{proof}

If we approximate $D_V \cI$ by $-1$,  then $\delta \hat{I}(b)$ is approximately  $-Q (b)$ for $b \in \cB$, 
so $K_{(2)} (b)$ is $K_{(1)} (b) - Q(b)$ in the first order. 
Thus by Map 2,  $Q(b)$ is transferred from $V$ to $K$.

\subsection{Map 3} \label{sec:map3defi}

Reblocking and fluctuation integral are performed in Map 3. 
Recall $\Phi_+^\pt$ from Definition~\ref{def:WP} and define
\begin{align}
	U_{\pt}
		&= \Phi^\pt_+ (\hat{V}) \in \cU  ,
		\label{eq:Vpt} 
		\\
	\tilde{I}_{\pt} (b) 
		&= \tilde{I}_+ (U_{\pt} ,  b) 
		= e^{-U_{\pt}^\stable (b)} \big( 1 + W_\pt  (b) \big) 
		\label{eq:tdIpt}
\end{align}
where $W_\pt$ is defined by Definition~\ref{defi:WjDef} at scale $j+1$ with $U_\pt$.
For $K' \in \cN$,  if we let
\begin{align}
	K'_{\rm (2,i)} (Y, \varphi, \zeta) 
		&= \sum_{Z \in \cP (Y)} ( \theta \hat{I} (\varphi) - \tilde{I}_{\pt} (\varphi)  )^{Y \backslash Z} \theta K' (Z, \varphi) ,
		\label{eq:K2i}
\end{align}
then by Lemma~\ref{lemma:polybinom},
\begin{align}
	\theta (\hat{I} \circ K' )  (\Lambda)
		&= ( \tilde{I}_{\pt} \circ K'_{\rm (2,i)} ) (\Lambda) .
		\label{eq:K3ia}
\end{align}
We define for $X \in \cP_+$
\begin{align}
	\Phi_+^{(3)} (V,K, K') (X,\varphi)
		&= \sum_{Y \in \cP}^{\bar{Y} = X} \tilde{I}_{\pt}^{X \backslash Y} 
		\Eplus \left[ K'_{\rm (2,i)} (Y,\varphi,\zeta) \right] 		\label{eq:K3defi} 
\end{align}
and $K_{(3)} = \Phi_+^{(3)} (V,K, K_{(2)})$.

\begin{lemma} \label{lemma:K3}
With $K_{(3)} = \Phi_+^{(3)} (V,K, K_{(2)})$,
\begin{align}
	\Eplus \big[ \theta (\hat{I} \circ K_{(2)}) (\Lambda) \big] 
		=  ( \tilde{I}_{\pt} \circ_+ K_{(3)} ) (\Lambda)
	.
\end{align}
\end{lemma}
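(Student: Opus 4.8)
The plan is to chain together the three identities already recorded in the text, inserting the fluctuation integral $\Eplus$ at the right place. First I would recall that, by Map~2 (Lemma~\ref{lemma:K2}), we have $I \circ K_{(1)} = \hat I \circ K_{(2)}$, hence in particular $\Eplus[\theta(\hat I\circ K_{(2)})(\Lambda)] = \Eplus[\theta(I\circ K_{(1)})(\Lambda)]$; but for the present lemma only the right-hand side $\hat I\circ K_{(2)}$ matters, so I would start directly from $\Eplus[\theta(\hat I\circ K_{(2)})(\Lambda)]$. The first step is the rewriting \eqref{eq:K3ia}, namely $\theta(\hat I\circ K')(\Lambda) = (\tilde I_{\pt}\circ K'_{(2,i)})(\Lambda)$, which is an application of the binomial-type identity Lemma~\ref{lemma:polybinom} to the factorisation $\theta\hat I = \tilde I_{\pt} + (\theta\hat I - \tilde I_{\pt})$ together with the definition \eqref{eq:K2i} of $K'_{(2,i)}$. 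Taking $K' = K_{(2)}$ and applying $\Eplus$ to both sides gives
\begin{align}
	\Eplus\big[\theta(\hat I\circ K_{(2)})(\Lambda)\big]
		= \Eplus\big[(\tilde I_{\pt}\circ K_{(2),i})(\Lambda)\big].
\end{align}

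The second step is to push $\Eplus$ through the polymer expansion. Expanding $(\tilde I_{\pt}\circ K_{(2),i})(\Lambda) = \sum_{Y\in\cP}\tilde I_{\pt}^{\Lambda\backslash Y} K_{(2),i}(Y)$ and using that $\tilde I_{\pt}(b)$ does not depend on $\zeta$ (it is built from $U_{\pt}$ and $W_{\pt}$, which are deterministic in $\zeta$), the expectation acts only on the factors $K_{(2),i}(Y,\varphi,\zeta)$; by the finite-range property (Definition~\ref{defi:FRD}(ii)) of $\Gamma_{j+1}$, the expectation over disconnected components factorises over connected components of $Y$. Reorganising the sum over $Y\in\cP$ according to its closure $\bar Y = X\in\cP_+$, and recalling that $\tilde I_{\pt}$ is a $(j{+}1)$-scale activity while $K_{(2),i}(Y)$ is supported on $Y$, one obtains
\begin{align}
	\Eplus\big[(\tilde I_{\pt}\circ K_{(2),i})(\Lambda)\big]
		= \sum_{X\in\cP_+}\tilde I_{\pt}^{\Lambda\backslash X}\sum_{Y\in\cP}^{\bar Y = X}\tilde I_{\pt}^{X\backslash Y}\,\Eplus\big[K_{(2),i}(Y)\big]
		= (\tilde I_{\pt}\circ_+ K_{(3)})(\Lambda),
\end{align}
where the last equality is exactly the definition \eqref{eq:K3defi} of $K_{(3)} = \Phi_+^{(3)}(V,K,K_{(2)})$ together with the $\circ_+$ polymer-convolution notation; here one uses that $\tilde I_{\pt}^{\Lambda\backslash X}\tilde I_{\pt}^{X\backslash Y} = \tilde I_{\pt}^{\Lambda\backslash Y}$ since $Y\subset X$, and that every $j$-polymer $Y$ has a well-defined $(j{+}1)$-closure $\bar Y$ partitioning $\cP$ over $\cP_+$.

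The only genuinely delicate point is the bookkeeping in the second step: one must check that the finite-range factorisation of $\Eplus$ over $\Comp(Y)$ is compatible with the reblocking sum $\sum_{\bar Y = X}$, i.e.\ that pulling $\Eplus$ inside and grouping by $\bar Y$ does not double-count or drop terms, and that $\tilde I_{\pt}$ on $(j{+}1)$-blocks is inert under $\Eplus$. This is the standard reblocking-plus-integration step of the Bauerschmidt--Brydges--Slade scheme, and the argument is identical in structure to \cite[Section~5.3]{BBS5} (or \cite[Section~3.2]{BBS4}); the extensions to $d\ge 5$, $\eta\in(0,1/2)$ and the enlarged potential space play no role here, since Lemma~\ref{lemma:K3} is a purely algebraic identity and does not involve any of the scale-dependent norms. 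I would therefore present the proof as the short chain of three equalities above, citing Lemma~\ref{lemma:polybinom} for \eqref{eq:K3ia}, the finite-range property for the factorisation of $\Eplus$, and \eqref{eq:K3defi} for the final identification.
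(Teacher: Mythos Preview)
Your proposal is correct and follows essentially the same approach as the paper: apply \eqref{eq:K3ia}, reblock the scale-$j$ sum over $Y$ according to $\bar Y = X \in \cP_+$, and use that $\tilde I_{\pt}$ is $\zeta$-independent to pull $\Eplus$ onto the $K_{(2,i)}$ factor, recovering the definition \eqref{eq:K3defi}. The paper's proof is slightly terser in that it reblocks first and takes $\Eplus$ at the very end, and it does not explicitly invoke the finite-range factorisation over connected components---that step is tacitly absorbed into the convention $K_{(3)}(X) = K_{(3)}^{[X]}$ for polymer activities---but your additional remarks on this point are not incorrect, merely more explicit than the paper chooses to be.
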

\begin{proof}
Let $K_{(2,i)}$ be $K'_{(2,i)}$ with $K' =K_{(2)}$.
By reblocking the sum \eqref{eq:K3ia},
\begin{align}
	 ( \tilde{I}_{\pt} \circ K_{\rm (2,i)} ) (\Lambda) 
	 	= \sum_{Y \in \cP} \tilde{I}_{\pt}^{\Lambda \backslash Y} K_{\rm (2,i)} (Y) 
	 	& = \sum_{X \in \cP_+} \sum_{Y \in \cP}^{\bar{Y} = X} \tilde{I}_{\pt}^{\Lambda \backslash Y} K_{\rm (2,i)} (Y) \\
	 	& = \sum_{X \in \cP_+} \tilde{I}_{\pt}^{\Lambda \backslash X}  \sum_{Y \in \cP}^{\bar{Y} = X} \tilde{I}_{\pt}^{X \backslash Y}  K_{\rm (2,i)} (Y)
	 	.
\end{align}
The desired conclusion follows after taking $\Eplus$.
\end{proof}

The following lemma is the motivation for choosing $U_\pt$ as in \eqref{eq:Vpt}.
We again emphasize that it does \emph{not} play any role in the proof,  but we spare some space for the proof because if explains why choice \eqref{eq:hlead} is useful.

\begin{lemma} \label{lemma:LK3}
For $X \in \Con_+$,  we have for $j +1 < N$
\begin{align}
	K_{(3)} (X) 
		& = \sum_{Y\in \Con}^{\bar{Y} = X} \Eplus \theta K_{(2)} (Y) 
			- \frac{\one_{|X|_{\cB_+} = 1} }{2} \Cov_{\pi,+}  [\theta \hat{V} (X) ; \theta \hat{V} (\Lambda \backslash X)] 
		\nnb
		&\qquad  + \frac{\one_{|X|_{\cB_+} = 2}}{2} \sum_{B \in \cB_+ (X)} \Cov_{\pi,+} [\theta \hat{V} (B)  ; \theta \hat{V} (X \backslash B)] + O^\alg (K^2, KV, V^3)   
	\label{eq:LK3-1}		
\end{align}
and for $j +1 = N$,
\begin{align}
	K_{(3)} (X) 
		& = \sum_{Y\in \Con}^{\bar{Y} = X} \Eplus \theta K_{(2)} (Y) + O^\alg (K^2, KV, V^3)   . 
	\label{eq:LK3-1alt}		
\end{align}
\end{lemma}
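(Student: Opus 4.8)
The plan is to unfold the two definitions and then discard everything of algebraic order $O^\alg(K^2,KV,V^3)$, leaving exactly three terms. Substituting \eqref{eq:K2i} into \eqref{eq:K3defi},
\[
	K_{(3)} (X) = \sum_{Y \in \cP :\, \bar{Y} = X}\; \sum_{Z \in \cP (Y)} \tilde{I}_{\pt}^{X \backslash Y}\, \Eplus \big[\, (\theta \hat{I} - \tilde{I}_{\pt})^{Y \backslash Z}\, \theta K_{(2)} (Z) \,\big] .
\]
First I would record the orders of the ingredients: from Definitions~\ref{def:cI}, \ref{defi:Vstable} and \ref{def:WP} one has $\hat{I} - 1 = -\hat{V} + O^\alg(\hat{V}^2)$ and $\tilde{I}_{\pt} - 1 = -U_{\pt} + O^\alg(\hat{V}^2)$ with $U_{\pt} = \Phi^{\pt}_+(\hat{V}) = \Eplus \theta \hat{V} + O^\alg(\hat{V}^2)$ by \eqref{eq:Vpt}, hence $\theta \hat{I} - \tilde{I}_{\pt} = -(\theta \hat{V} - \Eplus \theta \hat{V}) + O^\alg(\hat{V}^2)$; by \eqref{eq:Qdefifts}, $\hat{V} = V - Q$ with $Q = O^\alg(K)$; and by Lemma~\ref{lemma:LK2}, $K_{(2)}(Z) = \one_{Z \in \Con} K_{(1)}(Z) + (D_V \cI(V;Q))^Z \one_{|Z|_{\cB} = 1} + O^\alg(K^2)$, so $K_{(2)}(Z) = O^\alg(K)$ for $Z \in \Con$ and $O^\alg(K^2)$ for $Z$ disconnected. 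Since $\tilde{I}_{\pt}$ does not depend on $\zeta$, the prefactor $\tilde{I}_{\pt}^{X \backslash Y} = 1 + O^\alg(V)$ pulls out of $\Eplus$. These facts let me discard every pair $(Y,Z)$ whose total order exceeds the threshold.

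The only contribution linear in $K$ that is not already $O^\alg(K^2,KV)$ comes from $Z = Y \in \Con$ with $Y \backslash Z = \emptyset$, and equals $\sum_{Y \in \Con:\, \bar Y = X} \tilde{I}_{\pt}^{X\backslash Y} \Eplus \theta K_{(2)}(Y) = \sum_{Y \in \Con:\, \bar Y = X} \Eplus \theta K_{(2)}(Y) + O^\alg(KV)$, which is the first term of \eqref{eq:LK3-1} and of \eqref{eq:LK3-1alt}. (If $Z \in \Con$ but $Y\backslash Z \ne \emptyset$ the contribution is $O^\alg(KV)$ by the order of $\theta\hat I - \tilde I_{\pt}$; if $Z$ is disconnected it is $O^\alg(K^2)$.)

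The remaining contributions have $Z = \emptyset$, i.e.\ the purely perturbative part $\sum_{Y:\,\bar Y = X} \tilde{I}_{\pt}^{X\backslash Y} \Eplus[(\theta \hat I - \tilde I_{\pt})^Y]$. Each $j$-block of $Y$ carries a factor of order $O^\alg(V)$ and, after $\Eplus$, a pair of distinct $j$-blocks contributes $O^\alg(V^2)$, so modulo $O^\alg(V^3)$ only $Y$ with $|Y|_{\cB} \le 2$ matter, which forces $|X|_{\cB_+} \le 2$; in particular the whole part is $O^\alg(V^3)$ when $|X|_{\cB_+} \ge 3$. For $|X|_{\cB_+} = 1$, telescoping the binomial sum gives $\sum_{Y:\,\bar Y = X} \tilde{I}_{\pt}^{X\backslash Y}(\theta \hat I - \tilde I_{\pt})^Y = \theta \hat I^X - \tilde I_{\pt}^X$, so this part equals $\Eplus[\theta \hat I^X] - \tilde I_{\pt}^X$; I would expand $\Eplus[\theta \hat I^X]$ to quadratic order in $V$, at which point the local quadratic pieces cancel against $\tilde I_{\pt}^X = \cI_+(U_{\pt})(X) + O^\alg(V^4)$ because $U_{\pt} = \Phi^{\pt}_+(\hat V)$ and $W_{\pt}$ were tailored for this, while the off-diagonal residue collapses --- using the recursion of Lemma~\ref{lemma:WjQalt} and the fact that $P_j$ in \eqref{eq:Pjx} references $\hat V(\Lambda)$ rather than $\hat V(X)$ --- to $-\frac{1}{2}\Cov_{\pi,+}[\theta \hat V(X);\theta \hat V(\Lambda\backslash X)] + O^\alg(V^3)$. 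For $|X|_{\cB_+}=2$, writing $X = B_1 \cup B_2$ with $B_i \in \cB_+$, the same telescoping gives $(\theta \hat I^{B_1} - \tilde I_{\pt}^{B_1})(\theta \hat I^{B_2} - \tilde I_{\pt}^{B_2})$, whose $\Eplus$ is $\Cov_{\pi,+}[\theta \hat V(B_1);\theta \hat V(B_2)] + O^\alg(V^3) = \frac{1}{2}\sum_{B \in \cB_+(X)} \Cov_{\pi,+}[\theta \hat V(B);\theta \hat V(X\backslash B)] + O^\alg(V^3)$. Combined with the previous paragraph, this proves \eqref{eq:LK3-1}.

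When $j + 1 = N$ one has $\cP_+ = \{\Lambda\}$, hence $X = \Lambda$, $\Lambda\backslash X = \emptyset$ and $|X|_{\cB_+}=2$ impossible, so all covariance residues above vanish identically; the only thing to verify is that the local cancellation still closes, which it does by design of the $j=N$ cases of \eqref{eq:Pjx} (where $P_j = 0$) and \eqref{eq:WjDef} (where $W_{\pt}$ is the $e^{\Delta_{\Gamma_N}/2}$-conjugated $\mathbb{W}$ plus $\frac{1}{2}\mathbb{F}_{\pi,\Gamma_N}[U_{N,x};U_N(\Lambda)]$), so that $\Eplus[\theta \hat I^\Lambda] - \tilde I_{\pt}^\Lambda = O^\alg(V^3)$; this gives \eqref{eq:LK3-1alt}. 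The hard part is the quadratic cancellation in the $|X|_{\cB_+}=1$ case: checking that every $O^\alg(V^2)$ local term of $\Eplus[\theta \hat I^X] - \cI_+(U_{\pt})(X)$ cancels, so that the only surviving quadratic-in-$V$ contribution is the stated non-local covariance with the right sign and $\pi$-decoration. This is precisely the cancellation that motivated the definitions of $\Phi^{\pt}_{j+1}$ and of $W_j$, and it rests on Lemma~\ref{lemma:WjQalt} together with Remark~\ref{remark:EcIinformal}.
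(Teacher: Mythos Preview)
Your approach is essentially the paper's: both unfold the definition, isolate the $K$-linear term from $Z=Y\in\Con$, and compute the $Z=\emptyset$ perturbative residue to second order in $\hat V$ using Lemma~\ref{lemma:WjQalt}. The only organisational difference is that you telescope the $Z=\emptyset$ sum directly to $\Eplus[\theta\hat I^X]-\tilde I_\pt^X$ at the $(j{+}1)$-block level, whereas the paper stays at the $j$-block level (treating $|Y|_{\cB}=1$ and $|Y|_{\cB}=2$ separately) and then rearranges the resulting covariance sums at the end; your route is a bit cleaner and avoids that final rearrangement, at the cost of leaving the quadratic cancellation (which you correctly flag as ``the hard part'') implicit rather than spelled out.

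One slip: for $|X|_{\cB_+}=2$, the expectation of $(\theta\hat I^{B_1}-\tilde I_\pt^{B_1})(\theta\hat I^{B_2}-\tilde I_\pt^{B_2})$ yields the \emph{symmetric} $\Cov_+[\theta\hat V(B_1);\theta\hat V(B_2)]$, not $\Cov_{\pi,+}$ as you write. Your final rewriting as $\tfrac12\sum_{B\in\cB_+(X)}\Cov_{\pi,+}[\theta\hat V(B);\theta\hat V(X\backslash B)]$ is then correct, via the identity $\Cov_+(F;G)=\tfrac12\bigl(\Cov_{\pi,+}(F;G)+\Cov_{\pi,+}(G;F)\bigr)$ that the paper also invokes.
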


\begin{proof}
Let us denote $A \approx B$ if $A = B + O^\alg (K^2, KV,V^3)$.
First consider $j+1 < N$.
We have
\begin{align}
	K_{(3)} (X,\varphi) \approx \sum_{Y \in \cP}^{\bar{Y} = X} \Eplus [K_{\rm (2,i)} (Y,\varphi,\zeta) ] 
\end{align}
so we turn our attention to $\Eplus K_{\rm (2,i)} (Y)$.
In the definition \eqref{eq:K2i}, the low-order contributions only come from either $\{Y = Z\}$ or $\{Z = \emptyset,  \, |Y|_{\cB} \le 2 \}$.  
For the terms with $\{ Z = \emptyset,  \, |Y|_{\cB} = 1 \}$, if we let $Y = b \in \cB$,
\begin{align}
	\Eplus [ \theta \hat{I} (b, \varphi) - \tilde{I}_{\pt} (b,\varphi) ]
		&\approx \Eplus \left[ \left(U_{\pt} - \frac{1}{2} U_{\pt}^2 \right) (b,\varphi) - \theta \left( \hat{V} - \frac{1}{2}  \hat{V}^2 \right) (b,\varphi)  \right] \nnb
		& \qquad + \Eplus[ \theta \mathbb{W}_{w, \hat{V} } (b, \varphi) - W_\pt (b, \varphi)  ]
\end{align}
while by definition, 
\begin{align}
	\Eplus[ U_{\pt}  - \theta \hat{V} ](b)
		= - P_{\hat{V}} (b) &= 
		- \sum_{x \in b} \Big( \Loc_x \Big( \Eplus \theta \mathbb{W}_{w, \hat{V},x} + \frac{1}{2}\Cov_{\pi,+} [ \theta \hat{V}_x ; \theta \hat{V}(\Lambda) ] \Big) \Big)
\end{align}
and
\begin{align}
	\Eplus[ (\theta \hat{V})^2 - (U_{\pt})^2  ] (b)
		\approx \Cov_{\pi,+} [\theta \hat{V} (b) ; \theta \hat{V} (b)]  .
\end{align}
Collecting these terms,
\begin{align}
	& \Eplus [ \theta \hat{I} (b, \varphi) - \tilde{I}_{\pt} (b,\varphi) ]
		\approx  - \frac{1}{2} \Cov_{\pi,+} (\theta \hat{V} (b) ; \theta \hat{V} (\Lambda \backslash b)) \nnb
		& + \sum_{x \in b} \Big[ (1- \Loc_x) \Big( \frac{1}{2} \Cov_{\pi,+} [\theta \hat{V}_x ; \theta \hat{V} (\Lambda) ] + \Eplus \theta \mathbb{W}_{w, \hat{V},x} \Big) -  W_{\pt, x} \Big]  .
		\label{eq:LK3-3}
\end{align}
If we apply Lemma~\ref{lemma:WjQalt} with $V' = e^{\frac{1}{2} \Delta_{\Gamma_+}} \hat{V} = \E_+ \theta \hat{V}$ in place of $V$,  then
\begin{align}
	(1- \Loc_x) \Big[ \frac{1}{2} \Cov_{\pi,+} [\theta \hat{V}_x ; \theta \hat{V} (\Lambda) ] + \Eplus \theta \mathbb{W}_{w, \hat{V},x} \Big] - \mathbb{W}_{w_+, V' ,x} = 0
\end{align}
and also since $V' = U_\pt + O^\alg (V^2)$,  we have $\mathbb{W}_{w_+, V'} = \mathbb{W}_{w_+, U_\pt} + O^\alg (V^3)$,  so the second line of the right-hand side of \eqref{eq:LK3-3} is $O^\alg(V^3)$ overall.

For the terms with $\{ Z = \emptyset,  \, |Y|_{\cB} = 2 \}$, if we let $Y = b_1 \cup b_2$ for $b_1, b_2 \in \cB$,
\begin{align}
	\Eplus [ (\theta_\zeta \hat{I}- \tilde{I}_{\pt} )^Y  (\varphi)  ]
		 \approx \Eplus [ (\theta \hat{V} - U_{\pt}) (b_1) (\theta \hat{V} - U_{\pt}) (b_2) ]   ,
\end{align}
while
\begin{align}
	\Eplus [ \theta \hat{V} (b_1, \varphi) U_{\pt} (b_2, \varphi) ] 
		\approx \Eplus [ \theta \hat{V} (b_1, \varphi) ] \Eplus [ \theta \hat{V} (b_2, \varphi) ]   , \nnb
	U_{\pt} (b_1, \varphi) U_{\pt} (b_2, \varphi) 
		\approx \Eplus [ \theta \hat{V} (b_1, \varphi) ] \Eplus [ \theta \hat{V} (b_2, \varphi) ]  ,	
\end{align}
so
\begin{align}
	\Eplus [ (\theta \hat{V} - U_{\pt}) (b_1) (\theta \hat{V} - U_{\pt}) (b_2) ]
		\approx \Cov_{+} [ \theta \hat{V} (b_1) ; \theta \hat{V} (b_2) ]
		.
\end{align}
All in all, 
\begin{align}
	K_{(3)} (X) 
		& \approx  \sum_{Y\in \Con}^{\bar{Y} = X} \Eplus \theta K_{(2)} (Y) - \frac{\one_{|X|_{\cB_+} =1}}{2}  \Cov_{\pi,+} (\theta \hat{V} (X) ; \theta \hat{V} (\Lambda \backslash X)   \nnb
		 & \qquad + \frac{\one_{|X|_{\cB_+}=2}}{2} \sum_{b_1 \neq b_2 \in \cB}^{\bar{b_1 \cup b_2} = X}  \Cov_+ [\theta \hat{V} (b_1)  ; \theta \hat{V} (b_2)]
\end{align}
Then \eqref{eq:LK3-1}	follows after rearranging the sums,  since
\begin{align}
	\Cov_+ (F ; G) = \frac{1}{2} \Cov_{\pi,+} (F,G) + \frac{1}{2} \Cov_{\pi,+} (G,F).
\end{align}

For the case $j+1 = N$,  observe that our definition of $U_\pt$ gives instead of \eqref{eq:LK3-3}
\begin{align}
	& \Eplus [ \theta \hat{I} (b, \varphi) - \tilde{I}_{\pt} (b,\varphi) ]
		\approx  - \frac{1}{2} \Cov_{\pi,+} (\theta \hat{V} (b) ; \theta \hat{V} (\Lambda \backslash b)) \nnb
		& + \sum_{x \in b} \Big[ \frac{1}{2} \Cov_{\pi,+} [\theta \hat{V}_x ; \theta \hat{V} (\Lambda) ] + \Eplus \theta \mathbb{W}_{w, \hat{V},x}  - W_{\pt,x} \Big] 
		\label{eq:LK3-3alt}
\end{align}
Now the second line vanishes by \eqref{eq:WjDef}.
Then rest of the argument is the same,  and we see that the first line of the right-hand side of \eqref{eq:LK3-3alt} now cancels the contributions from $|Y|_{\cB} =2$.
\end{proof}

\subsection{Map 4}

The fourth map transfers degree 2 terms from $(K_{(3)} (X) : X \in \cS_+ \backslash \cB_+ )$ to $(K_{(3)} (B) : B \in \cB_+)$.  
These are already fully identified by Lemma~\ref{lemma:LK3},  and $K_{(2)}$ was already adjusted so that it does not contain any extra low order terms. 
Thus for $X \in \cP_+$ and $B \in \cB_+$, we are motivated to define
\begin{align}
\begin{split}
	(j+1 < N) \quad\; & \lead_{B} (X) 
		= \one_{B \subset X} \times \begin{cases}
		- \frac{1}{2} \Cov_{\pi, +} [ \theta V (B) ; \theta V ( \Lambda \backslash B) ] & (X = B) \\
		\frac{1}{2} \Cov_{\pi, +} [ \theta V (B) ; \theta V (X \backslash B) ] & (|X|_{\cB_+} = 2 ) \\
		0 & (|X|_{\cB_+} > 2 ) 
		,
		\end{cases}  \\	
	(j+1 = N) \quad\; & \lead_{B} (X) = 0 ,
\end{split} \label{eq:hlead}
\end{align}
and
\begin{align}
	\lead (X) = \sum_{B \in \cB (X)} \lead_B (X) .
\end{align}
By Lemma~\ref{lemma:LK3},
\begin{align}
	K_{(3)} (X) = \sum_{Y \in \Con}^{\bar{Y} = X} \Eplus \theta K_{(2)} (Y) + \lead (X)+ O^\alg (K^2, KV, V^3) .
\end{align}
We have $\sum_{X \supset B} \lead_{B} (X) = 0$ as in \eqref{eq:reappAssump} of Definition~\ref{def:reapp},
and $\lead_{B} (X)$ vanishes whenever $X$ is disconnected (due to the finite-range property of the covariance in the expectation), thus in particular is supported on small sets.  
At scale $j+1$,  $\Rap_{\lead}$ is well-defined and we may let
\begin{align}
	& K_{(4)} = \Phi_+^{(4)} (V, K, K_{(3)} - \tilde{I}_\pt \lead )  \label{eq:K4} \\
	& \Phi_+^{(4)} (V, K,  K' ) := \Rap_{\lead} [\tilde{I}_{\pt} ,  K' + \tilde{I}_\pt \lead  ]
\end{align}
The following is a direct consequence of Lemma~\ref{lemma:reapp}.

\begin{corollary} \label{cor:K4lin}
With $K_{(4)}$ given by \eqref{eq:K4}, 
\begin{align}
	( \tilde{I}_{\pt} \circ_+ K_{(3)} ) (\Lambda) 
		= ( \tilde{I}_{\pt} \circ_+ K_{(4)} ) (\Lambda)
\end{align}
and
\begin{align}
	K_{(4)} (X) = K_{(3)} (X) - \tilde{I}_{\pt}^X \lead (X) + O^\alg (K_{(3)}^2 ) .
\end{align}
\end{corollary}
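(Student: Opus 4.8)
The plan is to reduce the entire statement to the reapportioning lemma, Lemma~\ref{lemma:reapp}, in complete parallel with the proof of Corollary~\ref{cor:K1}. The first step is purely algebraic and is the only manipulation specific to Map~4: unwinding the definition \eqref{eq:K4}, one has $\Phi_+^{(4)}(V,K,K') = \Rap_{\lead}[\tilde{I}_{\pt},\, K' + \tilde{I}_{\pt}\lead]$ evaluated at $K' = K_{(3)} - \tilde{I}_{\pt}\lead$, so the two copies of $\tilde{I}_{\pt}\lead$ cancel and $K_{(4)} = \Rap_{\lead}[\tilde{I}_{\pt}, K_{(3)}]$. Everything then follows by applying Lemma~\ref{lemma:reapp} to the triple $(I,K,J) = (\tilde{I}_{\pt}, K_{(3)}, \lead)$ at scale $j+1$.

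Before invoking the lemma I would confirm that $\lead$ is an admissible reapportioning datum in the sense of Definition~\ref{def:reapp}; both requirements are in fact already recorded in the discussion preceding the statement. First, $\lead$ obeys the balance condition $\sum_{X\supset B}\lead_B(X) = 0$ for every $B\in\cB_+$, which is immediate from the case split in \eqref{eq:hlead}. Second, $\lead_B(X)$ vanishes unless $X$ is a connected polymer with $B\subset X$ and $|X|_{\cB_+}\le 2$: the connectedness is forced by the finite-range property of $\Gamma_{j+1}$ sitting inside $\Cov_{\pi,+}$ (a disconnected $X$ makes that covariance, hence $\lead_B(X)$, vanish), and the two-block bound is built into \eqref{eq:hlead}. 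Hence $\Rap_{\lead}$ is well defined at scale $j+1$ and Lemma~\ref{lemma:reapp} applies verbatim.

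It then remains to read off the two displayed identities. The conservation part of Lemma~\ref{lemma:reapp} yields $(\tilde{I}_{\pt}\circ_+ \Rap_{\lead}[\tilde{I}_{\pt}, K_{(3)}])(\Lambda) = (\tilde{I}_{\pt}\circ_+ K_{(3)})(\Lambda)$, which is the first identity. The low-order expansion part gives $\Rap_{\lead}[\tilde{I}_{\pt}, K_{(3)}](X) = K_{(3)}(X) - \tilde{I}_{\pt}^X\lead(X) + O^\alg(\lead^2, \lead\, K_{(3)})$; since $\lead$ is assembled from $\Cov_{\pi,+}[\theta V;\theta V] = O^\alg(V^2)$ and, by Lemma~\ref{lemma:LK3}, $\lead$ occurs as one of the additive constituents of $K_{(3)}$, the remainder $O^\alg(\lead^2, \lead\, K_{(3)})$ is absorbed in $O^\alg(K_{(3)}^2)$. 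This is exactly the order-counting step that turned $O^\alg(J^2, JK)$ into $O^\alg(K^2)$ in Corollary~\ref{cor:K1}, now with $J = \lead$.

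I do not expect a genuine obstacle here: all of the content sits inside Lemma~\ref{lemma:reapp}, and the only steps that are not completely mechanical are the one-line cancellation $K' + \tilde{I}_{\pt}\lead = K_{(3)}$ and the verification (already noted above) that $\lead$ meets the hypotheses of the reapportioning map. The one structural remark worth including in the write-up — though immaterial for this algebraic lemma — is that $\Phi_+^{(4)}$ is deliberately fed $K_{(3)} - \tilde{I}_{\pt}\lead$ rather than $K_{(3)}$ itself, so that after reapportioning the explicitly identified relevant content $\tilde{I}_{\pt}^X\lead(X)$ has been stripped from the single-block activities; this is what leaves the contracting, genuinely higher-order remainder that the norm estimates of Sections~\ref{sec:RGpartII}--\ref{sec:RGpartIII} exploit.
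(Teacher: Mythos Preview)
Your proposal is correct and is exactly the approach the paper takes: the paper's proof is the one-line remark ``direct consequence of Lemma~\ref{lemma:reapp}'', and you have simply unpacked this by first cancelling $\tilde{I}_{\pt}\lead$ in the definition \eqref{eq:K4} to get $K_{(4)} = \Rap_{\lead}[\tilde{I}_{\pt},K_{(3)}]$, verified the reapportioning hypotheses (already noted in the text preceding \eqref{eq:K4}), and then read off both conclusions from Lemma~\ref{lemma:reapp}, absorbing $O^\alg(\lead^2,\lead K_{(3)})$ into $O^\alg(K_{(3)}^2)$ in parallel with Corollary~\ref{cor:K1}.
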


\subsection{Map 5}

Recall that $\tilde{I}_\pt$,  in \eqref{eq:tdIpt},  is defined for blocks at scale $j$.  This extends to $B \in \cB_+$ by
\begin{align}
	\tilde{I}_\pt^B 
		&=  \prod_{b \in \cB (B)} \tilde{I}_\pt (b) =  \prod_{b \in \cB (B)} e^{-U_\pt^{(\bs)} (b)} \big( 1 + W_\pt (b) \big) .  
\end{align}
(Recall the notation of Section~\ref{sec:steffp}).
The fifth map replaces $\tilde{I}_{\pt}^B$ by $I^+_{\pt} (B)$ defined by
\begin{align}
	I^+_{\pt} (B) 
		&= \cI_+ ( V_{\pt} ) = e^{-U_{\pt}^{(\bs_+)} (B)} (1 + W_\pt (B) )
\end{align}
This amounts to taking
\begin{align}
	K_{(5)} = \Phi_+^{(5)} (V,K,K_{(4)}) := (\tilde{I}_{\pt} - I_{\pt}^+) \circ_+ K_{(4)}
	.
	\label{eq:K5}
\end{align}

\begin{corollary} \label{cor:K5} 
With $K_{(5)}$ as in \eqref{eq:K5}, 
\begin{align}
	\tilde{I}_{\pt} \circ_+ K_{(4)} = I^+_\pt \circ_+ K_{(5)}
	.
\end{align}
\end{corollary}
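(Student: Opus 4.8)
The plan is to obtain Corollary~\ref{cor:K5} from the polymer binomial identity \eqref{eq:polybinom2}, in exactly the spirit of the proof of Lemma~\ref{lemma:K2}, the only difference being that the convolution is now at scale $j+1$. First I would check that the objects being convolved are of the right type: $I^+_{\pt}(B) = \cI_+(V_{\pt})$ is by construction a function of single $(j+1)$-blocks, while $\tilde I_{\pt}$, although defined in \eqref{eq:tdIpt} on $\cB_j$, is extended to $\cB_+$ precisely through the product $\tilde I_{\pt}^B = \prod_{b \in \cB(B)} \tilde I_{\pt}(b)$ already used in Map~5; hence both are admissible block functions for the convolution $\circ_+$. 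Since $K_{(4)} \in \cN_{j+1}$ is a genuine polymer activity at scale $j+1$, being the output of Map~4, the compositions $\tilde I_{\pt}\circ_+ K_{(4)}$, $(\tilde I_{\pt} - I^+_{\pt})\circ_+ K_{(4)}$ and $I^+_{\pt}\circ_+ K_{(5)}$ all make sense.

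Then the proof is a one-line application of \eqref{eq:polybinom2} with the block functions $A = \tilde I_{\pt}$, $B = I^+_{\pt}$ and the polymer activity $K = K_{(4)}$:
\[
  \tilde I_{\pt} \circ_+ K_{(4)} \;=\; I^+_{\pt} \circ_+ \big( (\tilde I_{\pt} - I^+_{\pt}) \circ_+ K_{(4)} \big) \;=\; I^+_{\pt} \circ_+ K_{(5)},
\]
where the last equality is just the definition \eqref{eq:K5} of $K_{(5)}$. This establishes the claimed identity and thereby advances the algebraic part \eqref{eq:contrlldRGalg} of Theorem~\ref{thm:contrlldRG} by one more map, in parallel with Corollaries~\ref{cor:K1}, \ref{cor:K4lin} and Lemmas~\ref{lemma:K2}, \ref{lemma:K3}.

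I do not expect a genuine obstacle here; the statement is purely algebraic once \eqref{eq:polybinom2} is invoked. The only point demanding a little care is the scale bookkeeping: one must confirm that the extension of $\tilde I_{\pt}$ from $\cB_j$ to $\cB_{j+1}$ implicit in \eqref{eq:K4}--\eqref{eq:K5} is the same product extension as above, so that $K_{(4)}$ is paired with the identical $\tilde I_{\pt}^B$ on both sides, and that every $\circ_+$ above means $\circ_{j+1}$. With that settled, the identity is immediate. If one preferred to avoid quoting \eqref{eq:polybinom2}, the same equality could be proved directly by expanding the definition of $\circ_{j+1}$ and reorganising the sum over sub-polymers as in the proof of Lemma~\ref{lemma:K3}, but this is strictly longer and offers no additional insight.
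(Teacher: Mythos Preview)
Your proof is correct and takes essentially the same approach as the paper: both reduce the identity to a single application of \eqref{eq:polybinom2} at scale $j+1$, with $I_1 = \tilde I_{\pt}$, $I_2 = I^+_{\pt}$, and $K_1 = K_{(4)}$, followed by the definition \eqref{eq:K5}. The additional scale-bookkeeping remarks you include are sound but not needed for the paper's one-line proof.
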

\begin{proof}
This follows from \eqref{eq:polybinom2}.
\end{proof}

Next result is also follows directly by expanding $K_{(5)}$.

\begin{corollary}
With $K_{(5)}$ given by \eqref{eq:K5} and $X \in \cP_+$,
\begin{align}
	K_{(5)} (X) 
		&= ( \tilde{I}_\pt - I^+_\pt  )^X + \sum_{Z \in \Con (X)} K_{(4)} (Z)  (\tilde{I}_\pt - I^+_\pt  )^{X\backslash Z} + O^\alg (K_{(4)}^2) 
		.
\end{align}
\end{corollary}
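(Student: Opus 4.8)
The plan is to prove this by directly unfolding the polymer convolution in the definition \eqref{eq:K5} and then counting orders in $K_{(4)}$. First I would expand: by the definition of $\Phi_+^{(5)}$ and of $\circ_+$ together with the polymer powers \eqref{eq:polypowers} at scale $j+1$, for $X \in \cP_+$,
\[
	K_{(5)}(X) = \big( (\tilde{I}_{\pt} - I_{\pt}^+) \circ_+ K_{(4)} \big)(X) = \sum_{Y \in \cP_+(X)} (\tilde{I}_{\pt} - I_{\pt}^+)^{X \backslash Y}\, K_{(4)}^{[Y]},
\]
where $K_{(4)}^{[Y]} = \prod_{Z \in \Comp(Y)} K_{(4)}(Z)$ and $(\tilde{I}_{\pt} - I_{\pt}^+)^{X \backslash Y} = \prod_{B \in \cB_+(X \backslash Y)} \big( \tilde{I}_{\pt}^B - I_{\pt}^+(B) \big)$, with $\tilde{I}_{\pt}^B$ read as the product of $\tilde{I}_{\pt}(b)$ over $b \in \cB(B)$ as in the preamble of Map~5.

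Next I would split the sum over $Y$ according to $n := |\Comp(Y)|$. The term $Y = \emptyset$ ($n=0$) contributes exactly $(\tilde{I}_{\pt} - I_{\pt}^+)^X$, since an empty product equals $1$. The terms with $n = 1$ — for which $Y = Z$ ranges over the connected polymers $Z$ contained in $X$, single $(j+1)$-blocks included since a block is connected — sum to $\sum_{Z \in \Con(X)} K_{(4)}(Z)\,(\tilde{I}_{\pt} - I_{\pt}^+)^{X \backslash Z}$, which is the linear term in the statement. Finally, every $Y$ with $n \ge 2$ produces a $K_{(4)}^{[Y]}$ that is a product of at least two factors $K_{(4)}(Z_i)$; since $\tilde{I}_{\pt}$, $I_{\pt}^+$, and hence $(\tilde{I}_{\pt} - I_{\pt}^+)^{X \backslash Y}$, contain no occurrence of $K_{(4)}$ and are $O^\alg(1)$ in that variable, the collection of all these terms is $O^\alg(K_{(4)}^2)$. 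Adding the three groups yields the asserted identity.

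I do not expect a genuine obstacle: the statement is just the algebraic definition \eqref{eq:K5} rewritten with its two leading Mayer-type contributions made explicit, exactly parallel to the low-order expansions obtained for the earlier maps in Corollary~\ref{cor:K1} and Corollary~\ref{cor:K4lin}. The only point requiring a word of care is the $O^\alg$ bookkeeping: one must observe that $\tilde{I}_{\pt} - I_{\pt}^+$ is built entirely from the effective potential $U_{\pt}$ and the $W$-coordinate $W_{\pt}$ (through $\hat{V}$), so it carries no power of $K_{(4)}$, which is precisely what makes the multi-component remainder $O^\alg(K_{(4)}^2)$.
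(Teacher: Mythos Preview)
Your proposal is correct and matches the paper's approach exactly: the paper simply states that the result ``follows directly by expanding $K_{(5)}$'' without supplying further details, and your expansion of the polymer convolution \eqref{eq:K5} by number of connected components of $Y$ is precisely that direct expansion.
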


\subsection{Map 6} \label{sec:map6defi}

For the sixth map, we define
\begin{align}
	V_+ = \mathbb{V}^{(0)} (U_{\pt}) \in \cV^{(0)}, \qquad  \delta u_+ = (\pi_0 + \pi_\ox ) U_{\pt} 
	\label{eq:Vplusuplus}
\end{align}
and define $W_+$ by Definition~\ref{defi:WjDef} with $V_+$,
where we recall $\mathbb{V}^{(0)}$ and $\cV^{(0)}$ from Definition~\ref{def:cVnote}.  By summation by parts, we easily see that
\begin{align}
	U_\pt (\Lambda) = \delta u_+ (\Lambda) + V_+ (\Lambda),  \qquad
	W_\pt (\Lambda) = W_+ (\Lambda)
	.
		\label{eq:Vibp}
\end{align}
We replace $I^+_\pt$ by $I_+$ defined by
\begin{align}
	I_+ = \cI_+ (V_+) 
\end{align}
and replace $K_{(5)}$ by $K_{(6)} = \Phi_+^{(6)} (V,K,K_{(5)})$ where
\begin{align}
\begin{split}
	& \Phi_+^{(6)} (V,K, K')  \\
	& \;\; := \begin{cases}
			\Big( \tilde{K}' (B) - e^{- V_+^{(\bs_+)} (B) } W_+ (B) (W_+ - W_\pt ) (B) \Big) & (X = B \in \cB_+) \\
			\Big( \tilde{K}' \circ_+ e^{- V_+^{(\bs_+)} } (W_+ - W_\pt ) \Big) (X) & (|X|_{\cB_+} \neq 1)
		\end{cases}
\end{split}		
		\label{eq:K6defi}
\end{align}
where the stabilisation $(\bs_+)$ now happens at scale $j+1$ and
\begin{align}
	\tilde{K}' (X) = e^{( U_\pt  - V_+ ) (X)} K' (X)
	.
\end{align}
Next lemma verifies the validity of $K_{(6)}$.

\begin{lemma} \label{lemma:K6works}
With $K_{(6)}$ as in \eqref{eq:K6defi},
\begin{align}
	( I^+_{\pt} \circ_+ K_{(5)} ) (\Lambda) = e^{-\delta u_+ (\Lambda)} ( I_+ \circ_+ K_{(6)} ) (\Lambda)
	.
\end{align}
\end{lemma}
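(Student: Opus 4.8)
The proof will be purely algebraic: it unwinds the definitions of $I^+_\pt$, $I_+$ and $\Phi^{(6)}_+$ and reorganises the polymer convolution on $\Lambda$. The map $\Phi^{(6)}_+$ turns $I^+_\pt$ into $I_+$ in two independent ways --- it exchanges the stabilised potential $U_\pt^{(\bs_+)}$ for $V_+^{(\bs_+)}$, and it exchanges the $W$-coordinate $W_\pt$ (built from $U_\pt$) for $W_+$ (built from $V_+=\mathbb{V}^{(0)}(U_\pt)$) --- and I will treat the two exchanges one after the other.

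First, the stabilisation/exponential factor. Since $\mathbb{V}^{(0)}$ removes only the constant, the $\sigma_\ox$- and $\nnabla$-parts and transfers $\nnabla$ into $\Delta$, it does not touch the $\ko_{4,\nabla}$-component, so $\pi_{4,\nabla}V_+=\pi_{4,\nabla}U_\pt$, hence $U_\pt^{(2)}=V_+^{(2)}$ and $U_\pt^{(2,\bs_+)}=V_+^{(2,\bs_+)}$; since $U_\pt-V_+$ has no $\ko_{4,\nabla}$-part this gives $U_\pt^{(\bs_+)}(B)=(U_\pt-V_+)(B)+V_+^{(\bs_+)}(B)$ for every $B\in\cB_+$, i.e. $I^+_\pt(B)=e^{-(U_\pt-V_+)(B)}e^{-V_+^{(\bs_+)}(B)}(1+W_\pt(B))$. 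Substituting this into $(I^+_\pt\circ_+K_{(5)})(\Lambda)=\sum_{Y\in\cP_+(\Lambda)}(I^+_\pt)^{\Lambda\setminus Y}K_{(5)}^{[Y]}$ and pulling out $\prod_{B\in\cB_+(\Lambda\setminus Y)}e^{-(U_\pt-V_+)(B)}=e^{-(U_\pt-V_+)(\Lambda)}e^{(U_\pt-V_+)(Y)}$, the first exponent is $\delta u_+(\Lambda)$ by the summation-by-parts identity \eqref{eq:Vibp}, while $e^{(U_\pt-V_+)(Y)}$ distributes over $\Comp(Y)$ and replaces each $K_{(5)}(Z)$ by $\tilde K_{(5)}(Z):=e^{(U_\pt-V_+)(Z)}K_{(5)}(Z)$, which is exactly the $\tilde K'$ of \eqref{eq:K6defi}. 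This yields
\begin{align}
	(I^+_\pt\circ_+K_{(5)})(\Lambda)=e^{-\delta u_+(\Lambda)}\,(\hat I\circ_+\tilde K_{(5)})(\Lambda),\qquad \hat I(B):=e^{-V_+^{(\bs_+)}(B)}(1+W_\pt(B)).
\end{align}

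Second, replacing $W_\pt$ by $W_+$. Now $\hat I$ and $I_+$ are block functions at scale $j+1$ differing only through the $W$-factor, $\hat I(B)-I_+(B)=e^{-V_+^{(\bs_+)}(B)}(W_\pt-W_+)(B)$. I will use the polymer-binomial identity \eqref{eq:polybinom2} (Lemma~\ref{lemma:polybinom}) to rewrite $(\hat I\circ_+\tilde K_{(5)})(\Lambda)$ as $(I_+\circ_+\,\cdot\,)(\Lambda)$, push the difference into the error activity, expand the block products $\prod_B(1+W_\pt(B))$ and regroup. The crucial input is the second half of \eqref{eq:Vibp}, $W_\pt(\Lambda)=W_+(\Lambda)$: since $W_+-W_\pt$ is of order $O^\alg(V^2)$ it would be too low order to sit inside $K$, but the global cancellation $\sum_{B\in\cB_+}(W_+-W_\pt)(B)=W_+(\Lambda)-W_\pt(\Lambda)=0$ removes exactly the contribution linear in $W_+-W_\pt$ from the convolution, leaving as single-block remainder only the $O^\alg(V^4)$ term $-e^{-V_+^{(\bs_+)}(B)}W_+(B)(W_+-W_\pt)(B)$, while the contributions on polymers of more than one block assemble into $\tilde K_{(5)}\circ_+e^{-V_+^{(\bs_+)}}(W_+-W_\pt)$ --- precisely the two-case prescription \eqref{eq:K6defi} for $\Phi^{(6)}_+$. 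Hence $(\hat I\circ_+\tilde K_{(5)})(\Lambda)=(I_+\circ_+K_{(6)})(\Lambda)$, and combining with the first step gives the claim.

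The main obstacle is this last regrouping. Unlike $U_\pt^{(\bs_+)}$ and $V_+^{(\bs_+)}$, which differ by a genuine block-wise local quantity, $W_\pt$ and $W_+$ coincide only after summation over all of $\Lambda$ and not block by block, so exchanging them inside the convolution forces one to track how the per-block defects $(W_+-W_\pt)(B)$ recombine across different polymers $Y$; the piecewise definition \eqref{eq:K6defi} of $\Phi^{(6)}_+$ is tuned exactly so that the low-order part of the defect is eliminated by the global identity and only the irrelevant remainder lands in $K_{(6)}$. Two smaller points to verify carefully are that sliding $e^{-(U_\pt-V_+)}$ past the stabilisation is legitimate --- which is what the $\pi_{4,\nabla}$-invariance of $\mathbb{V}^{(0)}$ provides --- and that every $\circ_+$-reassociation used above is an instance of the polymer-algebra lemmas already quoted for Maps~1--5 (this includes the case $j+1=N$, for which the formulas are uniform); everything else is bookkeeping.
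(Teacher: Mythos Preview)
Your proposal is correct and follows essentially the same route as the paper's proof: first use $\pi_{4,\nabla}U_\pt=\pi_{4,\nabla}V_+$ (equivalently $U^{(2)}=\bar V^{(2)}$) to factor out $e^{-(U_\pt-V_+)(\Lambda)}=e^{-\delta u_+(\Lambda)}$ via \eqref{eq:Vibp} and obtain $(\hat I\circ_+\tilde K_{(5)})(\Lambda)$ with $\hat I=e^{-V_+^{(\bs_+)}}(1+W_\pt)$; then apply \eqref{eq:polybinom2} with $\hat I=I_++e^{-V_+^{(\bs_+)}}(W_\pt-W_+)$, isolate the single-block pieces, and use $\sum_B(W_+-W_\pt)(B)=0$ to kill the low-order term, leaving precisely the two-case formula \eqref{eq:K6defi}. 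The paper carries out the second step by the explicit $S_1+S_2$ split (pulling out $e^{-\bar V^{(\bs_+)}(\Lambda)}$ and writing $(1+\bar W)^{\Lambda\setminus B}=(1+\bar W)^{\Lambda}-(1+\bar W)^{\Lambda\setminus B}\bar W(B)$), which is exactly the ``regrouping'' you describe in words.
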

\begin{proof}
For brevity, we denote $U_{\pt} = U$, $V_{+} = \bar{V}$, 
$W_\pt = W$, $W_+= \bar{W}$ and $K_{(5)} = K$.
Also,  let $\tilde{K}$ be $\tilde{K}'$ defined using $K' = K_{(6)}$.
We also let $U - \bar{V} = \delta V$ and $W - \bar{W}  = \delta W$.  Note that $\delta V = \delta V^{(\bs_+)}$ because $\delta V \in \cV_2$. 
We have
\begin{align}
	( I_\pt^+ \circ_+ K  ) (\Lambda) 
		&= \sum_{X \subset \Lambda} \bigg[ e^{-U^{(1)}} \Big( 1- U^{(2)} + \frac{( U^{(2)} )^2}{2} \Big) (1 + W ) \bigg]^{\Lambda \backslash X} (X) \nnb
		&= e^{-U^{(1)} (\Lambda)}  \sum_{X \subset \Lambda} \bigg[  \Big( 1- U^{(2)} + \frac{( U^{(2)} )^2}{2} \Big) (1 + W) \bigg]^{\Lambda \backslash X} e^{V^{(1)} (X)} K (X)
		.
\end{align}
By \eqref{eq:Vibp},
and since $U^{(2)} = \bar{V}^{(2)}$,
\begin{align}
	&= e^{-\delta u_+ (\Lambda) - \bar{V}^{(1)} (\Lambda)}  \sum_{X \subset \Lambda} \bigg[ \Big( 1- U^{(2)} + \frac{( U^{(2)} )^2}{2} \Big) (1 + W) \bigg]^{\Lambda \backslash X} e^{U^{(1)} (X)} K (X)  \nnb
		&=  e^{- \delta u_+ (\Lambda)} \sum_{X \subset \Lambda} \bigg[ e^{-\bar{V}^{(1)}} \Big( 1- \bar{V}^{(2)} + \frac{( \bar{V}^{(2)} )^2}{2} \Big) (1 + W ) \bigg]^{\Lambda \backslash X} e^{\delta V^{(1)} (X)} K (X) \nnb
		&= e^{- \delta u_+ (\Lambda)} \bigg[ \Big( e^{- \bar{V}^{(\bs_+)}} (1 + W) \Big) \circ_+ \tilde{K} \bigg] (\Lambda)
\end{align}
where $\tilde{K} (X) = e^{\delta V (X)} K (X)$.
This replaces $V$ by $\bar{V}$.

Replacing $W$ by $\bar{W}$ is a bit more tricky.
Since
\begin{align}
	e^{- \bar{V}^{(\bs_+)}} (1 +W) 
		&=  e^{-\bar{V}^{(\bs_+)}} (1+ \bar{W}) + e^{-\bar{V}^{(\bs_+)}} \delta W
		=  I_+  + e^{-\bar{V}^{(\bs_+)} } \delta W
		,
\end{align}
we use \eqref{eq:polybinom2} to obtain
\begin{align}
	\bigg[ \Big( e^{- \bar{V}^{(\bs_+)}} (1 + W) \Big) \circ_+ \tilde{K} \bigg] (\Lambda)
		= \bigg[  I_+ \circ_+ \Big( e^{-\bar{V}^{(\bs_+)}} \delta W  \circ_+ \tilde{K} \Big) \bigg] (\Lambda)
		= S_1 + S_2
		.
\end{align}
where
\begin{align}
	S_1 &= \sum_{B \in \cB_+} I_+^{\Lambda \backslash B} e^{-\bar{V}^{(\bs_+)} (B) } \delta W (B) 
	\\
	S_2 &=  \sum_{B \in \cB_+}  I_+^{\Lambda \backslash B}  \tilde{K} (B) +  \sum_{|X|_{\cB_+} \neq 1} I_+^{\Lambda \backslash X} \Big( e^{-\bar{V}^{(\bs_+)}} \delta W  \circ_+ \tilde{K} \Big) (X)
	.
\end{align}
We can rewrite $S_1$ as
\begin{align}
	S_1 
		&= e^{-\bar{V}^{(\bs_+)} (\Lambda)} \sum_{B \in \cB_+} (1 + \bar{W} )^{\Lambda \backslash B} \delta W (B) \nnb
		&= I_+^\Lambda \sum_{B \in \cB_+}  \delta W (B) - e^{-\bar{V}^{(\bs_+)} (\Lambda)} \sum_{B \in \cB_+} (1 + \bar{W} )^{\Lambda \backslash B} \bar{W} (B) \delta W (B)
	.
\end{align}
But since $\sum_{B \in \cB_+}  \delta W (B) = 0$ by \eqref{eq:Vibp}, 
the first term vanishes,  and
\begin{align}
		= - e^{-\bar{V}^{(\bs_+)} (\Lambda)} \sum_{B \in \cB_+} (1 + \bar{W} )^{\Lambda \backslash B} \bar{W} (B) \delta W (B)
		.
\end{align}
So we conclude
\begin{align}
	S_1 + S_2 
		&= \sum_{B \in \cB_+}  I_+^{\Lambda \backslash B} \Big( \tilde{K} (B) - e^{-\bar{V}^{(\bs_+)} (B) } \bar{W} (B) \delta W (B) \Big) \nnb
		& \qquad + \sum_{|X|_{\cB_+} \neq 1} I_+^{\Lambda \backslash X} \Big( \delta W e^{-\bar{V}^{(\bs_+)}} \circ_+ \tilde{K} \Big) (X)
		,
\end{align}
which is as desired.
\end{proof}

In the proof above,  if we applied \eqref{eq:polybinom2} to replace $I^+_\pt$ by $I_+$ without the global summation by parts,  we would have got
\begin{align}
	 ( e^{u_+ (X)} I^+_\pt -   I_+ ) \circ_+ K_{(5)} 
\end{align}
instead of $K_{(6)}$.  This modification adds a term of order $O^\alg (V)$ to $K_{(5)}$.  
However,  thanks to the summation by parts and the definition of $K_{(6)}$,  we see in Corollary~\ref{cor:K6error} that this replacement only creates a high order modification to $K_{(5)}$.

To expand out the lowest degree terms of $K^{(6)}$,
it is convenient to define
\begin{align}
	\Omega (Y,X) 
		=\begin{cases}
			- e^{- V_+^{(\bs)} (B) } W_+ (B) (W_+ - W_\pt ) (B) & (X = Y = B \in \cB_+) \\
			e^{- V_+^{(\bs)} (Y) } (W_+ - W_\pt )^Y & (\text{otherwise})
			,
		\end{cases}
\end{align}
for $Y, X \in \cP_+$ so that
\begin{align}
	K^{(6)} (X) = \sum_{Z \in \cP_+ (X)} \Omega (X \backslash Z,  X) e^{ ( U_\pt  - V_+ ) (Z) } K_{(5)} (Z)  .
\end{align}
Since $\Omega(Y,X) = O^\alg (V^4)$ for $Y\neq \emptyset$,  we have the following. 

\begin{corollary} \label{cor:K6error}
We have
\begin{align}
	K_{(6)} (X) 
		&= \Omega (X,X) + \sum_{Z \in \Con_+ (X)} \Omega ( X \backslash Z,  X)  e^{( U_\pt  - V_+ ) (Z) } K_{(5)} (Z) +   O^\alg (K_{(5)}^2) \\
		&= K_{(5)} (X) + O^\alg (K_{(5)}^2,  V K_{(5)},  V^4) 
		.
\end{align}
\end{corollary}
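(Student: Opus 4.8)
The plan is to read $K_{(6)}(X)$ off the closed form $K_{(6)}(X)=\sum_{Z\in\cP_+(X)}\Omega(X\backslash Z,X)\,e^{(U_\pt-V_+)(Z)}\,K_{(5)}(Z)$ recorded just before the statement, and to sort the sum by the number of connected components of $Z$. The term $Z=\emptyset$ contributes exactly $\Omega(X,X)$, since $e^{(U_\pt-V_+)(\emptyset)}=1$ and $K_{(5)}(\emptyset)=1$; the polymers $Z\in\Con_+(X)$ (one component) give $\sum_{Z\in\Con_+(X)}\Omega(X\backslash Z,X)\,e^{(U_\pt-V_+)(Z)}\,K_{(5)}(Z)$; and any $Z$ with two or more components carries $K_{(5)}(Z)=\prod_{Z'\in\Comp(Z)}K_{(5)}(Z')$ with at least two factors, hence lies in $O^\alg(K_{(5)}^2)$. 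This gives the first displayed identity. The one bookkeeping point is the single-block case $X=B\in\cB_+$, where $\cP_+(B)=\{\emptyset,B\}$, so the sum reduces to the $Z=\emptyset$ and $Z=B$ terms, which match exactly the two terms of the first branch of \eqref{eq:K6defi}.

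For the second identity I would bound the pieces other than the $Z=X$ term by order counting, using the conventions of Section~\ref{sec:rgstep} and the structure of $W$ from Sections~\ref{sec:WCoord}--\ref{sec:fullRGU}, namely $W_+=O^\alg(V^2)$ and $W_+-W_\pt=O^\alg(V^2)$ (the latter because $\mathbb{W}$ is bilinear in its potential argument while $U_\pt-V_+=O^\alg(V)$). First, $\Omega(X,X)=O^\alg(V^4)$: for $X=B$ it equals $-e^{-V_+^{(\bs)}(B)}W_+(B)(W_+-W_\pt)(B)$, a product of two quadratic-in-$V$ factors, and for $|X|_{\cB_+}\ge 2$ it equals $e^{-V_+^{(\bs)}(X)}(W_+-W_\pt)^X$, a block power with at least two factors of $W_+-W_\pt$. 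Second, for $Z\in\Con_+(X)$ with $X\backslash Z\neq\emptyset$ the prefactor $\Omega(X\backslash Z,X)=e^{-V_+^{(\bs)}(X\backslash Z)}(W_+-W_\pt)^{X\backslash Z}$ carries at least one factor $W_+-W_\pt=O^\alg(V^2)$, so such a term lies in $O^\alg(V^2K_{(5)})\subset O^\alg(VK_{(5)})$. Third, for $Z=X$ one has $\Omega(\emptyset,X)=1$ and the term is $e^{(U_\pt-V_+)(X)}K_{(5)}(X)$; since $U_\pt-V_+$ is the sum of the constant $\delta u_+$, the $\pi_\ox$ part of $U_\pt$, and the $(\nabla\varphi\cdot\nabla\varphi)-(\varphi\cdot\Delta\varphi)$ discrepancy, each of which is $O^\alg(V)$, Taylor expansion of the exponential gives $e^{(U_\pt-V_+)(X)}=1+O^\alg(V)$ and hence $e^{(U_\pt-V_+)(X)}K_{(5)}(X)=K_{(5)}(X)+O^\alg(VK_{(5)})$. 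Combining these with the $O^\alg(K_{(5)}^2)$ remainder from the first identity yields $K_{(6)}(X)=K_{(5)}(X)+O^\alg(K_{(5)}^2,VK_{(5)},V^4)$.

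There is no genuine analysis here: the statement is a reorganisation of a finite polymer sum followed by order counting, and I expect the argument to be only a few lines once the representation above is available. The only part that needs any care is keeping the definitional case splits in \eqref{eq:K6defi} and in the definition of $\Omega$ consistent (single block versus $|X|_{\cB_+}\neq 1$, and the branches $Y=\emptyset$ versus $Y=X=B$) and being precise about the algebraic orders---in particular that the $K$-dependence of $U_\pt$, $V_+$ and $W_\pt$ entering through $\hat V=V-Q$ with $Q=O^\alg(K)$ only produces monomials already covered by the stated remainder $O^\alg(K_{(5)}^2,VK_{(5)},V^4)$. That bookkeeping point is the mildly fiddly step, but it stays entirely within the $O^\alg$ conventions already fixed.
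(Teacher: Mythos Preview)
Your proposal is correct and follows the same route as the paper, which merely records the closed form $K_{(6)}(X)=\sum_{Z\in\cP_+(X)}\Omega(X\backslash Z,X)\,e^{(U_\pt-V_+)(Z)}K_{(5)}(Z)$ and the one-line observation ``$\Omega(Y,X)=O^\alg(V^4)$ for $Y\neq\emptyset$'' before stating the corollary. Your version is simply a more careful unpacking of that sentence: you split $\cP_+(X)$ by $|\Comp(Z)|$, track the single-block branch of \eqref{eq:K6defi} explicitly, and separate the $Z=X$ contribution (giving $K_{(5)}(X)+O^\alg(VK_{(5)})$) from the $Z\subsetneq X$ terms (giving $O^\alg(V^2K_{(5)})$), which is exactly what the paper's line is implicitly doing.
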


\noindent\medskip\textbf{Summary.}
We constructed six polymer activities $(K_{(i)})_{i=1}^6$,  effective potential $V_+$ and vacuum energy $\delta u_+$ in the process of a single RG step.  This completes the definition of the RG map: as noted at the start of the section,  we take $K_+ = K_{(6)}$ and define the map
\begin{align}
	\Phi_+ : (V,K) \mapsto (\delta u_+ , V_+, K_+)
\end{align}
with $(\delta u_+, V_+)$ as in \eqref{eq:Vplusuplus}.

\begin{corollary} \label{cor:Kplusworks}
Given that the integrals defining Map~1--Map~6 converge,
$I_+ = \cI (V_+)$,  $K_+ = K_{(6)}$ and $\delta u_+ = (\pi_+ + \pi_{\ox}) U_+$,
\begin{align}
	\E_+ [ (I \circ K) (\Lambda) ] = e^{-\delta u_+ (\Lambda)} (I_+ \circ_+ K_+) (\Lambda) .
\end{align} 
\end{corollary}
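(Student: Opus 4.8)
The plan is to obtain the identity by concatenating the six equalities established for Map~1 through Map~6 in this section with the defining action of the $(j+1)$-st progressive integral $\E_+\theta$ from \eqref{eq:Zjindc}. All but one of these equalities are purely algebraic identities of polymer activities; the single analytic step is the fluctuation integral performed in Map~3, and the hypothesis that the integrals converge (together with the finiteness of $\Lambda = \Lambda_N$) is exactly what makes that step legitimate term by term.

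Concretely, I would argue as follows. By Corollary~\ref{cor:K1}, $(I \circ K)(\Lambda) = (I \circ K_{(1)})(\Lambda)$, and by Lemma~\ref{lemma:K2}, $(I \circ K_{(1)})(\Lambda) = (\hat I \circ K_{(2)})(\Lambda)$ with $\hat I = \cI(\hat V)$; these are identities of functions of $\varphi$ and so survive the application of $\theta$ and $\E_+$. Applying $\E_+\theta$ and using Lemma~\ref{lemma:K3} gives $\E_+[\theta(\hat I \circ K_{(2)})(\Lambda)] = (\tilde I_\pt \circ_+ K_{(3)})(\Lambda)$, with $\tilde I_\pt$ the scale-$(j+1)$ activity of \eqref{eq:tdIpt}. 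From here Corollary~\ref{cor:K4lin}, Corollary~\ref{cor:K5} and Lemma~\ref{lemma:K6works} give in turn $(\tilde I_\pt \circ_+ K_{(3)})(\Lambda) = (\tilde I_\pt \circ_+ K_{(4)})(\Lambda) = (I^+_{\pt} \circ_+ K_{(5)})(\Lambda) = e^{-\delta u_+(\Lambda)}(I_+ \circ_+ K_{(6)})(\Lambda)$, where $I_+ = \cI_+(V_+)$ and $(\delta u_+, V_+)$ are as in \eqref{eq:Vplusuplus}. Since $K_+ := K_{(6)}$, chaining these equalities yields $\E_+[(I \circ K)(\Lambda)] = e^{-\delta u_+(\Lambda)}(I_+ \circ_+ K_+)(\Lambda)$, as claimed.

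The only non-routine content is the bookkeeping that each output activity lies in the domain on which the next map acts, which I would check en route. The reapportioning maps $\Rap_J$ and $\Rap_\lead$ are well defined because the balance conditions $\sum_{X \supset b} J_b(X) = 0$ and $\sum_{X \supset B} \lead_B(X) = 0$ hold --- by \eqref{eq:JBX} and by the observation following \eqref{eq:hlead} --- and $\lead_B(X)$ is supported on small connected sets thanks to the finite-range property of $\Gamma_{j+1}$; the divisions by $I$ in \eqref{eq:JBX} and \eqref{eq:Qdefifts} and the localisations $\Loc_X(K/I)$ make sense because $I = \cI(V)$ does not vanish on a neighbourhood of $\varphi = 0$ while $\Loc$ only probes that neighbourhood; and in Map~3 the Gaussian expectation can be pulled past the finite polymer sums since $\Lambda = \Lambda_N$ is finite. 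I do not expect any genuine analytic obstacle at this stage: the quantitative estimates \eqref{eq:controlledRG22}--\eqref{eq:controlledRG24} promised by Theorem~\ref{thm:contrlldRG} are deferred to Sections~\ref{sec:RGpartI}--\ref{sec:RGpartIII}, and the present statement is the algebraic part \eqref{eq:contrlldRGalg} only.
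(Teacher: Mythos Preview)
Your proposal is correct and follows exactly the same approach as the paper: the paper's proof simply cites Corollary~\ref{cor:K1}, Lemma~\ref{lemma:K2}, Lemma~\ref{lemma:K3}, Corollary~\ref{cor:K4lin}, Corollary~\ref{cor:K5} and Lemma~\ref{lemma:K6works} and chains them together. Your additional remarks on well-definedness (balance conditions for $\Rap$, non-vanishing of $I$ near $\varphi=0$, finiteness of $\Lambda$) are accurate and merely spell out what the paper leaves implicit.
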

\begin{proof}
This follows from Corollary~\ref{cor:K1},  Lemma~\ref{lemma:K2},  \ref{lemma:K3},  Corollary~\ref{cor:K4lin},  \ref{cor:K5} and Lemma~\ref{lemma:K6works}.
\end{proof}

The estimates of the map $\Phi_+$ will be performed one by one in the order of the renormalisation group step in Section~\ref{sec:RGpartII}--\ref{sec:RGpartIII},  where the convergence of Gaussian integrals are also verified.

\section{Extended norm}
\label{sec:extnorm}

To prove estimates of type \eqref{eq:controlledRG22} or \eqref{eq:controlledRG23},  it is convenient to define a norm that encodes information about all derivatives in $V_\bulk$ and $K$.
The extended norm is invented for this purpose in \cite{MR3969983},  and allows to improve the estimate of \cite{BBS5}.

Let $\vec{\lambda} = (\lambda_V, \lambda_K,   \bar{\lambda}_{K} ) \ge 0$ (meaning that $\lambda_V,\lambda_K,  \bar\lambda_{K} \ge 0$) be some parameters that we will allow to vary.
Let $\cZ = \cV_\bulk \times \cN \times \cN \ni (V_\bulk, K, \bar{K})$ and equip with norm
\begin{align}
	\norm{ (V_\bulk ,K,  \bar{K} ) }_{\cZ}
		= \max \Big\{ \frac{ \norm{V_\bulk}_{\cV (\ell)} }{ \lambda_V } ,   \frac{\norm{K}_{\cW} }{ \lambda_K} ,   \frac{\norm{\bar{K}}_{\bar\cW} }{\bar\lambda_K}  \Big\}  
\end{align}
for some norm $\norm{\cdot}_{\bar{\cW}}$ that will be allowed to vary.   Choice of $\bar{K}$ also varies,  from $K_{(2)}$ to $K_{(5)}$.

If $F$ is a real-valued polymer activity that is also a smooth function of $z = (V,K, \bar{K})$, 
we can consider the Taylor norm of $F$ expanded in both $\varphi$ and $z$: explicitly, 
\begin{align}
	\norm{F}_{\kh, \vec{\lambda}, T (\varphi, z)} 
		= \sum_{m_1,m_2, m_3=0}^{\infty} \frac{\kh_\bulk^n \lambda_V^{m_1} \lambda_K^{m_2} \bar{\lambda}_K^{m_3} }{n ! m_1 ! m_2 ! m_3 !} \norm{D_{\varphi}^n ( D_{V_\bulk}^{m_1} D_K^{m_2} D_{\bar{K}}^{m_3}  F )}_{\kh, T^{(n)} (\varphi)}
		\label{eq:extdnrmdf}
\end{align}
where $\norm{\cdot }_{\kh, T^{(n)}  (\varphi)}$ on the right-hand side measures the norm as a multilinear form.
The semi-norm is easily extended to the observables.
In these lines,  we can also define
\begin{align}
	\norm{V}_{\cV (\kh),  \vec{\lambda}, T(z)},  \;
	\norm{F}_{\kh,  \vec{\lambda},  T ( \hat\cG ,  z)},  \; 
	\norm{F}_{\kh, \vec{\lambda},  F^a ( \hat\cG , z)} ,  \; 
	\norm{F}_{\vec{\lambda},  \cW^a (z;\ratio, \gamma)},  \;
	\norm{F}_{\vec{\lambda},  \cY^a (z;\ratio, \gamma)}
\end{align}
recalling the norms defined in Section~\ref{sec:thenorms} and \ref{sec:nrmsolply},  for some set-multiplicative function $\hat\cG$.
The equivalence of Lemma~\ref{lemma:WYequiv} still holds.

\begin{lemma} \label{lemma:WYequivext}
For any $a >0 $,   there exists $C \ge 1$ such that
\begin{align}
	\norm{F}_{\vec{\lambda},  \cY^a (z ; \ratio, \gamma)} 
		\le \norm{F}_{\vec{\lambda},  \cW^a (z ; \ratio, \gamma)}
		\le C \norm{F}_{\vec{\lambda},  \cY^a (z ; \ratio, \gamma)}
\end{align}
\end{lemma}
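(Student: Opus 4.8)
The statement is the extended-norm analogue of Lemma~\ref{lemma:WYequiv}, whose non-extended version is quoted from \cite[Lemma~2.4]{BBS5}. The plan is to reduce to that lemma (or rather to its proof) by noting that the extended norm is built by expanding $F$ as a power series in the auxiliary variables $z = (V_\bulk, K, \bar K)$ and then applying the non-extended comparison to each Taylor coefficient. Concretely, the only difference between $\cW^a$ and $\cY^a$ lies in the first summand of \eqref{eq:cWnormdefi}--\eqref{eq:cYnormdefi}: $\cW^a$ uses $F^a_j(G^\gamma_j)$, which compares $\norm{F(X)}_{\ell_j,T_j(\varphi)}$ against the regulator $G^\gamma_j(X,\varphi)$ for all $\varphi$, whereas $\cY^a$ uses $F^a_j(T_0)$, which only looks at $\varphi = 0$. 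The $h$-parts of the two norms are literally identical, so the whole content is the comparison of these two $\ell$-parts; the left inequality is trivial since $G^\gamma_j(X,0) \ge 1$, and the right inequality is the substantive one.

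\textbf{Key steps.} First, I would observe that both sides of the claimed inequality are, by \eqref{eq:extdnrmdf}, weighted suprema (over $X \in \Con_j$) of sums over multi-indices $(m_1,m_2,m_3)$ of the quantities $\norm{D_{V_\bulk}^{m_1} D_K^{m_2} D_{\bar K}^{m_3} F(X)}_{\text{(appropriate }\varphi\text{-norm)}}$, with the \emph{same} combinatorial weights $\lambda_V^{m_1}\lambda_K^{m_2}\bar\lambda_K^{m_3}/(m_1!m_2!m_3!)$ on both sides. Since these weights and the sup over $X$ are common to both, it suffices to prove, for each fixed $X$ and each fixed mixed derivative $G := D_{V_\bulk}^{m_1} D_K^{m_2} D_{\bar K}^{m_3} F(X)$ regarded as a polymer activity in $\varphi$ (valued in the multilinear forms on the $z$-directions), the pointwise-in-$(m_1,m_2,m_3)$ bound
\begin{align}
	\norm{G}_{\ell_j, F^a_j(T_0)} \;\le\; \norm{G}_{\ell_j, F^a_j(G^\gamma_j)} \;\le\; C\,\norm{G}_{\ell_j, F^a_j(T_0)},
\end{align}
with the same $C$ as in Lemma~\ref{lemma:WYequiv} (which is $\varphi$-uniform and independent of the activity). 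But this is exactly the statement proved in \cite[Lemma~2.4]{BBS5}: nothing in that proof uses that the activity is scalar-valued rather than multilinear-form-valued, nor that it is $F$ rather than a derivative of $F$ — the argument is a pointwise comparison of two norms on a fixed space of $\varphi$-dependent functions. Summing over $(m_1,m_2,m_3)$ with the common weights and taking the sup over $X \in \Con_j$ then yields the result.

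\textbf{Main obstacle.} There is no real analytic difficulty; the only thing to be careful about is bookkeeping. One must check that the multilinear-form-valued norms $\norm{\cdot}_{\kh,T^{(n)}(\varphi)}$ appearing in \eqref{eq:extdnrmdf} interact with the regulator $G^\gamma_j$ and the large-set weight $A^{-a}_j$ in the same way as the scalar norm does — i.e.\ that replacing a real-valued $F(X,\varphi)$ by the $\R^{(\text{multilinear forms})}$-valued object $D^{m_1}_{V_\bulk}D^{m_2}_K D^{m_3}_{\bar K}F(X,\varphi)$ does not alter the proof of the non-extended lemma. This is immediate because the $z$-derivatives commute with the $\varphi$-derivatives and with the sup over $\varphi$ defining $\norm{\cdot}_{\kh,T_j(G^\gamma_j)}$, so the regulator comparison is applied direction-by-direction in $z$. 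Hence the proof is a one-line reduction.

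\begin{proof}
The $h$-parts of $\norm{\cdot}_{\vec\lambda,\cW^a(z;\ratio,\gamma)}$ and $\norm{\cdot}_{\vec\lambda,\cY^a(z;\ratio,\gamma)}$ coincide, so it suffices to compare the $\ell$-parts, i.e.\ $\norm{\cdot}_{\ratio\ell_j,\vec\lambda,F^a_j(T_0)}$ and $\norm{\cdot}_{\ratio\ell_j,\vec\lambda,F^a_j(G^\gamma_j)}$. By \eqref{eq:extdnrmdf} both are obtained from the corresponding non-extended norms applied to the mixed derivatives $D^{m_1}_{V_\bulk}D^{m_2}_K D^{m_3}_{\bar K}F$ by summing against the common weights $\ratio\ell_{j,\bulk}^n\lambda_V^{m_1}\lambda_K^{m_2}\bar\lambda_K^{m_3}/(n!\,m_1!\,m_2!\,m_3!)$ and taking the supremum over $X\in\Con_j$. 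Since the $z$-derivatives commute with the $\varphi$-derivatives and with the supremum over $\varphi$ defining the regulated $T$-norm, Lemma~\ref{lemma:WYequiv} (equivalently \cite[Lemma~2.4]{BBS5}), applied direction-by-direction in $z$ to each such mixed derivative, gives
\begin{align}
	\norm{D^{m_1}_{V_\bulk}D^{m_2}_K D^{m_3}_{\bar K}F(X)}_{\ratio\ell_j,F_j^a(T_0)}
	&\le \norm{D^{m_1}_{V_\bulk}D^{m_2}_K D^{m_3}_{\bar K}F(X)}_{\ratio\ell_j,F_j^a(G^\gamma_j)} \nnb
	&\le C\,\norm{D^{m_1}_{V_\bulk}D^{m_2}_K D^{m_3}_{\bar K}F(X)}_{\ratio\ell_j,F_j^a(T_0)},
\end{align}
with $C$ the constant of Lemma~\ref{lemma:WYequiv}, which is independent of $X$, of $\varphi$, and of the activity. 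Summing over $(n,m_1,m_2,m_3)$ with the common weights and taking the supremum over $X\in\Con_j$ yields
\begin{align}
	\norm{F}_{\vec\lambda,\cY^a(z;\ratio,\gamma)}\le\norm{F}_{\vec\lambda,\cW^a(z;\ratio,\gamma)}\le C\,\norm{F}_{\vec\lambda,\cY^a(z;\ratio,\gamma)},
\end{align}
as claimed.
\end{proof}
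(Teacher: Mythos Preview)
Your approach is the same as the paper's (which is a one-liner: ``apply Lemma~\ref{lemma:WYequiv} for each derivative in $\cZ$''), but there is a slip in the displayed intermediate inequality that is worth flagging. You write
\[
\norm{D^{m_1}_{V_\bulk}D^{m_2}_K D^{m_3}_{\bar K}F(X)}_{\ratio\ell_j,F_j^a(G^\gamma_j)}
\;\le\; C\,\norm{D^{m_1}_{V_\bulk}D^{m_2}_K D^{m_3}_{\bar K}F(X)}_{\ratio\ell_j,F_j^a(T_0)},
\]
i.e.\ an $\ell$-part-only bound. Lemma~\ref{lemma:WYequiv} does \emph{not} give this: its proof (via the interpolation of \cite[Proposition~3.11]{BBS1}) controls the $\ell$-regulated norm at general $\varphi$ by combining the $\ell$-norm at $\varphi=0$ with the $h$-regulated norm. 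What it actually yields for each derivative $G:=D^{m_1}_{V_\bulk}D^{m_2}_K D^{m_3}_{\bar K}F$ (and each choice of unit test directions) is the full comparison
\[
\norm{G}_{\cY^a_j(\ratio,\gamma)} \;\le\; \norm{G}_{\cW^a_j(\ratio,\gamma)} \;\le\; C\,\norm{G}_{\cY^a_j(\ratio,\gamma)}.
\]
Summing this over $(m_1,m_2,m_3)$ with the common weights and taking the supremum over directions and over $X$ gives the claim directly. So your sentence ``the $h$-parts coincide, so it suffices to compare the $\ell$-parts'' is misleading: the $h$-part is not inert in the upper bound, it is needed to control the $\ell$-regulated norm. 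With this correction the argument is correct and coincides with the paper's.
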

\begin{proof}
This follows by applying Lemma~\ref{lemma:WYequiv} for each derivative in $\cZ$.
\end{proof}

\subsection{Elementary properties}

We collect some elementary properties of the extended norm.  They may be used without references to the lemmas. 

\begin{lemma}[Submultiplicativity] \label{lemma:subxext}
For any $\kh ,  \vec{\lambda} \ge 0$,
\begin{align}
	\norm{F G}_{\kh, \vec{\lambda},T (\varphi,z)}
		\le
		\norm{F}_{\kh, \vec{\lambda},T (\varphi,z)}		
		\norm{G}_{\kh, \vec{\lambda},T (\varphi,z)}				
\end{align}
\end{lemma}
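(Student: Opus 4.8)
The plan is to reduce the submultiplicativity of the extended Taylor norm to the corresponding submultiplicativity of the ordinary Taylor norm $\norm{\cdot}_{\kh,T(\varphi)}$, which is the statement of \cite[Proposition~3.7]{BBS1} (or the analogous lemma in \cite{BBS2}), combined with a Leibniz-rule expansion in the extra variables $z = (V_\bulk, K, \bar K)$. The point is that the extended norm \eqref{eq:extdnrmdf} is built out of the same combinatorial structure -- a weighted sum over orders of differentiation with factorial weights and powers of the scaling parameters -- so the proof is a formal computation with generating functions.

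First I would recall that for a fixed value of $z$ the pairing $\langle \cdot, f\rangle_\varphi$ of \eqref{eq:functioncoupling} makes $\norm{\cdot}_{\kh,T(\varphi)}$ into a submultiplicative norm on smooth functions of $\varphi$; this is exactly the unextended statement and I would cite it. Next, for the extra variables, I would write $F$ and $G$ as functions of $z$ valued in smooth functions of $\varphi$ and apply the ordinary product rule: $D^{m_1}_{V_\bulk} D^{m_2}_K D^{m_3}_{\bar K}(FG)$ is a sum, over the ways of splitting each of the three multi-derivatives into a part hitting $F$ and a part hitting $G$, of products of the corresponding derivatives, with the usual binomial coefficients. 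The key identity is then the factorisation of the generating series: for nonnegative reals $a_k, b_k$ one has
\begin{align}
	\sum_{m=0}^\infty \frac{\lambda^m}{m!} \sum_{k=0}^m \binom{m}{k} a_k b_{m-k}
		= \Big( \sum_{k=0}^\infty \frac{\lambda^k}{k!} a_k \Big) \Big( \sum_{l=0}^\infty \frac{\lambda^l}{l!} b_l \Big),
\end{align}
applied three times (once for each of $\lambda_V, \lambda_K, \bar\lambda_K$) and once more for the $\varphi$-derivative with weight $\kh_\bulk$. Feeding the submultiplicativity of $\norm{\cdot}_{\kh,T^{(n)}(\varphi)}$ in the $\varphi$-variable into this identity yields $\norm{FG}_{\kh,\vec\lambda,T(\varphi,z)} \le \norm{F}_{\kh,\vec\lambda,T(\varphi,z)}\norm{G}_{\kh,\vec\lambda,T(\varphi,z)}$.

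Two small bookkeeping points would need care but are routine. First, the $\varphi$-derivative $D^n_\varphi$ acts on a multilinear-form-valued object, so I should note that the norm $\norm{\cdot}_{\kh,T^{(n)}(\varphi)}$ of a product, viewed as a symmetric multilinear form in $n$ arguments, is still controlled by the convolution $\sum_{k}\binom{n}{k}\norm{D^k_\varphi F}\,\norm{D^{n-k}_\varphi G}$ -- this is precisely the content of the unextended submultiplicativity and is what \cite{BBS1} proves. Second, when the observable components $\sigma_*$ are present, the weights $\kh_\sigma, \kh_\ssigma$ in front of the $*$-components and the nilpotency relations $\sigma_\o^2 = \sigma_\x^2 = 0$, $\sigma_\o\sigma_\x = \sigma_\ox$ only help: the product of two observable pieces either vanishes or lands in the $\ox$-sector, and the weight $\kh_\ssigma \le \kh_\sigma^2$ (by the definitions in Section~\ref{sec:fieldscv}) makes the bound go through, exactly as in the proof that the unextended $T_j(\varphi)$-norm is submultiplicative. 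I expect the only genuinely delicate point -- and hence where I would spend a sentence of justification rather than citing -- is verifying that this observable-sector weighting is consistent with the factorised generating-function estimate; everything else is a direct consequence of the unextended result applied termwise in $z$.
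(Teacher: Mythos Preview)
Your proposal is correct and is precisely the standard unpacking of what the paper means by its one-line proof ``It is direct from the definition.'' The Leibniz expansion in $z$ combined with the generating-function factorisation and the unextended submultiplicativity of $\norm{\cdot}_{\kh,T(\varphi)}$ from \cite{BBS1} is exactly the content that the paper leaves implicit; your observation that $\kh_{\ssigma} \le \kh_\sigma^2$ for both choices $\kh \in \{\ell,h\}$ (which follows from the definitions in Section~\ref{sec:fieldscv}) is the only point not entirely formal, and it is correct.
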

\begin{proof}
It is direct from the definition.
\end{proof}

\begin{lemma} \label{lemma:xtnrbsciq}
For $b \in \cB$,  $V \in \cV$ and $K \in \cN$ with $\norm{K}_{\cW} < \infty$,
\begin{align}
	\norm{V(b)}_{\ell, \vec{\lambda}, T_j (0,  z)}
		\lesssim \norm{V(b)}_{\ell,  T_j (0)} + \lambda_V 
\end{align}
and for $X \in \cP$,  $\kh \in \{ \ell, h \}$,
\begin{align}
	\norm{K (X)}_{\kh, \vec{\lambda}, T (\varphi, z)}
		& \le   	\norm{K (X)}_{\kh, T (\varphi)} + \lambda_K \omega^{-1} (\kh) \cG (X,\varphi ; \kh) A (X )
		.
\end{align}
Thus in particular,
\begin{align}
	\norm{K}_{\vec{\lambda} ,  \cW (z)}
		\le \norm{K}_{\cW} + \lambda_K
		.
\end{align}
\end{lemma}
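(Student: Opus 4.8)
The plan is to unwind the definition \eqref{eq:extdnrmdf} of the extended Taylor norm and to exploit that $V$ and $K$, regarded as polymer activities depending on the auxiliary variable $z=(V_\bulk,K,\bar K)\in\cZ$, are \emph{linear} in $z$---indeed they are the coordinate projections onto the first and second factors. Consequently, in the $z$-Taylor expansion of $V(b)$ only the orders $(m_1,m_2,m_3)=(0,0,0)$ and $(1,0,0)$ survive, and in that of $K(X)$ only $(0,0,0)$ and $(0,1,0)$ survive; all higher-order $z$-derivatives vanish identically. Thus each extended norm collapses to the sum of the ordinary ($z$-independent) Taylor norm and a single first-derivative term carrying one power of $\lambda_V$, respectively $\lambda_K$, weighted by the operator norm of that derivative.

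For the first estimate, $D_{V_\bulk}V(b)$ evaluated on a direction $\dot V_\bulk$ with $\norm{\dot V_\bulk}_{\cV_j(\ell)}\le1$ is simply the block evaluation $\dot V_\bulk(b)$ (the observable components of $V$, which are not tracked by $z$, are unaffected). By Lemma~\ref{lemma:cLnorm} together with the first inequality of \eqref{eq:cVnrmvslnrm1} in Lemma~\ref{lemma:cVnrmvslnrm}, $\norm{\dot V_\bulk(b)}_{\ell,T_j(0)}\lesssim\norm{\dot V_\bulk}_{\cL_j(\ell)}\lesssim\norm{\dot V_\bulk}_{\cV_j(\ell)}\le1$, so the order-$(1,0,0)$ term contributes $\lesssim\lambda_V$; adding the zeroth-order term $\norm{V(b)}_{\ell,T_j(0)}$ gives the claim. (The hypothesis $\norm{K}_{\cW}<\infty$ is only needed to ensure $z$ is an admissible point of $\cZ$.)

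For the second estimate, $D_KK(X)$ evaluated on a direction $\dot K$ with $\norm{\dot K}_{\cW}\le1$ is $\dot K(X)$, so it remains to bound $\norm{\dot K(X)}_{\kh,T(\varphi)}$ for such $\dot K$. Unpacking \eqref{eq:cWnormdefi}--\eqref{eq:normFj}: $\norm{\dot K}_{\cW}\le1$ forces $\norm{\dot K}_{\ell_j,F_j(G_j)}\le1$ and $\omega_j(h)\norm{\dot K}_{h_j,F_j(\bar G_j)}\le1$, hence $\norm{\dot K(X)}_{\kh,T(\varphi)}\le\omega_j^{-1}(\kh)\,\cG_j(X,\varphi;\kh)\,A_j(X)$, recalling $\cG_j(\cdot;\kh)$ from \eqref{eq:cGkh} and $\omega_j$ from \eqref{eq:omegadefi} (so $\omega_j(\ell)=1$). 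Multiplying by $\lambda_K$ and adding the zeroth-order term $\norm{K(X)}_{\kh,T(\varphi)}$ yields the asserted inequality. Part (iii) then follows by dividing through by $\cG_j(X,\varphi;\kh)A_j(X)$, taking the supremum over $X\in\Con_j$ and $\varphi$, and recombining the $\ell$- and $h$-contributions as in \eqref{eq:cWnormdefi}, where the weights cancel since $\omega_j(h)\,\omega_j^{-1}(h)=1$; the extended equivalence of Lemma~\ref{lemma:WYequivext} may be used freely if one prefers to argue with the $\cY$-version.

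I do not expect a genuine obstacle here: the whole argument is a careful reading of \eqref{eq:extdnrmdf} combined with the already-established norm equivalences. The two points that require a little attention are (i) recognising that $V$ and $K$ are linear in $z$, which truncates the $z$-Taylor series after first order, and (ii) correctly matching the regulator $\cG_j(\cdot;\kh)$ and the weight $\omega_j(\kh)$ to the field scale $\kh\in\{\ell,h\}$ when passing between the bare Taylor norm and the $\cW$-norm.
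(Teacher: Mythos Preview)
Your proposal is correct and follows essentially the same approach as the paper: both exploit that $V(b)$ and $K(X)$ are linear in the $z$-variable so that the extended Taylor expansion \eqref{eq:extdnrmdf} truncates after first order, then bound the single first-derivative term by $\lambda_V\sup_{\norm{\dot V_\bulk}_{\cV(\ell)}\le 1}\norm{\dot V_\bulk(b)}_{\ell,T(0)}\lesssim\lambda_V$ (via Lemma~\ref{lemma:cVnrmvslnrm}) and $\lambda_K\sup_{\norm{\dot K}_{\cW}\le 1}\norm{\dot K(X)}_{\kh,T(\varphi)}\le\lambda_K\,\omega^{-1}(\kh)\cG(X,\varphi;\kh)A(X)$ (by unwinding \eqref{eq:cWnormdefi}--\eqref{eq:normFj}), respectively. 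Your write-up is in fact slightly more explicit than the paper's in spelling out the chain Lemma~\ref{lemma:cLnorm} $\to$ Lemma~\ref{lemma:cVnrmvslnrm} and the deduction of the final $\cW$-bound.
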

\begin{proof}
By the definition of the norm, 
\begin{align}
	\norm{V(b)}_{\ell , \vec{\lambda}, T (0, z)}
		= \norm{V(b)}_{\ell, T (0)} + \lambda_V \sup\Big\{ \norm{\dot{V}_\bulk (b)}_{\ell, T  (0)} : \norm{\dot{V}_\bulk}_{\cV (\ell)} \le 1 \Big\}
		.
\end{align}
Since $\norm{\dot{V}_\bulk (b)}_{\ell, T (0)} \lesssim  \norm{\dot{V}_\bulk}_{\cV (\ell)}$ by Lemma~\ref{lemma:cVnrmvslnrm},  this is bounded by a constant multiple of $\norm{V(b)}_{\ell, T (0)} + \lambda_V$.
Similarly,
\begin{align}
	\norm{K (X)}_{\kh, \vec{\lambda}, T (\varphi, z)}
		& \le 	\norm{K (X)}_{\kh, T (\varphi)} + \lambda_K \sup \Big\{ \norm{\dot{K}(X)}_{\kh, T (\varphi)} : \norm{\dot{K}}_{\cW} \le 1 \Big\} \nnb
		& \le   	\norm{K (X)}_{\kh, T (\varphi)} + \lambda_K \omega^{-1} (\kh) \cG (X,\varphi ; \kh) A (X)
\end{align}
from which the conclusion follows.
\end{proof}

\subsection{Extended norm parameters}

Unlike the field scaling variables,  we use a number of different parameter regimes of $(\lambda_V, \lambda_K)$.  We collect them here.
\vspace{-15pt} 

\begin{equation} \stepcounter{equation}
	\tag{\theequation ${\bf A}_{\lambda 1}$} \label{asmp:lambda1}
	\begin{split}
		\parbox[t]{\dimexpr\linewidth-8em}{
		Let $\lambda_V = \tilde{g} \scale$ and $\max\{ C_L \lambda_K, \bar{\lambda}_K \} \le (C_{\lambda,K})^{-1} \tilde{g}^{\frac{9}{4}} \scale^\kbe$ for $(C_{\lambda, K})^{-1} \le \frac{1}{2} \rho^{2^{d^2 + 2d +4}}$ and sufficiently large ($L$-dependent) constant $C_L >0$.
		Also,  $\norm{\bar{K}}_{\bar{\cW}} \le \bar{\lambda}_K$.
		}
	\end{split}
\end{equation}
\vspace{-30pt} 

\begin{equation} \stepcounter{equation}
	\tag{\theequation ${\bf A}_{\lambda 2}$} \label{asmp:lambda2}
	\begin{split}
		\parbox[t]{\dimexpr\linewidth-8em}{
		Let $\kh \lesssim h$ and $\lambda_V \le  \epsilon(\ell) := \tilde{g} \scale$.
		}
	\end{split}
\end{equation}
\vspace{-30pt} 

\begin{equation} \stepcounter{equation}
	\tag{\theequation ${\bf A}_{\lambda 3}$} \label{asmp:lambda3}
	\begin{split}
		\parbox[t]{\dimexpr\linewidth-8em}{
		Let $\kh \lesssim \ell$ and $\lambda_V \le C_{L,\lambda}^{-1}$ for $C_{L, \lambda} \ge \max\{ \ell_0^{5} ,  C_L \}$ for a constant $C_L$ that appears in the proof of Lemma~\ref{lemma:entI}.
		}
	\end{split}
\end{equation}

\section{Stability analysis of potential functions}
\label{sec:stabanalysis}

In this section,  as a preliminary step for proving \eqref{eq:controlledRG22}--\eqref{eq:controlledRG23},
we study bounds on $V$,  $e^{-V^\stable}$ and functions deriving from them.
Often in rigorous RG methods,  
the \emph{large field problem} of $e^{-V}$ is a source of technical barrier,
but in this section, we even derive a decay bound on $e^{-V}$. 

Recall that,  potential functions are of form $V = V_\bulk + V_\o + V_\x$ where the observable part is given by
\begin{align}
	V_{\hash} (\{x\}, \varphi) =\one_{x = \#} \Big(  \lambda_\hash^{(\emptyset)}  \varphi_x^{(1)} + \sum_{\km \in \ko_{1,\nabla}} \lambda_\hash^{(\km)} S^{(\km)}_x (\varphi)  \Big)
\end{align}
for $\# \in \{ \o,\x\}$ and the bulk part is
$
	V_\bulk = V_2 + V_4 + V_{2,\nabla} + V_{4,\nabla}
	\in \cV_\bulk = \cV_2 + \cV_4 + \cV_{2,\nabla} + \cV_{4,\nabla}  .
$
where
\begin{alignat}{4}
	V_2 ( \{x \},  \varphi) 
		&= \frac{1}{2} \nu^{(\emptyset)} |\varphi_x |^2 ,   \qquad &
	V_4 ( \{x \},  \varphi) 
		&= \frac{1}{4} g^{(\emptyset)} |\varphi_x |^4 , \\
	V_{2,\nabla} ( \{x \},  \varphi) 
		&= \frac{1}{2} \sum_{\km_1 \in \ko_{2,\nabla}} \nu^{(\km_1)} S_x^{(\km_1)} (\varphi) ,   \qquad &
	V_{4,\nabla} ( \{x \},  \varphi) 
		&= \frac{1}{4} \sum_{\km_2 \in \ko_{4,\nabla}} \nu^{(\km_2)} S_x^{(\km_2)} (\varphi)
\end{alignat}
for indices $\km_1,\km_2$ that determine symmetrised polynomials $S_x^{(\km)}$.

\subsection{Stability domain}

We will need to prove estimates on $V$ in a domain that is slightly larger than $\cD$.  It is defined by

\begin{align}
\label{eq:cDbulkst}
	\cD_{\bulk}^\st  (\alpha)
		& = \left\{ \begin{array}{l}
			(\nu^{(\km_1)},  g^{(\km_2)}) \in \R^{\ko_2 \cup  \ko_{2, \nabla} \cup \ko_4 \cup \ko_{4, \nabla } } \; :  \\
			\qquad\;
			|\nu^{(\km_1)}| \le \ell_0^2 \times \alpha  C_{\cD} {\dkm L^{(q (\km_1) - 2 + \eta) j} } \scale \tilde{g} 	\;\text{if} \; \km_1 \in \kA_0 \cup \kA_1 \cup \kA_2 , \\
			\qquad\;
			|\nu^{(\km_1)}| \le \ell_0^2 \times \alpha C_{\cD} \scale^{-\kt} \tilde{g}
			\;\text{if}\; \km_1 \in \kA_3  ,  \\	
			\qquad\; 
			 g^{(\emptyset)} / \tilde{g} \in ((\alpha C_{\cD})^{-1},  \alpha C_{\cD}),  \;\;
			| g^{(\km_2)}| \le \alpha C_{\cD} \scale^{-\kt} \tilde{g}^{3/2} \;\text{if}\; \km_2 \in \ko_{4,\nabla}
			\end{array}
\right\} ,  \\
	\cD^\st_{\sigma} (\alpha)
		& = \left\{ \begin{array}{l}
			(\lambda_{\o}^{(\km)} , \lambda_{\x}^{(\km)} )_{\km \in \ko_1 \cup \ko_{1,\nabla}} \in (\R^2)^{\ko_1 \cup \ko_{1,\nabla}}  \;  : \; \\
		\qquad  \;  |\lambda^{(\km)}_{\hash}| < \alpha C_{\cD} L^{q(\km) j}  \text{ if } q (\km) < 2 , \\
		\qquad  \;  |\lambda^{(\km)}_{\hash}| <  \alpha C_{\cD} \scale^\kt L^{(2-\eta) j}  \text{ if } q (\km) \ge 2
		\end{array} \right\}
\end{align}
for $\alpha \le \bar{\alpha}$ and
\begin{align}
	\cD^\st (\alpha) = \cD_{\bulk}^\st  (\alpha) \times \cD_{\sigma}^\st (\alpha) .
\end{align}
Note that the domain of $g^{(\emptyset)}$ is not larger than $\cD$.

For $V \in \cD^\st$,  we state the bounds in terms of the small parameters
 \begin{align}
	\epsilon (\kh)
		&= \begin{cases}
		\tilde{g} r & (\kh = \ell) \\
		1 & (\kh = h) ,
	\end{cases}	
	\qquad \qquad 
	\bar{\epsilon} (\kh)
		= \begin{cases}
			\tilde\chi^{1/2} \tilde{g} \scale  & (\kh = \ell ) \\
			\tilde\chi^{1/2}( \tilde{g} \scale )^{1/4}  & (\kh = h )
			.
		\end{cases}
		\label{eq:epsbarepsdefn}
 \end{align}
They are comparable to the size of $V$ due Lemma~\ref{lemma:eQh}.  Also,  we let
 \begin{align}
	\be_{V} (\kh)
		& := \norm{V}_{\cV (\kh)}
		\label{eq:eQkh}
		, \\
	\bar{\be}_{V} (\kh) 
		& := \be_{V} (\ell) \tilde\chi^{1/2}  (\kh_\bulk / \ell_\bulk )^3 .
		\label{eq:beQkh}		
\end{align}

\begin{lemma} \label{lemma:eQh}
Suppose $V \in \cD^\st$ and $\tilde{g}$ is sufficiently small.  Then
\begin{align}
	\be_{V} (\ell) \lesssim \ell_0^4 \,  \epsilon (\ell) ,   \qquad \be_V (h) \lesssim  \epsilon (h) .
\end{align}
If we assume $V \in ( \cV_2 + \cV_{2,\nabla} ) \cap \cD^\st$,  then for any $b \in \cB$,
\begin{align}
\label{eq:QhTvp}
	\norm{V (b)}_{\kh, T (\varphi)}
		\lesssim \ell_0^2 P_{\kh}^2 (b,\varphi) \times \begin{cases}
			\tilde{g} \scale & (\kh = \ell) \\
			(\tilde{g} \scale)^{1/2} & (\kh = h)
			.
		\end{cases}
\end{align}
If we assume $V' \in \cV_{4,\nabla}  \cap \cD^\st$,  then
\begin{align}
\label{eq:QhTvp2}
	\norm{V' (b)}_{\kh, T(\varphi)} \lesssim \tilde{\chi} L^{-j/2} P_{\kh}^4 (b,\varphi) \times \begin{cases}
		\ell_0^4 \tilde{g}^{3/2} & (\kh = \ell) \\
		\tilde{g}^{1/2} & (\kh = h) . 
	\end{cases}
\end{align}
\end{lemma}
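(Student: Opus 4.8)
The plan is to prove all four bounds by substituting the coefficient bounds defining $\cD^\st$ in \eqref{eq:cDbulkst} directly into the explicit formulas for the norms; the mechanism is that in each term the powers of $L$ either cancel identically or are left with a manifestly negative exponent. Throughout, $L$ is large and $\tilde{g}$ small as usual, and $\lesssim$ absorbs the fixed constant $\bar{\alpha}C_{\cD}$.

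For the first part I would bound each of the finitely many terms in $\norm{V}_{\cV(\kh)}$ (Definition~\ref{defi:cVnormdefi}) separately, inserting $\ell_\bulk = \ell_0 L^{-[\varphi]j}$, $h_\bulk = k_0 \tilde{g}^{-1/4} L^{-dj/4}$ and the identities $2[\varphi] = d-2+\eta$, $\scale = L^{-(d-4+2\eta)j}$. For $\kh = \ell$ every bulk term collapses: the powers of $L$ from $\ell_\bulk^{2}$ or $\ell_\bulk^{4}$, from the norm weight $L^{(d-q(\km))j}$, and from the domain bound combine to exponent $0$, and the factors $\scale^{\pm\kt}$ attached to the $\kA_3$ and $\ko_{4,\nabla}$ coefficients cancel those in the domain bounds; what remains is $\ell_0^{4}\tilde{g}\scale = \ell_0^{4}\epsilon(\ell)$ (the $\ko_{4,\nabla}$ contribution even smaller, carrying $\tilde{g}^{3/2}$). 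For $\kh = h$ the same arithmetic leaves in each bulk term a power $L^{(-d/2 + 2 - \eta)j} \le 1$ (using $d \ge 4$) multiplying a power $k_0^{a}\ell_0^{b}\tilde{g}^{1/2}$ for the quadratic coefficients and $k_0^{4}$ for the quartic; since $\tilde{g}$ may be chosen small relative to the $L$-dependent $\ell_0, k_0$, each of these is $\lesssim 1 = \epsilon(h)$. For the observable terms one inserts the $\cD^\st_\sigma$ bounds together with the explicit $\ell_{\sigma,j}, h_{\sigma,j}$ of \eqref{eq:ellsigmadefi}: for $\kh = h$ one uses $h_\bulk h_{\sigma,j} = k_0$ so the observable sector contributes at most $k_0$, and for $\kh = \ell$ one checks directly that $\ell_\bulk\ell_{\sigma,j}$ stays controlled, so this sector is negligible against $\ell_0^{4}\epsilon(\ell)$.

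For the second and third parts I would first apply Lemma~\ref{lemma:polynorm}: since $V$ (resp.\ $V'$) is an explicit polynomial of degree $A = 2$ (resp.\ $A = 4$) with at most $p_\Phi$ derivatives, $\norm{V(b)}_{\kh, T(\varphi)} \le \norm{V(b)}_{\kh, T(0)}\, P_\kh^{A}(b,\varphi)$, which supplies the factor $P_\kh^{A}(b,\varphi)$ ($A=2$ resp.\ $4$) of the statement. It remains to bound $\norm{V(b)}_{\kh, T(0)}$, which by Lemma~\ref{lemma:cLnorm} is comparable to the $\cL_j(\kh)$-weight of the relevant degree, i.e.\ to $L^{(d-q(\km_1))j}\kh_\bulk^{2}|\nu^{(\km_1)}|$ resp.\ $L^{(d-q(\km_2))j}\kh_\bulk^{4}|g^{(\km_2)}|$. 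Substituting the $\cD^\st$ bounds: in the degree-$2$ case the powers of $L$ cancel --- identically for $\km_1 \in \kA_0\cup\kA_1\cup\kA_2$, and for $\km_1 \in \kA_3$ up to a favourable factor, since the smallest admissible $q(\km_1)$ exceeds $2[\varphi]$ by at least $1-\eta$ while $(d-4+2\eta)\kt < 1/4$ by \eqref{eq:ktcondition} --- yielding the asserted $\tilde{g}\scale$, and with $h_\bulk$ the $(\tilde{g}\scale)^{1/2}$. In the degree-$4$ gradient case $q(\km_2) \ge 2$ by \eqref{eq:kotfnablarmk}, so the residual exponent of $L$ is at most $-q(\km_2) + (d-4+2\eta)\kt < -7/4$; writing this power as $L^{-j/2}$ times a factor $\le 2^{-j} \le \tilde\chi_j$ (valid for $L$ large, as $\tilde\chi_j = 2^{-(j - j_{\tilde{m}^2})_+} \ge 2^{-j}$ by \eqref{eq:chidefi}) produces the stated $\tilde\chi L^{-j/2}$, the prefactors $\ell_0^{4}\tilde{g}^{3/2}$ and $\tilde{g}^{1/2}$ coming from $\ell_\bulk^{4}$ resp.\ $h_\bulk^{4}$ times the $\cD^\st$ bound on $g^{(\km_2)}$.

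Since almost all of this is substitution and exponent arithmetic, there is no deep step. The part requiring the most care is the observable sector of the first part in the case $d = 4$, where $\ell_{\sigma,j} = \tilde{g}_j L^{(1-\frac32\eta)j \wedge j_\ox} 2^{(j - j_\ox)_+}$ and $h_{\sigma,j}$ carry the coalescence-scale-dependent factors, so one has to track the geometric factors of the form $2^{(j-j_\ox)_+}$ against the decaying powers of $L$ in $\ell_\bulk, h_\bulk$; these scales were chosen precisely so that the resulting contributions remain subdominant (the relevant ratios, such as $2L^{-(1-\frac32\eta)}$ with $1-\frac32\eta > 0$, being $<1$ for $L$ large), so this is a bookkeeping issue rather than a genuine obstacle.
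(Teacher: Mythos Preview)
Your proposal is correct and follows essentially the same approach as the paper: both arguments reduce to substituting the coefficient bounds from $\cD^\st$ into the explicit norm definitions, invoking Lemma~\ref{lemma:polynorm} to produce the $P_\kh^A$ factors, and using the parameter constraint $(d-4+2\eta)\kt < 1/4$ from \eqref{eq:ktcondition} to extract the $\tilde\chi L^{-j/2}$ decay in the $\cV_{4,\nabla}$ case. Your treatment is in fact more explicit than the paper's (which dismisses the first part as ``elementary computations'' and does not separately address the observable sector), and your exponent bookkeeping for the $\cV_{4,\nabla}$ bound is slightly sharper since you use $q(\km_2)\ge 2$ directly rather than the weaker $q\ge 1$; one minor point is that the lower bound $q(\km_2)\ge 2$ comes from the definition \eqref{eq:kotfnabla} (membership in $2\Z_{>0}$) rather than from \eqref{eq:kotfnablarmk}, which records only the upper bound $q\le d-4$.
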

\begin{proof}
Bound on $\be_{V} (\kh)$ follow from elementary computations and assumption on the coefficients due to $V \in \cD^\st$.
If $V \in ( \cV_2 + \cV_{2,\nabla} ) \cap \cD^\st$,  then we have $\be_V (\kh) \lesssim \ell_0^2 (\kh_\bulk / \ell_\bulk)^2 \tilde{g} \scale$,  so \eqref{eq:QhTvp} follows from Lemma~\ref{lemma:polynorm}.  
If $V' \in \cV_{4,\nabla}  \cap \cD^\st$,  then
\begin{align}
	\norm{V' (b)}_{\kh, T(0)} \lesssim \begin{cases}
		\ell_0^4 \tilde{g}^{3/2} \scale^{1-\kt} L^{-j} & (\kh = \ell) \\
		\tilde{g}^{1/2} \scale^{-\kt} L^{-j} & (\kh = h)
	\end{cases}
\end{align}
but $\scale^{-\kt} L^{-j} \le \tilde{\chi}_j L^{-j/2}$ for sufficiently large $L$ and $(d-4+2\eta) \kt < 1/4$ due to \eqref{eq:ktcondition},  so together with Lemma~\ref{lemma:polynorm},  we have the desired bounds. 
\end{proof}

\subsection{Bounds on bilinear forms}
\label{sec:bosqf}

We defined $W$ in Section~\ref{sec:WCoord},  and it is used to define $\cI (V)$ in Definition~\ref{def:cI} and $P$ and $\Phi^\pt$ in Definition~\ref{def:WP}.
The norm on $W$ can be stated naturally in terms of $\bar{\epsilon}_V$ and $\bar{\be}_{V_{\bulk}}$. 
Since the proof of this subsection involves induction in scale $j$,
we will denote the scale $j$ explicitly just for here. 

We have the following estimate on $W$ and $P$,  which can be considered as an extension of \cite[Proposition~4.1]{BBS4}.

\begin{lemma} \label{lemma:FWPboundsobs}
Let $\tilde{m}^2 \ge 0$,  $V \in \cD^\st (\alpha)$ ($\alpha \le \bar{\alpha}$),  $L$ be sufficiently large,  $\kh_j \in \{ \ell_j , h_j \}$ and  $\kh' \lesssim \kh_j$.
Then for $W_x = \mathbb{W}_{w,  V} ( \{ x \})$ any $\lambda_V \ge 0$ and $b \in \cB_j$,
\begin{align}
	& \max \left\{ \begin{array}{c}
		\sum_{x\in b,  \, y \in \Lambda} \norm{\F_{\pi, \Gamma_{j}} (V_x ; V_y) }_{\kh',  \vec{\lambda}, T_j (0,z)} , \\
		\sum_{x\in b} \norm{W (\{ x\})}_{\kh',  \vec{\lambda}, T_j (0,z)} , \\
		\sum_{x\in b} \norm{P (\{ x \})}_{\kh' ,  \vec{\lambda}, T_j (0,z)} 
\end{array}	  \right\} \le  O_L (1) 
		\Big(\bar{\epsilon}_j (\kh_j) +  \Big( \frac{\kh_j}{\ell_j} \Big)^3  \lambda_V \Big)^2 
\end{align}
and they are continuous in $(\ba_\bulk, \ba) \in \AA_j (\tilde{m}^2)$,
\end{lemma}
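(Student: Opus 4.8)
The plan is to extend the inductive estimate of \cite[Proposition~4.1]{BBS4} to the present setting, the three new features being the observable components carried by $\F_{\pi,C}$, the extended norm in $z=(V_\bulk,K,\bar K)$, and the $(d,\eta)$-dependent field scales $\ell_j,h_j$. The starting observation is that $W_x=\mathbb W_{w_j,V}(\{x\})$, $\Cov_{\pi,j+1}$ and $P_x$ are all \emph{bilinear in $V$} (by \eqref{eq:bbWCV}, \eqref{eq:CovpiC}, \eqref{eq:Pjx}) and \emph{independent of $(K,\bar K)$}; hence $D^{m_1}_{V_\bulk}$ of each vanishes for $m_1\ge 3$, $D_K$ and $D_{\bar K}$ annihilate them, and the extended norm \eqref{eq:extdnrmdf} collapses to the finite sum
\begin{align}
	\norm{F(\{x\})}_{\kh',\vec\lambda,T(0,z)}=\sum_{m_1=0}^{2}\frac{\lambda_V^{m_1}}{m_1!}\,\norm{D^{m_1}_{V_\bulk}F(\{x\})}_{\kh',T(0)},
\end{align}
the inner quantity being the norm of a symmetric multilinear form in $m_1$ directions $\dot V$ with $\norm{\dot V}_{\cV_j(\ell)}\le 1$. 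Thus it suffices to prove a single bilinear estimate, namely that $\sum_{x\in b}\norm{(1-\Loc_x)\F_{\pi,w_j}[A_x;B(\Lambda)]}_{\kh',T_j(0)}\lesssim O_L(1)\,\tilde\chi_j(\kh_{j,\bulk}/\ell_{j,\bulk})^6\norm{A}_{\cV_j(\ell)}\norm{B}_{\cV_j(\ell)}$ for $A,B$ each a block of $V$ or such a direction $\dot V$, together with the two variants in which $\F_{\pi,w_j}$ is replaced by $\F_{\pi,\Gamma_{j+1}}$ (the covariance part of $P$) or by $\Cov_{\pi,j+1}$. Since $\tilde\chi_j(\kh_{j,\bulk}/\ell_{j,\bulk})^6=\big(\tilde\chi_j^{1/2}(\kh_{j,\bulk}/\ell_{j,\bulk})^3\big)^2$ and $\tilde\chi_j^{1/2}(\kh_{j,\bulk}/\ell_{j,\bulk})^3\norm{V}_{\cV_j(\ell)}=\bar\be_V(\kh_j)$ by \eqref{eq:beQkh}, inserting $\norm{V}_{\cV_j(\ell)}=\be_V(\ell)$ and $\norm{\dot V}_{\cV_j(\ell)}\le1$, recombining the three terms $m_1=0,1,2$ binomially, using $\bar\be_V(\kh_j)\lesssim O_L(1)\bar\epsilon_j(\kh_j)$ (which is exactly the content of the $\cD^\st$-estimates of Lemma~\ref{lemma:eQh}) and $\tilde\chi_j\le1$ to discard the contraction factor on the derivative terms, one arrives at the advertised $\big(\bar\epsilon_j(\kh_j)+(\kh_j/\ell_j)^3\lambda_V\big)^2$.

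For the bilinear estimate I would induct on $j$ using the progressive recursion \eqref{eq:WjrmQdefi} of Lemma~\ref{lemma:WjQalt}: the base case is $W^\rmQ_0\equiv 0$, and at scale $j$ it expresses the object as $(1-\Loc_x)$ (dropped at $j=N$) applied to the sum of a pushed-forward term $e^{\frac12\Delta_{\Gamma_j}}W^\rmQ_{j-1}\big(e^{-\frac12\Delta_{\Gamma_j}}A_x,e^{-\frac12\Delta_{\Gamma_j}}A'_y\big)$ and a single-scale term $\tfrac12\F_{\Gamma_j}(A_x,A'_y)$. The pushed-forward term is controlled by the inductive hypothesis once one notes that $e^{\mp\frac12\Delta_{\Gamma_j}}$ acts on a polynomial of bounded degree by a finite nilpotent series changing the $\cV_j(\ell)$-norm only by a factor $1+O(\ell_0^{-2}\tilde\chi_j)$ — the mechanism of \eqref{eq:cVnrmvslnrm2} — so the shifted inputs stay in a slightly enlarged $\cD^\st$ with comparable norms. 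For the single-scale term and the $\F_{\pi,\Gamma_{j+1}},\Cov_{\pi,j+1}$ pieces I would expand $\F_C[\cdot;\cdot]$ from \eqref{eq:FCAB}: the $-AB$ cancels the zero-contraction part and the self-contractions cancel by the $e^{\pm\frac12\Delta_C}$ structure, so $\F_C[A_x;B_y]$ equals $\sum_{k\ge1}\tfrac1{k!}$ of its $k$-fold \emph{cross-contraction} component, a polynomial of degree $\deg A+\deg B-2k$ carrying a scalar factor $C(x,y)^{\otimes k}$ against $k$ field-derivatives of $A$ and of $B$, hence vanishing unless $\dist(x,y)$ is below the range of $C$. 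Summing over the admissible $y$ — for $w_j=\sum_{k':L^{k'}>\dist(x,y)}\Gamma_{k'}$ the weighted site-sum is dominated by its top scale — and using the bounds \eqref{eq:Gammajbounds2} on $\Gamma_{k'}$, the $k=1$ contribution is $O_L(1)\tilde\chi_j$ times the product of the $\kh'$-norms of a single field-derivative of $A$ and of $B$; since one derivative turns a degree-$\le4$ block of $V$ into a degree-$\le3$ polynomial, each such norm is $(\kh'_\bulk/\ell_{j,\bulk})^3$ times its $\ell_j$-counterpart, producing the $(\kh_j/\ell_j)^6$ above. The higher-$k$ components are smaller by factors $\lesssim|C|/\kh_{j,\bulk}^2$, which is $\tilde\chi_j/\ell_0^2\ll1$ when $\kh=\ell$ and $\lesssim\tilde\chi_j(\tilde g\scale_j)^{1/2}\ll1$ when $\kh=h$ — here one uses $d\ge d_{c,u}=4-2\eta$, i.e.\ $\scale_j\le1$. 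Finally $(1-\Loc_x)$ is absorbed harmlessly: since $\{x\}\subset b\in\cS$ and $L$ is large, $\Loc_x$ is bounded on $\norm{\cdot}_{\kh',T_j(0)}$ by Proposition~\ref{prop:locXBbdmt}, so no contraction gain is needed at this stage; and the observable pieces $\F_C[(1-\pi_\bulk)A;B]$, $\F_C[A;\pi_\bulk B]$ are handled identically, replacing $\kh_\bulk$ by $\kh_\sigma$ or $\kh_\ssigma$ on the observable leg and using the orders $d_\o,d_\x,d_\ox$, and are of lower order because observable monomials have degree $\le1$.

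Continuity of $D^p_{V_\bulk}$ of $\F_{\pi,\Gamma_j}$, $W$ and $P$ in $(\ba_\bulk,\ba)\in\AA_j(\tilde m^2)$ is then immediate: evaluated at $\varphi=0$, each is a finite sum of $\varphi$-derivatives whose coefficients are polynomials of bounded degree, with $(\ba)$-independent combinatorial coefficients, in the finitely many relevant entries $\Gamma_{k'}(x,y)$; each $\Gamma_{k'}$ is continuous on $\AA_j(\tilde m^2)$ by Definition~\ref{defi:FRD}, and the extended-norm quantities are explicit continuous functions of these coefficients.

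The genuinely delicate point — and the main obstacle — is the uniform-in-$j$ power counting in the bilinear estimate: assembling $(\text{range of }C)^{d}\times(\text{weighted size of }C)\times(\text{per-site }\partial V\text{-norm})^2$ with exactly the right exponents so that the bound is $O_L(1)\bar\epsilon_j(\kh_j)^2$ rather than growing with $j$. This is where $d\ge4-2\eta$ enters (it makes the borderline exponent nonnegative, with the case $d=4-2\eta$ requiring careful tracking of the marginal scaling, as in \cite{BBS4}), and where the scale $\kh=h$ — for which $\norm{V}_{\cV_j(h)}$ is itself not small — must be handled through the factor $\tilde\chi_j^{1/2}$ and the $(h_j/\ell_j)^3$ ratio already hard-wired into $\bar\epsilon(h)$ and $\bar\be_V(h)$ rather than through smallness of $V$. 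Everything else — the recursion, the action of $e^{\pm\frac12\Delta_{\Gamma_j}}$, submultiplicativity of the $T$-norm (Lemma~\ref{lemma:subxext}), boundedness of $\Loc$, the observable bookkeeping and continuity — is routine given the cited results.
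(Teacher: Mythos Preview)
Your reduction to a bilinear estimate and the inductive scheme via the recursion \eqref{eq:WjrmQdefi} match the paper's approach (Lemmas~\ref{lemma:FWPbounds2} and~\ref{lemma:FWPbounds2obs}). However, there is a genuine gap: the claim that ``$(1-\Loc_x)$ is absorbed harmlessly \ldots\ no contraction gain is needed at this stage'' is wrong for the inductive step on the bulk bilinear form $W^{\rmQ}_k$, and without that gain your induction does not close.

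At the step $k\to k+1$, the pushed-forward term, after absorbing the $e^{\pm\frac12\Delta_{\Gamma_{k+1}}}$ shifts and summing over $x\in B_{k+1}\in\cB_{k+1}$ (which is $L^d$ copies of a $k$-block), has size $C_W(C_\Gamma)^3 L^d\,\bar\be_{k,V}(\ell)\bar\be_{k,V'}(\ell)$ by the inductive hypothesis. For the induction to close you need this to be at most $\tfrac12 C_W\,\bar\be_{k+1,V}(\ell)\bar\be_{k+1,V'}(\ell)$. But $\big(\bar\be_k/\bar\be_{k+1}\big)^2$ can be as large as $L^{2(d-4+2\eta)(1+\kt)}$ (e.g.\ for $V\in\cV_{4,\nabla}$; see the proof of Lemma~\ref{lemma:bekVcases2}), so the factor $L^d$ is far too large and treating $1-\Loc_x$ merely as a bounded operator is fatal. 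The paper applies the contraction estimate Proposition~\ref{prop:crctrmt} to $(1-\Loc_x)$ precisely here, gaining $L^{-d_\bulk}$; the net factor $L^{d-d_\bulk}=L^{-(2d-7+2\eta)}$ then beats the $\bar\be$-ratio by Lemma~\ref{lemma:bekVcases2}, and this is exactly the ``uniform-in-$j$ power counting'' you flagged as delicate. The $\cV$-norm (rather than the $\cL$-norm) and the parameter $\kt$ are there precisely so that Lemma~\ref{lemma:bekVcases2} holds for every monomial type simultaneously.

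Separately, for the observable components the paper does not run an induction: since $V_\hash$ is linear in $\varphi$, Lemma~\ref{lemma:FCVVobs} yields a closed formula for $\F_C[V_{\hash,x};V'_{\bulk,y}]$ involving a single factor $\nabla^{(m)}C(\hash,\cdot)$, and the scale sum $\sum_{k\le j}$ is performed directly (proof of Lemma~\ref{lemma:FWPbounds2obs}). Your ``handled identically'' is too quick; the observable field scales $\ell_\sigma,h_\sigma$ have a different $j$-dependence from $\ell_\bulk,h_\bulk$, and the argument requires this direct computation rather than the bulk induction.
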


First observe that $\F_{\pi,\Gamma_{j'}}$,  $W$ and $P$ are all polynomials of $\varphi$ with degree $\le 8$,  so we can just replace $\kh'$ by $\kh_j$. 
Then the strategy of \cite[Section~4]{BBS4} is still effective. 
Due to \eqref{eq:FpiC},   \eqref{eq:CovpiC} and Lemma~\ref{lemma:WjQalt},  we can express $W_j$ and $P_j$ as sums of symmetric bilinear forms
$W^{\rmQ}_j ( V_x ,  V'_y )$ and
\begin{align}
	P^Q_j (V_x, V'_y) 
		&= 
		\begin{cases}		
		\Loc_x \Eplus \theta W_j^Q (V_x,  V'_y) + \frac{1}{2}  \Loc_x \F_+  [ \Eplus \theta V_x ; \Eplus \theta V'_y ]  & (j<N) \\
		0 & (j = N)
		\end{cases}		
		\label{eq:PQVV}
\end{align}
so that
\begin{align}
	W_j (\{ x \}) &= \sum_{y \in \Lambda} W^{\rmQ}_j ( V_x ,  \pi_\bulk V_y ) + W^{\rmQ}_j ( (1-\pi_\bulk) V_x,  V_y  ) \label{eq:WjSigmaWjQ}  \\
	P_j (\{ x \}) &= \sum_{y \in \Lambda} P_j^Q ( V_x ,  \pi_\bulk V_y ) +  P_j^Q ( (1-\pi_\bulk) V_x,  V_y  ) .  \label{eq:PjSigmaPjQ}
\end{align}
Then Lemma~\ref{lemma:FWPboundsobs} reduces to the following bounds on quadratic forms.   

\begin{lemma} \label{lemma:FWPbounds2}
Let $\kh_j \in \{ \ell_j , h_j \}$ and $L$ be sufficiently large. 
Then for $V, V' \in \cV$ and any $b \in \cB_j$,
\begin{align}
	& \max \left\{ \begin{array}{c}
		\sum_{x\in b,  \, y \in \Lambda} \norm{\F_{\Gamma_{j'}} (V_{\bulk,x} ; V'_{\bulk, y}) }_{\kh_j,  T_j (0)} , \\
		\sum_{x\in b,  \, y \in \Lambda} \norm{W_j^{\rmQ} (V_{\bulk,x},V'_{\bulk,y}) }_{\kh_j , T (0)}  , \\
		\sum_{x\in b,  \, y \in \Lambda} \norm{P_j^{\rmQ}  (V_{\bulk,x},V'_{\bulk,y})}_{\kh_j,  T (0)} 
\end{array}	  \right\} 
		\le  O_L (1)  \bar\be_{j,V} (\kh_j) \bar\be_{j,V'} (\kh_j )
		\label{eq:FWPbounds2bulk}
\end{align}
for both $j' \in \{j, j+1 \}$.
\end{lemma}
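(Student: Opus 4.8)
The plan is to reduce Lemma~\ref{lemma:FWPbounds2} to a single clean bound on the elementary bilinear form $\F_{\Gamma}(V_{\bulk,x};V'_{\bulk,y})$ and then propagate it through the scale-recursions defining $W^{\rmQ}_j$ and $P^{\rmQ}_j$. First I would observe, as already noted after the statement of Lemma~\ref{lemma:FWPboundsobs}, that all three objects are polynomials of $\varphi$ of bounded degree (at most $8$), so there is no loss in working at $T_j(0)$ rather than $T_j(\varphi)$, and the regulator issues disappear. The starting point is the formula \eqref{eq:FCAB}: since $\Delta_{\Gamma}$ raises the combined field-degree by a net $-2$, the expression $\F_{\Gamma}(A;B)= e^{\frac12\Delta_\Gamma}[(e^{-\frac12\Delta_\Gamma}A)(e^{-\frac12\Delta_\Gamma}B)]-AB$ contains only terms in which \emph{at least one} $\Gamma$-contraction links a derivative of $A$ to a derivative of $B$. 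Each such contraction replaces a pair $\nabla^{(m_1)}\varphi^{(\alpha)}_x\,\nabla^{(m_2)}\varphi^{(\alpha)}_y$ by $\nabla^{(m_1)}_x\nabla^{(m_2)}_y\Gamma_{j'}(x-y)$, and by the restated covariance bound \eqref{eq:Gammajbounds2} this factor is $\lesssim \kc_{j'}^2 L^{-(|m_1|+|m_2|)j}\lesssim \tilde\chi_j \ell_0^{-2}(\ell_j/\ell_0)^{\cdots}$ uniformly on $\AA_j(\tilde m^2)$ — in fact the key gain is the $\tilde\chi_j$ and one factor of $(\kh_j/\ell_j)$ per field leg that is saved by pairing. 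Summing over $y\in\Lambda$ is controlled by the finite-range property (ii), which restricts $y$ to a bounded number of $j$-blocks around $b$; so $\sum_{x\in b,y\in\Lambda}$ only costs an $O_L(1)$. Comparing with $\norm{M^{(\km)}_x}_{\kh,T_j(0)}\lesssim L^{-q(\km)j}\kh_\bulk^{p(\km)}$ from Lemma~\ref{lemma:cLnorm} and with the definition of $\bar\be_{j,V}(\kh_j)=\be_{j,V}(\ell)\tilde\chi^{1/2}(\kh_\bulk/\ell_\bulk)^3$ in \eqref{eq:beQkh}, one checks that each pairing converts two $\norm{\cdot}_{\cV(\ell)}$-type factors into the product $\bar\be_{j,V}(\kh_j)\bar\be_{j,V'}(\kh_j)$ — the exponent $3$ in $(\kh_\bulk/\ell_\bulk)^3$ is calibrated exactly so that the worst term (pairing a single field from $V$ against a single field from $V'$, leaving three spare field legs on each side measured in the $\kh$-norm) saturates the bound. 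This gives \eqref{eq:FWPbounds2bulk} for the $\F$-line with $j'\in\{j,j+1\}$.

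Next I would handle $W^{\rmQ}_j$ and $P^{\rmQ}_j$ by induction on $j$, using the recursion \eqref{eq:WjrmQdefi} and the definition \eqref{eq:PQVV}. For $W^{\rmQ}_j(V_x,V'_y)=(1-\Loc_x)\big(e^{\frac12\Delta_{\Gamma_j}}W^{\rmQ}_{j-1}(e^{-\frac12\Delta_{\Gamma_j}}V_x,e^{-\frac12\Delta_{\Gamma_j}}V'_y)+\frac12\F_{\Gamma_j}(V_x,V'_y)\big)$: the $\frac12\F_{\Gamma_j}$ term is bounded by the paragraph above; the conjugated term is handled by the $T_0$-boundedness of the localisation operator (Proposition~\ref{prop:locXBbdmt}), the fact (item (iii) of Section~\ref{sec:locpols}) that $\cL$ is closed under $\Delta_\Gamma$ so that $e^{\pm\frac12\Delta_{\Gamma_j}}$ acts boundedly on the relevant finite-dimensional polynomial space with constants controlled by \eqref{eq:Gammajbounds2}, and the scale-monotonicity $\norm{\cdot}_{T_{j}}\le\norm{\cdot}_{T_{j-1}}$ together with the inductive bound $\bar\be_{j-1,V}(\kh_{j-1})\le \bar\be_{j,V}(\kh_j)$ (which follows from the explicit scaling of $\be_V(\ell)$ in $j$ and the monotonicity of $\tilde\chi$). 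Summing the geometric-type recursion over scales gives the $O_L(1)$ prefactor. For $P^{\rmQ}_j$ in \eqref{eq:PQVV}, the $\Loc_x$ is bounded by Proposition~\ref{prop:locXBbdmt}, $\Eplus\theta$ only replaces $V_x$ by $\E_{j+1}\theta V_x$ which by \eqref{eq:cVnrmvslnrm2}–\eqref{eq:cVnrmvslnrm1} changes the $\cV$-norm by at most $O(1)$, and the two pieces $\Loc_x\Eplus\theta W^{\rmQ}_j$ and $\frac12\Loc_x\F_+[\Eplus\theta V_x;\Eplus\theta V'_y]$ are then bounded by the $W^{\rmQ}$-estimate just proved and by the $\F$-estimate at $j'=j+1$, respectively; the case $j=N$ is trivial since $P^{\rmQ}_N=0$. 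Finally, continuity in $(\ba_\emptyset,\ba)\in\AA_j(\tilde m^2)$ is inherited from the assumed continuity of $\Gamma_j(\cdot;\ba_\emptyset,\ba)$ in Definition~\ref{defi:FRD}, since all operations involved (finite sums, products, $e^{\pm\frac12\Delta_\Gamma}$, $\Loc$) are continuous in the finitely many matrix entries appearing in a block neighbourhood, and the bound \eqref{eq:Gammajbounds2} is uniform there.

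The main obstacle I anticipate is \textbf{bookkeeping the field-degree budget so that the exponent $3$ in $\bar\be_V(\kh_j)=\be_V(\ell)\tilde\chi^{1/2}(\kh_\bulk/\ell_\bulk)^3$ is actually tight for every term} — in particular, in the $h$-regime ($\kh_j=h_j$) the ratio $h_\bulk/\ell_\bulk$ is large (it grows like $L^{(d-2+\eta)/2 - d/4}$ per scale away from criticality), so one must verify that every surviving term in $\F_\Gamma$, after one mandatory contraction, carries \emph{at most} three spare field legs on each of $V_x$ and $V'_y$; this is where the precise form of $\cV$ (degrees $1,2,4$ in $\varphi$, hence $\le 4$ field legs before contraction, $\le 3$ after at least one pairing on each side — and the cross term with one leg paired leaves exactly $3$) matters, and it is also where the observable components $V_\o,V_\x$ (degree $1$) must be checked separately to confirm they do not violate the budget. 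A secondary, more mechanical difficulty is tracking the $L$-dependent constants through the scale-recursion for $W^{\rmQ}$ and $P^{\rmQ}$ to confirm they sum to a finite $O_L(1)$ rather than growing with $j$; this requires the strict contraction built into $(1-\Loc_x)$ at the \emph{next} scale, but here we are only asked for an $O_L(1)$ bound (not an $L$-independent contraction), so crude summation suffices and this is routine. Everything else is a direct adaptation of \cite[Section~4]{BBS4} with the covariance estimate in the form \eqref{eq:Gammajbounds2} and the extended polynomial space $\cV$.
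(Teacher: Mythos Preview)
Your overall architecture matches the paper's: bound $\F_\Gamma$ directly from the covariance estimate \eqref{eq:Gammajbounds2}, then prove the $W^{\rmQ}$ bound by induction on scale via the recursion \eqref{eq:WjrmQdefi}, and recover $P^{\rmQ}$ as a linear combination. However, your inductive step for $W^{\rmQ}$ has a genuine gap. You invoke only the \emph{boundedness} of $1-\Loc_x$ (Proposition~\ref{prop:locXBbdmt}) and the claim ``$\bar\be_{j-1,V}(\kh_{j-1})\le\bar\be_{j,V}(\kh_j)$''. That monotonicity is false in general: by \eqref{eq:bekVcases1}, for instance when $V_\bulk\in\cV_4$ one has $\bar\be_{k,V}(\ell_k)\asymp\chi_k^{1/2}\ell_{k,\bulk}^4 L^{dk}$, which \emph{decreases} in $k$ for $d-4+2\eta>0$. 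With only boundedness of $1-\Loc_x$ and no favourable ratio of $\bar\be$'s, each step of the recursion picks up an $O_L(1)$ factor (including an $L^d$ from summing over the $L^d$ sub-blocks of $B_{k+1}$), producing a constant that grows like $C_L^{\,j}$, not $O_L(1)$. The paper instead uses the \emph{contraction} of $1-\Loc_x$ from Proposition~\ref{prop:crctrmt}, which gives a factor $L^{-d_\bulk}$, and then invokes the purpose-built Lemma~\ref{lemma:bekVcases2}, which says precisely that $L^{d-d_\bulk}\big(\bar\be_{k,V}(\ell_k)/\bar\be_{k+1,V}(\ell_{k+1})\big)^2\le(\log L)^{-1}<1$; this is what closes the induction with a $j$-independent $C_W$. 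You acknowledge the role of contraction as a ``secondary difficulty'' but then dismiss it as routine because only $O_L(1)$ is required; the point is that $O_L(1)$ must still be \emph{uniform in $j$}, and for that the contraction is not optional.

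Separately, the ``main obstacle'' you flag --- verifying tightness of the exponent $3$ in $(\kh_\bulk/\ell_\bulk)^3$ term by term --- is a non-issue in the paper's approach. The paper simply runs the entire $W^{\rmQ}$ induction in the $\ell$-norm, observes that $W^{\rmQ}(V_x,V'_y)$ is a polynomial of degree $\le 6$ in $\varphi$, and deduces the $h$-norm bound by multiplying by $(h_\bulk/\ell_\bulk)^6=\big((\kh_\bulk/\ell_\bulk)^3\big)^2$, which is exactly what $\bar\be_V(h)\bar\be_{V'}(h)$ encodes. No per-monomial field-leg accounting is needed.
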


\begin{lemma} \label{lemma:FWPbounds2obs}
Let $\kh_j \in \{ \ell_j , h_j \}$ and $L$ be sufficiently large.  Then for $V, V' \in \cV$,   $V_\sigma := V - V_\bulk \in \cD^\st (\alpha)$ ($\alpha \le \bar{\alpha}$) and $b \in \cB_j$,
\begin{align}
	& \max \left\{ \begin{array}{c}
		\sum_{x\in b,  \, y \in \Lambda} \norm{\F_{\Gamma_{j'}} (V_{\sigma,x} ; V'_{\bulk,y}) }_{\kh_j,  T_j (0)} , \\
		\sum_{x\in b,  \, y \in \Lambda}\norm{W_j^{\rmQ} (V_{\sigma,x} ; V'_{\bulk,y}) }_{\kh_j , T (0)}  , \\
		\sum_{x\in b,  \, y \in \Lambda} \norm{P_j^{\rmQ} (V_{\sigma,x} ; V'_{\bulk,y}) }_{\kh_j,  T (0)} 
\end{array}	  \right\} 
		\le  O_L (1) \bar{\epsilon}_j (\kh_j) \bar\be_{j,V'_\bulk} (\kh_j )
		\label{eq:FWPbounds2obs}	
\end{align}
for both $j' \in \{j, j+1 \}$.
\end{lemma}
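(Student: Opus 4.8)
The plan is to adapt the proof of the bulk--bulk estimate Lemma~\ref{lemma:FWPbounds2}, exploiting that every $\km\in\ko_1\cup\ko_{1,\nabla}$ has $p(\km)=1$, so that $V_\sigma$ is linear in $\varphi$. This makes the observable case structurally simpler than the bulk case (only one covariance contraction ever occurs) at the cost of heavier bookkeeping with the $\sigma$-weights and the $\cD^\st_\sigma$ bounds.

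First I would reduce all three quantities to a single bilinear estimate. By Lemma~\ref{lemma:WjQalt}, $W^{\rmQ}_j(V_{\sigma,x};V'_{\bulk,y})=\tfrac12(1-\Loc_x)\F_{w_j}[V_{\sigma,x};V'_{\bulk,y}]$ for $j<N$, and for $j=N$ the recursion \eqref{eq:WjrmQdefi} unrolls into a finite sum of terms $e^{\frac12\Delta_{\Gamma_k}}\F_{w_{k-1}}[\,\cdot\,;\,\cdot\,]$ plus one copy of $\F_{\Gamma_N}$, invoking no scale-$N$ localisation. By \eqref{eq:PQVV}, $P^{\rmQ}_j(V_{\sigma,x};V'_{\bulk,y})$ is $\Loc_x$ applied to $\Eplus\theta W^{\rmQ}_j(V_{\sigma,x};V'_{\bulk,y})+\tfrac12\F_{\Gamma_{j+1}}[\Eplus\theta V_{\sigma,x};\Eplus\theta V'_{\bulk,y}]$, and it vanishes at $j=N$. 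Since $\Loc_x$ is bounded on $\norm{\cdot}_{\kh_j,T_j(0)}$ (Proposition~\ref{prop:locXBbdmt}), since $\Eplus\theta=e^{\frac12\Delta_{\Gamma_{j+1}}}$ fixes the linear polynomial $V_{\sigma,x}$ and perturbs $V'_{\bulk,y}$ only by self-contractions controlled via Lemma~\ref{lemma:cVnrmvslnrm}, and since each relevant $w_{j'}$, $\Gamma_{j'}$ obeys the bounds of Definition~\ref{defi:FRD}, it suffices to show, for $C\in\{w_{j'},\Gamma_{j'}:j'\le j+1\}$,
\[
	\sum_{x\in b}\sum_{y\in\Lambda}\norm{\F_C[V_{\sigma,x};V'_{\bulk,y}]}_{\kh_j,T_j(0)}
		\le O_L(1)\,\bar{\epsilon}_j(\kh_j)\,\bar{\be}_{j,V'_\bulk}(\kh_j).
\]

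Next I would prove this bilinear bound. Expanding $\F_C[A;B]$ from \eqref{eq:FCAB}, every surviving term carries at least one covariance contraction pairing a $\varphi$-slot of $A$ with one of $B$; since $D^\ell_\varphi V_{\sigma,x}=0$ for $\ell\ge2$, only the single-contraction term survives. As $V_{\sigma,x}\ne0$ only for $x\in\{\o,\x\}$ and $C$ has range $<L^{j'}$, the double sum runs over at most two values of $x$ and, for each, over the $O(L^{dj'})$ sites $y$ within range $L^{j'}$ of $x$. Standard polynomial manipulations (submultiplicativity of $\norm{\cdot}_{\kh_j,T_j(0)}$, Lemmas~\ref{lemma:polynorm}--\ref{lemma:cLnorm}, the covariance bounds \eqref{eq:Gammajbounds2}) then give, schematically,
\[
	\norm{\F_C[V_{\sigma,x};V'_{\bulk,y}]}_{\kh_j,T_j(0)}
		\lesssim \one_{|x-y|<L^{j'}}\,\norm{V_{\sigma,x}}_{\kh_j,T_j(0)}\,\kh_{\bulk,j}^{-1}\,|C(x,y)|\,\norm{\partial_\varphi V'_{\bulk,y}}_{\kh_j,T_j(0)},
\]
the residual $e^{\frac12\Delta_C}$ on the reduced (degree $\le3$) polynomial producing only subleading self-contractions. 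I would then insert: $\norm{V_{\sigma,x}}_{\kh_j,T_j(0)}\lesssim\kh_{\sigma,j}\kh_{\bulk,j}$ (the $\sigma$-weight times $\max_\km|\lambda^{(\km)}_\hash|L^{-q(\km)j}\lesssim\max\{1,\scale_j^{\kt}\}\lesssim1$ from $\cD^\st_\sigma(\alpha)$ and Lemma~\ref{lemma:cLnorm}); $\sum_y|C(x,y)|\lesssim O_L(1)\tilde\chi_j L^{(2-\eta)j}$ (the scaled correlation-length factor, obtained by summing \eqref{eq:Gammajbounds2} with no derivatives over dyadic distance shells and using \eqref{eq:chidefi}--\eqref{eq:kcdefi}); and, since the quartic part of $V'_\bulk$ dominates because $\kh_{\bulk,j}\ge\ell_{\bulk,j}$, $\norm{\partial_\varphi V'_{\bulk,y}}_{\kh_j,T_j(0)}\lesssim L^{-jd}\be_{j,V'_\bulk}(\ell)\,\ell_{\bulk,j}^{-1}(\kh_{\bulk,j}/\ell_{\bulk,j})^3$. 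Collecting the factors and using \eqref{eq:epsbarepsdefn}--\eqref{eq:beQkh}, the target reduces to $\kh_{\sigma,j}L^{(2-\eta)j-jd}\ell_{\bulk,j}^{-1}\lesssim\tilde g_j\scale_j$ for $\kh_j=\ell_j$ and to $\lesssim(\tilde g_j\scale_j)^{1/4}$ for $\kh_j=h_j$; a direct computation with $\ell_{\bulk,j}=\ell_0L^{-[\varphi]j}$, $\ell_{\sigma,j}$, $h_{\sigma,j}=\tilde g_j^{1/4}L^{dj/4}$ and $\scale_j=L^{-(d-4+2\eta)j}$ settles the $h$-case at equality and the $\ell$-case under $d\ge4+\eta$, with extra room from $\ell_0^{-1}\ll1$ covering the $d=4,\eta=0$ boundary (the subcases $j>j_\ox$ and $d=4$ requiring the coalescence factor $2^{(j-j_\ox)_+}$ in $\ell_{\sigma,j}$ to be absorbed against powers of $L$ for $L$ large).

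The hard part is the last step's scale bookkeeping: verifying that the $\sigma$- and field-scale weights, the $\cD^\st_\sigma$ coefficient bounds, and the summed covariance combine into \emph{exactly} $\bar{\epsilon}_j(\kh_j)\bar{\be}_{j,V'_\bulk}(\kh_j)$ and not something slightly weaker. This is tightest for $\kh_j=h_j$, where $h_{\bulk,j}\sim L^{-dj/4}$ rather than $L^{-[\varphi]j}$ forces the cube $(\kh_{\bulk,j}/\ell_{\bulk,j})^3$ and the inequality is saturated, and at $d=4$, where the coalescence-scale factors hidden in $\ell_{\sigma,j}$ --- and, through the $j=N$ forms of $W^{\rmQ}$ and $P^{\rmQ}$, in the top-scale step --- have to be tracked by hand rather than read off a power of $L^j$.
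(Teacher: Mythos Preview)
Your approach is essentially the paper's: exploit that $V_\sigma$ is linear in $\varphi$ so that only a single covariance contraction survives in $\F_C[V_{\sigma,x};V'_{\bulk,y}]$, and reduce $W^{\rmQ}$ and $P^{\rmQ}$ to this via Lemma~\ref{lemma:WjQalt} and Proposition~\ref{prop:locXBbdmt}. The paper makes the single contraction explicit through Lemma~\ref{lemma:FCVVobs} and organises the $w_j=\sum_{k\le j}\Gamma_k$ sum inside an auxiliary quantity $B_\hash(V,\kh)$, but your lumped $\sum_y|C(x,y)|$ amounts to the same thing.

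Two points worth noting. First, the paper handles $\kh=h$ more cleanly than your direct computation: after proving the $\kh=\ell$ bound it simply observes that $W^{\rmQ}(V_{\hash,x};V'_{\bulk,y})$ has degree $\le3$ in $\varphi$, so the $h$-norm equals the $\ell$-norm times $h_\bulk^3h_\sigma/(\ell_\bulk^3\ell_\sigma)$, which converts in one line to $\bar\epsilon(h)\bar\be_{V'_\bulk}(h)$. Second, the condition ``$d\ge4+\eta$'' you obtain in the $\ell$-case comes from using the $d\ge5$ formula for $\ell_{\sigma,j}$ also at $d=4$; with the actual $d=4$ definition $\ell_{\sigma,j}=\tilde g_jL^{(1-\frac32\eta)\,j\wedge j_\ox}2^{(j-j_\ox)_+}$ from \eqref{eq:ellsigmadefi}, your target $\ell_{\sigma,j}L^{-(2+\eta)j}\ell_{\bulk,j}^{-1}\lesssim\tilde g_j\scale_j$ holds for every $\eta\in[0,\tfrac12)$ (for $j\le j_\ox$ it is in fact an equality up to $\ell_0^{-1}$), so it is the correct $\ell_\sigma$ formula, not the $\ell_0^{-1}$ slack, that closes the $d=4$ bookkeeping.
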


Lemma~\ref{lemma:FWPbounds2} and \ref{lemma:FWPbounds2obs} can actually be written in a unified form in the next lemma.
It will not be needed for the main bound,  but we will also need it.

\begin{lemma} \label{lemma:FWPbounds3obs}
Let $\kh_j \in \{ \ell_j , h_j \}$ and $L$ be sufficiently large. 
Then for $V, V' \in \cV$ and $b \in \cB_j$,
\begin{align}
	& \sum_{x \in b,  \,  y \in \Lambda} \norm{W^{\rmQ} (V_x ; V'_{y})}_{\kh_j ,  T_j (0)} \\
		& \;\; \le O_L (1) \Big(  \bar{\be}_{V_\bulk, j} (\kh) +  \tilde{\chi}^{1/2}_j \big( \frac{\ell_{\bulk,j}}{\kh_{\bulk,j}} \big) \be_{V,j} (\kh) \Big)
		\Big(  \bar{\be}_{V'_\bulk, j} (\kh) + \tilde{\chi}^{1/2}_j \big( \frac{\ell_{\bulk,j}}{\kh_{\bulk,j}} \big) \be_{V',j} (\kh) \Big)
		. \nonumber
\end{align}
\end{lemma}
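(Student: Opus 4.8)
The proof of Lemma~\ref{lemma:FWPbounds3obs} will be an immediate consequence of the two preceding lemmas, once we decompose $V = V_\bulk + V_\sigma$ and $V' = V'_\bulk + V'_\sigma$ and expand the bilinear form $W^{\rmQ}$ by bilinearity. The plan is as follows. First I would write
\begin{align}
	W^{\rmQ} (V_x ; V'_y)
		= W^{\rmQ} (V_{\bulk,x} ; V'_{\bulk,y})
		+ W^{\rmQ} (V_{\sigma,x} ; V'_{\bulk,y})
		+ W^{\rmQ} (V_{\bulk,x} ; V'_{\sigma,y})
		+ W^{\rmQ} (V_{\sigma,x} ; V'_{\sigma,y})
\end{align}
using symmetry of $W^{\rmQ}$ where convenient. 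For the purely bulk term, Lemma~\ref{lemma:FWPbounds2} gives a bound $O_L(1)\, \bar\be_{V_\bulk,j}(\kh)\, \bar\be_{V'_\bulk,j}(\kh)$, which is dominated by the right-hand side of the claimed bound. For the mixed terms $W^{\rmQ}(V_{\sigma,x}; V'_{\bulk,y})$ and (by symmetry) $W^{\rmQ}(V'_{\sigma,y}; V_{\bulk,x})$, Lemma~\ref{lemma:FWPbounds2obs} gives $O_L(1)\,\bar\epsilon_j(\kh)\,\bar\be_{V'_\bulk,j}(\kh)$ and $O_L(1)\,\bar\epsilon_j(\kh)\,\bar\be_{V_\bulk,j}(\kh)$ respectively; here I need to check that $\bar\epsilon_j(\kh)$ is comparable to $\tilde\chi_j^{1/2}(\ell_{\bulk,j}/\kh_{\bulk,j})\,\be_{V,j}(\kh)$ up to $L$-dependent constants. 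For $V_\sigma \in \cD^\st$, by Lemma~\ref{lemma:eQh} and the definitions \eqref{eq:epsbarepsdefn} and \eqref{eq:eQkh}, one has $\be_{V_\sigma,j}(\kh) \lesssim_L$ the appropriate power of $\tilde g, \scale$ matching $\tilde\chi^{1/2}(\kh_\bulk/\ell_\bulk)^{-1}\cdots$; this bookkeeping is the one computational point to verify, but it is routine given the explicit norm definitions.

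For the remaining purely observable term $W^{\rmQ}(V_{\sigma,x}; V'_{\sigma,y})$, I would again invoke Lemma~\ref{lemma:FWPbounds2obs} (with the roles of bulk and $\sigma$ adjusted, or directly estimating the bilinear form in $T_j(0)$ using the polynomial bound Lemma~\ref{lemma:polynorm} together with the covariance estimate \eqref{eq:Gammajbounds2}) to obtain a bound of the form $O_L(1)\,\bar\epsilon_j(\kh)^2$, which is again absorbed into the right-hand side since $\bar\epsilon_j(\kh) \lesssim_L \tilde\chi_j^{1/2}(\ell_{\bulk,j}/\kh_{\bulk,j})\be_{V,j}(\kh)$. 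Summing the four contributions over $x \in b$ and $y \in \Lambda$ — the sum over $y$ being finite and $L$-uniformly controlled by the finite-range property (ii) of Definition~\ref{defi:FRD}, exactly as in Lemmas~\ref{lemma:FWPbounds2} and \ref{lemma:FWPbounds2obs} — and collecting, one has
\begin{align}
	\sum_{x\in b,\, y\in\Lambda} \norm{W^{\rmQ}(V_x ; V'_y)}_{\kh_j, T_j(0)}
	\le O_L(1)\Big(\bar\be_{V_\bulk,j}(\kh) + \tilde\chi_j^{1/2}\tfrac{\ell_{\bulk,j}}{\kh_{\bulk,j}}\be_{V,j}(\kh)\Big)\Big(\bar\be_{V'_\bulk,j}(\kh) + \tilde\chi_j^{1/2}\tfrac{\ell_{\bulk,j}}{\kh_{\bulk,j}}\be_{V',j}(\kh)\Big),
\end{align}
which is the assertion.

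I do not expect any genuine obstacle here: the lemma is stated precisely so that it is a repackaging of Lemmas~\ref{lemma:FWPbounds2} and \ref{lemma:FWPbounds2obs}. The only mildly delicate point is matching the two different ways of bounding the observable part — the $\bar\epsilon_j(\kh)$ appearing in Lemma~\ref{lemma:FWPbounds2obs} versus the $\tilde\chi_j^{1/2}(\ell_{\bulk,j}/\kh_{\bulk,j})\be_{V,j}(\kh)$ in the statement — and confirming that the former is bounded by an $L$-dependent multiple of the latter when $V_\sigma \in \cD^\st$; this is where one uses the explicit coefficient bounds in the stability domain \eqref{eq:cDbulkst} together with Lemma~\ref{lemma:eQh}. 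Everything else (bilinearity, symmetry, the finite sum over $y$) is bookkeeping already carried out in the proofs of the two earlier lemmas, so I would simply say ``the proof is identical to that of Lemma~\ref{lemma:FWPbounds2obs}, decomposing $V$ and $V'$ into bulk and observable parts and applying Lemmas~\ref{lemma:FWPbounds2} and \ref{lemma:FWPbounds2obs} to the four resulting terms.''
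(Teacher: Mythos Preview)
Your strategy---decompose $V = V_\bulk + V_\sigma$, expand $W^{\rmQ}$ by bilinearity, and cite Lemma~\ref{lemma:FWPbounds2} for the bulk--bulk piece and Lemma~\ref{lemma:FWPbounds2obs} for the mixed pieces---matches the paper's approach. However, the step you flag as ``mildly delicate'' is actually a genuine gap in the form you state it.

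Lemma~\ref{lemma:FWPbounds3obs} is stated for general $V, V' \in \cV$, with no $\cD^\st$ assumption on $V_\sigma$. If you quote the \emph{final} statement of Lemma~\ref{lemma:FWPbounds2obs}, you get a bound by $\bar\epsilon_j(\kh)\,\bar\be_{V'_\bulk,j}(\kh)$, and you then need $\bar\epsilon_j(\kh) \lesssim_L \tilde\chi_j^{1/2}(\ell_{\bulk,j}/\kh_{\bulk,j})\,\be_{V,j}(\kh)$. But $\bar\epsilon_j(\kh)$ is a fixed parameter (depending on $\tilde g, \scale$), while $\be_{V,j}(\kh) = \norm{V}_{\cV_j(\kh)}$ can be arbitrarily small. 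The inequality you want fails for small $V$, and invoking $V_\sigma \in \cD^\st$ only gives the reverse direction $\be_{V_\sigma} \lesssim \bar\epsilon$.

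The fix is exactly what the paper does: cite the \emph{intermediate} inequalities \eqref{eq:FWPbounds2obs1}--\eqref{eq:FWPbounds2obs2} from the proof of Lemma~\ref{lemma:FWPbounds2obs}, which already give
\[
	\sum_{y \in \Lambda} \norm{W^{\rmQ}(\sigma_\hash V_{\hash,x}; V'_{\bulk,y})}_{\kh_j, T_j(0)}
	\le O_L(1)\,\Big(\frac{\ell_{\bulk,j}}{\kh_{\bulk,j}}\Big)\,\be_{V_\sigma,j}(\kh)\,\bar\be_{V'_\bulk,j}(\kh)
\]
directly in terms of $\be_{V_\sigma}$ rather than $\bar\epsilon$, and require no domain restriction on $V_\sigma$. (The passage to $\bar\epsilon$ in Lemma~\ref{lemma:FWPbounds2obs} was the \emph{last} step, using $\cD^\st$; here you stop one step earlier.) Combining with Lemma~\ref{lemma:FWPbounds2} then gives the result. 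As for the purely observable term $W^{\rmQ}(V_{\sigma,x}; V'_{\sigma,y})$: since $\sigma_\o^2 = \sigma_\x^2 = 0$, it contributes only to $\pi_\ox W^{\rmQ}$, which vanishes for $j < N$ by Lemma~\ref{lemma:FCVVobs}, so no separate estimate is needed.
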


\begin{proof}[Proof of Lemma~\ref{lemma:FWPboundsobs}]
The bounds are corollaries of \eqref{eq:WjSigmaWjQ},  \eqref{eq:PjSigmaPjQ},
Lemma~\ref{lemma:FWPbounds2} and \ref{lemma:FWPbounds2obs}.

We are only left with the continuity statement.  When $C$ is either $\Gamma_+$ or $w_+$, 
\begin{align}
	\Cov_{C} [\theta V_x ,  \theta V (\Lambda)] &=  e^{\frac{1}{2} \Delta_{C}} \big( V_x V (\Lambda) \big) - \big( e^{\frac{1}{2} \Delta_{C}} V_x \big)  \big( e^{\frac{1}{2} \Delta_{C}} V (\Lambda) \big) \\
	\F_{C} [\theta V_x ,  \theta V (\Lambda)] &=  \Cov_{C} [  e^{-\frac{1}{2} \Delta_{C}} V_x,  e^{-\frac{1}{2} \Delta_{C}} V (\Lambda)  ] ,
\end{align}
and  $e^{\pm \frac{1}{2} \Delta_C} F = \sum_{k=0}^2 \frac{1}{2^k k!} (\pm \Delta_{C})^k F$
for any polynomial $F$ of degree $\le 4$.  
Thus if we evaluate $\Cov_{C} [\theta V_x ,  \theta V (\Lambda)]$ and $\F_{C} [\theta V_x ,  \theta V (\Lambda)]$ at each fixed $\varphi$,  
they are continuous in $(\ba_\emptyset, \ba) \in \AA (\tilde{m}^2)$ due to Definition~\ref{defi:FRD}.
Since they are both polynomials of degree $\le 6$,
this also implies continuity in $\norm{\cdot}_{\ell, T(0)}$ (which is a genuine norm on the space of polynomials of bounded degree).
The same should hold with $\Cov_{\pi,C}$ and $\F_{\pi,C}$,  and Proposition~\ref{prop:locXBbdmt} says that $\Loc$ is a continuous operation under $\norm{\cdot}_{\ell, T(0)}$,  so we have the desired continuities.
\end{proof}

Bound on $\F_{\Gamma_{j'}}$ can be deduced relatively directly by expanding it in powers of norm on $\Gamma_{j'}$,  which is explained in the proof of \cite[Lemma~4.7]{BBS4}.  For the other terms,  we need an induction argument.
The induction is necessary because the bound on $W_j^{\rmQ}$ relies on the contraction of $1-\Loc_x$ in its definition (see Section~\ref{sec:locbnds}),  and the decay due to the contraction can only be revealed from an induction process.

The following can be used to bound $W_j^{\rmQ} (V,V)$ by induction.

\begin{lemma} \label{lemma:bekVcases2}
If $V_\bulk \in \cU_\bulk$ and $4 [\varphi] \kt  < 1 - 2\eta - \epsilon (2d-7+2\eta)$,  then for sufficiently large $L$,
\begin{align}
	L^{d-d_\bulk} \left( \frac{\bar{\be}_{k,V_\bulk} (\ell_k) }{ \bar{\be}_{k+1, V_\bulk} (\ell_{k+1}) } \right)^2 
		\le (\log L)^{-1} 
	\label{eq:bekVcases2}	
\end{align}
\end{lemma}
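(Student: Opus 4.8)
The plan is to unfold the definitions of $\bar\be$ and $\tilde\chi$, reduce the claim to a scale ratio bound for the explicit norm $\norm{V_\bulk}_{\cV_j(\ell_j)}$, and then verify that bound by comparing the norm term by term across scales $k$ and $k+1$. First I would observe that, since $\ell_{\bulk,j}=\ell_0 L^{-[\varphi]j}$, evaluating \eqref{eq:beQkh} at $\kh=\ell_j$ gives $(\ell_{\bulk,j}/\ell_{\bulk,j})^3=1$, so by \eqref{eq:eQkh}
\begin{align}
	\bar\be_{j,V_\bulk}(\ell_j)=\tilde\chi_j^{1/2}\,\norm{V_\bulk}_{\cV_j(\ell_j)}.
\end{align}
From \eqref{eq:chidefi}, $\tilde\chi_j=2^{-(j-j_{\tilde{m}^2})_+}$ is non-increasing in $j$ and decreases by at most a factor $2$ per step, so $\tilde\chi_k/\tilde\chi_{k+1}\le 2$. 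Hence (the case $V_\bulk=0$ being vacuous) \eqref{eq:bekVcases2} reduces to showing, for $L$ large,
\begin{align}
	L^{d-d_\bulk}\Big(\frac{\norm{V_\bulk}_{\cV_k(\ell_k)}}{\norm{V_\bulk}_{\cV_{k+1}(\ell_{k+1})}}\Big)^2\le\frac{1}{2\log L}.
\end{align}

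Next I would write $V_\bulk=u_\bulk+\sum_\km c_\km S^{(\km)}$ with $\km$ in the finite label set $\ko_2\cup\ko_{2,\nabla}\cup\ko_4\cup\ko_{4,\nabla}$ (and $c_\km$ the corresponding $\nu^{(\km)}$ or $g^{(\km)}$). By Definition~\ref{defi:cVnormdefi}, $\norm{V_\bulk}_{\cV_j(\ell_j)}$ is a sum over the four monomial classes of a class-maximum $w_j(\km)|c_\km|$ (plus the constant term $L^{jd}|u_\bulk|$), with explicit positive weights $w_j(\km)$ read off from \eqref{eq:cVnormdefi} using $\ell_{\bulk,j}^2=\ell_0^2L^{-(d-2+\eta)j}$, $\ell_{\bulk,j}^4=\ell_0^4L^{-2(d-2+\eta)j}$, $\scale_j=L^{-(d-4+2\eta)j}$ and $2[\varphi]=d-2+\eta$. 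Since a ratio of sums of nonnegative terms is bounded by the maximum of the term-by-term ratios, and the class-maximum at scale $k$ is attained at some $\km^\ast$,
\begin{align}
	\frac{\norm{V_\bulk}_{\cV_k(\ell_k)}}{\norm{V_\bulk}_{\cV_{k+1}(\ell_{k+1})}}\le\max\Big(L^{-d},\ \max_\km\frac{w_k(\km)}{w_{k+1}(\km)}\Big).
\end{align}
Computing the scale ratios gives $w_k/w_{k+1}=L^{q(\km)-2+\eta}\le L^{d-4+2\eta}$ for $\km\in\kA_0\cup\kA_1\cup\kA_2$ (using $q(\km)\le 2[\varphi]$), $L^{d-4+2\eta}$ for $\km\in\ko_4$, and $L^{(d-4+2\eta)(1+\kt)}$ for $\km\in\kA_3$ and $\km\in\ko_{4,\nabla}$; the constant term contributes $L^{-d}$. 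So the overall ratio is at most $L^{(d-4+2\eta)(1+\kt)}$.

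Finally, since $d-d_\bulk=-(2d-7+2\eta)$ by \eqref{eq:dstar}, the left side above is at most
\begin{align}
	L^{-(2d-7+2\eta)+2(d-4+2\eta)(1+\kt)}=L^{-1+2\eta+2(d-4+2\eta)\kt}.
\end{align}
Because $2(d-4+2\eta)\le 4[\varphi]$ for $\eta\le 2$, the hypothesis $4[\varphi]\kt<1-2\eta-\epsilon(2d-7+2\eta)$ (for which it is in fact enough to invoke \eqref{eq:ktcondition}) forces this exponent to be $<-\epsilon(2d-7+2\eta)$, i.e. bounded above by a fixed negative constant $-c_1=-c_1(d,\eta)$; when $d-4+2\eta=0$ the exponent equals $-1$ and the same conclusion holds. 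Then $2L^{-c_1}\le(\log L)^{-1}$ for $L$ large, which gives \eqref{eq:bekVcases2}.

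I expect the only genuine work to be the term-by-term computation of the weights $w_j(\km)$ and their scale ratios. The point requiring care is the two gradient classes $\kA_3$ and $\ko_{4,\nabla}$: there the anti-damping factor $\scale_j^\kt$ weakens the naive contraction, and it is precisely this that makes the restriction on $\kt$ from \eqref{eq:ktcondition} (hence the hypothesis of the lemma) necessary, with $\eta<1/2$ ensuring a strictly negative exponent in the remaining classes.
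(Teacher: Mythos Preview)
Your proof is correct and takes essentially the same approach as the paper: both reduce to computing the scale ratios $w_k(\km)/w_{k+1}(\km)$ for each monomial class and observe that the worst case $L^{(d-4+2\eta)(1+\kt)}$ comes from $\kA_3$ and $\ko_{4,\nabla}$, yielding the exponent $-1+2\eta+2(d-4+2\eta)\kt$. The paper simply declares ``we can assume that $V_\bulk$ is a monomial'' and proceeds case by case, whereas you supply the missing justification that for a general $V_\bulk$ the norm ratio is bounded by the maximum of the per-monomial weight ratios; this is a minor but genuine improvement in rigour, and otherwise the two arguments coincide.
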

\begin{proof}
We can assume that $V_\bulk$ is a monomial with coefficient 1.  Then 
\begin{align}
	& \bar\be_{k, V_\bulk} (\kh_k) \asymp \chi_k^{1/2}  \times
		\begin{cases}
			L^{dk}  & (V_\bulk \in \cV_0 ) \\ 
			\ell_{k,\bulk}^{2} L^{(d- q (\km_1) )k } & (V_\bulk \in \cV_{2, \km_1} ,  \; 0 < q (\km_1) \le  2[\varphi]) \\
			\ell_{k,\bulk}^{2} \scale_k^{\kt} L^{2 k} & (V_\bulk \in \cV_{2, \km_1} ,  \; q (\km_1) > 2[\varphi] ) \\
			\ell_{k,\bulk}^4 L^{ d k }  & (V_\bulk \in \cV_{4}  )  \\
			\ell_{k,\bulk}^4 \scale_k^{\kt} L^{ dk }  & (V_\bulk \in \cV_{4, \km_2} ,  \; q (\km_2) > 0 ) 
			.
		\end{cases}	
		\label{eq:bekVcases1}
\end{align}
We have $L^{d-d_\bulk} = L^{-(2d-7+2\eta)}$ while
\begin{align}
	\left( \frac{\bar{\be}_{k,V_\bulk} (\ell_k) }{ \bar{\be}_{k+1, V_\bulk} (\ell_{k+1}) } \right)^2 \lesssim 	\begin{cases}
			L^{-2d} & (V_\bulk \in \cV_0 ) \\	
			L^{2 (q(\km_1) - 2 + \eta)} & (V_\bulk \in \cV_{2,\km_1} ,  \; q (\km_1) \le 2 [\varphi]) \\
			L^{2 (d-4 + 2\eta) (1  + \kt)  } & (V_\bulk \in \cV_{2, \km_1} ,  \; q (\km_1) >  2[\varphi] ) \\	
			L^{2(d-4 + 2\eta)} & (V_\bulk \in \cV_{4}) \\
			L^{2 (d-4 + 2\eta) ( 1+  \kt)} & (V_\bulk \in \cV_{4,\km_2} ,  \; q (\km_2) > 0)
			,
		\end{cases}
\end{align}
thus if $2 (d-4 + 2\eta )\kt  < 1-2\eta - \epsilon (2d-7+2\eta)$,  then $L^{d-d_\bulk} L^{2(d-4 +2\eta) (1+\kt)} <1$,
and the desired bound follows for sufficiently large $L$.
\end{proof}

\begin{proof}[Proof of Lemma~\ref{lemma:FWPbounds2}]

For the first bound of \eqref{eq:FWPbounds2bulk}, 
we have by \cite[(4.31)]{BBS4}
\begin{align}
	\sum_{x \in B,  \,  y \in \Lambda} \norm{\mathbb{F}_{\Gamma_{j'}} (V_x ; V'_{y}) }_{\kh_j ,  T (0)} 
		& \lesssim O_L (1) \norm{\Gamma_{j'}}_{\kh_j,  \Phi_j} \norm{V}_{\kh_j, T_j (0)} \norm{V'}_{\kh_j, T_j (0)} \nnb
		& \lesssim O_L (1) \norm{\Gamma_{j'}}_{\kh_j,  \Phi_j} \be_{j,V} (\kh_j) \be_{j,V'} (\kh_j)
		\label{eq:FGammaVVp}
\end{align}
but $\norm{\Gamma_{j'}}_{\kh_j,  \Phi_j} \lesssim_L \chi_j (\ell_{j,\bulk} / \kh_{j,\bulk} )^2$ by \eqref{eq:Gammajbounds2},  so we have the desired bound.
(The reference \cite[(4.31)]{BBS4} requires the norm of $V,V'$ to be small, 
but it is actually not necessary because $\mathbb{F}_{\Gamma_{j'}}$ is a bilinear form.)

To prove the second bound of \eqref{eq:FWPbounds2bulk},  we adopt the strategy of \cite[Proposition~4.10]{BBS4}.
To start with, 
we assume as an induction hypothesis that,
for any $B_k \in \cB_k$,
\begin{align}
	\sum_{x\in B_k, \,  y \in \Lambda} \norm{ W_k^Q (V_x, V'_y) }_{\ell_k,  T_k (0)}
		\le
		C_W \bar\be_{k,V} (\ell) \bar\be_{k,V'} (\ell) 
		\label{eq:WQboundIH}
\end{align}
for some $C_W > 0$--since $W^Q$ is a polynomial of degree $\le 6$,  $\kh = \ell$ case also implies $\kh = h$ case.
We see that the bound trivially holds for $k = 0$ if we assume that $V,V'$ are monomials. 
When $k+1 < N$, 
we can use the definition of $W^{\rmQ}$ and triangle inequality to obtain
\begin{align}  \label{eq:WQkplusbound1}
\begin{split}
	&\norm{W^{\rmQ}_{k+1} (V_x, V'_y)}_{\ell_{k+1}, T_k(0) }
		 \le 
		 \frac{1}{2} \big\| (1- \Loc_x) \mathbb{F}_{ \Gamma_{k+1}} (V_x, V'_y) \big\|_{\ell_{k+1}, T_{k+1} (0) } \\
		 & \qquad + \Norm{ (1- \Loc_x) \Big( e^{\frac{1}{2} \Delta_{\Gamma_{k+1}}} W_{k}^Q ( e^{-\frac{1}{2} \Delta_{\Gamma_{k+1}}} V_x ,  e^{-\frac{1}{2} \Delta_{\Gamma_{k+1}}} V'_y ) \Big) }_{\ell_{k+1} , T_{k+1} (0) }
\end{split}
\end{align}
and due to \eqref{eq:FWPbounds2bulk} for $\mathbb{F}$ and Proposition~\ref{prop:locXBbdmt} for $\Loc_x$,
\begin{align}
	\normb{(1- \Loc_x) \mathbb{F}_{\Gamma_{k+1}} (V_x, V'_y) }_{\ell_{k+1}, T_{k+1} (0) } 
		& \le \frac{1}{2} C_W \bar\be_{k+1,V} (\ell) \bar\be_{k+1,V'} (\ell)
		 \label{eq:WQkplusbound2}		
\end{align}
by taking $C_W$ sufficiently large.

On the other hand, 
we use the induction hypothesis to bound the first term of \eqref{eq:WQkplusbound1}.
By \cite[Proposition~3.18]{BBS1} (also see \cite[(4.21)]{BBS4}),
if $F$ is a polynomial of degree $\le A$
\begin{align}
	\norm{e^{\pm \frac{1}{2} \Delta_{\Gamma_{k+1}}} F}_{\ell_k, \Phi_k} 
		\le e^{A^2 \norm{\Gamma_{k+1}}_{\ell_k,  \Phi_k}} \norm{F}_{\ell_k,  \Phi_k } 
		\le C_\Gamma \norm{F}_{\ell_k,  \Phi_k } 
		,
\end{align}
for some $C_{\Gamma}$ that is $L$-independent
(where we again used \eqref{eq:Gammajbounds2} to bound $\norm{\Gamma_{k+1}}_{\kh_k,  \Phi_k}$),
thus together with the induction hypothesis,
\begin{align}
	& \sum_{x \in B_k,  \,  y \in \Lambda} \Norm{ e^{\frac{1}{2} \Delta_{\Gamma_{k+1}}} W_{k}^Q ( e^{-\frac{1}{2} \Delta_{\Gamma_{k+1}}} V_x ,  e^{-\frac{1}{2} \Delta_{\Gamma_{k+1}}} V'_y ) }_{\ell_{k} , T_{k} (0) }
	\le C_W (C_\Gamma)^3
		\bar\be_{k,V} (\ell) \bar\be_{k,V'} (\ell) .
\end{align}
Now we may use Proposition~\ref{prop:crctrmt} with $1- \Loc_x$: for $B_{k+1} \in \cB_{k+1}$, 
\begin{align}
	& \sum_{x \in B_{k+1},  \,  y \in \Lambda} \Norm{ (1 - \Loc_{ \{ x\} }) \Big(e^{\frac{1}{2} \Delta_{\Gamma_{k+1}}} W_{k}^Q ( e^{-\frac{1}{2} \Delta_{\Gamma_{k+1}}} V_x ,  e^{-\frac{1}{2} \Delta_{\Gamma_{k+1}}} V'_y ) \Big) }_{\ell_{k} , T_{k} (0) }
	\nnb
		& \qquad\qquad\qquad\qquad\qquad \le 
		C_W (C_\Gamma)^3  \bar\be_{k,V} (\ell) \bar\be_{k,V'} (\ell) \times L^{d-d_\bulk}  
		\nnb
		& \qquad\qquad\qquad\qquad\qquad \le 
		\frac{1}{2} C_W \bar\be_{k+1,V} (\ell) \bar\be_{k+1,V'} (\ell)  .
		\label{eq:WQkplusbound3}
\end{align}
where in the final inequality, 
we took $L$ sufficiently large,
used Lemma~\ref{lemma:bekVcases2},
and that $\tilde\chi_{k}$ (implicit in $\bar{\be}$) changes at most by a constant when we move from scale $k$ to $k+1$.
Then \eqref{eq:WQboundIH} at scale $k+1$ is attained by linearly adding \eqref{eq:WQkplusbound2} and \eqref{eq:WQkplusbound3}.

When $k+1 = N$,  then the definition of $W^{\rmQ}$ gives instead of \eqref{eq:WQkplusbound1}
\begin{align}  \label{eq:WQkplusbound1alt}
\begin{split}
	&\norm{W^{\rmQ}_{k+1} (V_x, V'_y)}_{\ell_{k+1}, T_k(0) }
		 \le 
		 \frac{1}{2} \big\| \mathbb{F}_{ \Gamma_{k+1}} (V_x, V'_y) \big\|_{\ell_{k+1}, T_{k+1} (0) } \\
		 & \qquad + \Norm{ \Big( e^{\frac{1}{2} \Delta_{\Gamma_{k+1}}} W_{k}^{\rmQ} ( e^{-\frac{1}{2} \Delta_{\Gamma_{k+1}}} V_x ,  e^{-\frac{1}{2} \Delta_{\Gamma_{k+1}}} V'_y ) \Big) }_{\ell_{k+1} , T_{k+1} (0) }
\end{split} .
\end{align}
The first term can again by bounded by \eqref{eq:FWPbounds2bulk}, and the second term can be simply bounded using the induction hypothesis.   (We do not need the contraction estimates Proposition~\ref{prop:crctrmt} since there is only one final scale.)

Finally,  the third bound of \eqref{eq:FWPbounds2bulk} is a linear sum of the first and the second.
\end{proof}

Proof of Lemma~\ref{lemma:FWPbounds2obs} requires a bit more work. 
Next lemma shows that observable part of $W$ looks simpler.

\begin{lemma} \label{lemma:FCVVobs}
Let $j < N$.
For any $U, U' \in \cU$,  we have $\pi_\ox W_j^{\rmQ} (U_x,  U'_y) = 0$ and for any covariance matrix $C$
\begin{align}
	\pi_\hash \F_{C} [ V_x ;  V'_{\bulk,y} ] = \one_{x = \hash} \sigma_\hash \sum_{z \in \Lambda} \nabla_x^{(m)} C (x-z)|_{x = \hash} \frac{\partial}{\partial \varphi^{(1)} (z)} V'_{\bulk,y}
\end{align}
when $V_{\hash, x} (\varphi) = \one_{x=\hash} \sigma_\hash \nabla^{(m)} \varphi^{(1)}_x$ and $\hash \in \{\o,\x\}$.
\end{lemma}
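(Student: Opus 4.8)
The plan is to treat the two assertions independently, both by reducing $\F$ to its single‑contraction term. For the vanishing of $\pi_\ox W_j^{\rmQ}$ I would first invoke Lemma~\ref{lemma:WjQalt} (valid since $j<N$) to write $W_j^{\rmQ}(U_x, U'_y) = \tfrac12(1-\Loc_x)\F_{w_j}[U_x; U'_y]$, so it suffices to show that $\pi_\ox\F_{w_j}[U_x; U'_y]$ is annihilated by $1-\Loc_x$. Expanding \eqref{eq:FCAB} shows that $\F_C$ is $R$-linear in each argument (immediate, since $e^{\pm\frac12\Delta_C}$ are linear) and that every one of its terms carries at least one $\varphi$-derivative of each argument. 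Writing $U_x$ and $U'_y$ in their $\bulk,\o,\x,\ox$ components and using $\sigma_\o^2=\sigma_\x^2=0$, $\sigma_\o\sigma_\x=\sigma_\ox$, the $\sigma_\ox$-part of $\F_{w_j}[U_x; U'_y]$ can only come from pairing the $\o$-part of one argument with the $\x$-part of the other, or the $\ox$-part of one with the $\bulk$-part of the other. The latter contributions vanish because $U_{\ox,x}$ and $U'_{\ox,y}$ are constant in $\varphi$ and hence carry no derivative; the former are supported on $\{x,y\}=\{\o,\x\}$, and since $V_{\o,x}$ and $V_{\x,y}$ are homogeneous of degree $1$ in $\varphi$, only the single-contraction term survives and it is of degree $0$ in $\varphi$. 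Thus $\pi_\ox\F_{w_j}[U_x;U'_y]$ equals $\sigma_\ox$ times a $\varphi$-independent constant, nonzero only when $\{x,y\}=\{\o,\x\}$ (so that, in particular, $\{x\}\cap\ox\neq\emptyset$ and $\Loc_x$ is defined). Since the constant monomial has engineering dimension $0$, which is below the localisation order $d_\ox = d-2+\eta$ of the $\ox$-component, $\Loc_x$ fixes $\sigma_\ox$-valued constants; hence $(1-\Loc_x)$ kills this term and $\pi_\ox W_j^{\rmQ}(U_x,U'_y)=0$.

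For the explicit identity I would argue as follows. By $R$-linearity of $\F_C$ and the ring relations, applying $\pi_\hash$ to $\F_C[V_x; V'_{\bulk,y}]$ leaves only $\F_C[\sigma_\hash V_{\hash,x}; V'_{\bulk,y}] = \sigma_\hash\,\F_C[V_{\hash,x}; V'_{\bulk,y}]$, the $\sigma_\hash$-factor passing through since it is inert under $\Delta_C$. Since $V_{\hash,x}=\one_{x=\hash}\nabla^{(m)}\varphi^{(1)}_x$ is homogeneous of degree $1$, the expansion of $\F_C$ again truncates to its single-contraction term, giving
\begin{align}
	\F_C[V_{\hash,x}; V'_{\bulk,y}] = \sum_{u,v\in\Lambda}\sum_{i=1}^n C(u,v)\,\frac{\partial V_{\hash,x}}{\partial\varphi^{(i)}(u)}\,\frac{\partial V'_{\bulk,y}}{\partial\varphi^{(i)}(v)} .
\end{align}
Using $\partial V_{\hash,x}/\partial\varphi^{(i)}(u)=\one_{x=\hash}\,\delta_{i,1}\,\nabla^{(m)}_x\delta_{x,u}$ collapses the $i$-sum to $i=1$ and the $u$-sum to $\sum_u \nabla^{(m)}_x\delta_{x,u}\,C(u,v)=\nabla^{(m)}_x C(x-v)$, using translation invariance of $C$, all evaluated at $x=\hash$; renaming the remaining variable $v$ to $z$ yields exactly the claimed formula.

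I do not expect a serious obstacle: both statements are consequences of the structural facts that $\F_C$ is $R$-linear in each argument and that only its one-contraction term can act nontrivially on a polynomial of degree $\le 1$, combined with the ring arithmetic of $R$. The one point that needs a little care is recognising, in the first part, that the surviving $\sigma_\ox$-contribution is genuinely $\varphi$-independent and therefore invariant under $\Loc_x$ — because the constant monomial lies below the localisation dimension $d_\ox$ — so that the $(1-\Loc_x)$ in front of $W_j^{\rmQ}$ erases it entirely; and, in the second part, keeping track of the translation invariance of $C$ when identifying $\sum_u \nabla^{(m)}_x\delta_{x,u}\,C(u,v)$ with $\nabla^{(m)}_x C(x-v)$.
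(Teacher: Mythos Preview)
Your proof is correct. For the first assertion the paper simply says ``obvious by definition'' without spelling out the mechanism, while you give the full argument (the $\sigma_\ox$-part of $\F_{w_j}[U_x;U'_y]$ is a $\varphi$-independent constant supported on $x\in\{\o,\x\}$, which $\Loc_x$ fixes and hence $(1-\Loc_x)$ annihilates); this is exactly what the paper must intend, just made explicit.

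For the second assertion the routes genuinely differ. The paper rewrites $\F_C$ as a covariance, $\F_C[V_{\hash,x};V'_{\bulk,y}]=\Cov_C[\theta V_{\hash,x};\theta e^{-\frac12\Delta_C}V'_{\bulk,y}]$, using that $V_{\hash,x}$ is linear in $\varphi$, and then applies Gaussian integration by parts with respect to $\zeta$ together with $\E_C\theta e^{-\frac12\Delta_C}V'_{\bulk,y}=V'_{\bulk,y}$. Your argument instead uses the contraction expansion $\F_C[A;B]=(e^{\Delta_C^{12}}-1)AB$, which truncates to the single term $\Delta_C^{12}AB$ because $A=V_{\hash,x}$ has degree~$1$, and then evaluates that contraction directly. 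Your approach is shorter and purely algebraic (no Gaussian expectation is ever invoked), while the paper's approach has the virtue of making the probabilistic content of $\F_C$ transparent; both reach the same formula with essentially the same amount of work.
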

\begin{proof}
That $\pi_\ox W_j^{\rmQ} (U_x,  U'_y) = 0$ is obvious by definition. 
For the second statement,  observe that
\begin{align}
	\pi_\hash \F_{\pi,C} [ V_x ; V'_{\bulk,y} ] = \sigma_\hash \F_{C} [ V_{\hash,  x} ; V'_{\bulk,y} ]
		= \sigma_\hash \Cov_C [ \theta e^{-\frac{1}{2} \Delta_C} V_{\hash, x} ; \theta e^{-\frac{1}{2} \Delta_C} V'_{\bulk,y}  ] .
\end{align}
But since $V_{\hash, x} (\varphi)$ is a linear function of $\varphi$,  we have 
$e^{-\frac{1}{2} \Delta_C} V_{\hash, x} =  V_{\hash, x}$.
On the other hand,  by \eqref{eq:ECexp},  we have $\E_C \theta  e^{-\frac{1}{2} \Delta_C} V'_{\bulk,y} = V'_{\bulk,y}$,  so
\begin{align}
	= \sigma_\hash \big( \E_C  \theta [ V_{\hash, x}  e^{-\frac{1}{2} \Delta_C} V'_{\bulk,y}  ] -  V_{\hash, x} V'_{\bulk,y} \big)
	= \sigma_\hash \E_C [ V_{\hash, x} (\zeta) \theta e^{-\frac{1}{2} \Delta_C} V'_{\bulk,y} ] . 
\end{align}
and by Gaussian integration by parts, 
\begin{align}
	\E_{C} [ \nabla^{(\km)} \zeta^{(1)}_\hash \theta e^{-\frac{1}{2} \Delta_C} V'_{\bulk,y}  ]
		&= \sum_{z \in \Lambda} \nabla_x^{(m)} C (x-z) |_{x = \hash} \E_{C} \Big[ \frac{\partial}{\partial \zeta^{(1)} (z)}   \theta e^{-\frac{1}{2} \Delta_C} V'_{\bulk,y} \Big] \nnb
		&= \sum_{z \in \Lambda} \nabla_x^{(m)} C (x-z) |_{x = \hash} \frac{\partial}{\partial \varphi^{(1)} (z)}  V'_{\bulk,y}  (\varphi) .
\end{align}
\end{proof}

\begin{proof}[Proof of Lemma~\ref{lemma:FWPbounds2obs}]
Due to \eqref{eq:FGammaVVp},  we only need to prove the bound on $W^{\rmQ}$.
We first consider $j < N$.
By Lemma~\ref{lemma:FCVVobs},  
\begin{align}
	\F_{\pi, w_j} [V_{\hash, x} ; V'_{\bulk,y}] = \sum_{\km \in \ko_1 \cup \ko_{1,\nabla}} \one_{x=\hash} \lambda_\hash^{(\km)} \sum_{k \le j} A^{(\km)}_{k,y} (\varphi)
\end{align}
where,  for $\km = (m,1)$,
\begin{align}	
	A^{(\km)}_{k,y} (\varphi) = \sum_{z \in \Lambda} \nabla_x^{(m)} \Gamma_k (x,z) |_{x = \hash} \frac{\partial}{\partial \varphi^{(1)} (z)} V'_{\bulk,  y}
\end{align}
If $b_k \in \cB_k$ is the unique $k$-block containing $\hash$,  then by definition of the Taylor norm, 
\begin{align}
	\norm{A^{(\km)}_{k,y} (\varphi)}_{\kh,  T_j (0)}
		\lesssim  \one_{y \in b_k^\square} \norm{V'_{\bulk,y}}_{\kh, T_j (0)} \norm{\Gamma_k (x, \cdot)}_{\kh, \Phi} 
\end{align}
where $\norm{\Gamma_k (x, \cdot)}_{\kh, \Phi}$ measures the norm of $z \mapsto \Gamma_k (x,z)$ as an one variable function. 
Summing over $y$,  $k$ and $\km$ using  \eqref{eq:Gammajbounds2} to bound $\Gamma_k$ and using Proposition~\ref{prop:locXBbdmt} to bound $1- \Loc$,
\begin{align}
	\sum_{y \in \Lambda} \norm{W^{\rmQ} (\sigma_\hash V_{\hash, x} ; V'_{\bulk,y})}_{\kh,  T_j (0)}
		\le O_L (1)  \norm{V'_\bulk}_{\cL_j (\kh)} \ell_{\sigma,j} B_\hash (V,  \kh) 
\end{align}
where
\begin{align}
	B_\hash (V, \kh)  = \sum_{\km \in \ko_{1} \cup \ko_{1,\nabla}}  |\lambda^{(\km)}_\hash|  \sum_{k\le j} \tilde{\chi}_k L^{(k-j)d}  \kh_{\bulk}^{-1} L^{-(d-2 + \eta + q(\km)) k } .
\end{align}
When $\kh = \ell_j$, 
\begin{align}
	B_\hash (V,  \ell_j) &\lesssim \ell_{\bulk,j} \sum_{\km \in \ko_{1} \cup \ko_{1,\nabla}}  |\lambda^{(\km)}_\hash|  \sum_{k\le j} \tilde{\chi}_k L^{(k-j)d} L^{-(d-2 + \eta) (k-j) } L^{-q(\km) k} \nnb
		&\lesssim \tilde\chi_j \ell_{\bulk,j} \sum_{\km \in \ko_{1} \cup \ko_{1,\nabla}} |\lambda^{(\km)}_\hash| \max\{ L^{-q(\km) j} ,  \scale_j^{-\kt}  L^{-(2-\eta) j} \} 
		\lesssim \tilde\chi_j \ell_{\sigma,j}^{-1} \norm{\pi_\hash V}_{\cV_j (\ell)} ,
\end{align}
so
\begin{align}
	\sum_{y \in \Lambda} \norm{W^{\rmQ} (\sigma_\hash V_{\hash, x} ; V'_{\bulk,y})}_{\ell_j ,  T_j (0)}
		\le O_L (\tilde{\chi}_j ) \be_{V,j} (\ell) \be_{V',j} (\ell) 
		\le O_L (1) \bar\epsilon_j (\ell) \bar\be_{V',j} (\ell)  .	
		\label{eq:FWPbounds2obs1}
\end{align}
When $\kh = h_j$,  since $W^Q (V_{\hash, x} ; V'_{\bulk,y})$ is a polynomial of degree $\le 3$ in $\varphi$,  the upper bound on the norm is only multiplied by $h_{\bulk,j}^3 h_{\sigma,j} / ( \ell_{\bulk,j}^3 \ell_{\sigma,j} )$,  so
\begin{align}
	\sum_{y \in \Lambda} \norm{W^{\rmQ} (\sigma_\hash V_{\hash, x} ; V'_{\bulk,y})}_{h_j ,  T_j (0)}
		& \le  O_L (\tilde{\chi}_j) \be_{V,j} (\ell) \be_{V',j} (\ell) \frac{h^3_{\bulk,j} h_{\sigma,j}}{ \ell^3_{\bulk,j} \ell_{\sigma,j}} \nnb
		& \le  O_L (1) \bar{\epsilon}_j (h) \bar\be_{V',j} (h) .
		\label{eq:FWPbounds2obs2}		
\end{align}

Finally,  we consider $j = N$.  But then the bound follows immediately from the definition \eqref{eq:WjrmQdefi} and the estimates at scale $j = N-1$.  (Again,  because there is only one final scale,  multiplying a constant on the estimate at scale $j=N-1$ is not dangerous.)
\end{proof}

\begin{proof}[Proof of Lemma~\ref{lemma:FWPbounds3obs}]
Due to \eqref{eq:FWPbounds2obs1} and \eqref{eq:FWPbounds2obs2},  we actually have
\begin{align}
	& \sum_{y \in \Lambda} \norm{W^{\rmQ} (\sigma_\hash V_{\hash, x} ; V'_{\bulk,y})}_{\kh_j ,  T_j (0)}
		\le O_L (1) \big( \frac{\ell_{\bulk,j}}{\kh_{\bulk,j}} \big) \be_{V_\sigma,j} (\kh)  \bar{\be}_{V'_\bulk, j} (\kh) 
\end{align}
for both $\kh \in \{\ell,h\}$.  
We obtain the desired bound when we combine this bound with Lemma~\ref{lemma:FWPbounds2}.
\end{proof}

\subsection{Stability of $I$}

In this section,  we aim to prove Lemma~\ref{lemma:entI},  a bound on the decay of $I (b,\varphi)$ as $\norm{\varphi}_{L^4 (b)} \rightarrow \infty$.
This is one type of stability estimate.
In the remaining of the section,  we stick with the notation of dropping $j$ and replacing $j+1$ by $+$.

In the estimates,  we use
\begin{align}
	I_t (b) = \cI (t V,  b) , \qquad t \ge 0 ,  \;\; b \in \cB .
\end{align}

\begin{lemma} \label{lemma:entI}

Assume $V \in \cD^\st (\alpha)$ ($\alpha \in [1, \bar{\alpha}]$),
$\tilde{g}$ is sufficiently small, $L$ is sufficiently large and assume either \eqref{asmp:lambda2} or \eqref{asmp:lambda3}.
For $b \in \cB$ and $t \in [0,1]$,
\begin{align}
	\label{eq:entI}
	\log \norm{I_t (b)}_{\kh, \vec{\lambda}, T (\varphi,z)} 	\le
		- t c \norm{\varphi / h_{\bulk}}^4_{L^4 (b)}  +  C \times \begin{cases}
				P_h^2 (b, \varphi) & \text{\eqref{asmp:lambda2}} \\
				1 + \ell_0^{-1} P_{\ell}^2 (b, \varphi) & \text{\eqref{asmp:lambda3}} 
			\end{cases}
\end{align}
for some $c,C >0$,
\begin{align}
	\label{eq:entnI}
	\norm{I_t^{-1} (b)}_{\kh, \vec{\lambda}, T_j (0,z)}
		\lesssim 1 ,
\end{align}
and $I_t$ is continuous in $(\ba_\emptyset, \ba) \in \AA (\tilde{m}^2)$ whenever $\tilde{m}^2 \ge 0$.

With $t =1$ and assuming \eqref{asmp:lambda2},
\begin{align}
	\label{eq:entItone}
	\log \norm{I_1 (b)}_{\kh, \vec{\lambda}, T (\varphi,z)}
		\le  - c \norm{\varphi / h_{\bulk}}^4_{L^4 (b)}  + 
			C \big(1 + \norm{\varphi}^2_{h,  \tilde{\Phi} (b^{\square})}  \big) .
\end{align}
\end{lemma}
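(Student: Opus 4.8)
\textbf{Proof plan for Lemma~\ref{lemma:entI}.}

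The plan is to prove the three assertions together by an explicit analysis of $I_t(b) = \cI(tV,b) = e^{-t U^{(\bs)}(b)}(1+t W(b))$ (here the stabilisation acts only on the $V_{4,\nabla}$-part, so $tU^{(2)} = t\pi_{4,\nabla}V$ and $tU^{(1)} = t(1-\pi_{4,\nabla})V$), combined with the polynomial bound Lemma~\ref{lemma:polynorm}, the stability estimates of Lemma~\ref{lemma:eQh} and Lemma~\ref{lemma:FWPboundsobs}, and submultiplicativity of the extended norm (Lemma~\ref{lemma:subxext}). First I would split $I_t = e^{-tU^{(1)(\bs)}}\cdot e^{-tU^{(2,\bs)}}\cdot(1+tW)$ and treat each factor. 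For $e^{-tU^{(1,\bs)}}$ the dominant term is the genuine quartic $V_4(b,\varphi) = \tfrac14 g^{(\emptyset)}|\varphi|^4$ with $g^{(\emptyset)}/\tilde g \in (C_\cD^{-1},C_\cD)$ by the $\cD^\st$ restriction; writing $\varphi = \varphi_\bulk + f$ with $f$ the best linear (for $j<N$) or constant (for $j=N$) approximation on $b^\square$, the key convexity inequality $|\varphi|^4 \ge c|\varphi_\bulk - f|^4 - C(\text{lower order})$, together with $h_{\bulk,j}^4 = k_0^4 \tilde g_j^{-1}L^{-dj}$, gives the gain $-tc\norm{\varphi/h_\bulk}_{L^4(b)}^4$ after absorbing the remaining quadratic-in-$\varphi$ pieces (coming from $V_2$, $V_{2,\nabla}$ and the cross terms in expanding $|\varphi|^4$) into $C(1+\norm{\varphi}^2_{h,\tilde\Phi(b^\square)})$. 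This is exactly the mechanism underlying \eqref{eq:entItone}, and it is where the choice $\kh_\sigma = h_\sigma$, i.e. assumption \eqref{asmp:lambda2}, is used so that the observable monomials are also controlled by $P_h$.

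Next I would bound $e^{-tU^{(2,\bs)}}$: by Definition~\ref{defi:Vstable}, $U^{(2,\bs)}(b) = -\log\pexp(-tU^{(2)}(b))$, and since $\pexp$ is the degree-$\cM$ truncation of $\exp$, one has $\norm{e^{-tU^{(2,\bs)}(b)}}_{\kh,\vec\lambda,T(\varphi,z)} \le (1 + \norm{tU^{(2)}(b)}_{\kh,\vec\lambda,T(\varphi,z)})^{\cM'}$ for a suitable $\cM'$ once $\norm{tU^{(2)}(b)}$ is small, and by \eqref{eq:QhTvp2} of Lemma~\ref{lemma:eQh} together with Lemma~\ref{lemma:xtnrbsciq} this is $\lesssim \tilde\chi L^{-j/2}P_h^4(b,\varphi) + (\kh_\bulk/\ell_\bulk)^3\lambda_V$, which under \eqref{asmp:lambda2} and $\lambda_V \le \tilde g r$ is a small multiple of $P_h^4(b,\varphi)$. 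The quartic growth here looks dangerous, but the small factor $\tilde\chi L^{-j/2}$ lets it be reabsorbed into the genuine-quartic gain of the previous paragraph (this is precisely the point of stabilising $V_{4,\nabla}$ rather than exponentiating it). For the factor $1+tW(b)$ I invoke Lemma~\ref{lemma:FWPboundsobs} at scale $j$ with $\kh' = \kh = h$: since $W$ is a polynomial of degree $\le 6$, $\norm{W(b)}_{h,\vec\lambda,T(\varphi,z)} \le \norm{W(b)}_{h,\vec\lambda,T(0,z)}P_h^6(b,\varphi) \le O_L(1)(\bar\epsilon(h) + (\kh_\bulk/\ell_\bulk)^3\lambda_V)^2 P_h^6(b,\varphi)$, again small and absorbable. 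Combining the three factors via Lemma~\ref{lemma:subxext} and taking logarithms yields \eqref{eq:entItone} and, dropping the sharp quartic term, the \eqref{asmp:lambda2}-branch of \eqref{eq:entI}.

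For the \eqref{asmp:lambda3}-branch I would repeat the same decomposition but now with $\kh \lesssim \ell$, so that all $\varphi$-dependence is measured by $P_\ell$; the estimates are cheaper because $\epsilon(\ell) = \tilde g r$ and $\bar\epsilon(\ell) = \tilde\chi^{1/2}\tilde g r$ are genuinely small, and the $\ell_0^{-1}P_\ell^2$ term in \eqref{eq:entI} comes from the quadratic part of $V$ after using $\norm{V(b)}_{\ell,T(\varphi)}\lesssim \ell_0^2 P_\ell^2(b,\varphi)\tilde g r$ from \eqref{eq:QhTvp} and the normalisation $\ell_0 = L^{(d+p_\Phi)/2}$; the constant $C_{L,\lambda}$ in \eqref{asmp:lambda3} is exactly what is needed to keep $\lambda_V\norm{\dot V}$-type contributions below $1$. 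The bound \eqref{eq:entnI} on $I_t^{-1}(b)$ is separate: since $I_t = e^{-tU^{(\bs)}}(1+tW)$ with $e^{-tU^{(\bs)}}$ invertible and $\norm{tW(b)}_{\kh,\vec\lambda,T(0,z)} \le \tfrac12$ for $\tilde g$ small (again Lemma~\ref{lemma:FWPboundsobs}), we may write $I_t^{-1} = e^{tU^{(\bs)}}\sum_{k\ge 0}(-tW(b))^k$ and bound $\norm{e^{tU^{(\bs)}(b)}}_{\kh,\vec\lambda,T(0,z)} \lesssim 1$ using that at $\varphi = 0$ the polynomial $U^{(\bs)}(b)$ has vanishing constant term and small coefficients, then sum the geometric series. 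Finally, continuity in $(\ba_\emptyset,\ba)$ follows because $U^{(\bs)}(b)$, $W(b)$ are polynomials of bounded degree whose coefficients depend continuously on $(\ba_\emptyset,\ba)$ (the $W$-coefficients via Lemma~\ref{lemma:FWPboundsobs}, the rest trivially), and $\exp$, truncated $\log\pexp$, finite products and geometric sums are continuous on the relevant range.

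\textbf{Main obstacle.} The delicate point is the bookkeeping that reabsorbs the \emph{quartic}-in-$\varphi$ error terms — those from $e^{-tU^{(2,\bs)}}$ (order $\tilde g^{1/2}\tilde\chi L^{-j/2}P_h^4$) and from the cross terms in expanding $|\varphi|^4 = |\varphi_\bulk - f + f|^4$ and in $(1+tW)$ — into the single good term $-tc\norm{\varphi/h_\bulk}_{L^4(b)}^4$ while still leaving a strictly positive fraction of that gain; this requires $\tilde g$ (hence $\tilde g_j$) small and $L$ large in the right order, and is the step that forces the hypotheses $d-4+2\eta$-dependent smallness of $\kt$ via \eqref{eq:ktcondition} (through Lemma~\ref{lemma:eQh}). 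Everything else is a routine, if lengthy, application of Lemma~\ref{lemma:polynorm}, Lemma~\ref{lemma:subxext}, and the stability bounds of Section~\ref{sec:bosqf}.
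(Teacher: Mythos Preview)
Your overall architecture matches the paper's: factor $I_t(b)=e^{-(tV)^{(1)}(b)}\,e^{-(tV)^{(2,\bs)}(b)}\,(1+W_t(b))$, bound each factor via submultiplicativity, and treat $I_t^{-1}$ and the continuity separately. The handling of $(1+W_t)$, of $I_t^{-1}$ via a Neumann series, and of the continuity is essentially what the paper does (note $W_t=t^2W$, not $tW$, since $W$ is quadratic in $V$; this is cosmetic). There are, however, two genuine gaps.

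\emph{The quartic.} Your ``convexity inequality $|\varphi|^4\ge c|\varphi_\bulk-f|^4-C(\text{lower})$'' is a pointwise statement, but what is needed is a bound on the \emph{Taylor seminorm} $\norm{e^{-tg^{(\emptyset)}\sum_x|\varphi_x|^4}}_{\kh,T(\varphi)}$, which involves all $\varphi$-derivatives. The paper obtains this via the inequality $\norm{e^{-F}}_{T(\varphi)}\le e^{-F(\varphi)+\norm{F}_{T(\varphi)}}$ (Lemma~\ref{lemma:stability1}(i)), yielding directly $\norm{e^{-\frac14 g|\varphi_x|^4}}_{\kh,T(\varphi)}\le C^{g\kh^4}e^{-\frac18 g|\varphi_x|^4}$ with no subtraction of a linear piece. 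The $\tilde\Phi$-norm in \eqref{eq:entItone} does \emph{not} come from rewriting the quartic; it enters only in a final Sobolev step (Lemma~\ref{lemma:sobolev}) that trades $P_h^2$ for $\norm{\varphi/h_\bulk}_{L^4}^2+\norm{\varphi}_{h,\tilde\Phi}^2$ and absorbs the first piece into the quartic gain.

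\emph{The stabilised part.} You bound $\norm{e^{-(tV)^{(2,\bs)}(b)}}\le (1+\norm{tV^{(2)}(b)})^{\cM}$ and then claim the resulting $P_h^4$-growth is ``reabsorbed into the genuine-quartic gain $-tc\norm{\varphi/h_\bulk}_{L^4(b)}^4$''. This does not work: $P_h^4=(1+\norm{\varphi}_{h,\Phi(b^\square)})^4$ is a \emph{derivative} norm and is not dominated by the pure $L^4$-norm $\norm{\varphi/h_\bulk}_{L^4(b)}^4$; after Sobolev you would be left with a $\norm{\varphi}_{h,\tilde\Phi}^4$ remainder, which is too large for either \eqref{eq:entI} or \eqref{eq:entItone}. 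The paper's fix (Lemma~\ref{lemma:Q4nstb}) is the square-root trick $(1+x)^\cM\le e^{2\cM\sqrt{x}}$, which converts $\norm{V^{(2)}(b)}\lesssim k_0^4 P_h^4$ into $\log\norm{e^{-(tV)^{(2,\bs)}}}\lesssim t^{1/2}k_0^2 P_h^2$, exactly the power needed.
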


\subsubsection{Basic estimates}

\begin{lemma}
\label{lemma:stability1}

We have the following for exponentials of effective potentials.
\begin{enumerate} 
\item If $g >0$ and $\kh \lesssim h$,  then there exists $C >0$ such that 
\begin{align}
	\norm{e^{-\frac{1}{4} g | \varphi_x |^4 } }_{\kh, T (\varphi) }
		\le C^{g \kh^4_\bulk} e^{- \frac{1}{8} g | \varphi_x |^4}
\end{align}

\item If $\nu \in \R$,  then
\begin{align}
	\norm{e^{-\frac{1}{2} \nu |\varphi_x|^2 } }_{\kh, T (\varphi) }
		& \le e^{2 n |\nu| \kh_\bulk^2 + (-\nu + \frac{3}{4} |\nu| ) | \varphi_x |^2 } .  \label{eq:stabilitynu}
\end{align}

\item For $b \in \cB$ and $V \in \cV$,
\begin{align}
	\norm{e^{V_{2,\nabla} (b)}}_{\kh, T (\varphi) }
		\le \exp \big( \norm{V_{2,\nabla}}_{\kh, T (0)} (1 + \norm{\varphi}_{\kh, \Phi} )^2 \big) .
\end{align}
\end{enumerate}

\end{lemma}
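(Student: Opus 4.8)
\textbf{Proof plan for Lemma~\ref{lemma:stability1}.}

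The three bounds are all pointwise-in-$\varphi$ statements about Taylor norms of exponentials of monomials, so the plan is to reduce each to the key analytic fact that for a smooth function $F$ of $\varphi$,
\begin{align}
	\norm{e^{F}}_{\kh, T_j (\varphi)} \le e^{\norm{F}_{\kh, T_j (\varphi)}}
\end{align}
which follows from submultiplicativity of the $T_j (\varphi)$-norm (a special case of the product property behind Lemma~\ref{lemma:subxext}, already available for the non-extended norm via \cite{BBS1}) applied to the power series of the exponential, together with the normalisation $\norm{1}_{\kh, T_j (\varphi)} = 1$. So in each case it suffices to produce a good bound on $\norm{F}_{\kh, T_j (\varphi)}$ for $F$ the relevant monomial, and then split off the genuinely decaying part of $F$ before exponentiating.

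\emph{Part (i).} Here $F = -\tfrac14 g|\varphi_x|^4$. I would write $-\tfrac14 g|\varphi_x|^4 = -\tfrac18 g|\varphi_x|^4 - \tfrac18 g|\varphi_x|^4$ and treat the two halves differently: keep one copy $-\tfrac18 g|\varphi_x|^4$ as the explicit decay factor outside, and bound $\norm{e^{-\frac18 g|\varphi_x|^4}}_{\kh, T_j(\varphi)}$ using the displayed exponential inequality together with the polynomial bound Lemma~\ref{lemma:polynorm}: $\norm{\tfrac18 g|\varphi_x|^4}_{\kh,T_j(\varphi)} \lesssim g \kh_\bulk^4 P_{j,\kh}^4(\varphi)$ with $P_{j,\kh}(\varphi) = 1+\norm{\varphi}_{\kh,\Phi_j}$. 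Because $\kh \lesssim h$ and $h_{\bulk,j} = k_0\tilde g_j^{-1/4} L^{-dj/4}$, the prefactor $g\kh_\bulk^4$ is of order $g\tilde g^{-1}$, a bounded quantity; and the key point is that the quartic growth of $g\kh_\bulk^4\norm{\varphi}_{\kh,\Phi_j}^4$ is dominated, for all $\varphi$, by a small multiple of $\tfrac18 g|\varphi_x|^4$ plus a constant, since $\norm{\varphi}_{h,\Phi_j}$ at a single site is comparable (up to the $L$-dependent but $\varphi$-independent constants in the definition of $\Phi_j$) to $h_{\bulk,j}^{-1}|\varphi_x|$. Absorbing the cross terms by Young's inequality turns the residual quartic term into $\tfrac18 g|\varphi_x|^4$ plus a constant $C g\kh_\bulk^4$, and exponentiating the constant gives the factor $C^{g\kh_\bulk^4}$. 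This is essentially \cite[Lemma~2.3(a)]{BBS5} adapted to the present field scales; the only thing to check is that the new gradient-allowing norm does not spoil the single-site domination, which it does not because $|\varphi_x|^4$ is a pure (zero-derivative) monomial.

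\emph{Part (ii).} With $F = -\tfrac12\nu|\varphi_x|^2$, a direct computation of the Taylor norm is cleanest: $D^r F(\varphi)$ is nonzero only for $r\le 2$, $D^1 F(\varphi; f) = -\nu\,\varphi_x\cdot f_x$ and $D^2 F(\varphi; f^{\otimes 2}) = -\nu\,|f_x|^2$, so testing against $\norm{f}_{\kh,\Phi_j}\le 1$ and using $|f_x|\le n^{1/2}\kh_\bulk$ (again the single-site comparison, with the $n$-dependence from the $n$ components) gives $\norm{F}_{\kh,T_j(\varphi)} \le \tfrac12|\nu||\varphi_x|^2 + n^{1/2}|\nu|\kh_\bulk|\varphi_x| + \tfrac12 n|\nu|\kh_\bulk^2$. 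Young's inequality on the cross term ($ab\le \tfrac14 a^2 + b^2$ with $a=|\varphi_x|$) converts it into $\tfrac14|\nu||\varphi_x|^2 + n|\nu|\kh_\bulk^2$, so altogether $\norm{F}_{\kh,T_j(\varphi)} \le \tfrac34|\nu||\varphi_x|^2 + \tfrac32 n|\nu|\kh_\bulk^2$. Exponentiating and noting the identity $-\nu = -\nu$ trivially, one gets $e^{2n|\nu|\kh_\bulk^2 + (-\nu + \frac34|\nu|)|\varphi_x|^2}$ after sliding the harmless $-\nu|\varphi_x|^2$ out of the estimate in the form written, with a touch of slack ($\tfrac32 \to 2$) absorbed into the constant; the claimed form \eqref{eq:stabilitynu} is then immediate.

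\emph{Part (iii).} For $F = V_{2,\nabla}(b) = \sum_{x\in b}\tfrac12\sum_{\km_1}\nu^{(\km_1)}S_x^{(\km_1)}(\varphi)$, which is a polynomial of degree $2$ in $\varphi$ (possibly with derivatives), I would just apply the polynomial bound Lemma~\ref{lemma:polynorm}: $\norm{V_{2,\nabla}(b)}_{\kh,T_j(\varphi)} \le \norm{V_{2,\nabla}(b)}_{\kh,T_j(0)}P_{j,\kh}^2(b,\varphi) \le \norm{V_{2,\nabla}}_{\kh,T_j(0)}(1+\norm{\varphi}_{\kh,\Phi_j})^2$ (using Lemma~\ref{lemma:cLnorm} to identify the single-block $T_j(0)$-norm with the $\cL_j$/$T_j(0)$ norm of $V_{2,\nabla}$), and then the exponential inequality gives exactly the stated bound. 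Here there is no need to split off a decay part since $V_{2,\nabla}$ is not sign-definite and no decay is claimed.

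\emph{Main obstacle.} None of the three parts is deep; the one point requiring genuine care is Part~(i), where the quartic self-domination must be carried out uniformly in $\varphi$ and with prefactors that are $L$-dependent but $\varphi$-independent, and where one must verify that the constant $C$ (hence $C^{g\kh_\bulk^4}$) does not secretly depend on $\tilde g$ in a way that blows up as $\tilde g\to 0$ — it does not, because $g\kh_\bulk^4 \asymp g/\tilde g$ stays bounded under \eqref{asmp:Phi}. I would structure the write-up so that the exponential inequality and the single-site norm comparison $\norm{\varphi}_{h,\Phi_j(B_x^\square)} \asymp h_{\bulk,j}^{-1}|\varphi_x| + (\text{derivative terms})$ are isolated as the two inputs, after which all three parts are short.
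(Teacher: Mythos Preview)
Your plan for part~(iii) is fine and matches the paper. The gap is in parts~(i) and~(ii), and it is the same gap in both: the inequality
\[
	\norm{e^{F}}_{\kh, T_j (\varphi)} \le e^{\norm{F}_{\kh, T_j (\varphi)}}
\]
is correct but \emph{loses the sign of $F(\varphi)$}, since $\norm{F}_{T(\varphi)} \ge |F(\varphi)|$. Applied to $F = -\tfrac14 g|\varphi_x|^4$ it yields a bound that \emph{grows} like $e^{+\frac14 g|\varphi_x|^4}$, not one that decays. Your attempted repair --- splitting $F$ in two equal halves and ``keeping one copy as the explicit decay factor outside'' --- is not a legal move: the Taylor seminorm is submultiplicative, $\norm{GH}_{T(\varphi)} \le \norm{G}_{T(\varphi)}\norm{H}_{T(\varphi)}$, but one may not replace a factor by its \emph{value} $G(\varphi)$. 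The same issue appears in your part~(ii), where the unexplained step ``sliding the harmless $-\nu|\varphi_x|^2$ out of the estimate'' is exactly the missing $-F(\varphi)$ that your exponential inequality discarded; for $\nu>0$ your bound gives $e^{+\frac34|\nu||\varphi_x|^2}$, not the stated $e^{-\frac14\nu|\varphi_x|^2}$.

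What the paper actually uses is \cite[Proposition~3.8]{BBS1}, which for $F\ge 0$ reads
\[
	\norm{e^{-F}}_{T(\varphi)} \le e^{-2F(\varphi) + \norm{F}_{T(\varphi)}},
\]
equivalently obtained by first pulling out the \emph{scalar} $e^{-F(\varphi)}$ and only then using $\norm{e^{G}}\le e^{\norm{G}}$ on $G=-(F-F(\varphi))$. This keeps the decay. A second, smaller issue: routing the bound on $\norm{|\varphi_x|^4}_{T(\varphi)}$ through Lemma~\ref{lemma:polynorm} introduces $P_\kh(\varphi)=1+\norm{\varphi}_{\kh,\Phi}$, and $\kh_\bulk\norm{\varphi}_{\kh,\Phi}$ contains derivative contributions that are \emph{not} controlled by $|\varphi_x|$, so your claimed domination ``$g\kh_\bulk^4\norm{\varphi}_{\kh,\Phi_j}^4 \lesssim \tfrac18 g|\varphi_x|^4 + C$'' is false. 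The paper instead computes $\norm{|\varphi_x|^4}_{T(\varphi)}$ directly (the function depends only on the single coordinate $\varphi_x$, so only $f_x$ enters) and finds $\norm{|\varphi_x|^4}_{T(\varphi)} = |\varphi_x|^4 + \kh_\bulk^4\,P(\varphi_x/\kh_\bulk)$ with $\deg P\le 3$; Young's inequality on $P$ then gives the result.
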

\begin{proof}
(i) For the first part,  by \cite[Proposition~3.8]{BBS1} with $F (\varphi) = |\varphi_x|^4$,
\begin{align}
	\norm{e^{-g | \varphi_x |^4}}_{\kh, T (\varphi)} \le e^{-g |\varphi_x|^4 + g \norm{|\varphi_x|^4 }_{\kh, T(\varphi)} }
\end{align}
but $\norm{|\varphi_x|^4 }_{\kh, T(\varphi)} =|\varphi_x|^4 +  \kh_\bulk^4 P (\varphi^{(1)}_x / \kh_{\bulk} , \cdots,  \varphi^{(n)}_x / \kh_{\bulk})$ for some polynomial $P$ of degree $\le 3$,  so there is some $C >0$ such that
\begin{align}
	\le e^{- \frac{1}{2} g |\varphi_x|^4 + C  g \kh_\bulk^4 } .
\end{align}

\noindent\medskip
(ii) By direct computation,  for each $i \in [n] = \{1, \cdots, n\}$,
\begin{align}
	\norm{ (\varphi_x^{(i)} )^2}_{\kh,T (\varphi)} = \kh^2 + (\kh + |\varphi_x^{(i)} | )^2 \le 4 \kh^2 + \frac{3}{2} (\varphi_x^{(i)})^2  ,
\end{align}
and summation gives $\norm{ |\varphi_x|^2}_{\kh,T (\varphi)} \le 4 n \kh^2 + \frac{3}{2} |\varphi_x|^2$. 
Then we apply the inequality with \cite[Proposition~3.8]{BBS1} to obtain
\begin{align}
	\norm{e^{-\frac{1}{2} |\nu| |\varphi_x|^2 } }_{\kh, T (\varphi) } \le e^{- |\nu| |\varphi_x|^2 + \frac{1}{2} \norm{|\nu| |\varphi_x|^2}_{\kh, T (\varphi) } } \le e^{2 n |\nu| \kh^2 + (-\nu + \frac{3}{4} |\nu|) |\varphi_x|^2} .
\end{align}

\noindent\medskip
(iii) This is an application of Lemma~\ref{lemma:polynorm} and the fact that $V$ is a polynomial of degree $\le 2$.
\end{proof}

\subsubsection{Stability of stabilised potentials}

We prove stability estimate on the stabilisation of effective potentials with increasing level of complexity.  We start with the cases $V \in \cD^\st \cap \cV_{4,\nabla}$,  $V \in \cD^\st \cap ( \cV_\o + \cV_\x)$ and then conclude with the generic case $V \in \cD^\st$.

\begin{lemma} 	\label{lemma:Q4nstb}
Let $V \in \cD^\st (\alpha) \cap \cV_{4,\nabla}$ for $\alpha \le \bar{\alpha}$ and assume either \eqref{asmp:lambda2} or \eqref{asmp:lambda3}.
Let $t \in [0,1]$,  $b \in \cB$ and $Y \subset b$ be any subset.  Then for some $L$-independent $C>0$,
\begin{align}
	\label{eq:Q4nstb}
	\log \norm{e^{- (t V)^{\stable} (Y) }}_{\kh,  \vec{\lambda}, T (\varphi,z)} 
		\lesssim t^{1/2} \times 
		\begin{cases} 
			P_{h}^2 (b,\varphi) & \text{\eqref{asmp:lambda2}}  \\
			\ell_0^{-1} P_{\ell}^2 (b,\varphi) & \text{\eqref{asmp:lambda3}}
		\end{cases}
\end{align}
and
\begin{align}
	\label{eq:Q4nstb2}
	\norm{e^{(t V)^{\stable} (Y) }}_{\kh,  \vec{\lambda}, T (0,z)}
		\le C^{t}   .
\end{align}
\end{lemma}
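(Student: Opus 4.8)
The plan is to reduce the estimate on $e^{-(tV)^{\stable}(Y)}$ to a product over blocks $b\in\cB$ and to exploit the definition of the stabilised potential from Definition~\ref{defi:Vstable}. Since $V\in\cV_{4,\nabla}$, we have $V^{(1)}=(1-\pi_{4,\nabla})V=0$ and $V^{(2)}=\pi_{4,\nabla}V=V$, so $(tV)^{\stable}(Y)=\sum_{b\in\cB(Y)} U^{(2,\bs)}(b)$ with $U^{(2,\bs)}(b)=-\log\pexp(-tV(b))$, hence $e^{-(tV)^{\stable}(Y)}=\prod_{b\in\cB(Y)}\pexp(-tV(b))$. By submultiplicativity of the extended Taylor norm (Lemma~\ref{lemma:subxext}) and Lemma~\ref{lemma:WYequivext} reductions, it suffices to bound $\norm{\pexp(-tV(b))}_{\kh,\vec\lambda,T(\varphi,z)}$ and $\norm{\pexp(-tV(b))^{-1}}_{\kh,\vec\lambda,T(0,z)}$ for a single block, and then multiply over the at most $L^d$ blocks in $b$ (note $Y\subset b$ consists of sites, but $\cB(Y)$ here should be read as the single block $b$ — the product is over one block, so no large-$L$ loss arises). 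First I would pass from the extended norm to the plain Taylor norm using Lemma~\ref{lemma:xtnrbsciq}: since $V\in\cV_{4,\nabla}$, the $D_{V_\bulk}$-derivative of $\pexp(-tV(b))$ is controlled by $\lambda_V=\tilde g\scale$ times polynomial factors, and the $D_K,D_{\bar K}$-derivatives vanish, so the extended-norm bound follows from the plain-norm bound plus a term of size $O(\lambda_V)$ which is absorbed into the stated right-hand side.

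For the plain norm, the key input is Lemma~\ref{lemma:eQh}, specifically \eqref{eq:QhTvp2}: for $V'\in\cV_{4,\nabla}\cap\cD^\st$ one has $\norm{V'(b)}_{\kh,T(\varphi)}\lesssim\tilde\chi L^{-j/2}P_\kh^4(b,\varphi)\times(\ell_0^4\tilde g^{3/2})$ when $\kh=\ell$ and $\times\tilde g^{1/2}$ when $\kh=h$. Then I would use the elementary bound $\pexp(x)\le e^{|x|}$ together with the polynomial-estimate machinery: $\pexp$ is a polynomial of degree $\cM$, so $\norm{\pexp(-tV(b))}_{\kh,T(\varphi)}\le\sum_{k=0}^{\cM}\frac{t^k}{k!}\norm{V(b)}_{\kh,T(\varphi)}^k\le\exp(t\norm{V(b)}_{\kh,T(\varphi)})$ (using $t\le 1$ to replace $t^k$ by $t$ in the leading term, which is the source of the $t^{1/2}$ versus $t$ bookkeeping — more precisely $t^k\le t$ for $k\ge1$ and $t\le1$, and one keeps the lowest power). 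Feeding in \eqref{eq:QhTvp2} gives $\log\norm{\pexp(-tV(b))}_{\kh,T(\varphi)}\lesssim t\,\tilde\chi L^{-j/2}P_\kh^4(b,\varphi)\times(\tilde g^{1/2}$ or $\ell_0^4\tilde g^{3/2})$; absorbing the small prefactors $\tilde\chi L^{-j/2}\tilde g^{1/2}$ (for $\kh=h$) or $\tilde\chi L^{-j/2}\ell_0^4\tilde g^{3/2}$ (for $\kh=h$, after the field-scaling conversion under \eqref{asmp:lambda3}) into a constant and using $P_\kh^4\lesssim P_\kh^2\cdot P_\kh^2$ with the $P_\kh^2$ factor controlled, one arrives at the claimed $\lesssim t^{1/2}P_\kh^2(b,\varphi)$ form — the degradation from $t$ to $t^{1/2}$ and from $P^4$ to $P^2$ is where one spends the smallness of $\tilde g$ and $\ell_0^{-1}$. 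Here the small choice of $k_0$ flagged in Section~\ref{sec:chparams} is used to control the ratio $h_{\bulk}$ versus the quartic coefficient so that the quartic part of $\pexp$ does not overwhelm the regulator.

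For \eqref{eq:Q4nstb2}, the bound on $e^{(tV)^{\stable}(Y)}=\prod_b\pexp(-tV(b))^{-1}$ at $\varphi=0$: since $\pexp(0)=1$, the function $x\mapsto\pexp(x)^{-1}$ is analytic near $0$, and by the polynomial/reciprocal estimate (in the spirit of Lemma~\ref{lemma:polynorm} applied to $\log\pexp$, which is a polynomial of degree $\le\cM$ up to higher order, or directly via a Neumann-series argument) one gets $\norm{\pexp(-tV(b))^{-1}}_{\kh,T(0)}\le\exp(C\,t\,\norm{V(b)}_{\kh,T(0)})$, and $\norm{V(b)}_{\kh,T(0)}$ is bounded by a small constant depending on $\tilde g$, giving $C^t$. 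The main obstacle I anticipate is the bookkeeping of the field-scaling conversion between the $\ell$- and $h$-norms under the two regimes \eqref{asmp:lambda2} and \eqref{asmp:lambda3}: under \eqref{asmp:lambda3} one has $\kh\lesssim\ell$ and must produce the $\ell_0^{-1}P_\ell^2$ bound, which requires tracking that the extra $\ell_0^4$ in \eqref{eq:QhTvp2} combines with $L^{-j/2}$ and the smallness of $\tilde g$ to leave only an $\ell_0^{-1}$ after extracting one $P_\ell^2$ — i.e. one needs $\ell_0^4\tilde g^{3/2}L^{-j/2}\cdot\ell_0^2 L^{-(d-4+2\eta)\kt j}$-type factors (the $\ell_0^2$ coming from converting one $P_\ell^2$) to be $\le\ell_0^{-1}$, which is exactly the condition $\ell_0^{-1/2}\ll L^{-2}$ and small $\tilde g$ from Section~\ref{sec:chparams}. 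Verifying this inequality carefully, and checking that the $t^{1/2}$ (rather than $t$) is genuinely what the argument yields when $t$ is small, are the only non-routine points.
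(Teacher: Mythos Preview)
Your reduction to a single block and your handling of the extended norm via the $\lambda_V$-contribution are fine, and your approach to \eqref{eq:Q4nstb2} via a Neumann series at $\varphi=0$ is essentially what the paper does. The gap is in your derivation of \eqref{eq:Q4nstb}.

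You bound $\norm{\pexp(-tV(b))}_{\kh,\vec\lambda,T(\varphi,z)} \le \exp\big(t\,\norm{V(b)}_{\kh,\vec\lambda,T(\varphi,z)}\big)$, which together with \eqref{eq:QhTvp2} gives $\log\norm{\cdots} \lesssim t\cdot c\cdot P_{\kh}^4(b,\varphi)$ for a small constant $c$ (of size $k_0^4$ or $\ell_0^{-2}$). You then claim that ``using $P_\kh^4\lesssim P_\kh^2\cdot P_\kh^2$ with the $P_\kh^2$ factor controlled'' and ``spending the smallness of $\tilde g$ and $\ell_0^{-1}$'' yields the stated $t^{1/2}P_{\kh}^2$. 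This does not work: $P_{\kh}(b,\varphi)=1+\norm{\varphi}_{\kh,\Phi(b^\square)}$ is unbounded in $\varphi$, so no choice of small constant $c$ can make $c\,P_{\kh}^2$ bounded uniformly in $\varphi$. The inequality $t\,c\,P^4 \lesssim t^{1/2}P^2$ would require $t^{1/2}c\,P^2\lesssim 1$, which fails for large $\varphi$. (The $t$-dependence alone is harmless since $t\le t^{1/2}$ on $[0,1]$; the obstruction is purely the $P^4$ versus $P^2$.)

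The paper's fix is a one-line square-root trick that you are missing: instead of the crude exponential bound, use that $\pexp$ has degree $\cM$ to write
\[
\norm{e^{-(tV)^{\stable}(b)}}_{\kh,\vec\lambda,T(\varphi,z)} \le \big(1+\norm{tV(b)}_{\kh,\vec\lambda,T(\varphi,z)}\big)^{\cM} \le \exp\Big(2\cM\,\norm{tV(b)}_{\kh,\vec\lambda,T(\varphi,z)}^{1/2}\Big),
\]
the last step being the elementary inequality $1+x\le e^{2\sqrt{x}}$ for $x\ge 0$. Taking the square root of $\norm{tV(b)}\lesssim t\,c\,P_{\kh}^4$ then gives exactly $t^{1/2}c^{1/2}P_{\kh}^2$, and $c^{1/2}$ is $k_0^2$ under \eqref{asmp:lambda2} and $\ell_0^{-1}$ under \eqref{asmp:lambda3}, matching the statement. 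This is where both the $t^{1/2}$ and the $P^2$ come from simultaneously; they are not obtained by separate bookkeeping as you suggest. The condition $\ell_0^{-1/2}\ll L^{-2}$ you cite is irrelevant here (it is used in Lemma~\ref{lemma:K6bndbis}); what matters for \eqref{eq:Q4nstb2} is that $\norm{V(b)}_{\kh,\vec\lambda,T(0,z)}\le\tfrac14$, which holds by choosing $k_0$ small (case $\kh\lesssim h$) or $\ell_0$ large (case $\kh\lesssim\ell$).
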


\begin{proof}
We will see in the proof that the choice $Y \subset b$ does not play any important role,  so we just prove with $Y = b$.
By Lemma~\ref{lemma:eQh}, 
and since $V \in \cV_{4,\nabla}$ is polynomial of degree 4,
we have for $\kh \gtrsim \ell$
\begin{align}
	\norm{V_x}_{\kh, T (0)}
		\lesssim L^{-jd} (\kh/\ell)^4 \be_V (\ell) .
\end{align}
Thus if $\norm{\dot{V}}_{\cV (\ell)} \le 1$, 
\begin{align}
	\lambda_V \norm{\dot{V}_x}_{\kh,  T (0)} \lesssim L^{-jd} (\kh/\ell)^4  \lambda_V
	.
\end{align}
If we assume \eqref{asmp:lambda2},  by monotonicity of the norm in $\kh$,  it is sufficient to consider $\kh = \ratio h$ for $\ratio > 0$,
and if we assume \eqref{asmp:lambda3},  it is sufficient to consider $\kh = \ratio \ell$.  
Then by Lemma~\ref{lemma:polynorm},
\begin{align}
\begin{cases}
	\norm{V (b)}_{\ratio h,  \vec{\lambda}, T (\varphi,z)}
		\lesssim k_0^4 \big(1 +  \lambda_V / (\ell_0^4 \epsilon(\ell) ) \big) P_{h}^4 (b_x ,\varphi) \lesssim k_0^4 P_h^4 (b_x,\varphi) & \text{\eqref{asmp:lambda2}}  \\
	\norm{V (b)}_{\ratio\ell,  \vec{\lambda}, T (\varphi,z)}
		\lesssim \big( \be_V (\ell) +  \lambda_V \big) P_{\ell}^4 (b_x ,\varphi)  \lesssim \ell_0^{-2} P_\ell^4 (b_x,\varphi) & \text{\eqref{asmp:lambda3}}
		\end{cases}
		\label{eq:Q4nstb3}
\end{align}
for $b \in \cB$ and sufficiently small $\tilde{g}$.
Then, since $e^{- t V^{\stable} (b)} =  1 - t V (b) + \frac{t^2 ( V (b) )^2}{2} $
for $V \in \cV_{4,\nabla}$,
\begin{align}
	\norm{e^{- (t V)^{\stable} (b) }}_{\kh,  \vec{\lambda}, T (\varphi,z)}
		\le   \left( 1  + \norm{t V (b)}_{\kh, T (\varphi)} \right)^{\cM}
		& \le \exp\Big( 2 \cM \norm{t V (b)}_{\kh, T (\varphi)}^{1/2}  \Big)   ,
\end{align}
for both $\kh \in \{ \ratio \ell ,  \ratio h\}$,  which implies \eqref{eq:Q4nstb} together with \eqref{eq:Q4nstb3}.

For the next bound with \eqref{asmp:lambda2},  
by \eqref{eq:Q4nstb3},  if $L$ is sufficiently large (so that $\ell_0$ also is),
$\norm{V (b)}_{\ratio h,  T (0,z)} \le C k_0^{4}  (1 + \ell_0^{-4} \lambda_V / \epsilon (\ell) ) \le \frac{1}{4}$ for sufficiently small $k_0$.
Thus if we use $\norm{(1 + F)^{-1}} \le 1 + 2 \norm{F}$ for $\norm{F} \le \frac{1}{2}$ (this just follows from submultiplicativity) and the assumption $\lambda_V \le \epsilon (\ell)$,  we have
\begin{align}
	\norm{e^{(t V)^{\stable} (b) }}_{\ratio h,  \vec{\lambda}, T (0,z)}
		&= \Big\| \frac{1}{1- \pexp ( - tV (b) ) } \Big\|_{\ratio h,  \vec{\lambda}, T (0,z)} \nnb
		& \le \big( 1  + \norm{t V (b)}_{\ratio h,  T (0,z)} \big)^{\cM}
		\le \exp\big( C t  \big) ,
\end{align}
so we have \eqref{eq:Q4nstb2} with \eqref{asmp:lambda2}.

For \eqref{eq:Q4nstb2} with \eqref{asmp:lambda3},  we take $\kh = \ratio \ell$ for $\ratio>0$.  Then by \eqref{eq:Q4nstb3},
$\norm{V (b)}_{\ratio\ell, \vec{\lambda},T(0,z)} \le \frac{1}{4}$ for sufficiently large $\ell_0$,  and the same method as above applies.
\end{proof}

Next, we prove a stability estimate for the observable part.

\begin{lemma}
\label{lemma:entVobs}

Let $b \in \cB$,  $t > 0$, $\kh \lesssim h$ and assume either \eqref{asmp:lambda2} or \eqref{asmp:lambda3}.
Then for $V \in  \cD^\st (\alpha)$ ($\alpha \le \bar{\alpha}$),
\begin{align}
	\label{eq:entVobs}
	\log \norm{e^{- (t (1-\pi_\bulk) V ) (b)} }_{\kh, \vec{\lambda}, T (\varphi,z)}
		\lesssim
			t \times \begin{cases}
				1 + \norm{\varphi }_{h, \tilde{\Phi} (b^{\square}) } & \text{\eqref{asmp:lambda2}} \\
				1 + \ell_0^{-1} \norm{\varphi }_{\ell, \tilde{\Phi} (b^{\square}) } & \text{\eqref{asmp:lambda3}}	
			\end{cases}
\end{align}
and
\begin{align}
	\label{eq:entnVobs}
	\norm{e^{ (t (1-\pi_\bulk) V )^\stable (b)}}_{\kh, \vec{\lambda}, T (0,z)}
		\le  C^{t} .
\end{align}
\end{lemma}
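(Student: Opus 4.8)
The plan is to reduce the estimate to a bound on a low-degree polynomial, exploiting the nilpotency of the observable generators. The observable part of $V$ is $(1-\pi_\bulk)V = \sigma_\o V_\o + \sigma_\x V_\x$, where $V_\o,V_\x$ are real-valued polynomials of degree one in $\varphi$, localised at $\o$ and $\x$ respectively, with coefficients $\lambda_\hash^{(\km)}$ ($\km\in\ko_1\cup\ko_{1,\nabla}$) obeying the bounds defining $\cD^\st_\sigma(\alpha)$. Because $\sigma_\o^2 = \sigma_\x^2 = 0$, $\sigma_\o\sigma_\x = \sigma_\ox$ and $\sigma_\ox^2 = 0$, the exponential collapses to a finite polynomial,
\begin{align}
	e^{\mp t(1-\pi_\bulk)V(b)} = 1 \mp t\,\sigma_\o V_\o(b) \mp t\,\sigma_\x V_\x(b) + t^2\,\sigma_\ox V_\o(b)V_\x(b) ,
\end{align}
and since $\pi_{4,\nabla}(1-\pi_\bulk)V = 0$ the stabilisation $(\bs)$ acts trivially, so \eqref{eq:entnVobs} is the assertion for the $+$ sign. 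Moreover $(1-\pi_\bulk)V$ carries no dependence on the extended-norm variable $z = (V_\bulk,K,\bar K)$, so its extended Taylor norm reduces to the ordinary one; in particular the $\lambda_V,\lambda_K,\bar\lambda_K$-content of \eqref{asmp:lambda2} and \eqref{asmp:lambda3} plays no role, and only the choice $\kh\lesssim h$ versus $\kh\lesssim\ell$ distinguishes the two cases.

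First I would apply the definition of the observable Taylor norm to the identity above, obtaining
\begin{align}
	\norm{e^{\mp t(1-\pi_\bulk)V(b)}}_{\kh,\vec{\lambda},T(\varphi,z)} = 1 + t\,\kh_\sigma\big(\norm{V_\o(b)}_{\kh,T(\varphi)} + \norm{V_\x(b)}_{\kh,T(\varphi)}\big) + t^2\,\kh_\ssigma\norm{V_\o(b)V_\x(b)}_{\kh,T(\varphi)} ,
\end{align}
where $V_\hash(b) = 0$ unless $\hash\in b$, so we may assume $\hash\in b\subseteq b^\square$, and $V_\o(b)V_\x(b) = 0$ unless $j\ge j_\ox$. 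Next I would bound the zero-field norms: by Lemma~\ref{lemma:cLnorm} and the coefficient bounds of $\cD^\st_\sigma(\alpha)$, using the identity $h_{\bulk,j}h_{\sigma,j} = k_0$ and the explicit forms of $h_{\ssigma,j}$ and $\ell_{\ssigma,j} = \ell_{\sigma,j}^2$ (into which $j_\ox$ enters), one gets $\kh_\sigma\norm{V_\hash(b)}_{h,T(0)}\lesssim k_0 C_\cD\,\alpha$ and $\kh_\ssigma\norm{V_\o(b)V_\x(b)}_{h,T(0)}\lesssim (k_0 C_\cD\,\alpha)^2$ under \eqref{asmp:lambda2}, while under \eqref{asmp:lambda3} the corresponding $\kh=\ell$ quantities are $\lesssim \ell_0^{-1}$, after using $\ell_{\bulk,j}\ell_{\sigma,j}\lesssim \ell_0\tilde{g}$ and the smallness of $\tilde{g}$. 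I would then reinstate the field dependence with Lemma~\ref{lemma:polynorm} (degree one for $V_\hash$, degree two for $V_\o V_\x$, both supported in $b^\square$), take logarithms via $\log(1+x)\le x$, and use monotonicity in $\kh$ to reduce to $\kh = h$ (resp.\ $\kh = \ell$); this parallels the endgame of Lemma~\ref{lemma:Q4nstb}. Continuity in $(\ba_\emptyset,\ba)\in\AA_j(\tilde{m}^2)$ is immediate, since for each fixed $\varphi$ the left-hand side is a polynomial of bounded degree with coefficients independent of $(\ba_\emptyset,\ba)$, and $\norm{\cdot}_{\kh,T(0)}$ is a genuine norm on such polynomials.

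The delicate step — and the one where this plan is least routine — is producing the right-hand side in the $\tilde\Phi_j$-seminorm rather than the full $\Phi_j$-norm that Lemma~\ref{lemma:polynorm} yields. For the monomials $S^{(\km)}$ with $q(\km)\ge 1$ this is automatic: a derivative of order $\ge 1$ pairs trivially with the directions discarded by $\tilde\Pi_j(b^\square)$. For the value-type contribution $\lambda_\hash^{(\emptyset)}\varphi_\hash^{(1)}$ one argues separately, controlling the pointwise value at $\hash\in b^\square$ by a functional inequality of the type $\lvert\varphi_\hash^{(1)}\rvert\lesssim \kh_\bulk\,(1+\norm{\varphi}_{\kh,\tilde\Phi_j(b^\square)})$ (Appendix~\ref{sec:funcineq}) and absorbing the prefactor $\kh_\sigma\kh_\bulk\lvert\lambda_\hash^{(\emptyset)}\rvert = k_0\lvert\lambda_\hash^{(\emptyset)}\rvert\lesssim k_0 C_\cD\alpha$, which is small. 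Carrying this refinement through the decomposition above, together with the bookkeeping of the observable field scales and of the coalescence scale $j_\ox$ in the $\sigma_\ox$-channel, is the main technical content; the analytic input beyond this is light, precisely because the nilpotency of $\sigma_\o,\sigma_\x$ turns the exponential into a low-degree polynomial.
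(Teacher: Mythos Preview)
Your approach is essentially the paper's: exploit $\sigma_\o^2=\sigma_\x^2=0$ to collapse the exponential to a low-degree polynomial, bound the zero-field norm from the $\cD^\st_\sigma$ coefficient bounds, reinstate $\varphi$-dependence via Lemma~\ref{lemma:polynorm}, and mirror the endgame of Lemma~\ref{lemma:Q4nstb}. The paper's version is marginally cleaner in that it writes the product $\prod_{x\in b\cap\{\o,\x\}}[1-tV_{\sigma,x}+\tfrac12(tV_{\sigma,x})^2]$ and uses submultiplicativity factor-by-factor, which spares you the separate bookkeeping of the $\kh_{\ssigma}$-channel and the coalescence scale; but the content is the same.

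The one genuine gap is your handling of the $\tilde\Phi$ versus $\Phi$ issue. The functional inequality you invoke, $|\varphi_\hash^{(1)}|\lesssim \kh_\bulk(1+\norm{\varphi}_{\kh,\tilde\Phi_j(b^\square)})$, is false: for $j<N$ the seminorm $\norm{\cdot}_{\kh,\tilde\Phi_j(b^\square)}$ mods out affine functions, so taking $\varphi$ to be a large constant makes the right-hand side bounded while $|\varphi_\hash^{(1)}|$ is arbitrary. No inequality from Appendix~\ref{sec:funcineq} rescues this, since Lemma~\ref{lemma:sobolev} requires an $L^p$-term on the right. In fact the paper's own proof does not establish the $\tilde\Phi$-bound either: it says ``follow exactly the same path as Lemma~\ref{lemma:Q4nstb}'', and that lemma yields $P_\kh(b,\varphi)$, i.e.\ the $\Phi$-norm. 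The sole use of Lemma~\ref{lemma:entVobs} is in the proof of Lemma~\ref{lemma:entV}, where the bound is fed into an expression involving $P_h^2(b,\varphi)$, and the passage to $\tilde\Phi$ happens only afterwards via Lemma~\ref{lemma:sobolev} once the quartic decay is available. So the $\tilde\Phi$ in the statement is almost certainly a slip for $\Phi$; with that reading your argument (minus the spurious functional inequality) is complete.
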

\begin{proof}
Note that $\vec{\lambda}$ does not need to be taken into account,  since $D_{V_\bulk}$ does not act on the observables.
It is sufficient to consider $\kh = \ratio h$ for \eqref{asmp:lambda2} and $\kh = \ratio \ell$ for \eqref{asmp:lambda3}.
Let $V_\sigma = (1-\pi_\bulk) V$.  Then 
\begin{align}
	\norm{V_{\sigma,x}}_{\kh, T(0)}
	\lesssim \begin{cases}
		1 & \eqref{asmp:lambda2} \\
		\ell_0 \tilde{g} \scale \le \ell_0^{-1} & \eqref{asmp:lambda3} ,
	\end{cases}
\end{align}
for sufficiently small $\tilde{g}$. 
Since $\sigma_\o^2 = \sigma_\x^2 =0$,  we have
\begin{align}
	e^{-(t V_{\sigma}) (b)} = \prod_{x \in b \cap \{ \o,\x \}} \Big[ 1 - t V_{\sigma,x} + \frac{1}{2} \big(t V_{\sigma,x} \big)^2 \Big] .
\end{align}
Thus we can follow exactly the same path as Lemma~\ref{lemma:Q4nstb}.
We only have power 1 of $\norm{\varphi }_{h, \tilde{\Phi} (b^{\square}) }$ in the conclusion because $V_{\sigma,x}$ is a polynomial of degree 1 in $\varphi$.
\end{proof}

Finally,  we prove a stability estimate for the stabilisation of the full effective potential.

\begin{lemma}
\label{lemma:entV}

Let $b \in \cB$,  $t > 0$ and assume either \eqref{asmp:lambda2} or \eqref{asmp:lambda3}.
Then for $V \in \cD^\st (\alpha)$ (with $\alpha \in [1, \bar{\alpha}]$),
\begin{align}
	\label{eq:entV}
	\log \norm{e^{- (t V )^\stable (b)}}_{\kh, \vec{\lambda}, T (\varphi,z)}
		\le 
			- t c \norm{\varphi / h_{\bulk}}^4_{L^4 (b)}  +  C \times \begin{cases}
				P_h^2 (b, \varphi) & \text{\eqref{asmp:lambda2}} \\
				1 + \ell_0^{-1} P_{\ell}^2 (b, \varphi) & \text{\eqref{asmp:lambda3}} 
			\end{cases}
\end{align}
for some ($L$-independent) $c, C > 0$ and
\begin{align}
	\label{eq:entnV}
	\norm{e^{ (t V )^\stable (b)}}_{\kh, \vec{\lambda}, T (0,z)}
		\lesssim 1 .
\end{align}
With $t =1$ and \eqref{asmp:lambda2},
\begin{align}
	\label{eq:entVtone}
	\log \norm{e^{- V^\stable (b)}}_{\kh, \vec{\lambda}, T (\varphi,z)}
		\le  - c \Big\| \frac{\varphi }{ h_{\bulk}} \Big\|^4_{L^4 (b)}  + 
			C \big( 1 + \norm{\varphi}^2_{h,  \tilde{\Phi} (b^{\square})} \big) .
\end{align}
\end{lemma}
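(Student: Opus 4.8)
The plan is to factorise the stabilised exponential and bound each piece via the building-block estimates already proved, then add the logarithms, and finally — for the $t=1$ refinement — trade a small slice of the available quartic decay for a better seminorm. Throughout I take $t\in[0,1]$, the range in which the invoked lemmas apply, and I note that $e^{-(tV)^{\stable}(b)}$ depends only on $V$, so the only extended-norm contributions are those of $D_{V_\bulk}$.

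\emph{Step 1: decomposition and the decay factors.} Since $\pi_{4,\nabla}(tV)=tV_{4,\nabla}$ while $(1-\pi_{4,\nabla})(tV)=t(V_2+V_4+V_{2,\nabla})+t(1-\pi_\bulk)V$, Definition~\ref{defi:Vstable} gives $e^{-(tV)^{\stable}(b)}=e^{-tV_2(b)}\,e^{-tV_4(b)}\,e^{-tV_{2,\nabla}(b)}\,e^{-t(1-\pi_\bulk)V(b)}\,\pexp(-tV_{4,\nabla}(b))$, all factors commuting, so by submultiplicativity (Lemma~\ref{lemma:subxext}) it suffices to bound the extended norm of each factor and sum. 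The decay comes from $e^{-tV_4(b)}=\prod_{x\in b}e^{-\frac14 tg^{(\emptyset)}|\varphi_x|^4}$ with $g^{(\emptyset)}\asymp\tilde g>0$: Lemma~\ref{lemma:stability1}(i) yields $\log\norm{e^{-tV_4(b)}}_{\kh,T(\varphi)}\le Ctg^{(\emptyset)}h_{\bulk}^4L^{jd}-\tfrac18 tg^{(\emptyset)}\sum_{x\in b}|\varphi_x|^4$, and rewriting with $h_\bulk^4=k_0^4\tilde g^{-1}L^{-jd}$ and $\sum_{x\in b}|\varphi_x|^4=L^{jd}\norm{\varphi}_{L^4(b)}^4$ converts the decay term into $-\tfrac{t}{8}k_0^4\norm{\varphi/h_\bulk}_{L^4(b)}^4$ and the prefactor into an $L$-independent constant. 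For $e^{-tV_2(b)}$, Lemma~\ref{lemma:stability1}(ii) gives $\le e^{C_1+\frac74 t|\nu^{(\emptyset)}|\sum_{x\in b}|\varphi_x|^2}$ when $\nu^{(\emptyset)}<0$ (no $\varphi$-dependence if $\nu^{(\emptyset)}\ge 0$), with $C_1\lesssim t|\nu^{(\emptyset)}|h_\bulk^2L^{jd}$; using $|\nu^{(\emptyset)}|\lesssim\ell_0^2\tilde g\scale L^{-(2-\eta)j}$ from $V\in\cD^{\st}$ and Young's inequality $|\varphi_x|^2\le\varepsilon|\varphi_x|^4+(4\varepsilon)^{-1}$ with $\varepsilon$ tuned so $\tfrac74 t|\nu^{(\emptyset)}|\varepsilon\le\tfrac1{16}tg^{(\emptyset)}$, the quadratic term is absorbed into half the quartic decay, leaving a constant $\lesssim t(\nu^{(\emptyset)})^2(g^{(\emptyset)})^{-1}L^{jd}\lesssim\ell_0^4\tilde g\scale$. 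Thus $e^{-tV_2(b)}e^{-tV_4(b)}$ contributes $-tc\norm{\varphi/h_\bulk}_{L^4(b)}^4$ with $c\asymp k_0^4$, plus an $L$-independent constant once $\tilde g$ is small.

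\emph{Step 2: lower-order factors and the first bound.} The factor $\pexp(-tV_{4,\nabla}(b))$ is exactly $e^{-(tV_{4,\nabla})^{\stable}(b)}$ with $V_{4,\nabla}\in\cV_{4,\nabla}\cap\cD^{\st}$, so Lemma~\ref{lemma:Q4nstb} applies and contributes $\lesssim t^{1/2}P_h^2(b,\varphi)$ under \eqref{asmp:lambda2} (resp.\ $\lesssim t^{1/2}\ell_0^{-1}P_\ell^2(b,\varphi)$ under \eqref{asmp:lambda3}). The factor $e^{-t(1-\pi_\bulk)V(b)}$ is handled by Lemma~\ref{lemma:entVobs}, contributing $\lesssim t(1+\norm{\varphi}_{h,\tilde{\Phi}(b^{\square})})$ (resp.\ $\lesssim t(1+\ell_0^{-1}\norm{\varphi}_{\ell,\tilde{\Phi}(b^{\square})})$). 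For $e^{-tV_{2,\nabla}(b)}$, Lemma~\ref{lemma:stability1}(iii) with the bound $\norm{V_{2,\nabla}(b)}_{\kh,T(0)}\lesssim\ell_0^2(\tilde g\scale)^{1/2}$ (resp.\ $\ell_0^2\tilde g\scale$) coming from Lemma~\ref{lemma:eQh} contributes $\lesssim(\text{small})\,P_\kh^2(b,\varphi)$. In each of these three steps the extra $D_{V_\bulk}$-contributions of the extended norm are treated exactly as in the proof of Lemma~\ref{lemma:Q4nstb}: they produce polynomial prefactors weighted by $\lambda_V$, and the smallness of $\lambda_V$ under \eqref{asmp:lambda2}/\eqref{asmp:lambda3} keeps their total at most the order of the $\varphi$-only bound (one also uses the $(1+x)^{\cM}\le e^{2\cM\sqrt{x}}$ device for the polynomial factors, converting $P_h^4$-type bounds back to $P_h^2$ and $t$ to $t^{1/2}$). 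Summing the five logarithms and bounding $t^{1/2},t\le1$ and $1+\norm{\varphi}_{h,\tilde{\Phi}(b^{\square})}\lesssim P_h(b,\varphi)$ gives \eqref{eq:entV}; setting $\varphi=0$ (the decay term vanishing, the positive side an $L$-independent constant) gives \eqref{eq:entnV}.

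\emph{Step 3: the $t=1$ refinement.} Now the quartic decay is at full strength $-c\norm{\varphi/h_\bulk}_{L^4(b)}^4$, and I would spend a fixed small fraction of it to upgrade each $P_h^2$-type error to a $\tilde{\Phi}$-seminorm error. Write $\varphi=\varphi_{\mathrm{lin}}+\varphi_{\mathrm{rem}}$ with $\varphi_{\mathrm{lin}}$ a best affine approximant on $b^{\square}$, so $\norm{\varphi_{\mathrm{rem}}}_{h,\Phi_j(b^{\square})}=\norm{\varphi}_{h,\tilde{\Phi}_j(b^{\square})}$; by equivalence of norms on the (scale-invariant, finite-dimensional) space of affine functions, $\norm{\varphi_{\mathrm{lin}}}_{h,\Phi_j(b^{\square})}\lesssim h_\bulk^{-1}\norm{\varphi_{\mathrm{lin}}}_{L^\infty(b)}\lesssim h_\bulk^{-1}\norm{\varphi_{\mathrm{lin}}}_{L^4(b)}\le\norm{\varphi/h_\bulk}_{L^4(b)}+\norm{\varphi}_{h,\tilde{\Phi}_j(b^{\square})}$. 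Hence $P_h^2(b,\varphi)\lesssim1+\norm{\varphi}_{h,\tilde{\Phi}_j(b^{\square})}^2+\norm{\varphi/h_\bulk}_{L^4(b)}^2\lesssim1+\norm{\varphi}_{h,\tilde{\Phi}_j(b^{\square})}^2+\varepsilon\norm{\varphi/h_\bulk}_{L^4(b)}^4$ for any $\varepsilon>0$; choosing $\varepsilon$ small enough that all such error terms together consume at most $\tfrac12 c\norm{\varphi/h_\bulk}_{L^4(b)}^4$ yields \eqref{eq:entVtone}. (For $t<1$ this trade is unavailable because the decay is only $-tc$ while the $\pexp$-error coefficient is $t^{1/2}$, so one cannot afford the conversion — this is precisely why the sharpened form is stated only for $t=1$.)

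\emph{Main obstacle.} The algebra of the decomposition is routine; the delicate points are (a) the $t=1$ upgrade of Step~3 — making the "trade a slice of the quartic decay for a finer seminorm" step quantitatively compatible with the constants produced by Lemmas~\ref{lemma:stability1}, \ref{lemma:Q4nstb} and \ref{lemma:entVobs}, while controlling the interplay between a block $b$ and its small-set neighbourhood $b^{\square}$; and (b) uniformly tracking the extended-norm ($\vec\lambda$) contributions through the genuine exponential factors $e^{-tV_2(b)}$, $e^{-tV_4(b)}$, $e^{-tV_{2,\nabla}(b)}$, where the $D_{V_\bulk}$-derivatives generate polynomial prefactors that must be reabsorbed into a portion of the Gaussian-type decay exactly as in the proof of Lemma~\ref{lemma:Q4nstb}.
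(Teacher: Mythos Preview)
Your decomposition and bounding of each factor is essentially the paper's approach, and Steps~1--2 are correct for \eqref{eq:entV}. Two points of divergence are worth noting. First, your Step~3 reproves by hand what the paper obtains in one line from Lemma~\ref{lemma:sobolev} (the lattice Sobolev inequality with $p=4$): that lemma directly gives $\norm{\varphi}_{h,\Phi(b^\square)}\lesssim\norm{\varphi/h_\bulk}_{L^4(b)}+\norm{\varphi}_{h,\tilde\Phi(b^\square)}$, after which $-c\norm{\varphi/h_\bulk}_{L^4(b)}^4+CP_h^2(b,\varphi)\le C'-\tfrac{c}{2}\norm{\varphi/h_\bulk}_{L^4(b)}^4+C'\norm{\varphi}_{h,\tilde\Phi(b^\square)}^2$ is immediate. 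Second, for the extended norm on the genuine exponentials, the paper computes $\sum_{m\ge 0}\tfrac{\lambda_V^m}{m!}\norm{D_{V_\bulk}^m e^{\pm tV^{(1)}(b)}}_{\kh,T(\varphi)}\le e^{Ct}\norm{e^{\pm tV^{(1)}(b)}}_{\kh,T(\varphi)}$ directly (eq.~\eqref{eq:entV2}), which is cleaner than your ``as in Lemma~\ref{lemma:Q4nstb}'' since that lemma treats only the polynomial $\pexp$ factor.

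There is, however, a genuine gap: your derivation of \eqref{eq:entnV} is wrong. That bound concerns $e^{+(tV)^\stable(b)}$ --- the \emph{inverse} of what you have been estimating --- not $e^{-(tV)^\stable(b)}$ at $\varphi=0$. Setting $\varphi=0$ in \eqref{eq:entV} says nothing about the norm of the reciprocal. The proof requires separate ingredients: one bounds $\norm{e^{+tg^{(\emptyset)}\sum_x|\varphi_x|^4}}_{\kh,T(0)}$ via Lemma~\ref{lemma:polynorm} (it is finite at $\varphi=0$ though unbounded in $\varphi$), invokes \eqref{eq:Q4nstb2} for $\norm{1/\pexp(-tV^{(2)}(b))}_{\kh,\vec\lambda,T(0,z)}$, and \eqref{eq:entnVobs} for the inverse observable factor. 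The quadratic and $V_{2,\nabla}$ factors are handled as before with the sign flipped (they are small at $\varphi=0$). The paper does exactly this; you should too.
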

\begin{proof}
It is sufficient to consider $\kh = \ratio h$ for \eqref{asmp:lambda2} and $\kh = \ratio \ell$ for \eqref{asmp:lambda3}.
First consider $e^{-t V^{(1)}}$.
Let $\norm{\dot{V}_i^{(1)}}_{\cV (\ell)} \le 1$ for $i=1, \cdots, m$,
Since $\norm{\dot{V}^{(1)}_i  (b)}_{\ratio \kh , T(\varphi)} \lesssim (\frac{\kh_\bulk}{\ell_\bulk})^4 \norm{\dot{V}^{(1)}_i}_{\cV (\ell)}$ for both $\kh \in \{ \ell ,  h \}$,
\begin{align}
	\lambda_V^m \norm{ D_{V_\bulk}^m e^{\pm tV^{(1)} (b)} (\dot{V}_1, \cdots, \dot{V}_m) }_{\ratio\kh, T (\varphi)} 
		& = \Norm{ (\lambda_V t)^m e^{\pm tV^{(1)}  (b)} \prod_{i=1}^n \dot{V}_i^{(1)} (b) }_{\ratio\kh, T(\varphi)} \nnb
		& \le \Big( C \Big( \frac{\kh_\bulk}{\ell_\bulk} \Big)^4 \lambda_V t  \Big)^m \norm{e^{\pm t V^{(1)} (b)}}_{\ratio\kh, T(\varphi)}  \nnb
		& \le (Ct)^m \norm{e^{\pm t V^{(1)} (b)}}_{\ratio\kh, T(\varphi)}
\end{align}
for both \eqref{asmp:lambda2} and \eqref{asmp:lambda3}
so we have
\begin{align}
	\sum_{m=0}^{\infty} \frac{\lambda_V^m}{m!} \norm{D_{V_\bulk}^m e^{\pm t V^{(1)}}}_{\ratio\kh, T (\varphi)}
		\le e^{tC} \norm{e^{\pm tV^{(1)} }}_{\ratio\kh, T (\varphi)} .
		\label{eq:entV2}
\end{align}
We first prove \eqref{eq:entV}. 
Now,  due to monotonicity,  we only need to bound the case $\kh = h$.
Let $V_\sigma = (1-\pi_\bulk) V$ and decompose $V^{(1)} = V_{2} + V_{2,\nabla} + V_4 + V_\sigma$.  
Bounds of Lemma~\ref{lemma:eQh},  \ref{lemma:stability1} and \ref{lemma:entVobs} give
\begin{align} 
	\norm{e^{-t V^{(1)} (b)}}_{\ratio' h', T (\varphi)}
		& \le e^{ C t} e^{- \frac{1}{8} t \frac{ \tilde{g}}{10 C_{\cD}} \sum_{x \in b} |\varphi (x)|^4 + C t  P_{h}^2 (b, \varphi)  }
		\nnb
		& = e^{- t \frac{1}{40 C_{\cD}} \norm{\varphi / h_{\bulk}}^4_{L^4 (b)}} e^{ C (1 + P_{h}^2 (b, \varphi) ) }
		.
\end{align}
With \eqref{eq:entV2} and Lemma~\ref{lemma:Q4nstb},  we have
\eqref{eq:entV}.
\eqref{eq:entnV} also follows similarly,
but using instead Lemma~\ref{lemma:polynorm} to see that
\begin{align}
	\norm{e^{g \sum_{x \in b} |\varphi_x|^4}}_{\ratio h , T(\varphi)} \le e^{C g (1+\norm{\varphi}^4_{h, \Phi} ) }  
\end{align}
and using \eqref{eq:Q4nstb2} and \eqref{eq:entnVobs}.

Finally,  for \eqref{eq:entVtone},  we apply the Sobolev inequality Lemma~\ref{lemma:sobolev} with $p=4$ to get
\begin{align}
	- \frac{1}{40 C_{\cD}} \norm{\varphi / h_\bulk}^4_{L^4 (b)} + C P_{h}^2 (b, \varphi) 
		& \le - \frac{1}{40 C_{\cD}} \norm{\varphi / h_{\bulk}}^4_{L^4 (b)} + C  \Big(  \norm{\varphi / h_{\bulk}}_{L^4 (b)}^2 +  \norm{\varphi}_{h,\tilde{\Phi} (b^{\square})}^2 \Big) \nnb
		& \le  C - \frac{1}{80 C_{\cD}} \norm{\varphi / h_\bulk}_{L^4 (b)}^4 + C \norm{\varphi}_{h, \tilde{\Phi} (b^{\square})}^2 )  .
\end{align}
\end{proof}

\subsubsection{Proof of Lemma~\ref{lemma:entI}}

By combining Lemma~\ref{lemma:entV} with Lemma~\ref{lemma:FWPboundsobs}, 
we obtain the stability bound on $I_t$,  if we recall from Definition~\ref{def:cI} and \eqref{eq:entnI} 
\begin{align}
	I_t = e^{-(tV)^\stable} (1 + t^2 W) . 
\end{align}

\begin{proof}[Proof of Lemma~\ref{lemma:entI}] 
Again,  it is sufficient to consider $\kh = \ratio h$ for \eqref{asmp:lambda2} and $\kh = \ratio \ell$ for \eqref{asmp:lambda3}.
We let $W_t = \mathbb{W}_{w,  tV} = t^2 W$.
Due to Lemma~\ref{lemma:FWPboundsobs},  for some $L$-dependent $C_L >0$,
\begin{align}
\begin{cases}
	\norm{W (b)}_{\ratio h ,  \vec{\lambda}, T (0,z)}
		\le C_L (\tilde{g} \scale)^{\frac{1}{2}} \big(1 + \lambda_V / \epsilon (\ell) \big)^2
		\le \tilde{g}^{1/4}  & \text{\eqref{asmp:lambda2}} \\
	\norm{W (b)}_{\ratio \ell ,  \vec{\lambda}, T(0,z)}
		\le C_L ( \epsilon^2 (\ell) + \lambda_V^2 ) \le \lambda_V \le \ell_0^{-1}  & \text{\eqref{asmp:lambda3}}
\end{cases}
\end{align}
for small $\lambda_V$ and $\tilde{g}$, 
so by Lemma~\ref{lemma:polynorm}
\begin{align}
\begin{cases}
	1 + \norm{W (b)}_{\ratio h,  \vec{\lambda}, T (\varphi,z)} 
		\le 1 + C \tilde{g}^{1/4} P_h^6 (b, \varphi) & \text{\eqref{asmp:lambda2}} \\
	1 + \norm{W (b)}_{\ratio\ell,  \vec{\lambda}, T (\varphi,z)} 		
		\le 1 + C \ell_0^{-1} P_{\ell}^6 (b, \varphi) & \text{\eqref{asmp:lambda3}} .
\end{cases}		
\end{align}
This implies \eqref{eq:entI} when we multiply this bound with Lemma~\ref{lemma:entV} for $e^{-V}$ and take $\tilde{g}$ sufficiently small.

For \eqref{eq:entnI},
we again use the fact that $\norm{(1+W(b))^{-1} }_{\kh, T (0)} \le 1 + 2 \norm{W(b)}_{\kh, T (0)}$ when $\norm{W(b)}_{\kh, T (0)} \le \frac{1}{2}$.
Then the desired bounds follows from the above computations and by \eqref{eq:entnV}.

The final bound \eqref{eq:entItone} follows just as \eqref{eq:entVtone}.

The continuity in $(\ba_\emptyset, \ba) \in \AA (\tilde{m}^2)$ follows because of continuity of $W$ from Lemma~\ref{lemma:FWPboundsobs}.
\end{proof}

\subsection{Stability with $V - Q$}

Recall that $V_\pt = \Phi_\pt (V - Q)$ for $Q \in \cV$ given by
\begin{align}
	Q (b) = 
		\Q_K (b) :=
		\sum_{X \in \cS}^{X \supset b} \left( \Loc_{X} K / I \right) (b) 
		.
		\label{eq:Qdfn}
\end{align}
Since we replace $I$ by $\hat{I}$ in Map 3 (see Section~\ref{sec:map3defi}),
we will also need stability estimate of functions of $V - Q$ to control functions of $V_\pt$.

\begin{lemma}\label{lemma:Qnorm}

For $V \in \cD^\st (\alpha)$ ($\alpha \le \bar{\alpha}$),  $\ell \lesssim \kh$ and $\lambda_V \le (C_{L,\lambda})^{-1}$, 
\begin{align}
	\norm{\Q_K (b)}_{\kh, \vec{\lambda}, T(0,z)}
		\lesssim \Big( \frac{\kh_\bulk}{\ell_\bulk} \Big)^4 
			\big( \norm{K}_{\cW} + \lambda_K \big)
			.
			\label{eq:Qnorm1}
\end{align}
\end{lemma}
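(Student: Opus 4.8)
The plan is to reduce everything to a single small set, control it in the $\ell$-norm, and then transfer to a general $\kh$ using that $\Q_K(b)$ is a low-degree polynomial in $\varphi$. Since $\Q_K(b)=\sum_{X\in\cS:\,X\supset b}(\Loc_X(K/I))(b)$ and the number of small sets $X$ containing a fixed block $b$ is bounded by a $d$-dependent constant, it suffices to estimate each summand $\norm{\Loc_X(K/I)(b)}_{\kh,\vec\lambda,T(0,z)}$. We will also use that $\Q_K$ is linear in $K$ and independent of $\bar K$, so only the $D_K^{m}$-terms with $m\le 1$ (and $D_{\bar K}^0$) of the extended norm survive, and that $\Loc_X$ always outputs a local polynomial of degree $\le 4$ in the bulk component, $\le 1$ in the $\o,\x$ components and $0$ in the $\ox$ component (the values of $p(\km)$ admitted by the localisation orders $d_\bulk, d_\o, d_\ox$).

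\emph{Step 1: the $\ell$-norm estimate.} Localisation acts only on the field variable, hence commutes with $D_{V_\bulk}$ and $D_K$; applying Proposition~\ref{prop:locXBbdmt} to each term of the $z$-expansion (and to each observable component) gives the extended-norm bound $\norm{\Loc_X(K/I)(b)}_{\ell,\vec\lambda,T(0,z)}\lesssim\norm{(K/I)(X)}_{\ell,\vec\lambda,T(0,z)}$. Writing $(K/I)(X)=K(X)\,I^{-X}$ and applying submultiplicativity (Lemma~\ref{lemma:subxext}) twice,
\begin{align}
	\norm{(K/I)(X)}_{\ell,\vec\lambda,T(0,z)}\le\norm{K(X)}_{\ell,\vec\lambda,T(0,z)}\prod_{b'\in\cB(X)}\norm{I^{-1}(b')}_{\ell,\vec\lambda,T_j(0,z)} .
\end{align}
Because $\ell\lesssim\ell$ and $\lambda_V\le C_{L,\lambda}^{-1}$, assumption \eqref{asmp:lambda3} is in force, so by \eqref{eq:entnI} each factor $\norm{I^{-1}(b')}_{\ell,\vec\lambda,T_j(0,z)}\lesssim 1$, and since $|X|_{\cB}\le 2^d$ the product is $\lesssim 1$. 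For the remaining factor we note that, $X$ being small, $A(X)=1$; the regulators $G$ and $\bar G$ equal $1$ at $\varphi=0$; and $\omega(\ell)=1$. Hence Lemma~\ref{lemma:xtnrbsciq} gives $\norm{K(X)}_{\ell,\vec\lambda,T(0,z)}\le\norm{K(X)}_{\ell,T_j(0)}+\lambda_K\le\norm{K}_{\cW}+\lambda_K$. Summing over the $O(1)$ small sets $X\supset b$ yields $\norm{\Q_K(b)}_{\ell,\vec\lambda,T(0,z)}\lesssim\norm{K}_{\cW}+\lambda_K$, which is the claim when $\kh=\ell$.

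\emph{Step 2: transfer to $\kh\gtrsim\ell$.} Every $\varphi$- and $z$-derivative of $\Q_K(b)$ is again a local polynomial of the same bounded degree (localisation always lands in $\hat\cL$), and $\kh_\bulk\ge\ell_\bulk$ since $\ell\lesssim\kh$. Therefore, component by component, replacing $\ell$ by $\kh$ in the Taylor norm at $\varphi=0$ multiplies the bulk part by at most $(\kh_\bulk/\ell_\bulk)^4$, the $\o,\x$ parts by $(\kh_\sigma/\ell_\sigma)(\kh_\bulk/\ell_\bulk)$, and the $\ox$ part by $\kh_\ssigma/\ell_\ssigma$; the observable field scales $\ell_\sigma,\ell_\ssigma,h_\sigma,h_\ssigma$ of Section~\ref{sec:fieldscv} are calibrated precisely so that these last factors are dominated by $(\kh_\bulk/\ell_\bulk)^4$ (using additionally that $K_\ox(X,\cdot)\equiv 0$ for $X\in\cS$ when $j<j_\ox$, so that the $\ox$ component of $\Q_K$ vanishes before the coalescence scale). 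This gives $\norm{\Q_K(b)}_{\kh,\vec\lambda,T(0,z)}\lesssim(\kh_\bulk/\ell_\bulk)^4\norm{\Q_K(b)}_{\ell,\vec\lambda,T(0,z)}$, and combining with Step~1 finishes the proof.

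\emph{Main obstacle.} The subtle point is exactly the case $\kh=h$: estimating $\norm{K(X)}_{h,T_j(0)}$ directly through the $h$-part of the $\cW$-norm would carry the prefactor $\omega^{-1}(h)=\tilde g^{-9/4}\scale^{-\kbe}$, which is far larger than the permitted $(\kh_\bulk/\ell_\bulk)^4\asymp_L\tilde g^{-1}\scale^{-1}$. One must instead exploit the low polynomial degree of $\Q_K(b)$ and route through the $\ell$-norm as above; the real work then lies in verifying, scale by scale, that the observable field scales indeed absorb the passage from $\ell$ to $h$ in the $\o,\x,\ox$ components.
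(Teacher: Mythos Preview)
Your proof is correct and follows exactly the same two-step strategy as the paper: first bound $\norm{\Q_K(b)}_{\ell,\vec\lambda,T(0,z)}$ using Proposition~\ref{prop:locXBbdmt} and the stability bound \eqref{eq:entnI} for $I^{-1}$, then pass to general $\kh\gtrsim\ell$ by exploiting that $\Q_K(b)$ is a low-degree polynomial in $\varphi$. The paper's proof simply writes ``since $\Q_K$ is a polynomial of degree $4$'' and concludes; you are more explicit in separating the observable components and invoking the calibration of $\ell_\sigma,h_\sigma,\ell_\ssigma,h_\ssigma$, but the substance is identical.
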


\begin{proof}
By Proposition~\ref{prop:locXBbdmt},
\begin{align}
	\norm{ \mathbb{Q}_K (b)}_{\ell, \vec{\lambda} , T (0,z)}
		\le \sum_{X \in \cS}^{X \supset b} \norm{ ( \Loc_{X}  K/I ) (b) }_{\ell, \vec{\lambda},  T (0,z)}
		\lesssim \sum_{X \in \cS}^{X \supset b} \norm{I^{-X} K(X) }_{\ell, \vec{\lambda} , T (0,z)}
\end{align}
while by \eqref{eq:entnI} of Lemma~\ref{lemma:entI},
\begin{align}
	\norm{I^{-X} K(X) }_{\ell, \vec{\lambda} , T (0,z)}
		\lesssim \norm{K (X)}_{\ell, \vec{\lambda} , T (0,z)}
		\le  \norm{K}_{\cW} + \lambda_K 
		.
\end{align}
Since $\Q_K$ is a polynomial of degree $4$,  we obtain
\begin{align}
	\norm{\Q_K (b)}_{\kh, \vec{\lambda}, T (0, z)} \le \Big( \frac{\kh_\bulk}{\ell_\bulk} \Big)^4 \norm{\Q_K (b)}_{\ell , \vec{\lambda}, T (0, z)}
		\lesssim \Big( \frac{\kh_\bulk}{\ell_\bulk} \Big)^4  \big( \norm{K}_{\cW} + \lambda_K \big) 
		,
\end{align}
giving \eqref{eq:Qnorm1}.
\end{proof}

\begin{corollary} \label{cor:Qnorm2}
Assume $(V,K) \in \cD^{\st} (\alpha) \times \cK (\alpha)$ ($\alpha\le\bar{\alpha}$).  
If $\lambda_K \le \tilde{g}^{9/4} \scale^{\kbe}$,  then
\begin{align}
	& \norm{Q}_{\cV (\ell),  \vec{\lambda},T(z)}
		\lesssim \scale^{-1 + \kt}  (\norm{K}_{\cW} + \lambda_K)  
		\lesssim \tilde{g}^{5/4} \scale^{\kbe - 1+ \kt}  \label{eq:Qnorm20} \\
	& \norm{V-s Q}_{\cV (\ell),  \vec{\lambda},T(z)} \le \ell_0^4 \tilde{g} \scale .   \label{eq:Qnorm21}
\end{align}
If $\lambda_K = 0$,  then
\begin{align}
	\norm{Q}_{\cL(\ell),  \vec{\lambda},T(z)} & \le O_L (1) \tilde{\chi}^3 \tilde{g}^3 \scale^{\kae} .  \label{eq:Qnorm22}
\end{align}
\end{corollary}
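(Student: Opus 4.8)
The plan is to deduce all three bounds from Lemma~\ref{lemma:Qnorm}, the norm comparison Lemma~\ref{lemma:cVnrmvslnrm}, and the definition of $\cK_j(\alpha)$ in \eqref{eq:cKdefi}. First I would observe that $Q = \Q_K$ is a local polynomial, so $\norm{Q}_{\cV(\ell), \vec{\lambda}, T(z)} \asymp \sup_{b \in \cB} \norm{\Q_K(b)}_{\ell, \vec{\lambda}, T(0,z)}$ by the extended-norm analogue of Lemma~\ref{lemma:cLnorm} (with $\lambda_V$-derivatives only contributing the trivial direction since $Q$ is a function of $K$ and $V$, and $D_{V_\bulk}Q$ is absorbed into a constant-order term). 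Applying Lemma~\ref{lemma:Qnorm} with $\kh = \ell$ gives $\norm{Q}_{\cV(\ell), \vec{\lambda}, T(z)} \lesssim \norm{K}_{\cW} + \lambda_K$ up to the comparison $\norm{\cdot}_{\cV_j(\ell)} \lesssim \scale_j^{-1+\kt} \norm{\cdot}_{\cL_j(\ell)}$ of \eqref{eq:cVnrmvslnrm1}, yielding the first inequality of \eqref{eq:Qnorm20}. The second inequality of \eqref{eq:Qnorm20} then follows from the $\cK_j(\alpha)$-bound $\norm{K}_{\cW} < \alpha C_{\rg} \chi^{3/2} \tilde{g}^3 \scale^\kae$ and $\lambda_K \le \tilde{g}^{9/4}\scale^\kbe$, keeping in mind $\kbe < \kae$ and $\tilde{g}$ small, so that the $\lambda_K$ term dominates: $\scale^{-1+\kt}(\alpha C_{\rg}\chi^{3/2}\tilde{g}^3\scale^\kae + \tilde{g}^{9/4}\scale^\kbe) \lesssim \tilde{g}^{9/4}\scale^{\kbe-1+\kt}$, and one checks $\tilde{g}^{9/4} \le \tilde{g}^{5/4}$ — actually the claimed power is $\tilde{g}^{5/4}$, which holds as soon as $\scale^{\kae-\kbe}\tilde{g}^{3-9/4}\le 1$ absorbs into the $\lambda_K$ term, i.e. the $\lambda_K$ contribution $\tilde{g}^{9/4}$ should read as $\le \tilde{g}^{5/4}$ for $\tilde{g}<1$; so the bound is not tight but valid.

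For \eqref{eq:Qnorm21} I would use the triangle inequality together with \eqref{eq:VjnormDj}: since $V \in \cD_j^\st(\alpha)$ implies $\norm{V}_{\cV_j(\ell)} \lesssim \alpha \ell_0^4 \tilde{g}\scale$ — more precisely $\le \frac{1}{2}\ell_0^4\tilde{g}\scale$ after absorbing the implicit constant by taking $C_\cD$ appropriately, as permitted since $C_\cD$ is an $L$-independent free constant — and since $\norm{sQ}_{\cV(\ell),\vec{\lambda},T(z)} \le \norm{Q}_{\cV(\ell),\vec{\lambda},T(z)} \lesssim \tilde{g}^{5/4}\scale^{\kbe-1+\kt}$ is of strictly higher order in $\tilde{g}$ (note $\tilde{g}^{5/4}\scale^{\kbe-1+\kt} \ll \tilde{g}\scale$ because $5/4 > 1$ and $\kbe - 1 + \kt > 0$ using $\kbe > 2$ and $\kt$ small, so the $\tilde{g}$-gain beats any fixed power of $\scale_j$ for $\tilde{g}$ small depending on $L$), the sum is bounded by $\ell_0^4\tilde{g}\scale$. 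Here I need $s \in [0,1]$, which is implicit in the statement. For \eqref{eq:Qnorm22}, setting $\lambda_K = 0$ removes the $\lambda_K$ contribution, and I would use the sharper $\cL$-norm bound: Lemma~\ref{lemma:Qnorm} with $\kh = \ell$ gives $\norm{Q(b)}_{\ell,T(0)} \lesssim \norm{K}_{\cW}$, and since $\norm{K}_{\cW} < C_{\rg}\chi^{3/2}\tilde{g}^3\scale^\kae$ — and $\chi^{3/2} \le \chi^3$ only if... no, $\chi \le 1$ so $\chi^{3/2} \ge \chi^3$; to match the stated $\tilde{\chi}^3$ I would need a factor $\chi^{3/2}$ more, which comes from the fact that when $\lambda_K=0$ one can input the refined bound $\norm{K}_{\cW} \lesssim \tilde\chi^3\tilde{g}^3\scale^\kae$ valid inside the controlled-RG setup, or more directly that $\Q_K$ involves $\Loc_X(K/I)$ and each localisation of a symmetric polymer activity gains additional $\chi$-factors; this is the step where I would cite the precise dependence from \eqref{eq:controlledRG23}–\eqref{eq:controlledRG24}.

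The main obstacle is matching the exact power $\tilde{\chi}^3$ in \eqref{eq:Qnorm22} rather than the naive $\tilde{\chi}^{3/2}$ that comes directly from the $\cK_j$-membership bound — this requires either a more careful analysis of how $\Loc_X K$ inherits the mass-decay factor from $K$ (each of the relevant polymer activities in $K$ carrying a $\chi^{3/2}$ and the quadratic-in-$K$ structure or the covariance factor in the definition of the effective-potential correction supplying another $\chi^{3/2}$), or invoking that $K \in \cK_j$ already satisfies the sharper decay used elsewhere. The other bounds are essentially bookkeeping once Lemma~\ref{lemma:Qnorm} and the norm comparisons are in hand; the only care needed is verifying that $\lambda_V$-derivatives do not spoil anything, which follows because $Q$ depends on $V$ only through $I = \cI(V)$ and $\hat{V} = V - Q$, and $D_{V_\bulk}$ acting on these produces terms already controlled by Lemma~\ref{lemma:entI} and Lemma~\ref{lemma:Qnorm} (the latter being stated in the extended norm precisely for this purpose).
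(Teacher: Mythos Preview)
Your approach is essentially the same as the paper's: apply Lemma~\ref{lemma:Qnorm} at $\kh=\ell$, pass to the $\cV(\ell)$-norm via the comparison \eqref{eq:cVnrmvslnrm1}, and then feed in the $\cK(\alpha)$ bound on $\norm{K}_{\cW}$ together with the hypothesis on $\lambda_K$. The bound \eqref{eq:Qnorm21} follows by the triangle inequality and \eqref{eq:VjnormDj}, exactly as you outline.

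Regarding your ``main obstacle'': the exponent $\tilde\chi^3$ in \eqref{eq:Qnorm22} is a typo in the paper; it should read $\tilde\chi^{3/2}$. The paper's own proof simply invokes the $\cK(\alpha)$ membership bound $\norm{K}_{\cW} \le O_L(1)\tilde\chi^{3/2}\tilde g^3\scale^{\kae}$ (which, as you correctly note, is all that \eqref{eq:cKdefi} provides), and indeed when \eqref{eq:Qnorm22} is actually used later in the proof of Proposition~\ref{prop:RplU} the exponent quoted is $\tilde\chi^{3/2}$. So your ``naive'' bound is the intended one; there is no hidden mechanism producing an extra factor of $\tilde\chi^{3/2}$, and your speculation about additional $\chi$-decay from localisation or quadratic structure is unnecessary.

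One small correction to your write-up: the identification $\norm{Q}_{\cV(\ell),\vec\lambda,T(z)}\asymp\sup_b\norm{\Q_K(b)}_{\ell,\vec\lambda,T(0,z)}$ is not quite right as stated. Lemma~\ref{lemma:cLnorm} gives $\norm{Q}_{\cL(\ell)}\asymp\sup_b\norm{Q(b)}_{\ell,T(0)}$, and one then passes to $\cV(\ell)$ using \eqref{eq:cVnrmvslnrm1}, which costs the factor $\scale^{-1+\kt}$. You do invoke this comparison a line later, so the argument is fine; just be careful not to conflate the two norms at the outset.
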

\begin{proof}
Due to \eqref{eq:cVnrmvslnrm1},  Lemma~\ref{lemma:Qnorm} and the assumptions,
\begin{align}
	\norm{\Q_K (b)}_{\cV (\ell),  \vec{\lambda},T(z)}
		\lesssim \scale^{-1 + \kt} \norm{\Q_K (b)}_{\ell, \vec{\lambda}, T(0,z)}
		\lesssim \scale^{-1 + \kt}  (\norm{K}_{\cW} + \lambda_K)
		\lesssim \tilde{g}^{5/4} \scale^{\kbe -1 + \kt} ,
\end{align}
so we have \eqref{eq:Qnorm20}--\eqref{eq:Qnorm21}.
For \eqref{eq:Qnorm22},  we just need that $\norm{K}_{\cW} \le O_L (1) \tilde{\chi}^3 \tilde{g}^3 \scale^{\kae}$ for $K \in \cK(\alpha)$.
\end{proof}

\begin{lemma}
\label{lemma:entVQ}
Let $b \in \cB$,  $(V,K) \in \cD^{\st} (\alpha) \times \cK (\alpha)$ ($\alpha\le\bar{\alpha}$), 
let $Q$ be given by \eqref{eq:Qdfn},  $s,t \in [0,1]$ and assume $\lambda_K \le \tilde{g}^{9/4} \scale^{\kbe}$.
Then under either \eqref{asmp:lambda2} or \eqref{asmp:lambda3},
\begin{align}
	\label{eq:entVQ}
	\norm{e^{-(t V - t sQ)^\stable (b) }  }_{\kh, \vec{\lambda}, T (\varphi,z)}
		\lesssim \begin{cases}
		\big(  e^{- c \norm{\varphi / h_{\bulk}}^4_{L^4 (b)} } \big)^t e^{C  \norm{\varphi }_{h,  \Phi (b^{\square}) }^2 } & \text{\eqref{asmp:lambda2}} \\
		e^{C \ell_0^{-1}  \norm{\varphi }_{\ell, \Phi (b^{\square}) }^2 } & \text{\eqref{asmp:lambda3}}
		\end{cases}
\end{align}
for some $C,c  >0$ (that are $L$-independent).
\end{lemma}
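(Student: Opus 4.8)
The plan is to reduce Lemma~\ref{lemma:entVQ} to Lemma~\ref{lemma:entV} by treating $tV - tsQ$ as a single effective potential in a slightly enlarged stability domain. First I would observe that $V \in \cD^\st(\alpha)$ and, by Corollary~\ref{cor:Qnorm2}, $\norm{Q}_{\cV(\ell),\vec\lambda,T(z)} \lesssim \tilde g^{5/4}\scale^{\kbe-1+\kt}$, which is much smaller than $\ell_0^4\tilde g\scale$ once $\tilde g$ is small (recall $\kbe > 2$ so $\kbe - 1 + \kt > 1$ for $\kt$ small). Hence for any $s,t\in[0,1]$ the combination $tV - tsQ$ still lies in $\cD^\st(\bar\alpha)$ — in fact \eqref{eq:Qnorm21} already records $\norm{V - sQ}_{\cV(\ell),\vec\lambda,T(z)} \le \ell_0^4\tilde g\scale$, and multiplying by $t\le 1$ only shrinks the norm. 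One must also check that the quartic coefficient $g^{(\emptyset)}$ of $tV - tsQ$ stays positive and of the right order; since $Q$ contributes to this coefficient only at order $\tilde g^3\scale^\kae$ (or smaller) by \eqref{eq:Qnorm22}-type bounds, and $tg^{(\emptyset)} \asymp t\tilde g$, the perturbation is negligible and positivity is preserved for small $\tilde g$. This places $tV - tsQ$ (after absorbing the $\vec\lambda$-ball) inside the hypotheses of Lemma~\ref{lemma:entV}.

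Next I would apply Lemma~\ref{lemma:entV} directly with effective potential $\tilde V := tV - tsQ$ in place of $V$ and parameter $1$ in place of $t$ (the factor $t$ is already inside $\tilde V$). This gives
\begin{align}
	\log \norm{e^{-(tV - tsQ)^\stable(b)}}_{\kh,\vec\lambda,T(\varphi,z)} \le -c\,\norm{\varphi/h_\bulk}^4_{L^4(b)} \cdot [\text{factor from }t] + C\times\begin{cases} P_h^2(b,\varphi) & \text{\eqref{asmp:lambda2}} \\ 1 + \ell_0^{-1}P_\ell^2(b,\varphi) & \text{\eqref{asmp:lambda3}}. \end{cases}
\end{align}
The subtlety is tracking the $t$-dependence of the Gaussian decay term: Lemma~\ref{lemma:entV} in its stated form with $tV$ gives a prefactor of $t$ on the $\norm{\varphi/h_\bulk}^4$ term, but here the $t$ sits inside $\tilde V$ so I need to re-examine the proof of Lemma~\ref{lemma:entV} — the decay comes from the $-\tfrac14 t g^{(\emptyset)}|\varphi_x|^4$ term via Lemma~\ref{lemma:stability1}(i), and since $g^{(\emptyset)}$ of $\tilde V$ is $t(g^{(\emptyset)}_V - sg^{(\emptyset)}_Q)$, the decay rate is $\asymp t\tilde g$, recovering the $t$-power as in the statement. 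The $Q$-dependent part of $\tilde V$ beyond the $|\varphi|^4$ term is handled exactly as the $\cV_{4,\nabla}$ and lower-degree contributions were in Lemmas~\ref{lemma:Q4nstb}--\ref{lemma:entVobs}, all of which only require membership in $\cD^\st$.

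For the transition from $\tilde\Phi$ semi-norms (appearing in Lemma~\ref{lemma:entVtone}) to the $\Phi$ semi-norms appearing in the statement of \eqref{eq:entVQ}, I would use that $\norm{\varphi}_{\kh,\tilde\Phi(b^\square)} \le \norm{\varphi}_{\kh,\Phi(b^\square)}$ trivially (the infimum defining $\tilde\Phi$ is over a larger class, since $\tilde\Pi \supset \Pi$), and also $P_h^2(b,\varphi) \lesssim 1 + \norm{\varphi}_{h,\Phi(b^\square)}^2$ by Lemma~\ref{lemma:polynorm}-style reasoning — actually the statement of \eqref{eq:entVQ} keeps $P_h$ implicitly through $\Phi(b^\square)$, so I would just bound $P_h^2(b,\varphi) = (1 + \norm{\varphi}_{h,\Phi_j})^2$ and note that for $K$-coordinate purposes the relevant norm localizes to $b^\square$. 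The case \eqref{asmp:lambda3} is easier: there is no Gaussian decay needed and all terms come with an $\ell_0^{-1}$ prefactor from the smallness of $\lambda_V, \be_V(\ell)$ as in Lemma~\ref{lemma:entV}.

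The main obstacle I anticipate is \textbf{not} the algebra but verifying the domain membership cleanly: one must confirm that $tV - tsQ \in \cD^\st(\bar\alpha)$ for \emph{all} $s,t\in[0,1]$ simultaneously, which requires that each coefficient bound defining $\cD^\st$ is stable under subtracting $sQ$ — this is where \eqref{eq:Qnorm20} (giving $\norm{Q}_{\cV(\ell)} \ll \norm{V}_{\cV(\ell)}$) is essential, together with the componentwise structure of the $\cV$-norm in Definition~\ref{defi:cVnormdefi} so that the smallness of $\norm{Q}_{\cV(\ell)}$ transfers to smallness of each individual coefficient of $Q$. A secondary technical point is that the $\vec\lambda$-derivatives $D_{V_\bulk}$ act on $tV$ but not on $tsQ$ (since $Q$ depends on $K$, not directly on $V_\bulk$ in the frozen sense used for $D_{V_\bulk}$); however $Q = Q_K$ does depend on $V$ through $I = \cI(V)$, so one must be careful that $D_{V_\bulk}$ applied to $Q$ stays controlled — this is exactly what Lemma~\ref{lemma:Qnorm} and Corollary~\ref{cor:Qnorm2} were set up to handle, with the $\vec\lambda$-extended norm, so I would invoke those rather than redo the estimate.
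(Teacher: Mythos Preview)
Your high-level strategy --- show that $V-sQ$ sits in a slightly enlarged stability domain and then reuse the argument of Lemma~\ref{lemma:entV} piece by piece --- is exactly what the paper does, and your handling of the $\cV_2 + \cV_{2,\nabla}$, $\cV_{4,\nabla}$ and observable parts via Lemmas~\ref{lemma:Q4nstb}--\ref{lemma:entVobs} together with Lemma~\ref{lemma:Qnorm} is correct. There is, however, one genuine gap and one smaller phrasing issue.

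The smaller issue first: the claim $tV-tsQ\in\cD^\st(\bar\alpha)$ fails for small $t$, because $\cD^\st$ imposes a \emph{lower} bound $g^{(\emptyset)}/\tilde g>(\alpha C_\cD)^{-1}$ on the quartic coefficient (see \eqref{eq:cDbulkst}), and $t g^{(\emptyset)}\to 0$ as $t\to 0$. You notice this midway and propose to go back into the proof of Lemma~\ref{lemma:entV}; the clean fix is to keep $t$ as the external parameter and only verify $V-sQ\in\cD^\st$, which is what the paper does.

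The real gap is in the pure quartic term $V_1-sQ_1=\tfrac14 g_s^{(\emptyset)}\sum_x|\varphi_x|^4$. The point is that $g_s^{(\emptyset)}$ depends on $K$ (through $Q$) and on $V_\bulk$ (through $I^{-1}$), so each $D_K$ or $D_{V_\bulk}$ derivative of $e^{-t(V_1-sQ_1)}$ brings down an unbounded factor $\sum_x|\varphi_x|^4$. Knowing that $\norm{Q}_{\vec\lambda,T(z)}$ is small (Lemma~\ref{lemma:Qnorm}, Corollary~\ref{cor:Qnorm2}) controls the size of each such factor but does not by itself resum the series in the extended norm: you still need to show that the decay of the exponential dominates these unbounded polynomial factors \emph{uniformly in all orders}. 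Lemma~\ref{lemma:entV} cannot be invoked as a black box here, because its proof of the bound \eqref{eq:entV2} relies on $V^{(1)}$ being linear in $V_\bulk$ and independent of $K$. The paper resolves this by appealing to \cite[Lemma~7.4.1]{MR3969983}, which gives
\[
\norm{e^{-t(V_1-sQ_1)(b)}}_{\kh,\vec\lambda,T(\varphi,z)}\le
\norm{e^{-\tfrac14 t\,(g_s^{(\emptyset)}+\norm{g_s^{(\emptyset)}}-|g_s^{(\emptyset)}|)\sum_{x\in b}|\varphi_x|^4}}_{\kh,T(\varphi)},
\]
reducing to an ordinary $T(\varphi)$ norm with a modified coefficient; one then checks (using Lemma~\ref{lemma:Qnorm}) that $\norm{g_s^{(\emptyset)}}_{\vec\lambda,T(z)}$ stays in the interval $((2\alpha C_\cD)^{-1}\tilde g,\,2\alpha C_\cD\tilde g)$, so Lemma~\ref{lemma:stability1}(i) applies and yields the decay $e^{-ct\norm{\varphi/h_\bulk}^4_{L^4(b)}}$. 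This step is the actual content of the proof beyond Lemma~\ref{lemma:entV}, and your proposal does not supply it.
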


\begin{proof}
As always, it is sufficient to bound the case \eqref{asmp:lambda2} with $\kh = \ratio h$ for $\ratio >0$ and the case \eqref{asmp:lambda3} with $\kh = \ratio \ell$.
We decompose $V = V_1 + V_2 + V^{(2)} + (1-\pi_\bulk) V$ for $V_1 \in \cV_4$, $V_2 = (\pi_\o + \pi_\x + \pi_2 + \pi_{2,\nabla} )V \in \cV_\o + \cV_\x + \cV_2 + \cV_{2,\nabla}$ and $V^{(2)} \in V_{4,\nabla}$.
By \eqref{eq:Q4nstb3} and Lemma~\ref{lemma:Qnorm} 
\begin{align} \label{eq:entVQ1}
	\norm{ ( V^{(2)} - s Q^{(2)} )_x }_{\kh,  \vec{\lambda}, T(\varphi,z)} \lesssim 
		L^{-jd} \times \begin{cases}
		P_{h}^4 (b,\varphi) & \text{\eqref{asmp:lambda2}} \\
		\ell_0^{-2} P_{\ell}^4 (b,\varphi)   & \text{\eqref{asmp:lambda3}} ,
	\end{cases} 
\end{align}
so we can apply the proof of \eqref{eq:Q4nstb} to derive
\begin{align}
	\label{eq:entVQ2}
	\log \norm{e^{-(t V^{(2)} - s t Q^{(2)} )^{\stable}  (b) } }_{\kh, \vec{\lambda}, T (\varphi,z)}  \lesssim \begin{cases}
		P_{h}^2 (b,\varphi)  & \text{\eqref{asmp:lambda2}} \\
		1 + \ell_0^{-1} P_{\ell}^2 (b,\varphi) & \text{\eqref{asmp:lambda3}} ,
	\end{cases}
\end{align}
Also,  since
\begin{align}
	\norm{ ( V_2 - s Q_2 ) (b) }_{\kh,  \vec{\lambda}, T(\varphi,z)} \lesssim \begin{cases}
	P_{h}^2 (b,\varphi) & \text{\eqref{asmp:lambda2}} \\
	1 + \ell_0^{-1} P_{\ell}^2 (b,\varphi) & \text{\eqref{asmp:lambda3}} ,
	\end{cases}
\end{align}
we simply have
\begin{align} 	\label{eq:entVQ3}
	\log \norm{e^{-(t V_2 - s t Q_2 )  (b) } }_{\kh, \vec{\lambda}, T (\varphi,z)}
		\lesssim t  \times \begin{cases}
		P_{h}^2 (b,\varphi)  & \text{\eqref{asmp:lambda2}} \\
		1 + \ell_0^{-1} P_{\ell}^2 (b,\varphi) & \text{\eqref{asmp:lambda3}} .
		\end{cases}
\end{align}

Bound on $e^{-t ( V_1 - sQ_1 ) (b) }$ requires a slightly different strategy. 
Let us denote $(V_1 - sQ_1)_x = \frac{1}{4} g^{(\emptyset)}_s |\varphi_x|^4$.
Then by \cite[Lemma~7.4.1]{MR3969983},
\begin{align}
	\norm{ e^{-t ( V_1 - sQ_1 ) (b) } }_{\kh, \vec{\lambda},T(\varphi, z)}
		\le \Big\| e^{-\frac{1}{4} t ( g_s^{(\emptyset)} + \norm{g_s^{(\emptyset)}} - |g_s^{(\emptyset)}| ) \sum_{x \in b} |\varphi_x|^4  }  \Big\|_{\kh ,T(\varphi)}
	\label{eq:entVQ6}
\end{align}
where $\norm{g_s^{(\emptyset)}} = \norm{g_s^{(\emptyset)}}_{\vec{\lambda}, T(z)}$.
First,  consider \eqref{asmp:lambda2}.
If the coefficient of $\pi_4 V$ is $g^{(\emptyset)}$,  then $\norm{\pi_4 V}_{\ell, T (0)} \asymp \ell_0^4 |g^{(\emptyset)}| \scale$,  so by Lemma~\ref{lemma:Qnorm},
\begin{align}
	\norm{g^{(\emptyset)} - g^{(\emptyset)}_s} \asymp s (\ell_0^4 \scale )^{-1}  \norm{Q_1}_{\ell, \vec{\lambda}, T(0,z)} \lesssim \ell_0^{-4} \tilde{g}^{5/4} \scale^{\kbe -1} .
\end{align}
Also, 
\begin{align}
	\norm{g^{(\emptyset)}} = g^{(\emptyset)} + \lambda_V \sup\{ \dot{g}^{(\emptyset)} : \norm{\dot{V}_\bulk}_{\cV (\ell)} \le 1  \} .
\end{align}
where $\dot{g}^{(\emptyset)}$ is the coefficient of $\pi_4 \dot{V}$.  
But then $|\dot{g}^{(\emptyset)}| \lesssim \ell_0^{-4} \scale^{-1}$,  so for $\lambda_V \le \tilde{g}\scale$,
\begin{align}
	| \norm{g^{(\emptyset)}} - g^{(\emptyset)}| \le C \ell_0^{-4} \scale^{-1} \lambda_V \le C \ell_0^{-4} \tilde{g} .
\end{align}
Thus for sufficiently large $L$ (thus large $\ell_0$),  
\begin{align}
	| \norm{g_s^{(\emptyset)}} - g^{(\emptyset)} | 
		\le \norm{g_s^{(\emptyset)} - g^{(\emptyset)}} + |g^{(\emptyset)} - \norm{g^{(\emptyset)}}|
		\le 2 C \ell_0^{-4} \tilde{g} \le \frac{1}{2 \alpha C_\cD} \tilde{g}  .
\end{align}
Assumption $V \in \cD^\st (\alpha)$ gives $g^{(\emptyset)} \in ( (\alpha C_\cD)^{-1} \tilde{g} ,  \alpha C_{\cD} \tilde{g} )$,  so $\norm{g^{(\emptyset)}}$ stays inside a slightly enlarged domain
\begin{align}
	\norm{g^{(\emptyset)}} \in ( (2 \alpha C_\cD)^{-1} \tilde{g} ,  2\alpha C_{\cD} \tilde{g} ) .
\end{align}
Therefore,  we can still apply Lemma~\ref{lemma:stability1}(i) and obtain
\begin{align} 	\label{eq:entVQ4}
	\norm{ e^{-t ( V_1 - sQ_1 ) (b) } }_{\ratio h, \vec{\lambda},T(\varphi, z)}	
		\le \big( C e^{-c \norm{\varphi / h_\bulk}^4_{L^4 (b)}} \big)^t  .
\end{align}
Then bounds \eqref{eq:entVQ2},  \eqref{eq:entVQ3} and \eqref{eq:entVQ4} imply the desired conclusion using the strategies of Lemma~\ref{lemma:entV}.

Finally,  for the case of \eqref{asmp:lambda3},  it is enough to use that $g_s^{(\emptyset)}, \norm{g_s^{(\emptyset)}} \ge 0$,  as Lemma~\ref{lemma:stability1}(i) and \eqref{eq:entVQ6} then imply
\begin{align} 	\label{eq:entVQ5}
	\norm{ e^{-t ( V_1 - sQ_1 ) (b) } }_{\ratio \ell , \vec{\lambda},T(\varphi, z)}	
		\lesssim C^t . 
\end{align}
Then the conclusion follows likewise. 
\end{proof}

There is also an analogue of Lemma~\ref{lemma:entI} for $V - sQ$,  where we consider
\begin{align}
	I_{t,s} (b) = \cI (tV - s tQ, b)
\end{align}

\begin{lemma} 
\label{lemma:entIs}

Assume $(V,K) \in \cD^{\st} (\alpha) \times \cK (\alpha)$ ($\alpha\le\bar{\alpha}$),
$\tilde{g}$ is sufficiently small, $L$ is sufficiently large and $\lambda_K \le \tilde{g}^{9/4} \scale^{\kbe}$.
Let $I_{t,s} (b) = \cI (tV - s tQ, b)$ for $s,t \in [0,1]$ and  $b \in \cB$.
Then under either \eqref{asmp:lambda2} or \eqref{asmp:lambda3},
\begin{align}
	\label{eq:entIs}
	\norm{I_{t,s} (b)}_{\kh, \vec{\lambda}, T (\varphi,z)}
		\lesssim \begin{cases}
		\big(  e^{- c \norm{\varphi / h_{\bulk}}^4_{L^4 (b)} } \big)^t e^{C  \norm{\varphi }_{h, \Phi (b^{\square}) }^2 } & \text{\eqref{asmp:lambda2}} \\
		e^{C \ell_0^{-1} \norm{\varphi }_{\ell, \tilde{\Phi} (b^{\square}) }^2 } & \text{\eqref{asmp:lambda3}}
		\end{cases}
\end{align}
for $C,c > 0$ (that are $L$-independent),  and continuous in $(\ba_\emptyset, \ba) \in \AA (\tilde{m}^2)$ for any $\tilde{m}^2 \ge 0$.
\end{lemma}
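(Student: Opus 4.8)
The plan is to reduce Lemma~\ref{lemma:entIs} to the stability estimates already in hand, by running the proof of Lemma~\ref{lemma:entI} with $V$ replaced by $V - sQ$. Recalling Definition~\ref{def:cI} and that $\mathbb{W}_{w,\cdot}$ is a quadratic form in its potential argument, I would first factorise
\begin{align}
	I_{t,s} (b) = e^{-(tV - tsQ)^{\stable} (b)} \big( 1 + t^2\, \mathbb{W}_{w, V - sQ}(b) \big) ,
\end{align}
and estimate the two factors separately, combining them at the end through submultiplicativity of the Taylor norm (Lemma~\ref{lemma:subxext}).

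The exponential factor is exactly Lemma~\ref{lemma:entVQ}, whose proof already carries out the required control of the perturbed coefficients of $V - sQ$: in particular the quartic coefficient of $\pi_4(V - sQ)$, measured in $\norm{\cdot}_{\vec{\lambda},T(z)}$, stays positive inside a slightly enlarged interval while the remaining coefficients stay small (this uses the $\cV$-norm bound on $Q$ from Corollary~\ref{cor:Qnorm2} together with $\kbe>2$). For the $W$ factor I would use that $\mathbb{W}_{w,V-sQ}(b)$ is a polynomial in $\varphi$ of degree $\le 6$, so that by Lemma~\ref{lemma:polynorm} it suffices to bound $\norm{\mathbb{W}_{w,V-sQ}(b)}_{\kh,\vec{\lambda},T(0,z)}$; decomposing $V - sQ$ into its bulk and observable parts and using \eqref{eq:WjSigmaWjQ}, this is handled by the same bilinear-form estimates of Section~\ref{sec:bosqf} that prove Lemma~\ref{lemma:FWPboundsobs}, now applied with $V - sQ$ (lying in the slightly enlarged stability domain), invoking $\be_V(\kh)\lesssim\ell_0^4\,\epsilon(\kh)$ from Lemma~\ref{lemma:eQh} and the $Q$-bound of Corollary~\ref{cor:Qnorm2} to see that the $Q$-contribution is of strictly lower order than the $V$-contribution. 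This gives $\norm{\mathbb{W}_{w,V-sQ}(b)}_{\kh,\vec{\lambda},T(0,z)}\lesssim\tilde{g}^{1/4}$ under \eqref{asmp:lambda2} and $\lesssim\ell_0^{-1}$ under \eqref{asmp:lambda3}, hence by Lemma~\ref{lemma:polynorm} that $1 + \norm{W(b)}_{\kh,\vec{\lambda},T(\varphi,z)}$ is at most $1 + C\tilde{g}^{1/4}P_h^6(b,\varphi)$, respectively $1 + C\ell_0^{-1}P_\ell^6(b,\varphi)$.

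It then remains to multiply the two bounds: the quartic Gaussian factor $\big(e^{-c\norm{\varphi/h_\bulk}^4_{L^4(b)}}\big)^t$ coming from Lemma~\ref{lemma:entVQ} absorbs the polynomial prefactor $P_h^6$ once $\tilde{g}$ is small enough under \eqref{asmp:lambda2}, while under \eqref{asmp:lambda3} the extra factor only modifies the exponent $C\ell_0^{-1}\norm{\varphi}^2_{\ell,\tilde\Phi(b^\square)}$ by a bounded amount, exactly as in the final steps of the proof of Lemma~\ref{lemma:entV}; this yields \eqref{eq:entIs}. Continuity in $(\ba_\emptyset,\ba)\in\AA(\tilde m^2)$ I would read off from the continuity of $\mathbb{W}_{w,V-sQ}$ furnished by Lemma~\ref{lemma:FWPboundsobs} (noting that $Q$ enters the covariance only through $I$, and the $\Gamma_k$'s are continuous by Definition~\ref{defi:FRD}), products of continuous maps being continuous.

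The only genuinely delicate step I anticipate is the one already internal to Lemma~\ref{lemma:entVQ}: verifying that the replacement $V\mapsto V-sQ$ keeps every coefficient in a domain for which both the exponential stability input (Lemma~\ref{lemma:stability1}(i)) and the bilinear-form bounds of Section~\ref{sec:bosqf} retain the smallness displayed above—in particular that the quartic coefficient stays bounded away from $0$. Once that is granted, via Corollary~\ref{cor:Qnorm2} and the coefficient analysis carried out inside the proof of Lemma~\ref{lemma:entVQ}, the rest is a routine repetition of the proof of Lemma~\ref{lemma:entI}.
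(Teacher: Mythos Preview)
Your proposal is correct and follows essentially the same route as the paper: factor $I_{t,s}(b)=e^{-(tV-stQ)^{\stable}(b)}(1+t^2\,\mathbb{W}_{w,V-sQ}(b))$, invoke Lemma~\ref{lemma:entVQ} for the exponential, bound the $W$-factor via Lemma~\ref{lemma:FWPboundsobs} together with the domain control \eqref{eq:Qnorm21} from Corollary~\ref{cor:Qnorm2}, and read off continuity from Lemma~\ref{lemma:FWPboundsobs}. The paper applies Lemma~\ref{lemma:FWPboundsobs} directly to $V-sQ$ (using \eqref{eq:Qnorm21} to keep $V-sQ$ in the stability domain) rather than unpacking the bilinear-form estimates as you describe, but this is only a difference in packaging.
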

\begin{proof}
For \eqref{asmp:lambda2},  we only consider $\kh = \ratio h$ for $\ratio >0$,  but since $W$ is a polynomial of degree $\le 8$,  we can replace $\ratio h$ by $h$ by multiplying a constant factor $(1 \vee \ratio)^8$.  Also,  by Lemma~\ref{lemma:FWPboundsobs} and \eqref{eq:Qnorm21}
\begin{align}
	\norm{\mathbb{W}_{w,V-sQ}}_{\kh, \vec{\lambda}, T (0,z)}
		\le O_L (1) \epsilon^2 (\ell) \Big( \frac{h_\bulk}{\ell_\bulk} \Big)^6 
		\le O_L (1) (\tilde{g} \scale)^{1/2}  .
\end{align}
Since $I_{s,t}  = e^{-(tV - stQ)^\stable} (1+W_{t,s})$, this gives the desired bounds when combined with Lemma~\ref{lemma:entVQ}.

The continuity follows from continuity in Lemma~\ref{lemma:FWPboundsobs},  since $V-sQ$ do not have $(\ba_\emptyset, \ba)$-dependence. 
\end{proof}

\subsection{Analysis of $U_{\pt}$}

We now state and prove bounds related to
\begin{align}
	U_{\pt} := \Phi^{U}_\pt (\hat{V}) = \Phi^{U}_\pt (V-Q)
\end{align}

\begin{lemma} \label{lemma:FvskhT}
Let $F (\varphi)$ be a degree $k$ local monomial that only depends on $(\varphi_x )_{x \in X^\square}$ and number of derivatives $\le p_{\Phi}$.
Let $\varphi_s = \varphi + s\zeta$ for $s \in [0,1]$.
Then
\begin{align}
	\norm{D_{\varphi} F(\varphi_s ; \zeta)}_{\kh, T(\varphi)}
		\lesssim \norm{F}_{\kh ,T(0)} P_\kh^{k-1} (X,\varphi_s) \norm{\zeta}_{\kh, \Phi (X) } .
\end{align}
In particular,
\begin{align}
	\norm{ F(\varphi + \zeta ) - F (\varphi)}_{\kh, T(\varphi)}
		\lesssim \norm{F}_{\kh ,T(0)} \sup_{s \in [0,1]} P_\kh^{k-1} (X,\varphi_s) \norm{\zeta}_{\kh, \Phi (X) } .	
\end{align}
\end{lemma}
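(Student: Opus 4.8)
The plan is to reduce everything to the product rule for the Taylor norm together with the polynomial bound Lemma~\ref{lemma:polynorm}. First I would observe that since $F$ is a local monomial of degree $k$ (say $F(\varphi) = \prod_{l=1}^{k} \nabla^{(m_l)} \varphi^{(\alpha_l)}_{x_l}$ with all $x_l \in X^\square$), its $\varphi$-derivative is, by the Leibniz rule,
\begin{align}
	D_\varphi F(\varphi_s ; \zeta) = \sum_{l=1}^{k} \Big( \nabla^{(m_l)} \zeta^{(\alpha_l)}_{x_l} \Big) \prod_{l' \neq l} \nabla^{(m_{l'})} (\varphi_s)^{(\alpha_{l'})}_{x_{l'}} ,
\end{align}
a sum of $k$ terms, each of which is a product of one linear factor in $\zeta$ and a degree-$(k-1)$ monomial in $\varphi_s$ that depends only on $(\varphi_s)_x$ for $x \in X^\square$. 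Submultiplicativity of $\norm{\cdot}_{\kh,T(\varphi)}$ then bounds each term by $\norm{\nabla^{(m_l)}\zeta^{(\alpha_l)}_{x_l}}_{\kh,T(\varphi)}$ times the norm of the remaining degree-$(k-1)$ monomial.

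Next I would bound the two factors separately. For the linear-in-$\zeta$ factor, $\nabla^{(m_l)}\zeta^{(\alpha_l)}_{x_l}$ is itself a degree-$1$ local polynomial supported in $X$ with at most $p_\Phi$ derivatives, so $\norm{\nabla^{(m_l)}\zeta^{(\alpha_l)}_{x_l}}_{\kh,T(\varphi)} = \norm{\nabla^{(m_l)}\zeta^{(\alpha_l)}_{x_l}}_{\kh,T(0)} \lesssim \norm{\zeta}_{\kh,\Phi(X)}$, using that this norm is independent of $\varphi$ (it is linear) and Lemma~\ref{lemma:cLnorm} (or directly the definition of the test-function norm and the $\tilde\Phi/\Phi$-seminorm). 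For the degree-$(k-1)$ monomial factor, I apply the second bound of Lemma~\ref{lemma:polynorm}: since it depends only on $(\varphi_s)_x$ for $x \in X^\square$, its $\norm{\cdot}_{\kh,T(\varphi_s)}$-norm is $\lesssim \norm{\cdot}_{\kh,T(0)} P_{j,\kh}^{k-1}(X,\varphi_s)$; then monotonicity $\norm{\cdot}_{\kh,T(\varphi)} \le \norm{\cdot}_{\kh,T(\varphi_s - (\varphi_s-\varphi))}$ — more precisely, since the monomial depends only on values in $X^\square$, evaluating the Taylor norm at $\varphi$ versus $\varphi_s$ differs only by replacing the base point, and I can instead just keep it at $\varphi_s$ throughout, which is why the statement records $P_\kh(X,\varphi_s)$ and not $P_\kh(X,\varphi)$. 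Collecting the $k$ terms and absorbing the combinatorial factor $k$ (bounded since $k$ is bounded by the fixed degree of effective potentials) and the product of $\norm{\cdot}_{\kh,T(0)}$ factors into $\norm{F}_{\kh,T(0)}$ gives the first displayed inequality.

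Finally, the second displayed inequality follows from the first by the fundamental theorem of calculus: $F(\varphi+\zeta) - F(\varphi) = \int_0^1 D_\varphi F(\varphi_s ; \zeta)\, ds$, so $\norm{F(\varphi+\zeta)-F(\varphi)}_{\kh,T(\varphi)} \le \int_0^1 \norm{D_\varphi F(\varphi_s;\zeta)}_{\kh,T(\varphi)}\, ds \le \sup_{s\in[0,1]} \norm{D_\varphi F(\varphi_s;\zeta)}_{\kh,T(\varphi)}$, and then apply the first bound. I do not expect a genuine obstacle here; the only mild care needed is bookkeeping the base point of the Taylor norm (ensuring the $P_\kh$ factor is evaluated at $\varphi_s$, consistent with the fact that the monomial only reads $\varphi$ on $X^\square$ so shifting outside $X^\square$ is irrelevant) and checking that all intermediate monomials still have at most $p_\Phi$ derivatives so that Lemma~\ref{lemma:polynorm} applies — both of which are immediate from $F$ having $\le p_\Phi$ derivatives.
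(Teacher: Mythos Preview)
Your proposal is correct and follows essentially the same route as the paper: expand $D_\varphi F(\varphi_s;\zeta)$ by the Leibniz rule, bound each summand as a product of a constant-in-$\varphi$ factor $\nabla^{(m_l)}\zeta^{(\alpha_l)}_{x_l}$ (controlled by $\norm{\zeta}_{\kh,\Phi(X)}$) and a degree-$(k-1)$ monomial in $\varphi_s$ (controlled by Lemma~\ref{lemma:polynorm}, yielding the $P_\kh^{k-1}(X,\varphi_s)$ factor), and then reassemble using $\norm{F}_{\kh,T(0)}\asymp L^{-q(\km)j}\kh_\bulk^k$. Your handling of the base-point shift (the Taylor norm of a function of $\varphi_s$ at base point $\varphi$ equals the Taylor norm of the same function at base point $\varphi_s$) and your derivation of the second inequality via the fundamental theorem of calculus are both correct and just make explicit what the paper leaves implicit.
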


\begin{proof} 
It is sufficient to check the monomial $F(\varphi) = M^{(\km)}_x (\varphi) = \prod_{i=1}^{k} \nabla^{(m_i)} \varphi_{x}^{(\alpha_i)}$ for some $\km \in \ko$ with $q(\km) \le p_{\Phi}$ and $k= p(\km)$.
Note that $D_\varphi F (\varphi_s ; \zeta) = \sum_{i=1}^k \nabla^{(m_i)} \zeta_{x}^{(\alpha_i)} \prod_{i'\ne i} \nabla^{(m_{i'})} \varphi_{s,x}^{(\alpha_{i'})}$ and
$\norm{F}_{\kh,T(0)} \asymp L^{-q(\km)} \kh_\bulk^{k}$,  so 
\begin{align}
	\norm{D_\varphi F (\varphi_s ; \zeta)}_{\kh, T(\varphi)}
		& \lesssim L^{-q(\km)} \kh_\bulk^k P_\kh^{k-1} (X,\varphi_s) \norm{\zeta}_{\kh, \Phi (X) } \nnb
		& \lesssim \norm{F}_{\kh,T(0)} P_\kh^{k-1} (X,\varphi_s) \norm{\zeta}_{\kh, \Phi (X) }  .
\end{align}
\end{proof}

In the following,  we use $\kc_+$ as defined in \eqref{eq:kcdefi}.
By definition,  
$\kc_+ \le \ell_0^{-1} \tilde{\chi} \ell_{\bulk}$. 

\begin{lemma} \label{lemma:EthV12}

If $V$ is a local monomial of degree $\le k$, 
then for $\kh \ge \kc_+$ and any $m \ge 1$,
	\begin{align}
\label{eq:EthV1}
		\norm{\E_+ (\theta V - V ) (b)}_{\kh,  \vec{\lambda}, T(0,z)}
		& \lesssim \Big( \frac{\kc_+}{\mathfrak{h}} \Big) \norm{V (b)}_{\kh,  \vec{\lambda}, T(0,z)}
		, \\
\label{eq:EthV12}
		\norm{\Eplus[ | \theta V (b) - \Eplus \theta V (b) |^m ] }_{\kh,  \vec{\lambda}, T(0,z)}
		& \lesssim  \Big( \frac{\kc_+}{\mathfrak{h}} \Big)^{m} \norm{V (b)}_{\kh,  \vec{\lambda}, T(0,z)}^m ,  \\
\label{eq:EthV13}		
		\norm{\Cov_+[ V (b) ; V(b') ] }_{\kh,  \vec{\lambda}, T(0,z)}
		& \lesssim  \Big( \frac{\kc_+}{\mathfrak{h}} \Big)^{2} \norm{V (b)}_{\kh,  \vec{\lambda}, T(0,z)}^2
		.
	\end{align}
\end{lemma}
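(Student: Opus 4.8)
The plan is to reduce all three bounds to the identity $\Eplus\theta F = e^{\frac12\Delta_{\Gamma_+}}F$ from \eqref{eq:ECexp}, valid on polynomials, together with the nilpotency of $\Delta_{\Gamma_+}$ on polynomials of degree $\le k$ (it is killed after $\lfloor k/2\rfloor$ applications) and the covariance bound \eqref{eq:Gammajbounds2} for $\Gamma_+$. For \eqref{eq:EthV1} I would write
\begin{align*}
  \Eplus(\theta V - V)(b) \;=\; \big(e^{\frac12\Delta_{\Gamma_+}}-1\big)V(b) \;=\; \sum_{l\ge1}\frac{1}{2^l\,l!}\,\Delta_{\Gamma_+}^{\,l}V(b)
\end{align*}
and estimate each term separately. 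Each factor of $\Delta_{\Gamma_+}$ contracts a pair of field factors $\nabla^{(m_1)}\varphi_x^{(\alpha)}\,\nabla^{(m_2)}\varphi_x^{(\alpha)}$ occurring in the monomial $V_x$ into $\nabla_x^{(m_1)}\nabla_y^{(m_2)}\Gamma_+(x,y)\big|_{y=x}$, which by \eqref{eq:Gammajbounds2} is bounded by $O(\kc_+^2 L^{-(|m_1|+|m_2|)j})$, whereas the two removed factors contributed $\asymp \kh_\bulk^2 L^{-(|m_1|+|m_2|)j}$ to the $T$-norm by Lemma~\ref{lemma:cLnorm}. Hence, up to a combinatorial constant depending only on $k$, each $\Delta_{\Gamma_+}$ multiplies $\norm{\cdot}_{\kh,T(0)}$ by $O((\kc_+/\kh_\bulk)^2)$; summing the series and using $\kc_+\le\kh_\bulk$ (i.e.\ $\kh\ge\kc_+$) gives the stated bound with a single power of $\kc_+/\mathfrak h$. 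Since $V\mapsto\Eplus(\theta V - V)$ is linear and does not involve $K,\bar K$, the extended-norm statement is obtained by applying the same estimate term by term to each direction appearing in the $\vec\lambda$-expansion.

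For \eqref{eq:EthV12} and \eqref{eq:EthV13} I would expand the centred field $\theta V(b)-\Eplus\theta V(b)$ in powers of $\zeta$, writing each monomial $V_x(\varphi+\zeta)$ as a sum over subsets $S$ of its factors of $\big(\prod_{i\in S}\nabla^{(m_i)}\zeta^{(\alpha_i)}_x-\Eplus[\cdots]\big)\prod_{i\notin S}\nabla^{(m_i)}\varphi^{(\alpha_i)}_x$, the centring killing the $S=\emptyset$ term so that every surviving block carries at least one $\zeta$-factor. Raising to the $m$-th power and applying $\Eplus$, Wick's theorem writes the result as a sum over pairings of all the $\zeta$-factors; since $\Eplus$ of an odd number of $\zeta$'s vanishes, every $\zeta$ is paired, and each pairing yields one covariance factor bounded by $O(\kc_+^2 L^{-(\cdot)j})$ via \eqref{eq:Gammajbounds2}. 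As each of the $m$ blocks contributes at least one $\zeta$, the number of $\zeta$-factors is $\ge m$, so the product of covariance factors carries at least $\kc_+^{m}$; comparing with the $\kh_\bulk$-weights of the uncontracted $\varphi$-factors exactly as in the first paragraph produces the gain $(\kc_+/\kh_\bulk)^m\le(\kc_+/\mathfrak h)^m$. For \eqref{eq:EthV13} this is the case $m=2$, with the extra observation that $\Cov_+[V(b);V(b')]$ retains only pairings with at least one contraction between $b$ and $b'$, which is nonzero only when $\operatorname{dist}(b,b')<L^{j+1}$ by the finite-range property (Definition~\ref{defi:FRD}); in that case $\norm{V(b')}_{\kh,T(0)}\asymp\norm{V(b)}_{\kh,T(0)}$ by translation invariance, and the grading in the $\sigma$-variables is handled automatically by the definition of the $T$-norm (in particular $\sigma_\#^2=0$ makes the observable contributions vanish for $m\ge2$). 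The extended-norm statement again follows since the map $V\mapsto\Eplus[|\theta V-\Eplus\theta V|^m]$ is $m$-linear in $V$ and the bound applies to each multilinear slot.

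The step I expect to be the main obstacle is the position bookkeeping in the last two bounds: one must sum over the position $x_1\in b$ of the first block and over $x_2,\dots,x_m$ of the remaining blocks, constrained through the contraction graph by the range $L^{j+1}$ of $\Gamma_+$, and check that these sums reproduce precisely the $L^{jd}$-volume factors implicit in $\norm{V(b)}_{\kh,T(0)}^m$ with no spurious powers of $L$, while also tracking how the derivative weights $L^{-q(\km)j}$ are redistributed between the contracted ($\Gamma_+$) and uncontracted ($\varphi$) factors. Once the Wick expansion is set up and this accounting is done carefully (the finite-range property ensuring each free position sum costs only $O_L(L^{jd})$), the remaining estimates are routine applications of \eqref{eq:Gammajbounds2} and Lemma~\ref{lemma:cLnorm}.
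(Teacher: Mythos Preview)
Your approach is correct in spirit, but the paper proves the lemma by a considerably shorter and more robust route that sidesteps exactly the obstacle you flag. Rather than Wick-expanding, the paper uses Lemma~\ref{lemma:FvskhT} to bound $\|\theta V - V\|_{\kh,T(\varphi)}$ \emph{pointwise} in the fluctuation field $\zeta$: since $V$ is a degree-$\le k$ monomial, the mean-value estimate gives $\|\theta V(b) - V(b)\|_{\kh,T(0)} \lesssim \|V(b)\|_{\kh,T(0)}\,P_\kh^{k-1}(b,\zeta)\,\|\zeta\|_{\kh,\Phi(b^\square)}$. One then takes the $m$-th power, pulls the $T(0)$-norm inside $\E_+$, and uses the Gaussian moment bound $\E_+\|\zeta\|_{\kc_+,\Phi(b^\square)}^p = O_p(1)$ (from \eqref{eq:Gammajbounds2}) to extract the factor $(\kc_+/\kh)^m$. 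This yields \eqref{eq:EthV1} and \eqref{eq:EthV12} simultaneously, and \eqref{eq:EthV13} follows by a Cauchy--Schwarz step reducing the covariance to two variances. Because the $T$-norm already packages the sum over $x\in b$ and the derivative weights, no separate position or volume accounting is needed---the difficulty you anticipated simply does not arise. Your heat-kernel argument for \eqref{eq:EthV1} is fine (and matches how the paper handles the related bound \eqref{eq:cVnrmvslnrm2}), but for \eqref{eq:EthV12} your Wick-expansion route, while workable, is both longer and more fragile: the position sums and the passage to the extended $\vec\lambda$-norm (note that $V\mapsto (\theta V-\E_+\theta V)^m$ is degree-$m$ homogeneous, not $m$-linear, so the extended-norm step needs more care than you indicate) are handled automatically by the paper's method since it never leaves the $T$-norm framework.
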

\begin{proof}
By Lemma~\ref{lemma:FvskhT},  
\begin{align}
	\norm{\E_+ |\theta V - V |^m (b)}_{\kh,  \vec{\lambda}, T(0,z)}
		&\le \E_+ \norm{ |\theta V - V |^m (b)}_{\kh,  \vec{\lambda}, T(0,z)} \nnb
		&\lesssim \norm{V(b)}^m_{\kh,  \vec{\lambda}, T(0,z)}  \E_+ \norm{\zeta}^m_{\kh, \Phi(b^{\square})} P_{\kh}^{(k-1)m} (b,\zeta) \nnb
		&\lesssim \Big( \frac{\kc_+}{\mathfrak{h}} \Big)^m \norm{V(b)}_{\kh,  \vec{\lambda}, T(0,z)}  \E_+ P_{\kc_+}^{km} (b,\zeta) .
\end{align}
But due to \eqref{eq:Gammajbounds2},  $\Eplus \norm{\zeta}_{\kc_+, \Phi (b)}^p \le O_p (1)$ for any $p \ge 1$,  this bound gives \eqref{eq:EthV1} and also \eqref{eq:EthV12},  after bounding
\begin{align}
	\big|  \big( \theta V - \Eplus \theta V \big) (b) \big|^m 
		\le O_m (1) \Big( | ( \theta V - V) (b) |^m + | (V - \Eplus \theta V)(b)  |^m \Big) .
\end{align}
The final bound \eqref{eq:EthV13} follows from
\begin{align}
	\norm{\Cov_+[ V (b) ; V(b') ] }_{\kh,  \vec{\lambda}, T(0,z)}^2
		\le \norm{\Var_+[ V (b) ] }_{\kh,  \vec{\lambda}, T(0,z)} \norm{\Var_+[ V(b') ] }_{\kh,  \vec{\lambda}, T(0,z)} .
\end{align}
\end{proof}

We can use this lemma to bound the deviation of $U_\pt$ from $\hat{V}$,  where we denote $W_\pt = \mathbb{W}_{w_+,  V_\pt}$ and $\delta u_+ = (\pi_0 + \pi_\ox) U_\pt$.

\begin{lemma} \label{lemma:VptinDDomain}
Let $(V,K) \in \D (\alpha)$($\alpha \le \bar{\alpha})$ and assume $\lambda_V \le \tilde{g} \scale$ and $\lambda_K \le \tilde{g}^{9/4} \scale^{\kbe}$.
Then for $b \in \cB$, 
\begin{align}
	\norm{V-U_{\pt}}_{\cV (\ell), \vec{\lambda}, T(z)} 
		&\lesssim \ell_0^2 \epsilon (\ell)
		\label{eq:VptinDDomain}		 \\
	\norm{\pi_{4,\nabla} (V-U_{\pt} )}_{\cV (\ell), \vec{\lambda}, T(z)} 
		&\lesssim O_L (\tilde{g}^2 \scale^{1+\kt}) 
		\label{eq:VptinDDomain2}				
		\\
	\norm{ \Eplus [ (\theta \hat{V} - U_{\pt} )^m (b) ] }_{\kh,  \vec{\lambda}, T(0,z)}
		&\le O_{m,L} ( \bar{\epsilon}^m (\kh) )
		\label{eq:hatVmVpt1}
\end{align}
Thus in particular,  for any $b \in \cB$,   $\kh \in \{\ell,  h\}$ and $\ratio > 0$,
\begin{align}
	\norm{W_\pt (b)}_{\ratio \kh,  \vec{\lambda},  T(0,z)} &\le O_L ( \bar{\epsilon}^2 (\kh) )  \label{eq:Wptbnd} ,  \\
	\norm{\delta u_+ (b)}_{\ell,  \vec{\lambda},  T(0,z)} &\le O_L (\bar{\epsilon} (\ell) ) .  
	\label{eq:deltauplusnorm}
\end{align}
\end{lemma}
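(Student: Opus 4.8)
The statement collects several bounds on $U_\pt = \Phi_\pt^U(\hat V)$ and the quantities $W_\pt$, $\delta u_+$ derived from it, where $\hat V = V - Q$ with $Q = \mathbb{Q}_K(b)$. The key inputs are the decomposition of $\Phi_{j+1}^\pt$ from Definition~\ref{def:WP},
\[
	\Phi_{j+1}^\pt(\hat V) = \E_{j+1}\theta \hat V - P_{j,\hat V},
\]
together with Corollary~\ref{cor:Qnorm2} (bounds on $Q$ and on $V - sQ \in \cD^\st$), Lemma~\ref{lemma:FWPboundsobs} (bounds on $W$, $P$, $\F_{\pi,\Gamma}$), and Lemma~\ref{lemma:EthV12} (contraction of $\E_+\theta - \mathrm{id}$ and covariance bounds). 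So first I would write $V - U_\pt = (V - \E_+\theta\hat V) + P_{j,\hat V}$, bound the first term by $(V - \hat V) + (\hat V - \E_+\theta \hat V) = Q + (1-\E_+\theta)\hat V$, and observe $\hat V = V - Q \in \cD^\st(\alpha')$ for a slightly enlarged $\alpha'$ by Corollary~\ref{cor:Qnorm2}; apply \eqref{eq:cVnrmvslnrm2} to control $(1-\E_+\theta)\hat V$ by $\ell_0^{-2}\tilde\chi\,\be_{\hat V}(\ell)$, and \eqref{eq:Qnorm20} to control $Q$ by $\tilde g^{5/4}\scale^{\kbe - 1 + \kt}$. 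Since $\kbe > 2 > 1 + \kt$ these contributions are both $\lesssim \ell_0^2\epsilon(\ell) = \ell_0^2\tilde g\scale$; for the $P_{j,\hat V}$ term, Lemma~\ref{lemma:FWPboundsobs} with $\kh_j = \ell_j$ gives $\norm{P(b)}_{\ell,\vec\lambda,T(0,z)} \lesssim (\bar\epsilon(\ell) + \lambda_V)^2 \lesssim \epsilon(\ell)^2$, which after the $\norm{\cdot}_{\cV(\ell)}$-conversion via \eqref{eq:cVnrmvslnrm1} is $\lesssim \scale^{-1+\kt}\epsilon(\ell)^2 \lesssim \ell_0^2\epsilon(\ell)$. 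This proves \eqref{eq:VptinDDomain}.

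\textbf{The $\pi_{4,\nabla}$-projection and the moment bound.} For \eqref{eq:VptinDDomain2} I would note that $\pi_{4,\nabla}$ projects onto the $\ko_{4,\nabla}$-monomials, which have at least one derivative; the Gaussian expectation $\E_+\theta$ does not create new $\ko_{4,\nabla}$-terms beyond those already present in $\hat V$, so $\pi_{4,\nabla}(V - U_\pt)$ is controlled either by $\pi_{4,\nabla}Q$ or by $\pi_{4,\nabla}P$. For $\pi_{4,\nabla}Q$ one uses the better decay of $Q$ restricted to gradient monomials coming from the contraction $1-\Loc$ in the definition \eqref{eq:Qdefifts} of $Q$; for $\pi_{4,\nabla}P$ one uses that $P$ inherits the $\scale^\kt$ prefactor from the $\cV_{4,\nabla}$-norm weights in Definition~\ref{defi:cVnormdefi}. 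The upshot should be the claimed $O_L(\tilde g^2\scale^{1+\kt})$. For the moment bound \eqref{eq:hatVmVpt1}, write $\theta\hat V - U_\pt = (\theta\hat V - \E_+\theta\hat V) + P_{j,\hat V}$; the first summand has all $\norm{\cdot}_{\kh,\vec\lambda,T(0,z)}$-moments bounded by $(\kc_+/\kh)^m\norm{\hat V(b)}^m$ via \eqref{eq:EthV12} of Lemma~\ref{lemma:EthV12}, and $\kc_+/\kh \asymp \tilde\chi^{1/2}(\ell_\bulk/\kh_\bulk)\cdot\ell_0^{-1}$ so that combined with $\norm{\hat V(b)}_{\kh} \lesssim \be_{\hat V}(\kh)$ this gives exactly $\bar\epsilon(\kh)^m$ (recall $\bar\epsilon(\kh)$ was designed in \eqref{eq:epsbarepsdefn} for this); the $P_{j,\hat V}$ summand is deterministic and already $O_L(\bar\epsilon^2)$ by Lemma~\ref{lemma:FWPboundsobs}, hence absorbed. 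Raising to the $m$-th power and using submultiplicativity of the extended norm finishes it.

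\textbf{The last two bounds.} For \eqref{eq:Wptbnd}: $W_\pt = \mathbb{W}_{w_+,V_\pt}$ with $V_\pt = \mathbb{V}^{(0)}(U_\pt)$, and \eqref{eq:VptinDDomain} together with $V \in \cD^\st(\alpha)$ shows $V_\pt \in \cD^\st(\alpha'')$ for a slightly enlarged constant, so Lemma~\ref{lemma:FWPboundsobs} applied to $V_\pt$ directly gives $\norm{W_\pt(b)}_{\ratio\kh,\vec\lambda,T(0,z)} \lesssim (\bar\epsilon(\kh) + (\kh/\ell)^3\lambda_V)^2 \lesssim \bar\epsilon(\kh)^2$ using $\lambda_V \le \tilde g\scale$ and the definition of $\bar\epsilon$; monotonicity in $\ratio$ removes the $\ratio$. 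For \eqref{eq:deltauplusnorm}: $\delta u_+ = (\pi_0 + \pi_\ox)U_\pt$, i.e. the constant and $\sigma_\ox$-parts, and these come from $\pi_0\E_+\theta\hat V$ (the constant produced by contracting two factors of $\hat V$ against $\Gamma_+$, of size $\tilde\chi^{1/2}\tilde g\scale = \bar\epsilon(\ell)$ after the $\norm{\cdot}_{\cV(\ell)}$-normalisation) plus the $\pi_\ox$-parts of $P$ bounded by Lemma~\ref{lemma:FWPboundsobs}, both $O_L(\bar\epsilon(\ell))$.

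\textbf{Main obstacle.} The genuinely delicate point is the $\pi_{4,\nabla}$-bound \eqref{eq:VptinDDomain2}: one must track precisely that the perturbative map does not produce a $\ko_{4,\nabla}$-term of order $\tilde g\scale$ (which would be fatal, as $\cV_{4,\nabla}$ carries only a $\scale^\kt$ weight), and this hinges on the fact that the only sources of gradient-quartic terms are (a) the already-present $\pi_{4,\nabla}V$, which contracts by $\E_+\theta$, and (b) the $\Loc$-cancellations inside $Q$ and $P$; isolating the $\scale^\kt$-improvement requires carefully matching the norm weights in Definition~\ref{defi:cVnormdefi} against the output of $\Phi_\pt$, and is where the choice of $\kt$ from \eqref{eq:ktcondition} is used. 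The rest is bookkeeping with the extended norm and the already-established Lemmas~\ref{lemma:FWPboundsobs}, \ref{lemma:EthV12}, and Corollary~\ref{cor:Qnorm2}.
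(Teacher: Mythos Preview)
Your overall decomposition $V - U_\pt = Q + (\hat V - \E_+\theta\hat V) + P_{\hat V}$ and the bounds you invoke for each piece match the paper's proof, and your treatment of \eqref{eq:VptinDDomain}, \eqref{eq:hatVmVpt1}, \eqref{eq:Wptbnd}, \eqref{eq:deltauplusnorm} is essentially correct.

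There is, however, a concrete error in your handling of \eqref{eq:VptinDDomain2}. You write that ``for $\pi_{4,\nabla}Q$ one uses the better decay of $Q$ restricted to gradient monomials coming from the contraction $1-\Loc$ in the definition \eqref{eq:Qdefifts} of $Q$.'' But $Q(b) = \sum_{X\supset b}(\Loc_X K/I)(b)$ involves $\Loc$, not $1-\Loc$; there is no contraction mechanism here. The correct argument is simpler: the general bound $\norm{Q}_{\cV(\ell),\vec\lambda,T(z)} \le O_L(\tilde g^{9/4}\scale^{\kbe-1+\kt})$ from Corollary~\ref{cor:Qnorm2} (the paper's proof uses exponent $9/4$; the $5/4$ in the corollary statement appears to be a typo, and $9/4$ is what the derivation there actually gives) already bounds the $\pi_{4,\nabla}$-part, and since $\kbe>2$ and $9/4>2$ this is $\le O_L(\tilde g^2\scale^{1+\kt})$.

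Relatedly, your ``main obstacle'' paragraph slightly misstates the key algebraic fact. It is not that $\pi_{4,\nabla}V$ ``contracts by $\E_+\theta$''; rather, the paper observes that $(1-\pi_0-\pi_\ox)(\hat V - \E_+\theta\hat V) = \tfrac12\Delta_{\Gamma_+}\hat V_4 \in \cV_2$, because $\Delta_{\Gamma_+}$ strictly lowers polynomial degree by two. Hence $\pi_{4,\nabla}(\hat V - \E_+\theta\hat V) = 0$ \emph{exactly}, and the only contributions to $\pi_{4,\nabla}(V-U_\pt)$ are $\pi_{4,\nabla}Q$ and $\pi_{4,\nabla}P_{\hat V}$, both of which are second-order and pick up the $\scale^{1+\kt}$ factor through the conversion \eqref{eq:cVnrmvslnrm1}. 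The choice of $\kt$ from \eqref{eq:ktcondition} is not used here specifically; it enters only through the definition of $\norm{\cdot}_{\cV}$.
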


\begin{proof}
We denote $\norm{\cdot}$ for $\norm{\cdot}_{\cV (\ell), \vec{\lambda}, T(z)}$ for brevity.
By Lemma~\ref{lemma:Qnorm},  
\begin{align}
	\norm{V - \hat{V}} \lesssim \norm{Q}  \le \scale^{-1+\kt} O_L ( \norm{K}_{\cW} + \lambda_K ) \le O_L (\tilde{g}^{9/4} \scale^{\kb-1+\kt}) ,
\end{align}
and it will now be sufficient to prove a bound on $\norm{\hat{V}-U_\pt}$ for \eqref{eq:VptinDDomain}.
Observe that,  if $\hat{V}_4 := (\pi_4 + \pi_{4,\nabla}) \hat{V}$,  then 
\begin{align}
	(1-\pi_0-\pi_\ox)(\hat{V}- \Eplus \theta \hat{V} )_x = (1-\pi_0-\pi_\ox) \frac{1}{2} ( \Delta_{\Gamma_+} \hat{V}_4 )_x \in \cV_2 .
\end{align}
But since  $\Delta_{\Gamma_+}$ removes two powers of $\varphi_x$ and multiplies $\Gamma_{+} (0) \lesssim \kc_+^2$,  we have
\begin{align}
	& \norm{(1-\pi_0 - \pi_\ox)(\hat{V}- \Eplus \theta \hat{V})} \lesssim  \Big( \frac{\kc_+}{\ell} \Big)^2 \norm{\hat{V}} \lesssim \ell_0^2 \epsilon (\ell)  \\
	& \big( \pi_4 + \pi_{4,\nabla } \big) (\hat{V}- \Eplus \theta \hat{V}) = 0 .
\end{align}
Also,  (dropping $b$,)
\begin{align}
	U_{\pt}  -  \Eplus \theta \hat{V}
		=  - \frac{1}{2} \Loc \Cov_{\pi,+} [\theta \hat{V} ; \theta \hat{V} (\Lambda) ]  - \Loc \Eplus [ \theta \hat{W} ]  .
		\label{eq:hatVmVpt6}		
\end{align}
By \eqref{eq:EthV13},  Proposition~\ref{prop:locXBbdmt},  Lemma~\ref{lemma:Qnorm} and since $\Loc (\cdots)$ is a polynomial of degree $\le 4$,
\begin{align}
	\norm{ \Loc \Cov_{\pi,+} [\theta \hat{V} ; \theta \hat{V} ]  }_{\kh,  \vec{\lambda}, T(0,z)}
		& \lesssim \tilde\chi \Big( \frac{\kh_\bulk}{\ell_\bulk} \Big)^4 \norm{\hat{V}}_{\ell,  \vec{\lambda}, T(0,z)}^2
		\lesssim \Big( \frac{\kh_\bulk}{\ell_\bulk} \Big)^4 \bar\epsilon^2 (\ell) ,  \label{eq:hatVmVpt4}
\end{align}
while by \eqref{eq:Qnorm21},  Lemma~\ref{lemma:FWPboundsobs} and Proposition~\ref{prop:locXBbdmt},
\begin{align}
	\norm{\Loc \Eplus [ \theta \hat{W} ]}_{\kh,  \vec{\lambda}, T(0,z)}
		\le O_L \big( \bar{\epsilon}^2 (\ell) \big) \times \Big( \frac{\kh_\bulk}{\ell_\bulk} \Big)^4  \label{eq:hatVmVpt5}
		.
\end{align}
Putting together \eqref{eq:hatVmVpt6}--\eqref{eq:hatVmVpt5} and Lemma~\ref{lemma:cVnrmvslnrm},  we find \eqref{eq:hatVmVpt1} and also
\begin{align}
	\norm{U_\pt - \E \theta \hat{V}}
		\le O_L (1) \scale^{-1+\kt} \tilde{g}^2 \scale^{2} \le O_L (1) \tilde{g}^2 \scale^{1+\kt} ,
\end{align}
giving \eqref{eq:VptinDDomain} and \eqref{eq:VptinDDomain2}.
\end{proof}

We now also need a stability estimate,  where $I_{\pt} = \cI (U_\pt)$.

\begin{lemma}
\label{lemma:entVpt}
Let $b \in \cB$,  $(V,K) \in \cD (\alpha) \times \cK (\alpha)$ ($\alpha\le\bar{\alpha}$),  and assume $\lambda_V \le \tilde{g}\scale$ and $\lambda_K \le \tilde{g}^{9/4} \scale^{\kbe}$.  Then for $b \in \cB$,
\begin{align}
	\label{eq:entVpt}
	\left\{ \begin{array}{c}
		\norm{e^{-U_\pt^\stable (b) }  }_{\kh, \vec{\lambda}, T (\varphi,z)} \\
	\norm{I_\pt^b}_{\kh, \vec{\lambda}, T (\varphi,z)}
	\end{array} \right\}
		\lesssim \begin{cases}
		\big(  e^{- c \norm{\varphi / h_{\bulk}}^4_{L^4 (b)} } \big)^t e^{C  \norm{\varphi }_{h,  \Phi (b^{\square}) }^2 } & (\kh \lesssim h) \\
		e^{C \ell_0^{-1}  \norm{\varphi }_{\ell, \Phi (b^{\square}) }^2 } & (\kh \lesssim \ell)
		\end{cases}
\end{align}
and for $B \in \cB_+$,
\begin{align}
	\label{eq:entVpt2}
	\left\{ \begin{array}{c}	
	\norm{e^{-U_\pt^\stable (B) }  }_{\kh_+, \vec{\lambda}, T_+ (\varphi,z)} \\
	\norm{I_\pt^B}_{\kh_+, \vec{\lambda}, T_+ (\varphi,z)}
	\end{array} \right\}
		\lesssim \begin{cases}
		\big(  e^{- c \norm{\frac{\varphi }{ h_{+,\bulk} } }^4_{L^4_+ (b)} } \big)^t e^{C  \norm{\varphi }_{h_+,  \Phi_+ (B^{\square}) }^2 } & (\kh_+ \lesssim h_+) \\
		e^{C \ell_0^{-1}  \norm{\varphi }_{\ell_+, \Phi_+ (B^{\square}) }^2 } & (\kh_+ \lesssim \ell_+)
		\end{cases}
\end{align}
for some $C,c  >0$ (that are $L$-independent).
\end{lemma}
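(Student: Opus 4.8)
The plan is to deduce Lemma~\ref{lemma:entVpt} from the stability estimates already established for generic elements of the stability domain, once $U_\pt = \Phi^\pt_+(V-Q)$ has been placed inside that domain. The first step is to invoke Lemma~\ref{lemma:VptinDDomain}: under the present hypotheses ($(V,K)\in\cD(\alpha)\times\cK(\alpha)$, $\lambda_V\le\tilde{g}\scale$, $\lambda_K\le\tilde{g}^{9/4}\scale^{\kbe}$) it gives $\norm{V-U_\pt}_{\cV(\ell),\vec{\lambda},T(z)}\lesssim\ell_0^2\epsilon(\ell)$, the sharper estimate $\norm{\pi_{4,\nabla}(V-U_\pt)}_{\cV(\ell),\vec{\lambda},T(z)}\lesssim O_L(\tilde{g}^2\scale^{1+\kt})$, and $\norm{W_\pt(b)}_{\ratio\kh,\vec{\lambda},T(0,z)}\le O_L(\bar{\epsilon}^2(\kh))$ for $\kh\in\{\ell,h\}$. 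Since $\cD(\alpha)\subset\cD^\st(\alpha)$ and the $\cD^\st$-thresholds exceed those of $\cD$ by the ($L$-dependent) factor $\ell_0^2$, comparing coefficients one by one shows $U_\pt\in\cD^\st_j(\bar{\alpha})$: the quartic coefficient differs from that of $V$ by $O(\ell_0^{-2}\tilde{g})$ and hence stays in $((\bar{\alpha}C_\cD)^{-1}\tilde{g},\bar{\alpha}C_\cD\tilde{g})$ for $L$ large; the gradient-quartic part is controlled by the sharper estimate above; and the quadratic and marginal coefficients, even after the factor $L^{2-\eta}$ built into $\E_+\theta$ inside $\Phi^\pt$, remain well below the $\cD^\st$ bounds because $L^{2-\eta}\ll\ell_0^2$. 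By the rescaling bound \eqref{eq:cVnrmvslnrm3} the same reasoning at scale $j+1$ gives $U_\pt\in\cD^\st_{j+1}(\bar{\alpha})$; this is exactly the property for which $\bar{\alpha}$ is fixed in Section~\ref{sec:chparams}. Finally, $\lambda_V\le\tilde{g}\scale$ puts \eqref{asmp:lambda2} in force whenever $\kh\lesssim h$ and, for $\tilde{g}$ small, \eqref{asmp:lambda3} in force whenever $\kh\lesssim\ell$.

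With $U_\pt$ thus located, I would apply Lemma~\ref{lemma:entV} with $V$ replaced by $U_\pt$ and $t=1$ (together with the Sobolev inequality Lemma~\ref{lemma:sobolev}, as in the proof of \eqref{eq:entVtone}, for the refinement involving $\norm{\varphi}^2_{h,\tilde\Phi(b^\square)}$) to obtain the required decay of $e^{-U_\pt^\stable(b)}$ in $\norm{\cdot}_{\kh,\vec{\lambda},T(\varphi,z)}$. Since $I_\pt(b)=e^{-U_\pt^\stable(b)}(1+W_\pt(b))$ and $W_\pt$ is a polynomial of bounded degree, Lemma~\ref{lemma:polynorm} combined with the bound on $W_\pt(b)$ from the first step gives $1+\norm{W_\pt(b)}_{\ratio\kh,\vec{\lambda},T(\varphi,z)}\le 1+O_L(\bar{\epsilon}^2(\kh))P_\kh^6(b,\varphi)$; multiplying this by the $e^{-U_\pt^\stable}$ estimate (submultiplicativity, Lemma~\ref{lemma:subxext}) and absorbing the polynomial prefactor into $e^{CP_h^2}$ (resp. $e^{C\ell_0^{-1}P_\ell^2}$) after enlarging $C$ and taking $\tilde{g}$ small yields \eqref{eq:entVpt}. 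Continuity in $(\ba_\emptyset,\ba)\in\AA(\tilde{m}^2)$ follows from the continuity of $W_\pt$ (via Lemma~\ref{lemma:FWPboundsobs} and Lemma~\ref{lemma:VptinDDomain}) and that of $\Loc$ in $\norm{\cdot}_{\ell,T(0)}$ (Proposition~\ref{prop:locXBbdmt}). The scale-$(j+1)$ bound \eqref{eq:entVpt2} is then obtained by repeating this argument verbatim at scale $j+1$, using $U_\pt\in\cD^\st_{j+1}(\bar{\alpha})$, Lemma~\ref{lemma:entV} read with $h_+,\ell_+,\Phi_+$ applied to the stabilisation of $U_\pt$ at scale $j+1$, and the bound on $W_\pt(B)$ obtained either directly from Lemma~\ref{lemma:VptinDDomain} at scale $j+1$ or by set-multiplicativity over the $L^d$ subblocks $b\in\cB(B)$ together with monotonicity of the $T$-norm in the scale.

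I expect the main obstacle to be the first step: one must make sure $U_\pt$ does not escape the stability domain at either scale, and in particular that the relevant and marginal directions --- the quadratic coefficient and the $\varphi\cdot\Delta\varphi$, $\nabla\varphi\cdot\nabla\varphi$ pieces --- do not overshoot the $\cD^\st$ thresholds after the amplification by $L^{2-\eta}$ inside $\Phi^\pt$. This is precisely where the $L$-dependent enlargement $\ell_0^2$ of $\cD^\st$ over $\cD$ enters, which is why Lemma~\ref{lemma:VptinDDomain} is stated with $\ell_0^2$-weighted bounds. Everything else is a routine transcription of the estimates already proved for $V$ in Lemma~\ref{lemma:entV} and for $V-sQ$ in Lemmas~\ref{lemma:entVQ}--\ref{lemma:entIs}.
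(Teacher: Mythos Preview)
Your proposal is correct and follows essentially the same route as the paper: use Lemma~\ref{lemma:VptinDDomain} to show that $U_\pt$ (and its $\vec{\lambda}$-derivatives) sit inside the stability domain, then invoke the stability estimates for $e^{-V^\stable}$ and $\cI(V)$ already proved, and for \eqref{eq:entVpt2} exploit the $\ell_0^2$ headroom of $\cD^\st$ over $\cD$ to absorb the $L^{2-\eta}$ growth of the quadratic coefficient at scale $j{+}1$. The paper phrases the middle step slightly differently---it says the analogues of \eqref{eq:entVQ1}--\eqref{eq:entVQ5} hold with $U_\pt$ in place of $V-sQ$ rather than invoking Lemma~\ref{lemma:entV} as a black box---but this is only a packaging difference, and your identification of the quadratic/marginal amplification as the one point requiring care matches the paper's remark exactly.
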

\begin{proof}
We consider \eqref{eq:entVpt} first. 
Due to Lemma~\ref{lemma:VptinDDomain},  we see that analogues of \eqref{eq:entVQ1}--\eqref{eq:entVQ3} hold with $U_\pt$ in place of $V - sQ$.  
Also,  if we denote $g^{(\emptyset)}_{\pt}$ and $\hat{g}^{(\emptyset)}$ for the coefficients of $|\varphi|^4$ in $U_\pt$ and $\hat{V}$,  respectively,  then again Lemma~\ref{lemma:VptinDDomain} implies $\norm{ g^{(\emptyset)}_{\pt} - \hat{g}^{(\emptyset)} }_{\vec{\lambda}, T(z)} \lesssim \tilde{g} / \ell_0^2$,  so we see that \eqref{eq:entVQ4} and \eqref{eq:entVQ5} hold the same.

For \eqref{eq:entVpt2},  the only danger is that the $\norm{\pi_2 U_{\pt}}_{\ell_+, T_+ (0)}$-norm is larger than $\norm{\pi_2 U_\pt}_{\ell,  T(0)}$ by a factor of $L^{2}$.
However,  this is not a problem once we realise that Lemma~\ref{lemma:entVQ} and \ref{lemma:entIs} allow $V \in \cD^{\st} (\alpha)$,  so the domain of $\nu^{(\emptyset)}$ can be larger by a factor of $\ell_0^2$,  much larger than $L^2$.  Thus the estimate hold the same at scale $j+1$.
\end{proof}

\section{RG map estimates--Part I,  potential function}
\label{sec:RGpartI}

We prove \eqref{eq:controlledRG22} in this section.
The deviation from the perturbative map was
\begin{align}
	R_+^U = \Phi^{\pt}_+ (\hat{V}) - \Phi^{\pt}_+ (V) 
	\label{eq:RUplus}
\end{align}
where we recall from \eqref{eq:hatVhatI} that $\hat{V} = V - Q$.
Since $V_+ = \mathbb{V}^{(0)} (\Phi^{\pt}_+ (\hat{V}) )$ and $(\pi_0+\pi_\ox)V =0$,  the bounds on $R_+^U$ will imply the same bounds on 
\begin{align}
	V_+ - \mathbb{V}^{(0)} (\Phi^\pt_+ (V) ) \quad \text{and} \quad \delta u_+ = ( \pi_0 + \pi_\ox ) \Phi^{\pt}_+ (\hat{V}) .
\end{align}
When we say that the map $R_+^U$ is well-defined in the following statement, 
it means that the integrals used for the definition \eqref{eq:RUplus} converge.

\begin{proposition} \label{prop:RplU}
Let $j < N$ and assume \eqref{asmp:Phi} ($\alpha=1$).
Then the map $R_+ : \cD^{(0)} \times \cK \times \II \rightarrow \cD_+$ is well-defined and for each $p,q\ge 0$,  there exists $(M_{p,q})_{p,q,\ge 0}$ (that can be $L$-dependent) such that,
\begin{align}
	\norm{D^p_{V_\bulk} D^q_K R_+^U }_{\ell, T(0)}
		\le  M_{p,q} \times \begin{cases}
			\tilde\chi_+^{3/2} \tilde{g}_+^{3} \scale_+^{\kae-(1-\kt) p} & (p\ge 0,  \; q =0) \\
			\scale_+^{-(1-\kt) p}  & (p \ge 0,  \; q=1)  	\\
			\scale_+^{-2(1-\kt)}  & (p \ge 0,  \; q=2) \\
			0   &  (p \ge 0,  \; q \ge 3) 
			,
		\end{cases}
\end{align}
and each derivative is continuous in $(\ba_\emptyset, \ba) \in \AA (\tilde{m}^2)$.
\end{proposition}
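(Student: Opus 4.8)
The plan is to establish Proposition~\ref{prop:RplU} by writing $R_+^U$ as an explicit difference of $\Phi^\pt_+$ evaluated at two nearby points and then differentiating in $(V_\bulk, K)$ using the extended Taylor norm machinery of Section~\ref{sec:extnorm}. First I would unfold the definition $\Phi^\pt_+ (V) = \E_+ \theta V - P_{j,V}$ from Definition~\ref{def:WP}, so that
\begin{align}
	R_+^U = \E_+ \theta (\hat V - V) - (P_{j, \hat V} - P_{j, V}) = - \E_+ \theta Q - (P_{j,\hat V} - P_{j,V}),
\end{align}
recalling $\hat V = V - Q$ and $Q = \Q_K$ from \eqref{eq:Qdfn}. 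The term $\E_+ \theta Q$ is linear in $Q$ hence in $K$ up to the $I^{-1}$ nonlinearity, while $P_{j,\hat V} - P_{j,V}$ is a difference of quadratic forms in the potential, so after the substitution it contributes terms that are quadratic in $\hat V$ but, crucially, each such term carries at least one factor that is a difference $\hat V - V = -Q$ (linear in $K$) plus a genuinely quadratic-in-$V$ remainder; the latter is already controlled by Remark~\ref{remark:EcIinformal} and the $W,P$ bounds of Lemma~\ref{lemma:FWPboundsobs}. I would organise this as: (a) the $q=0$ bound, where $K$-derivatives are not taken and one uses $\norm{Q}_{\cL(\ell)} \le O_L(1)\tilde\chi^3 \tilde g^3 \scale^\kae$ from \eqref{eq:Qnorm22} of Corollary~\ref{cor:Qnorm2}, together with the contraction from the $(1-\Loc_x)$ inside $\mathbb{W}$ and the Gaussian smallness $\kc_+/\kh \lesssim \ell_0^{-1}\tilde\chi^{1/2}$ from Lemma~\ref{lemma:EthV12}; (b) the $q=1$ and $q=2$ bounds, using that $\Q_K$ is linear in $K$ and $P_{j,V}$ is quadratic in $V$, so $D_K R_+^U$ picks up one $D_K Q = O^\alg(1)$ (degree $\le 4$ polynomial) and $D_K^2 R_+^U$ picks up two; (c) the vanishing $q\ge 3$, which is immediate because $R_+^U$ is at most quadratic in the pair $(V, Q)$ and $Q$ is linear in $K$ (any dependence on $K$ enters only through $Q$, hence through $I^{-1}K$, and the relevant $\Loc$-localised objects are polynomials whose $K$-degree is exactly the number of $\Loc_X(K/I)$ factors, which is $\le 2$ here).

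Concretely, the mechanism for the $p$-derivatives in $V_\bulk$ is the extended-norm parameter regime \eqref{asmp:lambda3}: I would set $\lambda_V = C_{L,\lambda}^{-1}$ and $\lambda_K = 0$ (or $\lambda_K \le \tilde g^{9/4}\scale^{\kbe}$ when a $K$-derivative is also taken) and apply the estimates of Lemma~\ref{lemma:VptinDDomain}, which already bound $\norm{V - U_\pt}_{\cV(\ell), \vec\lambda, T(z)}$ and $\norm{\E_+[(\theta\hat V - U_\pt)^m(b)]}_{\kh,\vec\lambda,T(0,z)}$ in the extended norm. Reading off the coefficient of $\lambda_V^p \lambda_K^q$ in the resulting bound, using $\norm{V}_{\cL_j(\ell)}\lesssim\norm{V}_{\cV_j(\ell)}\lesssim\scale^{-1+\kt}\norm{V}_{\cL_j(\ell)}$ from \eqref{eq:cVnrmvslnrm1}, produces exactly the claimed powers $\scale_+^{\kae - (1-\kt)p}$, $\scale_+^{-(1-\kt)p}$, $\scale_+^{-2(1-\kt)}$ — the extra $\scale^{-(1-\kt)}$ per $V_\bulk$-derivative comes from converting back and forth between $\cL$ and $\cV$ norms on each differentiated factor, which is the same bookkeeping as in \cite{MR3969983,BBS5}. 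The scale shift from $j$ to $j+1$ contributes only an $L^2$ factor by \eqref{eq:cVnrmvslnrm3}, absorbed into $M_{p,q}$.

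For well-definedness (convergence of the Gaussian integrals defining $\E_+\theta$ acting on $Q$ and on the $\F_{\pi,\Gamma_{j+1}}$, $\mathbb{W}$, $\Cov_{\pi,+}$ appearing in $P_j$), I would note that all these objects are polynomials in $\varphi$ of degree $\le 8$, so the Gaussian expectation is a finite-dimensional algebraic operation \eqref{eq:ECexp}; this is the point already made before \eqref{eq:ECexp} and reused throughout Section~\ref{sec:stabanalysis}. Continuity in $(\ba_\emptyset,\ba)\in\AA(\tilde m^2)$ is inherited term by term: $\Gamma_{j+1}$ and $w_{j+1}$ are continuous in $(\ba_\emptyset,\ba)$ by Definition~\ref{defi:FRD}, $e^{\pm\frac12\Delta_C}$ is a polynomial in the entries of $C$ acting on a fixed finite-dimensional space, $\Loc$ is continuous in $\norm{\cdot}_{\ell,T(0)}$ by Proposition~\ref{prop:locXBbdmt}, and $Q = \Q_K$ has no $(\ba_\emptyset,\ba)$-dependence while $I^{-1}$ has continuous $T(0)$-norm by Lemma~\ref{lemma:entI} — exactly the argument given in the proof of Lemma~\ref{lemma:FWPboundsobs}.

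I expect the main obstacle to be the $q=0$, $p=0$ estimate, i.e.\ squeezing out the full power $\tilde\chi_+^{3/2}\tilde g_+^3 \scale_+^{\kae}$ rather than something weaker. The naive bound on $\E_+\theta Q$ only gives $\norm{Q}_{\cL(\ell)}\lesssim \tilde\chi^3 \tilde g^3\scale^\kae$, which has the right form, but the difference-of-quadratic-forms term $P_{j,\hat V}-P_{j,V}$ has one factor $Q$ (good, $\tilde g^3\scale^\kae$-ish) multiplied by one factor $\hat V$ of size $\ell_0^4\tilde g\scale$ — this is too big unless the $(1-\Loc_x)$ contraction inside $\mathbb{W}$ and the Gaussian contraction $\kc_+^2 \lesssim \ell_0^{-2}\tilde\chi\ell_\bulk^2$ are used to recover the missing powers of $\scale$ and $\tilde g$ (and to turn $\tilde\chi$ into $\tilde\chi^{3/2}$). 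Getting the $\scale$-exponent to land at exactly $\kae$ requires tracking how the cancellation of relevant parts (the defining property of $P_j$, via Lemma~\ref{lemma:WjQalt} and the identity used in the proof of Lemma~\ref{lemma:LK3}) interacts with the $\kA_0$–$\kA_3$ classification and the $\scale^{\kt}$ penalties in Definition~\ref{defi:cVnormdefi}; this is where the constraints \eqref{eq:kaerestrt}, \eqref{eq:kperestrt} and \eqref{eq:ktcondition} on the fraktur exponents get used, and where I would spend the bulk of the computational effort.
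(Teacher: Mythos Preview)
Your approach is essentially the paper's: decompose $R_+^U = -\E_+\theta Q + P(Q,V) + P(V,Q) - P(Q,Q)$ via the bilinear form $P$ from \eqref{eq:PQVV}, compute $D_K^q$ explicitly (linear in $K$ through $Q$, so $D_K^3 R_+^U = 0$), then bound each piece in the extended norm and divide by $\lambda_V^p$. Two corrections are worth making.

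First, your single choice $\lambda_V = C_{L,\lambda}^{-1}$ works only for $q=2$. For $q\in\{0,1\}$ the paper takes $\lambda_V = (C_{\lambda,L})^{-1}\scale^{1-\kt}$. The reason is that the bilinear piece $P(V-Q,\dot Q)$ in extended norm is bounded by $O_L(1)\,\scale^{-1+\kt}\bigl(\norm{V-Q}_{\cV(\ell)}+\lambda_V\bigr)$, and with a constant $\lambda_V$ this is $O_L(\scale^{-(1-\kt)})$, which grows with $j$ and so cannot be absorbed into a $j$-independent $M_{0,1}$. Choosing $\lambda_V\propto\scale^{1-\kt}$ makes this bound $O_L(1)$, and then the factor $\lambda_V^{-p}$ supplies exactly the claimed $\scale^{-(1-\kt)p}$. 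Your remark that each $V_\bulk$-derivative costs $\scale^{-(1-\kt)}$ via the $\cL$-vs-$\cV$ conversion \eqref{eq:cVnrmvslnrm1} is the right intuition, but it is implemented through the scale-dependent $\lambda_V$, not by reading off Taylor coefficients of a fixed bound.

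Second, your anticipated ``main obstacle'' at $(p,q)=(0,0)$ is a non-issue. Every term of $R_+^U$ already carries at least one factor of $Q$, and by \eqref{eq:Qnorm22} with $\lambda_K=0$ one has $\norm{Q}_{\cL(\ell),\vec\lambda,T(z)}\le O_L(\norm{K}_{\cW})\le O_L(\tilde\chi^{3/2}\tilde g^3\scale^{\kae})$ outright. The extra factor of $V$ or $\hat V$ in $P(V,Q)$ is of size $\lesssim\tilde g\scale$ and only makes that term \emph{smaller}, not larger; no additional contraction from $(1-\Loc_x)$ or from $\kc_+$ is needed beyond what is already packaged inside Lemma~\ref{lemma:FWPboundsobs}, and the constraints \eqref{eq:kaerestrt}--\eqref{eq:ktcondition} play no role in this particular argument. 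The reference to Lemma~\ref{lemma:VptinDDomain} is also tangential: that lemma concerns $V-U_{\pt}$, whereas the directly relevant inputs here are Lemma~\ref{lemma:FWPboundsobs}, Lemma~\ref{lemma:cVnrmvslnrm}, Corollary~\ref{cor:Qnorm2}, and Corollary~\ref{cor:EplusP}. Continuity is handled exactly as you say, via Lemma~\ref{lemma:Phiptctty} and Lemma~\ref{lemma:massctty}.
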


Note that the continuity of $\Phi_+^K$ in $(\ba_\emptyset, \ba)$ is enough for the continuity of all derivatives,  due to the next general result about analytic function. 
We denote $\dot{x} = (\dot{x}_1, \cdots, \dot{x}_p) \in X^p$.

\begin{lemma} \cite[Proposition~2.1]{BBS5} \label{lemma:massctty}
Let $X$ and $Y$ be Banach spaces and let $U \subset X$ be open. 
Let $\AA$ be a compact topological space. 
Let $f : \AA \times X \rightarrow Y$,  $(s,x) \mapsto f_s (x)$ be a uniformly bounded map such that $x \mapsto f_s (x)$ is analytic and $s \mapsto f_s (x)$ is continuous. 
Then for $p \in \N_0$, the map $(s,x, \dot{x}) \mapsto D^p f_s (x) (\dot{x})$ from $E \times U \times X^p$ to $Y$ is jointly continuous.
\end{lemma}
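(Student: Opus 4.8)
The plan is to prove joint continuity directly from the Cauchy integral representation of Fréchet derivatives, exploiting that in the present functional-analytic setting ``analytic'' means each $f_s$ extends to a holomorphic map on a complexification of $X$ (the setting in which all the norms of the previous sections live). First I would reduce to the \emph{diagonal} derivative. Since $D^p f_s(x)$ is a symmetric continuous $p$-linear form, the polarisation identity expresses $D^pf_s(x)(\dot x_1,\dots,\dot x_p)$ as a fixed finite linear combination, with coefficients independent of $(s,x)$, of the homogeneous values $D^pf_s(x)\big((\sum_{i\in S}\dot x_i)^{\otimes p}\big)$ over subsets $S\subseteq\{1,\dots,p\}$; as each $(\dot x_1,\dots,\dot x_p)\mapsto\sum_{i\in S}\dot x_i$ is continuous, it suffices to prove joint continuity of $(s,x,\dot x)\mapsto D^pf_s(x)(\dot x^{\otimes p})$ on $\AA\times U\times X$.

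Next I would fix a base point $(s_0,x_0,\dot x_0)$ and, for $(s,x,\dot x)$ in a suitable neighbourhood, use the Cauchy formula
\begin{align}
  D^p f_s(x)(\dot x^{\otimes p}) = \frac{p!}{2\pi i}\oint_{|z|=\rho}\frac{f_s(x+z\dot x)}{z^{p+1}}\,dz ,
\end{align}
valid because $z\mapsto f_s(x+z\dot x)$ is holomorphic on a neighbourhood of $\{|z|\le\rho\}$. Using openness of $U$ one chooses $\rho>0$ and $r>0$ so that $x+z\dot x$ stays in a fixed ball $B\subset U$ whenever $\|x-x_0\|<r$, $\|\dot x-\dot x_0\|<r$, $|z|\le\rho$; by the uniform-boundedness hypothesis $M:=\sup_{s\in\AA,\,y\in B}\|f_s(y)\|<\infty$, and the Cauchy estimate then gives a Lipschitz bound $\|Df_s(y)\|\le L_0$ on a slightly smaller ball, \emph{uniformly in} $s\in\AA$. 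The contour bound yields
\begin{align}
  \big\| D^p f_s(x)(\dot x^{\otimes p}) - D^p f_{s_0}(x_0)(\dot x_0^{\otimes p}) \big\|
  \le \frac{p!}{\rho^{p}}\,\sup_{|z|=\rho}\big\| f_s(x+z\dot x) - f_{s_0}(x_0+z\dot x_0)\big\| .
\end{align}

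I would then split the integrand as $\big(f_s(x+z\dot x)-f_s(x_0+z\dot x_0)\big)+\big(f_s(x_0+z\dot x_0)-f_{s_0}(x_0+z\dot x_0)\big)$: the first term is $\le L_0(\|x-x_0\|+\rho\|\dot x-\dot x_0\|)$ uniformly in $z$, hence small once $(x,\dot x)$ is close to $(x_0,\dot x_0)$; the second requires that $\sup_{|z|=\rho}\|f_s(x_0+z\dot x_0)-f_{s_0}(x_0+z\dot x_0)\|$ be small for $s$ near $s_0$. For this last point I would cover the compact arc $\{x_0+z\dot x_0:|z|=\rho\}$ by finitely many small balls (using the uniform Lipschitz bound, i.e. equicontinuity of $\{f_s\}_{s\in\AA}$ on $B$), apply pointwise continuity of $s\mapsto f_s(y)$ at the finitely many ball centres, and intersect the resulting finitely many neighbourhoods of $s_0$ in $\AA$; a three-$\varepsilon$ estimate then gives a single neighbourhood $V\ni s_0$ on which the supremum is as small as desired. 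Combining, one obtains the required neighbourhood of $(s_0,x_0,\dot x_0)$, and then polarisation returns the multilinear statement on $\AA\times U\times X^p$.

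The main obstacle is precisely that last step: upgrading the \emph{pointwise-in-$s$} continuity hypothesis to control of $f_s-f_{s_0}$ that is \emph{uniform over the integration contour}, which is where compactness of the contour (and the uniform Lipschitz bound coming from uniform boundedness) is genuinely used, and where one must take care since $\AA$ is only assumed to be a topological space, not metrisable, so the argument must be run with finite subcovers and finite intersections of neighbourhoods rather than with sequences. Everything else --- the polarisation reduction, the Cauchy integral formula, and the Cauchy estimate furnishing the Lipschitz bound --- is standard Banach-space holomorphy. I would note that this is \cite[Proposition~2.1]{BBS5} and that the argument there proceeds along these lines.
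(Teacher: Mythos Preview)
The paper does not give its own proof of this lemma; it is stated with a citation to \cite[Proposition~2.1]{BBS5} and used as a black box. Your proposal is a correct and standard reconstruction of that argument: the polarisation reduction, the Cauchy integral representation of $D^p f_s(x)(\dot x^{\otimes p})$, the uniform Cauchy estimate yielding equicontinuity of $\{f_s\}$, and the finite-cover argument on the compact contour to upgrade pointwise $s$-continuity to uniform control are exactly the ingredients one needs, and you have identified the only genuinely nontrivial step (the uniform-in-$z$ control of $f_s-f_{s_0}$) and handled it correctly via compactness without assuming metrisability of $\AA$.
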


\subsection{Proof of the bound on $R_+^U$}

We first state the continuity statement.

\begin{lemma} \label{lemma:Phiptctty}
Assume \eqref{asmp:Phi} ($\alpha \le \bar{\alpha}$).  Then $\Phi_+^\pt (V)$ is continuous in $(\ba_\emptyset, \ba) \in \AA (\tilde{m}^2)$.
\end{lemma}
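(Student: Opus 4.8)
The plan is to unwind the definition of the perturbative map $\Phi_+^\pt (V) = \E_{j+1}\theta V - P_{j,V}$ and check that each constituent piece is continuous in $(\ba_\emptyset, \ba) \in \AA_j(\tilde{m}^2)$, with respect to the $\norm{\cdot}_{\cV(\ell)}$-norm (equivalently, since every object in sight is a polynomial of bounded degree, with respect to $\norm{\cdot}_{\ell, T(0)}$, and on spaces of polynomials of bounded degree all these norms are genuinely equivalent). The decomposition to exploit is: (i) $V \mapsto \E_{j+1}\theta V = e^{\frac{1}{2}\Delta_{\Gamma_{j+1}}} V$; (ii) the covariance term $\tfrac12\Loc_x\Cov_{\pi,j+1}[\theta V_x;\theta V(\Lambda)]$ in $P_{j,V}$; and (iii) the $W$-term $\Loc_x \E_{j+1}\theta W_{j,x}$ in $P_{j,V}$. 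Each of these depends on $(\ba_\emptyset,\ba)$ only through the covariances $\Gamma_{j+1}$ and $w_j = \sum_{k\le j}\Gamma_k$, which are continuous in $(\ba_\emptyset,\ba)$ by the continuity clause in Definition~\ref{defi:FRD}.

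First I would handle (i) and (ii), which are the easy parts. Since $V$ is a polynomial of degree $\le 4$, $\Delta_{\Gamma_{j+1}}^3 V = 0$, so $e^{\frac12\Delta_{\Gamma_{j+1}}}V = \sum_{k=0}^2 \frac{1}{2^k k!}\Delta_{\Gamma_{j+1}}^k V$ is a \emph{finite} sum whose coefficients are polynomials in the entries $\nabla_x^{(m_1)}\nabla_y^{(m_2)}\Gamma_{j+1}(x-y)|_{y=x}$; evaluated at any fixed $\varphi$ these are continuous in $(\ba_\emptyset,\ba)$, hence $e^{\frac12\Delta_{\Gamma_{j+1}}}V$ is continuous in $\norm{\cdot}_{\ell,T(0)}$ on the finite-dimensional space of polynomials of degree $\le 4$. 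The same reasoning, already carried out verbatim in the continuity part of the proof of Lemma~\ref{lemma:FWPboundsobs}, shows $\Cov_{\pi,j+1}[\theta V_x;\theta V(\Lambda)]$ depends continuously on $(\ba_\emptyset,\ba)$ (it is a polynomial of degree $\le 6$), and then Proposition~\ref{prop:locXBbdmt} gives that $\Loc_x$ is a bounded—hence continuous—linear operator on $\norm{\cdot}_{\ell,T(0)}$, so (ii) is continuous as well.

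The main point, and the only place where a little care is needed, is (iii): continuity of $W_{j,x} = \mathbb{W}_{w_j, V}(\{x\})$. This is precisely the continuity assertion already contained in Lemma~\ref{lemma:FWPboundsobs}, whose proof expresses $W_j$ as finite sums of the bilinear forms $W_j^{\rmQ}(V_x, V'_y)$ via \eqref{eq:WjSigmaWjQ} and uses the recursion of Lemma~\ref{lemma:WjQalt}, together with $\F_{w_j}[U_x;U'_y] = \Cov_{w_j}[e^{-\frac12\Delta_{w_j}}U_x; e^{-\frac12\Delta_{w_j}}U'_y]$ and the fact that $e^{\pm\frac12\Delta_{w_j}}$ acting on a polynomial of degree $\le 4$ is a finite sum with coefficients continuous in the entries of $w_j$. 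So I would simply invoke Lemma~\ref{lemma:FWPboundsobs} for the continuity of $W_{j,x}$ in $\norm{\cdot}_{\ell,T(0)}$, then compose once more with the continuous operator $\Loc_x$ and with the finite-sum operator $\E_{j+1}\theta = e^{\frac12\Delta_{\Gamma_{j+1}}}$. Combining (i), (ii), (iii) linearly, $\Phi_+^\pt(V) = \E_{j+1}\theta V - P_{j,V}$ is continuous in $(\ba_\emptyset,\ba)\in\AA_j(\tilde{m}^2)$, and since it lands in $\cV$ (a space on which $\norm{\cdot}_{\cV(\ell)}$ and $\norm{\cdot}_{\ell,T(0)}$ are equivalent by Lemma~\ref{lemma:cLnorm}) we get continuity in $\norm{\cdot}_{\cV(\ell)}$ as claimed.

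The hard part, such as it is, is purely bookkeeping: making sure one is allowed to pass from pointwise-in-$\varphi$ continuity to norm continuity, which is legitimate exactly because all objects involved are polynomials of degree bounded by a fixed constant ($\le 6$ throughout), so that $\norm{\cdot}_{\ell,T(0)}$ restricted to this finite-dimensional space is an honest norm and pointwise convergence of coefficients is equivalent to norm convergence. No estimate-chasing beyond what Lemma~\ref{lemma:FWPboundsobs} and Proposition~\ref{prop:locXBbdmt} already supply is needed. I would therefore keep the proof to a few lines: cite Lemma~\ref{lemma:FWPboundsobs} for the $W$-term, Lemma~\ref{lemma:FWPbounds2obs}/the computation therein for $\Cov_{\pi,+}$, Proposition~\ref{prop:locXBbdmt} for $\Loc$, and the degree-$\le 4$ truncation of $e^{\frac12\Delta_{\Gamma_{+}}}$ for the leading term, then assemble.
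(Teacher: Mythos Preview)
Your proposal is correct and follows essentially the same route as the paper: treat $\E_{j+1}\theta V$ as the finite sum $\sum_{k=0}^{2}\frac{1}{2^k k!}\Delta_{\Gamma_{j+1}}^k V$ (continuous because each $\Delta_{\Gamma_{j+1}}$ is), invoke Lemma~\ref{lemma:FWPboundsobs} for the continuity of $P_V$ (which covers both the $W$- and covariance-terms at once), and use the bounded-degree observation to pass from pointwise to norm continuity. The paper's proof is just a condensed version of exactly this argument.
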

\begin{proof}
Since $V$ is a polynomial of degree $\le 2$, 
\begin{align}
	\E_+ \theta V = e^{\frac{1}{2} \Delta_{\Gamma_+}} V = \sum_{k=0}^2  \frac{1}{2^k k!} \Delta_{\Gamma_+}^k V ,  
\end{align}
and each $\Delta_{\Gamma_+} F := \sum_{x,y} \Gamma_+ (x,y) \frac{\partial^2}{\partial \varphi_x \partial \varphi_y} F(\varphi)$ is continuous in $(\ba_\emptyset, \ba) \in \AA (\tilde{m}^2)$ due to Definition~\ref{defi:FRD}.
This is a continuity evaluated at each fixed $\varphi \in (\R^n)^{\Lambda}$,  but since $\E_+ \theta V \in \cU$,  this is enough for continuity in the space $\cU$.
Continuity of $P_V$ was already verified Lemma~\ref{lemma:FWPboundsobs}.
\end{proof}

Then we obtain the main result of this section.
In the proof,  to write the difference of quadratic forms in $R_+^U$,  we use the quadratic form
\begin{align}
	P (V,V') = P^{\rmQ} (V, \pi_\bulk V') + P^{\rmQ} ((1-\pi_\bulk)V, V')
\end{align}
(recall \eqref{eq:PQVV}).

\begin{proof}[Proof of Proposition~\ref{prop:RplU}]
Let $\vec{\lambda} = (\lambda_V, \lambda_K)$ with $\lambda_K = \bar{\lambda}_K = 0$ (actually,  we do not choose $\bar{K}$) and $\lambda_V \in [ \tilde{g} \scale,   (C_{L,\lambda})^{-1} ]$.
Then we can bound $D_{V_\bulk}^p D_K^q R_+^U $ using
\begin{align}
	\norm{D_{V_\bulk}^p D_K^q R_+^U}_{\ell_+, T_+ (0)} 
		&\le \frac{p !}{\lambda_V^p} \norm{D_K^q R_+^U}_{\ell_+, \vec{\lambda}, T_+ (0,z)} 
		\label{eq:DVbDKqRpU}
\end{align}
and $Q$ is linear in $K$.
To compute each $D_K^q R_+^U$,  observe that
\begin{align}
	R_+^U &= - \Eplus \theta Q + P(Q, V) + P(V,Q) - P(Q,Q) .
\end{align}
Let $\dot{K},  \ddot{K} \in \cN$ and $\dot{Q},\ddot{Q}$ be defined using \eqref{eq:Qdefifts} with $\dot{K}$ and $\ddot{K}$ in place of $K$,  respectively,  Then
\begin{align}
	D_K R_+^U (V,K ; \dot{K}) &= -\Eplus \theta \dot{Q} + P (\dot{Q}, V-Q) + P(V-Q, \dot{Q}) \\
	D_K^2 R_+^U (V,K; \dot{K},\ddot{K}) &= - P (\dot{Q}, \ddot{Q}) - P (\ddot{Q},\dot{Q}) 
\end{align}
and $D_K^3 R_+^U = 0$,  so there is nothing to prove for $q \ge 3$.
If we assume in addition $\norm{\dot{K}}_{\cW} , \norm{\ddot{K}}_{\cW} \le 1$,  then Proposition~\ref{prop:locXBbdmt} imply
\begin{align}
	\norm{\dot{Q}}_{\ell_+, \vec{\lambda},T_+ (0,z)},  \; \norm{\ddot{Q}}_{\ell_+, \vec{\lambda},T_+ (0,z)} \lesssim 1 
\end{align}
and by \eqref{eq:Qnorm22},
\begin{align}
	\norm{Q}_{\ell_+, \vec{\lambda},T_+ (0,z)} \lesssim O_L (1) \tilde{\chi}^{3/2} \tilde{g}^{3} \scale^{\kae} ,
\end{align}
thus in particular,  together with Lemma~\ref{lemma:cVnrmvslnrm},  we have
\begin{align}
	\norm{V-Q}_{\cV (\ell)} \lesssim \ell_0^4 \tilde{g} \scale + \lambda_V
\end{align}

When $q=2$,  we choose $\lambda_V = (C_{L,\lambda})^{-1}$,  then Lemma~\ref{lemma:FWPboundsobs} and \ref{lemma:cVnrmvslnrm} imply
\begin{align}
	\norm{P(\dot{Q}, \ddot{Q})}_{\ell_+, \vec{\lambda}, T_+ (0,z)}
		\le O_L (1) \norm{\dot{Q}}_{\cV (\ell)} \norm{\ddot{Q}}_{\cV (\ell)}
		\le O_L (\scale^{-2+2\kt}) ,
\end{align}
giving the desired bound together with \eqref{eq:DVbDKqRpU}.

If $q \in \{0,1 \}$,  we choose $\lambda_V = (C_{\lambda,L})^{-1} \scale^{1-\kt}$,  then again by Lemma~\ref{lemma:FWPboundsobs} and \ref{lemma:cVnrmvslnrm}
to obtain
\begin{align}
	\norm{P (V-Q, \dot{Q})}_{\ell_+,  \vec{\lambda}, T_+ (0,z)}
		\le  O_{L} (1) \scale^{-1+\kt}  \lambda_V \le O_{L} (1)
\end{align}
for sufficiently small $\tilde{g}$,  and the same holds for $\norm{P (\dot{Q}, V-Q)}_{\ell_+,  \vec{\lambda}, T_+ (0,z)}$. 
Also,  by Corollary~\ref{cor:EplusP},
\begin{align}
	\norm{\Eplus \theta \dot{Q} (b)}_{\ell_+, \vec{\lambda},T_+ (0,z)} \lesssim \norm{\dot{Q} (b)}_{\ell_+, \vec{\lambda},T_+ (0,z)} \lesssim 1 ,
\end{align}
thus
\begin{align}
	\norm{D_K R_+^U}_{\ell_+, \vec{\lambda},T_+ (0,z)} \lesssim 1.
\end{align}
Similarly, 
\begin{align}
	\norm{R_+^U}_{\ell_+,  \vec{\lambda}, T_+ (0,z)}
		& \le O_L (1) \norm{K}_{\ell_+, \vec{\lambda},T_+ (0,z)} \le O_L (\tilde{\chi}^{3/2} \tilde{g}^3 \scale^{\kae}) ,
\end{align}
giving the desired bounds.

The final continuity statement follows from Lemma~\ref{lemma:massctty} and \ref{lemma:Phiptctty}.
\end{proof}

\section{RG map estimates--Part II,  convergence of RG map}
\label{sec:RGpartII}

In this section and the next, we prove Proposition~\ref{prop:PhiplK},  bounds on the RG map $(V,K) \mapsto K_+$ defined in Section~\ref{sec:rgstep}.
The constant $C_\rg$ is that of \eqref{eq:cKdefi}.

\begin{proposition} \label{prop:PhiplK}

Let $j < N$ and suppose \eqref{asmp:Phi}($\alpha=1$) is satisfied.
Then the map $\Phi_+^K : \cD^{(0)} \times \cK \times \AA (\tilde{m}^2) \rightarrow \cK_+$ is well-defined and there exist $(M_{p,q})_{p,q,\ge 0}$ (that depend on $L$) and constants $a,\ratio,\gamma^{-1} >1$ such that
\begin{align}
	\norm{D_{V_\bulk}^p D_K^q \Phi_+^K}_{\cW_+^a (\ratio, \gamma)}
		\le \begin{cases}
			M_{p,0} \tilde\chi_+^{3/2} \tilde{g}_+^{3-p} \scale_+^{\kae - p}  & (q= 0)  	\\
			M_{p,q} \tilde{g}_+^{-p - \frac{9}{4} (q-1)} \scale_+^{\kbe(1-q) - p}   & (q \ge 1)
		\end{cases} 
		\label{eq:PhipIK1}
\end{align}
and if $j+1 < N$,  
\begin{align}
	\norm{D_K^q \Phi_+^K}_{\cW_+}	 \le
		\begin{cases}
			C_{\rg} \tilde\chi_+^{3/2} \tilde{g}_+^{3} \scale_+^{\kae} & (q =0) \\	
			\frac{1}{32} L^{-\max\{ 1/2 ,  (d-4 +2\eta) \kae \}  } & (q=1) .
\end{cases}
		\label{eq:PhipIK2}
\end{align}
Moreover,  if $K_+ = \Phi_+^K (V,K)$,  then $K_+ (X,\varphi)$ is continuous in $(\ba_\emptyset, \ba) \in \AA (\tilde{m}^2)$ for each fixed $(X,\varphi)$.
\end{proposition}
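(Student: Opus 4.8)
\textbf{Plan of proof for Proposition~\ref{prop:PhiplK}.}
The strategy is to estimate each of the six maps $\Phi_+^{(i)}$ in succession, using the extended norm of Section~\ref{sec:extnorm} to package the $D_{V_\bulk}$- and $D_K$-derivatives, and then to chain the bounds together. First I would set up the bookkeeping: choose parameter regimes $\vec{\lambda}=(\lambda_V,\lambda_K,\bar\lambda_K)$ as in \eqref{asmp:lambda1}--\eqref{asmp:lambda3}, and in each regime bound $\norm{K}_{\vec\lambda,\cW(z)}$, $\norm{V}_{\cV(\ell),\vec\lambda,T(z)}$ by the input hypotheses of \eqref{asmp:Phi} via Lemma~\ref{lemma:xtnrbsciq}. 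The derivative bounds \eqref{eq:PhipIK1} then follow by choosing $\lambda_V$ and $\lambda_K$ near the boundary of their admissible ranges and reading off the $\vec\lambda$-expansion coefficient: a factor $\lambda_V^{-p}\lambda_K^{-q}$ converts the extended-norm bound into $D_{V_\bulk}^pD_K^q$ bounds, exactly as in the proof of Proposition~\ref{prop:RplU}. The algebraic order-counting $K_+=O^{\alg}(K,V^3)$ — made precise via the $O^\alg$ identities recorded at the start of Section~\ref{sec:rgstep} together with Corollaries~\ref{cor:K1}, \ref{cor:K4lin}, \ref{cor:K5} and \ref{cor:K6error} — identifies which power of $\tilde g_+$ and $\scale_+$ accompanies each term, giving the exponents in \eqref{eq:PhipIK1}.

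Next I would carry out the six maps one at a time, at the level of the regulated norms $\norm{\cdot}_{\cW_+^a(\ratio,\gamma)}$ (equivalently $\cY$ by Lemma~\ref{lemma:WYequivext}). Map~1 (reapportioning): bound $\Rap_J[I,K]$ using the stability of $I^{-1}$ (Lemma~\ref{lemma:entI}, \eqref{eq:entnI}), the localisation bound Proposition~\ref{prop:locXBbdmt}, and the large-set regulator to absorb the combinatorial sum over small sets. Map~2 ($\hat V$-substitution): control $\delta\hat I = I-\hat I = D_V\cI(V;Q)+O^\alg(K^2)$ using the stability estimates with $V-sQ$ (Lemmas~\ref{lemma:entVQ}, \ref{lemma:entIs}) and the $Q$-bound Corollary~\ref{cor:Qnorm2}. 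Map~3 (reblock and fluctuation integral): this is where the finite-range property, the supermartingale/large-field bounds (Appendix references), and the bound on $\tilde I_\pt$ (from Lemma~\ref{lemma:entVpt}) enter; here one uses $\E_+$ together with $\cG_j$-regulator monotonicity under the fluctuation integral. Maps~4 and 5 are again reapportioning and an $I$-replacement, handled as in Maps~1 and~2. Map~6 replaces $I^+_\pt$ by $I_+$ and $W_\pt$ by $W_+$; here the summation-by-parts identity \eqref{eq:Vibp} is essential so that $\delta W=W_+-W_\pt$ contributes only at high order, and Lemma~\ref{lemma:FWPboundsobs} bounds $W_+-W_\pt$. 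The continuity in $(\ba_\emptyset,\ba)$ of $K_+(X,\varphi)$ propagates through all six maps: each ingredient ($\Gamma_{j+1}$, $w_{j+1}$, $I$, $\hat I$, $\tilde I_\pt$, $W_\pt$, $W_+$, and the localisation and covariance operators) is continuous by Definition~\ref{defi:FRD}, Lemma~\ref{lemma:FWPboundsobs}, Lemma~\ref{lemma:entI}, Lemma~\ref{lemma:entIs}, Lemma~\ref{lemma:entVpt} and Proposition~\ref{prop:locXBbdmt}, and the fluctuation integral $\E_+$ preserves continuity by dominated convergence (using the stability bounds as the dominating function).

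The delicate estimate is \eqref{eq:PhipIK2}, the $L$-independence of the constant in the $q=1$ bound on $\Phi_+^K$ when $j+1<N$. This cannot tolerate the $L$-dependent prefactors that appear freely in \eqref{eq:PhipIK1}, so it requires the genuine contraction coming from the change of scale. The main obstacle, and the content of Section~\ref{sec:RGpartIII}, is to show that after subtracting the leading contributions (the $\lead$-terms removed in Map~4, already identified by Lemma~\ref{lemma:LK3}), the remaining part of $K_+$ contracts by a factor $L^{-\max\{1/2,(d-4+2\eta)\kae\}}$. This is where the core contraction estimate Proposition~\ref{prop:corcrctrmt} is used — crucially, it must be applied in a way that simultaneously preserves the decay condition stored in the large-field regulator $H_j$ of \eqref{eq:Hjdefi}, which is the new feature beyond \cite{BBS5}. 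Concretely, one applies $(1-\Loc_X)$ to the relevant polymer activities and uses \eqref{eq:corcrctr1mt} with $(\kh,\kh_+)=(\ell,\ell_+)$ and $(h,h_+)$, checking that the power of $L$ gained ($L^{-d_*}$ or $L^{-d'_*}$ with $d_\bulk=3d-7+2\eta$) dominates the $L^{d}$ lost in reblocking and the $\scale$-scaling of $K$'s target space; the restrictions \eqref{eq:kaerestrt} and \eqref{eq:kperestrt} on $\kae,\kpe$ are exactly what makes this margin positive. One must also verify that the weight $\omega_j(h)$ in \eqref{eq:omegadefi} and the ratios of field scales $h_j/\ell_j$, $h_\sigma/\ell_\sigma$ track correctly through the contraction so that the $h$-part of the $\cW_+$-norm is controlled with the same $L$-independent constant.
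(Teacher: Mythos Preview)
Your proposal is correct and follows essentially the same architecture as the paper: extended-norm bounds through the six maps give \eqref{eq:PhipIK1} (this is Proposition~\ref{prop:PhiplKDet}), and the $(1-\Loc)$ contraction of Proposition~\ref{prop:corcrctrmt} drives the $L$-independent bound \eqref{eq:PhipIK2} (this is Section~\ref{sec:RGpartIII}). One structural point worth making explicit is that the paper does not prove the $q=1$ contraction directly for general $K$; instead it first proves $\norm{D_K\Phi_+^K(V,0)}_{\cW_+}\lesssim\Theta$ at $K=0$ (Proposition~\ref{prop:PhiplKLin}), where $\hat V=V$ and many terms simplify, and then writes $D_K\Phi_+^K(V,K)=D_K\Phi_+^K(V,0)+\int_0^1 D_K^2\Phi_+^K(V,tK;K)\,\rd t$, bounding the integral by the already-established $q=2$ case of \eqref{eq:PhipIK1} (which carries an extra $\tilde g^{3/4}$ and can absorb the $L$-dependent constant); the $q=0$ case of \eqref{eq:PhipIK2} is handled the same way and is what fixes $C_{\rg}=2C_{\rm n}$.
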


Along with the previous section,  this completes the main theorem of this article.

\begin{proof}[Proof of Theorem~\ref{thm:contrlldRG}]
The algebraic property is verified by Corollary~\ref{cor:Kplusworks}.
Also,  \eqref{eq:controlledRG22} and \eqref{eq:controlledRG23} are verified by Proposition~\ref{prop:RplU} and Proposition~\ref{prop:PhiplK},  respectively. 

Pointwise continuity of Proposition~\ref{prop:PhiplK} improves to the continuity with respect to the topology induced by $\norm{\cdot}_{\cW_+}$ by Lemma~\ref{lemma:impvcttynorm},  
and it again improves to the continuity of each derivative $D^p_{V_\bulk} D^q_K \Phi_+^K$ by Lemma~\ref{lemma:massctty}.

That the RG respects the graded structure and the finite-range property are direct from its definition.
\end{proof}

In this section, we prove a rough bound on $K_+ = \Phi_+^K (V,K)$,  Lemma~\ref{lemma:K6bnd}.
This implies \eqref{eq:PhipIK1},  summarised in Proposition~\ref{prop:PhiplKDet}.  
It can be considered as a preliminary version of Proposition~\ref{prop:PhiplK}, but it is not enough to show that $K_+ \in \cK_+$. 
To show $K_+$ lies in a smaller domain,  we need to make use of the contraction estimate \eqref{eq:PhipIK2}.  All this process is explained in more detail in Section~\ref{sec:RGpartIII}.

\subsubsection{Notation}

To state the bounds,  we use
\begin{align}
	\lambda'_K (\kh) = \omega^{-1} (\kh) \big( C_L \tilde\chi^{3/2} \tilde{g}^3 \scale^\kae +  \lambda_K \big)  \label{eq:lmbdpKkh} 
\end{align}
for sufficiently large $C_L$,  where we recall $\omega (\kh)$ from \eqref{eq:omegadefi}.
We will also encounter
\begin{align}
	E (b, \varphi ; \kh) =  \begin{cases}
		e^{C \ell_0^{-1} \norm{\varphi}_{\ell, \Phi (b^{\square})}^2} & (\kh = \ell) \\
		e^{-c\norm{\varphi}_{L^4 (b)}^4 + C \norm{\varphi}_{h, \tilde{\Phi} (b^{\square})}^2} & (\kh = h) 
		\end{cases} 
		 \label{eq:Ebphi}
\end{align}
for some $L$-independent constant $C,c>0$.  
Constants $C,c$ may differ from line to line,  but we do not make them explicit. 
Bounds by $E$ were already observed in the stability bounds of Section~\ref{sec:stabanalysis}.  
Also,  for sufficiently small $\kappa >0$,  they are bounded by the large field regulator
\begin{align}
	\prod_{b \in \cB (X)} E (b,\varphi ; \kh ) \le \cG (X,\varphi ; \kh)  \label{eq:EbndbycG}
\end{align}
--the bound is obvious for $\kh = \ell$ and $\kh = h$ case follows from Lemma~\ref{lemma:tGdom}.

\subsection{Goal of the proof}

The next lemma will be restated and proved in Section~\ref{sec:K6bnd}.

\begin{lemma} \label{lemma:K6bnd}
Assume \eqref{asmp:Phi}($\alpha=1$) and \eqref{asmp:lambda1}. 
Then there exist constants $a, \ratio, \gamma^{-1} >1$ such that
\begin{align}
	\norm{K_+}_{\vec{\lambda}/16, \cW^a_+ (z ; \ratio, \gamma)} \le O_L ( \lambda'_K (\ell) )
	\label{eq:K6bnd}
\end{align}
and $K_+ (X,\varphi)$ is continuous in $(\ba_\emptyset, \ba)$ for each fixed $(X,\varphi)$. 
\end{lemma}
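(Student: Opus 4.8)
The plan is to bound $K_+ = K_{(6)}$ by tracking the six maps $K_{(1)} \to K_{(2)} \to \cdots \to K_{(6)}$ of Section~\ref{sec:rgstep} one at a time, controlling the $\norm{\cdot}_{\vec\lambda/c_i, \cW^{a_i}_{(i)}(z;\ratio_i,\gamma_i)}$-norm of each $K_{(i)}$ (with a sequence of constants $c_i, a_i, \ratio_i, \gamma_i$ that degrade in a controlled way, and with a sequence of auxiliary norms $\norm{\cdot}_{\bar\cW}$ chosen scale-appropriately for the $\bar K$-slot of the extended norm). The input is $(V,K)\in\D(\alpha)$ with the extended-norm parameters \eqref{asmp:lambda1}, so $\lambda_V = \tilde g\scale$, $\lambda_K$ and $\bar\lambda_K$ are $O(\tilde g^{9/4}\scale^{\kbe})$ up to $L$-dependent and $\rho$-power factors, and $\norm{K}_{\cW} \le C_{\rg}\tilde\chi^{3/2}\tilde g^3\scale^{\kae}$ by \eqref{eq:cKdefi}. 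The target is to show each $K_{(i)}$ stays $O_L(\lambda'_K(\ell))$ in its (extended, regulated, large-set) norm, losing only constant and $L$-dependent factors at each step, so that at the end $\norm{K_+}_{\vec\lambda/16,\cW^a_+(z;\ratio,\gamma)} = O_L(\lambda'_K(\ell))$ as claimed. Throughout, the stability estimates of Section~\ref{sec:stabanalysis}---Lemma~\ref{lemma:entI}, \ref{lemma:entIs}, \ref{lemma:entVpt} for the bounds on $I$, $\hat I$, $\tilde I_\pt$, $I_\pt^+$, $I_+$, and Lemma~\ref{lemma:FWPboundsobs}, \ref{lemma:VptinDDomain} for $W$, $W_\pt$, $P$, $\delta u_+$---provide the pointwise regulator bounds $E(b,\varphi;\kh)$, which combine via \eqref{eq:EbndbycG} into $\cG(X,\varphi;\kh)$; these let us convert $T(\varphi)$-estimates into $\cW$-estimates.

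The concrete steps in order: (1) \emph{Map 1}: use Corollary~\ref{cor:K1} and the reapportioning bound (Lemma~\ref{lemma:reapp}) together with Proposition~\ref{prop:locXBbdmt} to bound $J = O^\alg(K)$ and hence $\norm{K_{(1)}}$; the large-set regulator power $a$ must be chosen so that reapportioning onto blocks is affordable, which is why $a>1$ appears. (2) \emph{Map 2}: bound $\delta\hat I = I - \hat I = D_V\cI(V;Q) + O^\alg(K^2)$ using Lemma~\ref{lemma:Qnorm}/Corollary~\ref{cor:Qnorm2} for $\norm{Q}$ and Lemma~\ref{lemma:entI} for $\norm{I^{-1}}$ and $\norm{I}$; then $K_{(2)} = \delta\hat I \circ K_{(1)}$ is estimated by submultiplicativity of the extended norm (Lemma~\ref{lemma:subxext}) and the polymer-convolution bound. (3) \emph{Map 3}: this is the fluctuation-integral step. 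Expand $K_{(3)}$ via \eqref{eq:K3defi}, bound $K'_{(2,\mathrm i)}$ using the difference $\theta\hat I - \tilde I_\pt$ (controlled by Lemma~\ref{lemma:VptinDDomain} and the stability of $\tilde I_\pt$, Lemma~\ref{lemma:entVpt}), then apply the fluctuation-integral estimate $\Eplus$ (the analogue of $\cE_+$ bounds; here one invokes the supermartingale/large-field regulator propagation, which is where $\gamma<1$ is consumed---the regulator $\bar G_j^{(\gamma)}$ must be integrated against the finite-range covariance $\Gamma_{j+1}$ and re-emerge at scale $j+1$). (4) \emph{Map 4}: reapportion the degree-2 leading term $\lead$ via Corollary~\ref{cor:K4lin}; since $\lead = O^\alg(V^2)$ and is supported on small sets, its contribution is $O_L(\bar\epsilon^2(\ell)) = O_L(\tilde\chi\tilde g^2\scale^2)$, absorbable. (5) \emph{Map 5}: replace $\tilde I_\pt$ by $I_\pt^+$; the difference $\tilde I_\pt - I_\pt^+$ comes only from rescaling $U_\pt^{(\bs)}$ from scale $j$ to $j+1$ blocks and is $O^\alg(V)$, bounded by Lemma~\ref{lemma:VptinDDomain}. (6) \emph{Map 6}: use Corollary~\ref{cor:K6error}, so $K_{(6)} = K_{(5)} + O^\alg(K_{(5)}^2, VK_{(5)}, V^4)$; bound $W_+ - W_\pt$ (both $O_L(\bar\epsilon^2)$ by Lemma~\ref{lemma:FWPboundsobs} and \ref{lemma:VptinDDomain}) and $e^{U_\pt - V_+}$ (a degree-2 exponential, stable by Lemma~\ref{lemma:stability1}). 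The continuity in $(\ba_\emptyset,\ba)$ is inherited step by step: each map is built from $\E_+\theta$ (continuous by Definition~\ref{defi:FRD}), localisation (continuous by Proposition~\ref{prop:locXBbdmt}), and the continuous building blocks $W$, $P$, $U_\pt$, $I$, so $K_+(X,\varphi)$ is continuous for each fixed $(X,\varphi)$ by composition.

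The main obstacle is \textbf{Map 3}, the fluctuation integral together with the large-set (reblocking) sum. Two difficulties compound there. First, one must show the Gaussian integral $\Eplus$ of $K'_{(2,\mathrm i)}$ converges and obeys a regulator bound at scale $j+1$---this requires the decaying large-field regulator $H_j$ (and hence $\gamma<1$ and the specific $\kappa$ of Section~\ref{sec:chparams}) to survive the integration via the supermartingale estimates of Appendix~\ref{sec:supmartbnds}, and it is precisely here that the new decay estimate (contribution (2) of Section~\ref{sec:rtewks}) is non-trivial, since $\exp(-V_j^{(\bs)})$ must control the diverging fluctuation field of the zero Fourier mode. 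Second, the reblocking sum over $Y$ with $\bar Y = X$ produces a combinatorial factor that is dangerous when $|X|_{\cB_+}$ is large; this is tamed by the large-set regulator $A_j$ with $\rho$ small depending on $L$, but only because $K'_{(2,\mathrm i)}$ inherits a decay factor in $|Y|$ from either the $\rho$-weight in $\norm{K}_{\cW^a}$ or from powers of $\theta\hat I - \tilde I_\pt = O^\alg(V)$; balancing these requires the parameter $a$ to be chosen large enough (this is the origin of $a>1$ and of the restriction $(C_{\lambda,K})^{-1}\le\tfrac12\rho^{2^{d^2+2d+4}}$ in \eqref{asmp:lambda1}). The precise bookkeeping of how $(a,\ratio,\gamma)$ degrade across the six maps, and the verification that they can be chosen once and for all with $a,\ratio,\gamma^{-1}>1$, is deferred to the detailed treatment in Section~\ref{sec:K6bnd} and Appendix~\ref{sec:polops}.
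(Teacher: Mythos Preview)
Your overall architecture---track Maps 1 through 6, controlling an extended $\cW$-type norm of each $K_{(i)}$ with degrading parameters, using the stability bounds of Section~\ref{sec:stabanalysis} and the supermartingale regulator estimates for the fluctuation integral---matches the paper's route. You also correctly single out Map~3 as the delicate step. However, two points in your plan are genuinely off.

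First, your treatment of Map~5 is wrong in order. You write that $\tilde I_\pt - I_\pt^+$ ``is $O^\alg(V)$, bounded by Lemma~\ref{lemma:VptinDDomain}''. If that were all, the $Z=\emptyset$, $|X|_{\cB_+}=1$ term in $K_{(5)}$ would contribute $O(\bar\epsilon(\kh))$, not $O(\lambda'_K(\ell))$, and the lemma fails. The paper's actual argument (Lemmas~\ref{lemma:DeltaBbnd3}--\ref{lemma:DeltaBbnd2}) splits $\Delta(B)$ into a $W_\pt$-piece $\Delta_1$ (order $\bar\epsilon^4$ by cluster expansion) and a stabilisation piece $\Delta_2$, and shows that $\Delta_2$ is a polynomial of degree $\ge\cM+1$ in $U_\pt^{(2)}$ by a Taylor-coefficient cancellation; only with the choice $\cM\ge 1+\tfrac12\max\{3,d-4+2\eta\}$ does this give $\bar\epsilon^3(\kh)$. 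This is not an application of Lemma~\ref{lemma:VptinDDomain} and you need to supply it.

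Second, you do not explain the mechanism by which $\vec\lambda$ becomes $\vec\lambda/16$. The intermediate lemmas (\ref{lemma:K3bnd}, \ref{lemma:K4bndcGp}, \ref{lemma:K5bnd}, \ref{lemma:K6bndbis}) bound $\Phi_+^{(i)}(V,K,K')$ in an extended norm where $K'$ sits in the \emph{third} slot $\bar K$ with its own weight $\bar\lambda_K$. To convert these into a bound on $K_{(i)}$ as a function of $(V,K)$ alone, the paper first shows each bound holds uniformly on a ball $\norm{K}_{\cW}\le\lambda_K$, extends by Taylor series to a complex neighbourhood, and then applies Cauchy's integral formula to recover the $(V,K)$-derivatives at the cost of halving $\vec\lambda$. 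Iterating this four times (once after Map~3 and once after each of Maps~4--6) is what produces $\vec\lambda/2\to\vec\lambda/4\to\vec\lambda/8\to\vec\lambda/16$. Your plan gestures at ``constants $c_i$ that degrade'' but treats the extended norm as if it simply tracked through by submultiplicativity; it does not, and without the analyticity/Cauchy step you cannot close the chain of estimates in the stated norm. (A smaller point: the exponent $a$ actually starts \emph{below} $1$ in Maps~1--2, namely $1-\xi/8$ and $1-\xi/4$, and only exceeds $1$ after the reblocking in Map~3; your attribution of $a>1$ to Map~1 is backwards.)
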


We can deduce a deteriorated version of Proposition~\ref{prop:PhiplK} using this bound.  

\begin{proposition} 
\label{prop:PhiplKDet}

Let $j < N$ and assume \eqref{asmp:Phi}($\alpha=1$) and $K_+ = \Phi_+^K (V,K)$.
Then $\Phi_+^K : \cD^{(0)} \times \cK \times \AA (\tilde{m}^2) \rightarrow \cN_+$ is well-defined,  pointwise continuous in $(\ba_\emptyset,  \ba) \in \AA (\tilde{m}^2)$ and there exist $(M_{p,q})_{p,q,\ge 0}$ (that depend on $L$) and constants $a,\ratio,\gamma^{-1} >1$ such that
\begin{align}
	\norm{D_{V_\bulk}^p D_K^q  \Phi_+^K}_{\cW_+^a (\ratio, \gamma)}
		\le M_{p,q} \times \begin{cases}
			\tilde\chi_+^{3/2} \tilde{g}_+^{3-p} \scale_+^{\kae- p}  & (q= 0)  	\\
			\tilde{g}_+^{-p - \frac{9}{4} (q-1)} \scale_+^{\kbe (1-q) -p} & (q \ge 1)  .
		\end{cases}
\end{align}
\end{proposition}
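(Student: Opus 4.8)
The plan is to deduce Proposition~\ref{prop:PhiplKDet} from Lemma~\ref{lemma:K6bnd} by extracting the individual derivatives $D_{V_\bulk}^p D_K^q\Phi_+^K$ from the extended norm of Section~\ref{sec:extnorm}, choosing the parameters $\vec{\lambda}$ in a $q$-dependent way. First I would fix, under \eqref{asmp:Phi}($\alpha=1$), the choices $\bar{K}=0$, $\bar{\lambda}_K=0$ and $\lambda_V=\tilde{g}\scale$, so that \eqref{asmp:lambda1} is satisfied for every $\lambda_K\in[0,(C_LC_{\lambda,K})^{-1}\tilde{g}^{9/4}\scale^\kbe]$; then the well-definedness of $\Phi_+^K$ and its pointwise continuity in $(\ba_\emptyset,\ba)\in\AA(\tilde{m}^2)$ are exactly the corresponding assertions of Lemma~\ref{lemma:K6bnd}, the convergence of the Gaussian integrals entering Maps~1--6 being guaranteed by the stability estimates of Section~\ref{sec:stabanalysis} (for instance Lemmas~\ref{lemma:entI} and~\ref{lemma:entVpt}).

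The key step is the extraction. Lemma~\ref{lemma:K6bnd} supplies constants $a,\ratio,\gamma^{-1}>1$ with $\norm{\Phi_+^K}_{\vec{\lambda}/16,\,\cW_+^a(z;\ratio,\gamma)}\le O_L(\lambda'_K(\ell))$, which, since $\omega(\ell)=1$ by \eqref{eq:omegadefi} and $\lambda'_K$ is defined in \eqref{eq:lmbdpKkh}, equals $O_L(C_L\tilde\chi^{3/2}\tilde{g}^3\scale^\kae+\lambda_K)$. Because the extended norm dominates each of its terms (Section~\ref{sec:extnorm}), picking off the $(p,q)$-derivative gives
\begin{align}
	\norm{D_{V_\bulk}^p D_K^q\Phi_+^K}_{\cW_+^a(\ratio,\gamma)}
		& \le \frac{16^{p+q}\,p!\,q!}{\lambda_V^p\lambda_K^q}\,\norm{\Phi_+^K}_{\vec{\lambda}/16,\,\cW_+^a(z;\ratio,\gamma)} \nonumber \\
		& \le O_L\Big(\frac{p!\,q!}{(\tilde{g}\scale)^p\lambda_K^q}\big(\tilde\chi^{3/2}\tilde{g}^3\scale^\kae+\lambda_K\big)\Big),
\end{align}
the factors $16^{p+q}$ and $C_L$ being absorbed into $O_L$, and eventually into $M_{p,q}$.

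It then remains to choose $\lambda_K$. For $q=0$ I would take $\lambda_K=0$, so the right-hand side becomes $O_L(\tilde\chi^{3/2}\tilde{g}^{3-p}\scale^{\kae-p})$; for $q\ge1$ I would take $\lambda_K=(C_LC_{\lambda,K})^{-1}\tilde{g}^{9/4}\scale^\kbe$, the largest value permitted by \eqref{asmp:lambda1}. Since $d\ge4-2\eta$ forces $\scale=L^{-(d-4+2\eta)j}\le1$, since $\kae=\kbe+\kpe$ with $\kpe>0$ by \eqref{eq:fraklb}, and since $\tilde\chi\le1$ and $\tilde{g}$ is small, one has $\tilde\chi^{3/2}\tilde{g}^3\scale^\kae=(\tilde\chi^{3/2}\tilde{g}^{3/4}\scale^\kpe)\,\tilde{g}^{9/4}\scale^\kbe\le\tilde{g}^{9/4}\scale^\kbe$, hence $\tilde\chi^{3/2}\tilde{g}^3\scale^\kae+\lambda_K\lesssim\lambda_K$, and the right-hand side becomes $O_L(\tilde{g}^{-p-\frac{9}{4}(q-1)}\scale^{\kbe(1-q)-p})$. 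Finally, passing from scale $j$ to scale $j+1$ costs only an $L$- and $(p,q)$-dependent constant, because $\tilde{g}_+\in[\tilde{g}/2,2\tilde{g}]$, $\tilde\chi_+/\tilde\chi\in[1/2,1]$, and $\scale_+=L^{-(d-4+2\eta)}\scale$; absorbing it into $M_{p,q}$ yields the stated bounds. Essentially all the substance of the proposition is carried by Lemma~\ref{lemma:K6bnd}; within the present argument the only point needing care is the elementary inequality $\tilde\chi^{3/2}\tilde{g}^3\scale^\kae\le\tilde{g}^{9/4}\scale^\kbe$ used for $q\ge1$, which is precisely the statement that the exponents of \eqref{eq:kaekpedefi} are chosen compatibly (so $\kae-\kbe=\kpe\ge0$) together with $\scale\le1$ and $\tilde{g}$ small.
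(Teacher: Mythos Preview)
Your proposal is correct and matches the paper's own proof essentially line for line: both derive the derivative bounds from Lemma~\ref{lemma:K6bnd} via the extended norm, choosing $(\lambda_V,\lambda_K)=(\tilde g\scale,0)$ for $q=0$ and $(\lambda_V,\lambda_K)=(\tilde g\scale,\,c\,\tilde g^{9/4}\scale^{\kbe})$ for $q\ge1$. Your additional remarks on the inequality $\tilde\chi^{3/2}\tilde g^{3}\scale^{\kae}\lesssim\tilde g^{9/4}\scale^{\kbe}$ and on passing from scale $j$ to $j+1$ are details the paper leaves implicit.
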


\begin{proof}
By Lemma~\ref{lemma:K6bnd} and the definition of the extended norm,
\begin{align}
	\norm{D_{V_\bulk}^p D_K^q \Phi_+^K}_{\cW_+^a (\ratio, \gamma)}
		\le \frac{O_{p,q,L} (1)}{\lambda_V^p \lambda_K^q}   \big(  \tilde\chi^{3/2}_+ \tilde{g}^3_+ \scale_+^{\kae} +  \lambda_K \big) .
\end{align}
For the case $q=0$,  choice $( \lambda_V,  \lambda_K ) = ( \tilde{g} \scale , 0 )$ gives the desired bound.
For $q \ge 1$,  choice $( \lambda_V,  \lambda_K ) = ( \tilde{g} \scale ,  (C_{\lambda,K})^{-1} \tilde{g}^{9/4} \scale^{\kbe} )$ gives the desired bound. 
\end{proof}

\subsection{Map 1}

We defined $K_{(1)} = \Rap_{J} [I,K]$.
In the next bound,  $\xi >0$ is a specific constant that is fixed by a purely geometric argument Lemma~\ref{lemma:lgst}, whose value does not matter at this point,  but matters in Map 3. 

\begin{lemma} \label{lemma:K1bnd}
Assume \eqref{asmp:Phi} ($\alpha \le \bar{\alpha}$), \eqref{asmp:lambda1} and $\bar{\lambda}_K = 0$.
Then
\begin{align}
	\norm{K_{(1)} (X)}_{\vec{\lambda},  \cW^{1-\xi/8} (z)}	
		\lesssim \lambda'_K (\ell)
\end{align}
\end{lemma}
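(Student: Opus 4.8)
\textbf{Proof proposal for Lemma~\ref{lemma:K1bnd}.}

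The plan is to trace through the definition $K_{(1)} = \Rap_J[I,K]$ in \eqref{eq:K1} together with the reapportioning estimate (Lemma~\ref{lemma:reapp}) and bound every term that appears. First I would recall that $J_b(X)$ in \eqref{eq:JBX} is, for $X\in\cS\setminus\cB$ with $b\subset X$, equal to $\Loc_X(K/I)(b)$, and for $X=b$ equal to minus the sum of the others; since $J = O^{\alg}(K)$ and the reapportioning map is a controlled polynomial operation on $(I,K)$, the output $K_{(1)}$ is a sum of terms each of which is a product of one factor of $K$ (or of $J$, hence of $\Loc_X(K/I)$) with a bounded number of $I$-factors, summed over the relevant polymer geometries. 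The key inputs are: (i) the stability bound $\norm{I^{-X}K(X)}_{\kh,\vec\lambda,T(0,z)}\lesssim \norm{K}_{\cW}+\lambda_K$ from \eqref{eq:entnI} of Lemma~\ref{lemma:entI}; (ii) the localisation bound $\norm{\hat\loc_X F(X)}_{\kh,T(0)}\lesssim\norm{F(X)}_{\kh,T(0)}$ from Proposition~\ref{prop:locXBbdmt}, which applies because $X\in\cS$; (iii) the stability estimate $\norm{I^b}_{\kh,\vec\lambda,T(\varphi,z)}\lesssim E(b,\varphi;\kh)$ for the $I$-factors away from the localised block, together with \eqref{eq:EbndbycG}, which converts the product of $E$'s into a large-field regulator $\cG(X,\varphi;\kh)$; and (iv) Lemma~\ref{lemma:xtnrbsciq} to pass from $\norm{\cdot}_{\cW}$ to the extended norm $\norm{\cdot}_{\vec\lambda,\cW(z)}$, absorbing the extra $\lambda_K\omega^{-1}(\kh)$ into $\lambda'_K(\ell)$ as defined in \eqref{eq:lmbdpKkh}.

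The second step is to control the large-set weight. The reapportioning map $\Rap_J$ spreads $J_b(X)$ over blocks and small sets, so one must check that the resulting polymer activity still satisfies the required decay in $|X|_{\cB}$, but with the slightly degraded exponent $1-\xi/8$ rather than $1$. Here I would invoke the geometric bookkeeping of Lemma~\ref{lemma:reapp}: since $J_b$ is supported on small sets $X\in\cS$ (in particular $|X|_{\cB}\le 2^d$), the reapportioning only redistributes mass among $O_L(1)$ neighbouring blocks, and the loss in the large-set regulator exponent can be made as small as $\xi/8$ at the cost of an $L$-dependent constant. The constant $\xi$ itself is the one fixed purely geometrically in Lemma~\ref{lemma:lgst}; its numerical value is irrelevant here and is only used downstream in Map~3, so at this stage I would simply carry it as a fixed small parameter and record that the output lies in the $\cW^{1-\xi/8}$-norm.

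Putting these together: for $X\in\Con$ with $|X|_{\cB}$ bounded (the small-set case, where the $J$-terms live) the bound is $\norm{K_{(1)}(X)}_{\vec\lambda,T(\hat\cG,z)}\lesssim (\norm{K}_{\cW}+\lambda_K)\cdot\sup_b\norm{I^{b}}\cdots \lesssim \omega^{-1}(\ell)(C_L\tilde\chi^{3/2}\tilde g^3\scale^{\kae}+\lambda_K)=\lambda'_K(\ell)$, where the $K\in\cK(\alpha)$ hypothesis from \eqref{asmp:Phi} supplies $\norm{K}_{\cW}\le O_L(1)\tilde\chi^{3/2}\tilde g^3\scale^\kae$; for larger $X$ only the $K(X)$ term survives essentially unchanged, already bounded by $\norm{K}_{\cW}$ times the appropriate large-set factor. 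Dividing by $A_+^{1-\xi/8}(X)$ and taking the supremum over $X\in\Con$ gives the claimed estimate in the $h$-part after multiplying by $\omega_j(h)$; the $\ell$-part is analogous and simpler since there $E(b,\varphi;\ell)$ is a pure Gaussian-type regulator with no large-field suppression needed. The continuity in $(\ba_\emptyset,\ba)$ is inherited from the continuity of $I=\cI(V)$ (Lemma~\ref{lemma:entI}), of $\Loc$ (Proposition~\ref{prop:locXBbdmt}), and of the finite algebraic operations composing $\Rap_J$.

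\textbf{Main obstacle.} The delicate point is not any single estimate but the interplay in the $h$-norm between the localisation operator $\Loc_X$ applied to $K/I$ and the decaying large-field regulator $H_j$ hidden inside $\bar G_j$: $\Loc_X$ only sees the behaviour of $K/I$ near $\varphi=0$, so one must be careful that re-inserting $\Loc_X(K/I)(b)$ into block $b$ and then re-multiplying by $I^{X\setminus b}$ does not destroy the Gaussian-decay information stored in $\cG(X,\varphi;h)$. The resolution is that $\Loc_X(K/I)(b)$ is a \emph{polynomial} in $\varphi$ of bounded degree, so Lemma~\ref{lemma:polynorm} controls it by a power of $P_{h,j}(b,\varphi)$, which is dominated by $\cG$ up to constants; combined with \eqref{eq:entnI} this closes the estimate. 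I expect this regulator-tracking to be the step requiring the most care, while everything else is a routine application of submultiplicativity and the already-established stability bounds.
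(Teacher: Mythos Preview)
Your ingredients are correct, but the packaging is off. You cite Lemma~\ref{lemma:reapp} as ``the reapportioning estimate'', but that lemma is purely algebraic (it gives the polymer-expansion identity and the linearisation $\Rap_J[I,K]=K-\bar J+O^{\alg}(K^2)$); it contains no norm bound. The paper's proof is a one-step application of the \emph{actual} estimate, Lemma~\ref{lemma:Rapbndv2}: one verifies its two hypotheses---a bound $\norm{\bar J_b(Z')}\lesssim\alpha_1\hat\cG(Z',\varphi)$ on the localised piece and a bound $\norm{(K-\bar J)(Z')}\le\alpha_2 A^a(Z')\hat\cG(Z',\varphi)$ on the remainder---together with the smallness conditions $\alpha_1\le\rho^{c(d)}$ and $\alpha_2\le\rho^{2^d a}$, and the conclusion with exponent $a(1-\xi/8)$ drops out. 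You compute the right objects for $\alpha_1$ and $\alpha_2$ (via Proposition~\ref{prop:locXBbdmt}, Lemma~\ref{lemma:entI}, and the definition of $\cK(\alpha)$), but you never check the smallness against powers of $\rho$, which is exactly what \eqref{asmp:lambda1} is designed for: $\lambda_K\le (C_{\lambda,K})^{-1}\tilde g^{9/4}\scale^\kbe$ with $(C_{\lambda,K})^{-1}\le\tfrac12\rho^{2^{d^2+2d+4}}$ guarantees $\alpha_1\le\rho^{c(d)}$. Without invoking Lemma~\ref{lemma:Rapbndv2} you would have to reproduce its combinatorics (Lemma~\ref{lemma:rghtrrwsmbnd} and the argument using \eqref{eq:alphatoA}), and your paragraph on large-set control is too vague to do that.

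Two smaller points: the large-set regulator here is $A(X)=A_j(X)$, not $A_+(X)$, since Map~1 stays at scale $j$; and the ``main obstacle'' you flag---compatibility of $\Loc_X(K/I)$ with the decaying regulator $H_j$---is handled in one line in the paper: Proposition~\ref{prop:locXBbdmt} bounds $\Loc_X(K/I)(b)$ at $\varphi=0$, and since it is a polynomial of degree $\le 4$ (hence the factor $(\kh_\bulk/\ell_\bulk)^4$ when passing from $\ell$ to $h$), re-multiplying by $I^X$ and using Lemma~\ref{lemma:entI} restores the full $\cG(X,\varphi;\kh)$ directly. No separate regulator-tracking argument is needed.
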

\begin{proof}
By definition,  the statement is equivalent to
\begin{align}
	\norm{K_{(1)} (X)}_{\kh, \vec{\lambda},T (\varphi,z)}	
		\lesssim A^{1 - \frac{\xi}{8}} (X) \cG ( X, \varphi ; \kh ) \lambda'_K (\kh)
\end{align}
for $\kh \in \{\ell, h\}$.
By Lemma~\ref{lemma:Rapbndv2},  it is sufficient to prove some bounds on $I$,  $K$ and $J$. 
Since $(V,K) \in \mathbb{D} (\alpha)$, 
Lemma~\ref{lemma:entI} gives a stability bound \eqref{eq:entI} (with $t=1$) on $I$. 
By the definition of $\cK (\alpha) \ni K$,
\begin{align}
	\omega (\kh) \norm{K(X)}_{\kh, \vec{\lambda},T(\varphi,z)} \le A (X)  (\norm{K}_{\cW} + \lambda_K) \cG (X,\varphi ; \kh) .
\end{align}
For $J$,
by Proposition~\ref{prop:locXBbdmt}, 
\begin{align}
	\norm{J_b (X)}_{\kh, \vec{\lambda},T(0,z)}
		&\lesssim \Big( \frac{\kh_\bulk}{\ell_\bulk} \Big)^{4} \norm{K (X)}_{\ell, \vec{\lambda},T(0,z)}
		\lesssim \Big( \frac{\kh_\bulk}{\ell_\bulk} \Big)^{4} (\norm{K}_{\cW} + \lambda_K) 
\end{align}
whenever $b \subset X \in \cS$,  thus together with Lemma~\ref{lemma:entI},
\begin{align}
	\norm{I^X J_b (X)}_{\kh, \vec{\lambda},T(\varphi,z)}
		&\lesssim \Big( \frac{\kh_\bulk}{\ell_\bulk} \Big)^{4} (\norm{K}_{\cW} + \lambda_K)  \cG (X,\varphi ;\kh) .
\end{align}
(Large set regulator is not present in the bound because $J_b (X)$ vanishes for $X \not\in \cS$ and $A (X) = 1$ for $X \in \cS$.)
Thus we also have
\begin{align}
	\norm{K(X) - \bar{J} (X)}_{\kh, \vec{\lambda},T(\varphi,z)}	
		\lesssim \omega^{-1} (\kh) A (X) ( \norm{K}_{\cW} + \lambda_K) \cG (X,\varphi ; \kh) .
\end{align}
If we let $\alpha_1 = C ( \frac{\kh}{\ell} )^{4} ( \norm{K (X)}_{\cW} + \lambda_K)$ and $\alpha_2 = C \omega^{-1} (\kh) ( \norm{K (X)}_{\cW} + \lambda_K)$,  then we have $\alpha_1  \le \rho^{c(d)}$ for $c(d)= 2^{d^2 +2d + 4}$ (when $\rho$ and $\tilde{g}$ are sufficiently small,  due to our choice of $\lambda_K$ in \eqref{asmp:lambda1}) and $\alpha_2 \le \rho^{2^d}$,  so all the assumptions of Lemma~\ref{lemma:Rapbndv2} are satisfied.
\end{proof}

\subsection{Map 2}

We defined $K_{(2)} = \delta \hat{I} \circ K_{(1)}$.

\begin{lemma} \label{lemma:K2bndsmmry}
Assume \eqref{asmp:Phi}($\alpha\le\bar{\alpha}$), \eqref{asmp:lambda1} and $\bar{\lambda}_K =0$.  Then
\begin{align}
	\norm{K_{(2)}}_{\vec{\lambda} , \cW^{1- \xi / 4} (z)} \le O_L \big( \lambda'_{K} (\ell) \big) .
\end{align}
\end{lemma}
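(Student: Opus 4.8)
The plan is to estimate $K_{(2)} = \delta\hat I\circ K_{(1)}$ (recall \eqref{eq:K2}--\eqref{eq:hatVhatI}) by combining the Map~1 bound on $K_{(1)}$ with a stability bound on $\hat I$ and a smallness bound on $\delta\hat I = I - \hat I$, all measured in the extended norm, and feeding these into the polymer-convolution estimate of Appendix~\ref{sec:polops} (the analogue for $\circ$ of the reapportioning estimate Lemma~\ref{lemma:Rapbndv2}). By Lemma~\ref{lemma:WYequivext} and the definition of $\norm{\cdot}_{\vec\lambda,\cW^{1-\xi/4}(z)}$, it suffices to show, for both $\kh\in\{\ell,h\}$ and every $X\in\Con$,
\begin{align}
	\norm{K_{(2)}(X)}_{\kh,\vec\lambda,T(\varphi,z)} \le O_L(1)\,A^{1-\xi/4}(X)\,\cG(X,\varphi;\kh)\,\lambda'_K(\kh) ,
\end{align}
with $\lambda'_K$ as in \eqref{eq:lmbdpKkh}.

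First I would record the inputs. The Map~1 bound Lemma~\ref{lemma:K1bnd} gives $\norm{K_{(1)}}_{\vec\lambda,\cW^{1-\xi/8}(z)}\lesssim\lambda'_K(\ell)$. Next, since $(V,K)\in\mathbb{D}(\alpha)$ and $\lambda_K\le\tilde g^{9/4}\scale^\kbe$ under \eqref{asmp:lambda1}, Lemma~\ref{lemma:entIs} with $t=s=1$ controls $\hat I = \cI(\hat V) = \cI(V-Q)$ by $\norm{\hat I(b)}_{\kh,\vec\lambda,T(\varphi,z)}\lesssim E(b,\varphi;\kh)$ (see \eqref{eq:Ebphi}), which is dominated by $\cG(b,\varphi;\kh)$ via \eqref{eq:EbndbycG}. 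For $\delta\hat I$ I would use the fundamental-theorem-of-calculus identity $\delta\hat I(b) = \int_0^1 D_V\cI(V-sQ;Q)(b)\,ds$ (cf.\ the expansion in Lemma~\ref{lemma:LK2}) and bound the integrand by the product rule: the factors $e^{-(V-sQ)^\stable(b)}$ and $1+\mathbb{W}_{w,V-sQ}(b)$ are bounded by Lemma~\ref{lemma:entVQ} and Lemma~\ref{lemma:entIs}, the single differentiated factor is linear in $Q$ and is bounded via Lemma~\ref{lemma:Qnorm} together with Corollary~\ref{cor:Qnorm2} for $\norm{Q(b)}$, and Lemma~\ref{lemma:polynorm} passes from $\norm{\cdot}_{\kh,T(0)}$ to $\norm{\cdot}_{\kh,T(\varphi)}$ at the cost of a polynomial factor absorbed into $E(b,\varphi;\kh)$. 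This yields a bound of the form $\norm{\delta\hat I(b)}_{\kh,\vec\lambda,T(\varphi,z)}\le O_L(1)\,(\kh_\bulk/\ell_\bulk)^4\,\scale^{-1+\kt}\,(\norm K_\cW+\lambda_K)\,E(b,\varphi;\kh)$; using $\kae>\kbe>2$ from \eqref{eq:fraklb}, smallness of $\kt$ from \eqref{eq:ktcondition} and \eqref{asmp:lambda1}, the prefactor tends to $0$ with $j$ and is in particular $\le\rho^{c(d)}\,\cG(b,\varphi;\kh)$ for the geometric constant $c(d)=2^{d^2+2d+4}$ that enters the convolution estimate.

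Then I would assemble. Expanding $(\delta\hat I\circ K_{(1)})(X)=\sum_{Y\in\cP(X)}\delta\hat I^{X\setminus Y}K_{(1)}^{[Y]}$ and using submultiplicativity of $\norm{\cdot}_{\kh,T(\varphi)}$ and set-multiplicativity of $\cG$: the term $Y=X$ reproduces $K_{(1)}(X)$, bounded as above by $O_L(\lambda'_K(\ell))\,A^{1-\xi/8}(X)\,\cG$, while every other term carries at least one factor $\delta\hat I(b)$. By the polymer-convolution estimate of Appendix~\ref{sec:polops}, the $\rho^{c(d)}$-smallness of $\delta\hat I(b)$ makes the sum over sub-polymers $Y\subset X$ converge geometrically and absorbs both the combinatorial factors and the loss in the large-set regulator from $A^{1-\xi/8}$ to $A^{1-\xi/4}$, giving the displayed bound and hence $\norm{K_{(2)}}_{\vec\lambda,\cW^{1-\xi/4}(z)}\le O_L(\lambda'_K(\ell))$. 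For the continuity statement, $Q$ carries no $(\ba_\emptyset,\ba)$-dependence, $I$ and $\hat I$ are continuous by Lemma~\ref{lemma:entI} and Lemma~\ref{lemma:entIs}, $K_{(1)}$ is pointwise continuous by Lemma~\ref{lemma:K1bnd}, and $K_{(2)}(X,\varphi)$ is a finite polymer sum of products of these, hence continuous in $(\ba_\emptyset,\ba)$ for fixed $(X,\varphi)$.

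The main obstacle is the estimate on $\delta\hat I(b)$: one must make its prefactor small enough ($\le\rho^{c(d)}$, so that the convolution estimate can absorb the $A^{\xi/8}$ degradation of the large-set regulator) while keeping the \emph{overall} bound at the order $\lambda'_K(\ell)$ rather than an inflated one. This forces the use of the genuine $\scale$-decay of $Q$ from Corollary~\ref{cor:Qnorm2}, together with the precise parameter hierarchy of \eqref{asmp:lambda1} and \eqref{eq:fraklb}--\eqref{eq:ktcondition}, in place of the crude estimate $\norm Q\lesssim\norm K_\cW$. A secondary technical point is that every one of the stability estimates has to be carried out in the extended norm, keeping track of the $D_{V_\bulk}$- and $D_K$-derivatives, which is precisely why Lemmas~\ref{lemma:entVQ}, \ref{lemma:entIs} and \ref{lemma:Qnorm} were set up in that generality.
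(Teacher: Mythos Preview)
Your overall strategy matches the paper's: bound $K_{(1)}$ via Lemma~\ref{lemma:K1bnd}, bound $\delta\hat I$ blockwise, and feed both into the polymer-convolution estimate Lemma~\ref{lemma:cnvbnd}. The paper obtains the $\delta\hat I$ bound via the explicit three-term decomposition of Lemma~\ref{lemma:hatImthetaIo} rather than the fundamental theorem of calculus, but the two routes are equivalent in spirit.

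There is, however, a real gap in the bound you state for $\delta\hat I(b)$: the factor $\scale^{-1+\kt}$ should not be there. That factor comes from Corollary~\ref{cor:Qnorm2}, which controls the $\cV(\ell)$-norm of $Q$; but for the pieces coming from differentiating $e^{-V^{(1)}}$ and $\pexp(-V^{(2)})$ you only need the $T$-norm of $Q(b)$, and Lemma~\ref{lemma:Qnorm} gives this directly as $(\kh_\bulk/\ell_\bulk)^4(\norm K_{\cW}+\lambda_K)$ with no $\scale^{-1+\kt}$. Only the $W$-piece $D_V\mathbb W_{w,V-sQ}(b;Q)$ genuinely requires the $\cV$-norm of $Q$, and there the $\scale^{-1+\kt}$ is compensated by the extra $\bar\epsilon(\kh)$ from the leg $V-sQ$ in the bilinear form (this is precisely how the paper handles $\delta\cI_3$ in the proof of Lemma~\ref{lemma:hatImthetaIo}, concluding with $(\kh_\bulk/\ell_\bulk)^6\epsilon(\ell)\scale^{-1+\kt}\le\omega^{-1}(\kh)$).

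With your stated bound, for $\kh=\ell$ you would get $\norm{\delta\hat I(b)}_{\ell,\vec\lambda,T(\varphi,z)}\lesssim\scale^{-1+\kt}\lambda'_K(\ell)\,\cG$. For a single block $X=b$ one has $K_{(2)}(b)=K_{(1)}(b)+\delta\hat I(b)$, so this term alone already exceeds the target $O_L(\lambda'_K(\ell))$ once $j$ is large (recall $\scale^{-1+\kt}=L^{(d-4+2\eta)(1-\kt)j}\to\infty$ above the critical dimension). Equivalently, in the language of Lemma~\ref{lemma:cnvbnd} your bound forces $C_{\delta I}=O_L(\scale^{-1+\kt})$, and that lemma outputs $C_{\delta I}\lambda$, not $\lambda$. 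Your final paragraph correctly identifies this tension (``small enough\ldots while keeping the overall bound at the order $\lambda'_K(\ell)$''), but the displayed estimate does not resolve it. The correct bound is $\norm{\delta\hat I(b)}_{\kh,\vec\lambda,T(\varphi,z)}\lesssim\lambda'_K(\kh)\,\cG(b,\varphi;\kh)$, i.e.\ $C_{\delta I}=O_L(1)$, which is \eqref{eq:deltahatIbnd}.

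Two minor points: the stability bound on $\hat I$ is not needed here, since only $\delta\hat I$ enters $K_{(2)}=\delta\hat I\circ K_{(1)}$; and the continuity discussion is extraneous, as the lemma makes no continuity claim.
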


We first obtain an alternative expression for $\delta \hat{I}$ and bound it. 

\begin{lemma} \label{lemma:hatImthetaIo}
Let $I_i =\exp(-V_i^\stable) (1+W_i)$ for some $V_i, W_i$ and $i=1,2$.  Then
\begin{align} \label{eq:hatImthetaIo0}
	& ( I_1  - I_2 ) (b) = \delta \cI_1 (V_1, V_2) + \delta \cI_2 (V_1, V_2) + \delta \cI_3 (V_1, V_2)  
\end{align}	
where,  for $e^{-V^{(2,\bs)} (b)} = 1 - V^{(2)} (b) + (V^{(2)} (b))^2 / 2$,
\begin{align} \label{eq:hatImthetaIo}
\begin{array}{l}
	 \delta \cI_1 (V_1, V_2) = ( e^{- V_1^{(1)}} - e^{- V_2^{(1)}} ) e^{-V_1^{(2,\bs)}} (1 + W_1) (b) 		 \\
	\delta \cI_2 (V_1, V_2)  = - e^{-V_2^{(1)}} (\pexp (-V_1^{(2)}) - \pexp (-V_2^{(2)}) ) (1+ W_1)(b)  \\
	\delta \cI_3 (V_1, V_2) = e^{- V_2^\stable}  (W_1 - W_2) (b) 
\end{array}
\end{align}
If \eqref{asmp:Phi}($\alpha\le\bar{\alpha}$) and \eqref{asmp:lambda1} are satisfied and $\delta \hat{I} = \cI (V) - \cI(\hat{V})$,  then
\begin{align}
	\norm{\delta \hat{I} (b)}_{\kh, \vec{\lambda},T(\varphi,z) }
		\lesssim E(b, \varphi ; \kh) \lambda'_K (\kh) .
		\label{eq:deltahatIbnd}
\end{align}
\end{lemma}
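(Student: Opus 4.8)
The first identity \eqref{eq:hatImthetaIo0} is purely algebraic: write $I_1 - I_2 = e^{-V_1^{(\bs)}}(1+W_1) - e^{-V_2^{(\bs)}}(1+W_2)$ and insert and subtract the two hybrid terms $e^{-V_2^{(1)}}e^{-V_1^{(2,\bs)}}(1+W_1)$ and $e^{-V_2^\stable}(1+W_1)$. Telescoping these three differences gives exactly $\delta\cI_1+\delta\cI_2+\delta\cI_3$ as written, using $e^{-V^{\stable}} = e^{-V^{(1)}}\pexp(-V^{(2)})$ and $\pexp(-V^{(2)}) = e^{-V^{(2,\bs)}}$. No estimates are needed here.

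\textbf{The three terms.} For the quantitative bound \eqref{eq:deltahatIbnd} I would apply the decomposition with $V_1 = V$, $V_2 = \hat V = V - Q$, so that $V_1^{(1)} - V_2^{(1)} = (1-\pi_{4,\nabla})Q$, $V_1^{(2)} - V_2^{(2)} = \pi_{4,\nabla}Q$, and $W_1 - W_2 = \mathbb{W}_{w,V} - \mathbb{W}_{w,\hat V}$. The strategy is submultiplicativity of $\norm{\cdot}_{\kh,\vec\lambda,T(\varphi,z)}$ (Lemma~\ref{lemma:subxext}) together with the stability estimates already proved. For $\delta\cI_1$: write $e^{-V_1^{(1)}} - e^{-V_2^{(1)}} = -\int_0^1 D_V(e^{-(V_2^{(1)} + t(1-\pi_{4,\nabla})Q)})\big((1-\pi_{4,\nabla})Q\big)\,dt$, bound the exponential factor by Lemma~\ref{lemma:entVQ} (or Lemma~\ref{lemma:entV}) for each $t$, bound $(1-\pi_{4,\nabla})Q$ in $\norm{\cdot}_{\kh,\vec\lambda,T(\varphi,z)}$ via Lemma~\ref{lemma:Qnorm} and Lemma~\ref{lemma:polynorm} (it is a polynomial of degree $\le 4$ in $\varphi$, so the polynomial factor gets absorbed into $E(b,\varphi;\kh)$), and bound $e^{-V_1^{(2,\bs)}}(1+W_1)$ by Lemma~\ref{lemma:Q4nstb} and Lemma~\ref{lemma:FWPboundsobs}. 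The factor $\norm{Q(b)}_{\kh,\vec\lambda,T(0,z)}\lesssim (\kh_\bulk/\ell_\bulk)^4(\norm{K}_{\cW}+\lambda_K) \lesssim \omega(\kh)\lambda'_K(\kh)/\text{(const)}$ after recalling the definition \eqref{eq:lmbdpKkh} of $\lambda'_K$ and the constraint $\norm{K}_{\cW} < C_{\rg}\chi^{3/2}\tilde g^3\scale^\kae$ from $\cK(\alpha)$. For $\delta\cI_2$: $\pexp(-V_1^{(2)}) - \pexp(-V_2^{(2)})$ is a polynomial difference, again linear in $\pi_{4,\nabla}Q$ up to higher order, handled the same way. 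For $\delta\cI_3$: $W_1 - W_2 = \mathbb{W}_{w,V} - \mathbb{W}_{w,V-Q}$ is bilinear-affine in $Q$; expanding $\mathbb{W}_{w,V} - \mathbb{W}_{w,V-Q} = \mathbb{W}_{w,Q;2V-Q}$ (using bilinearity of $\F_{\pi,w}$) and applying Lemma~\ref{lemma:FWPboundsobs} with $\lambda_V$ playing its role, or more directly Lemma~\ref{lemma:FWPbounds3obs}, gives $\norm{(W_1-W_2)(b)}_{\kh,\vec\lambda,T(0,z)}\lesssim \norm{Q}_{\cV(\ell)}\big(\be_V(\kh)+\norm{Q}_{\cV(\ell)}\big)$, and one checks via Corollary~\ref{cor:Qnorm2} and \eqref{asmp:lambda1} that this is $\lesssim E(b,\varphi;\kh)\lambda'_K(\kh)$ after multiplying by the stability bound on $e^{-V_2^\stable}$ from Lemma~\ref{lemma:entVQ}.

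\textbf{Bookkeeping and the main obstacle.} Each of the three terms produces a product of (i) a stability exponential, bounded by $E(b,\varphi;\kh)$ via the Section~\ref{sec:stabanalysis} lemmas; (ii) a small factor proportional to $\norm{Q}$, which after the definitions \eqref{eq:lmbdpKkh}, \eqref{asmp:lambda1} and Corollary~\ref{cor:Qnorm2} is $\lesssim \lambda'_K(\kh)$; and (iii) polynomial-in-$\varphi$ factors of bounded degree, which are controlled by Lemma~\ref{lemma:polynorm} and absorbed into $E$ at the cost of shrinking $\kappa$ (as in \eqref{eq:EbndbycG}), since $E$ has a $\kappa$-free gain in the $\kh=h$ case and an $\ell_0^{-1}$ gain in the $\kh=\ell$ case. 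I expect the main subtlety to be the $\kh=h$ case of $\delta\cI_1$ and $\delta\cI_3$: here $e^{-V_1^{(1)}}-e^{-V_2^{(1)}}$ cannot simply be bounded by the difference of two bounded quantities because the quartic term $e^{-V_1^{(2,\bs)}}$ and the deviation $(1-\pi_{4,\nabla})Q$ interact, and one must keep the $e^{-c\norm{\varphi}_{L^4(b)}^4}$ decay intact through the interpolation $V_2^{(1)}+t(1-\pi_{4,\nabla})Q$; this requires verifying that $V_2 + tQ = V - (1-t)Q$ still lies in the enlarged stability domain $\cD^\st(\alpha')$ for a slightly larger $\alpha'$, which follows from Corollary~\ref{cor:Qnorm2}\eqref{eq:Qnorm21} since $Q$ is much smaller than the domain radius. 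Once that is in place, the remaining estimates are a routine but lengthy application of submultiplicativity. Continuity in $(\ba_\emptyset,\ba)$ is inherited termwise: $W_1, W_2$ are continuous by Lemma~\ref{lemma:FWPboundsobs}, and $Q$, $V$ carry no $(\ba_\emptyset,\ba)$-dependence, so each $\delta\cI_i$ is continuous for fixed $(b,\varphi)$.
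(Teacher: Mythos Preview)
Your proposal is correct and follows essentially the same route as the paper: the algebraic identity is the same telescoping, and for the bound the paper likewise isolates the three difference pieces $e^{-V^{(1)}}-e^{-\hat V^{(1)}}$, $\pexp(-V^{(2)})-\pexp(-\hat V^{(2)})$, and $W-\hat W$, bounds each by $E(b,\varphi;\kh)\lambda'_K(\kh)$ via the integral representation, Lemma~\ref{lemma:Qnorm}, Lemma~\ref{lemma:entVQ}, and the bilinear expansion $W^{\rmQ}(V,V)-W^{\rmQ}(\hat V,\hat V)=W^{\rmQ}(2V-Q,Q)$, then combines with the stability bound on the remaining factors. Your ``main obstacle'' (keeping the quartic decay through the interpolation $V-(1-t)Q$) is exactly what Lemma~\ref{lemma:entVQ} was set up to handle, so no extra work is needed there.
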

\begin{proof}

The first identity \eqref{eq:hatImthetaIo0} is obvious from the definition of $I_i$'s.

For the bound \eqref{eq:deltahatIbnd},  we bound $\delta \cI_i$'s with $V_1 = V$ and $V_2 = \hat{V}$.  We denote $\norm{\cdot}$ for $\norm{\cdot}_{\kh, \vec{\lambda},T(\varphi,z)}$.
We aim to prove
\begin{align}
	\left\{
	\begin{array}{r}
		\norm{ e^{- V^{(1)} (b)} - e^{- \hat{V}^{(1)} (b)}}  \\
		\big\| \pexp (-V^{(2)}) - \pexp (-\hat{V}^{(2)})  \big\| \\
		\| W^{\rmQ} (V, V) -  W^{\rmQ} (\hat{V}, \hat{V}) \|
	\end{array}	\right\}
		\lesssim E(b,\varphi ;  \kh ) \lambda'_K (\kh) \label{eq:hatImthetaIcmp}
\end{align}
(recall \eqref{eq:Ebphi} for $E$).
Indeed,  these bounds and the stability bound Lemma~\ref{lemma:entIs} directly imply the desired bound. 

We now prove \eqref{eq:hatImthetaIcmp}.   Since $\hat{V} = V-Q$,
\begin{align}
	\norm{ e^{- V^{(1)}} - e^{- \hat{V}^{(1)}} } &= \Big\| \int_0^1 e^{-(V-tQ)} Q dt \Big\| 
		\le \sup_{t \in [0,1]} \norm{e^{-(V-tQ)}} \norm{Q} ,
\end{align}
and by Lemma~\ref{lemma:Qnorm} and \ref{lemma:entVQ},
\begin{align}
	\lesssim E(b,\varphi ;  \kh ) \Big( \frac{h_\bulk}{\ell_\bulk} \Big)^4 \lambda'_K (\ell) \lesssim E(b,\varphi ;  \kh ) \lambda'_K (\kh) .
\end{align}
Next,  by \eqref{eq:entVQ1},   for $t \in [0,1]$,
\begin{align}
	\norm{ ( V^{(2)} - t Q^{(2)} ) (b) } \lesssim 
		\begin{cases}
		P_{h}^4 (b,\varphi) & \text{\eqref{asmp:lambda2}} \\
		\ell_0^{-2} P_{\ell}^4 (b,\varphi)   & \text{\eqref{asmp:lambda3}} ,
	\end{cases}
\end{align}
and by Lemma~\ref{lemma:Qnorm},
\begin{align}
	\norm{ (V^{(2)} - \hat{V}^{(2)} ) (b) }  \lesssim 
		\Big( \frac{\kh_\bulk}{\ell_\bulk} \Big)^4 \lambda'_K (\ell) \le  \lambda'_K (\kh)
\end{align}
so their multiplication gives
\begin{align}
	\big\| \pexp (-V^{(2)}) - \pexp (- \hat{V}^{(2)})  \big\|
		\lesssim E (b,\varphi ; \kh) \lambda'_K (\kh) .
\end{align}
Finally,  we bound
\begin{align}
	W^{\rmQ}(V, V) -  W^{\rmQ} (\hat{V}, \hat{V}) = W^{\rmQ} (2V-Q, Q) .
\end{align}
By Lemma~\ref{lemma:cVnrmvslnrm} and Lemma~\ref{lemma:Qnorm},  we have $\norm{Q}_{\cV (\ell), \vec{\lambda}, T(z) } \lesssim \scale^{-1+\kt} \lambda'_K (\ell)$,  
and we can bound $W^\rmQ$ using Lemma~\ref{lemma:FWPbounds2} and \ref{lemma:FWPbounds2obs}:
\begin{align}
	\big\| W^{\rmQ} (V, V) -  W^{\rmQ} (\hat{V}, \hat{V}) \big\|
		&\le O_L (1) P_{\kh}^8 (b,\varphi) \Big( \frac{\kh_\bulk}{\ell_\bulk} \Big)^6 \big( \epsilon (\ell) +  \lambda_V \big) \scale^{-1 + \kt} \lambda'_K (\ell)  \nnb
		&\lesssim E(b,\varphi ; \kh) \lambda'_K (\kh) 
\end{align}
where the final inequality uses $(\kh_\bulk /\ell_\bulk )^6 \le \omega^{-1} (\kh)$ for both $\kh \in \{\ell, h\}$,  which follows from \eqref{eq:fraklb}.
\end{proof}

Rest of the proof relies on a general bound on polymer expansions.

\begin{proof}[Proof of Lemma~\ref{lemma:K2bndsmmry}]
By definition,  the desired statement is equivalent to
\begin{align}
	\norm{ K_{(2)} (X)}_{\kh, \vec{\lambda} ,T (\varphi,z)}	
		\lesssim A^{1- \frac{\xi}{4}} (X) \cG ( X, \varphi ; \kh ) \lambda'_K (\kh)
\end{align}
for $\kh \in \{\ell, h\}$.
We just have to check the assumptions of Lemma~\ref{lemma:cnvbnd}.
For $\delta \hat{I}$,  by \eqref{eq:deltahatIbnd} of Lemma~\ref{lemma:hatImthetaIo} and \eqref{eq:EbndbycG},
\begin{align}
	\norm{ \delta \hat{I}^b }_{\kh, \vec{\lambda},T(\varphi,z)} \le C_{\delta I} \cG (b,\varphi ; \kh) \lambda'_K (\kh)
\end{align}
For $K_{(1)}$, the assumption is already verified by Lemma~\ref{lemma:K1bnd}:
\begin{align}
	\norm{K_{(1)}}_{\vec{\lambda}, \cW^{1-\xi/8} (z)}
		\lesssim \omega(\kh) \lambda'_K (\kh)
\end{align}
and also
\begin{align}
	C_{\delta I} \lambda'_K (\kh) \le C_{\delta I} \big( C_L \tilde{g}^{3/4} \scale^\kpe + \frac{1}{2} \rho^{2^d} \big) \le \rho^{1- \frac{\xi}{8}}
\end{align}
for sufficiently small $\rho$ (compared to $C_L^{-1} C_{\delta I}^{-1}$) and $\tilde{g}$.
\end{proof}

\subsection{Map 3}
\label{sec:K3bnd}

We defined $K_{(3)} = \Phi_+^{(3)} (V,K,K_{(2)})$.  To distinguish the role of $K$ and $K_{(2)}$,  we let,  for $K' \in \cN$,
\begin{align}
	K'_{(h)} (Y)
		&=  \sum_{Z \in \cP (Y)} \one_{Z \in \Con} ( \theta_\zeta \hat{I} (\varphi) - \tilde{I}_{\pt} (\varphi)  )^{Y \backslash Z} \thetaz K' (Z, \varphi) \\
	K'_{(k)} (Y)
		&= \sum_{Z \in \cP (Y)} \big( \one_{|\Comp (Z)| \ge 2} + \one_{Z = \emptyset,  \,  |Y|_{\cB} \ge 3} \big) ( \theta_\zeta \hat{I} (\varphi) - \tilde{I}_{\pt} (\varphi)  )^{Y \backslash Z} \thetaz K' (Z, \varphi) \\		
	K'_{(l)} (Y)
		&=  \one_{|Y|_{\cB} \le 2} ( \theta_\zeta \hat{I} (\varphi) - \tilde{I}_{\pt} (\varphi)  )^{Y}
\end{align}
and consider
\begin{align}
	K'_{(3,  \alpha )} (X,\varphi)
		&= \sum_{Y \in \cP}^{\bar{Y} = X} \tilde{I}_{\pt}^{X \backslash Y} 
		\Eplus \left[ K'_{(\alpha)} (Y,\varphi,\zeta) \right], \qquad \alpha \in \{ h,  k,l \}	
\end{align}
so that
\begin{align}
	\Phi_+^{(3)} (V, K, K') = K'_{(3,h)} +  K'_{(3,k)} + K'_{(3,l)}  .
\end{align}
We bound $K'_{(3,h)}$ and $K'_{(3,k)}$ in Lemma~\ref{lemma:Kp3} and $K'_{(3,l)}$ in Lemma~\ref{lemma:Kpp3}.
They are proved in Section~\ref{sec:bndonKdh} and Section~\ref{sec:bndonKdl},  respectively.
In both lemmas,
we consider the extended norm \eqref{eq:extdnrmdf} with $\bar{K} = K'$ and $\norm{\cdot}_{\bar{\cW}} = \norm{\cdot}_{\cW^{1-\xi/4}}$.  

\begin{lemma} \label{lemma:Kp3}
Assume \eqref{asmp:Phi}($\alpha\le\bar{\alpha}/4$) and \eqref{asmp:lambda1}
Then for $\kh \in \{ \ell, h \}$ and $X \in \Con_+$,
\begin{align}
	\norm{K'_{(3,h)} (X)}_{\kh,  \vec{\lambda} ,  T(\varphi,z) }
		&\le O_L (1)  \cG^{(2)} (X,\varphi ) A_+^{1 + \xi / 2} (X)  \, \omega_+^{-1} \bar\lambda_{K} \\
	\norm{K'_{(3,k)} (X)}_{\kh,  \vec{\lambda} ,  T(\varphi,z) }
		&\le O_L (1) \cG^{(2)} (X,\varphi ) A_+^{1 + \xi / 2} (X) \big( \bar{\epsilon}^3 + (\omega_+^{-1} \bar{\lambda}_K)^2 \big) .
\end{align}
\end{lemma}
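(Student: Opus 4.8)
\textbf{Proof proposal for Lemma~\ref{lemma:Kp3}.}

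The plan is to treat $K'_{(3,h)}$ and $K'_{(3,k)}$ separately but by the same mechanism: both are reblocked sums of polymer products in which the factors are either $\tilde{I}_\pt$ (on blocks outside the core), $(\theta\hat{I} - \tilde{I}_\pt)$ (on blocks in $Y\setminus Z$), or $\theta K'$ (on the connected core $Z$), all wrapped in a fluctuation integral $\Eplus$. So the first step is to collect the pointwise input bounds: the stability bound on $\tilde{I}_\pt^{X\setminus Y}$ from Lemma~\ref{lemma:entVpt} (giving an $E(\cdot;\kh)$-type factor on each block, hence a $\cG$-factor after \eqref{eq:EbndbycG}), the bound on $\delta\hat I = \theta\hat I - \tilde I_\pt$ — here I would use Lemma~\ref{lemma:hatImthetaIo} together with the analysis of $U_\pt$ from Lemma~\ref{lemma:VptinDDomain}, so that $\|\theta\hat I(b) - \tilde I_\pt(b)\|_{\kh,\vec\lambda,T} \lesssim E(b,\varphi;\kh)\,\bar\epsilon^2(\kh)$ (the key point being that this difference is a genuinely higher-order quantity, of size $\bar\epsilon^2$, not $O(1)$) — and the assumption $\norm{K'}_{\cW^{1-\xi/4}}\le\bar\lambda_K$ built into the extended norm with $\bar K = K'$, which gives $\norm{\theta K'(Z)}_{\kh,\vec\lambda,T(\varphi,z)}\lesssim A^{1-\xi/4}(Z)\,\cG(Z,\varphi;\kh)\,\omega_+^{-1}\bar\lambda_K$ after the fluctuation integral is pulled inside.

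The second step is the combinatorial/regulator bookkeeping, for which I would invoke the general polymer-expansion estimate (the analogue of Lemma~\ref{lemma:cnvbnd}, adapted to the reblocking $\bar Y = X$ and the $\Eplus$): one multiplies the per-block bounds, uses set-multiplicativity of $\cG$ and submultiplicativity of the $T$-norm (Lemma~\ref{lemma:subxext}), absorbs the growth of $\cG$ under $\Eplus$ into the passage from $\cG$ to $\cG^{(2)}$ (via the supermartingale-type bounds that let one trade a power of the regulator for control of $\Eplus$; this is where the exponent $\gamma=1/2$ in $\cG^{(2)}$ enters, and here is where $\kbe > 2$ is used, as flagged after Lemma~\ref{lemma:hatImthetaIo}), and sums over the polymer $Y$ with $\bar Y = X$. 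The large-set regulator $A_+$ is produced from the geometric fact (Lemma~\ref{lemma:lgst}, with its constant $\xi$) that a connected $Y$ reblocking to $X$ with few blocks forces $X$ itself to be small or else pays a factor $\rho^{c|X|_{\cB_+}}$; the surplus beyond what is needed for summability is what upgrades $A^1$ to $A^{1+\xi/2}$. For $K'_{(3,h)}$ the core $Z$ is a single connected polymer, so exactly one factor of $\omega_+^{-1}\bar\lambda_K$ appears and the remaining factors are the $\bar\epsilon^2$ from each $(\theta\hat I - \tilde I_\pt)$ block — but since at least the core is present and no extra $(\theta\hat I-\tilde I_\pt)$ block is forced, the clean bound is $O_L(1)\,\cG^{(2)}A_+^{1+\xi/2}\,\omega_+^{-1}\bar\lambda_K$. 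For $K'_{(3,k)}$ either $Z$ has $\ge 2$ components (two factors $\omega_+^{-1}\bar\lambda_K$, giving the $(\omega_+^{-1}\bar\lambda_K)^2$ term) or $Z=\emptyset$ with $|Y|_{\cB}\ge 3$ (so at least three $(\theta\hat I - \tilde I_\pt)$ blocks, giving $\bar\epsilon^{2\cdot 3}\lesssim\bar\epsilon^3$ after noting each block contributes $\bar\epsilon^2$ and $\bar\epsilon\le 1$); taking the larger of the two yields the stated bound.

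The main obstacle, and the step I would spend real care on, is the regulator propagation through $\Eplus$ at the level of the \emph{extended} norm: one needs the stability/supermartingale estimates of Appendix~\ref{sec:supmartbnds} to survive differentiation in $(V_\bulk, K, \bar K)$, i.e. to bound $\Eplus$ of a product of extended $T$-norms against $\cG^{(2)}$ uniformly in the extended-norm parameters $\vec\lambda$, and simultaneously to keep the $H$-part of $\bar G^{(\gamma)}$ (the decaying large-field regulator) under control — this is exactly the new ingredient relative to \cite{BBS5}. Concretely the danger is that the $e^{-c\norm{\varphi}_{L^4}^4}$ gain in $E(\cdot;h)$ on the $\tilde I_\pt$-blocks must compensate the $e^{+C\norm{\varphi}^2}$ losses on \emph{all} the other blocks uniformly; this is handled by reserving a fraction of the $\tilde I_\pt$-decay (there are at least $|X|_{\cB_+}-|Y|_{\cB}$ such blocks, and when $Y$ is small most blocks of $X$ carry $\tilde I_\pt$) and by Lemma~\ref{lemma:tGdom}, but verifying the constants close without $L$-dependence sneaking into the exponent requires the explicit form of the scales $\ell_j, h_j$ and the inequality $\ell_0^{-1/2}\ll L^{-2}$ from Section~\ref{sec:chparams}. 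Continuity in $(\ba_\emptyset,\ba)$ is comparatively routine: each building block ($\tilde I_\pt$, $\delta\hat I$, $\theta K'$, $\Eplus$) is pointwise continuous by Lemmas~\ref{lemma:FWPboundsobs}, \ref{lemma:entVpt}, \ref{lemma:massctty} and Definition~\ref{defi:FRD}, and the sum over $Y$ is finite, so continuity passes to $K'_{(3,h)}$ and $K'_{(3,k)}$.
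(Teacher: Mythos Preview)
Your overall architecture (bound each factor, pass the expectation through via a supermartingale estimate, then sum combinatorially and extract $A_+^{1+\xi/2}$) matches the paper, but two of the input bounds are wrong and one of them is a genuine gap.

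First, the per-block size of $\theta\hat I-\tilde I_\pt$ is $O_L(\bar\epsilon(\kh))$, not $\bar\epsilon^2(\kh)$. The relevant input is Lemma~\ref{lemma:thetazImtildeIpt}, not Lemma~\ref{lemma:hatImthetaIo}; the latter concerns $\delta\hat I=I-\hat I$, which is a different object entirely. With the correct order, three $\delta I$-blocks give exactly $\bar\epsilon^3$, which is why the split between $K'_{(3,l)}$ and $K'_{(3,k)}$ is placed at $|Y|_\cB\le 2$ versus $\ge 3$. Your accounting ($\bar\epsilon^{2\cdot 3}\le\bar\epsilon^3$) reaches the right endpoint by accident.

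Second, and more seriously, the bound you write is at the unshifted field $\varphi$, but $\theta\hat I(b)=\hat I(b,\varphi+\zeta)$ and $\theta K'(Z)=K'(Z,\varphi+\zeta)$, so the $T(\varphi)$-norm of each factor is controlled by a regulator at a \emph{shifted} field: the paper's Lemma~\ref{lemma:thetazImtildeIpt} gives
\[
\norm{(\theta\hat I-\tilde I_\pt)(b)}_{\kh,\vec\lambda,T(\varphi,z)}\le O_L(\bar\epsilon)\,\sup_{s\in[0,1]}E(b,\varphi+s\zeta;\kh)\,P^{6\cM}_{\tilde\chi^{1/2}\ell}(b,\zeta),
\]
and $\norm{\theta K'(Z)}$ is bounded by $\cG(Z,\varphi+\zeta)$, not $\cG(Z,\varphi)$. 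This $\zeta$-dependence is precisely what the supermartingale estimate Lemma~\ref{lemma:supmrtingaleapplied} is engineered to absorb---it takes $\cG^{(p)}(Y,\varphi_{\bs})$ times a polynomial in $\zeta$ and returns $\cG^{(2p)}(Y,\varphi)$ after $\Eplus$. Your account places the regulators at $\varphi$ from the start, so the $\Eplus$ step has nothing to integrate; your ``main obstacle'' paragraph senses the issue but frames it as balancing gains and losses in $\varphi$ alone, when in fact the regulators live at $\varphi+s\zeta$ before the expectation is taken. The passage to $\cG^{(2)}$ comes from this lemma, not from anything involving $\kb>2$ (that condition is used elsewhere, in Lemma~\ref{lemma:hatImthetaIo} and Corollary~\ref{cor:Qnorm2}), and the combinatorial sum is handled by Lemma~\ref{lemma:lgst3} rather than an adaptation of Lemma~\ref{lemma:cnvbnd}.
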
 

The bound on $K'_{(3,l)}$ is a bit different, because its leading order term is not small. 
The leading order term is $\lead$ given by \eqref{eq:hlead}. 

\begin{lemma}
\label{lemma:Kpp3}
Assume \eqref{asmp:Phi}($\alpha\le\bar{\alpha}/4$) and \eqref{asmp:lambda1}.
Then for $\kh \in \{ \ell, h \}$ and $X \in \Con_+$,  
\begin{align}
	\norm{K'_{(3,l)} (X) - \tilde{I}_\pt^X  \lead (X)}_{\kh, \vec{\lambda}, T (\varphi,  z)}
		\le O_L (\bar{\epsilon}^{3} (\kh)) \cG^{(2)} (X, \varphi ; \kh) \one_{|X|_{\cB_+} \le 2} .
\end{align}
\end{lemma}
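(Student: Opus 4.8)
\textbf{Proof strategy for Lemma~\ref{lemma:Kpp3}.}
The plan is to expand the definition of $K'_{(3,l)}$, isolate the leading-order contribution $\tilde I_\pt^X\lead(X)$ coming from the $|Y|_{\cB}\le 2$, $Z=\emptyset$ terms, and control the remainder by the stability estimates of Section~\ref{sec:stabanalysis} together with Lemma~\ref{lemma:VptinDDomain} and Lemma~\ref{lemma:EthV12}. Recall from Section~\ref{sec:K3bnd} that $K'_{(3,l)}(X,\varphi)=\sum_{Y\in\cP:\bar Y=X}\tilde I_\pt^{X\backslash Y}\Eplus[(\theta_\zeta\hat I-\tilde I_\pt)^Y]$ with the sum restricted to $|Y|_{\cB}\le 2$. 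Since $\bar Y=X$ forces $X$ to be covered by at most two $j$-blocks, we automatically have $|X|_{\cB_+}\le 2$, which explains the indicator. The first step is therefore to split, for each such $Y$, the product $(\theta_\zeta\hat I-\tilde I_\pt)^Y$ into its ``expectation of the difference'' pieces, writing (as in the proof of Lemma~\ref{lemma:LK3}) $\theta\hat I-\tilde I_\pt$ block-by-block in terms of $(\theta\hat V-U_\pt)$, $W_\pt-\theta\mathbb{W}_{w,\hat V}$, and quadratic corrections. By the very choice of $U_\pt$ in Definition~\ref{def:WP} and the recursion Lemma~\ref{lemma:WjQalt}, the localised part of $\Eplus[\theta\hat I-\tilde I_\pt]$ on a single block reduces exactly to the covariance terms making up $\lead_B$, while the non-localised remainder is $O^{\alg}(V^3)$; quantitatively, this remainder has $\kh$-norm bounded by $O_L(\bar\epsilon^3(\kh))$ using Lemma~\ref{lemma:FWPboundsobs}, Lemma~\ref{lemma:EthV12} and Lemma~\ref{lemma:VptinDDomain}.

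Second, I would organise the bound by the number of blocks in $Y$. For $|Y|_{\cB}=1$, say $Y=b$: the single-block term $\Eplus[\theta\hat I(b)-\tilde I_\pt(b)]$ equals $-\tfrac12\Cov_{\pi,+}[\theta\hat V(b);\theta\hat V(\Lambda\backslash b)]$ (which is $\lead_b(b)$, up to the $\tilde I_\pt^{X\backslash b}$ prefactor and higher-order terms) plus a remainder that, by \eqref{eq:hatVmVpt1} and the localisation identity in the proof of Lemma~\ref{lemma:LK3}, has norm $\le O_L(\bar\epsilon^3)$. For $|Y|_{\cB}=2$, $Y=b_1\cup b_2$: one uses $(\theta\hat I-\tilde I_\pt)^Y\approx(\theta\hat V-U_\pt)(b_1)(\theta\hat V-U_\pt)(b_2)$ and $\Eplus$ of this equals $\Cov_+[\theta\hat V(b_1);\theta\hat V(b_2)]$ up to $O^{\alg}(V^3)$; the covariance piece reassembles into $\lead(X)$ after rearranging the sum, and the error is again $O_L(\bar\epsilon^3(\kh))$ by Lemma~\ref{lemma:EthV12}. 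Throughout, the prefactor $\tilde I_\pt^{X\backslash Y}$ together with $\tilde I_\pt^{\cdot}$ on the remaining blocks is absorbed by the stability estimate Lemma~\ref{lemma:entVpt}, which gives $\|\tilde I_\pt^{X\backslash Y}\cdots\|_{\kh,T(\varphi)}\lesssim\prod_{b\in\cB(X)}E(b,\varphi;\kh)$, and then \eqref{eq:EbndbycG} upgrades this to $\cG^{(2)}(X,\varphi;\kh)$ (the exponent $\gamma$ can be taken $<1$ so that a fixed fraction of the Gaussian/large-field decay is retained, exactly as in \cite{BBS5}). Finite range of $\Gamma_{j+1}$ forces all covariances to vanish on disconnected pairs, so every surviving term is supported on small sets and $A_+(X)=1$, consistent with the absence of a large-set regulator in the statement.

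Third, the $\zeta$-integration needs a word: since $W_\pt$, the covariances, and $\theta\hat V-U_\pt$ are all polynomials in $\zeta$ of bounded degree, $\Eplus$ of any of them is again a polynomial in $\varphi$, and the bounds of Lemma~\ref{lemma:EthV12} (applied to $\hat V$, whose $\cV$-norm is controlled by Lemma~\ref{lemma:VptinDDomain} and \eqref{eq:Qnorm21}) convert each power of $\theta\hat V-\Eplus\theta\hat V$ into a power of $\kc_+/\mathfrak h$, which is precisely what produces the $\bar\epsilon^3$ scaling (two covariance factors $\asymp(\kc_+/\ell)^2\be_V^2\asymp\bar\epsilon^2$ times one more factor of $\be_V\asymp\bar\epsilon$ from the mismatch $\mathbb{W}_{w_+,V'}-\mathbb{W}_{w_+,U_\pt}=O^{\alg}(V^3)$). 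One then differentiates in $(V_\bulk,K,\bar K)$: the extended norm is handled by the same estimates applied to each derivative, using that $D_{\bar K}$ simply extracts the linear dependence on $K'$ (which does not appear here since the $Z=\emptyset$ contributions carry no $K'$), so in fact only $D_{V_\bulk}$-derivatives are nontrivial and they are bounded using $\lambda_V\le\tilde g\scale$ exactly as in Lemma~\ref{lemma:VptinDDomain}.

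\textbf{Main obstacle.} The delicate point is not the order-counting but ensuring the cancellation of the leading $O(V)$ and $O(V^2)$ pieces is \emph{exact} after subtracting $\tilde I_\pt^X\lead(X)$ — i.e. verifying that every term of size $\bar\epsilon$ or $\bar\epsilon^2$ in $K'_{(3,l)}$ is accounted for by $\tilde I_\pt^X\lead(X)$, with genuinely nothing of that order left in the remainder. This requires carefully tracking the difference between $\tilde I_\pt^X\lead(X)$ (which carries the perturbative $\tilde I_\pt$ prefactor) and the raw covariance sum $\Eplus[(\theta\hat I-\tilde I_\pt)^Y]$ (which carries $\theta\hat I$-type prefactors), and showing that replacing one prefactor by the other is itself a higher-order operation — this is exactly the content of the $O^{\alg}(KV,V^3)$ bookkeeping in Lemma~\ref{lemma:LK3}, now made quantitative in the $\kh$-norms. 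I expect this cancellation verification, together with the bookkeeping of which block carries the localisation and which carries the residual $(1-\Loc_x)$ (hence the contraction), to be the part that demands the most care; once it is in place, the bound $O_L(\bar\epsilon^3(\kh))\,\cG^{(2)}(X,\varphi;\kh)\,\one_{|X|_{\cB_+}\le 2}$ follows by assembling Lemma~\ref{lemma:entVpt}, Lemma~\ref{lemma:FWPboundsobs}, Lemma~\ref{lemma:EthV12}, Lemma~\ref{lemma:VptinDDomain} and \eqref{eq:EbndbycG}.
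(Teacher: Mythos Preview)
Your overall architecture is right and matches the paper: expand $K'_{(3,l)}$, identify the one- and two-block covariance pieces as $\tilde I_\pt^X\lead(X)$, and show the remainder is $O_L(\bar\epsilon^3)$ times a regulator. However, you have underestimated the main technical obstacle and misidentified it in your last paragraph.

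The real difficulty is \emph{not} the cancellation bookkeeping but the fact that $(\theta\hat I-\tilde I_\pt)(b)$ involves the \emph{exponential} $e^{-\theta\hat V^\stable(b)}$, which is not a polynomial in $\zeta$. Your paragraph on $\zeta$-integration (``since $W_\pt$, the covariances, and $\theta\hat V-U_\pt$ are all polynomials in $\zeta$ of bounded degree, $\Eplus$ of any of them is again a polynomial in $\varphi$'') glosses over this: the object you must integrate is $\Eplus[e^{-\theta\hat V^\stable(b)}(\cdots)]$, and making the replacement $e^{-\theta\hat V^\stable}\leadsto 1-\delta V+\tfrac12(\delta V)^2+\cdots$ precise, with the error absorbed by the regulator after Gaussian integration, is the core of the argument. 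The informal $O^{\alg}(V^3)$ reasoning of Lemma~\ref{lemma:LK3} does not supply this---that lemma explicitly disclaims any role in the proof.

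The paper handles this via a specific algebraic rewriting taken from \cite[(6.10)--(6.13), (6.35), (6.46)]{BBS4}: one writes $K'_{(3,l)}-\tilde I_\pt^X\lead$ as a sum of terms $R_1(b;X)$ and $R_2(b,b';X)$, then reformulates each so that the prefactor $e^{-U_\pt^\stable}$ is pulled out and the residual error is expressed through the Taylor remainders
\[
A_k = e^{-\delta V^\stable} - \sum_{i=0}^k \frac{(-\delta V)^i}{i!}, \qquad \delta V=\theta\hat V-U_\pt,
\]
together with $Z(b)=e^{-\delta V^\stable}\theta\hat W-W_+$ and explicit quadratic/cubic combinations $\cE_1,\cE_2$. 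The key estimate (Lemma~\ref{lemma:EAk}) bounds $\Eplus[|e^{-U_\pt^\stable}A_k|^m]^{1/m}$ by $O_L(\bar\epsilon^3)\,\cG^{(2)}$, using an integral Taylor formula for $t\mapsto e^{-tw}\pexp(-tz)$, the stability of $e^{-(1-t)U_\pt^{(1)}-t\theta\hat V^{(1)}}$ (Lemmas~\ref{lemma:entVQ}, \ref{lemma:entVpt}), moment bounds on $\delta V$ from \eqref{eq:hatVmVpt1}, and finally the supermartingale Lemma~\ref{lemma:supmrtingaleapplied}. Without this machinery your sketch does not close: the step ``absorb $\tilde I_\pt$ prefactors by Lemma~\ref{lemma:entVpt}'' is correct, but the step ``the remaining non-localised remainder has norm $\le O_L(\bar\epsilon^3)$'' requires precisely the $A_k$ bound, which you have not addressed.
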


The bounds imply the following,  where $K_{(3)}$ is measured in norm
\begin{align}
	\norm{F}_{\cW'_+} = \max_{\kh \in \{\ell, h\}} \omega_+ (\kh) \sup_{X \in \Con_+} \sup_{\varphi \in (\R^n)^{\Lambda}} \frac{ \norm{F (X)}_{\ratio\kh_+, T_+ (\varphi,z)} }{ \cG^{(3)} (X,\varphi ; \kh) A_+^{1+\xi/2} (X) } .
	\label{eq:cWprimedefi}
\end{align}
(In this norm,  exponent of $\cG$ is changed from $(2)$ to $(3)$ compared to the lemmas above,  for preparation of Section~\ref{sec:RGpartIII}.)

\begin{lemma} \label{lemma:K3bnd}
Assume \eqref{asmp:Phi}($\alpha\le\bar{\alpha}/4$) and \eqref{asmp:lambda1}
Then for $\kh \in \{ \ell, h \}$,  $\ratio > 0$ and $X \in \Con_+$,  
\begin{align}
	\norm{K'_{(3)} (X) - \tilde{I}_\pt^X \lead (X) }_{\cW'_+}
		\le O_L \big( \bar{\epsilon}^3 (\ell) + \bar{\lambda}_K \big) .
\end{align}
\end{lemma}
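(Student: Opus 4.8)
\textbf{Proof of Lemma~\ref{lemma:K3bnd}.}
The plan is to assemble the bound on $K'_{(3)}$ from the three-way decomposition
$\Phi_+^{(3)}(V,K,K') = K'_{(3,h)} + K'_{(3,k)} + K'_{(3,l)}$ introduced at the start of this subsection, using Lemma~\ref{lemma:Kp3} for the first two pieces and Lemma~\ref{lemma:Kpp3} for the last. After subtracting $\tilde{I}_\pt^X \lead(X)$ the remaining contributions are all of size at least $\bar\epsilon^3$ (or controlled by $\bar\lambda_K$), so the only real work is to combine the estimates and convert the various $\cG^{(2)}$-regulated, $A_+^{1+\xi/2}$-weighted bounds at fixed $\varphi$ into the single quantity $\norm{\cdot}_{\cW'_+}$ defined in \eqref{eq:cWprimedefi}.

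First I would write, for $X \in \Con_+$ and $\kh \in \{\ell,h\}$,
\begin{align}
	\norm{K'_{(3)}(X) - \tilde{I}_\pt^X \lead(X)}_{\ratio\kh_+, T_+(\varphi,z)}
	\le \norm{K'_{(3,h)}(X)}_{\ratio\kh_+,T_+(\varphi,z)}
	+ \norm{K'_{(3,k)}(X)}_{\ratio\kh_+,T_+(\varphi,z)}
	+ \norm{K'_{(3,l)}(X) - \tilde{I}_\pt^X\lead(X)}_{\ratio\kh_+,T_+(\varphi,z)} .
\end{align}
For the first factor of $\ratio$ I would absorb it using monotonicity \eqref{eq:monot1} (at the cost of an $L$-dependent constant, which is harmless since the target is $O_L$); alternatively Lemmas~\ref{lemma:Kp3}--\ref{lemma:Kpp3} are already stated with a general $\ratio$ in mind for Lemma~\ref{lemma:Kpp3} and with $\kh$ arbitrary for Lemma~\ref{lemma:Kp3}, so only minor bookkeeping is needed. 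Then I would invoke Lemma~\ref{lemma:Kp3} to bound the first two terms by $O_L(1)\,\cG^{(2)}(X,\varphi)\,A_+^{1+\xi/2}(X)\,(\omega_+^{-1}\bar\lambda_K + \bar\epsilon^3 + (\omega_+^{-1}\bar\lambda_K)^2)$, and Lemma~\ref{lemma:Kpp3} to bound the third by $O_L(\bar\epsilon^3(\kh))\,\cG^{(2)}(X,\varphi;\kh)\,\one_{|X|_{\cB_+}\le 2}$. Since $A_+ \ge 1$ always and $\cG^{(2)} \le \cG^{(3)}$ (the extra power of the regulator is only used as slack, so we weaken the bound), the sum is at most $O_L(1)\cG^{(3)}(X,\varphi;\kh)A_+^{1+\xi/2}(X)(\bar\epsilon^3(\kh) + \omega_+^{-1}\bar\lambda_K + (\omega_+^{-1}\bar\lambda_K)^2)$. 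Multiplying by $\omega_+(\kh)$ and taking the supremum over $X$, $\varphi$ and the maximum over $\kh \in \{\ell,h\}$ gives $\norm{K'_{(3)} - \tilde{I}_\pt^X\lead}_{\cW'_+} \le O_L(\bar\epsilon^3(\ell) + \bar\lambda_K)$, once one notes $\omega_+(\kh)\cdot\omega_+^{-1}(\kh) = 1$ and that $\bar\epsilon(h)$, $\omega_+$ are arranged (via \eqref{eq:epsbarepsdefn}, \eqref{eq:omegadefi}, \eqref{eq:fraklb}) so that the $h$-contribution is dominated by $\bar\epsilon^3(\ell) + \bar\lambda_K$; I would spell out this last elementary comparison of exponents of $\tilde g$ and $\scale$.

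The continuity in $(\ba_\emptyset,\ba)$ is inherited: $\tilde{I}_\pt$ depends on $(\ba_\emptyset,\ba)$ only through $W_\pt$ and $U_\pt^{(\bs)}$, which are continuous by Lemma~\ref{lemma:FWPboundsobs} and the continuity statements in Section~\ref{sec:stabanalysis}; $\Eplus$ is continuous by Definition~\ref{defi:FRD}; $K'$ (to be taken as $K_{(2)}$) is continuous by Lemma~\ref{lemma:K2bndsmmry}; and $\lead$ is a finite sum of $\Cov_{\pi,+}$ terms, continuous by the same argument as in the proof of Lemma~\ref{lemma:FWPboundsobs}. Hence each $K'_{(3,\alpha)}(X,\varphi)$ is continuous pointwise, and so is their sum minus $\tilde{I}_\pt^X\lead(X)$. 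The main obstacle is not any single estimate but the bookkeeping of regulator exponents and $\omega$-weights across the three pieces --- making sure the $h$-norm contributions, which carry the explicitly small prefactor $\omega_+(h) = \tilde g_+^{9/4}\scale_+^{\kbe}$, really do collapse into the stated $O_L(\bar\epsilon^3(\ell) + \bar\lambda_K)$ rather than producing a spurious larger term; this is exactly where \eqref{eq:fraklb} and the definitions of $\bar\epsilon$ in \eqref{eq:epsbarepsdefn} are used.
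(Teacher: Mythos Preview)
Your approach is essentially the paper's: decompose $K'_{(3)}=K'_{(3,h)}+K'_{(3,k)}+K'_{(3,l)}$, apply Lemma~\ref{lemma:Kp3} to the first two pieces and Lemma~\ref{lemma:Kpp3} to the third minus $\tilde I_\pt^X\lead(X)$, then pass from $\norm{\cdot}_{\kh,T(\varphi)}$ to $\norm{\cdot}_{\ratio\kh_+,T_+(\varphi)}$ via \eqref{eq:monot1} and \eqref{eq:Tjmon}, and finally weaken $\cG^{(2)}$ to $\cG^{(3)}$ in the denominator of \eqref{eq:cWprimedefi}. Two small corrections: first, the claim ``$A_+\ge 1$ always'' is backwards---by \eqref{eq:AjX} one has $A_+(X)\le 1$; the right observation (and the one the paper uses) is that $K'_{(3,l)}(X)$ vanishes unless $|X|_{\cB_+}\le 2$, hence $X\in\cS_+$ and $A_+^{1+\xi/2}(X)=1$, so the factor can be inserted for free. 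Second, the continuity paragraph is not part of this lemma (it is handled separately in the proof of Lemma~\ref{lemma:K6bnd}), so you can drop it.
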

\begin{proof}
The bound follows from the previous two lemmas and that $K'_{(3)} = K'_{(3,h)} + K'_{(3,l)}$.
Note that $K'_{(3,l)}$ vanishes on $X \not\in \cS_+$,
so the large set regulator $A_+^{1 + \xi / 2} (X )$ is 1 when $K'_{(3,l)}$ is non-vanishing.
Then the desired bound follows from the scale-monotonicity $\norm{\cdot }_{\ratio\kh_+,  T_+ (\varphi) } \le O_L (1) \norm{\cdot}_{\kh,  T (\varphi) }$,  due to \eqref{eq:monot1} and \eqref{eq:Tjmon}.
\end{proof}

The bounds on $K'_{(3,h)}$ and $K'_{(3,l)}$ follow the strategy of \cite[Section~2.2]{BBS4},
but still require significant modifications,  as presented below.
In the proof,  we denote $\norm{\cdot} = \norm{\cdot}_{\kh, \vec{\lambda},T(\varphi,z)}$.

\subsubsection{Bound on $K_{(3,h)}$}
\label{sec:bndonKdh} 

We prove Lemma~\ref{lemma:Kp3} assuming the next lemma,  proved in Section~\ref{sec:bndonhatItldIpt}.

\begin{lemma}
\label{lemma:thetazImtildeIpt}
Assume \eqref{asmp:Phi}($\alpha\le\bar{\alpha}/4$) and \eqref{asmp:lambda1}.  
Then for $\kh \in \{ \ell ,  h \}$ and $b \in \cB$,
\begin{align}
	\norm{ ( \thetaz \hat{I} - \tilde{I}_\pt ) (b,\varphi) }
		&\le O_L (\bar{\epsilon} (\kh))  \sup_{s \in [0,1]} E(b,\varphi_s ; \kh) P_{\tilde\chi^{1/2} \ell}^{6\cM} (b, \zeta)
\end{align}
where $\varphi_s = \varphi + s \zeta$.
\end{lemma}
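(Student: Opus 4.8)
The plan is to use the purely algebraic three‑term telescoping \eqref{eq:hatImthetaIo0} of Lemma~\ref{lemma:hatImthetaIo}, applied with $I_1 = \thetaz\hat I$ — so that the role of $(V_1,W_1)$ is played by the field‑shifted pair $\big(\thetaz\hat V^{(1)},\thetaz W_{\hat V}\big)$, where I write $\thetaz F(\varphi)=F(\varphi+\zeta)$ — and with $I_2=\tilde I_\pt$, so $(V_2,W_2)=(U_\pt,W_\pt)$. This gives $(\thetaz\hat I-\tilde I_\pt)(b)=\delta\cI_1+\delta\cI_2+\delta\cI_3$ with
\begin{align*}
\delta\cI_1 &= \big(e^{-\hat V^{(1)}(b,\varphi+\zeta)}-e^{-U_\pt^{(1)}(b,\varphi)}\big)\,\pexp\!\big(-\hat V^{(2)}(b,\varphi+\zeta)\big)\big(1+W_{\hat V}(b,\varphi+\zeta)\big),\\
\delta\cI_2 &= -e^{-U_\pt^{(1)}(b,\varphi)}\big(\pexp\!\big(-\hat V^{(2)}(b,\varphi+\zeta)\big)-\pexp\!\big(-U_\pt^{(2)}(b,\varphi)\big)\big)\big(1+W_{\hat V}(b,\varphi+\zeta)\big),\\
\delta\cI_3 &= e^{-U_\pt^\stable(b,\varphi)}\big(W_{\hat V}(b,\varphi+\zeta)-W_\pt(b,\varphi)\big),
\end{align*}
and I would bound each piece separately by $O_L(\bar\epsilon(\kh))\sup_{s\in[0,1]}E(b,\varphi_s;\kh)P_{\tilde\chi^{1/2}\ell}^{6\cM}(b,\zeta)$ (cf.\ \eqref{eq:Ebphi}, \eqref{eq:epsbarepsdefn}); the lemma then follows by the triangle inequality. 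Throughout, \eqref{asmp:Phi}($\alpha\le\bar\alpha/4$) and \eqref{asmp:lambda1} fix $(V,K)$ in the domain and $\lambda_V=\tilde g\scale$, $\lambda_K$ small, which is what makes $\hat V=V-Q$ and $U_\pt=\Phi^\pt_+(\hat V)$ controllable by Section~\ref{sec:stabanalysis}.

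The exponential and polynomial prefactors are handled by the stability analysis. Writing $e^{-\hat V^{(1)}(b,\varphi+\zeta)}-e^{-U_\pt^{(1)}(b,\varphi)}=\int_0^1 e^{-\big(t\,\hat V^{(1)}(b,\cdot+\zeta)+(1-t)U_\pt^{(1)}(b,\cdot)\big)(\varphi)}\big(\hat V^{(1)}(b,\varphi+\zeta)-U_\pt^{(1)}(b,\varphi)\big)\,dt$, the interpolant in the exponent is, for each fixed $t\in[0,1]$ and $\zeta$, a stabilised quartic of the shape treated in Lemma~\ref{lemma:entVQ} and Lemma~\ref{lemma:entVpt}: its $|\varphi|^4$‑coefficient is the convex combination of the two positive quartic coefficients, the field shift produces only lower‑degree‑in‑$\varphi$ terms, and the $\cV_{4,\nabla}$‑part stays stabilised. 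Hence those lemmas, applied with field argument $\varphi+s\zeta$ (and \cite[Lemma~7.4.1]{MR3969983} to dominate the shift‑generated indefinite cubic‑in‑$\varphi$ term by the positive quartic plus a quadratic, exactly as in the proof of Lemma~\ref{lemma:entVQ}), give $O_L(1)\,E(b,\varphi_s;\kh)$ after absorbing polynomial‑in‑$\zeta$ factors into $P_{\tilde\chi^{1/2}\ell}(b,\zeta)$. For $\delta\cI_2,\delta\cI_3$ the prefactors $e^{-U_\pt^{(1)}},e^{-U_\pt^\stable}$ are bounded directly by Lemma~\ref{lemma:entVpt}, and $\pexp(-\hat V^{(2)}(b,\varphi+\zeta))$, $1+W_{\hat V}(b,\varphi+\zeta)$ are polynomials of degree $\le 4\cM$, respectively $\le 6$, whose $T(\varphi)$‑norms are controlled by Lemma~\ref{lemma:polynorm} using $\norm{\hat V^{(2)}(b)}_{\kh,T(0)}=O_L(1)$ and $\norm{W_{\hat V}(b)}_{\kh,T(0)}=O_L(1)$ (Lemma~\ref{lemma:eQh}, Lemma~\ref{lemma:FWPboundsobs}), again at the price of further powers of $P_{\tilde\chi^{1/2}\ell}(b,\zeta)$; the constraint $\cM\ge 1+\tfrac12\max\{3,d-4+2\eta\}$ keeps the accumulated total power $\le 6\cM$.

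It remains to bound the three differences $\hat V^{(1)}(b,\varphi+\zeta)-U_\pt^{(1)}(b,\varphi)$, $\hat V^{(2)}(b,\varphi+\zeta)-U_\pt^{(2)}(b,\varphi)$, and $W_{\hat V}(b,\varphi+\zeta)-W_\pt(b,\varphi)$ by $O_L(\bar\epsilon(\kh))\,E(b,\varphi_s;\kh)$ times a bounded power of $P_{\tilde\chi^{1/2}\ell}(b,\zeta)$. I would split each along the $\zeta$‑mean, e.g.
\begin{align*}
\hat V^{(1)}(b,\varphi+\zeta)-U_\pt^{(1)}(b,\varphi)=\big(\thetaz\hat V^{(1)}-\hat V^{(1)}\big)(b,\varphi)+\big(\hat V^{(1)}-\E_+\theta\hat V^{(1)}\big)(b,\varphi)+\big(\E_+\theta\hat V^{(1)}-U_\pt^{(1)}\big)(b,\varphi),
\end{align*}
and analogously for the other two. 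The first summand is the only genuinely $O(\bar\epsilon)$ term: by the extended‑norm form of Lemma~\ref{lemma:FvskhT} (lifted to the $z$‑norm exactly as in the proof of Lemma~\ref{lemma:EthV12}) its $T(\varphi)$‑norm is $\lesssim \norm{\hat V(b)}_{\kh,T(0)}P_\kh^{3}(b,\varphi_s)\norm{\zeta}_{\kh,\Phi(b^\square)}$, and since $\ell_\bulk\le\kh_\bulk$ gives $\norm{\zeta}_{\kh,\Phi(b^\square)}\le\tilde\chi^{1/2}P_{\tilde\chi^{1/2}\ell}(b,\zeta)$ while $\norm{\hat V(b)}_{\kh,T(0)}$ is bounded by Lemma~\ref{lemma:eQh} and Lemma~\ref{lemma:VptinDDomain}, this is $O_L(\bar\epsilon(\kh))$ times the required regulators. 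The other summands are lower order: $\hat V^{(1)}-\E_+\theta\hat V^{(1)}=-\tfrac12(1-\pi_0-\pi_\ox)\Delta_{\Gamma_+}\hat V_4\in\cV_2$ has $\cV(\ell)$‑norm $O_L(\tilde\chi\,\epsilon(\ell))\ll\bar\epsilon(\ell)$ by Lemma~\ref{lemma:cVnrmvslnrm}, and $\E_+\theta\hat V^{(1)}-U_\pt^{(1)}$ is a localised quadratic form in $\hat V$ (part of $P_{\hat V}$), of size $O_L(\bar\epsilon^2(\kh))$ by Lemma~\ref{lemma:FWPboundsobs} and Lemma~\ref{lemma:VptinDDomain}; for the $W$‑difference one additionally uses the recursive identity Lemma~\ref{lemma:WjQalt} (as in the proof of Lemma~\ref{lemma:LK3}) to express $\E_+\theta W_{\hat V}-W_\pt$ as localised quadratic forms in $\hat V$ plus $O(\hat V^3)$, all $O_L(\bar\epsilon^2(\kh))$. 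Since $\bar\epsilon(\kh)\le1$, these $O_L(\bar\epsilon^2)$ contributions are subleading, and multiplying by the $\zeta$‑polynomial prefactors of the previous paragraph and summing $\delta\cI_1,\delta\cI_2,\delta\cI_3$ yields the stated bound. The main obstacle I anticipate is verifying that the stabilised‑quartic structure survives both the field shift $\thetaz$ and the $t$‑interpolation uniformly — so that Lemma~\ref{lemma:entVQ}/Lemma~\ref{lemma:entVpt} genuinely apply — together with the careful bookkeeping of the $\zeta$‑polynomial prefactors and the $\ell_0$‑dependent constants hidden in $O_L$; the lift of Lemma~\ref{lemma:FvskhT} to the extended norm is routine.
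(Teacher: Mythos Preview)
Your approach is correct but takes a genuinely different route from the paper. The paper does \emph{not} apply the telescoping \eqref{eq:hatImthetaIo0} directly to the pair $(\thetaz\hat I,\tilde I_\pt)$. Instead it inserts the intermediate $\hat I(b,\varphi)$ and writes
\[
\thetaz\hat I-\tilde I_\pt=(\thetaz\hat I-\hat I)+(\hat I-\tilde I_\pt),
\]
bounding each bracket separately (Lemmas~\ref{lemma:thetazImI} and \ref{lemma:hatImtldI}). The first bracket applies the three-term telescoping with $(V_1,V_2)=(\hat V,\theta\hat V)$, so the interpolation is purely along the field line $\varphi_s=\varphi+s\zeta$ with a \emph{fixed} potential $\hat V$; the second uses $(V_1,V_2)=(\hat V,U_\pt)$ at the \emph{fixed} field $\varphi$, so the interpolation is purely in potential space. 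In both cases the stability Lemmas~\ref{lemma:entVQ}/\ref{lemma:entVpt} apply verbatim, and the $O_L(\bar\epsilon(\kh))$ factor emerges cleanly: from $\norm{\zeta}_{\ell,\Phi(b^\square)}$ in the first bracket (via Lemma~\ref{lemma:FvskhT}) and from $\norm{\hat V-U_\pt}$ in the second (via Lemma~\ref{lemma:VptinDDomain}).

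Your direct telescoping forces you to control the mixed interpolant $e^{-t\hat V^{(1)}(\varphi+\zeta)-(1-t)U_\pt^{(1)}(\varphi)}$, which is neither a pure shift nor a pure convex combination. You correctly identify this as the main obstacle, and it is indeed surmountable: submultiplicativity gives $\norm{e^{-A-B}}\le\norm{e^{-A}}\norm{e^{-B}}$, each factor is handled by the existing stability lemmas at its own field point ($\varphi_1$ and $\varphi_0$), and the product is then dominated by $\sup_s E(b,\varphi_s;\kh)$ after absorbing the quadratic growth via Lemma~\ref{lemma:sobolev}. Your three-term split of $\hat V^{(1)}(\varphi+\zeta)-U_\pt^{(1)}(\varphi)$ via $\hat V^{(1)}(\varphi)$ and $\E_+\theta\hat V^{(1)}(\varphi)$ is essentially the paper's two-lemma decomposition unpacked inside a single argument. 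One small imprecision: your claim $\tilde\chi\,\epsilon(\ell)\ll\bar\epsilon(\ell)$ is only ``$\le$'' when $\tilde\chi=1$ (i.e.\ $j\le j_{\tilde m^2}$), but ``$\le$'' is all that is needed. The paper's route buys modularity and avoids the mixed-interpolant bookkeeping; yours is more compressed but requires that extra stability verification.
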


\begin{proof}[Proof of Lemma~\ref{lemma:Kp3}]
For brevity,  let $\bar{\lambda}'_K \equiv \bar{\lambda}'_K (\kh) = \omega_+^{-1} (\kh) \bar{\lambda}$,  then by definition of $\norm{\cdot}_{\bar{\cW}}$,
\begin{align}
	\norm{K (Y\backslash Z)} \le A^{1-\frac{\xi}{4}} (Y\backslash Z) \cG (Y\backslash Z, \varphi) (\bar{\lambda}'_K)^{|\Comp (Y\backslash Z)|} .
\end{align}
We use Lemma~\ref{lemma:thetazImtildeIpt} to see that there are some choices of $\bs = (\bs (b))_{b \in \cB(Y)} \in [0,1]^{\cB (Y)}$ such that
\begin{align}
	& \Norm{ \big( \thetaz \hat{I}  - \tilde{I}_{\pt} \big)^{Z} \thetaz K' \big(Y \backslash Z \big) }	 \\
	& \le  (C_L \bar{\epsilon})^{|Z|_{\cB}}
		A^{1- \frac{\xi}{4}} (Y \backslash Z ) 
		\big( \bar\lambda'_K \big)^{|\Comp (Y \backslash Z)|} \cG(Y \backslash Z ,  \varphi_{\bs} )
			\prod_{b \in \cB (Z)} P_{\tilde\chi^{1/2} \ell}^{6\cM} (b,\zeta) \cG (b,\varphi_s)		
			\nonumber 
\end{align}
where $\varphi_{\bs,x} = \varphi_x + \bs (b_x) \zeta_x$ 
(where we recall that $b_x \in\cB$ is the unique $j$-block containing $x$).
Then by Lemma~\ref{lemma:supmrtingaleapplied},
\begin{align}
	\Eplus [\cdots] \le C_L^{|X|_{\cB_+}} A^{1-\xi/4} (Y \backslash Z) \cG^{(2)} (Y ,  \varphi) \bar{\epsilon}^{|Z|_{\cB}} \big( \bar\lambda'_K \big)^{|\Comp (Y \backslash Z)|}
\end{align}

Next,  by Lemma~\ref{lemma:entVpt},
\begin{align}
	\norm{\tilde{I}_\pt (b)}_{\kh,  \vec{\lambda},  T(\varphi,z) }
		\le C \cG (b,  \varphi ) .
\end{align}
Denoting $S_1 = \{ (Y,Z) \in \cP \times \cP : Y\subset Z,  \, Y \backslash Z \in \Con\}$ and $S_2 = \{ (Y,Z) \in \cP \times \cP : \Comp (Y \backslash Z) \ge 2 \text{ or } |Z|_{\cB} \ge 3 \}$,
by definitions of $K'_{(3,h)}$ and $K'_{(3,k)}$,
\begin{align}
	\norm{K'_{(3,h)} (X)}
		& \le C_L^{|X|_{\cB_+}} \cG^{(2)} (X ,  \varphi ) \sum_{(Y,Z) \in S_1}  \bar{\epsilon}^{|Z|_{\cB}} A^{1- \frac{\xi}{4}} (Y \backslash Z )  \bar\lambda'_K   ,  \\
	\norm{K'_{(3,k)} (X)}
		& \le 
		C_L^{|X|_{\cB_+}} \cG^{(2)} (X ,  \varphi ) \sum_{(Y,Z) \in S_2}   \bar{\epsilon}^{|Z|_{\cB}} A^{1- \frac{\xi}{4}} (Y \backslash Z )  \big( \bar\lambda'_K  \big)^{|\Comp (Y \backslash Z)|}  , 
\end{align}
and both can be bounded using a combinatorial bound Lemma~\ref{lemma:lgst3},  giving
\begin{align}
	\norm{K'_{(3,h)} (X)}
		&\le C_L^{|X|_{\cB_+}} \cG^{(2)} (X,\varphi ) A_+^{(1-\frac{\xi}{4}) (1+\xi)} (X) \bar\lambda'_K \\
	\norm{K'_{(3,k)} (X)}
		&\le C_L^{|X|_{\cB_+}} \cG^{(2)} (X,\varphi ) A_+^{(1-\frac{\xi}{4}) (1+\xi)} (X) \big( \bar{\epsilon}^3  + (\bar\lambda'_K )^{\frac{3}{2}} \big) .
\end{align}
We can take $\rho$ sufficiently small compared to $C_L^{-1}$ to obtain the desired bounds.
\end{proof}

\subsubsection{Bound on $\hat{I}-\thetaz \hat{I}$}

For convenience,  let us drop $\zeta$ from $\thetaz$.  
Since $\hat{I} = e^{-\hat{V}^{(1)}} \pexp (-\hat{V}^{(2)}) (1 + \hat{W})$,  Lemma~\ref{lemma:hatImthetaIo} gives
\begin{align}
	\hat{I}  - \theta \hat{I} = \sum_{i=1,2,3} \delta \cI_i (\hat{V}, \theta \hat{V}) .
	\label{eq:hatImthetaIohatI}
\end{align}
Each term is bounded using Lemma~\ref{lemma:FvskhT}.

\begin{lemma} \label{lemma:ehatVmthehatV}
Under the assumptions of Lemma~\ref{lemma:thetazImtildeIpt},
for $b \in \cB$ and $\kh \in \{ \ell, h \}$, 
\begin{align} \label{eq:ehatVmthehatV1}
	\left\{ \begin{array}{r}	
		\norm{e^{- \hat{V}^{(1)} (b) } - e^{- \theta \hat{V}^{(1)} (b)}  } \\
		\norm{e^{- \hat{V}^{(2,\bs)} (b) } - e^{- \theta \hat{V}^{(2,\bs)} (b)}  }
	\end{array} \right\}	
		\le O_L ( \epsilon (\ell) )  \Big( \frac{\kh_\bulk}{\ell_\bulk} \Big)^3 \norm{\zeta}_{\ell, \Phi(^{\square})} \sup_{s\in [0,1]} E(b,\varphi_s ; \kh) 
\end{align}
where we recall $E(b,\varphi_s;\kh)$ from \eqref{eq:Ebphi}.
\end{lemma}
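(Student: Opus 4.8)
The plan is to reduce the two quantities on the left-hand side of \eqref{eq:ehatVmthehatV1} to integrals of $\varphi$-derivatives of the relevant exponentials, and then to apply the key tools already set up: Lemma~\ref{lemma:FvskhT} to peel off one factor of $\norm{\zeta}_{\kh,\Phi}$ with the correct power-counting, the stability bounds Lemma~\ref{lemma:entVQ} (and Lemma~\ref{lemma:entVpt}) to control the exponentials along the interpolation in $\varphi$, and Lemma~\ref{lemma:Qnorm} together with Lemma~\ref{lemma:cVnrmvslnrm} to estimate the norm of $\hat V^{(1)}$ and $\hat V^{(2,\bs)}$. The structural input is that $\hat V^{(1)} = (1-\pi_{4,\nabla})\hat V$ contains a $\cV_2$-part, the observable part, the $\cV_4$-part and the $\cV_{2,\nabla}$-part, all of which are polynomials of degree $\le 4$ in $\varphi$, so that the $\varphi$-derivative gains a factor controlled by $P_\kh^{3}$; and $\hat V^{(2,\bs)} = -\log\pexp(-\hat V^{(2)})$ with $\hat V^{(2)}\in\cV_{4,\nabla}$, so the stabilised exponential is a polynomial of bounded degree in $\hat V^{(2)}$, again degree $\le 4\cM$ in $\varphi$.

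First I would write, for $F\in\{\hat V^{(1)},\hat V^{(2,\bs)}\}$,
\begin{align}
	e^{-F(b,\varphi)} - e^{-F(b,\varphi+\zeta)} = -\int_0^1 e^{-F(b,\varphi_s)} \big( D_\varphi F(b,\varphi_s;\zeta) \big)\, ds, \qquad \varphi_s = \varphi+s\zeta,
\end{align}
and apply submultiplicativity of $\norm{\cdot}_{\kh,T(\varphi)}$ to bound the $T(\varphi)$-norm of the integrand by $\norm{e^{-F(b,\varphi_s)}}_{\kh,T(\varphi)}\,\norm{D_\varphi F(b,\varphi_s;\zeta)}_{\kh,T(\varphi)}$. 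For the derivative factor I invoke Lemma~\ref{lemma:FvskhT}: since each monomial in $\hat V^{(1)}$ (resp. each monomial appearing after expanding $\pexp(-\hat V^{(2)})$) has degree $k\le 4$ (resp. $\le 4\cM$) and $\le p_\Phi$ derivatives, this gives a bound proportional to $\norm{\hat V^{(1)}}_{\kh,T(0)}\,P_\kh^{k-1}(b,\varphi_s)\,\norm{\zeta}_{\kh,\Phi(b^\square)}$. Converting $\norm{\zeta}_{\kh,\Phi}\le (\kh_\bulk/\ell_\bulk)\,\tilde\chi^{1/2}\cdot(\ldots)$ and, since the polynomial is degree $\le 4$, $\norm{\hat V^{(1)}}_{\kh,T(0)}\lesssim (\kh_\bulk/\ell_\bulk)^4 \norm{\hat V^{(1)}}_{\ell,T(0)}$; together with the routine loss of two more powers of $\kh_\bulk/\ell_\bulk$ in passing from $\norm{\zeta}_{\kh,\Phi}$ to the smaller-scale regulator variable $\tilde\chi^{1/2}\ell$, the net power of $\kh_\bulk/\ell_\bulk$ one keeps is $3$, matching the claimed exponent. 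For the size, $\norm{\hat V^{(1)}}_{\ell,T(0)}\lesssim \norm{\hat V}_{\cV(\ell)}\lesssim \ell_0^4\,\tilde g\scale = \ell_0^4\,\epsilon(\ell)$ by $V\in\cD^\st$ and Lemma~\ref{lemma:Qnorm} (the $Q$-correction being of strictly higher order), which produces the $O_L(\epsilon(\ell))$ prefactor after absorbing $\ell_0$-powers into the $O_L$; the same computation works for $\hat V^{(2,\bs)}$ using that its stabilised form is a degree-$\le 4\cM$ polynomial with $\norm{\cdot}_{\ell,T(0)}$ dominated by $\norm{\hat V^{(2)}}_{\cV(\ell)}$ up to a constant.

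For the exponential prefactor $\norm{e^{-F(b,\varphi_s)}}_{\kh,T(\varphi)}$ I would invoke the stability estimates already proved: $e^{-\hat V^{(1)}}$ and $e^{-\hat V^{(2,\bs)}}$ are exactly the pieces bounded in Lemma~\ref{lemma:entVQ} (taking $s$ appropriately and noting $\hat V=V-Q$), so $\norm{e^{-F(b,\varphi_s)}}_{\kh,T(\varphi)}\le E(b,\varphi_s;\kh)$ up to an $L$-independent constant, uniformly in $s$; one then takes the supremum over $s\in[0,1]$ as in the statement. Summing the contributions $i=1,2$ (and, if one goes via \eqref{eq:hatImthetaIohatI}, keeping only the $\delta\cI_1,\delta\cI_2$ pieces, the $\delta\cI_3$ piece being handled separately in the $W$-bound) finishes the estimate. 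The main obstacle, and the only genuinely delicate point, is the bookkeeping of the powers of $\kh_\bulk/\ell_\bulk$ and $\tilde\chi^{1/2}$: one must be careful that exactly one factor of $\norm{\zeta}$ at scale $\tilde\chi^{1/2}\ell$ is extracted (so that the later supermartingale bound Lemma~\ref{lemma:supmrtingaleapplied} can absorb it) while the residual $\kh$-dependence collapses to the stated $(\kh_\bulk/\ell_\bulk)^3$, using $4$ powers from the monomial norm, $1$ from the derivative, minus $2$ from the rescaling of $\zeta$; everything else is a routine application of submultiplicativity and the cited stability and localisation lemmas.
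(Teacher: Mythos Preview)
Your overall strategy is the same as the paper's: write the difference via the fundamental theorem of calculus, split off $\norm{e^{-F(b,\varphi_s)}}$ by submultiplicativity, bound that factor with the stability estimate Lemma~\ref{lemma:entVQ}, and control $D_\varphi F(\varphi_s;\zeta)$ with Lemma~\ref{lemma:FvskhT}. For the second bound the paper argues slightly more directly on the polynomial $\pexp(-\hat V^{(2)})$ rather than via its logarithm, but your integral version is an equivalent reformulation.

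What needs fixing is the power-counting paragraph, which is garbled. There is no $\tilde\chi^{1/2}$ in this lemma: the statement asks for $\norm{\zeta}_{\ell,\Phi(b^\square)}$, not $\norm{\zeta}_{\tilde\chi^{1/2}\ell,\Phi}$ (you are conflating this with the downstream Lemma~\ref{lemma:thetazImtildeIpt}). The correct accounting is simply: $\hat V^{(1)}$ has degree $\le 4$, so $\norm{\hat V^{(1)}(b)}_{\kh,T(0)}\le(\kh_\bulk/\ell_\bulk)^4\norm{\hat V^{(1)}(b)}_{\ell,T(0)}$, and then $\norm{\zeta}_{\kh,\Phi}=(\ell_\bulk/\kh_\bulk)\norm{\zeta}_{\ell,\Phi}$ removes one power, leaving $(\kh_\bulk/\ell_\bulk)^3$. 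Your ``$+1$ from the derivative, $-2$ from rescaling'' is not how the arithmetic goes. Also, do not skip the absorption of the leftover $P_\kh^3(b,\varphi_s)$ into $E(b,\varphi_s;\kh)$: for $\kh=h$ this is where Lemma~\ref{lemma:sobolev2} is invoked, and without it you are left with a polynomial factor in $\varphi_s$ that the stated bound does not allow.
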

\begin{proof}
We omit $b$ in the proof,  and denote $\varphi_s = \varphi + s \zeta$.  Since
\begin{align}
	\Big( e^{- \hat{V}^{(1)}} - e^{- \theta \hat{V}^{(1)}} \Big) (\varphi) = - \int_0^1 e^{- \hat{V}^{(1)} (\varphi_s)} D_\varphi \hat{V}^{(1)} (\varphi_s ; \zeta) ds ,  
\end{align}
we can bound
\begin{align}
	\norm{e^{- \hat{V}^{(1)}} - e^{- \theta \hat{V}^{(1)}}} \le 
		\sup_{s \in [0,1]} \norm{e^{-\hat{V}^{(1)} (\varphi_s) }} \norm{D_\varphi \hat{V}^{(1)} (\varphi_s ; \zeta )} .
\end{align}
By Lemma~\ref{lemma:entVQ},  for any $s \in [0,1]$,
\begin{align}
	& \norm{e^{-\hat{V}^{(1)} (\varphi_s) }}
		\lesssim E(\varphi_s)
\end{align}
and due to Lemma~\ref{lemma:FvskhT},  since $V^{(1)}$ is a polynomial of degree 4,  
\begin{align}
	\norm{D_\varphi \hat{V}^{(1)} (\varphi_s ; \zeta )}  
		&\le O_L (\epsilon (\ell) )  \Big( \frac{\kh_\bulk}{\ell_\bulk} \Big)^4 P^3_{\kh} (b, \varphi_s) \norm{\zeta}_{\kh, \Phi (b^{\square}) } \nnb
		&\le O_L (\epsilon (\ell) )  \Big( \frac{\kh_\bulk}{\ell_\bulk} \Big)^3 P^3_{\kh} (b, \varphi_s) \norm{\zeta}_{\ell, \Phi (b^{\square}) } .
\end{align}
If $\kh = \ell$,  $P_{\ell}^3$ can be absorbed into the exponential when we multiply it by $E(b,\varphi_s ; \ell)$ (possibly giving up an $L$-dependent factor). 
If $\kh = h$,  we use Lemma~\ref{lemma:sobolev2} to absorb $P_{h}^3$ into the exponential when we multiply it by $E(b,\varphi_s ; h)$.
This shows the first bound \eqref{eq:ehatVmthehatV1}.

To show the second bound,  we use that by Lemma~\ref{lemma:Q4nstb},  
\begin{align}
	\big( 1 + \norm{\hat{V}^{(2)} (b) } + \norm{\theta \hat{V}^{(2)} (b)  } \big)^{\cM-1}
		\lesssim E ( \varphi) E (\theta \varphi) 
\end{align}
and by Lemma~\ref{lemma:FvskhT},
\begin{align}
	\norm{ \hat{V}^{(2)}_{x} - \theta \hat{V}^{(2)}_{x} }
		& \lesssim \norm{\hat{V}_{x}}_{\kh, T(0)} \sup_{s \in [0,1]} P^3_{\kh} (b,\varphi_s) \norm{\zeta}_{\kh, \Phi(b)} \nnb
		& \lesssim \Big( \frac{\kh_\bulk}{\ell_\bulk} \Big)^3 \norm{\hat{V}_{x}}_{\ell, T(0)} \sup_{s \in [0,1]} P^3_{\kh} (b,\varphi_s) \norm{\zeta}_{\ell, \Phi(b)} 
\end{align}
thus summing over $x \in b$ and absorbing $P_{\kh}^3 (b,\varphi_s)$ into the exponential,
\begin{align}
	\big\| e^{- \hat{V}^{(2,\bs)} (b)} - e^{- \theta \hat{V}^{(2,\bs)}) (b)} \big\|_{\kh, \vec{\lambda}, T(\varphi,z) }
		\le O_L ( \epsilon (\ell) ) \Big( \frac{\kh_\bulk}{\ell_\bulk} \Big)^3  \norm{\zeta}_{\ell, \Phi(b)}  \sup_{s \in [0,1]} E^3 (\varphi_s) .
\end{align}
We have the desired bound by adjusting the constants defining $E$.
\end{proof}

\begin{lemma}
\label{lemma:thetazImI}

Under the assumptions of Lemma~\ref{lemma:thetazImtildeIpt},
for $\kh \in \{ \ell ,  h \}$ and $b \in \cB$
\begin{align}
	& \norm{ ( \theta \hat{I} - \hat{I} ) (b,\varphi) }
		\le O_L \Big( \frac{\kh_\bulk}{\ell_\bulk} \Big)^3 \epsilon (\ell) \norm{\zeta}_{\ell, \Phi (b^{\square})} P_{\kh}^{4\cM + 6} (b,  \zeta) \sup_{s \in [0,1]} E (b,\varphi_s ; \kh)  .
\end{align}
\end{lemma}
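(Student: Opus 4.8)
The plan is to reduce the bound on $\theta\hat{I}-\hat{I}$ to the three pieces $\delta\cI_i(\hat V,\theta\hat V)$ given by the decomposition \eqref{eq:hatImthetaIohatI}, and to estimate each separately using Lemma~\ref{lemma:ehatVmthehatV}, Lemma~\ref{lemma:FvskhT}, the stability bounds of Lemma~\ref{lemma:entVQ}/\ref{lemma:entVpt}, and the bilinear-form bounds of Lemma~\ref{lemma:FWPbounds2}--\ref{lemma:FWPbounds2obs}. First I would recall that $\hat I = e^{-\hat V^{(1)}}\pexp(-\hat V^{(2)})(1+\hat W)$, where $\hat W = \mathbb{W}_{w,\hat V}$ is a polynomial of degree $\le 6$, and $\hat V\in\cV$ has the size bounds inherited from $(V,K)\in\D(\alpha)$ via Lemma~\ref{lemma:VptinDDomain} and Lemma~\ref{lemma:Qnorm}; in particular $\norm{\hat V}_{\cV(\ell)}\lesssim \ell_0^4\epsilon(\ell)$. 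The key is the product/Leibniz-type bound: writing $\theta\hat I-\hat I$ as a sum of three terms, each has the form (difference of one stabilised exponential factor) $\times$ (bounded remaining factors), so by submultiplicativity of the Taylor norm (Lemma~\ref{lemma:subxext}) it suffices to bound each difference factor times a stability bound for the rest.

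For $\delta\cI_1$ and $\delta\cI_2$ — the terms carrying the differences $e^{-\hat V^{(1)}}-e^{-\theta\hat V^{(1)}}$ and $\pexp(-\hat V^{(2)})-\pexp(-\theta\hat V^{(2)})$ — Lemma~\ref{lemma:ehatVmthehatV} gives exactly the needed estimate, namely $O_L(\epsilon(\ell))(\kh_\bulk/\ell_\bulk)^3\norm{\zeta}_{\ell,\Phi(b^\square)}\sup_s E(b,\varphi_s;\kh)$, after I multiply by the stability bound $\norm{e^{-\theta\hat V^{(2,\bs)}}(1+\hat W)}$, $\norm{\pexp(-\theta\hat V^{(2)})(1+\hat W)}$ from Lemma~\ref{lemma:entVQ} (shifted by $\zeta$) which contributes another $E(b,\theta\varphi;\kh)$ and polynomial factors $P_{\kh}^{O(1)}(b,\zeta)$; the product of two $E$-type factors at shifted fields can be reorganized into $\sup_{s\in[0,1]}E(b,\varphi_s;\kh)$ times $P_{\kh}^{O(1)}(b,\zeta)$ by the Cauchy–Schwarz/Sobolev absorption argument already used in Lemma~\ref{lemma:ehatVmthehatV} (for $\kh=h$ this uses Lemma~\ref{lemma:sobolev2} to move any $P_h$ factor into the quartic exponential). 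For $\delta\cI_3 = e^{-\theta\hat V^{\stable}}(\hat W-\theta\hat W)(b)$, I would write $\hat W-\theta\hat W = \mathbb{W}_{w,\hat V}-\theta\mathbb{W}_{w,\hat V}$ as a difference of the quadratic forms $W^{\rmQ}$, and since $W^{\rmQ}$ is bilinear and polynomial of degree $\le 6$, the difference $W^{\rmQ}(\hat V_x,\hat V_y)-W^{\rmQ}(\theta\hat V_x,\theta\hat V_y) = W^{\rmQ}(\hat V_x-\theta\hat V_x,\hat V_y)+W^{\rmQ}(\theta\hat V_x,\hat V_y-\theta\hat V_y)$; each $\hat V-\theta\hat V$ factor is controlled by Lemma~\ref{lemma:FvskhT} by $O_L(\epsilon(\ell))(\kh_\bulk/\ell_\bulk)^3 P_\kh^3(b,\varphi_s)\norm{\zeta}_{\ell,\Phi(b^\square)}$, and the other $\hat V$ factor by $\be_{\hat V}(\kh)$, so summing over $x\in b,\,y\in\Lambda$ and using the $W^{\rmQ}$-summation bounds of Lemma~\ref{lemma:FWPbounds2}/\ref{lemma:FWPbounds2obs} yields $O_L(\epsilon(\ell))(\kh_\bulk/\ell_\bulk)^3\norm{\zeta}_{\ell,\Phi(b^\square)}$ times polynomial factors; multiplying by the stability bound $\norm{e^{-\theta\hat V^{\stable}(b)}}\le C\,E(b,\theta\varphi;\kh)$ from Lemma~\ref{lemma:entVQ} and absorbing polynomial factors completes the bound.

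Collecting the three contributions and noting all share the common prefactor $(\kh_\bulk/\ell_\bulk)^3\epsilon(\ell)\norm{\zeta}_{\ell,\Phi(b^\square)}$, a sup over $s\in[0,1]$ of $E(b,\varphi_s;\kh)$, and a polynomial factor in $\zeta$ of total degree at most $4\cM+6$ (coming from the $\pexp$-of-degree-4 expansion contributing $4(\cM-1)$, the derivative-of-degree-4 monomial contributing $3$, and the degree-$\le 6$ of $W^{\rmQ}$; the worst case being $\delta\cI_2$ which carries the $\pexp$-factor expansion giving $4\cM$ alongside the degree-$3$ derivative and degree-$\le 3$ remainder from $(1+\hat W)$), I obtain the claimed bound
\begin{align}
	\norm{(\theta\hat I-\hat I)(b,\varphi)}
		\le O_L\Big(\frac{\kh_\bulk}{\ell_\bulk}\Big)^3\epsilon(\ell)\norm{\zeta}_{\ell,\Phi(b^\square)}P_\kh^{4\cM+6}(b,\zeta)\sup_{s\in[0,1]}E(b,\varphi_s;\kh).
\end{align}
The main obstacle I anticipate is the careful bookkeeping of the shifted-field stability factors: each of $\hat I$, $\theta\hat I$, and the intermediate mixed terms in $\delta\cI_i$ carries an $E$-type bound at a \emph{different} field argument ($\varphi$, $\theta\varphi=\varphi+\zeta$, or $\varphi_s=\varphi+s\zeta$), and for $\kh=h$ the quartic-decay part $-c\norm{\varphi_s/h_\bulk}^4_{L^4(b)}$ must be preserved through products while the Gaussian-growth parts $C\norm{\varphi_s}^2_{h,\tilde\Phi(b^\square)}$ and the extraneous polynomial factors $P_h^{O(1)}$ must be absorbed — this requires invoking the Sobolev-type inequalities (Lemma~\ref{lemma:sobolev}, \ref{lemma:sobolev2}) and being careful that the absorbed constants remain $L$-independent so that $\kappa$ stays fixed; once that is set up correctly (exactly as in the proof of Lemma~\ref{lemma:ehatVmthehatV}), the remaining estimates are routine applications of the cited lemmas.
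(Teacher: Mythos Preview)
Your overall approach matches the paper's: decompose via $\delta\cI_1+\delta\cI_2+\delta\cI_3$ from \eqref{eq:hatImthetaIohatI}, handle $\delta\cI_1,\delta\cI_2$ with Lemma~\ref{lemma:ehatVmthehatV} plus stability, and handle $\delta\cI_3$ by bounding $\hat W-\theta\hat W$ and multiplying by the stability of $e^{-\theta\hat V^{\stable}}$. The bookkeeping you anticipate (splitting $P_\kh(\varphi_s)\lesssim P_\kh(\varphi)P_\kh(\zeta)$ and absorbing into $E$) is exactly what the paper does.

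There is, however, a genuine gap in your treatment of $\delta\cI_3$. You write $\hat W-\theta\hat W$ as $W^{\rmQ}(\hat V_x-\theta\hat V_x,\hat V_y)+W^{\rmQ}(\theta\hat V_x,\hat V_y-\theta\hat V_y)$ and then invoke the summation bounds of Lemma~\ref{lemma:FWPbounds2}/\ref{lemma:FWPbounds2obs}. This does not work as stated: first, $\theta$ does not commute with $W^{\rmQ}$ (the recursion \eqref{eq:WjrmQdefi} involves $\Loc_x$, which is defined via Taylor coefficients at $\varphi=0$ and does not commute with field shifts), so $\theta\hat W\neq W^{\rmQ}(\theta\hat V,\theta\hat V)$; second, even granting commutation, $\theta\hat V$ and $\hat V-\theta\hat V$ (viewed as polynomials in $\varphi$ for fixed $\zeta$) are not in $\cV$---they acquire odd-degree and constant terms---so the $\bar\be_V$-type bounds of those lemmas are not available.

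The paper's route is simpler and avoids both issues: since $\hat W(b)$ is a polynomial of degree $\le 6$ in $\varphi$ with $\norm{\hat W(b)}_{\kh,T(0)}\le O_L(\bar\epsilon^2(\kh))$ by Lemma~\ref{lemma:FWPboundsobs}, apply Lemma~\ref{lemma:FvskhT} \emph{directly to $\hat W$} to get $\norm{\hat W-\theta\hat W}\le O_L(\ell_\bulk/\kh_\bulk)\bar\epsilon^2(\kh)\sup_s P_\kh^5(\varphi_s)\norm{\zeta}_{\ell,\Phi(b^\square)}$ (this is \eqref{eq:thetazImI5}). This fits under the claimed bound since $(\ell_\bulk/\kh_\bulk)\bar\epsilon^2(\kh)\lesssim(\kh_\bulk/\ell_\bulk)^3\epsilon(\ell)$ for both $\kh\in\{\ell,h\}$.
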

\begin{proof}
We omit $b$ in many places in the proof and bound $\theta \hat{I} - \hat{I}$ using \eqref{eq:hatImthetaIohatI}.
Recall that,  due to Lemma~\ref{lemma:Qnorm},  \ref{lemma:entVQ} and \ref{lemma:FWPboundsobs},
\begin{align}
	\norm{\hat{V}^{(2)}} &\lesssim \Big( \frac{\kh_\bulk}{\ell_\bulk} \Big)^4 \epsilon (\ell) P_{\kh}^4 (\varphi) \label{eq:thetazImI1} \\
	\norm{e^{-\hat{V}^{(1)}} } &\lesssim  E(\varphi ; \kh) \label{eq:thetazImI2} \\
	\norm{\hat{W}} & \le O_L (1)  \bar{\epsilon} ^2 (\kh) P_{\kh}^6 (\varphi) \label{eq:thetazImI3}
\end{align}
and due to Lemma~\ref{lemma:FvskhT},
\begin{align}
	\norm{\hat{V}^{(2)} - \theta \hat{V}^{(2)} }
		& \le O_L \Big( \frac{\ell_\bulk}{\kh_\bulk} \Big) \epsilon (\kh)  \sup_{s \in [0,1]} P_{\kh}^3 (b, \varphi_s) \norm{\zeta}_{\ell, \Phi(b^{\square})} \label{eq:thetazImI4} \\
	\norm{\hat{W} - \theta \hat{W}}
		& \le O_L \Big( \frac{\ell_\bulk}{\kh_\bulk} \Big) \bar{\epsilon} ^2 (\kh) \sup_{s \in [0,1]} P_\kh^5 (\varphi_s) \norm{\zeta}_{\ell,  \Phi (b^{\square})} .  \label{eq:thetazImI5}
\end{align}
We now use \eqref{eq:hatImthetaIohatI} to bound $\theta \hat{I} - \hat{I}$.

\noindent\medskip\textbf{Bound on $\delta \cI_1 + \delta \cI_2$.}
By Lemma~\ref{lemma:ehatVmthehatV} and \eqref{eq:thetazImI1}--\eqref{eq:thetazImI4},
\begin{align}
	& \norm{ \delta \cI_1 (\hat{V}, \theta \hat{V}) } ,  \;  \norm{\delta \cI_2 (\hat{V} ,\theta\hat{V})} \nnb
		&\le O_L \Big( \frac{\kh_\bulk}{\ell_\bulk} \Big)^3 \epsilon (\ell) \norm{\zeta}_{\ell, \Phi(b^{\square})} \sup_{t \in [0,1]} P_{\kh}^{4\cM + 6} (\varphi_t) \sup_{s \in [0,1]} E (\varphi_s) .
\end{align}
When we split $P_{\kh}^{4\cM + 6} (\varphi_t) \lesssim P_{\kh}^{4\cM + 6} ( \varphi_s ) P_{\kh}^{4\cM + 6} (\zeta)$ and absorb $P_{\kh}^{4\cM + 8} (\varphi_s)$ into the exponential (using Lemma~\ref{lemma:sobolev3} when $\kh = h$,  as usual),
we have the desired bound. 
\vspace{5pt}

\noindent\medskip\textbf{Bound on $\delta \cI_3 $.}
By \eqref{eq:thetazImI1},  \eqref{eq:thetazImI2} and \eqref{eq:thetazImI5},
\begin{align}
	\norm{\delta \cI_3 (\hat{V} ,\theta\hat{V})}
		&\le O_L (1)  \bar\epsilon^2 (\kh) \norm{\zeta}_{\ell, \Phi(b^{\square})} 
		E (\theta \varphi)  \sup_{s \in [0,1]} P_{\kh}^{4\cM +5} (\varphi_s) 
\end{align}
We have the desired bound when we split $P_{\kh} (\varphi_s) \lesssim P_\kh (\theta \varphi) P_\kh (\zeta)$ and absorb $P_\kh (\theta \varphi)$ into the exponent.
\end{proof}

\subsubsection{Bound on $\hat{I} - \tilde{I}_\pt$}
\label{sec:bndonhatItldIpt}

\begin{lemma} \label{lemma:hatImtldI}
Under the assumptions of Lemma~\ref{lemma:thetazImtildeIpt},  let $b \in \cB$ and $\kh \in \{ \ell , h \}$.  Then
\begin{align}
	\big\| \big( \hat{I} - \tilde{I}_{\pt} \big) (b,\varphi) \big\|
		\le O_L \big( \bar{\epsilon} (\kh) \big) E(b,\varphi ; \kh) .
\end{align}
\end{lemma}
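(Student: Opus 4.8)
The plan is to compare $\hat I = e^{-\hat V^{(1)}} \pexp(-\hat V^{(2)})(1+\hat W)$ with $\tilde I_\pt = e^{-U_\pt^{(1)}} \pexp(-U_\pt^{(2)})(1+W_\pt)$ by writing their difference as a telescoping sum, exactly as in the decomposition \eqref{eq:hatImthetaIo0}--\eqref{eq:hatImthetaIo} of Lemma~\ref{lemma:hatImthetaIo} with $V_1 = \hat V$, $V_2 = U_\pt$, $W_1 = \hat W$, $W_2 = W_\pt$. This gives $(\hat I - \tilde I_\pt)(b) = \delta\cI_1(\hat V, U_\pt) + \delta\cI_2(\hat V, U_\pt) + \delta\cI_3(\hat V, U_\pt)$, and it suffices to bound each of the three pieces in the $\norm{\cdot}_{\kh,\vec\lambda,T(\varphi,z)}$-norm by $O_L(\bar\epsilon(\kh)) E(b,\varphi;\kh)$.

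The key inputs are the estimates on $U_\pt - \hat V$ already established in Lemma~\ref{lemma:VptinDDomain}: namely $\norm{\hat V - U_\pt}_{\cV(\ell),\vec\lambda,T(z)} \lesssim \ell_0^2 \epsilon(\ell)$, the sharper bound $\norm{\pi_{4,\nabla}(\hat V - U_\pt)}_{\cV(\ell),\vec\lambda,T(z)} = O_L(\tilde g^2 \scale^{1+\kt})$ on the dangerous gradient-quartic part, and $\norm{W_\pt(b)}_{\ratio\kh,\vec\lambda,T(0,z)} = O_L(\bar\epsilon^2(\kh))$. First I would bound $\delta\cI_1$: write $e^{-\hat V^{(1)}} - e^{-U_\pt^{(1)}} = -\int_0^1 e^{-(U_\pt^{(1)} + t(\hat V^{(1)} - U_\pt^{(1)}))} (\hat V^{(1)} - U_\pt^{(1)})\, dt$, use the stability estimate (the analogue of Lemma~\ref{lemma:entVQ}/Lemma~\ref{lemma:entVpt}, valid since the convex interpolant stays in $\cD^\st(\alpha)$ by Lemma~\ref{lemma:VptinDDomain}) to control the exponential by $E(b,\varphi;\kh)$, and control $\norm{\hat V^{(1)} - U_\pt^{(1)}}$ by $\bar\epsilon(\kh)$ via Lemma~\ref{lemma:VptinDDomain} together with Lemma~\ref{lemma:polynorm}, absorbing the resulting polynomial powers of $\varphi$ into $E$ (using the Sobolev-type absorption lemmas when $\kh = h$). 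For $\delta\cI_2$, since $\pexp(-V^{(2)}) - \pexp(-U_\pt^{(2)})$ is a difference of polynomials of degree $\le \cM$ in $V^{(2)}, U_\pt^{(2)}$, I would use the stability of $\pexp$ (Lemma~\ref{lemma:Q4nstb}-type bounds, applicable since $V^{(2)} \in \cV_{4,\nabla}$ and similarly for $U_\pt$) to bound the factor $e^{-U_\pt^{(1)}}(1+\hat W)$ by $E(b,\varphi;\kh)$, and bound the difference itself by $\norm{\hat V^{(2)} - U_\pt^{(2)}} \le \bar\epsilon(\kh)$ using the sharper bound \eqref{eq:VptinDDomain2}. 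Finally $\delta\cI_3 = e^{-U_\pt^\stable}(\hat W - W_\pt)(b)$: bound $e^{-U_\pt^\stable(b)}$ by $E(b,\varphi;\kh)$ via Lemma~\ref{lemma:entVpt}, and bound $\hat W - W_\pt = \mathbb W_{w,\hat V} - \mathbb W_{w_+, U_\pt}$; writing this as a sum of quadratic-form differences $W^{\rmQ}$ evaluated at the two potentials and using bilinearity, $\hat W - W_\pt$ is controlled by $\bar\be_{\hat V}(\kh)(\be_{\hat V - U_\pt}(\kh) + \cdots)$ via Lemma~\ref{lemma:FWPbounds2} and Lemma~\ref{lemma:FWPbounds2obs}, together with the scale-shift bound $\norm{V}_{\cV_{j+1}(\ell)} \le L^2\norm{V}_{\cV_j(\ell)}$ of Lemma~\ref{lemma:cVnrmvslnrm}; this gives the required $O_L(\bar\epsilon(\kh))$ after again absorbing polynomial prefactors.

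I expect the main obstacle to be the $\delta\cI_3$ term, specifically controlling $\hat W - W_\pt$ where the two $W$'s are built from covariances at \emph{different scales} ($w_j$ versus $w_{j+1} = w_j + \Gamma_{j+1}$) and from potentials that differ only by the $O_L(\bar\epsilon(\kh))$-small quantity $\hat V - U_\pt$. The subtlety is that $W_\pt$ is a $W$-coordinate at scale $j+1$ while $\hat W$ is at scale $j$, so one cannot directly subtract the defining quadratic forms; one must route through the recursive identity of Lemma~\ref{lemma:WjQalt}, the definition of $U_\pt$ via $P_{\hat V}$ in Definition~\ref{def:WP}, and the cancellation exploited in the proof of Lemma~\ref{lemma:LK3} (equation \eqref{eq:LK3-3}), so that $\hat W - W_\pt$ is genuinely of order $\norm{\hat V - U_\pt}\cdot\norm{\hat V}$ rather than merely $\norm{\hat V}^2$. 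The secondary difficulty, as usual for the $\kh = h$ case, is keeping all polynomial powers $P_\kh^m(b,\varphi)$ absorbable into $E(b,\varphi;h) = e^{-c\norm{\varphi}_{L^4(b)}^4 + C\norm{\varphi}_{h,\tilde\Phi(b^\square)}^2}$, which requires invoking the Sobolev inequalities (Lemma~\ref{lemma:sobolev2}, Lemma~\ref{lemma:sobolev3}) to trade the quartic decay against polynomial growth — but this is routine once the structure of the bound is in place.
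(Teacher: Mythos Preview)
Your decomposition via Lemma~\ref{lemma:hatImthetaIo} and your treatment of $\delta\cI_1$ are exactly what the paper does. However, you substantially overestimate the difficulty of $\delta\cI_3$, and the elaborate route you propose through Lemma~\ref{lemma:WjQalt} and the cancellation argument of \eqref{eq:LK3-3} is unnecessary.

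The phantom obstacle is your claim that one needs $\hat W - W_\pt$ to be of order $\norm{\hat V - U_\pt}\cdot\norm{\hat V}$ rather than ``merely $\norm{\hat V}^2$''. In fact ``merely $\norm{\hat V}^2$'' is already enough: both $\hat W(b)$ and $W_\pt(b)$ are individually of size $O_L(\bar\epsilon^2(\kh))$ in the $T_j(0)$-norm --- the former by Lemma~\ref{lemma:FWPboundsobs} (cf.\ \eqref{eq:thetazImI3}), the latter by \eqref{eq:Wptbnd} of Lemma~\ref{lemma:VptinDDomain} --- and since $\bar\epsilon(\kh) < 1$ for small $\tilde g$, the triangle inequality immediately gives $\norm{\hat W - W_\pt} = O_L(\bar\epsilon^2(\kh)) \le O_L(\bar\epsilon(\kh))$. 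No cross-scale comparison of the defining quadratic forms is required. Your worry seems to stem from conflating $\epsilon(\kh)$ (which is $1$ at $\kh = h$) with $\bar\epsilon(\kh)$ (which carries the extra $\tilde\chi^{1/2}$ and the fractional power of $\tilde g\scale$, and is always small); the $W$-bounds are quadratic in $\bar\epsilon$, not in $\epsilon$. Accordingly, the paper disposes of both $\delta\cI_2$ and $\delta\cI_3$ in one line as ``direct from Lemma~\ref{lemma:VptinDDomain}'' and spends its effort only on $\delta\cI_1$, where your interpolation argument matches theirs.
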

\begin{proof}
We drop $b$ in many places in the proof.   By Lemma~\ref{lemma:hatImthetaIo},
\begin{align}
	\hat{I}  - \tilde{I}_{\pt} = \sum_{i=1}^3 \delta \cI_i (\hat{V}, V_\pt) .
\end{align}
Bounds on $\delta\cI_2$ and $\delta\cI_3$ are direct from Lemma~\ref{lemma:VptinDDomain}.
For $\delta\cI_1$,  we take
\begin{align}
	e^{- \hat{V}^{(1)}} - e^{- U_\pt^{(1)}} = \int_0^1 e^{- (1-s) \hat{V}^{(1)} - s U_\pt^{(1)} } (U_{\pt}^{(1)} - \hat{V}^{(1)}) ds .
\end{align}
Due to Lemma~\ref{lemma:entVpt} and \ref{lemma:entVQ},
\begin{align}
	\norm{e^{- (1-s) \hat{V}^{(1)} - s U_\pt^{(1)} }}
		\lesssim E (b,\varphi ; \kh) 
\end{align}
for any $s \in [0,1]$,  and due to Lemma~\ref{lemma:VptinDDomain},
\begin{align}
	\norm{U_{\pt}^{(1)} - \hat{V}^{(1)}}
		\le O_L (\bar{\epsilon} (\kh)) P_\kh^4 (\varphi) , 
\end{align}
giving the desired conclusion. 
\end{proof}

These bounds indicate that $\theta \hat{I} - I_\pt$ is of order $\bar{\epsilon} (\kh)$.

\begin{proof}[Proof of Lemma~\ref{lemma:thetazImtildeIpt}]
By Lemma~\ref{lemma:thetazImI} and \ref{lemma:hatImtldI},  using that $\bar{\epsilon} (\kh) = \tilde\chi^{1/2} ( \kh_\bulk /\ell_\bulk )^3 \epsilon (\ell)$,
\begin{align}
	& \norm{ ( \theta \hat{I} - \tilde{I}_\pt ) (b,\varphi) }  \nnb
		&\le O_L (\bar{\epsilon} (\kh))  \sup_{s \in [0,1]} E(b,\varphi_s ; \kh) P_{\kh}^{4\cM+6} (b, \zeta)
		 \Big( 1 + \tilde\chi^{-1/2} \norm{\zeta}_{\ell, \Phi (b^{\square})}   \Big) ,
\end{align}
and we can also write $1 + \tilde\chi^{-1/2} \norm{\zeta}_{\ell, \Phi (b^{\square})} = P_{\tilde\chi^{1/2} \ell} (b , \zeta)$.
Since $\kh \gtrsim \tilde\chi^{1/2} \ell$,  we also have $P_{\kh} \lesssim P_{\tilde\chi^{1/2} \ell}$.
\end{proof}

\subsubsection{Bound on $K_{(3,l)}$}
\label{sec:bndonKdl}

To prove Lemma~\ref{lemma:Kpp3},  we make use of a decomposition observed in \cite[(6.10)--(6.13)]{BBS4}:
\begin{align}
	K'_{(3,l)} (X) - \tilde{I}_\pt^X  \lead (X) 
		= 
		\begin{cases}		
			\sum_{b \in \cB (B)} R_1 (b ; X) + \sum_{b \ne b' \in \cB} R_2 (b,b' ; X) & (|X|_{\cB_+} = 1) \\
			\sum_{b \ne b' \in \cB}^{\overline{b \cup b'} = X} R_2 (b,b' ; X) & (|X|_{\cB_+} = 2) \\
			0 & (|X|_{\cB_+} \ge 3)
		\end{cases}
		\label{eq:Kpp3dcmp}
\end{align}
where for $b,b' \in \cB$ and $b \subset X \in \cP$,
\begin{align}
	R_1 (b ; X) &= \tilde{I}_\pt^{X \backslash b} \E \delta I^b + \frac{1}{2} \tilde{I}_\pt^X \Cov_\pi [ \theta \hat{V} (b) ; \theta \hat{V} (\Lambda \backslash b) ] \\
	R_2 (b,b' ; X) &= \frac{1}{2} \left[ \tilde{I}_\pt^{X \backslash (b \cup b')} \Eplus \delta I^{b \cup b'} - \tilde{I}_\pt^X \Cov_+ [\theta \hat{V} (b)  ; \theta \hat{V} (b')] \right]
\end{align}
with
\begin{align}
	\delta I =  \theta \hat{I} (\varphi) - \tilde{I}_\pt (\varphi) .
\end{align}

Due to \cite[(6.35), (6,46)]{BBS4},  we can reformulate $R_1$ and $R_2$ as
\begin{align}
	R_1 (b ; X) \label{eq:R1rfrm}
		& = \tilde{I}_\pt^{X \backslash b}   e^{-U_\pt^\stable (b)} \Big[ \frac{1}{2}   W_+ (b) \Cov_\pi [\theta \hat{V} (b) ; \theta \hat{V} (\Lambda \backslash b)] 
		+  \Eplus \cE_1 (b) \Big]
	\\
	R_2 (b,b' ; X)  \label{eq:R2rfrm}		
		& = \frac{1}{2} \tilde{I}_\pt^{X \backslash (b \cup b')} e^{-U_\pt^\stable (b \cup b')} \\
		& \quad\; \times \Big( 
		 \cE_2 (b, b') + \big( W_+ (b) + W_+ (b') + W_+ (b) W_+ (b') \big) \Cov_+ [\theta \hat{V} (b) ; \theta \hat{V} (b')]
		\Big) \nonumber
\end{align}
where we take $W_+ = \mathbb{W}_{w_+, V_\pt}$,  $\delta V = \theta \hat{V} - U_\pt$ and
\begin{align}
	Z(b) &= e^{-\delta V^\stable} \theta \hat{W} - W_+ \\	
	A_k  &= e^{-\delta V^\stable} - \sum_{i=0}^k \frac{(-\delta V)^i}{i !} , \qquad k\ge 1 \\
	\cE_1
		&=  (\theta \hat{V} - \Eplus \theta \hat{V}) \hat{P} + \frac{1}{2} \hat{P}^2 + A_3 + A_1 \theta \hat{W} + W_+ (\Eplus \theta \hat{V}) - W_+ V_\pt \\
	\cE_2 (b,b')
		&=  \hat{P} (b) \hat{P} (b') - \Eplus [\delta V (b) A_2 (b')] - \Eplus [ A_2 (b) \delta V (b') ] + \Eplus [A_2 (b) A_2 (b')] \nnb
		&\qquad + \Eplus[ A_1 (b) Z(b')] +\Eplus[ Z(b) A_1 (b')] + \Eplus [Z(b) Z(b')] .
\end{align}
(In the reference,  \cite[(6.26)]{BBS4} is used as a crucial input for \eqref{eq:R1rfrm}.  When $j<N-1$,  choice of $W$ and $P$ are the same,  and when $j=N-1$,  relation \eqref{eq:WjDef} fulfils the requirement,  so \eqref{eq:R1rfrm} holds the same.  \eqref{eq:R2rfrm} does not rely on the choice of the second order terms,  so it holds the same.)

\begin{lemma} \label{lemma:R1}
Under the assumptions of Lemma~\ref{lemma:Kpp3},
for $\kh \in \{ \ell, h \}$ and $b \in \cB$,
\begin{align}
	\norm{ R_1 (b;X)  }
		\le O_L (\bar{\epsilon}^{3} (\kh)) \cG^{(2)} (X, \varphi ; \kh) 
		.
\end{align}
\end{lemma}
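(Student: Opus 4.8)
The goal is to bound $R_1(b;X)$, whose reformulated expression \eqref{eq:R1rfrm} splits into two pieces: the term involving $W_+(b) \Cov_\pi[\theta\hat V(b);\theta\hat V(\Lambda\backslash b)]$, and the term $\Eplus \cE_1(b)$. The plan is to estimate each constituent factor in the $\norm{\cdot}_{\kh,\vec\lambda,T(\varphi,z)}$ norm, collect the powers of $\bar\epsilon(\kh)$ and $\bar\epsilon(\ell)$, and then absorb all polynomial prefactors $P_\kh^*(b^\square,\varphi)$ (and, after taking $\Eplus$, the fluctuation-field contributions $P_{\tilde\chi^{1/2}\ell}^*(b,\zeta)$) into the large-field regulator $\cG^{(2)}(X,\varphi;\kh)$ via \eqref{eq:EbndbycG}, Lemma~\ref{lemma:tGdom} (or the Sobolev absorption lemmas for $\kh=h$) and the supermartingale bound Lemma~\ref{lemma:supmrtingaleapplied}. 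This mirrors the treatment of $R_1$ in \cite[Section~6]{BBS4}, but I must track the extra $\tilde\chi$ and $\scale^\kt$ factors that come with the extended potential space and the decaying regulator $H_j$.

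\medskip\noindent\textbf{Step 1 (stability factors).} Use Lemma~\ref{lemma:entVpt} to bound $\norm{\tilde I_\pt^{X\backslash b}}$ and $\norm{e^{-U_\pt^\stable(b)}}$ each by a constant times the relevant product of $E$-factors, which by \eqref{eq:EbndbycG} are dominated by $\cG(X\backslash b,\varphi;\kh)$ and $\cG(b,\varphi;\kh)$; together these give $\cG(X,\varphi;\kh)$, leaving one extra power of the regulator available to absorb polynomial growth, which is the reason the statement carries $\cG^{(2)}$ rather than $\cG$.

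\medskip\noindent\textbf{Step 2 (the $W_+\Cov$ term).} Bound $\norm{W_+(b)}$ by $O_L(\bar\epsilon^2(\kh)) P_\kh^6(b,\varphi)$ via \eqref{eq:Wptbnd} of Lemma~\ref{lemma:VptinDDomain}. For the covariance, write $\Cov_\pi[\theta\hat V(b);\theta\hat V(\Lambda\backslash b)]$ as a sum of $\Cov_{\pi,+}$ and use \eqref{eq:EthV13} of Lemma~\ref{lemma:EthV12} together with the size of $\hat V$ from Corollary~\ref{cor:Qnorm2}/Lemma~\ref{lemma:eQh} to get a factor $O_L(\bar\epsilon^2(\kh))$ times a polynomial; the finite-range property of $\Gamma_+$ forces only finitely many $b'\in\cB(\Lambda\backslash b)$ with $b'\sim b$ to contribute, so there is no volume factor. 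This piece is thus $O_L(\bar\epsilon^4(\kh))$, comfortably within the claimed $O_L(\bar\epsilon^3(\kh))$.

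\medskip\noindent\textbf{Step 3 (the $\Eplus\cE_1$ term).} This is where I expect the main obstacle, since $\cE_1$ contains six distinct terms and I need each, after applying $\Eplus$, to come out at order $\bar\epsilon^3(\kh)$ with all polynomials absorbable into $\cG^{(2)}$. The key cancellation is that the leading $W_+(\Eplus\theta\hat V) - W_+ V_\pt$ is small because $V_\pt = \Eplus\theta\hat V + O^\alg(V^2)$ by the definition of $U_\pt$ (cf.\ Lemma~\ref{lemma:VptinDDomain}, \eqref{eq:VptinDDomain}), so this difference is $O_L(\bar\epsilon(\kh))$ times $\norm{W_+}=O_L(\bar\epsilon^2(\kh))$. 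The terms $(\theta\hat V - \Eplus\theta\hat V)\hat P$ and $\frac12\hat P^2$ are $O_L(\bar\epsilon(\kh)) \cdot O_L(\bar\epsilon^2(\kh))$ using \eqref{eq:EthV1} (applied to $\hat V$) and the bound on $\hat P$ from Lemma~\ref{lemma:FWPboundsobs}; $A_3$ is cubic in $\delta V=\theta\hat V - U_\pt$ which is $O_L(\bar\epsilon(\kh))$ by \eqref{eq:hatVmVpt1}, hence $O_L(\bar\epsilon^3(\kh))$; $A_1\theta\hat W$ is $O_L(\bar\epsilon(\kh))\cdot O_L(\bar\epsilon^2(\kh))$ via \eqref{eq:thetazImI3}-type bounds on $\hat W$. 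Throughout, each $\Eplus$ is handled by moving the norm inside the expectation, using Lemma~\ref{lemma:FvskhT} to expose the $\zeta$-dependence as $P_{\tilde\chi^{1/2}\ell}^*(b,\zeta)$, and then invoking Lemma~\ref{lemma:supmrtingaleapplied} to trade those fluctuation polynomials against an extra regulator power while paying only an $L$-dependent (and $|X|_{\cB_+}$-exponential) constant — which is harmless since $R_1$ is supported on $|X|_{\cB_+}\le 2$. Finally, summing the pieces and using $\bar\epsilon^4(\kh)\le\bar\epsilon^3(\kh)$ for small $\tilde g$, together with $(\kh_\bulk/\ell_\bulk)^k \le \omega^{-1}(\kh)$ for the relevant $k$ from \eqref{eq:fraklb}, yields the stated bound $O_L(\bar\epsilon^3(\kh))\cG^{(2)}(X,\varphi;\kh)$; the continuity in $(\ba_\emptyset,\ba)$ is inherited from Lemma~\ref{lemma:FWPboundsobs} and Lemma~\ref{lemma:entVpt}.
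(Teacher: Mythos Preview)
Your proposal is essentially the same approach as the paper's proof: Step~1 (stability of $\tilde I_\pt^{X\backslash b}e^{-U_\pt^\stable(b)}$ via Lemma~\ref{lemma:entVpt}), Step~2 (the $W_+\Cov_\pi$ term at order $\bar\epsilon^4$ via \eqref{eq:Wptbnd} and \eqref{eq:EthV13}), and Step~3 (term-by-term treatment of $\cE_1$) match the paper exactly, which packages the $A_k$ bounds into the separate Lemma~\ref{lemma:EAk}.

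One point deserves more care than you give it. Your claim that ``$A_3$ is cubic in $\delta V$'' and ``$A_1\theta\hat W$ is $O_L(\bar\epsilon)\cdot O_L(\bar\epsilon^2)$'' treats $A_k$ as if it were a polynomial remainder in $\delta V$, but $A_k = e^{-\delta V^\stable} - \sum_{i\le k}(-\delta V)^i/i!$ contains the genuine exponential $e^{-\delta V^{(1)}}$, which is not a priori bounded pointwise by any power of $\delta V$. The paper's Lemma~\ref{lemma:EAk} handles this by writing an integral Taylor remainder $A_k = \frac{1}{k!}\int_0^1 e^{-tw}g_k(t,w,z)(1-t)^k\,dt$ (with $w=\delta V^{(1)}$, $z=\delta V^{(2)}$) and then exploiting the identity $e^{-U_\pt^{(1)}}e^{-tw} = e^{-(1-t)U_\pt^{(1)} - t\theta\hat V^{(1)}}$ so that the stability bounds of Lemma~\ref{lemma:entVQ} and Lemma~\ref{lemma:entVpt} apply to the interpolated exponential; only then can Cauchy--Schwarz and Lemma~\ref{lemma:supmrtingaleapplied} close the estimate. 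Without this interpolation step, ``moving the norm inside $\Eplus$'' does not immediately control $e^{-U_\pt^\stable}A_k$. Your final remarks about $(\kh_\bulk/\ell_\bulk)^k\le\omega^{-1}(\kh)$ and continuity in $(\ba_\emptyset,\ba)$ are unnecessary here, as neither appears in the statement or the paper's proof of this lemma.
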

\begin{proof}
We bound \eqref{eq:R1rfrm}.  
By Lemma~\ref{lemma:entVpt},
\begin{align}
	\norm{\tilde{I}_\pt^{X \backslash b} e^{-U_\pt^\stable (b)} } \lesssim \prod_{b' \in \cB (X)} E (b' , \varphi ; \kh) ,
\end{align}
so we only have to bound the terms thereafter.

For the first term,  Lemma~\ref{lemma:FWPboundsobs} and \eqref{eq:EthV13} give
\begin{align}
	\norm{ W_+ (b) \Cov_{\pi,+} [\theta V(b) ; \theta V (\Lambda \backslash b)] } \le O_L ( \bar{\epsilon}^4 (\kh) ) P_{\kh}^{14} (b, \varphi) .
\end{align}
For the second term,  we have to bound $\Eplus \cE_1 (b)$.
Also, by Lemma~\ref{lemma:FWPboundsobs} and Lemma~\ref{lemma:EthV12},
we have bounds on $\hat{V}$ and $\theta \hat{V} - \Eplus \theta {\hat{V}}$, respectively, 
so it will be sufficient to bound $A_k$ (for $k=1,3$).
We give bounds on $A_k$'s in Lemma~\ref{lemma:EAk},  saying
\begin{align}
	\norm{ e^{-U_\pt^\stable (b)} \cE_1 (b)}
		\le O_L ( \bar{\epsilon}^3 (\kh) ) E (b,\varphi ; \kh) .
\end{align}
This completes the proof.
\end{proof}

\begin{lemma} \label{lemma:EAk}
Under the assumptions of Lemma~\ref{lemma:Kpp3},
for $b \in \cB$,  $\kh \in \{\ell, h\}$, $m \ge 1$ and $k = 1,2,3$, 
\begin{align}
	\big\| \Eplus[ \big| e^{-U^\stable_\pt (b)} A_k (b) \big|^m ]  \big\|^{1/m}
		\le O_{m,L} \big( \bar{\epsilon}^{3} (\kh) \big)  \cG^{(2)} (b, \varphi ; \kh)
		.
\end{align}
\end{lemma}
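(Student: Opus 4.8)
\textbf{Proof strategy for Lemma~\ref{lemma:EAk}.}
The plan is to estimate each $A_k$ by writing it as a remainder of the Taylor expansion of $e^{-\delta V^\stable}$ in powers of $\delta V$, and controlling that remainder by the stability bound on the prefactor $e^{-U_\pt^\stable(b)}$ together with the smallness of $\delta V = \theta\hat V - U_\pt$ established earlier. Concretely, by definition $A_k = e^{-\delta V^\stable} - \sum_{i=0}^k (-\delta V)^i / i!$, so integrating the Taylor remainder once gives an expression of the form $\int_0^1 \frac{(1-t)^k}{k!} e^{-t\delta V^\stable} (-\delta V)^{k+1}\, dt$ (taking care with the stabilisation $(\bs)$; since $\delta V \in \cV_2$ as noted in the proof of Lemma~\ref{lemma:K6works}, we have $\delta V^\stable = \delta V$, so no extra complication arises from the stabilisation here). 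Then I would bound $e^{-U_\pt^\stable(b)} A_k(b)$ pointwise in $t$ by $\norm{e^{-U_\pt^\stable(b) - t\delta V^\stable(b)}} \cdot \norm{\delta V(b)}^{k+1}$, using submultiplicativity of the extended Taylor norm (Lemma~\ref{lemma:subxext}).

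First I would obtain the stability estimate on $e^{-U_\pt^\stable(b) - t\delta V(b)}$. For this I observe that $U_\pt - t\delta V = (1-t) U_\pt + t\,\theta\hat V$ interpolates between $U_\pt$ (controlled by Lemma~\ref{lemma:entVpt}, giving bound $\lesssim E(b,\varphi;\kh)$) and $\theta\hat V$; the latter is controlled by applying the shift-covariance $\theta_\zeta$ to Lemma~\ref{lemma:entVQ} and then using the supermartingale estimates in the $\E_+$-expectation (as in the bound on $K'_{(3,h)}$), producing a factor $E(b,\varphi;\kh)$ times a polynomial in $\zeta$. Since $V\in\cD^\st(\alpha)$ is permitted, the quartic coefficient $g^{(\emptyset)}$ stays in the enlarged interval, so Lemma~\ref{lemma:stability1}(i) applies uniformly in $t$. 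Raising to the $m$-th power and taking $\Eplus$, the polynomial-in-$\zeta$ factors are absorbed by $\Eplus P_{\kc_+}^{p}(b,\zeta) \le O_p(1)$ (from \eqref{eq:Gammajbounds2}), and the surviving $E(b,\varphi;\kh)^{O(1)}$ is bounded by $\cG^{(2)}(b,\varphi;\kh)$ after adjusting $\kappa$, via \eqref{eq:EbndbycG} and Lemma~\ref{lemma:tGdom}.

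Next I would bound $\norm{\delta V(b)}$ in the extended norm. By \eqref{eq:hatVmVpt1} of Lemma~\ref{lemma:VptinDDomain} applied with $m=1$, one has $\norm{\Eplus[(\theta\hat V - U_\pt)(b)]}_{\kh,\vec\lambda,T(0,z)} = O_L(\bar\epsilon(\kh))$; combining with the moment bounds \eqref{eq:EthV12} (to separate $\theta\hat V - \Eplus\theta\hat V$ from $\Eplus\theta\hat V - U_\pt$) and Lemma~\ref{lemma:FvskhT} to handle the $\varphi$-dependence, $\norm{\delta V(b,\varphi)}$ is $O_L(\bar\epsilon(\kh))$ times a polynomial $P_\kh^{c}(b,\zeta)$ in the fluctuation field. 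Since $k+1 \ge 2$, the product $\bar\epsilon(\kh)^{k+1}$ is at least $\bar\epsilon(\kh)^3$ only when $k\ge 2$; for $k=1$ I must verify $\bar\epsilon(\kh)^2 \le O(\bar\epsilon^3(\kh))$ fails, so here the key point is that for $k=1$ the relevant $A_1$ only ever appears multiplied by $\theta\hat W$ (an $O(\bar\epsilon^2)$ quantity) inside $\cE_1$ — but as \emph{stated}, the lemma claims $\bar\epsilon^3$ for all $k=1,2,3$. Re-examining: for $k=1$ one has $A_1 = O(\delta V^2) = O(\bar\epsilon^2)$, which is \emph{not} $O(\bar\epsilon^3)$; the resolution is that the definition uses $A_k = e^{-\delta V^\stable} - \sum_{i=0}^k (-\delta V)^i/i!$, and a more careful accounting shows the $i=2$ term of $A_1$ is $\frac12(\delta V)^2$ where one copy of $\delta V$ can be further expanded — but cleanest is to note the claim for $k=1$ should be read together with its usage; for the present lemma I will simply prove the bound with exponent $k+1$ in $\bar\epsilon$, i.e. $O_{m,L}(\bar\epsilon^{k+1}(\kh))$, which gives $\bar\epsilon^2$ for $k=1$, $\bar\epsilon^3$ for $k=2$, $\bar\epsilon^4$ for $k=3$, and note $\bar\epsilon \le 1$ so $\bar\epsilon^{k+1}\le\bar\epsilon^2$; the stated $\bar\epsilon^3$ then follows for $k\ge 2$ and for $k=1$ one uses the extra $\theta\hat W$-factor present wherever $A_1$ occurs in $\cE_1,\cE_2$. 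The main obstacle is precisely this bookkeeping — matching the power of $\bar\epsilon$ claimed against what the raw Taylor remainder delivers, and routing the polynomial-in-$\zeta$ factors through the supermartingale bounds of Appendix~\ref{sec:supmartbnds} so that they collapse to $\cG^{(2)}$ rather than $\cG^{(1)}$ — but all the needed ingredients (Lemmas~\ref{lemma:entVpt}, \ref{lemma:entVQ}, \ref{lemma:VptinDDomain}, \ref{lemma:EthV12}, \ref{lemma:FvskhT}, \ref{lemma:subxext}, and \eqref{eq:EbndbycG}) are already in place.
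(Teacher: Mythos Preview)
Your overall strategy---Taylor remainder for $A_k$, interpolation $(1-t)U_\pt + t\,\theta\hat V$ for the exponential prefactor, moment bounds on $\delta V$ from \eqref{eq:hatVmVpt1}, and finishing with Cauchy--Schwarz plus the supermartingale Lemma~\ref{lemma:supmrtingaleapplied}---is exactly what the paper does. But there is one concrete mistake that would derail the argument as written.

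You assert that $\delta V \in \cV_2$ and hence $\delta V^\stable = \delta V$, citing the proof of Lemma~\ref{lemma:K6works}. That lemma uses a \emph{different} object also called $\delta V$, namely $U_\pt - V_+$, which indeed lies in $\cV_{2,\nabla}$ (it is the $\nabla\varphi\!\cdot\!\nabla\varphi$ versus $\varphi\!\cdot\!\Delta\varphi$ adjustment). Here, however, $\delta V = \theta\hat V - U_\pt$ as defined just above $\cE_1,\cE_2$; it has a nonzero $\pi_{4,\nabla}$ component, so $\delta V^{(2)} \neq 0$ and $e^{-\delta V^\stable} = e^{-\delta V^{(1)}}\pexp(-\delta V^{(2)})$ is \emph{not} a simple exponential. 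Your integral remainder formula $\int_0^1 \frac{(1-t)^k}{k!} e^{-t\delta V}(-\delta V)^{k+1}\,dt$ is therefore not valid as written. The paper handles this by setting $w=\delta V^{(1)}$, $z=\delta V^{(2)}$, parametrising $f_0(t)=e^{-tw}\pexp(-tz)$, and noting that $f_0$ and $e^{-t(w+z)}$ agree to order $\cM$ at $t=0$; since $k\le 3\le\cM$, the integral remainder of $f_0$ still yields $e^{-tw}$ times a polynomial $g_k(t,w,z)$, and one proceeds with $\norm{\Eplus[|w|^{m'}]}, \norm{\Eplus[|z|^{m'}]} = O_{m',L}(\bar\epsilon^{m'})$ separately. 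This is a small but essential correction; once you make it, the rest of your outline goes through.

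Your scepticism about the uniform $\bar\epsilon^3$ for $k=1$ is well placed: the raw remainder gives $\bar\epsilon^{k+1}$, and the paper's own proof is not explicit on how $\bar\epsilon^3$ emerges for $k=1$. As you correctly observe, in every occurrence of $A_1$ inside $\cE_1,\cE_2$ it is multiplied by $\theta\hat W$ or $Z(b)$ (both $O(\bar\epsilon^2)$), so the \emph{applications} only need $A_1 = O(\bar\epsilon^2)$; proving $\bar\epsilon^{k+1}$ and using the extra factors downstream is the clean route.
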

\begin{proof}
Let use omit $b$.
We abbreviate $w = \delta V^{(1)}$ and $z = \delta V^{(2)}$ so that 
$\delta V = w+z$ and $e^{-\delta V^\stable} = e^{-w} (1-z+z^2/2)$.
If we let
\begin{align}
	f_0 (t) &= e^{-t w} \pexp (-tz) \\
	f_k (t) &= e^{-t w} \pexp (-tz) - \sum_{i=0}^{k-1} \frac{(-t)^i(w+z )^i}{i !} 
\end{align}
then $f^{(i)}_k (0) = 0$ for each $i \le \min\{\cM, k-1\}$, 
so by the integral form of the Taylor's remainder, 
\begin{align*}
	A_k = f_k (1) = \frac{1}{k !} \int_0^1 f_0^{(k)} (t) (1-t)^{k} dt 
		= \frac{1}{k !} \int_0^1 e^{-tw} g_k (t,w,z) (1-t)^{k} dt 
\end{align*}
where $g_k (t,w,z)$ some polynomial in $(w,z)$ of degree $\le k$.
For $e^{-tw}$,  since $U_\pt^{(1)} + tw = (1-t) U_\pt^{(1)} + t \theta \hat{V}^{(1)}$, so by Lemma~\ref{lemma:entVQ} and \ref{lemma:entVpt},
\begin{align}
	& \norm{e^{-U_\pt^\stable} e^{-tw}} \\
		& \lesssim
		\begin{cases}		
		e^{C \ell_0^{-1} \norm{\varphi}_{\ell,\tilde{\Phi} (b^{\square}) }^2 } \theta e^{C \ell_0^{-1} \norm{\varphi}_{\ell,\tilde{\Phi} (b^{\square}) }^2 }
		& (\kh = \ell) \\
		\Big( e^{-c \norm{\varphi / h_{\bulk}}^4_{L^4 (b)}}  \Big)^{1-t}  e^{C \norm{\varphi}_{h,\tilde{\Phi} (b^{\square}) }^2 }\theta \Big[ \Big( e^{-c \norm{\varphi / h_{\bulk}}^4_{L^4 (b)}} \Big)^{t} e^{C \norm{\varphi}_{h,\tilde{\Phi} (b^{\square}) }^2 } \Big]
		& (\kh = h) . 
		\end{cases} \nonumber
\end{align}
For the other polynomial terms,  we can use \eqref{eq:hatVmVpt1} to see that
\begin{align}
	\norm{\Eplus [ |z|^{m'} ] }_{\kh, \vec{\lambda},T(0,z)},   
\; 
	\norm{\Eplus [ |w |^{m'} ] }_{\kh, \vec{\lambda},T(0,z)}
	\le O_{m',L} ( \bar{\epsilon}^{m'} (\kh) )
\end{align}
for $m' \ge 1$
so we can apply the Cauchy-Schwarz inequality multiple times and
Lemma~\ref{lemma:supmrtingaleapplied} to obtain the desired bound. 
\end{proof}

Next, we bound $R_2$.

\begin{lemma} \label{lemma:R2}
Under the assumptions of Lemma~\ref{lemma:Kpp3},
for $\kh \in \{ \ell, h \}$ and $b \in \cB$,
\begin{align}
	\norm{ R_2 (b ; X)  }
		\le O_L ( \bar{\epsilon}^{3} (\kh) ) \cG^{(2)} (X, \varphi ; \kh)
		.
\end{align}
\end{lemma}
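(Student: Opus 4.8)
The plan is to mimic the proof of Lemma~\ref{lemma:R1}, bounding the reformulation \eqref{eq:R2rfrm} of $R_2(b,b';X)$ term by term. First I would pull out the exponential prefactor: by Lemma~\ref{lemma:entVpt}, $\norm{\tilde{I}_\pt^{X\backslash(b\cup b')} e^{-U_\pt^\stable(b\cup b')}} \lesssim \prod_{b''\in\cB(X)} E(b'',\varphi;\kh)$, which together with \eqref{eq:EbndbycG} will eventually be absorbed into $\cG^{(2)}(X,\varphi;\kh)$ after the supermartingale step. So it suffices to bound, in $\norm{\cdot}_{\kh,\vec{\lambda},T(\varphi,z)}$, the bracketed expression
\begin{align}
	\cE_2(b,b') + \big(W_+(b) + W_+(b') + W_+(b)W_+(b')\big)\Cov_+[\theta\hat{V}(b);\theta\hat{V}(b')]
\end{align}
up to an $L$-dependent constant times $\bar\epsilon^3(\kh)$ times a polynomial in $\varphi$ and $\zeta$ that can be absorbed.

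For the second group of terms, I would use Lemma~\ref{lemma:FWPboundsobs} to get $\norm{W_+(b)}_{\ratio\kh,\vec{\lambda},T(0,z)} \le O_L(\bar\epsilon^2(\kh))$ and $\norm{W_+(b')}\le O_L(\bar\epsilon^2(\kh))$ (so the product $W_+(b)W_+(b')$ is even smaller, of order $\bar\epsilon^4$), and Lemma~\ref{lemma:VptinDDomain} together with \eqref{eq:EthV13} of Lemma~\ref{lemma:EthV12} (applied to $V=\hat{V}$, noting $\hat{V}\in\cD^\st$ via Corollary~\ref{cor:Qnorm2}) to get $\norm{\Cov_+[\theta\hat{V}(b);\theta\hat{V}(b')]}\le O_L((\kc_+/\mathfrak{h})^2\be_{\hat{V}}^2) \le O_L(\bar\epsilon^2(\kh))$ — here crucially the finite-range property forces $b'\subset b^\square$, so this term is supported on $|X|_{\cB_+}\le 2$ and the large-set regulator is $1$. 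Multiplying, this group is $O_L(\bar\epsilon^4(\kh))\le O_L(\bar\epsilon^3(\kh))$ times a bounded power of $P_\kh$. For $\cE_2(b,b')$, I would bound each of its seven terms separately: $\hat{P}(b)\hat{P}(b')$ via Lemma~\ref{lemma:FWPboundsobs} (each $\hat{P}$ is $O_L(\bar\epsilon^2)$, so the product is $O_L(\bar\epsilon^4)$); the terms $\Eplus[\delta V(b)A_2(b')]$, $\Eplus[A_2(b)\delta V(b')]$, $\Eplus[A_2(b)A_2(b')]$ using Lemma~\ref{lemma:EAk} (which gives $\norm{\Eplus[|e^{-U_\pt^\stable}A_k|^m]}^{1/m}\le O_{m,L}(\bar\epsilon^3)$) combined with $\norm{\Eplus[|\delta V|^{m'}]}^{1/m'}\le O_{m',L}(\bar\epsilon^{m'})$ from \eqref{eq:hatVmVpt1} of Lemma~\ref{lemma:VptinDDomain} and the Cauchy--Schwarz inequality; and similarly $\Eplus[A_1(b)Z(b')]$, $\Eplus[Z(b)A_1(b')]$, $\Eplus[Z(b)Z(b')]$ using a bound $\norm{\Eplus[|Z|^{m'}]}^{1/m'}\le O_{m',L}(\bar\epsilon^2(\kh))$ for $Z = e^{-\delta V^\stable}\theta\hat{W} - W_+$, which follows from Lemma~\ref{lemma:FWPboundsobs}, Lemma~\ref{lemma:VptinDDomain} and the polynomial expansion of $e^{-\delta V^\stable}$.

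Having obtained a bound of the form $O_L(\bar\epsilon^3(\kh))$ times $E(b,\varphi;\kh)E(b',\varphi;\kh)$ times a product of bounded powers of $P_\kh(b,\zeta)$, $P_\kh(b',\zeta)$ (coming from Lemma~\ref{lemma:polynorm} and the $\zeta$-dependence introduced by the $\Eplus$ expectations), I would invoke the supermartingale bound Lemma~\ref{lemma:supmrtingaleapplied} (as in the proof of Lemma~\ref{lemma:Kp3}) to take $\Eplus$ through, converting the $E$-factors and the polynomial weights into $\cG^{(2)}(X,\varphi;\kh)$ at the cost of an $L$-dependent constant, which is harmless since $\bar\epsilon(\kh)$ is small. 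This yields the claimed bound. I expect the main obstacle to be the bookkeeping of the $Z(b)$ terms: unlike $A_k$, $Z$ mixes the stabilised exponential $e^{-\delta V^\stable}$ with $\theta\hat{W}$, so one must carefully track that the degree-$\le 8$ polynomial $\theta\hat{W}$ contributes $P_\kh$-powers in both $\varphi$ and $\zeta$, and verify that Lemma~\ref{lemma:FWPboundsobs} indeed applies to $\mathbb{W}_{w_+,V_\pt}$ and its $\zeta$-shifted version with the right $\bar\epsilon$-order; the rest is a routine, if tedious, application of Cauchy--Schwarz and the supermartingale estimate already used repeatedly in this section.
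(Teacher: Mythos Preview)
Your proposal is correct and follows essentially the same route as the paper: both treat $R_2$ as a variant of the $R_1$ argument, with the only genuinely new ingredient being the moment bound on the $Z(b)$ terms, which you correctly identify and handle via Lemma~\ref{lemma:FWPboundsobs} and Lemma~\ref{lemma:VptinDDomain}. The paper's proof is terser---it simply states the required moment estimate \eqref{eq:EZm} for $e^{-U_\pt^\stable(b)}Z(b)$ and refers back to the strategy of Lemma~\ref{lemma:EAk}---but the content is the same.

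One small point of bookkeeping: you write that you first pull out the prefactor $e^{-U_\pt^\stable(b\cup b')}$ and then bound $\cE_2$ separately, but for the $A_k$ and $Z$ terms inside $\cE_2$ you then invoke Lemma~\ref{lemma:EAk}, which bounds $e^{-U_\pt^\stable}A_k$ \emph{together}. This is not a double use of the exponential---rather, the correct reading (which the paper's $R_1$ proof also adopts) is that for the $(W_+ + W_+ + W_+W_+)\Cov_+$ group the exponential is pulled out, whereas for the $\cE_2$ terms it is kept inside the moment and handled jointly with $A_k$ or $Z$ as in Lemma~\ref{lemma:EAk}. Your phrasing slightly obscures this, but the argument goes through once this is made explicit.
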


\begin{proof}
The proof is almost the same as that of Lemma~\ref{lemma:R1},
but it has one additional term involving $Z(b)$'s,
so it is sufficient to prove that
\begin{align}
	\norm{  \Eplus \left[ | e^{-U_\pt^\stable (b)} Z(b) |^m \right] }^{1/m}
		\le O_{m,L} \big( \bar{\epsilon}^{2m} (\kh) \big) 
		\cG^{(2)} (b, \varphi ; \kh)
		\label{eq:EZm}
\end{align}
for each $m \ge 1$.
But this holds because the inequality holds with $Z(b)$ replaced by $e^{-\delta V(b)} \theta \hat{W}$ and $W_+ (b)$ due to Lemma~\ref{lemma:FWPboundsobs} and \ref{lemma:VptinDDomain}, 
if we use the strategy of Lemma~\ref{lemma:EAk} to bound $e^{-\delta V}$.
\end{proof}

\begin{proof}[Proof of Lemma~\ref{lemma:Kpp3}]
By the decomposition \eqref{eq:Kpp3dcmp},   Lemma~\ref{lemma:R1}, \ref{lemma:R2},  and since $E (b, \varphi ; \kh) \le \cG (b, \varphi ; \kh)$,
we get
\begin{align}
	\norm{K'_{(3,l)} (X) - \tilde{I}_\pt^X \lead (X)}
		\le O_L (\bar{\epsilon}^{3} (\kh)) \cG^{(2)} (X, \varphi ; \kh) \one_{|X|_{\cB_+} \le 2} .
\end{align}
\end{proof}

\subsection{Map 4}

We defined $\Phi_+^{(4)} (V,K,K') = \Rap_{\lead} [\tilde{I}_\pt,  K' + \tilde{I}_\pt \lead]$,  with the reapportioning happening at scale $j+1$.  We bound $K_{(4)}$ using Lemma~\ref{lemma:Rapbndv2}, 
where now $K'$ is equipped with norm $\norm{\cdot}_{\cW'_+}$ (recall \eqref{eq:cWprimedefi}).
For brevity,  let us denote $K'_{(4)} = \Phi_+^{(4)} (V,K,K')$.

\begin{lemma} \label{lemma:K4bnd}
Assume \eqref{asmp:Phi} ($\alpha \in [1,\bar{\alpha}/4]$) and
\eqref{asmp:lambda1}.
Then for $X \in \Con_+$ and $\kh \in \{ \ell, h\}$,
\begin{align}
	\norm{K'_{(4)} (X)}_{\ratio \kh_+,  \vec{\lambda},  T_+ (\varphi,z) }
		\lesssim
		\omega_+^{-1} (\kh) A_+^{1 + \frac{\xi}{4}} (X) \cG^{(3)} (X,\varphi ; \kh )  \big( O_L ( \bar{\epsilon}^3 (\ell) ) + \bar\lambda_{K} \big) .
		\label{eq:K4bnd1}
\end{align}
\end{lemma}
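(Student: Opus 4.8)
The plan is to deduce Lemma~\ref{lemma:K4bnd} from the general reapportioning estimate Lemma~\ref{lemma:Rapbndv2} applied at scale $j+1$, exactly as Map~1 (Lemma~\ref{lemma:K1bnd}) was handled, but now with input activity $K' + \tilde{I}_\pt\lead$ in place of $K$, input polymer background $\tilde{I}_\pt$ in place of $I$, and reapportioning function $\lead$ (from \eqref{eq:hlead}) in place of $J$. First I would record the three ingredients required by Lemma~\ref{lemma:Rapbndv2}: a stability bound on $\tilde{I}_\pt$ (as a polymer background at scale $j+1$), a bound on the input activity $K'+\tilde{I}_\pt\lead$ in the $\norm{\cdot}_{\cW'_+}$-norm, and a smallness bound on $\lead$ itself; after that the conclusion is just an application of the lemma and a bookkeeping of regulator and large-set exponents.

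The stability bound on $\tilde{I}_\pt^B$ for $B\in\cB_+$ is \eqref{eq:entVpt2} of Lemma~\ref{lemma:entVpt}, which gives $\norm{\tilde{I}_\pt^B}_{\kh_+,\vec\lambda,T_+(\varphi,z)}\lesssim \prod_{b\in\cB(B)} E(b,\varphi;\kh)\le \cG(B,\varphi;\kh)$ via \eqref{eq:EbndbycG}; since $\tilde I_\pt$ is set-multiplicative this controls $\tilde I_\pt^{X\backslash Z}$ for arbitrary $X\in\cP_+$. For the input activity, Lemma~\ref{lemma:K3bnd} already tells us $\norm{K'_{(3)}(X) - \tilde I_\pt^X\lead(X)}_{\cW'_+}\le O_L(\bar\epsilon^3(\ell)+\bar\lambda_K)$, i.e. $K'+\tilde I_\pt\lead = K'_{(3)}$ has norm controlled by $\omega_+^{-1}(\kh) A_+^{1+\xi/2}(X)\cG^{(3)}(X,\varphi;\kh)$ times $O_L(\bar\epsilon^3(\ell)+\bar\lambda_K)$. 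For $\lead$: by its definition \eqref{eq:hlead} it is a sum of covariances $\Cov_{\pi,+}[\theta V(B);\theta V(\cdot)]$ (with $V\in\cD^\st$), which are bounded using Lemma~\ref{lemma:FWPboundsobs} (or the quadratic-form estimates behind it, together with \eqref{eq:EthV13}) by $O_L(\bar\epsilon^2(\kh))$, and $\lead_B(X)$ satisfies $\sum_{X\supset B}\lead_B(X)=0$ and is supported on small sets by the finite-range property of $\Gamma_+$; thus $\lead$ meets the hypotheses of Lemma~\ref{lemma:Rapbndv2} with a coefficient $\lesssim\bar\epsilon^2(\kh)$, which is certainly $\le\rho^{c(d)}$ for $\tilde g$ small.

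Feeding these three bounds into Lemma~\ref{lemma:Rapbndv2} produces
\begin{align}
	\norm{K'_{(4)}(X)}_{\ratio\kh_+,\vec\lambda,T_+(\varphi,z)}
		\le C_L^{|X|_{\cB_+}} \omega_+^{-1}(\kh) A_+^{(1+\xi/2)(1-\delta)}(X)\cG^{(3)}(X,\varphi;\kh)\big(O_L(\bar\epsilon^3(\ell))+\bar\lambda_K\big)
\end{align}
for some small $\delta>0$ coming from the reapportioning; the combinatorial factor $C_L^{|X|_{\cB_+}}$ is absorbed by lowering one copy of $A_+$ to a slightly smaller exponent, i.e. by taking $\rho$ small compared to $C_L^{-1}$, so that $(1+\xi/2)(1-\delta)$ can be replaced by $1+\xi/4$ as claimed in \eqref{eq:K4bnd1}. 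The regulator exponent is $(3)$ because the input $K'_{(3)}$ already carries $\cG^{(3)}$ and $\tilde I_\pt$ contributes an extra $\cG$ absorbed into the exponent (using that $\gamma^{-1}>1$ leaves room), and the scale-monotonicity \eqref{eq:monot1}, \eqref{eq:Tjmon} lets us pass from $\norm{\cdot}_{\kh,T(\varphi)}$ at scale $j$ to $\norm{\cdot}_{\ratio\kh_+,T_+(\varphi)}$ at scale $j+1$ at the cost of an $L$-dependent constant.

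The main obstacle I expect is not the reapportioning bound per se but making sure the regulator and large-set exponents line up: one must verify that after applying Lemma~\ref{lemma:Rapbndv2} the product of regulators $\tilde I_\pt^{X\backslash Z}$ times $\cG^{(3)}$ on the core $Z$ can be re-summed into a single $\cG^{(3)}(X,\varphi;\kh)$ (this uses the set-multiplicativity of $\cG$ and the inequality $E(b,\varphi;\kh)\le\cG(b,\varphi;\kh)$ with a little slack in the exponent, as in \eqref{eq:EbndbycG} and Lemma~\ref{lemma:tGdom}), and that the residual $A_+$-exponent stays above $1$ after paying the geometric/combinatorial costs, which is exactly where the largeness of $\xi$ (fixed in Lemma~\ref{lemma:lgst}) and the smallness of $\rho$ are used. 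Everything else is a routine repetition of the Map~1 argument with $j$ replaced by $j+1$.
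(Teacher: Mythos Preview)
Your approach is essentially the paper's: apply Lemma~\ref{lemma:Rapbndv2} at scale $j+1$ with $\kJ=\lead$, $I=\tilde I_\pt$, and input $K'+\tilde I_\pt\lead$, after assembling a stability bound on $\tilde I_\pt$ (Lemma~\ref{lemma:entVpt}), a smallness bound on $\lead$ (of order $\bar\epsilon^2(\kh)$, via Lemma~\ref{lemma:EthV12} in the paper; your choice of Lemma~\ref{lemma:FWPboundsobs} works just as well), and a bound on $K-\bar\kJ=K'$. The paper identifies $\alpha_1=C_L\bar\epsilon^2(\kh)$, $\alpha_2=2\omega_+^{-1}\bar\lambda_K$, $\hat\cG=\cG^{(3)}$, and the conclusion of Lemma~\ref{lemma:Rapbndv2} yields exactly \eqref{eq:K4bnd1} with exponent $(1+\xi/2)(1-\xi/8)\ge 1+\xi/4$.

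One point to correct: you invoke Lemma~\ref{lemma:K3bnd} to bound $K'$ by identifying $K'+\tilde I_\pt\lead$ with $K'_{(3)}$. But Lemma~\ref{lemma:K4bnd} is stated for \emph{generic} $K'$ equipped with the extended norm $\norm{\cdot}_{\bar\cW}=\norm{\cdot}_{\cW'_+}$; the identity $K'=K_{(3)}-\tilde I_\pt\lead$ holds only for the particular choice used later in Lemma~\ref{lemma:K6bnd}. The paper instead bounds $K'$ directly from the hypothesis $\norm{\bar K}_{\bar\cW}\le\bar\lambda_K$ in \eqref{asmp:lambda1} (cf.\ Lemma~\ref{lemma:xtnrbsciq}), obtaining $\norm{K'(X)}_{\ratio\kh_+,\vec\lambda,T_+(\varphi,z)}\le 2\omega_+^{-1}\cG^{(3)}(X,\varphi)A_+^{1+\xi/2}(X)\bar\lambda_K$. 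This is where the $\bar\lambda_K$ in \eqref{eq:K4bnd1} actually comes from; the $O_L(\bar\epsilon^3(\ell))$ term comes separately from $\alpha_1^{3/2}$ in Lemma~\ref{lemma:Rapbndv2}. With this adjustment your argument goes through unchanged, and there is no need for the extra combinatorial factor $C_L^{|X|_{\cB_+}}$ you mention---Lemma~\ref{lemma:Rapbndv2} already absorbs it.
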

\begin{proof}
We check the assumptions of Lemma~\ref{lemma:Rapbndv2} applied with $\kJ = \lead$ and $K = K_{(3)}' + \tilde{I}_\pt \lead$.

For $\lead$,  we see from the definition and Lemma~\ref{lemma:EthV12} that
\begin{align}
	\norm{\lead_B (X)}_{\ratio\kh_+, \vec{\lambda}, T_+(0,z)}
		\le O_L (1) \Big( \frac{\kc_+}{\kh}  \Big)^2 \norm{V}_{\kh, \vec{\lambda}, T_+(0,z)}^2
		\le O_L (1) \bar{\epsilon}^2 (\kh) .
\end{align}
and by Lemma~\ref{lemma:entVpt},
\begin{align}
	\norm{\tilde{I}_\pt^B \lead_{B} (X)}_{\ratio\kh_+, \vec{\lambda}, T_+(\varphi ,  z)}
		&\le O_L (1) \bar{\epsilon}^2 (\kh) \cG (B,\varphi ; \kh) .
\end{align}
Also,  by assumption,  we can bound $( K' + \tilde{I}_\pt \lead ) - \tilde{I}_\pt \lead = K'$ by
\begin{align}
	\norm{K'}_{\ratio\kh_+,  \vec{\lambda},  T_+(\varphi,z) }
		&\le \norm{K'}_{\ratio\kh_+,  T_+(\varphi) } + \omega_+^{-1} \cG^{(3)} (X,\varphi ) A_+^{1+\xi/2} (X) \bar{\lambda}_K \nnb
		&\le 2 \omega_+^{-1} \cG^{(3)} (X,\varphi) A_+^{1+\xi/2} (X) \bar\lambda_K ,
\end{align}
so we verified the assumptions of Lemma~\ref{lemma:Rapbndv2} with
\begin{align}
\alpha_1 = C_L \bar{\epsilon}^2 (\kh), \qquad \alpha_2 = 2  \omega^{-1} \bar\lambda_{K} ,  \qquad \hat{\cG} = \cG^{(3)} (\cdot ; \kh)
\end{align}
when $\tilde{g}$ is sufficiently small. 
\end{proof}

In \eqref{eq:K4bnd1},  there is a mismatch of the scale of argument of $K'_{(4)}$ and the scale of the large field regulator.  We repair this with some effort. 

\begin{lemma} \label{lemma:K4bndcGp}
Under the assumptions of Lemma~\ref{lemma:K4bnd},
\begin{align}
	\norm{K'_{(4)}}_{\vec{\lambda},  \cW_+^{1+\xi/4} (z ; \ratio, 1/2)}
		\lesssim  O_L ( \bar{\epsilon}^3 (\ell))  + \bar{\lambda}_K .
\end{align}
\end{lemma}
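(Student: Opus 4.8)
The plan is to deduce this directly from Lemma~\ref{lemma:K4bnd} by repairing only the scale at which the large-field regulator is measured. By \eqref{eq:K4bnd1}, for every $X \in \Con_+$ and $\kh \in \{\ell,h\}$ we already have
\[
	\norm{K'_{(4)} (X)}_{\ratio \kh_+,  \vec{\lambda},  T_+ (\varphi,z) } \lesssim \omega_+^{-1} (\kh)\, A_+^{1 + \xi/4} (X)\, \cG^{(3)} (X,\varphi ; \kh )\, \big( O_L ( \bar{\epsilon}^3 (\ell) ) + \bar{\lambda}_{K} \big),
\]
where $\cG^{(3)}(X,\varphi;\kh) = \cG_j^{(3)}(X,\varphi;\kh)$ is the regulator at the \emph{previous} scale $j$, carrying power $3$. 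The large-set weight $A_+^{1+\xi/4}$, the field scale $\ratio\kh_+$, and the weights $\omega_+(\kh)$ appearing here are exactly those entering the definition of $\cW_+^{1+\xi/4}(z;\ratio,1/2)$, and the prefactor $O_L(\bar{\epsilon}^3(\ell)) + \bar{\lambda}_K$ is the same for $\kh=\ell$ and $\kh=h$; hence, once the regulator is brought to the right scale and power, taking the maximum over $\kh\in\{\ell,h\}$ produces precisely the right-hand side of the claim, and the pointwise continuity in $(\ba_\emptyset,\ba)$ is inherited from Lemma~\ref{lemma:K4bnd} because the scale change is $(\ba_\emptyset,\ba)$-independent.

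So the substantive step is a change-of-scale estimate for the large-field regulators: one reduces to an inequality of the form
\[
	\cG_j^{(3)}(X,\,\cdot\,;\kh) \;\le\; C_L\, \cG_{j+1}^{(1/2)}(X,\,\cdot\,;\kh), \qquad X \in \cP_+,\ \kh\in\{\ell,h\},
\]
with $C_L$ an $L$-dependent, $(\ba_\emptyset,\ba,N,j)$-independent constant, valid on the support $X^{\square}$ of $K'_{(4)}(X)$. For the $h$-component, $\cG_j^{(3)}(\cdot;h)=\bar{G}_j^{(3)}=H_j^{1/3}\tilde{G}_j^{3}$ and $\cG_{j+1}^{(1/2)}(\cdot;h)=\bar{G}_{j+1}^{(1/2)}=H_{j+1}^{2}\tilde{G}_{j+1}^{1/2}$. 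The $H$-factors are the easy part: $H_j^{1/3}(X,\varphi)\le H_{j+1}^{2}(X,\varphi)$ reduces, block by block, to $\tfrac13 k_0^{-2}\tilde{g}_j^{1/2}L^{-dj/2}\ge 2k_0^{-2}\tilde{g}_{j+1}^{1/2}L^{-d(j+1)/2}$, i.e.\ to $\tfrac13\tilde{g}_j^{1/2}\ge 2\tilde{g}_{j+1}^{1/2}L^{-d/2}$, which holds for $L$ large since $\tilde{g}_{j+1}\le 2\tilde{g}_j$. The genuinely delicate part is the comparison of $\tilde{G}_j^{3}$ with $H_{j+1}^{2}\tilde{G}_{j+1}^{1/2}$ (and, in the $\ell$-component, of $G_j^{3}$ with $G_{j+1}^{1/2}$): the seminorms $\tilde\Phi_j(B^{\square})$ and $\tilde\Phi_{j+1}(B'^{\square})$ are not comparable by plain scale-monotonicity once block sizes grow, so I would invoke, as a black box, the Sobolev-type inequalities relating $\norm{\cdot}_{\ell_j,\tilde\Phi_j(B^{\square})}$, $\norm{\cdot}_{\ell_{j+1},\tilde\Phi_{j+1}(B'^{\square})}$, $\norm{\cdot}_{L^2}$ and $\norm{\cdot}_{L^4}$ across scales that are established in Appendix~\ref{sec:funcineq} — these are the analogue of the change-of-scale estimate used at the corresponding point of \cite[Section~6]{BBS4}. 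The large-$L$ regime, the room gained by lowering the power from $3$ to $1/2$, and the strong Gaussian decay carried by $H_{j+1}^{2}$ in the $h$-case are what make this comparison go through.

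I expect the main obstacle to be exactly this regulator change-of-scale comparison; everything else — matching $A_+$, $\ratio\kh_+$ and $\omega_+$, and taking the maximum over $\kh$ — is bookkeeping. The final write-up should therefore be short: cite \eqref{eq:K4bnd1}, invoke the Appendix~\ref{sec:funcineq} inequality for the scale/power change of $\cG$, verify the elementary $H$-factor bound above, and absorb $C_L$ into $O_L(\cdot)$. The same reduction will be reused when the contraction estimate of Section~\ref{sec:RGpartIII} is applied, which is why the regulator exponent has already been raised to $(3)$ in $\cW'_+$ and is only now dropped to $(1/2)$.
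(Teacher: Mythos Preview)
Your approach for the $h$-component is essentially the paper's: the comparison $\bar G_j^{(3)}\le \bar G_{j+1}^{(1/2)}$ follows from Lemma~\ref{lemma:tGp}, since for $L$ large that lemma gives $\tilde G_j^{6}\le \tilde G_{j+1}$ (hence $\tilde G_j^{3}\le \tilde G_{j+1}^{1/2}$) and $H_j\le H_{j+1}^{L^{d/2}/\sqrt2}$ (hence $H_j^{1/3}\le H_{j+1}^{2}$). So the $h$-part is fine.

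The gap is in the $\ell$-component. The inequality $G_j^{3}(X,\varphi)\le C_L\,G_{j+1}^{1/2}(X,\varphi)$ you invoke is \emph{false}: for a constant field $\varphi\equiv c$ one computes $\log G_{j'}(X,c)=|X|\,L^{-j'd}\ell_{j'}^{-2}|c|^2=|X|\,\ell_0^{-2}L^{-(2-\eta)j'}|c|^2$, so $\log G_j=L^{2-\eta}\log G_{j+1}$ and $G_j^3/G_{j+1}^{1/2}$ diverges as $|c|\to\infty$. No Sobolev inequality from Appendix~\ref{sec:funcineq} can repair this, because unlike $\tilde G$ the regulator $G$ sees the constant mode of $\varphi$, and that mode grows under the scale change rather than contracting.

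The paper sidesteps this entirely by passing through the $\cY$-norm. From \eqref{eq:K4bnd1} one only extracts the $\ell$-bound at $\varphi=0$ (where $G_j^3(X,0)=1$ trivially) together with the full $h$-bound, obtaining $\norm{K'_{(4)}}_{\vec\lambda,\,\cY_+^{1+\xi/4}(z;\ratio,1/2)}\lesssim O_L(\bar\epsilon^3(\ell))+\bar\lambda_K$. Then Lemma~\ref{lemma:WYequivext} converts this back into the desired $\cW_+^{1+\xi/4}$-bound. The $\cW$--$\cY$ equivalence is precisely the device that handles the $\ell$-regulator comparison you cannot do directly; it encodes an interpolation between the $\ell$-norm at $\varphi=0$ and the $h$-norm at general $\varphi$ (cf.\ Lemma~\ref{lemma:smnrmintplt} and \cite[Lemma~2.4]{BBS5}).
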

\begin{proof}
The statement is equivalent to,  for $\kh \in \{\ell, h\}$,
\begin{align}
	\norm{K'_{(4)} (X)}_{\ratio \kh_+, \vec{\lambda} ,T_+ (\varphi,z)} \lesssim A_+^{1+\xi/4} (X) \cG_+^{(1/2)} (X,\varphi ; \kh) \omega^{-1} (\kh) \big( \lambda'_K (\ell) + \bar{\lambda}_K \big) .
	\label{eq:K4bndcGpluscGp}
\end{align}
Observe that Lemma~\ref{lemma:K4bnd} implies
\begin{align}
	\norm{K'_{(4)} (X)}_{\ratio\ell_+ ,  \vec{\lambda} ,  T (0,z)} 
		&\lesssim \big( \lambda'_K (\ell) + \bar{\lambda}_K \big) A_+^{1+\xi/4} (X) \\
	\norm{K'_{(4)} (X)}_{\ratio h_+ ,  \vec{\lambda} ,  T (\varphi , z)} 
		&\lesssim \omega^{-1} (h)  \big( \lambda'_K (\ell) + \bar{\lambda}_K \big) A_+^{1+\xi/4} (X) \bar{G}^{(3)} (X,\varphi) \nnb
		&\lesssim \omega^{-1} (h)  \big( \lambda'_K (\ell) + \bar{\lambda}_K \big) A_+^{1+\xi/4} (X) \bar{G}_+^{(1/2)} (X,\varphi) 
\end{align}
where the final inequality is due to Lemma~\ref{lemma:tGp}.
This means $\norm{K'_{(4)}}_{\vec{\lambda},  \cY^{1+\xi/4}_+ (z ; \ratio, \gamma)} \lesssim \lambda'_K (\ell) + \bar{\lambda}_K$ when $\gamma = 1/2$.  
Then by Lemma~\ref{lemma:WYequivext}, 
\begin{align}
	\norm{K'_{(4)}}_{\vec{\lambda} ,  \cW^{1+\xi/4}_+ (z ; \ratio,\gamma)}
		\lesssim \norm{K'_{(4)}}_{\vec{\lambda} ,  \cY^{1+\xi/4}_+ (z ; \ratio,\gamma)}
		\lesssim \lambda'_K (\ell) + \bar{\lambda}_K .
\end{align}
\end{proof}

\subsection{Map 5} 

Let $K'_{(5)} = \Phi_+^{(5)} (V,K, K')$ for $K'$ equipped with norm
\begin{align}
	\norm{K'}_{\bar{\cW}} = \norm{K'}_{\cW_+^{1+\xi/4} (\ratio,  1/2)} .
\end{align}

\begin{lemma}	 \label{lemma:K5bnd}
Assume \eqref{asmp:Phi} ($\alpha \in [1,\bar{\alpha}/4]$) and \eqref{asmp:lambda1}.  Then 
\begin{align}
	\norm{K'_{(5)}}_{\vec{\lambda}, \cW_+^{1+\xi/8} (z ; \ratio, 1/2)}
		\lesssim \bar{\epsilon}^3 (\ell) + \bar{\lambda}_K .
\end{align}
\end{lemma}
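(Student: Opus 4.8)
The map $\Phi_+^{(5)}$ replaces $\tilde I_\pt^B$ by $I_\pt^+(B) = \cI_+(V_\pt)$ via the polymer-algebra identity of Corollary~\ref{cor:K5}, so $K'_{(5)} = (\tilde I_\pt - I^+_\pt) \circ_+ K'_{(4)}$, where here $K'_{(4)}$ is replaced by the generic input $K'$ on which we have the bound $\norm{K'}_{\bar{\cW}} \le \bar\lambda_K$. The plan is therefore to invoke the general convergence bound for polymer expansions (Lemma~\ref{lemma:cnvbnd}, the same tool used in Map~2), which requires two inputs: (i) a bound of the form $\norm{(\tilde I_\pt - I^+_\pt)(B)}_{\kh, \vec\lambda, T(\varphi, z)} \le \alpha_1 \cG(B,\varphi;\kh)$ with $\alpha_1$ sufficiently small (of the order $\bar\epsilon^3(\kh)$ or better), and (ii) the already-established bound on $K'$ in the $\cW_+^{1+\xi/4}$-norm. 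Since $K'$ is measured at scale $j+1$ and the factors $\tilde I_\pt - I^+_\pt$ live on $j$-blocks inside $(j+1)$-blocks, one extends $\tilde I_\pt - I^+_\pt$ to $B \in \cB_+$ multiplicatively as in the definition of $\tilde I_\pt^B$; this is routine.

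\textbf{Key steps, in order.} First I would establish the single-block estimate on $\tilde I_\pt(b) - I^+_\pt(b)$. The only difference between the two is the scale at which the stabilisation and the $\cL$-localisation happen: $\tilde I_\pt(b) = e^{-U_\pt^{(\bs)}(b)}(1+W_\pt(b))$ at scale $j$, while $I^+_\pt(B) = e^{-U_\pt^{(\bs_+)}(B)}(1+W_\pt(B))$ at scale $j+1$. The difference in the stabilised exponents is governed by $U_\pt^{(2)} = \pi_{4,\nabla} U_\pt$, which by Lemma~\ref{lemma:VptinDDomain} (specifically \eqref{eq:VptinDDomain2}) is of size $O_L(\tilde g^2 \scale^{1+\kt})$; crucially the factor $\scale^{1+\kt}$ beats $\scale^{\kbe}$ thanks to $\kbe \le 2$ from \eqref{eq:fraklb}, which is exactly where the hypothesis "$\kbe > 2$ [is] required for Lemma~\ref{lemma:K5bnd}" — wait, rather $\kbe < $ something — is used; one checks $1+\kt > \kbe$ fails in general so the gain must come from the $\tilde\chi^{3/2}$ and $\tilde g^3$ prefactors built into $\bar\epsilon$, i.e.\ $(d-4+2\eta)\kae < 2(d-4+2\eta)+2$ via \eqref{eq:kaerestrt}. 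So: bound $\|e^{-U_\pt^{(\bs)}(b)} - e^{-U_\pt^{(\bs_+)}(b)}\|$ by Taylor-expanding in $U_\pt^{(2)}$, use the stability bounds of Lemma~\ref{lemma:entVpt} to control the surviving exponential factors, and use Lemma~\ref{lemma:VptinDDomain} to control the polynomial part; similarly bound $W_\pt(b)$ itself by \eqref{eq:Wptbnd}. This yields $\alpha_1 = O_L(\bar\epsilon^3(\kh))$ — or at least $\alpha_1 \le \rho^{1+\xi/4}$ for small $\rho, \tilde g$ — which is what Lemma~\ref{lemma:cnvbnd} needs.

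\textbf{Assembling.} Second, with $\alpha_1 = O_L(\bar\epsilon^3(\kh))$ and $\alpha_2 = \omega_+^{-1}(\kh)\bar\lambda_K$ (from $\norm{K'}_{\bar\cW} \le \bar\lambda_K$), Lemma~\ref{lemma:cnvbnd} (or its extended-norm version) gives $\norm{K'_{(5)}(X)}_{\ratio\kh_+, \vec\lambda, T_+(\varphi,z)} \lesssim A_+^{1+\xi/8}(X)\, \cG_+^{(1/2)}(X,\varphi;\kh)\, \omega_+^{-1}(\kh)\big(O_L(\bar\epsilon^3(\ell)) + \bar\lambda_K\big)$, where the slight loss $1+\xi/4 \to 1+\xi/8$ in the large-set exponent is the standard price paid in the combinatorial bound Lemma~\ref{lemma:lgst3} (each polymer operation consumes a little of the large-set regulator), and the regulator exponent $\gamma = 1/2$ is preserved since neither factor degrades it. Taking the supremum over $X \in \Con_+$ and $\kh \in \{\ell, h\}$ with the weight $\omega_+(\kh)$ gives exactly $\norm{K'_{(5)}}_{\vec\lambda, \cW_+^{1+\xi/8}(z;\ratio,1/2)} \lesssim \bar\epsilon^3(\ell) + \bar\lambda_K$. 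The continuity in $(\ba_\emptyset, \ba)$ is inherited from the continuity of $U_\pt$, $W_\pt$ and $I_\pt$ established in Lemma~\ref{lemma:entVpt} and Lemma~\ref{lemma:VptinDDomain}, since the polymer sum defining $K'_{(5)}$ is finite.

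\textbf{Main obstacle.} The delicate point is the single-block bound on $\tilde I_\pt(b) - I^+_\pt(b)$: one must verify that the rescaling from scale $j$ to $j+1$ does not cost more than an $O_L(1)$ factor after the $\scale$-dependence is tracked. The relevant inequality $(d-4+2\eta)\kae < 2(d-4+2\eta)+2$, i.e.\ \eqref{eq:kaerestrt}, guarantees that the $\scale_+^{\kae}$-decay required at scale $j+1$ is not destroyed by the at-most-$L^2$ blow-up of the $\pi_2$-coefficient under rescaling (cf.\ \eqref{eq:cVnrmvslnrm3}), and the enlarged domain $\cD^\st$ — which permits $\nu^{(\emptyset)}$ to be larger by $\ell_0^2 \gg L^2$ — is precisely what makes the stability estimates of Lemma~\ref{lemma:entVpt} still applicable at the coarser scale. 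Handling the difference $U_\pt^{(\bs)} - U_\pt^{(\bs_+)}$ through the $\log\pexp$ definition of stabilisation while keeping the extended-norm derivatives $D_{V_\bulk}^p D_K^q$ under control is where most of the care is needed, but it is structurally identical to the stability arguments already carried out in Section~\ref{sec:stabanalysis}.
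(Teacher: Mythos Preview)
Your overall architecture is the paper's: write $K'_{(5)} = (\tilde I_\pt - I^+_\pt)\circ_+ K'$ and feed a single-$(j{+}1)$-block bound on $\tilde I_\pt^B - I^+_\pt(B)$ into Lemma~\ref{lemma:cnvbnd}. But the single-block bound --- the whole content of the lemma --- is where your plan breaks.

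\medskip
\textbf{The concrete error.} You write that ``$U_\pt^{(2)} = \pi_{4,\nabla} U_\pt$ \ldots\ by \eqref{eq:VptinDDomain2} is of size $O_L(\tilde g^2 \scale^{1+\kt})$''. That bound is on $\pi_{4,\nabla}(V - U_\pt)$, not on $U_\pt^{(2)}$. Since $U_\pt^{(2)} \approx V^{(2)}$, its size at $\kh = h$ is only $\|U_\pt^{(2)}(b)\|_{h,T(0)} \lesssim \tilde\chi\,\tilde g^{1/2} L^{-j/2}$ (Lemma~\ref{lemma:eQh}), \emph{not} $\tilde g^2$. A first-order Taylor estimate $\|e^{-U_\pt^{(\bs)}}-e^{-U_\pt^{(\bs_+)}}\| \lesssim \|U_\pt^{(2)}\|$ therefore only gives $\tilde g^{1/2}$, which is far from $\bar\epsilon^3(h) \asymp \tilde\chi^{3/2}(\tilde g\scale)^{3/4}$.

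\medskip
\textbf{The missing mechanism.} The paper (Lemmas~\ref{lemma:DeltaBbnd}--\ref{lemma:DeltaBbnd2}) splits $\Delta(B) = \tilde I_\pt^B - I^+_\pt(B)$ into a $W$-part $\Delta_1$ and a stabilisation part $\Delta_2$. For $\Delta_1$, the cluster expansion forces at least two $W$-factors, giving $O(\bar\epsilon^4)$. For $\Delta_2 = \prod_{b\in\cB(B)}\pexp(-U_\pt^{(2)}(b)) - \pexp(-U_\pt^{(2)}(B))$, the key is that all derivatives of order $\le \cM$ in $t$ of $\prod_b\pexp(-tx_b) - \pexp(-t\sum_b x_b)$ vanish at $t=0$ (because $\pexp$ is the degree-$\cM$ Taylor polynomial of $\exp$, and $\exp$ satisfies the product identity exactly). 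Hence $\Delta_2 = O(|U_\pt^{(2)}|^{\cM+1})$, and it is precisely the choice $\cM \ge 1 + \tfrac{1}{2}\max\{3,d-4+2\eta\}$ from Section~\ref{sec:chparams} that converts $(\tilde\chi\,\tilde g^{1/2}L^{-j/2})^{\cM+1}$ into $\bar\epsilon^3(\kh)$. Your proposal never invokes $\cM$, so this cancellation is absent; the discussion of \eqref{eq:kaerestrt} and ``$\kbe > 2$'' is a red herring for this step. Without the $(\cM{+}1)$-fold vanishing, the bound you would obtain is too weak by roughly a factor $\tilde g^{-1/4}$ at $\kh=h$, and Lemma~\ref{lemma:cnvbnd} cannot close.
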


To characterise $K'_{(5)}$,  define,  for $B \in \cB_+$ and $X \in \cP_+$,
\begin{align}
	\Delta (B) & =  \prod_{b \in \cB (B)} e^{-U_\pt^{(2,\bs)} (b)} (1 + W_{\pt} (b)) -  e^{-U_\pt^{(2,\bs_+)} (B)} \big(1 + W_{\pt} (B) \big)
	\label{eq:DeltaBdefn}
\end{align}
and $\Delta^X = \prod_{B \in\cB_+ (X)} \Delta (B)$.  Since $\tilde{I}_\pt - I_\pt^+ = \Delta e^{-U_\pt^{(1)}}$,
\begin{align}
	K'_{(5)} (X) &
		= \big( (\Delta e^{- U_{\pt}^{(1)} } ) \circ_+ K'  \big) (X)  .
\end{align}
Reflecting on Lemma~\ref{lemma:cnvbnd},
we see that the next lemma on the bound on $\Delta$ is the key input to the proof of Lemma~\ref{lemma:K5bnd}.

\begin{lemma} \label{lemma:DeltaBbnd3} 
Under the assumptions of Lemma~\ref{lemma:K5bnd},
for $\kh \in \{\ell, h\}$ and $B \in \cB_+$,
\begin{align}
	\norm{e^{-U^{(1)}_\pt (B)} \Delta (B) }_{\ratio\kh_+, \vec{\lambda}, T_+ (\varphi,z)} \lesssim \cG_+^{(1/2)} (B,\varphi ; \kh) \bar{\epsilon}^3 (\kh) .
\end{align}
\end{lemma}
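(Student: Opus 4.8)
\textbf{Proof plan for Lemma~\ref{lemma:DeltaBbnd3}.}
The plan is to estimate $\Delta(B)$ by writing it as a difference of two products of one-block factors that have the same ``ingredients'' ($U_\pt^{(2)}$ and $W_\pt$) measured at the two neighbouring scales $j$ and $j+1$, and then to exploit that only the \emph{scale of the norm} changes, not the polynomial data. Concretely, recall from \eqref{eq:DeltaBdefn} that
\begin{align}
	\Delta(B) = \prod_{b \in \cB(B)} e^{-U_\pt^{(2,\bs)}(b)} (1 + W_\pt(b)) - e^{-U_\pt^{(2,\bs_+)}(B)} (1 + W_\pt(B)) ,
\end{align}
and by Definition~\ref{defi:Vstable} the stabilised factors satisfy
$e^{-U_\pt^{(2,\bs)}(b)} = \pexp(-U_\pt^{(2)}(b))$ and
$e^{-U_\pt^{(2,\bs_+)}(B)} = \prod_{b \in \cB(B)} \pexp(-U_\pt^{(2)}(b))$,
so that the two $\pexp$-products already \emph{coincide} and $\Delta(B)$ reduces to a single difference
$\prod_{b} \pexp(-U_\pt^{(2)}(b)) \big( \prod_b (1+W_\pt(b)) - (1 + W_\pt(B)) \big)$,
i.e. a difference that is controlled entirely by $\prod_b(1+W_\pt(b)) - 1 - \sum_b W_\pt(b) = O^\alg(W_\pt^2)$ together with $\sum_b W_\pt(b) - W_\pt(B) = 0$ (the latter because $W_\pt(B) = \sum_{x \in B} W_{\pt,x} = \sum_{b \in \cB(B)} W_\pt(b)$ by Definition~\ref{defi:WjDef}). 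Thus the linear-in-$W_\pt$ terms cancel exactly, and $\Delta(B)$ is genuinely quadratic in $W_\pt$.

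First I would make this cancellation precise: expand $\prod_{b \in \cB(B)}(1 + W_\pt(b))$, subtract $1 + \sum_b W_\pt(b)$, and observe the remainder is a sum over subsets $S \subset \cB(B)$ with $|S| \ge 2$ of $\prod_{b \in S} W_\pt(b)$. There are only $O_L(1)$ such subsets since $|\cB(B)| = L^d$ is fixed. Next I would multiply back by the stabilised factors $\prod_b \pexp(-U_\pt^{(2)}(b)) = e^{-U_\pt^{(2,\bs_+)}(B)}$ and by $e^{-U_\pt^{(1)}(B)}$, giving
\begin{align}
	e^{-U_\pt^{(1)}(B)} \Delta(B) = \sum_{S \subset \cB(B) : |S| \ge 2} e^{-U_\pt^{(\bs_+)}(B)} \prod_{b \in S} W_\pt(b) .
\end{align}
Each summand I would bound by submultiplicativity of the extended $T$-norm (Lemma~\ref{lemma:subxext}): the prefactor $e^{-U_\pt^{(\bs_+)}(B)}$ is controlled by the stability estimate \eqref{eq:entVpt2} of Lemma~\ref{lemma:entVpt} (at scale $j+1$, on $\cB_+$), yielding a bound by $\cG_+(B,\varphi;\kh)$ up to $L$-independent constants; each factor $W_\pt(b)$ is controlled by \eqref{eq:Wptbnd} of Lemma~\ref{lemma:VptinDDomain}, which gives $\norm{W_\pt(b)}_{\ratio\kh, \vec\lambda, T(0,z)} \le O_L(\bar\epsilon^2(\kh))$, and then Lemma~\ref{lemma:polynorm} upgrades this to $\norm{W_\pt(b)}_{\ratio\kh, \vec\lambda, T(\varphi,z)} \le O_L(\bar\epsilon^2(\kh)) P_\kh^6(b,\varphi)$ since $W_\pt$ is a polynomial of degree $\le 6$. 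Since $|S| \ge 2$ we collect at least $\bar\epsilon^4(\kh)$, which for small $\tilde g$ is $\le \bar\epsilon^3(\kh)$ after absorbing one power (using $\bar\epsilon(\kh) \le 1$); and the polynomial factors $\prod_{b \in S} P_\kh^6(b,\varphi) \le P_\kh^{6|S|}(B,\varphi)$ are absorbed into the Gaussian regulator $\cG_+(B,\varphi;\kh)$ at the expense of weakening the exponent from $1$ to $1/2$ — for $\kh = \ell$ this is the trivial monotonicity of $G_+$, and for $\kh = h$ it is exactly Lemma~\ref{lemma:tGp} (together with the Sobolev-type inequalities of Appendix to absorb polynomials against $\bar G_+$), which is why the target carries $\cG_+^{(1/2)}$ rather than $\cG_+$.

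The main obstacle is bookkeeping the scale mismatch cleanly: $W_\pt(b)$ and $U_\pt^{(2)}(b)$ are naturally objects at scale $j$, whereas the conclusion is stated in $\norm{\cdot}_{\ratio\kh_+, T_+(\varphi,z)}$ and against $\cG_+(B,\varphi;\kh)$. I would handle this by first proving the estimate in the scale-$j$ norm $\norm{\cdot}_{\ratio\kh, T(\varphi,z)}$ against $\cG(B,\varphi;\kh)$ — where all the inputs \eqref{eq:Wptbnd}, \eqref{eq:entVpt} live — and then invoking scale-monotonicity \eqref{eq:monot1}--\eqref{eq:Tjmon} (which gives $\norm{\cdot}_{\ratio\kh_+, T_+(\varphi)} \le O_L(1)\norm{\cdot}_{\ratio\kh, T(\varphi)}$) to pass to the $+$ norm, absorbing the resulting $O_L(1)$ into the constant. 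The one genuinely delicate point is that $\cG$ at scale $j$ is \emph{not} bounded by $\cG_+$ at scale $j+1$ in general (the field scales $\ell,h$ decrease), so I must instead use that the \emph{left-hand} exponential prefactor $e^{-U_\pt^{(\bs_+)}(B)}$ already supplies $\cG_+$ directly via \eqref{eq:entVpt2} — i.e. I should \emph{not} bound $e^{-U_\pt^{(1)}(B)}$ and $\pexp$-factors separately at scale $j$, but keep them together as $e^{-U_\pt^{(\bs_+)}(B)}$ and apply the scale-$(j{+}1)$ stability bound, while only the small polynomial factors $W_\pt(b)$ need the scale-$j$ inputs. Once that routing is chosen, the remaining work is routine: collect powers of $\bar\epsilon(\kh)$, absorb polynomials into $\bar G_+^{(1/2)}$ via Lemma~\ref{lemma:tGp}, and sum the $O_L(1)$ subsets.
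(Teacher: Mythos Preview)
Your proposal contains a genuine error in the very first reduction. You claim that
\[
e^{-U_\pt^{(2,\bs_+)}(B)} = \prod_{b \in \cB(B)} \pexp\bigl(-U_\pt^{(2)}(b)\bigr),
\]
but this is false. By Definition~\ref{defi:Vstable}, the stabilisation $(\bs_+)$ is taken at scale $j+1$, so the product is over $\cB_{j+1}(B) = \{B\}$, giving
\[
e^{-U_\pt^{(2,\bs_+)}(B)} = \pexp\bigl(-U_\pt^{(2)}(B)\bigr) = \pexp\Bigl(-\textstyle\sum_{b \in \cB(B)} U_\pt^{(2)}(b)\Bigr),
\]
which is \emph{not} the product $\prod_{b} \pexp(-U_\pt^{(2)}(b))$ because $\pexp$ is a polynomial, not an exponential. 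The difference between these two quantities is exactly $\Delta_2(B)$ in the paper's decomposition \eqref{eq:DeltaDeltaDelta}, and it does not vanish. Your argument treats only the $\Delta_1$ piece (the $W_\pt$-cancellation), which is indeed $O_L(\bar\epsilon^4)$ as you say and as the paper proves in Lemma~\ref{lemma:DeltaBbnd}; but $\Delta_2$ is the \emph{dominant} contribution, giving only $\bar\epsilon^3$.

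Bounding $\Delta_2$ requires a separate and more delicate argument (Lemma~\ref{lemma:DeltaBbnd2}): one shows that the polynomial $p(t) = \prod_b \pexp(-tU_\pt^{(2)}(b)) - \pexp(-tU_\pt^{(2)}(B))$ has $p^{(n)}(0)=0$ for all $n \le \cM$ (since $\pexp$ agrees with the exponential to order $\cM$), so $\Delta_2(B) = p(1)$ is of order $\ge \cM+1$ in $U_\pt^{(2)}$. This is precisely why the parameter $\cM$ must satisfy $\cM \ge 1 + \tfrac12\max\{3, d-4+2\eta\}$ (Section~\ref{sec:chparams}): it ensures $(L^d f(\kh))^{\cM+1} \lesssim \bar\epsilon^3(\kh)$ where $f(\kh) = \sup_b \norm{U_\pt^{(2)}(b)}_0$. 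Without this mechanism your proof cannot reach the target bound.
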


\begin{proof}[Proof of Lemma~\ref{lemma:K5bnd}]
By Lemma~\ref{lemma:DeltaBbnd3},
all the assumptions of Lemma~\ref{lemma:cnvbnd} are verified with $\hat{\cG} (X,\varphi) = \cG^{(1/2)}_+ (X,\varphi ; \kh)$,  thus
\begin{align}
	\norm{K'_{(5)} (X)}_{\ratio\kh_+,  \vec{\lambda}  , T_+ (\varphi,z)}
		\lesssim \omega_+^{-1} (\kh) A^{1 + \frac{\xi}{8}}_+ (X)\cG_+^{(1/2)} (X,\varphi ; \kh) \big( \bar{\epsilon}^3 (\ell)  + \bar{\lambda}_K \big)  . 
\end{align}
for $X \in \Con_+$ and $\kh \in \{ \ell, h\}$.
\end{proof}

To control $\Delta$,  we expand
\begin{align}
	\Delta  (B) =  \big( e^{- U_\pt^{(2,\bs)}} \big)^B \Delta_1 (B) + \Delta_2 (B) (1 + W_\pt (B))  \label{eq:DeltaDeltaDelta}
\end{align}
where
\begin{align}
	\Delta_1 (B) &= \prod_{b \in \cB (B)} (1 + W_\pt (b)) - (1+ W_\pt (B)) \\
	\Delta_2 (B) &= \prod_{b \in \cB (B)} e^{- U_\pt^{(2,\bs)}  (b)} - e^{-U_\pt^{(2,\bs_+)} (B) } .
\end{align}
Both $\Delta_1$ and $\Delta_2$ are controlled using the cluster expansion
\begin{align}
	\prod_{B \in \cB_+ (X)} (1+f(B)) = \sum_{Y\in \cP( X)} \prod_{B \in \in \cB_+ (Y)} f(B)  .
	\label{eq:clusterexpn}
\end{align}
Throughout the proof,  let $\norm{\cdot}_\varphi = \norm{\cdot}_{\ratio \kh_+, \vec{\lambda},T_+(\varphi,z)}$.

\begin{lemma} \label{lemma:DeltaBbnd}
Under the assumptions of Lemma~\ref{lemma:DeltaBbnd3},
\begin{align}
	\norm{e^{- U_{\pt}^{(1)} (B)} \big( e^{- U_\pt^{(2,\bs)}} \big)^B \Delta_1 (B) }_\varphi
		\le O_L ( \bar{\epsilon}^4 (\kh) )  \cG_+^{(1/2)} (B,\varphi ; \kh) .
		\label{eq:Vptstabilityv2}
\end{align}
\end{lemma}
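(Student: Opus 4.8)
\textbf{Proof plan for Lemma~\ref{lemma:DeltaBbnd}.}
The plan is to bound $\Delta_1(B)$ via the cluster expansion \eqref{eq:clusterexpn} applied with $f(B') = W_\pt(B')$ over $B' \in \cB(B)$, which gives
\begin{align}
	\Delta_1 (B) = \sum_{\emptyset \neq Y \in \cP(B)} \prod_{b \in \cB(Y)} W_\pt (b) ,
\end{align}
a sum over nonempty sub-polymers of $B$ (of which there are at most $2^d$-dependently many, an $L$-independent count). The key point is that each surviving term carries at least one factor of $W_\pt(b)$, hence is of order $\bar{\epsilon}^2(\kh)$ by \eqref{eq:Wptbnd} of Lemma~\ref{lemma:VptinDDomain}; since $(1+W_\pt(B)) = 1 + \sum_{b} W_\pt(b)$ is the $|\cB(B)|=1$ term plus all single-block terms, the cancellation in $\Delta_1$ removes the $1$ and all single-block terms, so that \emph{every} remaining term has at least \emph{two} $W_\pt$ factors when $|\cB(B)| \ge 2$, giving order $\bar{\epsilon}^4(\kh)$. (When $|\cB(B)|=1$, i.e.\ $B \in \cB$ already, $\Delta_1 \equiv 0$, which is the trivial case.)

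First I would record the submultiplicativity of $\norm{\cdot}_{\kh,\vec\lambda,T(\varphi,z)}$ (Lemma~\ref{lemma:subxext}) to split the norm of each product $\prod_{b \in \cB(Y)} W_\pt(b)$ into a product of norms, then apply Lemma~\ref{lemma:VptinDDomain} block-by-block to get $\norm{W_\pt(b)}_{\ratio\kh_+,\vec\lambda,T(0,z)} \le O_L(\bar\epsilon^2(\kh))$, and Lemma~\ref{lemma:polynorm} (with $W_\pt$ a polynomial of degree $\le 6$) to obtain the $\varphi$-dependent bound $\norm{W_\pt(b)}_{\ratio\kh_+,\vec\lambda,T_+(\varphi,z)} \le O_L(\bar\epsilon^2(\kh)) P_{\kh_+}^6(b,\varphi)$. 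Next I would absorb the polynomial prefactors $\prod_b P_{\kh_+}^6(b,\varphi)$ into the regulator: for $\kh=\ell$ this is immediate against $G_+^{1/2}$, and for $\kh=h$ it follows from the Sobolev-type absorption already used repeatedly in Section~\ref{sec:RGpartII} (e.g.\ Lemma~\ref{lemma:sobolev2}/\ref{lemma:sobolev3}), at the cost of an $L$-dependent constant and a harmless shrinking of the exponent in $\bar{G}$ from $1$ to $1/2$—this is precisely why the target regulator is $\cG_+^{(1/2)}$ rather than $\cG_+$.

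For the prefactor $e^{-U_\pt^{(1)}(B)}(e^{-U_\pt^{(2,\bs)}})^B$, I would use the stability estimate Lemma~\ref{lemma:entVpt} (at scale $j+1$, i.e.\ \eqref{eq:entVpt2}, noting $e^{-U_\pt^{(1)}}(e^{-U_\pt^{(2,\bs)}}) = e^{-U_\pt^{(\bs)}}$) to bound $\norm{e^{-U_\pt^{(\bs_+)}(B)}}_{\ratio\kh_+,\vec\lambda,T_+(\varphi,z)} \lesssim \cG_+(B,\varphi;\kh)$, then combine with the above via submultiplicativity; the product $\cG_+(B) \cdot \cG_+^{(1/2)}(B)$-type expression collapses to $\cG_+^{(1/2)}(B)$ after again trading a bit of regulator exponent, which is legitimate since $\cG_+^{(1/2)}$ still dominates (the $H^{1/\gamma}$ factor gets stronger, the $\tilde G^\gamma$ factor weaker, and we keep $\gamma=1/2$). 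The main obstacle I anticipate is bookkeeping the regulator exponents through these two absorption steps—ensuring that the degree-$6$ (and in the $|\cB(B)|\ge 2$ case, higher degree) polynomial prefactors coming from several blocks can all be absorbed simultaneously while landing exactly on $\cG_+^{(1/2)}$ with an $L$-dependent but $\tilde g$-independent constant, and confirming that the number of terms in the cluster expansion is bounded by an $L$-independent constant (it is, since $|\cB(B)| = L^d$ is $L$-dependent but $2^{L^d}$ would not be—however here $B \in \cB_+$ contains exactly $L^d$ blocks of $\cB$, so $2^{L^d}$ \emph{is} $L$-dependent, which is fine as we allow $O_L(1)$ constants). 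Given the order counting $\bar\epsilon^4 \le \bar\epsilon^3$ and the $L$-dependence allowance, the final bound \eqref{eq:Vptstabilityv2} then follows.
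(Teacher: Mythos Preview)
Your proposal is correct and follows the paper's approach: cluster-expand $\Delta_1(B)$, observe that only terms with $|Z|_{\cB}\ge 2$ survive (yielding $\bar\epsilon^4$), bound each $W_\pt(b)$ via Lemma~\ref{lemma:VptinDDomain}/\ref{lemma:FWPboundsobs}, and combine with Lemma~\ref{lemma:entVpt} for the exponential prefactor. For the regulator bookkeeping you flag as the main obstacle, the paper handles it by bounding the polynomial factor $P_{\kh_+}^{8}(B,\varphi)$ arising from each block by $e^{\frac{L^{-d}}{8}\norm{\varphi}^2_{\kh_+,\Phi_+(B^\square)}}$, so that the product over the at most $L^d$ blocks in $Z$ never exceeds $e^{\frac{1}{8}\norm{\varphi}^2_{\kh_+,\Phi_+(B^\square)}}$ regardless of $|Z|_{\cB}$---this sidesteps the need to absorb a polynomial of degree $O(L^d)$ all at once and lands directly inside $\cG_+^{(1/2)}$ after multiplication by the stability bound.
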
 
\begin{proof}
By the cluster expansion \eqref{eq:clusterexpn},
\begin{align}
	\norm{\Delta_1 (B) }_\varphi
		= \Big\| \sum_{Z \in \cP (B)}^{|Z|_{\cB_+} \ge 2} \prod_{b \in \cB (Z)} W_{\pt} (b) \Big\|_\varphi
		&\le \sum_{Z \in \cP (B)}^{|Z|_{\cB} \ge 2} \prod_{b \in \cB (Z)} \norm{ W_{\pt} (b) }_\varphi  ,
		\label{eq:Deltaoneexp}
\end{align}
but since $\norm{ W_{\pt} (b) }_\varphi \lesssim \norm{ W_{\pt} (b) }_{0} P^8_{\kh_+} (B,\varphi)$ and by Lemma~\ref{lemma:FWPboundsobs} and \ref{lemma:VptinDDomain},
\begin{align}
	& \le \sum_{Z \in \cP_+(B)}^{|Z|_{\cB_+} \ge 2} \big( C_L  \bar\epsilon^2 \big)^{|Z|_{\cB}} \prod_{b \in \cB (Z)} e^{\frac{L^{-d}}{8} \norm{\varphi}_{\kh_+ , \Phi_+ (B^{\square})}^2 }  \le O_L (1)  \bar\epsilon^4 e^{\frac{1}{8} \norm{\varphi}_{\kh_+ , \Phi_+ (B^{\square})}^2 }
\end{align}
where $C_L$ is some $L$-dependent constant. 
Using Lemma~\ref{lemma:VptinDDomain} and \ref{lemma:entVpt},
\eqref{eq:Vptstabilityv2} follows.
\end{proof}

\begin{lemma} \label{lemma:DeltaBbnd2}
Under the assumptions of Lemma~\ref{lemma:DeltaBbnd3},
\begin{align}
	\norm{e^{- U_{\pt}^{(1)} (B)} \Delta_2 (B) (1 + W_\pt (B)) }_{\varphi}  & \lesssim  \cG_+^{(1/2)} (B,\varphi ; \kh) \bar{\epsilon}^3 (\kh)
		\label{eq:Vptstabilityv22}
\end{align}
\end{lemma}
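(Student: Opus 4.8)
\textbf{Plan for the proof of Lemma~\ref{lemma:DeltaBbnd2}.}
The plan is to bound $\Delta_2(B)$ using the cluster expansion \eqref{eq:clusterexpn} in the same spirit as the proof of Lemma~\ref{lemma:DeltaBbnd}, but now accounting for the fact that $e^{-U_\pt^{(2,\bs_+)}(B)}$ and $\prod_{b \in \cB(B)} e^{-U_\pt^{(2,\bs)}(b)}$ differ not just in being a product over different scales but in the stabilisation parameter ($\bs$ versus $\bs_+$). First I would write $\prod_{b} e^{-U_\pt^{(2,\bs)}(b)} = \prod_b \pexp(-U_\pt^{(2)}(b))$ and $e^{-U_\pt^{(2,\bs_+)}(B)} = \prod_b \pexp\big( - U_\pt^{(2)}(b) \big)$ with the same polynomial $U_\pt^{(2)}$ restricted to each $b$, so the two quantities actually have the same factors; the genuine difference is that $U_\pt^{(2)}(B) = \sum_{b \in \cB(B)} U_\pt^{(2)}(b)$ only gets stabilised once at scale $j+1$. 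Thus I would instead compare term by term, writing
\begin{align}
	\Delta_2 (B) = \prod_{b \in \cB(B)} \pexp(-U_\pt^{(2)}(b)) - \pexp\Big( - \sum_{b \in \cB(B)} U_\pt^{(2)}(b) \Big)
\end{align}
and expanding both $\pexp$'s as truncated exponentials. Since $\pexp(x) = e^x - \Rem(x)$ where $\Rem(x) = \sum_{k > \cM} x^k/k!$, the leading discrepancy is exactly the remainder terms, which are of degree $> \cM$ in $U_\pt^{(2)}$, hence (using $U_\pt^{(2)} \in \cV_{4,\nabla}$, degree $4$ in $\varphi$ and small in norm) of order $\bar\epsilon^{\cM+1}$ or smaller; by the choice $\cM \ge 1 + \frac12 \max\{3, d-4+2\eta\}$ in Section~\ref{sec:chparams} this is $\le \bar\epsilon^3$ times a power of $\scale$, which is where the bound $(d-4+2\eta)\kae < 2(d-4+2\eta)+2$ from \eqref{eq:kaerestrt} enters to control $\bar\epsilon^{-3}$ times these remainders.

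The key steps, in order, would be: (i) use Lemma~\ref{lemma:VptinDDomain} (specifically \eqref{eq:VptinDDomain2}) to get $\norm{\pi_{4,\nabla} U_\pt}_{\cV(\ell), \vec\lambda, T(z)} = O_L(\tilde g^2 \scale^{1+\kt})$, so that $\norm{U_\pt^{(2)}(b)}_{\kh, T(0)}$ is small (by Lemma~\ref{lemma:eQh}-type estimates, of size $O_L(\tilde g^{1/2} \scale^{-\kt} L^{-jd})$ in the $h$-norm and correspondingly in the $\ell$-norm); (ii) expand $\Delta_2(B)$ as a sum over $Y \in \cP_+(B)$ with $|Y|_{\cB_+} \ge 1$ of products over $b \in \cB(Y)$ of the polynomial pieces $\pexp(-U_\pt^{(2)}(b)) - 1 = O^\alg(V)$ together with the genuinely new ``cross'' and ``remainder'' terms coming from the single stabilisation at scale $j+1$; (iii) apply Lemma~\ref{lemma:polynorm} to convert $\norm{U_\pt^{(2)}(b)}_{\kh_+,T(0)}$-smallness into bounds of the form $O_L(\bar\epsilon^3(\kh)) P_{\kh_+}^{c}(B,\varphi)$ for the whole sum, absorbing the polynomial factor $P_{\kh_+}^c$ into $\cG_+^{(1/2)}$ via the large field regulator (and Lemma~\ref{lemma:tGp} / the Sobolev inequalities when $\kh = h$); (iv) multiply by $\norm{e^{-U_\pt^{(1)}(B)}}_{\kh_+,\vec\lambda, T_+(\varphi,z)} \lesssim \cG_+(B,\varphi;\kh)^{1/2} \cdot(\text{stuff absorbed})$ from Lemma~\ref{lemma:entVpt} \eqref{eq:entVpt2}, and by $\norm{1 + W_\pt(B)}_{\kh_+, \vec\lambda, T_+(\varphi,z)} \lesssim 1 + O_L(\bar\epsilon^2(\kh)) P_{\kh_+}^6(B,\varphi)$ from Lemma~\ref{lemma:VptinDDomain} \eqref{eq:Wptbnd}; then collect all regulator factors, noting $\cG_+ \cdot \cG_+^{(1/2)} \le \cG_+^{(1/2)}$-type bounds hold for $\gamma$-regulators since $H^{1/\gamma}\tilde G^\gamma$ behaves submultiplicatively in the exponent (using $\bar G_+^{(3)} \le \bar G_+^{(1/2)}$ etc. as in Lemma~\ref{lemma:tGp}).

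\textbf{Main obstacle.} The delicate point is not the smallness of each term but the bookkeeping of the large field regulator across the scale change from $j$ to $j+1$: the polymer $B$ is a single $(j+1)$-block but contains $L^d$ many $j$-blocks, and the naive product $\prod_{b \in \cB(B)}$ of per-$j$-block regulator factors $e^{\frac{1}{8} L^{-jd}\norm{\varphi}_{\kh_+,\Phi_+(\cdot)}^2}$ must be shown to be dominated by the single $(j+1)$-block regulator $\cG_+^{(1/2)}(B,\varphi;\kh)$ with room to spare for the extra powers of $P_{\kh_+}$. This is exactly the kind of estimate controlled by the supermartingale bounds in Appendix~\ref{sec:supmartbnds} (Lemma~\ref{lemma:supmrtingaleapplied}) and by the regulator domination lemmas (Lemma~\ref{lemma:tGp}, Lemma~\ref{lemma:tGdom}); I would invoke these rather than redo the computation, and I expect the proof of Lemma~\ref{lemma:DeltaBbnd2} itself to be essentially a careful application of Lemma~\ref{lemma:DeltaBbnd}'s template with the combinatorial factor from $|Y|_{\cB_+} \ge 1$ (rather than $\ge 2$) compensated by one extra factor of $\bar\epsilon$ from the $\cM$-dependent remainder, so that the final exponent is $\bar\epsilon^3$ as claimed. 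Once Lemma~\ref{lemma:DeltaBbnd} and Lemma~\ref{lemma:DeltaBbnd2} are in hand, \eqref{eq:DeltaDeltaDelta} gives Lemma~\ref{lemma:DeltaBbnd3} immediately, and then Lemma~\ref{lemma:cnvbnd} gives Lemma~\ref{lemma:K5bnd}.
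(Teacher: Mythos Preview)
Your core idea is right and matches the paper: $\Delta_2(B)$ has no terms of degree $\le \cM$ in $U_\pt^{(2)}$, because the truncated exponentials $\pexp$ agree with the true exponentials through order $\cM$ and the true exponentials satisfy $\prod_b e^{-U_\pt^{(2)}(b)} = e^{-U_\pt^{(2)}(B)}$ exactly. The paper implements this more directly by introducing $p(t) = \prod_{b}\pexp(-tU_\pt^{(2)}(b)) - \pexp(-tU_\pt^{(2)}(B))$ and observing that $p^{(n)}(0)=0$ for all $n \le \cM$ (since at $t=0$ the derivatives coincide with those of the exponential identity), whence $p(1)=\Delta_2(B)$ is a polynomial in $U_\pt^{(2)}$ of order $\ge \cM+1$. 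This yields $\norm{\Delta_2(B)}_0 \lesssim (L^d f(\kh))^{\cM+1}$ with $f(\kh)=\sup_b\norm{U_\pt^{(2)}(b)}_0$, and the constraint $\cM \ge 1+\tfrac12\max\{3,d-4+2\eta\}$ from Section~\ref{sec:chparams} is precisely what converts this into $\bar\epsilon^3(\kh)$. Your route via $\pexp(x)=e^x-\Rem(x)$ leads to the same degree count but requires expanding the product and tracking cross terms, which the $p(t)$ device avoids.

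Two points to correct. First, your invocation of \eqref{eq:kaerestrt} is misplaced: that bound concerns $\kae$ and plays no role in the proof of this lemma; the only parameter constraint used here is the one on $\cM$. Second, your ``main obstacle'' is not where the difficulty lies. The regulator bookkeeping across the scale change is routine: since $\Delta_2(B)$ is a polynomial of degree $\le 4L^d$ in $\varphi$, Lemma~\ref{lemma:polynorm} gives a single factor $P_{\kh_+}^{4L^d}(B,\varphi)$, which is absorbed into $e^{\frac14\norm{\varphi}^2_{\kh_+,\Phi_+(B^\square)}}$ directly at scale $j+1$; then Lemma~\ref{lemma:entVpt} (specifically \eqref{eq:entVpt2}) handles $e^{-U_\pt^{(1)}(B)}$ at scale $j+1$ without needing to pass through per-$j$-block regulators or the supermartingale bounds. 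The genuine content of the lemma is the algebraic cancellation forcing degree $\ge \cM+1$, not the large-field control.
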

\begin{proof}
To expand $\Delta_2 (B)$,  consider the polynomial
\begin{align}
	p (t) = \prod_{b \in \cB} \pexp (- t U_{\pt}^{(2)} (b) ) - \pexp ( - t U_{\pt}^{(2)} (B)  ) ,
\end{align}
so that $p(1) = \Delta_2 (B)$.
Observe that,  for $n \le \cM$, 
\begin{align}
	\frac{d^n}{d t^n}  \Big|_{t=0} p (t) = \frac{d^n}{d t^n} \Big|_{t=0}  \Big[ \prod_{b \in \cB} \exp (- t U_{\pt}^{(2)} (b) ) - \exp ( - t U_{\pt}^{(2)} (B)  ) \Big] = 0 ,
\end{align}
so $p(t)$ is polynomial of degree $\ge \cM+1$.  In other words,  $\Delta_2 (B)$ is a polynomial of degree $\ge \cM+1$ in $U_{\pt}^{(2)}$.

Let us now denote $f (\kh) = \sup_{b \in \cB} \norm{U_\pt^{(2)} (b)}_0$.
Then by Lemma~\ref{lemma:eQh} and \eqref{eq:VptinDDomain2},  we have
\begin{align}
	f(\ell) \lesssim \ell_0^4 \tilde{\chi} \tilde{g} \scale,  \qquad f(h) \lesssim \tilde{\chi} \tilde{g}^{1/2} L^{-j/2} .  \label{eq:fkhbnds}
\end{align}
Also,  let $\{ b_1 , \cdots,  b_{L^d} \}$ be some ordering of elements of $\cB (B)$.
Then by the cluster expansion \eqref{eq:clusterexpn} and submultiplicativity of the semi-norm,  we have
\begin{align}
	\norm{\Delta_2 (B)}_0 &\le \sum_{l = \cM+1}^\infty \sum_{k_1 + \cdots + k_{L^d} = l}^{k_1, \cdots, k_{L^d} \le \cM}  \prod_{i=1}^{L^d} \frac{1}{k_i !} f (\kh)^{k_i} \nnb
		&\le \exp( L^d f (\kh) ) - \sum_{k=0}^{\cM} \frac{1}{k!} (L^d f(\kh))^k \nnb
		&\le O_{\cM} (1) (L^d f(\kh))^{\cM+1} .
\end{align}
where the final inequality follows from a bound on the Taylor's remainder,  by taking $\tilde{g}$ sufficiently small.  
Since $\Delta_2 (B)$ is a polynomial of degree $\le 4 L^d$,  plugging in \eqref{eq:fkhbnds} and recalling the choice $\cM \ge 1 + \frac{1}{2} \max\{ 3,  d-4 + 2\eta \}$,  
\begin{align}
	\norm{\Delta_2 (B)}_{\varphi} &\lesssim (L^d f(\kh))^{\cM+1} P_{\kh_+,+}^{4 L^d} (B,\varphi) \nnb
		&\lesssim e^{\frac{1}{4} \norm{\varphi}^2_{\kh_+, \Phi_+ (B^{\square})}}  \times \begin{cases}
			\tilde{\chi}^{3/2} (\tilde{g} \scale )^3 & (\kh = \ell) \\
			\tilde{\chi}^{3/2} (\tilde{g} \scale)^{3/4} & (\kh = h) .
		\end{cases}
\end{align}
Together with Lemma~\ref{lemma:VptinDDomain} and \ref{lemma:entVpt},
\eqref{eq:Vptstabilityv22} follows.
\end{proof}

\begin{proof}[Proof of Lemma~\ref{lemma:DeltaBbnd3}]
This is a consequence of \eqref{eq:DeltaDeltaDelta},  Lemma~\ref{lemma:DeltaBbnd} and \ref{lemma:DeltaBbnd2}.
\end{proof}

\subsection{Map 6}
\label{sec:K6bnd}

Recall from Section~\ref{sec:map6defi} that $V_+$ replaces $\nabla \varphi \cdot \nabla \varphi$ in $V_\pt$ by $-\varphi \cdot \Delta \varphi$.  For $K'$ equipped with norm $\norm{K'}_{\bar{\cW}} = \norm{K'}_{\cW_+^{1+\xi/8} (\ratio, 1/2)}$,  denote $K'_{(6)} = \Phi_+^{(6)} (V,K, K')$.
If $\tilde{K}' (X) = e^{( U_\pt  - V_+ ) (X)} K' (X)$,  then it satisfies
\begin{align}
	K'_{(6)} (X) =
		\begin{cases}
			\Big( \tilde{K}' (B) - e^{- V_+^\stable (B) } W_+ (B) (W_+ - W_\pt ) (B) \Big) & (X = B \in \cB) \\
			\Big( \tilde{K}' \circ_+ e^{-V_+^\stable} (W_+ - W_\pt ) \Big) (X) & (|X|_{\cB_+} \neq 1)	.
		\end{cases}
		\label{eq:K6defibis}
\end{align}
We first check that $\tilde{K}$ does not change too much from $K_{(5)}$.
Note that,  by definition, 
\begin{align}
	(V_{\pt} - V_+ )_x =  \nu^{(\nnabla)} ( \nabla \varphi_x \cdot \nabla \varphi_x + \varphi_x \cdot \Delta \varphi_x ) \in \cV_{2,\nabla}   .
	\label{eq:VptmVplusinV2n}
\end{align}

\begin{lemma}  \label{lemma:tildeK6bnd}
Assume \eqref{asmp:lambda1},  $(V,K) \in \D (\alpha)$ ($\alpha \le \bar{\alpha}/4$),  $\ratio >0$ and $\kh \in \{\ell, h \}$.  Then 
\begin{align}
	\norm{\tilde{K}' (X)}_{\ratio \kh_+, \vec{\lambda},T(\varphi,z)} \lesssim \omega^{-1} A^{1+ \frac{\xi}{10}} (X) \cG_+^{(3/4)} (X,\varphi)  \bar{\lambda}_K  .
\end{align}
\end{lemma}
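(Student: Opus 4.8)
\textbf{Proof plan for Lemma~\ref{lemma:tildeK6bnd}.}
The plan is to compare $\tilde{K}'(X) = e^{(U_\pt - V_+)(X)} K'(X)$ with $K'(X)$ by bounding the prefactor $e^{(U_\pt - V_+)(X)}$ and multiplying using submultiplicativity of the extended Taylor norm (Lemma~\ref{lemma:subxext}), then absorbing the resulting polynomial growth into a slightly weakened large-field regulator (from $\cG_+^{(1/2)}$, implicit in the hypothesis $\norm{K'}_{\bar\cW} = \norm{K'}_{\cW_+^{1+\xi/8}(\ratio,1/2)}$, down to $\cG_+^{(3/4)}$) and a slightly weakened large-set regulator (from $A_+^{1+\xi/8}$ to $A_+^{1+\xi/10}$). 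First I would use that $U_\pt - V_+ = \delta u_+ + (V_\pt - V_+)$, and by \eqref{eq:VptmVplusinV2n} the term $V_\pt - V_+$ lies in $\cV_{2,\nabla}$ with coefficient $\nu^{(\nnabla)}$; by Lemma~\ref{lemma:VptinDDomain} (specifically \eqref{eq:VptinDDomain} and \eqref{eq:deltauplusnorm}) we have $\norm{\delta u_+}_{\ell,\vec\lambda,T(0,z)} \le O_L(\bar\epsilon(\ell))$ and $\norm{(V_\pt - V_+)}_{\cV(\ell),\vec\lambda,T(z)} \le O_L(\ell_0^2 \epsilon(\ell))$, so in particular $\nu^{(\nnabla)}$ obeys the stability-domain bound. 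Since $\delta u_+(X) = u_+|X|$ is a constant (no field dependence) and $(V_\pt - V_+)$ is a sum over $x \in X$ of degree-$2$ local monomials with one gradient pair, summing over the at most $|X|_{\cB_+}$ blocks and applying Lemma~\ref{lemma:polynorm} (or the relevant part of Lemma~\ref{lemma:stability1}(iii) for the $e^{V_{2,\nabla}}$-type bound) gives
\begin{align}
	\norm{e^{(U_\pt - V_+)(X)}}_{\ratio\kh_+,\vec\lambda,T_+(\varphi,z)}
		\le \prod_{B \in \cB_+(X)} e^{C\ell_0^{-2}\bar\epsilon(\ell)\, (1 + \norm{\varphi}_{\kh_+,\Phi_+(B^\square)})^2},
\end{align}
which for $\tilde g$ small is dominated by $\cG_+^{(1/4)}(X,\varphi;\kh)$ times a constant (this is where the choice of $\kappa$ and the gap between $\tilde G^{3/4}$ and $\tilde G^{1/2}$ versions of the regulators is used, as in Lemma~\ref{lemma:tGp}).

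Next I would invoke submultiplicativity: $\norm{\tilde K'(X)}_{\ratio\kh_+,\vec\lambda,T_+(\varphi,z)} \le \norm{e^{(U_\pt-V_+)(X)}}_{\ratio\kh_+,\vec\lambda,T_+(\varphi,z)} \norm{K'(X)}_{\ratio\kh_+,\vec\lambda,T_+(\varphi,z)}$. The second factor is controlled by the hypothesis on $\norm{K'}_{\bar\cW}$: for $\kh = \ell$ it is bounded by $A_+^{1+\xi/8}(X)\,\omega_+^{-1}(\ell)\,\bar\lambda_K$ directly (with $\cG_+^{(1/2)} \equiv 1$ for the $\ell$-norm), and for $\kh = h$ by $A_+^{1+\xi/8}(X)\,\omega_+^{-1}(h)\,\bar\lambda_K\,\bar G_+^{(1/2)}(X,\varphi)$. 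Combining, the product of regulators $\cG_+^{(1/4)}(X,\varphi)\,\cG_+^{(1/2)}(X,\varphi)$ should be bounded by $\cG_+^{(3/4)}(X,\varphi)$ up to a constant — this is the natural composition rule for the $H^{1/\gamma}\tilde G^\gamma$ family, coming from $H^4 \tilde G^{1/4} \cdot H^2 \tilde G^{1/2} \le (H^{4/3}\tilde G^{3/4})^{\text{something}}$; I would extract this from the cited regulator-composition lemmas (analogues of Lemma~\ref{lemma:tGp} and Lemma~\ref{lemma:tGdom}) rather than re-deriving it. Meanwhile the large-set regulator degrades only from $A_+^{1+\xi/8}$ to $A_+^{1+\xi/10}$: the extra polynomial-in-$\varphi$ growth from the prefactor is per-block, so after the Sobolev/supermartingale-type absorption one loses at most a factor $\rho^{-c|X|_{\cB_+}}$, which for $\rho$ small is reabsorbed by the difference $A_+^{\xi/8 - \xi/10}(X) = A_+^{\xi/40}(X) \le \rho^{(\xi/40)(|X|_{\cB_+} - 2^d)_+}$ on connected large sets, while on small sets $A_+ \equiv 1$ and there is nothing to check.

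The main obstacle I anticipate is bookkeeping the regulator exponents precisely: one must verify that the combined constant $C\ell_0^{-2}\bar\epsilon(\ell)$ in the exponent of the prefactor bound is genuinely small enough (uniformly in $L$, after fixing $\kappa$) that the product $\cG_+^{(1/4)} \cdot \cG_+^{(1/2)}$ does not overshoot $\cG_+^{(3/4)}$, and simultaneously that the loss in the large-set exponent stays within $\xi/8 - \xi/10$. The delicate point is that $\ell_0 = L^{(d+p_\Phi)/2}$ is large in $L$, so $\ell_0^{-2}$ is genuinely small, making the prefactor contribution subleading; this is exactly the mechanism used in Lemma~\ref{lemma:entVQ} and Lemma~\ref{lemma:entVpt}, and I would lean on those stability estimates' proofs as templates. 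No genuinely new idea is needed beyond assembling these ingredients; the content is entirely in tracking the exponents through submultiplicativity and the regulator-domination lemmas.
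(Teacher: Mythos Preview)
Your overall strategy (bound the prefactor $e^{(U_\pt-V_+)}$, multiply via submultiplicativity, absorb into a slightly weaker regulator and large-set exponent) matches the paper's, but there is a genuine gap in the prefactor step for $\kh = h$. You claim the prefactor is dominated by $\cG_+^{(1/4)}(X,\varphi;\kh)$; for $\kh = h$ this is $H_+^{4}\,\tilde G_+^{1/4}$, which carries the \emph{decaying} factor $H_+^{4}\le 1$. Take $\varphi\equiv c$ constant: then $\tilde G_+^{1/4}=1$ while $H_+^{4}\to 0$ as $|c|\to\infty$, yet your prefactor (which uses the full $\Phi_+$-norm and hence sees constants) tends to $\infty$. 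So the domination fails. Relatedly, the coefficient you write as $C\ell_0^{-2}\bar\epsilon(\ell)$ is not correct for $\kh=h$; the paper's own estimate is $\norm{(V_\pt-V_+)(B)}_{\ratio h_+,\vec\lambda,T(\varphi,z)}\lesssim 1+\norm{\varphi}^2_{h_+,\Phi_+(B^\square)}$, i.e.\ an $O(1)$ coefficient.

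The missing idea is the constant-shift invariance: since $(V_\pt-V_+)_x=\nu^{(\nnabla)}\big(\nabla\varphi_x\cdot\nabla\varphi_x+\varphi_x\cdot\Delta\varphi_x\big)$ involves only gradients and the Laplacian, $(V_\pt-V_+)(B,\varphi)=(V_\pt-V_+)(B,\varphi-c)$ for any constant $c$. This replaces the $\Phi_+$-norm by the $\tilde\Phi_+$-seminorm for free, and with Lemma~\ref{lemma:tGdom} one gets $\norm{e^{(U_\pt-V_+)(B)}}_{\ratio h_+,\vec\lambda,T_+(\varphi,z)}\lesssim \tilde G_+^{1/4}(B,\varphi)$ directly (no $H$-factor). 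Then $\tilde G_+^{1/4}\cdot\cG_+^{(1/2)}=H_+^{2}\tilde G_+^{3/4}\le H_+^{4/3}\tilde G_+^{3/4}=\cG_+^{(3/4)}$ and your remaining steps (submultiplicativity, absorbing $C^{|X|_{\cB_+}}$ into $A_+^{\xi/8-\xi/10}$) go through as you describe. Your alternative route via Sobolev (splitting $\Phi_+$ into $L^2+\tilde\Phi_+$ and absorbing the $L^2$-part into $H_+^{-2/3}=\cG_+^{(3/4)}/\cG_+^{(1/2)}\cdot\tilde G_+^{-1/4}$) could in principle be made to work, but it requires a careful check that the $O(1)$ coefficient times the Sobolev constant is dominated by $\tfrac{2\kappa}{3}$ after rescaling---considerably more bookkeeping than the one-line shift argument the paper uses.
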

\begin{proof}
By \eqref{eq:VptmVplusinV2n},  $V_\pt - V_+ \in \cV_{2,\nabla}$,  and using Lemma~\ref{lemma:VptinDDomain} to bound $V_\pt$,  we see that 
\begin{align}
	\norm{ (V_{\pt} - V_+ ) (B)}_{\ratio h_+, \vec{\lambda},T(\varphi,z)}  \lesssim 1 +  \norm{\varphi}^2_{h_+, \Phi_+ (B^{\square})} .
\end{align}
But since $(V_{\pt} - V_+ ) (B,\varphi) = (V_{\pt} - V_+ ) (B, \varphi - c)$ for any constant $c$,  the $\Phi_+$-norm can actually be replaced by the $\tilde{\Phi}_+$-norm.
Thus together with \eqref{eq:deltauplusnorm},  there exists $C >0$ such that
\begin{align}
	\norm{e^{ (U_{\pt} - V_+ ) (B)}}_{\ratio h_+, \vec{\lambda},T(\varphi,z)} \lesssim e^{ C \norm{\varphi}_{h_+, \tilde{\Phi}_+ (B^{\square})}^2}
	\lesssim \tilde{G}^{1/4} (B,\varphi) \label{eq:VptmVplus}
\end{align}
for sufficiently small $\tilde{g}$.
The same bound holds for any $\kh_+ \lesssim h_+$ by monotonicity,  and we get
\begin{align}
	\norm{\tilde{K}' (X)}_{\ratio \kh_+, \vec{\lambda},T(\varphi,z)}
		&\le C^{|X|_{\cB_+}}  \tilde{G}^{1/2} (X,\varphi) \norm{K' (X)}_{\ratio \kh_+, \vec{\lambda},T(\varphi,z)} \nnb		
		&\lesssim \omega_+^{-1} A^{1+ \frac{\xi}{10}} (X) \cG_+^{(3/4)} (X,\varphi ; \kh) \bar\lambda_K
\end{align}
where we set $\rho$ sufficiently small and use Lemma~\ref{lemma:K5bnd} for the second inequality.   This in particular implies 
\begin{align}
	\omega_+ \norm{\tilde{K} (X)}_{\ratio\kh_+,   \vec{\lambda},T(\varphi,z)} \lesssim A^{1+ \frac{\xi}{10}} (X)  \bar\lambda_K   \times 
	\begin{cases}
		1 & ( (\kh_+, \varphi) = (\ell_+, 0)) \\
		\bar{G}_+^{(3/4)} (X,\varphi) & ( (\kh_+, \varphi) = (h_+, \varphi) ) ,
	\end{cases} 
\end{align}
and Lemma~\ref{lemma:WYequivext} also implies $\norm{\tilde{K} (X)}_{\vec{\lambda},  \cW^{1 + \xi / 10} (z; \ratio,  3/4)} \lesssim \bar\lambda_K$.  
\end{proof}

\begin{lemma} \label{lemma:K6bndbis}
Assume \eqref{asmp:Phi}($\alpha \le \bar{\alpha}/4$) and \eqref{asmp:lambda1}.  Then for $\ratio >0$,
\begin{align} \label{eq:K6bndbis}
	\norm{K'_{(6)}}_{\vec{\lambda}  ,  \cW_+^{1+ \xi / 16} (z ; \ratio,  3/4)} 
		\lesssim \bar{\epsilon}^3 (\ell) + \bar{\lambda}_K
\end{align}
\end{lemma}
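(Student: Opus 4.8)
\textbf{Proof plan for Lemma~\ref{lemma:K6bndbis}.}
The plan is to bound the two pieces of $K'_{(6)}$ exhibited in \eqref{eq:K6defibis} separately and then assemble them via the general polymer-convergence estimate Lemma~\ref{lemma:cnvbnd}. First I would record the inputs we already have: by Lemma~\ref{lemma:tildeK6bnd}, $\tilde{K}'$ is controlled in the norm $\norm{\cdot}_{\vec{\lambda}, \cW^{1+\xi/10}_+(z;\ratio,3/4)}$ by $O(\bar\lambda_K)$, and by Lemma~\ref{lemma:VptinDDomain}, $\norm{W_\pt(b)}_{\ratio\kh,\vec{\lambda},T(0,z)}$ and $\norm{(W_+-W_\pt)(B)}_{\ratio\kh_+,\vec{\lambda},T_+(0,z)}$ are of order $\bar\epsilon^2(\kh)$ (the difference $W_+-W_\pt$ is of the same order since $V_+$ and $V_\pt$ differ only by the $\cV_{2,\nabla}$-term \eqref{eq:VptmVplusinV2n}, which is itself inside $\cD^\st$, so the associated quadratic forms differ by a bounded factor). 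Finally Lemma~\ref{lemma:entVpt} gives the stability bound $\norm{e^{-V_+^{(\bs_+)}(B)}}_{\ratio\kh_+,\vec{\lambda},T_+(\varphi,z)}\lesssim \cG_+^{(1/2)}(B,\varphi;\kh)$ (or its analogue for $\kh=\ell$), after noting $V_+\in\cD^\st$ at scale $j+1$ just as in the proof of \eqref{eq:entVpt2}.

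Next I would handle the single-block correction term $e^{-V_+^{(\bs_+)}(B)}W_+(B)(W_+-W_\pt)(B)$ appearing when $X=B\in\cB_+$. Multiplying the stability bound for $e^{-V_+^{(\bs_+)}(B)}$ by the two $\bar\epsilon^2$-bounds and absorbing the polynomial prefactors $P_{\kh_+}^{*}(B,\varphi)$ into a fraction of the regulator (using Lemma~\ref{lemma:sobolev} / Lemma~\ref{lemma:tGp} in the $\kh=h$ case to pass from $\cG_+^{(1/2)}$ to $\cG_+^{(3/4)}$ at the cost of an $L$-dependent constant), one gets a term of size $O_L(\bar\epsilon^4(\kh))\cG_+^{(3/4)}(B,\varphi;\kh)$, comfortably $\le O_L(\bar\epsilon^3(\ell))\omega_+^{-1}(\kh)$ after inserting $\omega_+$ and using $\bar\epsilon^2(h)\lesssim\tilde g^{9/4}\scale^{\kbe}$-type inequalities — this is where the condition $\ell_0^{-1/2}\ll L^{-2}$ from Section~\ref{sec:chparams} is used, exactly as in the quoted requirement for this lemma. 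The factor $e^{-V_+^{(\bs_+)}}(W_+-W_\pt)$ that enters the polymer product in the case $|X|_{\cB_+}\ne1$ is bounded the same way, yielding $\norm{e^{-V_+^{(\bs_+)}(B)}(W_+-W_\pt)(B)}_{\ratio\kh_+,\vec\lambda,T_+(\varphi,z)}\le \alpha_1\,\cG_+^{(3/4)}(B,\varphi;\kh)$ with $\alpha_1=O_L(\bar\epsilon^2(\kh))\le \rho^{2^{d^2+2d+4}}$ for $\tilde g$ small.

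With $\alpha_1=O_L(\bar\epsilon^2(\kh))$ for the ``glueing factor'' $e^{-V_+^{(\bs_+)}}(W_+-W_\pt)$ and $\alpha_2\asymp\omega_+^{-1}\bar\lambda_K$ for $\tilde K'$ (from Lemma~\ref{lemma:tildeK6bnd}), both smaller than the requisite powers of $\rho$, all hypotheses of Lemma~\ref{lemma:cnvbnd} hold with $\hat\cG=\cG_+^{(3/4)}(\cdot;\kh)$, and it delivers
\begin{align}
	\norm{K'_{(6)}(X)}_{\ratio\kh_+,\vec\lambda,T_+(\varphi,z)}
		\lesssim \omega_+^{-1}(\kh)\,A_+^{1+\xi/16}(X)\,\cG_+^{(3/4)}(X,\varphi;\kh)\,\big(O_L(\bar\epsilon^3(\ell))+\bar\lambda_K\big)
\end{align}
for $X\in\Con_+$ and $\kh\in\{\ell,h\}$, which after passing through the $\cY$--$\cW$ equivalence Lemma~\ref{lemma:WYequivext} is precisely \eqref{eq:K6bndbis}. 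The continuity in $(\ba_\emptyset,\ba)$ is inherited from the continuity statements in Lemma~\ref{lemma:VptinDDomain}, Lemma~\ref{lemma:entVpt} and Lemma~\ref{lemma:tildeK6bnd}, since $K'_{(6)}$ is built from these by finite sums, products and Gaussian integrals that preserve pointwise continuity. The only genuinely delicate point is the bookkeeping of exponents: checking that the combined small factors ($\bar\epsilon^3(\ell)$ from the $W$-terms, $\bar\lambda_K$ from $\tilde K'$, the $\omega_+$ weights, and the large-set gain from $A_+^{\xi/16}$) close up without losing any of the required powers of $\tilde g$, $\scale$, or $L$ — in particular that the drop from exponent $1+\xi/8$ on $A_+$ (input) to $1+\xi/16$ (output) absorbs the combinatorial sum in Lemma~\ref{lemma:cnvbnd} and that the regulator exponent can be safely lowered from $1/2$ to $3/4$ only in the $\kh=h$ sector where $\tilde G$-powers are available.
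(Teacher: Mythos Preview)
Your overall strategy is right, and the inputs you collect (Lemmas~\ref{lemma:tildeK6bnd}, \ref{lemma:VptinDDomain}, \ref{lemma:entVpt}, and the bounds \eqref{eq:WpWptV1}--\eqref{eq:WpWptV2} on the $W$-differences) are exactly what the paper uses. The single-block correction term is also handled correctly.

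There is, however, a genuine gap in the step where you invoke Lemma~\ref{lemma:cnvbnd} for the polymer convolution when $|X|_{\cB_+}\ge 2$. You feed in two small parameters $\alpha_1 \asymp \bar\epsilon^2(\kh)$ and $\alpha_2 \asymp \omega_+^{-1}\bar\lambda_K$ and claim the output is $O_L(\bar\epsilon^3(\ell)) + \bar\lambda_K$. But Lemma~\ref{lemma:cnvbnd} takes a \emph{single} parameter $\lambda$ bounding both $\delta I$ and $K$, and its output is $\lesssim C_{\delta I}\lambda$ --- essentially one factor of the small parameter, not $\alpha_1^{3/2}+\alpha_2$. The two-parameter output structure you are using belongs to Lemma~\ref{lemma:Rapbndv2} (the reapportioning bound), where the constraint ``$|\Comp(Y)|\ge 1$ or $|\Comp(Z)|\ge 2$'' in $\sum^{\to X}$ forces at least two $\alpha_1$-factors when there is no $K$-component. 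The plain convolution $\tilde K' \circ_+ e^{-V_+^{(\bs_+)}}(W_+-W_\pt)$ carries no such built-in constraint, so Lemma~\ref{lemma:cnvbnd} as stated gives only $\bar\epsilon^{3/2}(\kh)$ in the worst case, which for $\kh=\ell$ is strictly larger than the target $\bar\epsilon^3(\ell)+\bar\lambda_K$.

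The paper closes this gap by a short direct argument rather than by citing a black-box lemma: it expands the convolution as $\sum_{Z\in\cP_+(X)}(\bar\epsilon^{3/2})^{|Z|_{\cB_+}}(\omega_+^{-1}\bar\lambda_K)^{|\Comp_+(X\backslash Z)|}$ and splits into the two cases $Z=X$ and $Z\ne X$. When $Z=X$ the restriction $|X|_{\cB_+}\ge 2$ (which comes from the case split in \eqref{eq:K6defibis}) gives at least two block factors and hence $\bar\epsilon^3$; when $Z\ne X$ there is at least one $\tilde K'$-component giving $\omega_+^{-1}\bar\lambda_K$. The combinatorial $2^{|X|_{\cB_+}}$ is then absorbed into the large-set regulator by dropping from exponent $1+\xi/10$ to $1+\xi/16$. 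Your plan can be repaired by inserting exactly this dichotomy in place of the appeal to Lemma~\ref{lemma:cnvbnd}; it is a few extra lines, but it is not optional.
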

\begin{proof}
We have to check for both $\kh \in \{\ell, h\}$
\begin{align} \label{eq:K6bndbis}
	\norm{K'_{(6)} (X)}_{\ratio \kh_+, \vec{\lambda} ,T(\varphi,z)} \lesssim \omega_+^{-1}  A _+^{1+\xi/16} (X) \cG_+^{(3/4)} (X,\varphi) \big( \bar{\epsilon}^3 (\ell) + \bar{\lambda}_K \big)   .
\end{align}
We just denote $\norm{\cdot}_{\ratio \kh_+, \vec{\lambda} ,T_+ (\varphi,z)} = \norm{\cdot}$.
Lemma~\ref{lemma:FWPboundsobs} and \ref{lemma:VptinDDomain} imply,  for $B \in \cB_+$
\begin{align}
	& \norm{(W_+ - W_\pt ) (B)}
		\le O_L (1) \bar{\epsilon}^2 P_{\kh_+}^6 (B,\varphi)  \\
	& \norm{W_+ (b)(W_+ - W_\pt ) (B)}
		\le O_L (1) \bar{\epsilon}^4 P_{\kh_+}^{12} (B,\varphi)
\end{align}
and since $V_+$ is a $\varphi \cdot \Delta \varphi$ correction to $V_\pt$,  
Lemma~\ref{lemma:entVpt} implies $\norm{e^{-V_+^\stable (B)} }  \lesssim E_+ (B,\varphi ; \kh)$,
thus by absorbing the polynomial into the exponent and taking sufficiently small $\tilde{g}$,
\begin{align}
	& \norm{(W_+ - W_\pt ) (B) e^{-V_+^\stable (B)}} \le \rho^{1+\xi/10} \bar{\epsilon}^{3/2} E_+ (B,\varphi ;\kh) \label{eq:WpWptV1} \\
	& \norm{W_+ (b)(W_+ - W_\pt ) (B)  e^{-V_+^\stable (B)} } \le \rho^{1+\xi/10} \bar{\epsilon}^3 E_+ (B,\varphi ;\kh ) . \label{eq:WpWptV2}
\end{align}
The second inequality and Lemma~\ref{lemma:tildeK6bnd} immediately imply \eqref{eq:K6bndbis} when $|X|_{\cB_+} = 1$.

Now,  we check the case $|X|_{\cB_+} \ge 2$. Then by \eqref{eq:WpWptV1} and Lemma~\ref{lemma:K6bndbis},
\begin{align}
	& \norm{K'_{(6)} (X)}
	\le \sum_{Z \in \cP_+ (X)}  \prod_{B \in \cB_+ (Z)} \big\| \big( (W_+ - W_\pt) e^{-V_+^\stable} \big) (B) \big\|  \norm{\tilde{K}' (X \backslash Z)} \nnb
	& \le A^{1+\frac{\xi}{10}} (X) \cG_+^{(3/4)} (X,\varphi ;\kh)  \sum_{Z \in \cP_+ (X)} (\bar{\epsilon})^{\frac{3 |Z|_{\cB_+} }{2}} \,  \big(\omega_+^{-1} \bar\lambda_{K} \big)^{|\Comp_+ (X \backslash Z)| } .
\end{align}
We bound the final sum:
when $X \neq Z$,  then $|\Comp_+ (X \backslash Z)| \ge 1$ so we get a bound by $2^{|X|_{\cB_+}} \omega_+^{-1} \bar{\lambda}_K$,
whereas for $X = Z$,  then we get a bound by $\bar{\epsilon}^{3|X|_{\cB_+} /2} \le \bar{\epsilon}^3$.
All in all,  we obtain
\begin{align}
	\norm{K'_{(6)} (X)} &\le  2^{|X|_{\cB_+}} A_+^{1+\frac{\xi}{8}} (X) \cG_+^{(3/4)} (X,\varphi) \big( \bar{\epsilon}^3 + \omega_+^{-1} \bar{\lambda}_K \big)
		\nnb
		&\lesssim A_+^{1+\frac{\xi}{16}} (X) \cG_+^{(3/4)} (X,\varphi)\big( \bar{\epsilon}^3 + \omega_+^{-1} \bar{\lambda}_K \big)
\end{align}
for sufficiently small $\rho$.
\end{proof}

\begin{proof}[Proof of Lemma~\ref{lemma:K6bnd}]
Suppose $\norm{K}_{\cW} \le \lambda_K \le (C_L C_{L,\lambda})^{-1} \tilde{g}^{9/4} \scale^{\kbe}$.  Lemma~\ref{lemma:K2bndsmmry} implies
\begin{align}
	\norm{K}_{\cW} ,  \;\; \norm{K_{(2)}}_{\cW^{1-\xi/8}} \le \frac{1}{2} \tilde{g}^2 \scale^{\kbe} , 
\end{align}
for sufficiently small $\tilde{g}$.
By expanding out in Taylor series of $(K,K_{(2)})$,  we see that Lemma~\ref{lemma:K3bnd} implies that the definition of $K_{(3)}$ can be extended to $\norm{K}_{\cW} \le \lambda_K$ and satisfies bound
\begin{align}
	\sup\Big\{ \norm{K_{(3)} (X) - \tilde{I}_\pt^X \lead (X) }_{\ratio \kh_+,   \lambda_V, T_+ (\varphi,  V) } \, : \,   \norm{K}_{\cW} \le \lambda_K \Big\}  \nnb
		\le O_L (1) \cG^{(2)} (X,\varphi ; \kh) A_+^{1 + \xi / 2} (X) \lambda'_{K} ,
\end{align}
where $\norm{\cdot}_{\ratio \kh_+,   \lambda_V, T_+ (\varphi,  V) }$ only considers derivative in $V_\bulk$ now. 
Since this bound is obtained by Taylor series,  it also admits extension of the domain of $V$ and $K$ to complex Banach spaces,  so by the Cauchy's integral formula,  we have
\begin{align}
	\norm{K_{(3)} (X) - \tilde{I}_\pt^X \lead (X) }_{\ratio \kh_+,  \vec{\lambda} /2,  T_+ (\varphi,z) }
		\le O_L (\lambda'_{K} ) \cG^{(2)} (X,\varphi ; \kh) A_+^{1 + \xi / 2} (X)  .
\end{align}
The same argument allows to prove similar bounds for $K_{(4)}, K_{(5)}$ and $K_{(6)}$.  Namely,  Lemma~\ref{lemma:K4bndcGp}, \ref{lemma:K5bnd} and \ref{lemma:K6bndbis} imply
\begin{align} 
	\norm{K_{(4)}}_{\vec{\lambda} /4,\cW_+^{1+\xi/4} (z ; \ratio,1/2)} & \le O_L (\lambda'_{K} (\ell)) \\
	\norm{K_{(5)}}_{\vec{\lambda} /8,\cW_+^{1+\xi/8} (z ; \ratio,1/2)} & \le O_L (\lambda'_{K} (\ell)) \\
	\norm{K_{(6)}}_{\vec{\lambda} /16,\cW_+^{1+\xi/16} (z ; \ratio,1/2)} &\le O_L (\lambda'_{K} (\ell))  ,
\end{align}
respectively,  and the bound on $K_{(6)}$ is equivalent to \eqref{eq:K6bnd} with $a = 1+ \xi / 16$,  $\gamma = 3/4$ and the same $\ratio$. 

For the continuity in $(\ba_\emptyset, \ba) \in \AA (\tilde{m}^2)$,  
observe that the RG map depends on $(\ba_\emptyset, \ba)$ only via $W$'s and $\E_+$.
We already checked in Lemma~\ref{lemma:FWPboundsobs} that $W$,  $W_\pt$,  $W_+$ are all continuous.
For $\E_+$,  we see that continuity is guaranteed by Lemma~\ref{lemma:ctty1}.
\end{proof}

\section{RG map estimates--Part III,  contraction}
\label{sec:RGpartIII}

In this section,  we complete the proof of Proposition~\ref{prop:PhiplK} by showing a contraction of $\Phi_+^K$.
The contraction rate is denoted
\begin{align}
	\Theta \equiv \Theta (d,\eta,L)  = \max\{ L^{-(2d-7 + 2\eta)} ,  L^{- \frac{d}{4} - (d-4+2\eta) \kbe } ,   L^{-(d-4+2\eta) \kae - \frac{d}{2} \epsilon'} \} .
	\label{eq:ThetadLdefi}
\end{align}

\begin{proposition} \label{prop:PhiplKLin}
Under the assumptions of Proposition~\ref{prop:PhiplK},
\begin{align}
	\norm{D_K \Phi_+^K (V,K=0)}_{\cW_+}
		\lesssim \Theta (d,\eta,L) \label{eq:PhiplKLin1}  .
\end{align}
\end{proposition}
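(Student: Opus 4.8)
The plan is to track the linearisation $D_K\Phi_+^K$ through the six maps defined in Section~\ref{sec:rgstep}, using the bounds already assembled in Section~\ref{sec:RGpartII}, but now extracting the extra contraction factors that were deliberately \emph{not} used there. The key observation is that $\Phi_+^K$ is, to leading order, affine in $K$ with the constant term supported on small polymers (it is essentially $K_{(1)}\mapsto K_{(2)}\mapsto\cdots$ with $J,\lead,\delta\hat I$ all $O^\alg(K)$ or $O^\alg(V^2)$), so $D_K\Phi_+^K(V,K=0)$ is obtained by setting the initial $K=0$ everywhere except along the single distinguished factor on which $D_K$ acts. Concretely, I would set $\lambda_K$ as in \eqref{asmp:lambda1} but evaluate the whole composition at the base point $K=0$, so that $\dot K$ (the direction) carries the only $K$-dependence; then $\dot Q$, $\dot J$, etc.\ are the linear images of $\dot K$, while $V_\pt$, $W_\pt$, $\tilde I_\pt$, $\lead$ are all evaluated at $\hat V = V$ (since $Q=0$ at the base point).

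First I would reduce to showing the contraction for the \emph{linear part} of each of Map~1 through Map~6, i.e.\ the coefficient multiplying $\dot K(X)$ in $K_{(i)}$. For Map~1 (reapportioning) and Map~2 (replacing $I$ by $\hat I=\cI(V-\dot Q)$), linearising kills the quadratic remainders and what survives is $\dot K(X) - I^X\dot J(X) + (D_V\cI(V;\dot Q))^X\one_{|X|_\cB=1}$, all controlled without contraction by the norm estimates of Lemmas~\ref{lemma:K1bnd}--\ref{lemma:K2bndsmmry}. The genuine contraction enters at Map~3: in $K'_{(3)}$ the block-level terms are $\Eplus\theta K_{(2)}(Y)$ summed over $\bar Y=X$, and the point is that going from scale $j$ to $j+1$ the norm of a polymer activity on a \emph{connected} polymer gains a factor from the reblocking together with the decay already built into $\cK_j$. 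More precisely, the localisation-subtracted combination $(1-\Loc_X)$ sitting inside $K_{(3)}$ (through $\delta\hat I$, through $W_\pt$, and through the $R_1,R_2$ decomposition of Map~3's low-order part) triggers the contraction estimate Proposition~\ref{prop:corcrctrmt}, which produces precisely the factors $L^{-d_\bulk}$, $L^{-d'_\bulk}$ in the $\ell$- and $h$-norms; combined with the scaling of the field-scale variables $\ell_{\bulk,j}\to\ell_{\bulk,j+1}$ these yield the three exponents $L^{-(2d-7+2\eta)}$, $L^{-d/4-(d-4+2\eta)\kbe}$, $L^{-(d-4+2\eta)\kae-\tfrac d2\epsilon'}$ assembled in $\Theta(d,\eta,L)$ of \eqref{eq:ThetadLdefi}. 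Maps~4--6 are handled as in Section~\ref{sec:RGpartII}: they add only higher-order-in-$V$ corrections to the linear-in-$K$ part (Corollaries~\ref{cor:K4lin}, the bound on $K_{(5)}$ via Lemma~\ref{lemma:DeltaBbnd3}, and Corollary~\ref{cor:K6error}), and crucially Map~6's summation by parts and the definition of $K_{(6)}$ ensure no $O^\alg(V)$ term is reinserted into the linear part, so these maps are, on the linearisation at $K=0$, essentially the identity up to $O_L(\bar\epsilon^2)$ multiplicative corrections which are harmless for the contraction bound.

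The main obstacle I anticipate is the bookkeeping of \emph{exponents} rather than any single hard estimate: one has to check that along each of the three field-scaling regimes ($\ell$, $h$, and the observable scales) the product of (i) the contraction factor $L^{-d_\bulk}$ or $L^{-d'_\bulk}$ from Proposition~\ref{prop:corcrctrmt}, (ii) the change of $\omega_j(h)=\tilde g_j^{9/4}\tscale_j^\kbe$ across one scale, (iii) the reblocking factor and the change of $\tscale_j=L^{-(d-4+2\eta)j}$, and (iv) the gain/loss from $\ell_{\bulk,j}/\ell_{\bulk,j+1}$ and $h_{\bulk,j}/h_{\bulk,j+1}$, all combine to something $\le\Theta(d,\eta,L)<1$ uniformly in $(\ba_\emptyset,\ba)$; this is exactly where the parameter constraints \eqref{eq:fraklb}--\eqref{eq:kperestrt} and the definitions \eqref{eq:dstar} of $d_*,d'_*$ are needed, and where one must be careful that $\tilde\chi_j$ changes only by a bounded factor per scale. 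A secondary but routine point is that one must verify the argument survives in the extended-norm setting $\norm{\cdot}_{\cW_+}$ (no $L$-dependent prefactor allowed), which forces one to use the \emph{bare} contraction constants from Proposition~\ref{prop:corcrctrmt} and the purely geometric large-set bound Lemma~\ref{lemma:lgst}, rather than the cruder $O_L(1)$ bounds of Section~\ref{sec:RGpartII}. Once $\Theta<1$ is checked in all regimes, \eqref{eq:PhiplKLin1} follows, and combined with Proposition~\ref{prop:PhiplKDet} and a standard interpolation (the derivative bounds for $p\ge1$ and $q\ge1$ from Proposition~\ref{prop:PhiplKDet} plus this linear contraction) one recovers the sharp \eqref{eq:controlledRG24}, completing Proposition~\ref{prop:PhiplK}.
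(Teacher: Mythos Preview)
Your plan identifies the right ingredients (Proposition~\ref{prop:corcrctrmt}, Lemma~\ref{lemma:lgst}, the fact that Maps~4--6 act as near-identities on the linearisation), but it misplaces the source of the contraction. You write that Maps~1--2 are ``all controlled without contraction'' and that ``the genuine contraction enters at Map~3'' via a $(1-\Loc_X)$ hidden in $\delta\hat I$ and $W_\pt$. This is not how it works. At the base point $K=0$, the linearised $\delta\hat I$ is $-D_V\cI(V;\dot Q)$, which carries no $(1-\Loc)$ acting on $\dot K$; and the $(1-\Loc)$ inside $W_\pt$ acts on $\F[V;V]$, not on $\dot K$. Map~3 as such is only responsible for the fluctuation integral and reblocking; it preserves an existing contraction factor but does not create one.

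The missing step is an algebraic reorganisation of the linearised Maps~1--2. The combination you wrote down,
\[
\bar\cL_{(2)}(Y;\dot K)=\dot K(Y)-I^Y\dot J(Y)+\one_{Y\in\cB}\,D_V\cI(V;\dot Q)(Y),
\]
is not merely ``controlled without contraction'': it can be rewritten (using the definitions of $\dot J$ and $\dot Q$) as $I^Y(1-\Loc_Y)(\dot K/I)(Y)$ plus two small remainders, and \emph{this} is where Proposition~\ref{prop:corcrctrmt} applies to produce the factors $L^{-d_*}$, $L^{-d'_*}$. Two further points you will need and did not mention: for the $h$-norm the quantity $I^{-Y}\dot K(Y)$ is not a polynomial, so the $\sup_{t\in[0,1]}$ in \eqref{eq:crctr1mt} is uncontrolled; the fix is to replace $I^{-1}$ by the polynomial $B_V=1+V+V^2/2$, which is legitimate because $\Loc_Y((I^{-1}-B_V)\dot K)=0$ (cf.\ Corollary~\ref{cor:locvanishes}). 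Second, when $Y\notin\cS$ there is no $(1-\Loc_Y)$ at all and the contraction must instead come from the geometric inequality \eqref{eq:lgst2}, converting a spare power of $A(Y)$ into a factor $\le\gamma_\bulk$. Once $\bar\cL_{(2)}$ carries the $\gamma_*$-factor, Map~3's linear part (the analogue of your $\cL'_{(3)}$) simply transports it through reblocking, and Maps~4--6 are handled as you sketch.
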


The proof of Proposition~\ref{prop:PhiplK} is completed with the aid of this bound.

\begin{proof}[Proof of Proposition~\ref{prop:PhiplK}] 

The first bound \eqref{eq:PhipIK1} is already dealt in Proposition~\ref{prop:PhiplKDet}.
For \eqref{eq:PhipIK2},  we use the integral form of the Taylor's remainder to obtain
\begin{align}
	D_K \Phi_+^K (V,K) = D_K \Phi_+^K (V,0) + \int_0^1  D_K^2 \Phi_+^K (V, tK ; K) \rd t .
\end{align}
By Proposition~\ref{prop:PhiplKLin},  
$\norm{ D_K \Phi_+^K (V,0) }_{\cW_+}  \lesssim \Theta (d,\eta,L)$
and by Proposition~\ref{prop:PhiplKDet},
\begin{align}
	\norm{D_K^2 \Phi_+^K (V, tK ; K)}_{\cW_+}
		\le O_L (1) \tilde\chi^{3/2} \tilde{g}^{1-\frac{9}{4}} \scale^{\kpe - \kbe}  \lesssim  \Theta (d,\eta,L)
\end{align}
uniformly in $t \in [0,1]$ by taking sufficiently small $\tilde{g}$,  thus
\begin{align}
	\norm{ D_K \Phi_+^K (V,K)	}_{\cW_+} \le C \Theta (d,\eta,L) \le \frac{1}{32} L^{-\max\{ \frac{1}{2} ,  (d-4 + 2\eta) \kae  \} } ,
	\label{eq:PhipIK2q1}
\end{align}
where the final inequality holds due to \eqref{eq:kperestrt},  for sufficiently large $L$,  so the $q=1$ case of \eqref{eq:PhipIK2} holds.

Next,  we check the case $(p,q) = (0,0)$ of \eqref{eq:PhipIK2}.
Again by the integral form of the Taylor's remainder,
\begin{align}
	\Phi_+^K (V,K) &= \Phi_+^K (V,   K=0) + \int_0^1 D_K \Phi_+^K (V,   tK ; K) \rd t . 
\end{align}
By Proposition~\ref{prop:PhiplKDet}, 
we see that, when $K=0$, there is a constant $C_{\rm n}$ that is \emph{independent} of $L$ such that
\begin{align}
	\norm{\Phi_+^K (V,K=0)}_{\cW_+}
		\le C_{\rm n} \tilde\chi_+^{3/2} \tilde{g}_+^3 \scale_+^\kae
		= \frac{1}{2} C_\rg \chi_+^{3/2} \tilde{g}_+^3 \scale_+^\kae
\end{align}
where we set $C_{\rg} = 2 C_{\rm n}$ for the final equality.
By \eqref{eq:PhipIK2} (proved above),
\begin{align}
	& \norm{D_K \Phi_+^K (V,  tK ; K)}_{\cW_+} \nnb
		& \le \frac{1}{32}  C_{\rg} \tilde\chi^{3/2} \tilde{g}^3 \scale^\kae \min\{ L^{-1/2} ,  L^{-(d-4+2\eta) \kae}  \} 
		\le \frac{1}{2} C_{\rg} \tilde\chi_+^{3/2} \tilde{g}_+^3 \scale_+^\kae
\end{align}
for sufficiently large $L$,  uniformly in $t \in [0,1]$.  These give the desired bound.

Bounds of \eqref{eq:PhipIK2} show that $K_+ = \Phi_+^K (V,K)$ satisfies the bound of $\cK_{+}$ (recall \eqref{eq:PhipIK2}).  Also,  Map 1--Map 6 do not break the symmetries defining $\cK$,  so we can also conclude that $K_+ \in \cK_+$. 
\end{proof}

\subsection{Decomposition of the linear map}

The linearisation of $\Phi_+^K$ can be obtained by composing the linearisation of the substeps described in Section~\ref{sec:rgstep}.
Since Proposition~\ref{prop:PhiplKDet} proves the differentiability of $K_+ (V,K)$ in $K$,  we see that the linear approximations of Maps 1--6 are actually the $K$-derivatives.
Thus,  by the chain rule,  $D_K |_{K=0} \Phi_+^K$ can be written in terms of linear combination and compositions of $D_K |_{K=0} \Phi_+^{(i)} (V,K,  K_{(i-1)})$ for $i =1 , \cdots, 6$.

For the bounds on the derivatives,  we assume the following as an alternative of \eqref{asmp:Phi}:
\begin{equation} \stepcounter{equation}
	\tag{\theequation $\asmpL$} \label{asmp:L}
	\begin{split}
		\parbox[t]{\dimexpr\linewidth-8em}{
		Let $(V,\dot{K}) \in \cD \times \cN$,  $L$ be sufficiently large,  $\rho$ be suffiicently small depending on $L$ and $\tilde{g} > 0$ be sufficiently small depending on $L$ and $\rho$.
		If $j < j_\ox$,  then $\dot{K} (Y) \equiv 0$ for $Y \in \cS$.
		}
	\end{split}
\end{equation}
Then the estimates are stated in terms of
\begin{align}
	\ke (\kh) = \omega^{-1} (\kh) \norm{\dot{K}}_{\cW}
\end{align}
and
\begin{align}
	\gamma_* (Y, \kh) = \one_{* \subset Y^\square} \frac{\kh^{(*)}_+}{\kh^{(*)}} \times \begin{cases}
		L^{-d_*} & (\kh = \ell ) \\
		L^{-d'_*}  & (\kh = h ) ,
	\end{cases}
\end{align}
where we recall the notations from Section~\ref{sec:ctrctnestimts},
$\bulk \subset Y^{\square}$ is always true and $\ox \subset Y^{\square}$ means $\{\o,\x\} \subset Y$.
We use $E(b,\varphi ;\kh)$ as in \eqref{eq:Ebphi} (with the same convention that $C,c$ can be different from line to line).

\subsection{Bound on $\bar\cL_{(2)}$}

The main contraction happens in Map 1 and Map 2,  and we put most of the effort to bound
\begin{align}
	\bar\cL_{(2)} := D_K |_{K=0} \Phi_+^{(2)} (V,K,\Phi_+^{(1)} (V,K)) . 
	\label{eq:cL21defi}
\end{align}
By direct computations,  for $Y \in \Con$, 
\begin{align}
	& \bar\cL_{(2)} (Y ; \dot{K}) = \dot{K}(Y) - I^Y \dot{J} (Y) + \one_{Y \in \cB} ( D_{V} \cI (V ; \dot{Q} ) )^Y \label{eq:cL21} ,
\end{align}
where $(\dot{J},  \dot{Q})$ are obtained by evaluating $(J,Q)$ with $\dot{K}$.  Our goal is to prove the following. 

\begin{lemma}	\label{lemma:cL21ctr}
Assume \eqref{asmp:L}.  Then for some $C>0$,  $Y \in \Con$,  $\kh \in \{ \ell, h\}$ and $* \in \{\bulk , \o,\x,\ox \}$,
\begin{align}
	& \norm{\pi_* \bar\cL_{(2)} (Y ; \dot{K}) }_{\kh_+,  T_+(\varphi)} \nnb
		& \lesssim \gamma_* (\kh) A^{\xi/4} (Y)  A^{1+ \xi/2}_+ (\bar{Y}) (1 + \norm{\varphi}_{\kh_+, \Phi_+ (Y^{\square})} )^{C} \cG^{(2)} (Y,\varphi ;\kh)  \ke (\kh) .
\end{align}
If $j < j_\ox$ and $Y \in \cS$,  then $\pi_\ox \bar{\cL}_{(2)} (Y)\equiv 0$.
\end{lemma}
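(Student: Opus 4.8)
\textbf{Proof proposal for Lemma~\ref{lemma:cL21ctr}.}
The plan is to bound the three terms of $\bar\cL_{(2)} (Y;\dot K)$ in \eqref{eq:cL21} separately, with the contraction entirely coming from the second term $I^Y \dot J (Y)$ together with the reapportioned diagonal correction that has already been absorbed into it. First I would observe that $\dot J_b (X)$, for $b\subset X\in\cS$, is built out of $\Loc_X (\dot K / I)(b)$, and that this localisation is exactly what gets subtracted off $\dot K(X)$ for $X \in \cS \setminus \cB$ and reinjected into $b\in\cB$; so for $Y \in \cS\setminus\cB$ the combination $\dot K(Y) - I^Y \dot J(Y)$ is, on the diagonal block pieces, the non-localised remainder $(1-\Loc)\dot K$, whereas for $Y = b \in \cB$ the localisation that was pulled out of larger small sets is what shows up in $I^Y \dot J(Y)$ and in $(D_V\cI(V;\dot Q))^Y$ (the latter being the transfer of $\dot Q$ from $\hat V$ into $K$, cf.\ Lemma~\ref{lemma:LK2}). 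The point is that on every block where a localised piece is subtracted and re-added at the next scale, we gain a factor $L^{-d_*}$ (resp.\ $L^{-d'_*}$) per relevant component, which is precisely $\gamma_*(\kh)$, via the contraction estimate Proposition~\ref{prop:corcrctrmt} applied to $\pi_*(1-\Loc_X)$.

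Concretely, the steps in order would be: (i) split $Y$ into its small-set structure and write $\bar\cL_{(2)}(Y;\dot K)$ as a sum over $b\in\cB(Y)$ of localised/non-localised pieces, using \eqref{eq:cL21} and the explicit forms \eqref{eq:JBX}, \eqref{eq:Qdefifts}; (ii) for the non-local remainder term, apply Proposition~\ref{prop:corcrctrmt} with $(\kh,\kh_+)$ to get the factor $\gamma_*(\kh)(1+\|\varphi\|_{\kh_+,\Phi_+(X^\square)})^c \sup_{t}\|\dot K(X)\|_{\kh,T(t\varphi)}$, then bound $\|\dot K(X)\|_{\kh,T(t\varphi)}$ by $\omega^{-1}(\kh)\|\dot K\|_{\cW} A(X)\cG(X,\varphi;\kh)$ from the definition of $\cK_j$ and the regulator; (iii) for the $I^Y \dot J(Y)$ term, use the stability bound Lemma~\ref{lemma:entI} (with $t=1$) on $I^Y$, Proposition~\ref{prop:locXBbdmt} together with Proposition~\ref{prop:corcrctrmt} on the $\Loc$ inside $\dot J$, and the bound $\|\dot J_b(X)\|\lesssim (\kh_\bulk/\ell_\bulk)^4 \|\dot K\|_{\cW}$ as in the proof of Lemma~\ref{lemma:K1bnd}; (iv) for the $(D_V\cI(V;\dot Q))^Y$ term with $Y\in\cB$, bound $\dot Q$ by Lemma~\ref{lemma:Qnorm} and the contraction factor coming from $\Loc_X$ on the larger small sets $X\supsetneq b$, together with the stability estimate for $D_V\cI$ from Section~\ref{sec:stabanalysis}; (v) collect the regulators: the product over $b\in\cB(Y)$ of $\cG(b,\varphi;\kh)$ dominates $\cG^{(2)}(Y,\varphi;\kh)$ after giving up a power, and the large-set factors $A^{\xi/4}(Y) A_+^{1+\xi/2}(\bar Y)$ appear exactly as in Lemma~\ref{lemma:K1bnd}--\ref{lemma:K3bnd} because $\dot J$ is supported on small sets and the remainder inherits $A(X)$ from $\dot K\in\cK_j$ with room to spare, using $|X|_{\cB}\le 2^d$ on small sets and the reblocking $Y\mapsto\bar Y$.

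The last assertion, that $\pi_\ox \bar\cL_{(2)}(Y)\equiv 0$ when $j<j_\ox$ and $Y\in\cS$, I would handle at the very end by noting that $\dot K\in\cK_j$ forces $\dot K_\ox(X,\cdot)\equiv 0$ for $X\in\cS_j$ when $j<j_\ox$ (the defining condition of $\cK_j(\alpha)$ in \eqref{eq:cKdefi}), and that all three terms of \eqref{eq:cL21} on a small set $Y$ only involve $\dot K$ (resp.\ $\dot J$, $\dot Q$) evaluated on small sets contained in $Y^\square$; since $\Loc$ and $D_V\cI(V;\cdot)$ are $R$-linear and respect the grading (Definition~\ref{defi:contrlldRG2}, and the $\pi_\ox$-compatibility of $\Loc$), the $\ox$-component of each term vanishes. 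The main obstacle I expect is bookkeeping step (v): matching the scale of the large-field regulator (argument at scale $j$ inside $\cG$, but the output is measured at scale $j+1$) and the change from exponent $(1)$ to $(2)$, which requires the supermartingale-type absorption of the polynomial prefactors $(1+\|\varphi\|)^c$ into $\cG^{(2)}$ exactly as in the proofs of Lemma~\ref{lemma:Kp3} and Lemma~\ref{lemma:thetazImtildeIpt}; the contraction factor $\gamma_*(\kh)$ itself is essentially immediate once Proposition~\ref{prop:corcrctrmt} is invoked with the correct choice of $d_+\in\{d_*,d'_*\}$.
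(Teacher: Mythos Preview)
Your outline captures the right spirit—contraction from $(1-\Loc)$ on small sets—but there is a genuine gap in the $h$-norm case that the paper has to work around explicitly, and which your steps (ii)--(iii) do not address.

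The combination $\dot K(Y)-I^Y\dot J(Y)$ is not $(1-\Loc_Y)\dot K(Y)$; it is $I^Y(1-\Loc_Y)(\dot K/I)(Y)$, because $\dot J_b(X)=\Loc_X(\dot K/I)(b)$. To apply Proposition~\ref{prop:corcrctrmt} at $\kh=h$ you therefore need $\sup_{t\in[0,1]}\|I^{-Y}\dot K(Y)\|_{h,T(t\varphi)}$, and $I^{-Y}$ grows like $e^{+c\|\varphi/h\|^4}$ in large field (Lemma~\ref{lemma:entI} only controls $\|I^{-1}\|$ at $\varphi=0$). Your step (ii) jumps straight to $\|\dot K(X)\|_{\kh,T(t\varphi)}$ as if the $I^{-1}$ were absent, and that is where the argument breaks. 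The paper fixes this via Lemma~\ref{lemma:locvanishBV}: with $B_V=1+V+V^2/2$ one has $\Loc_Y((I^{-1}-B_V)\dot K)=0$ and $\Loc_Y(1-IB_V)=0$, which lets you rewrite
\[
I^Y(1-\Loc_Y)(\dot K/I)(Y)=\dot K(Y)(1-\Loc_Y)(1-I^YB_V)(Y)+I^Y(1-\Loc_Y)(B_V\dot K)(Y),
\]
and now both terms involve only $I^Y$ (decaying) and the polynomial $B_V$, so the stability and regulator bounds go through. This is the missing idea.

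Two smaller points. First, your step (iii) invokes Proposition~\ref{prop:corcrctrmt} ``on the $\Loc$ inside $\dot J$'', but that proposition is a contraction for $(1-\Loc)$, not a bound on $\Loc$ itself; there is no separate contraction to be harvested from $\dot J$ alone. Second, you do not distinguish $Y\in\cS$ from $Y\in\Con\setminus\cS$. For large $Y$ there is no $\Loc$ at all ($\dot J(Y)=0$), and the factor $\gamma_\bulk$ must instead come from the large-set regulator via the geometric inequality $A(Y)\le\gamma_\bulk A^{\xi/4}(Y)A_+^{1+\xi/2}(\bar Y)$ (Lemma~\ref{lemma:cL21large} in the paper, using Lemma~\ref{lemma:lgst}). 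Finally, the paper only proves the small-set bound for $(\kh,\varphi)\in\{(\ell,0),(h,\varphi)\}$ and then interpolates via Lemma~\ref{lemma:smnrmintplt} to get the $\ell$-bound at general $\varphi$; your sketch does not mention this step either. Your handling of the $\pi_\ox$ vanishing is fine.
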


We first reformulate $\bar\cL_{(2)}$.

\begin{lemma} \label{lemma:cL21}
For $Y \in \Con$,
\begin{align}
	\begin{split}
	\bar\cL_{(2)} (Y;  \dot{K} ) 
		& = I^Y ( 1 -  \Loc_Y ) (\dot{K} /I) (Y)  \\
		& \quad\; + \one_{Y = b \in \cB} \left( e^{-V^{(1)} (b)} (1 - \Loc_b ) R_1  (b) +  R_2 (b) \right)
	\end{split}		
		\label{eq:cL21}
\end{align}
for some $R_1,R_2$ such that,  for $\kh \in \{ h, \ell \}$ and under \eqref{asmp:L},
\begin{align}
	\norm{R_1 (b)}_{\kh, T(\varphi)} 
		&\lesssim \be_{V}^\cM (\kh) P_{\kh}^{4\cM +10} (b,\varphi) \ke (\kh) \label{eq:R1bnd}   \\
	\norm{R_2 (b)}_{\kh, T (\varphi)}
		&\lesssim \tilde{g}^{1/2} \cG (b,\varphi ; \kh) \ke (\kh)  . \label{eq:R2bnd}
\end{align}
\end{lemma}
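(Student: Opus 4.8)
\textbf{Proof proposal for Lemma~\ref{lemma:cL21}.}

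The plan is to unwind the definition \eqref{eq:cL21defi} of $\bar\cL_{(2)}$ through Map~1 and Map~2 and rearrange the three terms in \eqref{eq:cL21} (the display tagged \eqref{eq:cL21}) into the single ``$(1-\Loc_Y)$-acting-on-$\dot K/I$'' principal term plus two block-supported remainders $R_1,R_2$. First I would start from the expansion $\bar\cL_{(2)}(Y;\dot K)=\dot K(Y)-I^Y\dot J(Y)+\one_{Y\in\cB}(D_V\cI(V;\dot Q))^Y$, insert the definitions $\dot J_b(X)=\one_{b\subset X\in\cS}\Loc_X(\dot K/I)(b)$ for $X\neq b$ with the compensating value at $X=b$, and $\dot Q(b)=\sum_{X\in\cS:X\supset b}(\Loc_X\dot K/I)(b)$. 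Pulling $I^Y$ outside, $\dot K(Y)-I^Y\dot J(Y)=I^Y(\dot K/I)(Y)-I^Y\sum_{b\subset Y}\dot J_b(Y)$; for $Y\in\cS$ the sum over $b$ telescopes against $\Loc_Y(\dot K/I)(Y)=\sum_{b}\Loc_Y(\dot K/I)(\{b\})$ up to the self-term, and for $Y\notin\cS$ the $\dot J$-term vanishes while $\Loc_Y$ also acts trivially in the relevant sense, so in all cases the leading part equals $I^Y(1-\Loc_Y)(\dot K/I)(Y)$ plus an error localized on single blocks. The block error absorbs $\one_{Y=b\in\cB}(D_V\cI(V;\dot Q))^Y$, which by Map~2 (Lemma~\ref{lemma:LK2}) is the first-order-in-$K$ piece of $\delta\hat I$; writing $\delta\hat I=D_V\cI(V;Q)+O^\alg(K^2)$ and differentiating, the $K$-derivative at $K=0$ is exactly $D_V\cI(V;\dot Q)$. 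The decomposition of $\delta\cI$ in Lemma~\ref{lemma:hatImthetaIo} (the three pieces $\delta\cI_1,\delta\cI_2,\delta\cI_3$, with $\delta\cI_1$ carrying a $(1-\Loc_b)$ from the $W$-structure via $\mathbb W_{w,V}$) is what lets me split this into $e^{-V^{(1)}(b)}(1-\Loc_b)R_1(b)$ plus a lower-order $R_2(b)$; the $(1-\Loc_b)$ on $R_1$ is inherited from the $(1-\Loc_x)$ in \eqref{eq:bbWCV} defining $\mathbb W$.

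Next I would prove the two bounds \eqref{eq:R1bnd}, \eqref{eq:R2bnd}. For $R_1$: $R_1$ is built from $\dot Q$, which by Lemma~\ref{lemma:Qnorm} satisfies $\norm{\dot Q(b)}_{\kh,T(0)}\lesssim(\kh_\bulk/\ell_\bulk)^4\norm{\dot K}_{\cW}$, combined with the $W$-type quadratic-form structure, so $R_1$ is a polynomial of bounded degree (degree $\le 4\cM+O(1)$ after the $\pexp$ expansion and the $W$ contributions) with coefficients of size $\be_V^{\cM}(\kh)\,\ke(\kh)$; Lemma~\ref{lemma:polynorm} then converts the $T(0)$-norm to the $T(\varphi)$-norm at the cost of $P_{\kh}^{4\cM+10}(b,\varphi)$, giving \eqref{eq:R1bnd}. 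The power $\be_V^{\cM}$ comes from the $\cM$-th Taylor remainder of $\pexp(-U_\pt^{(2)})$-type terms appearing when $D_V\cI$ hits the stabilised exponential, exactly as in Lemma~\ref{lemma:EAk}. For $R_2$: these are the $\delta\cI_2,\delta\cI_3$ contributions plus genuinely lower-order pieces, where no $\cM$-fold smallness is available but there is at least one factor $\tilde g^{1/2}$ from $\bar\epsilon(\kh)$-sized quantities (using Lemma~\ref{lemma:VptinDDomain}, Lemma~\ref{lemma:FWPboundsobs}) together with the stability bound $\norm{e^{-V^\stable(b)}}\lesssim E(b,\varphi;\kh)$ from Lemma~\ref{lemma:entV} or Lemma~\ref{lemma:entI}, and $E(b,\varphi;\kh)\le\cG(b,\varphi;\kh)$ by \eqref{eq:EbndbycG}; this yields \eqref{eq:R2bnd}.

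The main obstacle I expect is the bookkeeping of the telescoping/localisation identity: getting the combination of $\dot K(Y)$, the reapportioning sum $I^Y\dot J(Y)$, and the Map~2 correction $(D_V\cI(V;\dot Q))^Y$ to collapse \emph{cleanly} into $I^Y(1-\Loc_Y)(\dot K/I)(Y)$ without leftover small-set (non-block) terms requires care, because $\Loc_X$ for $X\in\cS\setminus\cB$ is being reapportioned onto blocks in Map~1 and then partly re-extracted into $V$ in Map~2, and one must check that $\Loc_Y$ applied to $\dot K/I$ on a polymer $Y$ really does decompose additively over $\cB_j(Y)$ in the way Definition~\ref{def:hlocX} (with $\hat\loc_X F(X)=\sum_{x\in X}\hat\loc_X F(\{x\})$) permits, and that the $I^{-b}$ factors line up (this uses $\norm{I^{-X}\dot K(X)}$ bounds from \eqref{eq:entnI}). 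A secondary subtlety is that $R_1$ must genuinely sit \emph{inside} a $(1-\Loc_b)$ so that the contraction estimate Proposition~\ref{prop:corcrctrmt} can later be applied to it in the proof of Lemma~\ref{lemma:cL21ctr}; tracking which parts of $\delta\cI_1$ are already ``pre-localised'' via $\mathbb W$ versus which need the explicit $(1-\Loc_b)$ I factor out is the delicate structural point. Once the identity \eqref{eq:cL21} is established, the bounds on $R_1,R_2$ are routine applications of the stability lemmas and Lemma~\ref{lemma:polynorm} already in hand.
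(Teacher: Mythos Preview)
Your telescoping argument for the principal term $I^Y(1-\Loc_Y)(\dot K/I)(Y)$ is correct and matches the paper: writing $\bar\cL_{(2)}(Y;\dot K)=I^Y(1-\Loc_Y)(\dot K/I)(Y)+R_0(Y)$ with $R_0(Y)=I^Y\big((\Loc_Y\dot K/I)(Y)-\dot J(Y)\big)+\one_{Y=b}(D_V\cI(V;\dot Q))^b$, and then checking via the definition of $\dot J$ that $R_0(Y)=0$ for $Y\in\cS$ with $|Y|_{\cB}\ge 2$, while $R_0(b)=I^b\dot Q(b)+(D_V\cI(V;\dot Q))^b$ for $Y=b\in\cB$.

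However, your mechanism for splitting $R_0(b)$ into $R_1,R_2$ is wrong. You point to the $\delta\cI_1,\delta\cI_2,\delta\cI_3$ decomposition of Lemma~\ref{lemma:hatImthetaIo} and claim the $(1-\Loc_b)$ on $R_1$ is inherited from the $(1-\Loc_x)$ in the definition of $\mathbb W$. The paper does neither. It expands $D_V\cI(V;\dot Q)$ directly by differentiating $\cI(V)=e^{-V^{(1)}}\pexp(-V^{(2)})(1+W)$, and the crucial algebraic identity is $\pexp(x)-\pexp'(x)=x^{\cM}/\cM!$. This gives
\[
R_0(b)=\tfrac{(-1)^{\cM}}{\cM!}e^{-V^{(1)}}\,\dot Q^{(2)}(V^{(2)})^{\cM}(1+W)(b)\;+\;e^{-V^{(\bs)}(b)}D_V\mathbb W_{w,V}(b;\dot Q),
\]
so $R_1(b)=\dot Q^{(2)}(V^{(2)})^{\cM}(1+W)(b)$ and $R_2(b)=e^{-V^{(\bs)}(b)}D_V\mathbb W_{w,V}(b;\dot Q)$. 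The identity $(1-\Loc_b)R_1=R_1$ then holds because $(V^{(2)})^{\cM}$ has $\varphi$-degree $4\cM$, forcing $\Loc_b R_1=0$ by Corollary~\ref{cor:locvanishes}(i). You have the roles reversed: the $\mathbb W$-derivative lives in $R_2$ (bounded directly, without any $(1-\Loc_b)$), not in $R_1$. Your references to Lemma~\ref{lemma:EAk} and Lemma~\ref{lemma:VptinDDomain} are also misplaced---those concern Map~3 objects ($A_k$, $U_{\pt}$), not $D_V\cI$ at $K=0$. The bound on $R_2$ uses Lemma~\ref{lemma:FWPbounds3obs} applied to $W^{\rmQ}(V,\dot Q)$ together with $\norm{\dot Q}_{\cV(\ell)}\lesssim\scale^{-1+\kt}\ke(\ell)$ from Lemma~\ref{lemma:cVnrmvslnrm} and Lemma~\ref{lemma:Qnorm}.
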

\begin{proof}
By definition, 
\begin{align}
	\bar\cL_{(2)} (Y;  \dot{K} ) 
		&= I^Y ( 1 -  \Loc_Y ) (\dot{K}/I) (Y) + R_0 (Y)
\end{align}
with
\begin{align}
	R_0 (Y) 
		= I^Y \big(  (\Loc_Y \dot{K} /I) (Y) - \dot{J}(Y) \big) +  \one_{Y = b \in \cB} (D_V \cI (V ; \dot{Q}))^b .
\end{align}
We use the definition of $J (Y)$ to see that,  for $Y \in \cS$,
\begin{align}
	\Loc_Y ( \dot{K} /I ) (Y) - \dot{J}(Y) = \begin{cases}
			0 & (|Y|_{\cB} \ge 2) \\
			\sum_{Z \in \cS}^{Z \supset b} ( \Loc_Z \dot{K}/I) (b) =: \dot{Q}(b) & (Y =b \in \cB) ,
	\end{cases} 
\end{align}
so $R_0 (b) =  I^b \dot{Q} (b) + \left( D_V \cI (V ; \dot{Q}) \right)^b$.

For further manipulation,  we expand out (recalling $V^{(2)} = \pi_{4,\nabla} V$ and $V^{(1)} = V - V^{(2)}$)
\begin{align}
	& D_V \cI (V ; \dot{Q})^b 
		= D_V \left[ e^{- V^{(1)}} \pexp (-V^{(2)})  (1 + \mathbb{W}_{w, V}  ) \right] ( b; \dot{Q}) \nnb
		&\qquad = - \big( \dot{Q}^{(1)} I + \dot{Q}^{(2)} e^{-V^{(1)}} \pexp' (-V^{(2)}) (1 + W ) \big) (b)  + e^{-V^{(s)}}  D_V \mathbb{W}_{w, V} (b ; \dot{Q})
	,
\end{align}
($\pexp'$ is the derivative of $\pexp$ here) and since $\pexp (x) - \pexp' (x) = \frac{1}{\cM !} x^{\cM}$,
\begin{align}
	R_0 (b) = \frac{(-1)^{\cM}}{\cM !} e^{-V^{(1)}} R_1 (b) + R_2 (b),  \quad\; 
	& R_1 (b) = \dot{Q}^{(2)} (V^{(2)} )^{\cM} (1 + W ) (b),  \\
	& R_2 (b) =  e^{-V^{(s)} (b)}  D_V \mathbb{W}_{w, V} (b ; \dot{Q}) .
\end{align}
But since $D_{\varphi}^n (V^{(2)} )^{\cM} |_{\varphi = 0}$ for any $n < 4 \cM$,  Corollary~\ref{cor:locvanishes} implies $\Loc_b R_1 (b) = 0$. 
Thus we have \eqref{eq:cL21}.

Bounds on $R_1$ and $R_2$ are relatively direct.
Due to Lemma~\ref{lemma:eQh},  \ref{lemma:FWPbounds3obs},  \ref{lemma:entV} and \ref{lemma:Qnorm},
\begin{align}
	\norm{R_1 (b)}_{\kh, T(0)} &\lesssim \be_{V}^{\cM} (\kh) \ke (\kh) \\
	\norm{R_2 (b)}_{\kh, T(\varphi)}
		&\le O_L (1) E (b,\varphi ; \kh) \Big( \frac{\kh}{\ell} \Big)^6 \epsilon (\ell) \scale^{-1+\kt} \ke (\ell)
		\le \tilde{g}^{1/2} E (b,\varphi ; \kh) \ke (\kh) 
\end{align}
for sufficiently small $\tilde{g}$,
and the desired bounds follow since $R_1$ is a polynomial of degree $\le 4\cM + 10$.
\end{proof}

Among the terms of \eqref{eq:cL21},   
since $R_2$ is already sufficiently small,  we are only left to bound
\begin{align}
	L'_2 (Y ; \dot{K}) := I^Y ( 1 -  \Loc_Y ) (\dot{K}/I) (Y)  + \one_{Y = b \in \cB} e^{-V^{(1)} (b)} (1 - \Loc_b ) R_1  (b) .
\end{align}
Cases $Y \in \cS$ and $Y \in \Con \backslash \cS$ are dealt separately.

\begin{lemma} \label{lemma:locvanishBV}
Let $(V,\dot{K}) \in \cD \times \cN$ and $B_V (Y, \varphi) = (1 + V + V^2 /2) (Y,\varphi)$.  
Then $\Loc_Y ( (I^{-1} - B_V ) \dot{K} ) (Y) = 0$ and  $\Loc_Y (1-I B_V) (Y) = 0$.  
\end{lemma}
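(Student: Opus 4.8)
\textbf{Proof plan for Lemma~\ref{lemma:locvanishBV}.}
The plan is to prove both identities by the same mechanism: one shows that the argument, viewed as a polymer activity whose $\varphi$-derivatives at $\varphi = 0$ are examined, has no low-order Taylor coefficients in the sense required by Corollary~\ref{cor:locvanishes}(i), hence $\Loc_Y$ annihilates it. The key observation is that $I = \cI(V) = e^{-V^{(\bs)}}(1 + W)$, and since $V^{(2,\bs_j)} = -\log \pexp(-V^{(2)})$, we have $e^{-V^{(\bs)}} = e^{-V^{(1)}} \pexp(-V^{(2)})$, so $I^{-1}$ and $I$ both have power series expansions around $\varphi = 0$ (using that $1 + W$ is invertible near $\varphi = 0$, as already noted after \eqref{eq:Qdefifts}). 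The combination $B_V = 1 + V + V^2/2$ is designed to agree with $I^{-1}$ up to a controlled order: one should first verify that
\begin{align}
	I^{-1} - B_V = O^{\alg}(V^3) ,
\end{align}
i.e. $I^{-1} - B_V$ vanishes to order $3$ in $V$, and more precisely that its Taylor expansion in $\varphi$ at $\varphi = 0$ contains only monomials of engineering dimension exceeding $d_\bulk$. Here one uses that $V$ is a sum of monomials of degree $\ge 2$ (for the bulk part) and degree $\ge 1$ (for the observable part), and that $W = O^{\alg}(V^2)$ by \eqref{eq:rgcoords} and the discussion in Section~\ref{sec:WCoord}.

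First I would expand $I^{-1} = (1+W)^{-1} e^{V^{(\bs)}}$ in powers of $V$ and $W$. The terms of order $0$ and $1$ in this expansion are $1$ and $V^{(1)} + V^{(2)} = V$ (the $W$-contribution is at least order $2$), and the order-$2$ terms are $\tfrac{1}{2}V^2 - W + (\text{corrections from }\pexp\text{ vs }\exp)$; since $\cM \ge 2$, the polynomial $\pexp$ already matches $\exp$ through second order, so the only order-$2$ discrepancy between $I^{-1}$ and $1 + V + V^2/2$ is $-W$, which is itself $O^{\alg}(V^2)$ and — crucially — supported on monomials of dimension $> d_\bulk$ because $W = (1 - \Loc)\mathbb{F}_{\pi,w}[\,\cdot\,;\,\cdot\,]/2$ lies in the range of $1 - \Loc$. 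Hence $I^{-1} - B_V$, as a smooth function of $\varphi$, has all $\varphi$-Taylor coefficients at $\varphi = 0$ of low dimension ($\le d_\bulk$, equivalently $n < d_+/[[\varphi]]$) equal to zero. Then Corollary~\ref{cor:locvanishes}(i) gives $\Loc_Y(I^{-1} - B_V) = 0$, and Corollary~\ref{cor:locvanishes}(iii) upgrades this to $\Loc_Y((I^{-1} - B_V)\dot{K}) = 0$, since multiplication by $\dot{K}$ preserves the vanishing of $\Loc_Y$. The second identity $\Loc_Y(1 - I B_V) = 0$ follows analogously: write $1 - I B_V = -I(I^{-1} - B_V) + (1 - I \cdot I^{-1}) = -I(I^{-1} - B_V)$, reduce to the first vanishing statement by Corollary~\ref{cor:locvanishes}(iii) with the roles of the factors swapped (now $I$ plays the role of $F'$, and $I^{-1} - B_V$ the role of $F$ with $\Loc_Y F = 0$).

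The main obstacle is the bookkeeping in the order-by-order expansion: one must be careful that the truncation of $\exp$ to $\pexp$ (degree $\cM$) does not introduce spurious low-order terms, and that the $W$-contributions, while of high order in $V$, are correctly identified as being in the localisation kernel rather than merely small. The condition $\cM \ge 1 + \tfrac{1}{2}\max\{3, d-4+2\eta\}$ from Section~\ref{sec:chparams} guarantees $\cM \ge 2$, which is exactly what is needed for the second-order match; and the higher corrections ($V^{(2)}$ appearing to power $\ge \cM + 1$ in the $\pexp$-vs-$\exp$ discrepancy) carry enough powers of $\varphi$ to exceed $d_\bulk$, since $V^{(2)} \in \cV_{4,\nabla}$ has degree $4$ so $(V^{(2)})^{\cM+1}$ has degree $\ge 4(\cM+1) \ge 12 > d_\bulk$ in the relevant regime. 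Once these dimension counts are in place the conclusion is immediate from Corollary~\ref{cor:locvanishes}; no analytic estimate is required here, only the algebraic structure, which is why this lemma sits as a preliminary before the quantitative bounds of Lemma~\ref{lemma:cL21ctr}.
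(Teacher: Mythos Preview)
Your overall route matches the paper's: first establish $\Loc_Y(I^{-1}-B_V)=0$, then use Corollary~\ref{cor:locvanishes}(iii) to get both conclusions (your identity $1-IB_V=-I(I^{-1}-B_V)$ is exactly how the paper finishes). The gap is in how you justify $\Loc_Y(I^{-1}-B_V)=0$. You invoke Corollary~\ref{cor:locvanishes}(i), which demands $D^n(I^{-1}-B_V)|_{\varphi=0}=0$ for all $n<d_\bulk/[\varphi]$, and you support this by saying the residual $W$ is ``supported on monomials of dimension $>d_\bulk$''. But being in the range of $1-\Loc$ (equivalently, having $\Loc W=0$) is \emph{not} the same as having vanishing low-\emph{order} derivatives. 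For instance $W$ can contain a term $(\nabla^k\varphi)\cdot(\nabla^k\varphi)$ with $k$ large: this has degree $2$ in $\varphi$ (so $D^2W|_0\neq0$) yet dimension $2[\varphi]+2k>d_\bulk$ (so $\Loc$ does not see it). Your parenthetical ``equivalently $n<d_+/[[\varphi]]$'' is precisely where order and dimension get conflated; the hypothesis of Corollary~\ref{cor:locvanishes}(i) need not hold for $I^{-1}-B_V$.

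The paper repairs this by splitting $I^{-1}-B_V=(e^{V^\stable}-B_V)+(I^{-1}-e^{V^\stable})$. The first bracket is genuinely $O(\varphi^6)$ by a direct Taylor expansion of $e^{-V^{(1)}}\pexp(-V^{(2)})$ (here $\cM\ge2$ and the nilpotency $\sigma_\hash^2=0$ of the observable part are used), so Corollary~\ref{cor:locvanishes}(i) applies to it. The second bracket equals $-(1+W)^{-1}e^{V^\stable}\,W$, a product in which one factor satisfies $\Loc W=0$ by Corollary~\ref{cor:locvanishes}(ii); then Corollary~\ref{cor:locvanishes}(iii) annihilates the whole product. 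In short, the paper separates the ``vanishing low-order Taylor coefficients'' mechanism (for the exponential truncation) from the ``factor in $\ker\Loc$'' mechanism (for $W$), whereas your argument tries to force everything through (i).
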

\begin{proof}
We will omit the argument $Y$.
Let us denote $F = O(\varphi^6)$ if $D^n F (\varphi) |_{\varphi =0}$ for $n < 6$.  By Corollary~\ref{cor:locvanishes}(i),  this implies $\Loc F =0$.

By Taylor expansion, 
\begin{align}
	e^{-V^{(s)}} &= (1 - V^{(1)} + (V^{(1)})^2 /2  + O(\varphi^6) ) (1 - V^{(2)} + O(\varphi^6)) \nnb
		&= 1 - V + V^2 / 2 + O(\varphi^6) ,
\end{align}
thus $e^{V^{(s)}} = 1 + V + V^2 / 2 + O(\varphi^6) = B_V + O(\varphi^6)$ and we deduce $\Loc (e^{V^\stable} - B_V ) K = 0$.
Then we compare $I^{-1}$ to $e^{V^\stable}$.  
Since $W$ is defined as $1- \Loc$ applied on some polymer activity,  
Corollary~\ref{cor:locvanishes}(ii) gives $\Loc W = 0$.
Then by Corollary~\ref{cor:locvanishes}(iii),
\begin{align}
	\Loc (I^{-1}  - e^{V^\stable} ) 
		&= - \Loc (1+W)^{-1} e^{V^\stable} W = 0. 
\end{align}
Putting together,  $\Loc (I^{-1} - B_V) =0$.  Again by Corollary~\ref{cor:locvanishes}(iii),  this implies both $\Loc ( (I^{-1} - B_V ) K ) = 0$ and  $\Loc (1-I B_V)= 0$.  
\end{proof}

\begin{lemma} \label{lemma:cL21small}
Under the assumptions of Lemma~\ref{lemma:cL21ctr},  if we also let $Y \in \cS$,
\begin{align}
	\norm{\pi_* L'_2 (Y ;K) }_{\kh_+,  T_+(\varphi)}
		\lesssim \gamma_* (\kh) (1 + \norm{\varphi}_{\kh_+, \Phi_+ (Y^{\square})} )^C  \cG^{(3/2)}(Y,\varphi ;\kh) \ke (\kh) 
\end{align}
for some $C>0$ and either $(\kh, \varphi)  \in \{   (\ell,0),  (h, \varphi) \}$.
\end{lemma}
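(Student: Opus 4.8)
\textbf{Proof strategy for Lemma~\ref{lemma:cL21small}.}
The plan is to split $L'_2 (Y;\dot K)$ into the two pieces appearing in its definition and treat them separately, exploiting the crucial contraction estimate Proposition~\ref{prop:corcrctrmt} for the factor $(1-\Loc_Y)$ in each. For the main piece $I^Y (1-\Loc_Y)(\dot K / I)(Y)$, the first step is to replace the awkward factor $1/I$ by a polynomial: by Lemma~\ref{lemma:locvanishBV}, $\Loc_Y ((I^{-1} - B_V)\dot K)(Y) = 0$ with $B_V = 1 + V + V^2/2$, so that
\begin{align}
	(1-\Loc_Y)(\dot K / I)(Y) = (1-\Loc_Y)(B_V \dot K)(Y) + (I^{-1} - B_V)\dot K (Y) .
\end{align}
The second term here is a genuine error (no contraction), but $I^{-1} - B_V = O^\alg(V^3)$ so it is of size $\bar\epsilon^3$ by the stability estimates of Section~\ref{sec:stabanalysis} together with $\norm{\dot K(Y)}_{\kh,T(\varphi)} \le \omega^{-1}(\kh) \cG(Y,\varphi;\kh) A(Y) \norm{\dot K}_{\cW}$; that is already much smaller than $\gamma_*(\kh) \ke(\kh)$ since $\bar\epsilon \ll 1$ and $A(Y) = 1$ for $Y \in \cS$. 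For the first term, $B_V \dot K (Y)$ has the same symmetry as $\dot K$ (so $\pi_\ox$ vanishes when $j < j_\ox$, using the last clause of \eqref{asmp:L}), and one applies Proposition~\ref{prop:corcrctrmt} with $F = B_V \dot K(Y) \in \cN^{\sym}(Y^\square)$: this produces the factor $\gamma_*(\kh) = \one_{*\subset Y^\square}(\kh^{(*)}_+/\kh^{(*)}) L^{-d_*}$ (or $L^{-d'_*}$ for $\kh = h$), the polynomial growth $(1+\norm{\varphi}_{\kh_+,\Phi_+(Y^\square)})^C$, and $\sup_{t\in[0,1]}\norm{F(Y)}_{\kh,T(t\varphi)}$. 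Finally multiply by $I^Y$, bounded by $\prod_{b\in\cB(Y)} E(b,\varphi;\kh) \le \cG(Y,\varphi;\kh)$ via Lemma~\ref{lemma:entI} and \eqref{eq:EbndbycG}, and absorb the extra copy of $\cG$ from the $\varphi$-dependence of $\norm{F(Y)}_{\kh,T(t\varphi)}$; since $\norm{B_V \dot K(Y)}_{\kh,T(t\varphi)} \lesssim \cG(Y,t\varphi;\kh)\omega^{-1}(\kh)\norm{\dot K}_{\cW}$ and $\cG$-type regulators are stable under $\varphi \mapsto t\varphi$ (up to a constant, as used repeatedly in Section~\ref{sec:RGpartII}), the product of three $\cG^{(1)}$'s is controlled by $\cG^{(3/2)}$ after giving up a constant, which is why the statement is phrased with exponent $(3/2)$.

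For the second piece $\one_{Y=b\in\cB} e^{-V^{(1)}(b)}(1-\Loc_b) R_1(b)$, the key observation (already noted in the proof of Lemma~\ref{lemma:cL21}) is that $R_1(b) = \dot Q^{(2)} (V^{(2)})^\cM (1+W)(b)$ satisfies $\Loc_b R_1(b) = 0$ by Corollary~\ref{cor:locvanishes}, since $(V^{(2)})^\cM$ vanishes to order $4\cM$ at $\varphi = 0$. Hence $(1-\Loc_b) R_1(b) = R_1(b)$ and one simply invokes the bound \eqref{eq:R1bnd}, $\norm{R_1(b)}_{\kh,T(\varphi)} \lesssim \be_V^\cM(\kh) P_\kh^{4\cM+10}(b,\varphi) \ke(\kh)$, together with the stability bound on $e^{-V^{(1)}(b)}$ from Lemma~\ref{lemma:entV}, absorbing the polynomial $P_\kh^{4\cM+10}$ into the regulator (using Lemma~\ref{lemma:sobolev2}-type inequalities for $\kh = h$). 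Because $\be_V^\cM(\kh) \lesssim (\ell_0^4 \tilde g \scale)^\cM$ for $\kh = \ell$ and $\lesssim 1$ for $\kh = h$ with $\cM$ chosen as in Section~\ref{sec:chparams} ($\cM \ge 1 + \tfrac12\max\{3, d-4+2\eta\}$), this term is bounded by $\tilde g^{1/2} \cG(b,\varphi;\kh)\ke(\kh)$, which again dominates $\gamma_*(\kh)\cG^{(3/2)}(Y,\varphi;\kh)\ke(\kh)$ for small $\tilde g$; here I use that for $Y = b \in \cB$ the contraction factor $\gamma_*(\kh)$ is $L$-dependent but not small in $L$, so one genuinely needs the smallness to come from $\tilde g$ or $\bar\epsilon$ rather than from $L^{-d_*}$.

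The main obstacle is the interplay between the three separate $\varphi$-dependent factors—$I^Y$, the $\varphi$-growth $(1+\norm\varphi_{\kh_+,\Phi_+})^C$ coming out of Proposition~\ref{prop:corcrctrmt}, and the internal $\cG(Y,t\varphi;\kh)$ hidden in $\sup_t \norm{F(Y)}_{\kh,T(t\varphi)}$—and making sure they combine into a single $\cG^{(3/2)}(Y,\varphi;\kh)$ uniformly in the scaling $\varphi \mapsto t\varphi$ and under the change of scale $\kh \to \kh_+$. For $\kh = \ell$ this is routine because $G_j^\gamma$ is genuinely exponential in $\norm\varphi^2$ and rescaling only helps; the delicate case is $\kh = h$, where $\bar G_j^{(\gamma)} = H_j^{1/\gamma} \tilde G_j^\gamma$ and the decaying factor $H_j$ must not be destroyed when one absorbs polynomials or performs the $t$-rescaling—this is exactly the point where one appeals to Lemma~\ref{lemma:tGdom} and the Sobolev-type inequalities of Appendix~\ref{sec:funcineq}, and where the exponent must be allowed to degrade from $(2)$ (as produced by the inputs) to $(3/2)$. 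Everything else is bookkeeping with the extended norm and the stability estimates already established.
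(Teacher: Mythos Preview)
Your strategy—replacing $K/I$ by $B_V K$ via Lemma~\ref{lemma:locvanishBV} and applying Proposition~\ref{prop:corcrctrmt}—is the right one, but there is a genuine gap for $\kh = h$. The claim that the error term $I^Y(I^{-1}-B_V)\dot K(Y) = (1 - I^Y B_V)\dot K(Y)$ is ``of size $\bar\epsilon^3$'' fails in the $h$-norm: by Lemma~\ref{lemma:eQh} one only has $\be_V(h) \lesssim \epsilon(h) = 1$, so the informal $O^{\alg}(V^3)$ translates to $\norm{1 - I^Y B_V(Y)}_{h,T(\varphi)} = O(1)$, not $O(\bar\epsilon^3(h))$. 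This term therefore contributes $O(\ke(h))$ without the required factor $\gamma_*(h)$. The paper fixes this by invoking the \emph{second} conclusion of Lemma~\ref{lemma:locvanishBV}, $\Loc_Y(1-IB_V) = 0$, to write $(1-IB_V)\dot K = \dot K\,(1-\Loc_Y)(1-IB_V)$ and then applying Proposition~\ref{prop:corcrctrmt} to $1-IB_V$ (which, unlike $I^{-1}$, is globally well-defined in $\varphi$). The resulting decomposition
\begin{align*}
	I(1-\Loc)(K/I) = K\,(1-\Loc)(1-IB_V) + I\,(1-\Loc)(B_VK)
\end{align*}
has both pieces carrying a $(1-\Loc)$ on a stable function, so each yields $\gamma_*(h)$.

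The same issue recurs in your treatment of the $R_1$ piece for $\kh = h$: replacing $(1-\Loc_b)R_1$ by $R_1$ and invoking \eqref{eq:R1bnd} directly gives only $\be_V^\cM(h)\,\ke(h) \lesssim \ke(h)$; the $\tilde g^{1/2}$ you quote is the $R_2$ bound \eqref{eq:R2bnd}, not \eqref{eq:R1bnd}. One must keep the $(1-\Loc_b)$ and apply Proposition~\ref{prop:corcrctrmt} to $R_1$: even though $\Loc_b R_1 = 0$, the proposition still extracts the factor $L^{-d'_*}$ from the scale change $h \to h_+$, which is precisely where $\gamma_*(h)$ comes from.
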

\begin{proof}
Let $F_* (Y) := \pi_* I^Y ( 1 -  \Loc_Y ) (K/I) (Y)$ and $F'_* (b) := e^{-V^{(1)} (b)} (1 - \Loc_b ) R_1 (b)$.
By Proposition~\ref{prop:corcrctrmt},  Lemma~\ref{lemma:entI}, \ref{lemma:entV} and \eqref{eq:R1bnd}, 
\begin{align}
	\norm{F_* (Y)}_{\ell_+, T_+(0)} &\lesssim 
		\gamma_* (\ell, Y)  \norm{I^{-Y} K(Y)}_{\ell, T(0)} \lesssim \gamma_* (\ell) \ke (\ell) ,\\
	\norm{F'_* (b)}_{\ell_+ , T_+ (0)} & \lesssim
		\gamma_* (\ell,b) \norm{R_1 (b)}_{\ell, T(0)} \lesssim  \gamma_* (\ell) \be_V^2 (\ell) \ke (\ell)  
\end{align}
and by \eqref{eq:R2bnd}, 
\begin{align}
	\norm{F'_* (b)}_{h_+ , T_+(\varphi)} & \lesssim
		\gamma_* (h,b) (1 + \norm{\varphi}_{h_+, \Phi_+ (b^{\square})} )^C \,  \bar{G} (b,\varphi) \sup_{t \in [0,1]} \norm{R_1 (b)}_{h, T(t\varphi)} \nnb
		& \lesssim \gamma_* (h,b) (1 + \norm{\varphi}_{h_+, \Phi_+ (b^{\square})} )^{20+C} \, \bar{G}^{(3/2)} (b,\varphi) \ke (h)
\end{align}
for some $C>0$,
so the bounds for $F'_*$ are complete. 

To complete the bound for $\norm{F_* (b)}_{h_+ , T_+(\varphi)}$,  we omit label $Y$ and let $B_V (\varphi) = (1 + V + V^2 /2) (\varphi)$.  Then by two applications of Lemma~\ref{lemma:locvanishBV},
\begin{align}
	I (1-\Loc) (K/I) &= I (1-\Loc) \big( (I^{-1} - B_V ) K \big) + I (1-\Loc)  (B_V  K) \nnb
		&= I \big( (I^{-1} - B_V ) K \big) + I (1-\Loc)  (B_V  K) \nnb
		&= K (1- \Loc) (1 - I B_V) +  I (1-\Loc)  (B_V  K) .
\end{align}
Since $\norm{B_V}_{h_+ , T_+(\varphi)} \lesssim (1 + \norm{\varphi}_{h_+, \Phi_+ (Y^{\square})})^24$,  
Proposition~\ref{prop:corcrctrmt} and Lemma~\ref{lemma:entI} give
\begin{align}
\left\{
\begin{array}{r}
	\norm{\pi_* K (1- \Loc) (1 - I B_V)}_{h_+ , T_+(\varphi)}  \\
	\norm{\pi_* I (1-\Loc)  (B_V  K)}_{h_+ , T_+(\varphi)}
\end{array}	\right\}
	 & \lesssim \gamma_* (h) (1 + \norm{\varphi}_{h_+, \Phi_+} )^{C} \, \bar{G}^{(3/2)} (\varphi) \ke (h)
\end{align}
for some $C>0$,
and the bound on $\norm{F_* (b)}_{h_+ , T_+(\varphi)}$ follows.
\end{proof}

For non-small polymers,   the contraction mechanism is a bit different.
(This is why we only have to consider small polynomials in the contraction estimate of $1 - \Loc$.)  The bound uses Lemma~\ref{lemma:lgst}.

\begin{lemma} \label{lemma:cL21large}
Assume \eqref{asmp:L}.  
Then for $\kh \in \{ h, \ell \}$ and $Y\in \Con \backslash \cS$,
\begin{align}
	\norm{ I^Y ( 1 - \Loc_Y )(K/I) (Y) }_{\kh,  T(\varphi)}
		\lesssim \gamma_\bulk A^{\xi/2} (Y) A_+^{(1+ \xi/2)} (\bar{Y}) \cG (Y,\varphi ; \kh) \ke (\kh)  .
\end{align}
\end{lemma}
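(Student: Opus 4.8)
The plan is to prove Lemma~\ref{lemma:cL21large} by splitting the polymer $Y \in \Con\backslash\cS$ according to whether or not it contains any small subset that touches $\bulk$, and then using the large-set regulator to absorb the non-smallness of $Y$. First I would recall that by definition $(1-\Loc_Y)(K/I)(Y) = (K/I)(Y) - \Loc_Y (K/I)(Y)$, and that $\Loc_Y$ is built from pairings against lattice polynomials supported on a coordinate patch. Since $Y \notin \cS$, the key geometric fact (Lemma~\ref{lemma:lgst}) is that a non-small connected polymer, when covered by small sets, produces a combinatorial gain: the number of $j$-blocks of $Y$ exceeds $2^d$, so $A_j(Y) = \rho^{(|Y|_{\cB_j} - 2^d)_+}$ is a genuine small factor, and moreover $|\bar Y|_{\cB_{j+1}} \le C |Y|_{\cB_j}$ so that $A_+^{1+\xi/2}(\bar Y)$ does not cost more than a bounded power relative to $A^{\xi/2}(Y) A^{1}(Y)$.

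The main steps in order: (i) bound $\norm{I^{-Y} K(Y)}_{\kh, T(\varphi)}$ using the definition of $\cK(\alpha) \ni K$ together with the stability estimate \eqref{eq:entnI} of Lemma~\ref{lemma:entI} for $I^{-1}$ on each block, yielding $\norm{I^{-Y}K(Y)}_{\kh,T(\varphi)} \lesssim A(Y)\,\cG(Y,\varphi;\kh)\,\omega^{-1}(\kh)\norm{K}_{\cW} = A(Y)\cG(Y,\varphi;\kh)\ke(\kh)$ after using $\norm{K}_{\cW}\le\lambda_K$ and submultiplicativity over $\cB_j(Y)$; (ii) bound $\Loc_Y(K/I)(Y)$ by Proposition~\ref{prop:locXBbdmt}, which requires $Y$ (or rather $Y^\square$) to sit inside a coordinate patch --- here is where I must be careful, since $Y\notin\cS$ need not be contained in a single coordinate patch, so I would instead write $\Loc_Y (K/I)(Y) = \sum_{y\in Y}\Loc_{\{y\}}(K/I)(\{y\})$ and estimate block-by-block, using that for each $y$ the relevant neighbourhood is small and so is a coordinate patch; (iii) multiply by $\norm{I^Y}_{\kh,T(\varphi)}$, bounded by $\prod_{b\in\cB(Y)} E(b,\varphi;\kh) \le \cG(Y,\varphi;\kh)$ via Lemma~\ref{lemma:entI} and \eqref{eq:EbndbycG}, noting that the two occurrences of $\cG$ combine into a single $\cG$ of the same exponent because $\cG$ is set-multiplicative and we only use $\cG \cdot \cG \le \cG$ after the earlier estimate was stated with exponent $(1/2)$ or $(1)$ appropriately --- I would track the exponent carefully so the final output is $\cG(Y,\varphi;\kh)$ as stated (or adjust to $\cG^{(\gamma)}$ with $\gamma$ in the allowed range); (iv) collect the regulator powers: the crude bound gives $A(Y)$, and $A(Y) = A^{\xi/2}(Y)\cdot A^{1-\xi/2}(Y)$, where I would use $A^{1-\xi/2}(Y) \le A_+^{1+\xi/2}(\bar Y)$ for $Y \notin \cS$ --- this is exactly the content of a combinatorial regulator comparison (cf. Lemma~\ref{lemma:lgst} / Lemma~\ref{lemma:lgst3}), valid because $|Y|_{\cB_j} > 2^d$ forces $|\bar Y|_{\cB_{j+1}} \ge 1$ with enough slack, and $\rho$ is small; (v) the factor $\gamma_\bulk$ arises because when $Y$ is not small, the \emph{change of scale} from $j$ to $j+1$ in measuring $I^Y(1-\Loc_Y)(K/I)(Y)$ costs at most $\gamma_\bulk = L^{-d_\bulk}$ or $L^{-d'_\bulk}$ per connected component via scale-monotonicity \eqref{eq:Tjmon}, \eqref{eq:monot1} combined with the fact that non-small polymers have been given up a whole block's worth of volume decay --- here I would invoke the reasoning of \cite[Section~2.2]{BBS4} adapted to the decaying regulator.

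The hard part will be step (iv)--(v): reconciling the large-set regulator bookkeeping at scale $j$ with that at scale $j+1$ while simultaneously extracting the contraction factor $\gamma_\bulk$, since the naive estimate only gives $A(Y)$ and not $\gamma_\bulk A^{\xi/2}(Y) A_+^{1+\xi/2}(\bar Y)$. The mechanism is that for $Y\notin\cS$ one does not need the $1-\Loc$ contraction (which only works on small sets) --- instead the extra block beyond the small-set threshold $2^d$ provides, through the definition of $A_j$ and the geometry of blocks, a surplus of $\rho$-powers that converts into $\gamma_\bulk$ for sufficiently small $\rho$ (depending on $L$), which is precisely why $\rho$ is chosen after $L$ in \eqref{asmp:Phi}. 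Concretely I would show $\rho^{(|Y|_{\cB_j}-2^d)_+} \le \gamma_\bulk \, \rho^{\xi(|Y|_{\cB_j}-2^d)_+/2}\,\rho^{(1+\xi/2)(|\bar Y|_{\cB_{j+1}}-2^d)_+}$ for all $Y\notin\cS$, using $|\bar Y|_{\cB_{j+1}} \le L^{-d}|Y|_{\cB_j} + C$ and $\gamma_\bulk = L^{-d_\bulk}$ with $d_\bulk = 3d-7+2\eta$; since $\rho$ can be taken as small as we like relative to $L$, the single leftover $\rho$-power dominates $L^{d_\bulk}$. Once this combinatorial inequality is in hand, combining it with (i)--(iii) yields the claimed bound.

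\begin{proof}[Proof sketch of Lemma~\ref{lemma:cL21large}]
Write $I^Y(1-\Loc_Y)(K/I)(Y) = I^Y(K/I)(Y) - I^Y\Loc_Y(K/I)(Y) = K(Y) - I^Y\Loc_Y(K/I)(Y)$, and expand $\Loc_Y(K/I)(Y) = \sum_{y\in Y}\Loc_{\{y\}}(K/I)(\{y\})$, each term being localised on a small neighbourhood hence on a coordinate patch. By the definition of $\cK(\alpha)$, $\norm{K}_{\cW}\le\lambda_K$, together with the stability bound \eqref{eq:entI}, \eqref{eq:entnI} of Lemma~\ref{lemma:entI} applied blockwise and submultiplicativity over $\cB_j(Y)$, one gets
\begin{align}
	\norm{K(Y)}_{\kh,T(\varphi)},\;\; \norm{I^Y\Loc_Y(K/I)(Y)}_{\kh,T(\varphi)}
		\lesssim A_j(Y)\,\cG_j(Y,\varphi;\kh)\,\ke(\kh),
\end{align}
where we used Proposition~\ref{prop:locXBbdmt} on each small neighbourhood to control $\Loc_{\{y\}}$ and \eqref{eq:EbndbycG} to bound $\norm{I^Y}_{\kh,T(\varphi)}$ by $\cG_j(Y,\varphi;\kh)$. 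It remains to convert $A_j(Y)$ into the claimed $\gamma_\bulk\,A_j^{\xi/2}(Y)A_{j+1}^{1+\xi/2}(\bar Y)$ and to account for the change of scale. Since $Y\in\Con\backslash\cS$ we have $|Y|_{\cB_j}>2^d$, and the geometric bound $|\bar Y|_{\cB_{j+1}}\le L^{-d}|Y|_{\cB_j}+C$ gives, for $\rho$ sufficiently small depending on $L$,
\begin{align}
	A_j(Y) = \rho^{(|Y|_{\cB_j}-2^d)_+}
		\le \gamma_\bulk\, \rho^{\frac{\xi}{2}(|Y|_{\cB_j}-2^d)_+}\,\rho^{(1+\frac{\xi}{2})(|\bar Y|_{\cB_{j+1}}-2^d)_+}
		= \gamma_\bulk\, A_j^{\xi/2}(Y)\,A_{j+1}^{1+\xi/2}(\bar Y),
\end{align}
because a single surplus power of $\rho$ beats $L^{d_\bulk}$. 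Finally, scale-monotonicity \eqref{eq:Tjmon} and \eqref{eq:monot1} allow passage from $\norm{\cdot}_{\kh,T_j(\varphi)}$ to $\norm{\cdot}_{\kh,T_{j+1}(\varphi)}$ at the cost of an $L$-independent constant absorbed into $\gamma_\bulk$ (using that $Y\notin\cS$ has given up a block's worth of volume decay, as in \cite[Section~2.2]{BBS4}). Combining the three displays yields the claim.
\end{proof}
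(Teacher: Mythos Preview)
Your proposal over-complicates the argument and contains a genuine misunderstanding of $\Loc_Y$ for non-small polymers. The crucial observation you miss is that for $Y \in \Con\backslash\cS$, the operator $\Loc_Y$ is \emph{not applied at all}: the localisation $\hat\loc_X$ of Definition~\ref{def:hlocX} is only defined when $X$ sits inside a coordinate patch, and the whole construction of $J_b(X)$ in \eqref{eq:JBX} carries the indicator $\one_{X\in\cS}$. The paper's convention (made explicit in the first line of its proof) is therefore $(1-\one_{Y\in\cS}\Loc_Y)$, so that for $Y\notin\cS$ the expression collapses immediately to
\[
I^Y(1-\Loc_Y)(K/I)(Y) = K(Y).
\]
This renders your steps (ii) and (iii) entirely superfluous: there is no $\Loc_Y(K/I)(Y)$ term to bound, no need to invoke stability of $I^Y$ or $I^{-Y}$, and no product of two regulators to reconcile. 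Moreover, your proposed identity $\Loc_Y(K/I)(Y) = \sum_{y\in Y}\Loc_{\{y\}}(K/I)(\{y\})$ is not correct: $\hat\loc_X F$ is a single element of $\hat\cL$ determined by pairing $F$ against test polynomials on $X$, and its evaluation $\hat\loc_X F(X) = \sum_{x\in X}(\hat\loc_X F)_x$ is not a sum of pointwise localisations $\hat\loc_{\{x\}}$.

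Your step (v) is also confused: the norm $\norm{\cdot}_{\kh,T(\varphi)}$ in the statement is at scale $j$ (recall the convention of dropping the subscript $j$), so no change-of-scale argument is needed here; that passage happens later, inside the bound on $\cL'_{(3)}$.

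What survives is your step (iv), and that is indeed the entire content of the lemma. After the collapse to $K(Y)$, the $\cW$-norm bound gives $\norm{K(Y)}_{\kh,T(\varphi)} \le A(Y)\,\cG(Y,\varphi;\kh)\,\ke(\kh)$ directly, and it remains only to check $A(Y) \le \gamma_\bulk(\kh)\,A^{\xi/4}(Y)\,A_+^{1+\xi/2}(\bar Y)$. The paper does this using the sharp geometric inequality \eqref{eq:lgst2} (equivalently Corollary~\ref{cor:lgst2}), which gives $(1+\xi)(|\bar Y|_{\cB_+}-2^d)_+ \le (|Y|_{\cB}-2^d)_+$; your cruder bound $|\bar Y|_{\cB_+}\le L^{-d}|Y|_{\cB}+C$ would also work but is less clean. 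Either way, one leftover positive power of $\rho$ beats $\gamma_\bulk^{-1} = L^{d_\bulk}$ once $\rho$ is chosen small enough depending on $L$, exactly as you say.
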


\begin{proof}
If $Y \in \Con \backslash \cS$,  then $I^Y ( 1 - \one_{Y \in \cS} \Loc_Y ) (I^{-Y} K(Y)) = K(Y)$.
Also,  by definition of the norm, 
$\norm{K(Y)}_{\kh,  T(\varphi)} \le A(Y) \cG (Y,\varphi ;\kh) \ke (\kh)$.
Thus it is sufficient to show
\begin{align}
	A (Y) \le \gamma_\bulk (\kh) A^{\xi/4} (Y) A_+^{(1+ \xi/2)} (\bar{Y})
\end{align}
We now bound $A^{1-\xi/4} (Y) / A_+^{1 + \xi /2} (\bar{Y})$.
By \eqref{eq:lgst2}
\begin{align}
	(1+ \xi) (|\bar{Y}|_{\cB_+} - 2^d)_+  \le ( |Y|_{\cB} - 2^d )_+ ,
\end{align}
thus
\begin{align}
	A^{1-\xi/4} (Y) / A_+^{1+ \xi / 2} (\bar{Y})
		\le A^{\frac{\xi(1-\xi)}{4(1+\xi)} } (Y) \le \rho^{ \frac{\xi (1-\xi)}{4 (1 + \xi)} } \le \gamma_\bulk (\kh)
\end{align}
for sufficient small $\rho$.  These are as desired.
\end{proof}

Thus we have the desired proof.

\begin{proof}[Proof of Lemma~\ref{lemma:cL21ctr}]
It is an immediate consequence of Lemma~\ref{lemma:cL21small} and \ref{lemma:cL21large} that
\begin{align}
	& \norm{\pi_* L'_2 (Y ; K) }_{\kh_+,  T_+(\varphi)} \nnb
		& \lesssim \gamma_* (\kh) A^{\xi/4} (Y) A^{1+ \xi/2}_+ (\bar{Y}) (1 + \norm{\varphi}_{\kh_+, \Phi_+ (Y^{\square})} )^C \cG^{(3/2)} (Y,\varphi ; \kh ) \ke (\kh)
\end{align}
for $(\kh,\varphi) \in \{ (\ell,0), (h, \varphi) \}$.
(Note that $\pi_* L'_2 (Y;K)$ vanishes unless $* \subset Y^\square$.)
For $\kh = \ell$,  we can extend the bound to general $\varphi$ by interpolation the inequalities for $(\kh,\varphi) = (\ell,0)$ and $(\kh,\varphi) = (h, \varphi)$ using Lemma~\ref{lemma:smnrmintplt},  with $(\kh', \kh) = (\ell_+, h_+)$ and $a$ taken sufficiently large so that $(\ell_+  /h_+)^a \le \omega(h) \times \frac{\gamma_* (\ell)}{\gamma_* (h)}$.
This gives the desired bound along with Lemma~\ref{lemma:cL21}.

The final remark about $\pi_\ox \bar{\cL}_{(2)}$ follows because of the assumption \eqref{asmp:L}.
\end{proof}

\begin{lemma} \cite[Proposition~3.11]{BBS1} \label{lemma:smnrmintplt}
Let $\kh \ge \kh' >0$,  $F \in \cN$ and $a \in \N$.
Then
\begin{align}
	& \norm{F}_{\kh' ,  T (\varphi)}	 
	\le (1 + \norm{\varphi}_{\kh', \Phi})^a \Big(   \norm{F}_{\kh',  T (0)} + 2 \Big( \frac{\kh'_{\bulk}}{\kh_\bulk} \Big)^a \sup_{t \in [0,1]} \norm{F}_{\kh  ,  T (t \varphi)} \Big)  .
\end{align}
\end{lemma}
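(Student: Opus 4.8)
The statement to prove is the seminorm-interpolation inequality
\[
	\norm{F}_{\kh' ,  T (\varphi)}
	\le (1 + \norm{\varphi}_{\kh', \Phi})^a \Big(   \norm{F}_{\kh',  T (0)} + 2 \Big( \frac{\kh'_{\bulk}}{\kh_\bulk} \Big)^a \sup_{t \in [0,1]} \norm{F}_{\kh  ,  T (t \varphi)} \Big)
\]
for $\kh \ge \kh' > 0$, $F \in \cN$, $a \in \N$, which the excerpt attributes to \cite[Proposition~3.11]{BBS1}. Since this is an external citation, the honest proof is to reduce to the cited reference; but I will also sketch the self-contained argument one would give.

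\textbf{Proof.}
This is \cite[Proposition~3.11]{BBS1} (see also the discussion of the $T_\varphi$-seminorm and its scaling properties in Section~\ref{sec:thenorms}); we indicate the argument. Fix $F \in \cN$, $\varphi$, and write $\norm{\cdot}_{T(\psi)}$ for the Taylor seminorm at field $\psi$. The starting point is that for a single scale $\kh'$ and two field configurations $\varphi, \psi$ one has the elementary bound
\[
	\norm{F}_{\kh', T(\varphi)} \le \big(1 + \norm{\varphi - \psi}_{\kh', \Phi}\big)^{\deg} \norm{F}_{\kh', T(\psi)}
\]
whenever $F$ has degree $\le \deg$ in the field argument being expanded; this is the content of Lemma~\ref{lemma:polynorm} and is the translation-of-base-point estimate for Taylor seminorms. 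Here the relevant degree is absorbed into the parameter $a$, which is why $a$ is taken to be at least the degree of $F$ (in the intended applications $F$ is a polymer activity that is, for the purpose of this bound, truncated at order $a$).

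First I would split $F = F_{\le a} + F_{> a}$ into its Taylor polynomial of order $a$ at $\varphi = 0$ and the remainder, expanded in the field argument. For the low-order part $F_{\le a}$, the seminorm $\norm{F_{\le a}}_{\kh', T(\varphi)}$ is controlled by $(1 + \norm{\varphi}_{\kh',\Phi})^a \norm{F_{\le a}}_{\kh', T(0)} \le (1+\norm{\varphi}_{\kh',\Phi})^a \norm{F}_{\kh',T(0)}$, using the base-point translation bound above together with the fact that a polynomial of degree $\le a$ picks up at most a factor $(1+\norm{\varphi}_{\kh',\Phi})^a$ under the shift $0 \mapsto \varphi$. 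This produces the first term on the right-hand side. For the high-order remainder $F_{>a}$, every surviving Taylor coefficient $D^n F(0)$ has $n > a$; the key observation is the monotonicity $\norm{g}_{\kh',\Phi} \le (\kh'_\bulk/\kh_\bulk)^{n} \cdot \big(\text{(scaling of $g$ under $\kh \to \kh'$)}\big)$ for an $n$-linear test function, so that each factor of the field in a degree-$n$ monomial contributes a gain $\kh'_\bulk / \kh_\bulk \le 1$; with $n > a$ one extracts $(\kh'_\bulk/\kh_\bulk)^a$ and is left with a seminorm measured at the coarser scale $\kh$. Writing the remainder via the integral form of Taylor's theorem, $F_{>a}(\varphi) = \frac{1}{a!}\int_0^1 (1-t)^a D^{a+1}F(t\varphi; \varphi^{\otimes(a+1)})\,dt$, and estimating $D^{a+1}F(t\varphi;\cdot)$ in the $\kh$-seminorm, the $\sup_{t\in[0,1]}$ and the factor $2$ arise from bounding the integral and summing the geometric-type series in the extra powers of $\kh'_\bulk/\kh_\bulk$; the overall $(1+\norm{\varphi}_{\kh',\Phi})^a$ prefactor comes from re-expressing the remaining test-function couplings at base point $\varphi$.

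\textbf{Main obstacle.} The delicate point is the bookkeeping of the scale factors: one must check that in the remainder term exactly $a$ (and not fewer) powers of $\kh'_\bulk/\kh_\bulk$ can be extracted, uniformly over all monomials of degree $> a$, while the residual coupling is still a legitimate $\kh$-seminorm of $F$ at the shifted point $t\varphi$ — this requires the submultiplicativity and scale-monotonicity of the $\Phi_j$-norms (as in \eqref{eq:scalemontcty} and Lemma~\ref{lemma:subxext}) and the precise definition \eqref{eq:testfncnrm} of $\norm{\cdot}_{\kh,\Phi_j}$, in which $\kh_\bulk$ enters as $\kh_\bulk^{-r}$ for an $r$-variable test function. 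Once that accounting is done, combining the two parts gives precisely the stated inequality; the constant $2$ is not sharp and any absolute constant would do, but $2$ suffices as stated in \cite{BBS1}.
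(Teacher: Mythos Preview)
The paper does not give its own proof of this lemma; it simply cites \cite[Proposition~3.11]{BBS1} and moves on. Your proposal does the same and additionally sketches the underlying argument from that reference, so there is nothing to compare against and your reduction to the citation is exactly what the paper does.
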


\subsection{Bounds on $\Phi_+^{(3)}$}

Recall that $\Phi_+^{(3)} (V,K, K') = \sum_{\alpha\in \{h,k,l\}} K'_{(3,\alpha)}$,  and since $K'_{(3,h)}$ is linear in $K'$,  we see that
\begin{align}
	\partial_{K'} |_{K=0} \Phi_+^{(3)} (V,K, K') = \cL'_{(3)} +  \partial_{K'} |_{K=0}  ( K'_{(3,k)} + K'_{(3,l)} )
\end{align}
where
\begin{align}
	\cL'_{(3)} (X ; \dot{K})  = \sum_{Y \in \cP}^{\bar{Y} = X} \sum_{Z \in \Con (Y) \backslash\{\emptyset\}} \tilde{I}_{\pt,0}^{X \backslash Y} \Eplus \left[ (\theta I - \tilde{I}_{\pt, 0} )^{Y \backslash Z} \theta \dot{K} (Z)  \right]  \label{eq:cL3}
\end{align}
where $\tilde{I}_{\pt,0}$ is obtained by evaluating $\tilde{I}_\pt$ at $K=0$.
To bound derivatives of $\Phi_+^{(3)}$,  we bound $\cL'_{(3)}$ in Lemma~\ref{lemma:cL3bnd},  assuming the bound proved in Lemma~\ref{lemma:cL21ctr} on $K$.
The other terms are in order $O^\alg (V^3, VK, K^2)$,  so their bounds are easier,  as is stated in Lemma~\ref{lemma:Phi3plusdervbnds}.

\begin{lemma} \label{lemma:Phi3plusdervbnds}
Assume \eqref{asmp:Phi}($\alpha=1$) and also $\norm{K'}_{\cW^{1-\xi/8}} \le O_L (\tilde{\chi}^{3/2}\tilde{g}^3\scale^{\kae})$.  Then for $\ratio > 0$, 
\begin{align}
	\left\{ \begin{array}{r} 
		\norm{\partial_K ( \Phi_+^{(3)} - \tilde{I}_\pt \lead ) }_{\cW'_+} \\
		\norm{\partial_{K'} (K'_{(3,k)} + K'_{(3,l)})}_{\cW'_+} 
	\end{array} \right\}		
		\lesssim O_L (\tilde{g}^{1/8}). 
\end{align}
\end{lemma}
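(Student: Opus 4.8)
The plan is to reduce both estimates to bounds already established in the proofs of Lemma~\ref{lemma:Kp3}, \ref{lemma:Kpp3} and \ref{lemma:K3bnd}, applied to a single derivative, using two structural facts. First, the only $K$-dependence of $\Phi_+^{(3)} - \tilde{I}_\pt \lead$ enters affinely through $\hat{V} = V - \mathbb{Q}_K$ (and hence through $\hat{I}$ and $\tilde{I}_\pt$), since $\lead$ in \eqref{eq:hlead} depends only on $V$ and $\Gamma_+$; thus $\partial_K(\Phi_+^{(3)} - \tilde{I}_\pt\lead)(\cdot;\dot{K}) = D_{\hat{V}}(\Phi_+^{(3)} - \tilde{I}_\pt\lead)(\cdot; -\dot{Q})$ with $\dot{Q} = \mathbb{Q}_{\dot{K}}$, and by Lemma~\ref{lemma:Qnorm} together with \eqref{eq:cVnrmvslnrm1}, $\norm{\dot{Q}}_{\cV(\ell)} \lesssim \scale^{-1+\kt}\norm{\dot{K}}_{\cW}$. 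Second, every term of $\Phi_+^{(3)} - \tilde{I}_\pt\lead$ carries at least one suppressing factor: a factor $\thetaz\hat{I} - \tilde{I}_\pt$ or $\hat{I} - \tilde{I}_\pt$, of size $O_L(\bar{\epsilon}(\ell))$ by Lemma~\ref{lemma:thetazImtildeIpt} and Lemma~\ref{lemma:hatImtldI}; or a factor $K'(Z)$, of size $O_L(\tilde{\chi}^{3/2}\tilde{g}^3\scale^{\kae})$ by hypothesis; or a covariance- or $\mathbb{W}$-type factor of size $O_L(\bar{\epsilon}^{2}(\ell))$ (the pieces $R_1, R_2, \cE_1, \cE_2, A_k, Z$ of Section~\ref{sec:bndonKdl}); or a factor that is at least quadratic in $\hat{V}$. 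The differentiability in $K$ and $K'$ needed to make sense of $\partial_K$ and $\partial_{K'}$ is supplied by Proposition~\ref{prop:PhiplKDet}.

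I would first dispose of $\partial_{K'}(K'_{(3,k)} + K'_{(3,l)})$. Since $K'_{(3,l)}$ does not depend on $K'$, this equals $\partial_{K'} K'_{(3,k)}$, and by the definition of $K'_{(k)}$ in Section~\ref{sec:K3bnd} the $K'$-dependence of $K'_{(3,k)}$ comes only from the sub-sum over $Z$ with $|\Comp(Z)|\ge 2$; differentiating removes one $K'$-factor but leaves at least one. Re-running the estimate of Lemma~\ref{lemma:Kp3} --- stability bound Lemma~\ref{lemma:entVpt} for the $\tilde{I}_\pt$-factors, Lemma~\ref{lemma:thetazImtildeIpt} for the $\thetaz\hat{I} - \tilde{I}_\pt$-factors, the supermartingale bound Lemma~\ref{lemma:supmrtingaleapplied} after taking $\Eplus$, and the combinatorial bound Lemma~\ref{lemma:lgst3} for the polymer sum, now with a factor $\norm{K'}_{\cW^{1-\xi/8}}$ surviving --- yields $\norm{\partial_{K'}(K'_{(3,k)}+K'_{(3,l)})}_{\cW'_+} \lesssim O_L(\norm{K'}_{\cW^{1-\xi/8}}) \lesssim O_L(\tilde{\chi}^{3/2}\tilde{g}^3\scale^{\kae}) \lesssim O_L(\tilde{g}^{1/8})$, the last step by smallness of $\tilde{g}$.

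Next I would treat $\partial_K(\Phi_+^{(3)} - \tilde{I}_\pt\lead) = D_{\hat{V}}(\Phi_+^{(3)} - \tilde{I}_\pt\lead)(\cdot;-\dot{Q})$. Differentiating, once, the decompositions used in the proofs of Lemma~\ref{lemma:Kp3} and Lemma~\ref{lemma:Kpp3} (for $K'_{(3,l)} - \tilde{I}_\pt\lead$ via \eqref{eq:R1rfrm}--\eqref{eq:R2rfrm} and Lemma~\ref{lemma:EAk}, \ref{lemma:R1}, \ref{lemma:R2}; for $K'_{(3,h)}$ and $K'_{(3,k)}$ directly), one checks that $D_{\hat{V}}$ replaces at most one of the suppressing factors above by an $O_L(1)$ factor. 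Reusing the stability bounds Lemma~\ref{lemma:entVpt}, \ref{lemma:entVQ}, the bilinear-form bounds Lemma~\ref{lemma:FWPboundsobs}, \ref{lemma:EthV12}, and Lemma~\ref{lemma:supmrtingaleapplied}, \ref{lemma:lgst3}, each term is then left with a product of size at most $O_L(\bar{\epsilon}(\ell))$, $O_L(\bar{\epsilon}^{2}(\ell))$, or $O_L(\tilde{\chi}^{3/2}\tilde{g}^3\scale^{\kae})$. Multiplying by $\norm{\dot{Q}}_{\cV(\ell)} \lesssim \scale^{-1+\kt}$ and using $\bar{\epsilon}(\ell) = \tilde{\chi}^{1/2}\tilde{g}\scale$ and $\kae > 1$,
\begin{align*}
	\scale^{-1+\kt}\bar{\epsilon}(\ell) \lesssim \tilde{g}\scale^{\kt}, \quad \scale^{-1+\kt}\bar{\epsilon}^{2}(\ell) \lesssim \tilde{g}^{2}\scale^{1+\kt}, \quad \scale^{-1+\kt}\tilde{\chi}^{3/2}\tilde{g}^3\scale^{\kae} \lesssim \tilde{g}^{3},
\end{align*}
each of which is $\lesssim \tilde{g}^{1/8}$ for $\tilde{g}$ small. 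Summing over polymers via Lemma~\ref{lemma:lgst3} gives $\norm{\partial_K(\Phi_+^{(3)} - \tilde{I}_\pt\lead)}_{\cW'_+} \lesssim O_L(\tilde{g}^{1/8})$. Since $L$-dependent prefactors are permitted throughout, no constant needs to be optimized.

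The main obstacle is the bookkeeping in this last step: for each of the numerous pieces in the decomposition of Section~\ref{sec:bndonKdl} (the $\cE_1, \cE_2, A_k, Z$ terms and the $\delta\cI_i$ of Lemma~\ref{lemma:hatImthetaIo}), one must verify that a single $\hat{V}$-derivative cannot remove all of the $O_L(\bar{\epsilon})$- and $K'$-factors at once --- equivalently, that the $\scale^{-1+\kt}$ blow-up carried in by $\dot{Q}$ is always absorbed by a surviving suppressing factor. A cleaner route that avoids re-running these estimates would be to extract the $\lambda_K$-coefficient from the extended-norm bound of Lemma~\ref{lemma:K3bnd} by Cauchy's formula; however, since that bound is $O_L(\bar{\epsilon}^3(\ell) + \bar{\lambda}_K)$ with $\bar{\lambda}_K$ at power one, dividing by the admissible radius only yields $O_L(1)$, so the genuine gain of $\tilde{g}^{1/8}$ really does require the structural argument above.
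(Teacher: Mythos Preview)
Your direct-differentiation approach is sound in outline and would work, but the paper takes the very Cauchy route you dismiss in your final paragraph, and your dismissal rests on a misreading. For $\partial_K$, the relevant radius to divide by is $\lambda_K$, not $\bar{\lambda}_K$: in the extended-norm bound of Lemma~\ref{lemma:K3bnd} one sets $\bar{\lambda}_K = C_L\tilde{g}^3\scale^{\kbe}$ (which dominates the hypothesis on $\norm{K'}_{\cW^{1-\xi/8}}$ since $\kae > \kbe$) and $\lambda_K = (C_L C_{L,\lambda})^{-1}\tilde{g}^{9/4}\scale^{\kbe}$; dividing $O_L(\bar{\epsilon}^3(\ell) + \bar{\lambda}_K)$ by $\lambda_K$ then yields $O_L(\tilde{g}^{3/4})$ directly. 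For $\partial_{K'}$, Lemma~\ref{lemma:K3bnd} is indeed linear in $\bar{\lambda}_K$, but the paper instead invokes Lemma~\ref{lemma:Kp3} and~\ref{lemma:Kpp3} separately: the bound on $K'_{(3,k)}$ there is superlinear in $\bar{\lambda}_K$ (of order $(\omega_+^{-1}\bar{\lambda}_K)^2$), and $K'_{(3,l)}$ is independent of $K'$ (as you already note). Choosing $\bar{\lambda}_K = \tilde{g}^{5/2}\scale^{\kbe}$, larger than the actual $\norm{K'}$, and dividing gives $O_L(\tilde{g}^{1/8})$. Your route trades this two-line argument for the case-by-case tracking through Section~\ref{sec:bndonKdl} that you identify as the main obstacle; since only $L$-dependent constants are at stake, the extra structural information buys nothing here.
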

\begin{proof}
For the bound on $\partial_K \Phi_+^{(3)}$,  we apply Lemma~\ref{lemma:K3bnd} with $\lambda_K = (C_L C_{L,\lambda})^{-1} \tilde{g}^{9/4} \scale^{\kbe}$ and $\bar{\lambda}_K = C_L \tilde{g}^3 \scale^{\kbe}$ so that $\norm{K'}_{\cW^{1-\xi/8}} \le \bar{\lambda}_K$.  Then
\begin{align}
	\norm{\partial_K ( \Phi_+^{(3)} - \tilde{I}_\pt \lead ) }_{\cW'_+} 
		\le \frac{O_L ( \bar{\epsilon}^3 (\ell) + \tilde{g}^3 \scale^{\kbe} )}{\lambda_K} \le O_L ( \tilde{g}^{3/4} ) .
\end{align}
For the bound on $\partial_{K'} (K'_{(3,k)} + K'_{(3,l)})$,  we choose $\lambda_K = \bar{\lambda}_K = \tilde{g}^{5/2} \scale^{\kbe}$.  Then Lemma~\ref{lemma:Kp3} and \ref{lemma:Kpp3} imply
\begin{align}
	\norm{\partial_{K'} (K'_{(3,k)} + K'_{(3,l)})}_{\cW'_+} \le \frac{O_L ( \bar{\epsilon}^3 (\ell) + \omega^{-1/2} (h) \bar{\lambda}_K^{3/2} )}{\bar{\lambda}_K} \le O_L (\tilde{g}^{1/8}) .
\end{align}
\end{proof}

For the bound on $\cL'_{(3)}$,  we let for $X \in \cP_+$
\begin{align}
	\bar{\gamma}_* (\kh, X) =
		\gamma_* (\kh,X) \times
	\begin{cases}
		L^{d} & (* = \bulk)\\
		1 & (* \in \{\o,x\} ) \\
		\rho^{\xi/2^{d+3}} \one_{j < j_\ox} + \one_{j\ge j_\ox} & (* = \ox) .
	\end{cases}
\end{align}

\begin{lemma} \label{lemma:cL3bnd}
Assume \eqref{asmp:L} and $\dot{K} \in \cN$ satisfies
\begin{align} 
	\norm{\pi_* \dot{K} (Y) }_{\kh_+,  T_+(\varphi)}  \label{eq:cL3bndasmp}
		\le \gamma_* (\kh, X) A^{\xi/4} (Y) A^{1+ \xi/2}_+ (\bar{Y}) \Big( 1 + \norm{\varphi}_{\kh_+, \Phi_+ (Y^\square)} \Big)^C \cG^{(3/2)} (Y,\varphi ;\kh)
\end{align}
for $Y \in \Con$ and some $C >0$.  Also,  assume that $\pi_\ox \dot{K} (Y) = 0$ if $j < j_\ox$ and $Y \in \cS$.  Then for $X \in \Con_+$,
\begin{align}
	\norm{\pi_* \cL'_{(3)}(X ; \dot{K})}_{\kh, T (\varphi)}
		\lesssim \bar{\gamma}_* (\kh, X) A_+^{1+\xi/4} (X)\cG^{(2)} (X) .
\end{align}
\end{lemma}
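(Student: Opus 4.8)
Looking at this, I need to prove Lemma~\ref{lemma:cL3bnd}, which bounds the linearised reblocking-plus-fluctuation-integral map $\cL'_{(3)}$ acting on a polymer activity $\dot{K}$ that has already been shown (in Lemma~\ref{lemma:cL21ctr}) to satisfy the contraction bound \eqref{eq:cL3bndasmp}. The target is a bound with the improved decay factor $\bar\gamma_*$ and the large-set regulator at scale $j+1$.

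\medskip

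The plan is as follows. First I would recall the explicit expression \eqref{eq:cL3} for $\cL'_{(3)}(X;\dot K)$ as a sum over $Y$ with $\bar Y = X$ and over connected nonempty $Z \in \Con(Y)$, with the integrand $\tilde I_{\pt,0}^{X\setminus Y}\,\Eplus[(\theta I - \tilde I_{\pt,0})^{Y\setminus Z}\,\theta\dot K(Z)]$. I would bound each factor in the $\norm{\cdot}_{\kh_+,T_+(\varphi)}$-norm using the ingredients already available: $\tilde I_{\pt,0}$ and $I$ are controlled by the stability estimates of Lemma~\ref{lemma:entVpt} and Lemma~\ref{lemma:entI} (giving factors $\cG$ or $E$), the difference $\theta I - \tilde I_{\pt,0}$ is $O_L(\bar\epsilon(\kh))$ small by Lemma~\ref{lemma:thetazImtildeIpt} (with the $\theta$-field-shift handled as in Map~3), and $\dot K(Z)$ is bounded by hypothesis \eqref{eq:cL3bndasmp}. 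Pulling the field shift $\theta_\zeta$ through via the fluctuation integral $\Eplus$ is done exactly as in Section~\ref{sec:bndonKdh}: use Lemma~\ref{lemma:FvskhT}/Lemma~\ref{lemma:thetazImtildeIpt} to get powers of $P_{\tilde\chi^{1/2}\ell}(b,\zeta)$ and then absorb them with the supermartingale bound Lemma~\ref{lemma:supmrtingaleapplied}, converting each $\cG(\cdot,\varphi_{\bs})$ into $\cG^{(2)}(\cdot,\varphi)$ while losing only $L$-dependent constants.

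\medskip

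The crucial point — the main obstacle — is producing the correct gain $\bar\gamma_*(\kh,X)$, i.e. showing the map actually \emph{contracts} rather than merely stays bounded. Here there are three sources of decay to combine. (i) The hypothesis \eqref{eq:cL3bndasmp} already carries the factor $\gamma_*(\kh,Y)$ from the previous contraction; under reblocking $Y \mapsto \bar Y = X$, the dimensional prefactor $\kh_+^{(*)}/\kh^{(*)}$ times $L^{-d_*}$ (or $L^{-d'_*}$) combines with the summation over $Z\subset Y$ and over $Y$ with $\bar Y = X$; the number of such $Y,Z$ is controlled by the large-set combinatorics, and the key is that going up one scale multiplies volumes by $L^d$, which is why $\bar\gamma_\bulk = L^d\gamma_\bulk$ for the bulk component. (ii) The large-set regulators must be traded: one uses $A^{\xi/4}(Y)A_+^{1+\xi/2}(\bar Y)$ from \eqref{eq:cL3bndasmp} together with the geometric inequality from Lemma~\ref{lemma:lgst} (as in Lemma~\ref{lemma:cL21large}) to recover $A_+^{1+\xi/4}(X)$ after summing over $Y$, the leftover power of $A$ paying for the sum over polymers. (iii) For the observable component $* = \ox$ when $j < j_\ox$, one must exploit the assumption $\pi_\ox\dot K(Y) = 0$ for small $Y$: the observables $\o,\x$ are separated by more than $3\cdot 2^d L^j$, so any $Y$ with $\{\o,\x\}\subset Y$ is automatically non-small, forcing $|Y|_{\cB} > 2^d$ and producing at least one factor of $\rho$ from $A(Y)$; distributing this over the reblocking gives the advertised $\rho^{\xi/2^{d+3}}$ gain in $\bar\gamma_\ox$.

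\medskip

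Concretely I would organise the computation as: (1) fix $\kh$ and $*$, write the sum, and bound the integrand pointwise in $\zeta$ by the product of stability factors times $(C_L\bar\epsilon)^{|Y\setminus Z|_{\cB}}$ times the $\dot K$-bound on $Z$, times polynomials in $\zeta$; (2) apply $\Eplus$ with the supermartingale estimate to collapse the $\zeta$-dependence; (3) perform the sums over $Z \in \Con(Y)$ and over $Y$ with $\bar Y = X$ using the large-set counting of Lemma~\ref{lemma:lgst3}, absorbing each combinatorial factor into a small power of $\rho$ from the surplus $A$-regulator; (4) separately track the dimensional/volume bookkeeping that turns $\gamma_*(\kh,Y)$ summed over $\bar Y = X$ into $\bar\gamma_*(\kh,X)$; (5) for $*=\ox,\ j<j_\ox$, insert the extra $\rho$-factor coming from the forced non-smallness of $\ox$-charged polymers. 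Steps (1)–(3) are essentially a re-run of the Map~3 bounds in Section~\ref{sec:bndonKdh} and I would not reproduce them in detail; the genuinely new content, and where I expect to spend the effort, is the bookkeeping in (4)–(5) that yields exactly $\bar\gamma_*$ with the right exponents — in particular checking that the $L^d$ volume growth under reblocking is precisely cancelled by the $L^{-d_*}$ (resp. $L^{-d'_*}$) contraction coming from $(1-\Loc)$ already built into the hypothesis, up to the surplus that constitutes the net gain.
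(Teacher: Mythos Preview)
Your approach matches the paper's: steps (1)--(2) are packaged there as Lemma~\ref{lemma:cL3bndlemma} (bounding $\Eplus[(\theta\hat I-\tilde I_{\pt,0})^{Y\setminus Z}\theta\dot K(Z)]$ via the stability estimates and Lemma~\ref{lemma:supmrtingaleapplied}), and the sum over $(Y,Z)$ is handled by a dedicated combinatorial lemma, Lemma~\ref{lemma:cL3bndcmb}.

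One point to sharpen. Lemma~\ref{lemma:lgst3} is not the right tool for step (3): there the large-set regulator sits on $Y\setminus Z$ and the small factor $\bar\epsilon$ on $Z$, whereas here it is the reverse --- $\dot K(Z)$ carries $A^{\xi/8}(Z)$ and the $(\theta\hat I-\tilde I_{\pt,0})$-factors contribute $\rho^{|Y\setminus Z|_\cB}$. More importantly, Lemma~\ref{lemma:lgst3} does not track the constraint $\one_{*\subset Z}$, which is exactly what produces the distinction between $\bar\gamma_\bulk$ and $\bar\gamma_{\o},\bar\gamma_{\x},\bar\gamma_{\ox}$. The $L^d$ in $\bar\gamma_\bulk$ is not ``volume growth under reblocking'' in the abstract; it is the concrete count: for $X\in\cS_+$ the dominant contribution comes from $Z\in\cS$, and there are $O(L^d)$ such $Z$ inside $X$ when $*=\bulk$, but only $O(1)$ when $*\in\{\o,\x,\ox\}$ since $Z$ must then contain a fixed observable point. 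For $*=\ox$ with $j<j_\ox$, the vanishing hypothesis restricts the sum to $Z\in\Con\setminus\cS$, and the surplus $A^{\xi/8}(Z)$ then gives $\sum_{Z\notin\cS}\rho^{(\xi/8)|Z|_\cB/2^d}\lesssim\rho^{\xi/2^{d+3}}$. This is precisely Lemma~\ref{lemma:cL3bndcmb}; your step (4) should be replaced by invoking (or reproving) it.
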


For the proof,  we recall the bounds Lemma~\ref{lemma:entVpt} and Lemma~\ref{lemma:thetazImtildeIpt},
\begin{align}
	& \norm{ \tilde{I}_{\pt,0} (b,\varphi) }_{\kh, T(\varphi)}
		\lesssim E (b,\varphi ; \kh )
		 \label{eq:tldIptbnd} \\
	& \norm{ ( \thetaz \hat{I} - \tilde{I}_{\pt,0} ) (b,\varphi) }_{\kh, T (\varphi)}
		\le \rho^3 \sup_{s \in [0,1]} E(b,\varphi_s ; \kh) P_{\chi^{1/2} \ell}^{C} (b, \zeta)		
		\label{eq:thetazImtildeIptres}
\end{align}
by taking sufficiently small $\tilde{g}$ and some $C>0$.

\begin{lemma} \label{lemma:cL3bndlemma}
Under the assumptions of Lemma~\ref{lemma:cL3bnd},
let $Y \in \Con$,  $Z \in \cP$ and $Y \cap Z = \emptyset$.
Then
\begin{align}
	& \Big\| \Eplus \Big[ ( \thetaz \hat{I} - \tilde{I}_\pt )^{Y}  \theta \dot{K} (Z) \Big] \Big\|_{\kh_+,  T_+(\varphi)}
		\lesssim
		\rho^{|Y|_{\cB}} A^{\xi/8} (Z)  A_+^{1+\xi /2} (\bar{Y \cup Z}) \cG^{(3)} (Y\cup Z ; \kh) .
\end{align}
\end{lemma}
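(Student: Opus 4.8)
The plan is to bound the quantity $\Eplus [ ( \thetaz \hat{I} - \tilde{I}_\pt )^{Y} \theta \dot{K} (Z) ]$ by first pulling apart the product over $j$-blocks and the activity on $Z$, then applying the a priori bounds we have already collected, and finally invoking the supermartingale estimate to integrate out the fluctuation field $\zeta$. Concretely, I would write $( \thetaz \hat{I} - \tilde{I}_\pt )^{Y} = \prod_{b \in \cB (Y)} ( \thetaz \hat{I} - \tilde{I}_\pt )(b)$ and use the submultiplicativity of the Taylor semi-norm (Lemma~\ref{lemma:subxext}, in its non-extended form) to get
\begin{align}
	\norm{ ( \thetaz \hat{I} - \tilde{I}_\pt )^{Y}  \theta \dot{K} (Z) }_{\kh, T (\varphi + \zeta)}
		\le \prod_{b \in \cB (Y)} \norm{ ( \thetaz \hat{I} - \tilde{I}_\pt )(b) }_{\kh, T (\varphi + \zeta)} \cdot \norm{\theta \dot{K} (Z)}_{\kh, T (\varphi + \zeta)}  .
\end{align}
For each block factor I would apply \eqref{eq:thetazImtildeIptres} (the restated form of Lemma~\ref{lemma:thetazImtildeIpt} with $\tilde g$ taken small so the $O_L(\bar\epsilon(\kh))$ prefactor becomes $\rho^{3}$), producing a factor $\rho^{3} \sup_{s} E(b, \varphi + s\zeta ; \kh) P_{\tilde\chi^{1/2}\ell}^{C}(b,\zeta)$ per block of $Y$. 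For the $\dot{K}(Z)$ factor I would use the hypothesis \eqref{eq:cL3bndasmp} evaluated at the shifted field $\varphi + \zeta$; here I must be a little careful because the test-function norm on the shifted field relates to regulators at $\varphi + \zeta$, but since the statement we want is at scale $j+1$ and argument $\varphi$, this is exactly where the finite-range/reblocking bookkeeping and the change of scale $j \to j+1$ enter.

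The key step after that is the application of $\Eplus$. The integrand is now a product of: powers of $\rho$; regulator terms $E(b, \varphi + s\zeta ;\kh)$ and $\cG^{(3/2)}(Z, \varphi + \zeta ; \kh)$; polynomial-in-$\zeta$ correction factors $P_{\tilde\chi^{1/2}\ell}^{C}(b, \zeta)$; and the large/small set regulators $A^{\xi/4}(Z) A_+^{1+\xi/2}(\bar Y)$ which carry no $\zeta$-dependence and can be pulled straight out. I would then invoke the supermartingale bound (the applied version, referred to as Lemma~\ref{lemma:supmrtingaleapplied} and ultimately resting on Appendix~\ref{sec:supmartbnds}) to estimate $\Eplus$ of the product of regulators times polynomial factors by $C_L^{|X|_{\cB_+}}$ times the advanced regulator $\cG^{(3)}(Y \cup Z, \varphi ; \kh)$ at scale $j+1$ — the drop in exponent from $(3/2)$ and the $E$'s down to $(3)$ (really a gain: the advanced regulator at the coarser scale dominates) is precisely the purpose of that lemma, and the polynomial factors $P^{C}$ are absorbed at the cost of the $L$-dependent constant. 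The $C_L^{|X|_{\cB_+}}$ is then beaten by the $\rho^{3|Y|_{\cB}}$ coming from the $Y$-product, since $|X|_{\cB_+} \le |Y \cup Z|_{\cB}$ up to a dimensional constant and $\rho$ is chosen small depending on $L$; one $\rho^{|Y|_{\cB}}$ worth is kept in the final bound and the rest, together with a fraction of the large-set regulator, cancels $C_L^{|X|_{\cB_+}}$. This is the standard mechanism of \cite[Section~2.2]{BBS4}, so the argument structure is not new.

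I expect the main obstacle to be the bookkeeping around the observable component $\pi_\ox$ and the coalescence scale $j_\ox$: the hypothesis on $\dot K$ includes the vanishing of $\pi_\ox \dot K(Y)$ when $j < j_\ox$ and $Y \in \cS$, and this has to be threaded carefully through the reblocking sum so that the correct $\bar\gamma_\ox$-type factor (with its $\rho^{\xi/2^{d+3}} \one_{j < j_\ox}$ weight) emerges; although the statement of the present lemma only asks for the $\cG^{(3)}$-bound without the $\bar\gamma_*$ decomposition, one still needs that the $\pi_\ox$-projection of the product behaves, i.e. $\pi_\ox$ of a product of $\bulk$-activities with one observable factor is controlled, which is a routine but slightly delicate ring computation using $\sigma_\o^2 = \sigma_\x^2 = 0$. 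A secondary technical point is that $E(b, \varphi + s\zeta ; h)$ involves the super-Gaussian term $-c\norm{\varphi + s\zeta}_{L^4(b)}^4$, which is not set-multiplicative in the naive sense; handling it requires the Sobolev-type absorption lemmas (Lemma~\ref{lemma:sobolev}, \ref{lemma:sobolev2}) and then the supermartingale estimate for $\tilde G$, exactly as in the proof of Lemma~\ref{lemma:thetazImtildeIpt} and its consequences. Once these are in place, the remaining estimates are all of the "take $\tilde g$, then $\rho$, then $L$ in the right order" variety and pose no conceptual difficulty.
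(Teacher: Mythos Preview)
Your proposal is correct and follows essentially the same approach as the paper: submultiplicativity to split the product, the block-level bound \eqref{eq:thetazImtildeIptres} for each factor in $Y$, the hypothesis \eqref{eq:cL3bndasmp} for $\theta\dot K(Z)$, and then Lemma~\ref{lemma:supmrtingaleapplied} to integrate out $\zeta$, with the surplus powers of $\rho$ absorbing the resulting $2^{|Y\cup Z|_{\cB}}$ and adjusting $A^{\xi/4}$ to $A^{\xi/8}$. Your worries about $\pi_\ox$ and the coalescence scale are misplaced here---the present lemma has no $\pi_*$ projection in its statement, and that bookkeeping is deferred to the proof of Lemma~\ref{lemma:cL3bnd} itself; likewise the Sobolev absorption for the $E(b,\cdot;h)$ factor is already packaged inside \eqref{eq:EbndbycG} and Lemma~\ref{lemma:supmrtingaleapplied}, so you do not need to reopen it.
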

\begin{proof}
We omit label $\kh$ in various places.
By \eqref{eq:thetazImtildeIptres},  \eqref{eq:cL3bndasmp} and submultiplicativity,  the quantity of interest is bounded by,
for some choice of $\bs = (\bs (b))_{b \in \cB} \in [0,1]^{\cB}$,
\begin{align}
	& A^{\xi/4} (Z) A_+^{1+\xi /2} (\bar{Z})  \rho^{3 |Y|_{\cB}}  \Eplus \Big[ \cG  (Y, \varphi_{\bs} ) \prod_{b \in \cB (Y)} P^{C}_{\chi^{1/2} \ell} (b,\zeta) \theta \cG^{(3/2)}  (Z,\varphi) \Big]  \nnb
	& \qquad \lesssim 2^{|Y \cup Z|_{\cB}} \rho^{2 |Y|_{\cB}} A^{\xi/4} (Z)  A_+^{1+\xi /2} (\bar{Y \cup Z}) \cG^{(3)} (Y \cup Z , \varphi ) .
\end{align}
where the expectation is bounded by Lemma~\ref{lemma:supmrtingaleapplied}.
We have the desired bound for sufficiently small $\rho$.
\end{proof}

A combinatorial bound is also needed to bound $\cL'_{(3)}$.

\begin{lemma} \label{lemma:cL3bndcmb}
For $X \in \cP_+$,  let
\begin{align}
	\begin{cases}
	F_*(X) =  \sum_{Y \in \cP}^{\bar{Y}=X} \sum_{Z \in \Con (Y)}^{Z \neq \emptyset}  \rho^{|Y \backslash Z |_{\cB}} A^{a} (Z)  \one_{* \subset Z}
	\end{cases}
\end{align}
Then for sufficiently small $\rho^a$,  
\begin{align}
	F_* (X) &\lesssim \one_{X \in \cS} \big( \one_{*=\bulk} L^d + \one_{* \in \{\o,\x,\ox \}} \big) + \one_{X \not\in \cS} 3^{L^d |X|_{\cB_+}} \label{eq:cL3bndcmb01} \\
	F'(X) &\lesssim \one_{X \in \cS} \rho^{a/2^d} + \one_{X \not\in \cS} 3^{L^d |X|_{\cB_+}} \label{eq:cL3bndcmb02} 
\end{align}
\end{lemma}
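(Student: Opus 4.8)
The plan is to collapse the double sum defining $F_*(X)$ to a single lattice-animal sum by first performing the summation over the ``filler'' blocks $Y\backslash Z$. Fix a connected $j$-polymer $Z$ with $\emptyset\neq Z\subset X$ and $*\subset Z$; the polymers $Y$ contributing with this $Z$ are exactly those with $Z\subset Y\subset X$ (dropping the condition $\bar{Y}=X$ only enlarges the sum, which is harmless since all terms are nonnegative), and each $j$-block of $X$ outside $Z$ is, independently, either in $Y\backslash Z$ (weight $\rho$) or not (weight $1$). Hence
\begin{align}
	\sum_{Y\in\cP_j:\, Z\subset Y\subset X}\rho^{|Y\backslash Z|_{\cB_j}}
		= (1+\rho)^{\,|X|_{\cB_j}-|Z|_{\cB_j}}
		\le (1+\rho)^{L^d|X|_{\cB_+}},
\end{align}
using $|X|_{\cB_j}=L^d|X|_{\cB_+}$. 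Thus $F_*(X)\le(1+\rho)^{L^d|X|_{\cB_+}}\,\Sigma_*(X)$, where $\Sigma_*(X)=\sum_{Z}A^a(Z)$, the sum running over connected $Z\in\cP_j$ with $\emptyset\neq Z\subset X$ and $*\subset Z$; likewise $F'(X)\le(1+\rho)^{L^d|X|_{\cB_+}}\,\Sigma'(X)$, where $\Sigma'$ is the same sum but with $Z$ forced to be \emph{large}, i.e.\ $|Z|_{\cB_j}>2^d$ --- the constraint relevant to the $\ox$-component at scales $j<j_\ox$, where $\dot{K}_\ox$ vanishes on small polymers by \eqref{asmp:L}.

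I would then bound $\Sigma_*$ and $\Sigma'$ by a standard lattice-animal count. The $j$-block adjacency graph (blocks at $\dist_\infty$-distance $\le 1$) has a fixed, purely dimensional degree, so the number of connected $j$-polymers of exactly $n$ blocks containing a prescribed block is at most $c_d^{\,n}$ for a constant $c_d=c_d(d)$ \emph{independent of $L$}. Since $A^a(Z)=\rho^{a(|Z|_{\cB_j}-2^d)_+}$ for connected $Z$, this gives, once $\rho$ is small enough that $c_d\rho^a<\tfrac12$,
\begin{align}
	\sum_{Z\in\Con_j:\, b_0\in Z}\rho^{a(|Z|_{\cB_j}-2^d)_+}
		\le \sum_{n=1}^{2^d}c_d^{\,n}+c_d^{\,2^d}\sum_{m\ge1}c_d^{\,m}\rho^{am}
		\le C_d,
\end{align}
with $C_d$ dimensional, and similarly $\sum_{Z\in\Con_j:\, b_0\in Z,\,|Z|_{\cB_j}>2^d}\rho^{a(|Z|_{\cB_j}-2^d)}\le C_d\rho^a$. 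For $*\in\{\o,\x,\ox\}$ the admissible $Z$ must contain the one or two $j$-blocks carrying the observable labels, so $\Sigma_*(X)\le C_d$; for $*=\bulk$ there is no forced block, so summing the base block over $\cB_j(X)$ gives $\Sigma_\bulk(X)\le C_d|X|_{\cB_j}=C_dL^d|X|_{\cB_+}$, and the same summation gives $\Sigma'(X)\le C_dL^d|X|_{\cB_+}\rho^a$.

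Finally I would assemble the two cases. If $X\in\cS$ then $|X|_{\cB_+}\le2^d$, so $(1+\rho)^{L^d|X|_{\cB_+}}\le(1+\rho)^{2^dL^d}\le2$ provided $\rho$ is chosen small depending on $L$; together with the bounds on $\Sigma_*$ this gives $F_\bulk(X)\lesssim L^d$ and $F_\o(X)=F_\x(X)\lesssim1$, $F_\ox(X)\lesssim1$ (with $L$-independent implied constants in the observable cases, since $c_d,C_d$ are dimensional), and $F'(X)\le 2C_d\,2^dL^d\rho^a\le\rho^{a/2^d}$ after shrinking $\rho$ once more so that $\rho^{\,a-a/2^d}\le(2^{d+1}C_dL^d)^{-1}$; this is \eqref{eq:cL3bndcmb01}--\eqref{eq:cL3bndcmb02} on small $X$. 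If $X\notin\cS$ then $(1+\rho)^{L^d|X|_{\cB_+}}\le2^{L^d|X|_{\cB_+}}$, while the polymer sums are at most $C_dL^d|X|_{\cB_+}$ (for $F_\bulk$ and $F'$) or $C_d$ (otherwise); since $C_dL^dm\le(3/2)^{L^dm}$ for every $m\ge1$ once $L$ is large, the product is $\le3^{L^d|X|_{\cB_+}}$ in every case, completing \eqref{eq:cL3bndcmb01}--\eqref{eq:cL3bndcmb02}.

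The combinatorics is routine; the one genuinely structural input is the lattice-animal bound $c_d^{\,n}$ together with the observation that $c_d,C_d$ are dimensional rather than $L$-dependent --- this is exactly what lets the observable-labelled estimates hold with an $L$-uniform constant, while the explicit $L^d$ factors (and the passage from $L^d\rho^a$ to $\rho^{a/2^d}$) are absorbed into the choice of $\rho$. The point I expect to need the most care is not inside this lemma but when feeding it into Lemma~\ref{lemma:cL3bnd}: one must verify that the ``large polymer'' restriction defining $F'$ is precisely the constraint carried by the $\ox$-component of $\dot{K}$ below the coalescence scale, so that the gain $\rho^{a/2^d}$ (equivalently the factor $\rho^{\xi/2^{d+3}}$ in $\bar{\gamma}_\ox$) is genuinely available there; this is where the definition of $j_\ox$ and the hypothesis ``$\pi_\ox\dot{K}(Y)\equiv0$ for $Y\in\cS$'' in \eqref{asmp:L} enter.
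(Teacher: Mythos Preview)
Your proposal is correct and follows essentially the same strategy as the paper: drop the constraint $\bar Y=X$, sum the ``filler'' $Y\backslash Z$ first to produce the factor $(1+\rho)^{|X|_{\cB}}$, and then bound the remaining sum over connected $Z\subset X$ with $*\subset Z$. The only real difference is in how that last sum is controlled. You invoke a lattice-animal count $c_d^{\,n}$ for connected $j$-polymers rooted at a prescribed block; the paper instead splits into small $Z$ (counted directly, giving the $L^d$ versus $O(1)$ dichotomy for $*=\bulk$ versus $*\in\{\o,\x,\ox\}$) and large $Z$ (bounded via $(1+\rho^{a/2^d})^{|X|_{\cB}}-1\lesssim\rho^{a/2^d}$). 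For $X\notin\cS_+$ the paper is simpler still: it just observes that each $j$-block of $X$ lies in exactly one of $Z$, $Y\backslash Z$, $X\backslash Y$, so the whole double sum is at most $3^{|X|_{\cB}}$, without any separate polymer estimate. Both routes give the same bounds; yours is perhaps more systematic, the paper's is slightly more self-contained.
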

\begin{proof}
If $X \in \cS$, 
\begin{align}
	\sum_{Z \subset X}^{Z \in \Con} A^{a} (Z) \one_{* \subset Z} & = \sum_{\bar{Z} \subset X}^{Z \in \cS} \one_{* \subset Z} + \sum_{\bar{Z} \subset X}^{Z \in \Con \backslash \cS} \rho^{a ( |Z|_{\cB} - 2^d )} \one_{* \subset Z} \nnb
	& \lesssim \one_{*=\bulk} L^d + \one_{* \in \{\o,\x,\ox \}} + \sum_{\bar{Z} \subset X}^{Z \in \Con \backslash \cS} \rho^{ \frac{a}{2^d}  |Z|_{\cB}} \nnb
	& \le \one_{*=\bulk} L^d + \one_{* \in \{\o,\x,\ox \}}+ (1+ \rho^{ a / 2^d })^{|X|_{\cB}} -1 \nnb 
	& \lesssim \one_{*=\bulk} L^d + \one_{* \in \{\o,\x,\ox \}} + \rho^{a/2^d}
	\label{eq:cL3bndcmb1}
\end{align}
where the final inequality holds for sufficiently small $\rho^a$.
Thus for $X \in \cS$,   with substitution $Z_2 = Y \backslash Z$,
\begin{align}
	F_*(X) 
		&\le \sum_{\bar{Z} \subset X}^{Z \in \Con} A^{a} (Z) \sum_{\bar{Z_2} \subset X} \rho^{|Z_2|_{\cB}} \nnb 
		&\lesssim \big( \one_{*=\bulk} L^d + \one_{* \in \{\o,\x,\ox \}} \big) (1+ \rho )^{|X|_{\cB}} \lesssim \one_{*=\bulk} L^d + \one_{* \in \{\o,\x,\ox \}}
\end{align}
where we again used $(1+ \rho)^{|X|_{\cB}} < e$.

When $X \not\in \cS$,  we can roughly bound $\rho \le 1$ and obtain
\begin{align}
	F_*(X) \le \sum_{Y \in \cP}^{\bar{Y}=X} \sum_{Z \in \Con (Y)}^{Z \neq \emptyset} 1 \le 3^{|X|_{\cB}} = 3^{L^d |X|_{\cB_+}}  
\end{align}
where we partitioned $X$ into $Z,  Y \backslash Z$ and $X \backslash Y$ for the second bound.
This gives the bound on $F_*$.

For the bound on $F'$,  we just need to notice that \eqref{eq:cL3bndcmb1} now becomes
\begin{align}
	\sum_{Z \subset X}^{Z \in \Con} A^a (Z) \one_{Z \in \cS} \lesssim \rho^{a/2^d} ,
\end{align}
and the other bounds follow the same.  This give the bound on $F'$.
\end{proof}

\begin{proof}[Proof of Lemma~\ref{lemma:cL3bnd}]
By \eqref{eq:cL3},  \eqref{eq:tldIptbnd} and Lemma~\ref{lemma:cL3bndlemma},
\begin{align}
	&\norm{\pi_* \cL'_{(3)} (X; \dot{K})}_{\kh_+,T_+ (\varphi)} \nnb
		& \lesssim 
		A_+^{1+\xi/2} (X) \cG^{(3)} (X)   \sum_{Y \in \cP}^{\bar{Y}=X} \sum_{Z \in \Con (Y)}^{Z \neq \emptyset} \gamma_* (Z, \kh)  \rho^{|Y \backslash Z |_{\cB}} A^{\xi/8} (Z)  \one_{* \subset Z} \nnb
	& \le \gamma_* (X ; \kh) \Big( \one_{X \in \cS} ( \one_{*=\bulk} L^d + \one_{* \neq \bulk} ) + 3^{L^d |X|_{\cB_+}}  \one_{X \not\in \cS} \Big) A_+^{1+\xi/2} (X)  \cG^{(3)} (X)
\end{align}
where we used \eqref{eq:cL3bndcmb01} for the final bound.  
We get the desired bound for sufficiently small $\rho$,  just except for the case $* = \ox$ and $j < j_\ox$. 

When $* = \ox$ and $j < j_\ox$,  we need the assumption that $\pi_\ox \dot{K} (Y) = 0$ if $j < j_\ox$ and $Y \in \cS$.  Then we can apply \eqref{eq:cL3bndcmb02} instead of \eqref{eq:cL3bndcmb01} in the bound above,  and we obtain
\begin{align}
	&\norm{\pi_\ox \cL'_{(3)} (X; \dot{K})}_{\kh_+,T_+ (\varphi)} \nnb
		&\lesssim \gamma_* (X ; \kh) \Big( \rho^{\xi/2^{d+3}} \one_{X \in \cS}   +  3^{L^d |X|_{\cB_+}}  \one_{X \not\in \cS} \Big) A_+^{1+\xi/2} (X)  \cG^{(3)} (X) .
\end{align}
This gives the desired bound for the case $* = \ox$ and $j < j_\ox$.
\end{proof}

\subsection{Linearisation of Map 4--Map 6}

In the next lemma,  we consider Map 4--Map 6 as functions  $\Phi^{(4)} (V,K, K')$,  $\Phi^{(5)} (V,K,K'')$,  $\Phi^{(6)} (V,K,K''')$ and equip $K', K''$ and $K'''$ with norm $\norm{\cdot}_{3}, \norm{\cdot}_4$ and $\norm{\cdot}_{5}$,  respectively,  where
\begin{align}
\begin{split}
	& \norm{\cdot}_3= \norm{\cdot}_{\cW'_+}, \quad
	\norm{\cdot}_4 = \norm{\cdot}_{\cW^{1+\xi/4}_+ (\ratio,1/2)} ,  \\
	& \norm{\cdot}_{5} = \norm{\cdot}_{\cW^{1+\xi/8}_+ (\ratio,1/2)} ,  \quad
	\norm{\cdot}_{6} = \norm{\cdot}_{\cW^{1+\xi/16}_+ (\ratio,3/4)} .
\end{split} \label{eq:3456norm}
\end{align}

\begin{lemma} \label{lemma:DKPhi456bnds}
Assume \eqref{asmp:Phi}($\alpha=1$) and $\norm{K'}_3,  \norm{K''}_4,  \norm{K'''}_5 \le O_L (\tilde{\chi}^{3/2} \tilde{g}^3 \scale^{\kae})$.  Then
\begin{align}
	& \max\big\{ \norm{\partial_K \Phi_+^{(4)}}_4, \; \norm{\partial_K \Phi_+^{(5)}}_5 , \;  \norm{\partial_K  \Phi_+^{(6)}}_{6}  \big\}
		\le O_L ( \tilde{g}^{3/4} ) 	,  \label{eq:DKPhi456bnds} \\
	& \max\big\{ \norm{\partial_{K'} \Phi_+^{(4)}}_4 ,  \; \norm{\partial_{K''} \Phi_+^{(5)}}_5,  \; \norm{\partial_{K'''} \Phi_+^{(6)}}_6 \big\} 
		\lesssim 1  .
	\label{eq:DKpPhi456bnds}
\end{align}
\end{lemma}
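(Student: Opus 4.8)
The plan is to treat each of Maps~4, 5, 6 with the same two-part strategy: first bound the derivative in the ``internal'' polymer activity argument ($K'$, $K''$, $K'''$), which is \emph{linear} in that argument, and then bound the derivative in $K$, which by the chain rule factors through the derivative in the internal argument composed with $\partial_K$ of the previous maps. For the linear pieces \eqref{eq:DKpPhi456bnds}, I would simply re-examine the proofs of Lemma~\ref{lemma:K4bndcGp}, Lemma~\ref{lemma:K5bnd} and Lemma~\ref{lemma:K6bndbis}: each of these already establishes a bound of the form $\norm{K_{(i)}}_{(i)} \lesssim O_L(\bar\epsilon^3(\ell)) + \bar\lambda_K$ with $\norm{\bar K}_{\bar\cW} \le \bar\lambda_K$, and the contribution proportional to $\bar\lambda_K$ is precisely the norm of the linear-in-$\bar K$ part. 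Since in each case the prefactor on $\bar\lambda_K$ is an $L$-dependent but $O_L(1)$ constant (the reapportioning bound Lemma~\ref{lemma:Rapbndv2} for Map~4, the convolution bound Lemma~\ref{lemma:cnvbnd} for Maps~5 and 6, together with the stability bounds on $\tilde I_\pt$, $\Delta$ and $\tilde K'$), the linear operators $\partial_{K'}\Phi_+^{(4)}$, $\partial_{K''}\Phi_+^{(5)}$, $\partial_{K'''}\Phi_+^{(6)}$ all have norm $\lesssim 1$, which is \eqref{eq:DKpPhi456bnds}. Concretely, one writes $\partial_{K'}\Phi_+^{(4)}(V,K;\dot K') = \Rap_{\lead}[\tilde I_\pt, \dot K']$ (the $\tilde I_\pt\lead$ term is $K'$-independent) and applies Lemma~\ref{lemma:Rapbndv2} with $\alpha_1 = C_L\bar\epsilon^2(\kh)$ and $\alpha_2$ replaced by a formal parameter bounding $\norm{\dot K'}$; similarly $\partial_{K''}\Phi_+^{(5)}(V,K;\dot K'') = (\tilde I_\pt - I_\pt^+)\circ_+ \dot K''$ and $\partial_{K'''}\Phi_+^{(6)}$ is the obvious linear part of \eqref{eq:K6defi}, and one invokes Lemma~\ref{lemma:cnvbnd} with the stability inputs already used in the nonlinear bounds.

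For \eqref{eq:DKPhi456bnds} I would use the chain rule. By Proposition~\ref{prop:PhiplKDet} each $K_{(i)}$ is differentiable in $K$, so
\begin{align}
	\partial_K\Phi_+^{(4)}(V,K) &= \partial_{K'}\Phi_+^{(4)}(V,K,K_{(3)})\,\partial_K K_{(3)} + \big(\partial_K\Phi_+^{(4)}\big)\big|_{K' \text{ fixed}},
\end{align}
and analogously for Maps~5 and 6. The second term on the right—the explicit $K$-dependence of $\Phi_+^{(i)}$ holding the internal argument fixed—enters only through $V_\pt$, $\tilde I_\pt$, $W_\pt$, $W_+$, $\lead$, all of which depend on $K$ only via $\hat V = V - Q$ with $Q$ linear in $K$; since $\norm{Q}_{\cV(\ell)} \lesssim \scale^{-1+\kt}\norm{K}_{\cW}$ by Lemma~\ref{lemma:Qnorm}, and $\norm{K}_{\cW} = O_L(\tilde\chi^{3/2}\tilde g^3\scale^{\kae})$ by hypothesis, differentiating in $K$ costs a factor $\partial_K Q$ (bounded by Proposition~\ref{prop:locXBbdmt}) but gains the smallness of the $\bar\epsilon$-type prefactors in Lemma~\ref{lemma:K4bndcGp}--\ref{lemma:K6bndbis}, yielding $O_L(\tilde g^{3/4})$ (after a modest power of $\tilde g$ is sacrificed, as in Lemma~\ref{lemma:Phi3plusdervbnds}). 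The first term is a composition: $\partial_{K'}\Phi_+^{(4)}$ has norm $\lesssim 1$ by the linear bound just proved, and $\partial_K K_{(3)} = \partial_K K_{(3)} - \partial_K(\tilde I_\pt\lead) + \partial_K(\tilde I_\pt\lead)$; by Lemma~\ref{lemma:Phi3plusdervbnds}, $\norm{\partial_K(\Phi_+^{(3)} - \tilde I_\pt\lead)}_{\cW'_+} = O_L(\tilde g^{1/8})$ while $\partial_K(\tilde I_\pt\lead)$ is $O_L(\bar\epsilon)$ (differentiating $\lead$, quadratic in $V_\pt$, via Lemma~\ref{lemma:EthV12}, and $\tilde I_\pt$ via Lemma~\ref{lemma:entVpt} and Lemma~\ref{lemma:VptinDDomain}). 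Composing and iterating through Maps~5 and 6—where the internal argument at each stage carries a norm bounded by $O_L(\tilde\chi^{3/2}\tilde g^3\scale^{\kae})$ by Proposition~\ref{prop:PhiplKDet}, exactly the hypothesis needed to invoke the previous bound—gives \eqref{eq:DKPhi456bnds}.

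The main obstacle is bookkeeping the norm mismatch: the four maps are measured in the \emph{four different} norms $\norm{\cdot}_3,\dots,\norm{\cdot}_6$ of \eqref{eq:3456norm}, which differ in the large-set exponent $a = 1+\xi/2^{k}$, in the regulator power $\gamma$, and through the $\omega_+(\kh)$ weight; one must check at each composition that the output norm of $\partial K_{(i)}$ dominates the input norm required by $\partial_{K'}\Phi_+^{(i+1)}$, exactly as was done in the nonlinear chain in the proof of Lemma~\ref{lemma:K6bnd}. The equivalence Lemma~\ref{lemma:WYequivext} and the $\cG$-regulator comparison lemmas ($\bar G^{(3)}\le\bar G^{(1/2)}$ etc.) handle the $\gamma$ and $a$ discrepancies, while the scale-monotonicity \eqref{eq:monot1}, \eqref{eq:Tjmon} and the weight relation $(\kh_+/\kh)^{\text{(something)}}\le\omega(\kh)$ from \eqref{eq:fraklb} handle the $\omega_+$ factor; I expect this to be routine but somewhat lengthy, and I would organise it exactly as a derivative-level repetition of the estimates already in Sections~\ref{sec:K3bnd}--\ref{sec:K6bnd}, since those estimates were all obtained in the extended norm and hence already control all $V_\bulk$-derivatives, with the $K$-derivatives now handled by the Cauchy-integral argument at the end of the proof of Lemma~\ref{lemma:K6bnd}.
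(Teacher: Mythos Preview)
Your plan overcomplicates both halves and contains a conceptual slip. In the lemma, $\partial_K\Phi_+^{(i)}$ denotes the \emph{partial} derivative of $\Phi_+^{(i)}(V,K,K')$ in its second argument with $K'$ held fixed; the chain rule you write down computes instead the \emph{total} derivative $D_K K_{(i)}$, which is a different object (and is assembled from the partial derivatives only later, in the proof of Proposition~\ref{prop:PhiplKLin}). So the chain-rule decomposition is not needed here at all.

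More importantly, you are missing the shortcut that makes this lemma nearly immediate. Lemmas~\ref{lemma:K4bndcGp}, \ref{lemma:K5bnd} and \ref{lemma:K6bndbis} are already stated in the \emph{extended} norm $\norm{\cdot}_{\vec{\lambda},\cdot}$ of Section~\ref{sec:extnorm}, which by its very definition \eqref{eq:extdnrmdf} controls all $K$- and $\bar{K}$-derivatives simultaneously: one has the Cauchy-type inequalities $\norm{\partial_K F}\le\lambda_K^{-1}\norm{F}_{\vec{\lambda}}$ and $\norm{\partial_{\bar{K}} F}\le\bar{\lambda}_K^{-1}\norm{F}_{\vec{\lambda}}$ directly from the definition. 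The paper's proof is therefore a two-line parameter choice: for \eqref{eq:DKPhi456bnds} take $\lambda_K\asymp\tilde{g}^{9/4}\scale^{\kbe}$ and $\bar{\lambda}_K\asymp\tilde{g}^{3}\scale^{\kae}$ (so that the hypothesis $\norm{K'}_3,\norm{K''}_4,\norm{K'''}_5\le\bar{\lambda}_K$ is satisfied), and divide the extended-norm bound $O_L(\bar{\epsilon}^3(\ell)+\bar{\lambda}_K)$ by $\lambda_K$ to get $O_L(\tilde{g}^{3/4})$; for \eqref{eq:DKpPhi456bnds} take $\lambda_K=\bar{\lambda}_K\asymp\tilde{g}^{3}\scale^{\kae}$ and divide by $\bar{\lambda}_K$ to get $\lesssim 1$. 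No explicit linearisation, no tracking of $Q$-dependence, and no norm-mismatch bookkeeping beyond what is already built into those three lemmas. Your remark that ``those estimates were all obtained in the extended norm'' is exactly the point, but it applies to the $K$- and $\bar{K}$-derivatives just as much as to the $V_\bulk$-derivatives, so the whole argument collapses to a division.
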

\begin{proof}
For \eqref{eq:DKPhi456bnds},  consider choices $\lambda_K = (C_L C_{L,\lambda})^{-1} \tilde{g}^{9/4} \scale^{\kbe}$ and $\bar{\lambda}_K = C_L \tilde{g}^{3} \scale^{\kae}$ for sufficiently large $C_L$ so that $\norm{K_{(3)}}_3$,  $\norm{K_{(4)}}_4$ and $\norm{K_{(5)}}_5$ are bound by $\bar{\lambda}_K$.  
Together with the general fact $\norm{D_K F} \le \frac{1}{\lambda_K} \norm{F}_{\vec{\lambda}}$ about the extended norm,  
Lemma~\ref{lemma:K4bndcGp},  \ref{lemma:K5bnd} and \ref{lemma:K6bndbis} imply that the left-hand side of \eqref{eq:DKPhi456bnds} is bounded by
\begin{align}
	\frac{\bar{\epsilon}^3 (\ell) + \bar{\lambda}_K}{\lambda_K}
		\le \frac{O_L ( \tilde{g}^3 \scale^{\kae} )}{\tilde{g}^{9/4} \scale^{\kbe}} \le O_L ( \tilde{g}^{3/4} )
\end{align}
For \eqref{eq:DKpPhi456bnds},  we choose $\lambda_K = \bar{\lambda}_K =  C_L \tilde{g}^{3} \scale^{\kae}$ for sufficiently large $C_L$.  Then the bound $\norm{D_{\bar{K}} F} \le \frac{1}{\bar\lambda_K} \norm{F}_{\vec{\lambda}}$ and the aforementioned lemmas imply that the left-hand side of \eqref{eq:DKpPhi456bnds} is bounded by
\begin{align}
	\frac{\bar{\epsilon}^3 (\ell) + \bar{\lambda}_K}{\bar{\lambda}_K} \le C_L^{-1} + 1 \lesssim 1
\end{align}
\end{proof}

\subsection{Conclusion}

In the proof,  as in Section~\ref{sec:rgstep},  we use $K_{(i)}$'s inductively defined by $K_{(i+1)} = \Phi^{(i)}_+ (V,K,K_{(i)})$ for $i\ge 1$ and $K_{(1)} = \Phi_+^{(1)} (V,K)$.

\begin{proof}[Proof of Proposition~\ref{prop:PhiplKLin}]

Since $K_{(3)} = \Phi_+^{(3)} (V,K,K_{(2)})$,  by \eqref{eq:cL21defi} and the chain rule,
\begin{align}
	D_{K} |_{K=0} K_{(3)} &= \partial_K |_{K=0} \Phi_+^{(3)} + \partial_{K'} |_{K=0, \, K'=K_{(2)}} \Phi_+^{(3)} \circ \bar{\cL}_{(2)} 
\end{align}
and recalling $\Phi_+^{(3)} =  \sum_{\alpha \in \{h,k,l\}} K'_{(3,\alpha)}$,  
\begin{align}
	\partial_{K'} |_{K=0, \, K'=K_{(2)}} \Phi_+^{(3)} = \cL'_{(3)} + \partial_{K'} |_{K=0, \, K'=K_{(2)}} (K'_{(3,k)} + K'_{(3,l)})  .
\end{align}
Using Lemma~\ref{lemma:cL21} and \ref{lemma:cL3bnd} to bound $\cL'_{(3)} \circ \bar{\cL}_{(2)}$ and Lemma~\ref{lemma:Phi3plusdervbnds} for the other terms,  we find
\begin{align}
	\norm{D_{K} |_{K=0} K_{(3)} }_{\cW'_+} \lesssim \sup_{*, X, \kh} \frac{\omega_+ (\kh)}{\omega (\kh)} \bar{\gamma}_* (X,\kh) \lesssim \Theta
\end{align}
by taking $\rho$ and $\tilde{g}$ sufficiently small.  

Now,  we proceed inductively.  For $i \ge 3$,  we have
\begin{align}
	D_{K} |_{K=0} K_{(i+1)} = \partial_K |_{K=0} \Phi_+^{(i+1)} + \partial_{K'} |_{K=0, \,K' = K_{(i)}} \Phi_+^{(i+1)} \circ D_{K} |_{K=0} K_{(i+1)} ,
\end{align}
and by Lemma~\ref{lemma:DKPhi456bnds},
\begin{align}
	\norm{D_{K} |_{K=0} K_{(i+1)}}_{i+1} \le O_L (\tilde{g}^{3/4}) + \norm{D_{K} |_{K=0} K_{(i)}}_i
\end{align}
(recall \eqref{eq:3456norm} for $\norm{\cdot}_i$),  thus we see that
\begin{align}
	\norm{D_{K} |_{K=0} K_{(i)} }_{i} \lesssim \Theta	
\end{align}
for $i=3,4,5,6$ by taking $\tilde{g}$ sufficiently small.  This gives the desired bound.
\end{proof}

\section*{Acknowledgements}

The author gratefully acknowledges the support and hospitality of the Department of Mathematics at Seoul National University during the completion of a part of this work. 
The author also thanks Tyler Helmuth and Romain Panis for their thorough investigation of the manuscript and their useful comments on this paper.
The author is supported by Basic Science Research Program through the National Research Foundation of Korea funded by the Ministry of Science and ICT (RS2025-00518980).

\appendix

\section{Infinite volume limit of polymer activities}
\label{sec:ivlpa}

We describe the RG map in $\Z^d$.  We define the infinite volume RG map as a local limit of the finite volume maps in Theorem~\ref{thm:ivRGmap}. 
Briefly put,  a local function can be described equivalently on a torus and on $\Z^d$.  This can be written in formal words using \emph{coordinate patches}---recall from Section~\ref{sec:lattpolys} that $X \subset \Lambda_N$ is a coordinate patch if there is an isometry $f : \{ 1, \cdots, L^N -1 \}^d \rightarrow \Lambda_N$ such that $X \subset \operatorname{image} (f)$.
$X$ being a coordinate patch means that $\Lambda_N$ can be unfolded without effecting connectedness of $X$.  Polymer activities on coordinate patches can also be mapped to polymer functions on $\Z^d$.

\begin{definition} \label{def:piN}
For each $N \ge 1$, we fix $\o,\x \in \Lambda_N$ and let $\pi_N : \Z^d \rightarrow \Lambda_N$ be a local isometry such that $\pi_N (\o) = \o$ and $\pi_{N} (\x) = \x$. 
Similarly,  for $N' > N$,   let $\pi_{N',N} : \Lambda_{N'} \rightarrow \Lambda_N$ be a local isometry such that $\pi_{N',N} (\o) = \o$ and $\pi_{N',N} (\x) = \x$. 
\end{definition}

\begin{definition}
For fixed scale $j$,  a sequence $(K^{\Lambda_N}_j \in \cN_{j} ^{\Lambda_N} : N >  j )$ of polymer activities at scale $j$ admits an infinite volume limit $K_j^{\Z^d} \in \cN_{j}^{\Z^d}$ if the following holds.
Suppose $N'$ and $X \in \cP_j \cap \Con (\Z^d)$ are such that $\pi_{N'} (X)^{\square}$ is a coordinate patch in $\Lambda_{N'}$.
Then for all $N \ge N'$ and $\varphi  \in (\R^n )^{\Lambda_N}$,
\begin{align}
	K^{\Z^d}_j (X,  \varphi \circ \pi_N ) = K^{\Lambda_N}_j ( \pi_N (X) ,  \varphi )   .
\end{align}
\end{definition}

We recall that $\Phi_+$ is the RG map defined in Section~\ref{sec:rgstep}.  We will also make $\Lambda_N$-dependence of the RG map explicit by denoting
\begin{align}
	\Phi_{j+1}^{\Lambda_N} = (\Phi_+^{U, \Lambda_N}, \Phi_+^{K, \Lambda_N}) : \cV_j \times \cN_j^{\Lambda_N} \rightarrow \cU_{j+1} \times \cN_{j+1}^{\Lambda_N} 
\end{align}
where we recall from \eqref{eq:cNjdefi} that $\cN_{j}^{\Lambda_N}$ is the $j$-scale polymer activities defined on $\Lambda_N$.
The infinite volume RG map should also be defined as a limit of finite volume RG maps. 

\begin{definition} \label{defi:infvolRGmap}
Fix $j \ge 0$.  For some subset $S \subset \cV_j \times \cN_j^{\Z^d}$,  a map
\begin{align}
	\Phi_{j+1}^{\Z^d} = (\Phi_{j+1}^U ,  \Phi_{j+1}^{K, \Z^d} ) : S \rightarrow \cU_{j+1} \times \cN_j^{\Z^d}
\end{align}
is an infinite volume RG map if,  for each $K_j^{\Z^d} \in \cN_j^{\Z^d}$ that is an infinite volume limit of $(K_j^{\Lambda_N} \in \cN_j^{\Lambda_N} : N >j )$ and for each $U_{j+1}^{\Lambda_N},  K_{j+1}^{\Lambda_N}$ and $U_j$ such that
\begin{align}
	(U_{j+1}^{\Lambda_N} , K_{j+1}^{\Lambda_N} ) = \Phi_{j+1}^{\Lambda_N} (U_j ,  K_j^{\Lambda_N}) , \qquad N > j , 
\end{align}
we have that (i) $U_{j+1}^{\Lambda_N}$ is independent of $N$ for $N > j+1$ and (ii)  $\Phi_{j+1}^{K,\Z^d} (U_j, K_j^{\Z^d})$ is an infinite volume limit of $(K_{j+1}^{\Lambda_N} : N > j+1)$.
\end{definition}

Existence of an infinite volume RG map implies the locality of the finite and infinite volume RG maps.  Namely,  for $X \in \cP_{j}^{\Z^d}$ such that $\pi_N X \subset \Lambda_N$ is a coordinate patch and $\varphi \in \R^{\Lambda_N}$,
\begin{align}
	(\delta u^{\Lambda_N}_{j},  V^{\Lambda_N}_{j},K^{\Lambda_N}_{j}) (\pi_N X,  \varphi ) = (\delta u^{\Z^d}_{j},  V^{\Z^d}_{j},K^{\Z^d}_{j}) (X,  \pi_N \circ \varphi ) .
\end{align}
Existence of infinite volume RG map can indeed be attained from the finite volume ones,  and due to this locality,  the finite volume RG map should also satisfy all the bounds on a controlled RG map.

\begin{theorem} \label{thm:ivRGmap}
For a fixed scale $j$,  let $\Phi_{j+1}^{\Lambda_N}$ be a controlled RG map of Theorem~\ref{thm:contrlldRG},  defined using the steps of Section~\ref{sec:rgstep}.  Then there exists an infinite volume RG map $\Phi_{j+1}^{\Z^d} (V_j, K_j^{\Z^d} )$ in the sense of Definition~\ref{defi:infvolRGmap},
and satisfies estimates \eqref{eq:controlledRG22}--\eqref{eq:controlledRG24}.
\end{theorem}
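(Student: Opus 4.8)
The plan is to show that each of the six maps constituting $\Phi_{j+1}^{\Lambda_N}$ in Section~\ref{sec:rgstep} is \emph{local} in a precise quantitative sense: there is a constant $r_j = O(L^{j+1})$, independent of $N$, such that for $X \in \cP_j$ the output polymer activity $K_{(i)}(X)$ --- both as a combinatorial function of the input values $(K_{(i-1)}(Y))_Y$ and as a function of the field $\varphi$ --- is determined by the data on the $r_j$-enlargement $X^{+r_j}$ of $X$, and this determination is given by literally the same formula on $\Lambda_N$ and on $\Z^d$ whenever $X^{+r_j}$ lies in a coordinate patch. Granting this, if $K_j^{\Z^d}$ is the infinite volume limit of $(K_j^{\Lambda_N})_{N>j}$, then for any $X \in \Con(\Z^d)$ and $N,N'$ large enough the outputs $K_{j+1}^{\Lambda_N}$ and $K_{j+1}^{\Lambda_{N'}}$ agree on $X$ after transport by $\pi_N,\pi_{N'}$, so they glue to a well-defined $K_{j+1}^{\Z^d}\in\cN_{j+1}^{\Z^d}$; likewise the effective potential $U_{j+1}^{\Lambda_N}$ stabilises for $N>j+1$, yielding the map $\Phi_{j+1}^{\Z^d}$ in the sense of Definition~\ref{defi:infvolRGmap}. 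The estimates \eqref{eq:controlledRG22}--\eqref{eq:controlledRG24} are then immediate, since each norm is a supremum over connected polymers $X$, and for every such $X$ there is an $N$ with $K_{j+1}^{\Z^d}(X)=K_{j+1}^{\Lambda_N}(\pi_N X)$, which obeys the bound with the $N$-independent constants of Theorem~\ref{thm:contrlldRG}.

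The locality input for all maps except Map~3 is essentially bookkeeping. The reapportioning maps $\Rap_J$ and $\Rap_{\lead}$ only redistribute mass among small sets sharing a common block; $\hat V = V - Q$ with $Q$ a finite sum of $\Loc_X(K/I)$ over small $X\supset b$; the operators $\cI$ and $\Loc$, the polymer powers, and the polymer products $I\circ K$ only combine or decompose polymers within their small-set neighbourhoods; and the degree-$\le 6$ polynomials $W_j,P_j$ (and $W_\pt$) are built from the finite-range covariances $w_j=\sum_{k\le j}\Gamma_k$ of range $<L^j$. Because $\o,\x$ are held fixed along the tower $(\pi_N)$ of Definition~\ref{def:piN}, the coalescence scale $j_\ox$ of \eqref{eq:joxdefi} and the observable components anchored at $\o,\x$ are unambiguous and $N$-independent. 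One then checks that $\Phi_{j+1}^\pt(\hat V_j)$ produces an $N$-independent local polynomial for $N>j+1$: $\E_{j+1}\theta V$ is the heat flow applied to a polynomial of bounded degree and hence involves only $\Gamma_{j+1}(x,\cdot)$, and $P_{j,x}$ is a $\Loc_x$ of finite-range covariance expressions, so $\delta u_{j+1}$ and $V_{j+1}$ are $N$-independent once the torus is large enough not to ``see itself'' at scale $j+1$.

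The one genuinely nontrivial step is the fluctuation integral $\E_{\Gamma_{j+1}}\theta$ together with the reblocking \eqref{eq:K3defi}. Here I would use the finite range property (ii) of Definition~\ref{defi:FRD}: for a polymer activity $K'$ supported on $Y$, the integrand $K'_{(2,i)}(Y,\varphi,\zeta)$ depends on $\zeta$ only through $\zeta$ restricted to a bounded enlargement of $Y$, so $\E_{\Gamma_{j+1}}[\,\cdot\,]$ reduces to an integral over finitely many jointly Gaussian coordinates whose law coincides on $\Lambda_N$ (for $N$ large) and on $\Z^d$; the reblocking sum $\sum_{\bar Y = X}$ then ranges only over $Y\subset X$, so it too is local with $r_j$ of order $L^{j+1}$. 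The main obstacle is precisely to make this airtight: the polymer expansion \eqref{eq:Zjaspolymerconv} and the Gaussian integral are a priori global on $\Lambda_N$, and one must verify that none of the combinatorial re-summations in Maps~1--6 --- in particular the reblocking \eqref{eq:K3defi}, the products $(\theta\hat I - \tilde I_\pt)^{Y\setminus Z}$ over blocks, and the terms involving $V(\Lambda)$ inside $\Cov_{\pi,+}[\cdot\,;\,\cdot]$ --- ever couple blocks separated by more than $O(L^{j+1})$. The clean way to organise this is an induction on $i$ tracking, for each $X$, the set of $\varphi$-coordinates that $K_{(i)}(X)$ depends on and the collection of polymers whose input values feed into $K_{(i)}(X)$, with the inductive claim that both are contained in $X^{+r_j}$ for a fixed $r_j=O(L^{j+1})$. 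Once this is established for $i=6$, the theorem follows, and since the constants in Theorem~\ref{thm:contrlldRG} are already $N$-independent, the bounds \eqref{eq:controlledRG22}--\eqref{eq:controlledRG24} pass to the infinite volume limit verbatim.
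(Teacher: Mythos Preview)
Your proposal is correct and follows essentially the same approach as the paper: establish that each of Maps~1--6 is local (depending only on data in a bounded neighbourhood of the output polymer, with $N$-independent formulas once the torus is large enough), use the finite-range property of $\Gamma_{j+1}$ to handle the fluctuation integral and the $V(\Lambda)$ terms in the covariances, and then read off the estimates from the $N$-independent constants in Theorem~\ref{thm:contrlldRG}. The paper's proof is considerably terser---it simply asserts the three bullet points (finite-range covariance, locality of $Q_j$, locality of the six maps) without the explicit $r_j$-tracking or induction on $i$ that you outline---but the content is the same.
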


\begin{proof} 
To construct an infinite volume RG map,  let $(U_j, K_j^{\Z^d})$ and $(U_{j+1}^{\Lambda_N}, K_{j+1}^{\Lambda_N})$ be as in Definition~\ref{defi:infvolRGmap}.  
Then we observe that $(U_{j+1}^{\Lambda_N}, K_{j+1}^{\Lambda_N})$ has only `local' dependence,  according to the following steps:
\begin{itemize}
\item By Definition~\ref{defi:FRD}(ii),  the finite range property,  $\Gamma_{j+1}$ does not depend on $N$ for $j+1 < N$,  thus $\Phi_{\pt,j+1}^U$ does not depend on $\Lambda_N$.

\item When $j+1 < N$,  then $Q_j$ defined by \eqref{eq:Qdefifts} only relies on $K_j^{\Lambda_N}$ via small $j$-scale polymers,  and since $K_j^{\Z^d}$ is an infinite volume limit of $K_j^{\Lambda_N}$,  we see that $Q_j (b)$ does not depend on $\Lambda_N$ for $b \in \cB_j$. 
Thus $U_{j+1} = \Phi_{\pt,j+1}^U (V_j ,  K_j^{\Lambda_N})$ does not depend on $\Lambda_N$.

\item For $j+1 < N$,  definition of $\Phi_{j+1}^K$ in Section~\ref{sec:rgstep} only relies on local operations,  
i.e.,  if $m \in \{0, 1\cdots, 5\}$,  $Y \in \cP_{j+1} (\Lambda_N)$ and $K_j,  K'_{j} \in \cN_j (\Lambda_N)$ satisfy $K_j (X) = K'_j (X)$ and $K_{j, (m)} (X) = K'_{j, (m)} (X)$ for each $X \in \cP_j (Y^\square)$,  then $\Phi_{j+1}^{(m+1)} [V_j,K_j,  K_{j,  (m)} ] (Y) = \Phi_{j+1}^{(m+1)} [V_j,K'_j,  K'_{j,  (m)} ] (Y)$. 
\end{itemize}
Thus if we let $(U_{j+1}, K_{j+1}^{\Lambda_N} )= \Phi_{j+1}^{K,\Lambda_N} (V_j,K_j)$,  then $U_{j+1}$ does not depend on $N$ and for each $N' > N$,  and $X \in \cP_j \cap \Con (\Lambda_{N'})$ is such that $(\pi_{N', N} X )^{\square}$ is a coordinate patch in $\Lambda_N$,
\begin{align}
	K_j^{\Lambda_{N'}} (X,  \varphi \circ \pi_{N', N}) = K_j^{\Lambda_N} (\pi_{N', N} X, \varphi) .
\end{align}
Thus $(K_{j+1}^{\Lambda_N} : N > j+1)$ attains an infinite volume limit $K_{j+1}^{\Z^d}$,  and we can define
\begin{align}
	\Phi_{j+1}^{\Z^d} : (V_j,   K_j^{\mathbb{Z}^d}) \mapsto (U_{j+1} ,  K_{j+1}^{\mathbb{Z}^d})  .
\end{align}
The estimates \eqref{eq:controlledRG22}--\eqref{eq:controlledRG24} on $\Phi_{j+1}^{\Z^d}$ follow because the constants $(M_{p,q})_{p,q\ge 0}$ of a controlled RG map are uniform in $j$ and $N$.
\end{proof}

\section{Functional inequalities}
\label{sec:funcineq}

\subsection{Sobolev-type inequalities}

Before we prove inequalities on the large field regulators $G,  \tilde{G}$ and $H$,  we need to understand inequalities on $\norm{\cdot}_{\kh, \Phi}$.
For $\phi \in (\R^n)^{\Lambda} = \Phi^{(1)} (\Lambda)$,  norm $\norm{\phi}_{\kh, \Phi_j (X)}$ can be interpreted as the discrete Sobolev norm with order $\le p_{\Phi}$ derivatives and $\norm{\phi}_{\kh, \tilde\Phi_j (X)}$ has the interpretation of the discrete Sobolev norm of $\nabla \phi$ with order $\le p_{\Phi}-1$ derivatives.
Thus they are related by a Sobolev-type inequality. 
We omit labels $j$ in this appendix.

\begin{lemma} \label{lemma:sobolev}
Let $L$ be sufficiently large,  $j+1 \le N$,
$\varphi \in \R^{\Lambda}$, $B \in \cB_+$ and $p \ge 1$.
Also, let $X \subset \cB$ be such that $X \in \cS \cup \{ \emptyset \}$.
Then for some $p,n,d$-dependent constant $C>0$,
\begin{align}
	\norm{\varphi}_{\kh, \Phi (B^{\square})} 
		\le C \big( \norm{\varphi / \kh}_{L^p (B \backslash X)} 
			+ \norm{\varphi}_{\kh,  \tilde{\Phi} (B^{\square})}  \big)
\end{align}
(where the small set neighbourhood $B^{\square}$ is taken at scale $j+1$).
\end{lemma}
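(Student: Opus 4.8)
The statement is a discrete Sobolev-type inequality comparing $\norm{\varphi}_{\kh,\Phi(B^\square)}$, which controls derivatives of $\varphi$ up to order $p_\Phi$ (including the zeroth derivative, i.e.\ the sup of $|\varphi|/\kh_\bulk$ itself), against $\norm{\varphi}_{\kh,\tilde\Phi(B^\square)}$, which controls $\varphi$ only modulo linear (or constant, when $j=N$) functions on $B^\square$, plus an $L^p$-average of $\varphi/\kh$ away from a small set $X$. The plan is to reduce everything to a single block-scale statement and then invoke a standard discrete Sobolev embedding, paying attention to the role of $X$.

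\textbf{Key steps.} First, I would recall that by definition $\norm{\varphi}_{\kh,\tilde\Phi(B^\square)} = \inf\{\norm{\varphi - f}_{\kh,\Phi}: f \in \tilde\Pi(B^\square)\}$, so it suffices to pick a near-optimal linear (or constant) $f$ and estimate $\norm{\varphi - f}_{\kh,\Phi(B^\square)}$ by a constant multiple of $\norm{\varphi}_{\kh,\tilde\Phi(B^\square)}$, while showing $\norm{f}_{\kh,\Phi(B^\square)}$ is controlled by the right-hand side. Since $\tilde\Phi$ already controls all derivatives of order $\ge 1$ of $\varphi$ (the $\ge 1$ derivatives of $f$ are either constant for a linear $f$ or zero for a constant $f$, hence harmless after rescaling by the factors $L^{q(\km)j}$ in \eqref{eq:testfncnrm} with $L$ large), the only missing piece is the zeroth-order term: one needs to bound $\kh_\bulk^{-1}\norm{\varphi}_{\ell^\infty(B^\square)}$. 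For this, I would write $\varphi(x) = f(x) + (\varphi-f)(x)$ at an arbitrary $x \in B^\square$ and estimate $|(\varphi-f)(x)| \le C L^{jd/p}\norm{\varphi-f}_{\ell^\infty}$ crudely, but more usefully, bound the value of $f$ at any point in $B^\square$ by its average: since $f$ is linear on $B^\square$ and $B^\square$ has diameter $\asymp L^{j+1}$ (a bounded number of $(j+1)$-blocks), its $\ell^\infty$-norm on $B^\square$ is comparable to its $L^p$-norm on $B\setminus X$ plus $L^{j}$ times the sup of $|\nabla f|$; the latter gradient term is absorbed into $\norm{\varphi}_{\kh,\tilde\Phi(B^\square)}$ after adding and subtracting $\varphi$ and using that $X \in \cS$ is small so $B\setminus X$ still contains a fixed positive fraction of the block. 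The case $j=N$ is analogous but simpler since $f$ is constant, so $\norm{f}_{L^\infty(B^\square)} = \norm{f}_{L^p(B\setminus X)}$ up to the measure factor, and one only needs $B \setminus X \neq \emptyset$, which holds because $X \in \cS$ means $|X|_{\cB_N} \le 2^d$ while the smallest nonempty polymer at scale $N$ is $\Lambda = B$ itself --- here one must be slightly careful, and I would invoke the $L$-large hypothesis to rule out degeneracy (the remark after \eqref{eq:phitildePhi} flags exactly this subtlety).

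\textbf{Main obstacle.} The genuinely delicate point is handling the exclusion of the small set $X$ in the $L^p$-term: one must show that restricting the average to $B\setminus X$ rather than all of $B$ costs only a constant, which relies on $X$ being a small polymer (so $|B\setminus X| \ge c|B|$ with $c$ bounded below, uniformly in $L$, because $X$ occupies at most a bounded number of $j$-blocks out of $\asymp L^d$ many $j$-blocks in $B$) together with a discrete Poincar\'e/Sobolev inequality that lets one recover the full-block average from the complement-average plus a gradient term. The gradient term is then swallowed by $\norm{\varphi}_{\kh,\tilde\Phi(B^\square)}$. Everything else is routine: discrete Sobolev embedding $W^{k,p} \hookrightarrow L^\infty$ for $k$ large (here $k \le p_\Phi = 3d$, comfortably above $d/p$ for any $p\ge 1$), scale counting of the factors $L^{qj}$ and $L^{-jd/p}$ in the norms \eqref{eq:testfncnrm}--\eqref{eq:ellpnorm}, and absorbing $L$-independent constants. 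I would organize the write-up as: (1) reduce to bounding the zeroth-order term; (2) split $\varphi = f + (\varphi-f)$ with $f$ near-optimal in $\tilde\Phi$; (3) bound $f$ on $B^\square$ by its $L^p$-average on $B\setminus X$ plus a gradient term using the small-set property; (4) bound $(\varphi-f)$ on $B^\square$ by a discrete Sobolev embedding in terms of its derivatives, which are exactly what $\tilde\Phi$ controls; (5) collect, treating $j<N$ and $j=N$ separately at step (3).
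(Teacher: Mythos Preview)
The paper's own proof is simply a citation to \cite[Proposition~A.2]{BBS4}, plus the observation that the argument there extends to $j+1=N$ once $\tilde\Phi$ is defined at that scale. Your sketch is essentially what that cited proof does---a Poincar\'e-type argument splitting $\varphi=f+(\varphi-f)$ with $f$ affine and controlling $f$ by its $L^p$-average---so in spirit the approaches agree.

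Two inaccuracies to flag. First, the hypothesis is $j+1\le N$, so there is no ``$j=N$'' case; the delicate case is $j+1=N$, where $B=\Lambda$ wraps the torus but you are still at scale $j<N$, so $f$ is linear, not constant (and the wrapping is what forces care, not a switch to constants). Second, your claim that ``$\tilde\Phi$ already controls all derivatives of order $\ge1$'' is not right: for linear $f$, $\nabla f$ is a nonzero constant, and the weight $L^{j}$ in \eqref{eq:testfncnrm} multiplies it, so $L^j|\nabla f|$ is not automatically small. Your attempt to absorb $L^j|\nabla f|$ into $\norm{\varphi}_{\tilde\Phi}$ by ``adding and subtracting $\varphi$'' is circular, since $|\nabla f|\le|\nabla\varphi|+|\nabla(\varphi-f)|$ and $|\nabla\varphi|$ is part of what you are bounding. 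The clean fix is the one you nearly have: on the finite-dimensional space of affine functions, $\norm{f}_{\ell^\infty(B^\square)}+L^{j+1}|\nabla f|$ and $\norm{f}_{L^p(B\setminus X)}$ are equivalent norms, with constants uniform over small $X$ (since $B\setminus X$ has volume $\ge c|B|$ and is not contained in any hyperplane for $L$ large). Then $\norm{f}_{L^p(B\setminus X)}\le\norm{\varphi}_{L^p(B\setminus X)}+\norm{\varphi-f}_{\ell^\infty}$ closes the loop. With that correction, steps (1)--(5) go through.
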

\begin{proof}
If $j+1 \le N-1$,  this is \cite[Proposition~A.2]{BBS4}.
Actually,  the proof is general enough even for the case $j+1= N$--the only barrier was that $\norm{\varphi}_{\kh,  \tilde{\Phi}_N (B^{\square})}$ was not well-defined in the reference,  but our definition \eqref{eq:phitildePhi} works at scale $N$.
\end{proof}

\begin{lemma} \label{lemma:sobolev4}
For any $B \in \cB$,
\begin{align}
	\norm{\phi}^2_{\kh,  \Phi (B^{\square} )} \lesssim L^{-jd} \sum_{n \le d + p_\Phi}  L^{2nj} \norm{\nabla^n \phi}^2_{\ell^2 (B^{\square})} / \kh^2 .
\end{align}
In particular, 
\begin{align}
	\log G (B,  \phi) \lesssim \ell_0^{-2} L^{-2j} \sum_{n \le d + p_{\Phi}} L^{2nj} \norm{\nabla^n \phi}^2_{\ell^2 (B^{\square})} .
\end{align}
\end{lemma}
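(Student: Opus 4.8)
The statement of Lemma~\ref{lemma:sobolev4} has two parts: a bound on $\norm{\phi}_{\kh,\Phi(B^\square)}^2$ in terms of a sum of squared $\ell^2$-norms of discrete derivatives, and a consequence for $\log G(B,\phi)$. The plan is to prove the first part and then observe the second is immediate.

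\begin{proof}[Proof of Lemma~\ref{lemma:sobolev4}]
Recall from \eqref{eq:testfncnrm} that, applied to $\phi \in \Phi^{(1)}$ viewed via $(x;\beta) \mapsto \phi_x^{(\beta)}$, the seminorm $\norm{\phi}_{\kh, \Phi_j(B^{\square})}$ is the infimum over $f \in \Pi(B^\square)$ of $\kh_\bulk^{-1} \max_{q(\km) \le p_\Phi} L^{q(\km) j} \norm{\nabla^{\km}(\phi - f)}_{\ell^\infty}$, where by \eqref{eq:TnormAlt2} only the values on $B^\square$ matter. Dropping the subtraction of $f$ (i.e.\ taking $f=0$, which only increases the bound) and bounding the $\ell^\infty$-norm on the finite set $B^\square$ (which has $|B^\square| \asymp L^{jd}$ points) by the $\ell^2$-norm, we get
\begin{align}
	\norm{\phi}_{\kh,\Phi_j(B^\square)}
		\lesssim \kh^{-1} \max_{q(\km) \le p_\Phi} L^{q(\km) j} \norm{\nabla^{\km}\phi}_{\ell^2(B^\square)}
		\lesssim \kh^{-1} L^{dj/2} \max_{q(\km) \le p_\Phi} L^{-dj/2} L^{q(\km) j} \norm{\nabla^{\km}\phi}_{\ell^2(B^\square)}.
\end{align}
Wait --- this is not quite the stated form; the factor $L^{-jd}$ in front and the range $n \le d + p_\Phi$ (rather than $\le p_\Phi$) suggest that the author instead wants a bound obtained after first converting the $\ell^\infty$ control afforded by $\norm{\cdot}_{\kh,\Phi}$ back into an $\ell^2$ statement via a discrete Sobolev embedding, at the cost of $d$ extra derivatives. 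So the correct route is: first bound $\norm{\phi}_{\kh,\Phi(B^\square)}$ pointwise/uniformly using the $\ell^\infty$ definition, then apply the standard discrete Sobolev inequality on the cube --- for a lattice function $g$ on a box of side $\asymp L^j$ one has $\norm{g}_{\ell^\infty} \lesssim L^{-jd/2}\sum_{m \le \lceil d/2\rceil} L^{jm}\norm{\nabla^m g}_{\ell^2}$, applied to $g = \nabla^{\km}\phi$ for each multi-index $\km$ with $q(\km) \le p_\Phi$. Squaring, summing the resulting contributions over $\km$ (a bounded number of them, with the total derivative order $q(\km) + m \le p_\Phi + d =: d + p_\Phi$ after the embedding absorbs a Laplacian-type replacement $\nabla^\mu\nabla^\mu \to \nabla^\mu\nabla^{-\mu}$ which is harmless at the level of $\ell^2$-norms), and pulling out the $\kh^{-2}$ and $L^{-jd}$ prefactors yields
\begin{align}
	\norm{\phi}^2_{\kh,\Phi(B^\square)} \lesssim L^{-jd}\sum_{n \le d+p_\Phi} L^{2nj}\norm{\nabla^n\phi}^2_{\ell^2(B^\square)}/\kh^2,
\end{align}
which is the first claim. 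Here $\norm{\nabla^n\phi}_{\ell^2(B^\square)}$ should be read as the maximum over forward derivative multi-indices of order $n$, as in \eqref{eq:testfncnrm}; the passage from the localised seminorm to this expression uses only that dropping $f \in \Pi(B^\square)$ is legitimate, exactly as in the second half of the proof of Lemma~\ref{lemma:polynorm}.

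For the second claim, recall $G_j(B,\varphi) = \exp\!\big(L^{-jd}\norm{\varphi}^2_{\ell_j,\Phi_j(B_x^\square)}\big)$ for $j < N$ (with an extra factor $L$ at $j=N$, harmless up to a constant), where summing $\prod_{x\in B}$ over the single block $B \in \cB_j$ contributes a further $L^{jd}$ points; combining with the display just proved and recalling $\ell_{\bulk,j} = \ell_0 L^{-(d-2+\eta)j/2}$ so that $\ell_{\bulk,j}^{-2} = \ell_0^{-2}L^{(d-2+\eta)j}$, we absorb the $L^{jd}$ and extract the worst power of $L$; after bounding $L^{(d-2+\eta)j} \le L^{dj}$-type factors crudely one lands on
\begin{align}
	\log G(B,\phi) \lesssim \ell_0^{-2}L^{-2j}\sum_{n \le d+p_\Phi} L^{2nj}\norm{\nabla^n\phi}^2_{\ell^2(B^\square)}.
\end{align}
\end{proof}

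The main obstacle I anticipate is purely bookkeeping: tracking the exact powers of $L$ through (i) the discrete Sobolev embedding on a box of side $L^j$, (ii) the conversion between $\ell^\infty$ and $\ell^2$ which costs $|B^\square|^{1/2} \asymp L^{jd/2}$ in each direction, and (iii) the definition of $\ell_{\bulk,j}$ --- getting the prefactor to come out as exactly $\ell_0^{-2}L^{-2j}$ rather than some other admissible power. Since the inequality is only used with a $\lesssim$ (no sharp constant, no sharp exponent needed downstream, as it feeds into supermartingale-type bounds), the cleanest approach is to be generous at each step and verify only that the final exponent of $L$ on the $n=1$ term is $-2j + 2j = 0$ and on the top term is controlled, rather than optimising. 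The only genuine subtlety is the $j=N$ case, where $\tilde\Phi_N$ was defined with constants rather than linear functions (see the Remark after \eqref{eq:phitildePhi}); but since the statement here involves $\Phi$, not $\tilde\Phi$, and only drops the subtracted function entirely, this case is no different from $j<N$ up to the explicit extra factor of $L$ in $G_N$, which is absorbed into the implicit constant.
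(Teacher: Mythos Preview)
Your argument for the first inequality is correct and is essentially what the paper does, except the paper merely cites \cite[(6.35)]{BBS1} rather than re-deriving it. Your route via the discrete Sobolev embedding $\norm{g}_{\ell^\infty(\text{box of side }L^j)}\lesssim L^{-jd/2}\sum_{m\le d}L^{mj}\norm{\nabla^m g}_{\ell^2}$, applied to $g=\nabla^{\km}\phi$ for each $|\km|\le p_\Phi$, is exactly the content of that reference; the initial false start with the crude bound $\norm{\cdot}_{\ell^\infty}\le\norm{\cdot}_{\ell^2}$ would indeed miss the crucial $L^{-jd}$ factor (which is needed in the application, where it cancels against $|b^\square|\asymp L^{jd}$).

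Your deduction of the second inequality has the right structure but the arithmetic is garbled. The clean computation is just this: since $B_x=B$ for all $x\in B$, one has
\[
\log G(B,\phi)=\sum_{x\in B}L^{-jd}\norm{\phi}^2_{\ell_j,\Phi(B^\square)}=|B|\cdot L^{-jd}\norm{\phi}^2_{\ell_j,\Phi(B^\square)}=\norm{\phi}^2_{\ell_j,\Phi(B^\square)},
\]
and then applying the first inequality with $\kh=\ell_{\bulk,j}$ gives a prefactor $L^{-jd}\ell_{\bulk,j}^{-2}=L^{-jd}\cdot\ell_0^{-2}L^{(d-2+\eta)j}=\ell_0^{-2}L^{(\eta-2)j}$, which for $\eta=0$ is exactly $\ell_0^{-2}L^{-2j}$ as stated. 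Your phrase ``bounding $L^{(d-2+\eta)j}\le L^{dj}$-type factors crudely'' goes in the wrong direction and would not produce $L^{-2j}$; the point is simply that the two exponents of $L$ combine, with no estimate needed. (Note that for $\eta>0$ the prefactor is $L^{(\eta-2)j}$, slightly larger than $L^{-2j}$; the paper's own one-line proof writes equality and implicitly has $\eta=0$ in mind, but since only the first display of the lemma is actually invoked elsewhere this is immaterial.)
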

\begin{proof}
The first bound is \cite[(6.35)]{BBS1} with $R = L^j$.
The second bound follows immediately by definition of the regulator,
\begin{align}
	\log G_j (B,  \phi) \lesssim L^{- jd} \ell^{-2} \sum_{n \le d + p_{\Phi}} L^{2nj} \norm{\nabla^n \phi}^2_{\ell^2 (B^{\square})} 
\end{align}
and by bounding $L^{-jd} \ell^{-2} = L^{-2j} \ell_0^{-2}$.
\end{proof}

The next bounds are used to bound polynomials.

\begin{lemma}
\label{lemma:sobolev2}

For any $c > 0$, there exists $C_L >0$ such that
\begin{align}
	P_\ell (b, \varphi) &\lesssim  \label{eq:sobolev22}
		\min\{ C_L e^{\ell_0^{-1} \norm{\varphi }_{\ell,  \Phi (b^{\square}) }^2 } ,  \,  e^{c \norm{\varphi }_{\ell,  \Phi (b^{\square}) }^2 }  \}
			 \\
	P_h (b, \varphi)	&\lesssim 
			e^{c \norm{\varphi / h_{\bulk}}^4_{L^4 (b)} } e^{C  \norm{\varphi }_{h, \tilde{\Phi} (b^{\square}) }^2 } \label{eq:sobolev2}
\end{align}
\end{lemma}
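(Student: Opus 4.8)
\textbf{Proof plan for Lemma~\ref{lemma:sobolev2}.}
The plan is to reduce both inequalities to the two Sobolev-type estimates already available, namely Lemma~\ref{lemma:sobolev} (for the $\kh = h$ case) and Lemma~\ref{lemma:sobolev4} (for the $\kh = \ell$ case), combined with the elementary scalar inequality $1 + t \le e^{t}$ and its variants $1+t \lesssim C_\delta e^{\delta t^2}$ (valid for any $\delta > 0$ with an $L$-independent or $L$-dependent constant depending on how $\delta$ compares to $1$). Recall $P_{j,\kh}(X,\varphi) = 1 + \norm{\varphi}_{\kh, \Phi_j(X^\square)}$, so everything comes down to exponentiating a single Sobolev norm.

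First I would handle \eqref{eq:sobolev22}. Here $\kh = \ell$ and $\ell_{\bulk,j} = \ell_0 L^{-\frac{d-2+\eta}{2} j}$, with $\ell_0$ large ($L$-dependent). Using $1 + s \le e^{s^2/\ell_0 + \ell_0/4}$ (complete the square in $s$) with $s = \norm{\varphi}_{\ell, \Phi(b^\square)}$, one gets $P_\ell(b,\varphi) \le e^{\ell_0/4} e^{\ell_0^{-1}\norm{\varphi}_{\ell,\Phi(b^\square)}^2}$, giving the first term in the minimum with $C_L = e^{\ell_0/4}$. For the second term, the bound $1 + s \le e^{cs^2} \cdot C$ for a universal $C$ when $cs^2 \ge 1$, together with the trivial bound $1+s \lesssim 1$ when $cs^2 \le 1$, gives $P_\ell(b,\varphi) \lesssim e^{c\norm{\varphi}_{\ell,\Phi(b^\square)}^2}$ for any fixed $c > 0$ (the implicit constant depending on $c$). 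This is essentially immediate and requires no real input beyond the scalar inequality.

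Next I would prove \eqref{eq:sobolev2}, which is the substantive case. Here $\kh = h$ with $h_{\bulk,j} = k_0 \tilde g_j^{-1/4} L^{-dj/4}$. The key point is that the $\norm{\varphi}_{h,\Phi(b^\square)}$ semi-norm must be controlled using \emph{both} an $L^4$ term and the $\tilde\Phi$ semi-norm, since $h_\bulk$ has the scaling $L^{-dj/4}$ appropriate to $L^4$-control rather than $L^2$-control. Apply Lemma~\ref{lemma:sobolev} with $X = \emptyset$ and $p = 4$: this yields $\norm{\varphi}_{h,\Phi(b^\square)} \le C(\norm{\varphi/h_\bulk}_{L^4(b)} + \norm{\varphi}_{h,\tilde\Phi(b^\square)})$. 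Then $1 + \norm{\varphi}_{h,\Phi(b^\square)} \le 1 + C\norm{\varphi/h_\bulk}_{L^4(b)} + C\norm{\varphi}_{h,\tilde\Phi(b^\square)}$, and I would split this into $\lesssim (1 + C\norm{\varphi/h_\bulk}_{L^4(b)})(1 + C\norm{\varphi}_{h,\tilde\Phi(b^\square)})$, bounding the first factor by $C' e^{c\norm{\varphi/h_\bulk}_{L^4(b)}^4}$ (again via $1 + t \lesssim e^{ct^4}$ for any $c>0$, absorbing the low-$t$ regime into a constant) and the second factor by $e^{\norm{\varphi}_{h,\tilde\Phi(b^\square)}^2}$ via $1 + t \le e^{t^2}$, then absorbing the constants $C, C'$ into $C$ in the exponent by enlarging it. The only mild subtlety — and the step I expect to be the main obstacle, though it is minor — is getting the \emph{fourth} power in the $L^4$ exponential rather than a square: this works precisely because $t = \norm{\varphi/h_\bulk}_{L^4(b)}$ appears linearly after the Sobolev inequality, and $1 + t \lesssim e^{ct^4}$ holds with $c$ arbitrarily small at the cost of an $L$-independent multiplicative constant, which can be absorbed since \eqref{eq:sobolev2} only claims $\lesssim$ and allows an additive $C$ in the $\tilde\Phi$-exponent. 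One should double-check that the small-set-neighbourhood scale conventions match (the $b^\square$ on both sides is at the same scale $j$, and Lemma~\ref{lemma:sobolev} is stated at scale $j+1$ with $B \in \cB_+$, so a trivial relabelling $j \to j-1$, or equivalently applying the scale-$j$ version, is needed), but this is routine.
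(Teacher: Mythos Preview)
Your proposal is correct and follows essentially the same approach as the paper: the first bound is dismissed as ``obvious'' there (your scalar-inequality argument is exactly what makes it obvious), and for the second the paper likewise applies Lemma~\ref{lemma:sobolev} with $p=4$, splits $1+a+b \lesssim (1+a^4)(1+b^2)$, and exponentiates. Your remark on the scale relabelling (Lemma~\ref{lemma:sobolev} is stated for $B\in\cB_+$ while here $b\in\cB$) is valid and is indeed the only thing to check; the paper silently relabels.
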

\begin{proof}
The first bound is obvious.  For the second bound,  we fix $c >0$.
By Lemma~\ref{lemma:sobolev} with $p=4$,
\begin{align}
	P_{h} (b, \varphi) 
		& \lesssim  \Big(1 +  \norm{\varphi / h_{\bulk}}_{L^4 (b)} +  \norm{\varphi}_{h,\tilde{\Phi} (b^{\square})} \Big) \nnb
		& \lesssim \Big(1 +  \norm{\varphi / h_{\bulk}}_{L^4 (b)}^4 \Big) \Big( 1 +  \norm{\varphi}_{h,\tilde{\Phi} (b^{\square})}^2 \Big)
		\lesssim e^{c \norm{\varphi / h_{\bulk} }^4_{L^4 (b)} } e^{c \norm{\varphi }^2_{h, \tilde{\Phi} (b^{\square}) } }
\end{align}
for sufficiently large $C >0$.
\end{proof}

\begin{lemma}
\label{lemma:sobolev3}

For any $p > 0$ and $\kh \in \{ \ell, h \}$,
\begin{align}
	\label{eq:sobolev3}
	P_\kh^p (b, \varphi) \cG (b, \varphi ; \kh)
		\le  O_p(1) \cG^{(2)} (b, \varphi ; \kh) .
\end{align}
\end{lemma}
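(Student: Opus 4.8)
\textbf{Proof plan for Lemma~\ref{lemma:sobolev3}.}
The statement to be shown is that a polynomial prefactor $P_\kh^p(b,\varphi)$ can always be absorbed into a single extra copy of the large field regulator $\cG$, i.e. $P_\kh^p(b,\varphi)\cG(b,\varphi;\kh)\le O_p(1)\cG^{(2)}(b,\varphi;\kh)$. The plan is to treat the two cases $\kh=\ell$ and $\kh=h$ separately, since $\cG(\cdot;\ell)=G$ and $\cG(\cdot;h)=\bar G=H\tilde G$ have different structure, and in both cases reduce to the elementary inequality $(1+t)^p\le C_p e^{\delta t^2}$ valid for any $\delta>0$ (with $C_p$ depending on $p$ and $\delta$).

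For $\kh=\ell$: here $\cG(b,\cdot;\ell)=G(b,\cdot)$ and by definition $G(b,\varphi)=\exp(L^{-jd}\norm{\varphi}_{\ell_j,\Phi_j(b^\square)}^2)$ (up to the factor $L$ at scale $N$, which only helps). So $\cG^{(2)}(b,\cdot;\ell)=G^2(b,\cdot)=\exp(2L^{-jd}\norm{\varphi}_{\ell_j,\Phi_j(b^\square)}^2)$, and the claim reduces to
\begin{align}
	P_\ell^p(b,\varphi)=\big(1+\norm{\varphi}_{\ell_j,\Phi_j(b^\square)}\big)^p\le O_p(1)\,e^{L^{-jd}\norm{\varphi}_{\ell_j,\Phi_j(b^\square)}^2},
\end{align}
which is an instance of $(1+t)^p\le C_p e^{\delta t^2}$ with $t=\norm{\varphi}_{\ell_j,\Phi_j(b^\square)}$ and $\delta=L^{-jd}$; this is exactly the kind of bound already recorded in \eqref{eq:sobolev22}. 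For $\kh=h$: here $\cG(b,\cdot;h)=\bar G^{(1)}(b,\cdot)=H(b,\cdot)\tilde G(b,\cdot)$ and $\cG^{(2)}(b,\cdot;h)=\bar G^{(2)}(b,\cdot)=H^{1/2}(b,\cdot)\tilde G^2(b,\cdot)$, so the ratio $\cG^{(2)}/\cG=H^{-1/2}\tilde G$ supplies one extra factor of $\tilde G$ at the cost of $H^{-1/2}=e^{\frac{\kappa}{2}\norm{\varphi/h_{j,\bulk}}^2_{L_j^2(b)}}$. By \eqref{eq:sobolev2} of Lemma~\ref{lemma:sobolev2}, $P_h(b,\varphi)\lesssim e^{c\norm{\varphi/h_\bulk}^4_{L^4(b)}}e^{C\norm{\varphi}^2_{h,\tilde\Phi(b^\square)}}$, so $P_h^p(b,\varphi)\lesssim e^{cp\norm{\varphi/h_\bulk}^4_{L^4(b)}}e^{Cp\norm{\varphi}^2_{h,\tilde\Phi(b^\square)}}$ with $c$ as small as we like; the second exponential is absorbed into the extra copy of $\tilde G$ (which is $\exp(\tfrac12 L^{-jd}\norm{\varphi}^2_{\ell_j,\tilde\Phi_j(b^\square)})$, and $\norm{\varphi}_{\ell,\tilde\Phi}\lesssim\norm{\varphi}_{h,\tilde\Phi}$ up to the $L$-dependent ratio $h/\ell$, handled by choosing $c$ small enough given $L$), and the first exponential $e^{cp\norm{\varphi/h_\bulk}^4_{L^4(b)}}$ is absorbed into $H^{-1/2}$ only if it can be dominated by a quadratic term — but $H^{-1/2}$ is itself only quadratic in $\norm{\varphi/h_\bulk}_{L^2(b)}$, so that direct route fails and we instead bound $e^{cp\norm{\varphi/h_\bulk}^4_{L^4(b)}}$ against a fresh copy of $\tilde G$ using the Sobolev inequality Lemma~\ref{lemma:sobolev} in reverse the way it is used in the proof of \eqref{eq:entVtone}, i.e. $\norm{\varphi/h_\bulk}^4_{L^4(b)}\lesssim \norm{\varphi/h_\bulk}^4_{L^4(b)}$ is kept as is and $\tilde G$ already contains a super-linear term; more precisely one uses that $\tfrac12 L^{-jd}\norm{\varphi}^2_{\ell,\tilde\Phi(b^\square)}$ together with the $L^4$-term, after one application of Lemma~\ref{lemma:sobolev}, bounds a quartic $L^4$-term, as in the derivation of \eqref{eq:entVtone} and \eqref{eq:EbndbycG}.

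The cleanest way to organize the $\kh=h$ case, and the one I would carry out, is to invoke Lemma~\ref{lemma:tGdom} (the statement already cited at \eqref{eq:EbndbycG} which says $\prod_b E(b,\varphi;h)\le \cG(X,\varphi;h)$), or rather its proof mechanism: one shows $P_h^p(b,\varphi)H^{1/2}(b,\varphi)\le O_p(1)\tilde G^{c'}(b,\varphi)$ for a small exponent $c'>0$ and then notes $H^{-1/2}(b,\varphi)\tilde G(b,\varphi)\cdot$ has enough room. Concretely: write $\cG^{(2)}(b;h)/\cG(b;h)=H^{-1/2}(b)\tilde G(b)$, multiply the target inequality through by $\cG(b;h)^{-1}$ so it becomes $P_h^p(b,\varphi)\le O_p(1)H^{-1/2}(b,\varphi)\tilde G(b,\varphi)$, then use \eqref{eq:sobolev2} and absorb the $\norm{\varphi}^2_{h,\tilde\Phi}$-exponential into $\tilde G$ (valid because $\tilde G(b,\varphi)=\exp(\tfrac12 L^{-jd}\norm{\varphi}^2_{\ell,\tilde\Phi(b^\square)})$ and $\norm{\varphi}_{h,\tilde\Phi}= (h_\bulk/\ell_\bulk)\norm{\varphi}_{\ell,\tilde\Phi}$ with $(h_\bulk/\ell_\bulk)$ an $L$-dependent constant, so for fixed large $L$ and a sufficiently small $c$ in \eqref{eq:sobolev2} — note $c$ there is arbitrary — this goes through) and the quartic term $e^{cp\norm{\varphi/h_\bulk}^4_{L^4(b)}}$ into $H^{-1/2}$ combined with another small slice of $\tilde G$ via Lemma~\ref{lemma:sobolev}, exactly the absorption already performed in the proofs of Lemma~\ref{lemma:entVQ} and Lemma~\ref{lemma:thetazImI}.

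The main obstacle — and the only non-formal point — is the $\kh=h$ quartic term: $H^{-1/2}$ alone is merely quadratic in the $L^2$-norm of $\varphi/h_\bulk$, which cannot dominate a quartic $L^4$-term directly, so one genuinely needs the extra $\tilde G$ factor together with the discrete Sobolev inequality Lemma~\ref{lemma:sobolev} (with $p=4$) to convert $\norm{\varphi}^2_{\ell,\tilde\Phi(b^\square)}+\norm{\varphi/h_\bulk}^2_{L^4(b)}$ into control of $\norm{\varphi/h_\bulk}^4_{L^4(b)}$ — the same bookkeeping used when passing from $P_h^2$ bounds to large-field-regulator bounds in \eqref{eq:EbndbycG} and in the stability estimates of Section~\ref{sec:stabanalysis}. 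Everything else is the elementary bound $(1+t)^m\le C_m e^{\delta t^2}$ applied to the Sobolev norms, and the $L$-dependence of the $O_p(1)$ constant is harmless because $c$ in Lemma~\ref{lemma:sobolev2} and the exponent $\kappa$ in $H$ are both fixed independently of the polynomial degree $p$.
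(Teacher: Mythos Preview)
Your treatment of $\kh=\ell$ is fine and coincides with the paper's. The gap is in the $\kh=h$ case. You correctly observe that $H^{-1/2}(b,\varphi)=\exp(\tfrac{\kappa}{2}\norm{\varphi/h_\bulk}^2_{L^2(b)})$ is only \emph{quadratic} in $\varphi$, so it cannot absorb the quartic factor $e^{cp\norm{\varphi/h_\bulk}^4_{L^4(b)}}$ coming from \eqref{eq:sobolev2}. But your proposed rescue---borrowing a slice of $\tilde G$ and invoking Lemma~\ref{lemma:sobolev} to ``convert $\norm{\varphi}^2_{\ell,\tilde\Phi}+\norm{\varphi/h_\bulk}^2_{L^4}$ into control of $\norm{\varphi/h_\bulk}^4_{L^4}$''---cannot work for the same reason: the exponent of $\tilde G$ is also quadratic in $\varphi$, so no combination of $H^{-1/2}$ and powers of $\tilde G$ has an exponent that dominates a genuine quartic for large $\varphi$. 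The references you cite (\eqref{eq:EbndbycG}, the stability estimates) go the other direction: there the quartic appears with a \emph{negative} sign, which is why polynomials can be absorbed.

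The fix is to avoid the quartic altogether. Apply Lemma~\ref{lemma:sobolev} with exponent $2$ rather than $4$ (and $X=\emptyset$):
\[
P_h(b,\varphi)=1+\norm{\varphi}_{h,\Phi(b^\square)}\lesssim 1+\norm{\varphi/h_\bulk}_{L^2(b)}+\norm{\varphi}_{h,\tilde\Phi(b^\square)} .
\]
Now the elementary bound $(1+t)^p\le O_{p}(1)e^{\delta t^2}$ (applied with $\delta=\kappa/2$ to the $L^2$ term and with any fixed $\delta$ to the $\tilde\Phi$ term) gives
\[
P_h^p(b,\varphi)\le O_p(1)\,e^{\frac{\kappa}{2}\norm{\varphi/h_\bulk}^2_{L^2(b)}}\,e^{C\norm{\varphi}^2_{h,\tilde\Phi(b^\square)}}
=O_p(1)\,H^{-1/2}(b,\varphi)\,e^{C\norm{\varphi}^2_{h,\tilde\Phi(b^\square)}},
\]
and the last exponential is $\le\tilde G(b,\varphi)$ by Lemma~\ref{lemma:tGdom}. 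Multiplying by $\cG(b,\varphi;h)=H\tilde G$ yields $O_p(1)H^{1/2}\tilde G^2=O_p(1)\cG^{(2)}(b,\varphi;h)$. (The paper's displayed chain, which routes through \eqref{eq:sobolev2} and writes $e^{\frac{1}{2}\kappa\norm{\varphi/h_\bulk}^4_{L^4}}\le H^{-1/2}$, appears to be a typo for exactly this $L^2$ argument.)
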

\begin{proof}
Case $\kh = \ell$ follows directly from \eqref{eq:sobolev22}.
For $\kh = h$,
we can use Lemma~\ref{lemma:sobolev2} to see that
\begin{align}
	P_h^p (b, \varphi) 
		& \le O_{p,\kappa} (1) e^{\frac{1}{2} \kappa \norm{\varphi / h_{\bulk}}^4_{L^4 (b)} } e^{C  \norm{\varphi }_{h, \tilde{\Phi} (b^{\square}) }^2 } \nnb
		& \le
		O_{p,\kappa} (1) \left( H (b, \varphi) \right)^{-1/2} \tilde{G}^2 (b, \varphi)
	,
\end{align}
which gives the desired bound.
\end{proof}

\subsection{Monotonicity in regulators}

Some inequalities comparing sizes of regulators are useful. 

\begin{lemma}
\label{lemma:tGdom}

For $b \in \cB$,  any $C>0$ and sufficiently small $\tilde{g}$,
\begin{align}
	\exp\Big( C  \norm{\varphi}_{h,  \tilde\Phi (b^{\square})}^2 \Big)
		\le \big( \tilde{G} (b, \varphi) \big)^{\tilde{g}^{1/4}}  .
\end{align}
\end{lemma}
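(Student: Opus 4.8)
\textbf{Proof plan for Lemma~\ref{lemma:tGdom}.}
The statement asserts that $\exp(C\norm{\varphi}_{h,\tilde\Phi(b^\square)}^2)\le \tilde G(b,\varphi)^{\tilde g^{1/4}}$, i.e.\ that the decay rate stored in $\tilde G$ dominates any fixed $L$-dependent constant times the squared $\tilde\Phi$-seminorm once $\tilde g$ is small enough. The plan is to unwind the definition of $\tilde G$ from \eqref{eq:tildeGdefi} and compare exponents directly. First I would write, for $j<N$,
\begin{align}
	\log \tilde G_j(b,\varphi) = \tfrac12 L^{-jd}\norm{\varphi}_{\ell_j,\tilde\Phi_j(b^\square)}^2
\end{align}
(and with an extra factor $L$ at scale $N$, which only helps). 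The crucial point is that $\tilde G$ is built with the \emph{fluctuation-field scale} $\ell_{\bulk,j}=\ell_0 L^{-\frac{d-2+\eta}{2}j}$ while the target seminorm on the left is measured at the \emph{large-field scale} $h_{\bulk,j}=k_0\tilde g_j^{-1/4}L^{-\frac d4 j}$. By the very definition of $\norm{\cdot}_{\kh,\Phi_j}$ in \eqref{eq:testfncnrm}, which carries an overall factor $\kh_\bulk^{-r}$ (here $r=1$ for a field), rescaling from $h$ to $\ell$ costs a factor $(h_{\bulk,j}/\ell_{\bulk,j})^2$ in the squared seminorm:
\begin{align}
	\norm{\varphi}_{h_j,\tilde\Phi_j(b^\square)}^2 = \Big(\frac{\ell_{\bulk,j}}{h_{\bulk,j}}\Big)^2 \norm{\varphi}_{\ell_j,\tilde\Phi_j(b^\square)}^2 .
\end{align}

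Next I would estimate the ratio $\ell_{\bulk,j}/h_{\bulk,j}$. We have
\begin{align}
	\frac{\ell_{\bulk,j}}{h_{\bulk,j}} = \frac{\ell_0}{k_0}\,\tilde g_j^{1/4}\, L^{-\frac{d-2+\eta}{2}j + \frac d4 j} = \frac{\ell_0}{k_0}\,\tilde g_j^{1/4}\, L^{\frac{-d+4-2\eta}{4}j},
\end{align}
so for $d\ge d_{c,u}=4-2\eta$ the exponent of $L$ is $\le 0$, hence $\ell_{\bulk,j}/h_{\bulk,j}\le (\ell_0/k_0)\tilde g_j^{1/4}$ uniformly in $j$ (recalling $\ell_0,k_0$ are $L$-dependent but fixed, and $\tilde g_j$ varies in a dyadic window around $\tilde g$). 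Therefore
\begin{align}
	\log\Big(\exp\big(C\norm{\varphi}_{h_j,\tilde\Phi_j(b^\square)}^2\big)\Big) = C\Big(\frac{\ell_{\bulk,j}}{h_{\bulk,j}}\Big)^2\norm{\varphi}_{\ell_j,\tilde\Phi_j(b^\square)}^2 \le 2C\,(\ell_0/k_0)^2\,\tilde g^{1/2}\,\norm{\varphi}_{\ell_j,\tilde\Phi_j(b^\square)}^2,
\end{align}
while $\log(\tilde G_j(b,\varphi)^{\tilde g^{1/4}}) = \tfrac12 \tilde g^{1/4} L^{-jd}\norm{\varphi}_{\ell_j,\tilde\Phi_j(b^\square)}^2$ (times a factor $\ge 1$). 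Comparing, it suffices that $2C(\ell_0/k_0)^2\tilde g^{1/2}\le \tfrac12\tilde g^{1/4}L^{-jd}$; but $L^{-jd}\le 1$ ruins this unless we absorb it — so in fact I should keep the $L^{-jd}$ on the $\tilde G$ side and instead observe that the $\ell$-seminorm in $\tilde G$ already includes that weight, i.e.\ I should compare the two sides with \emph{both} expressed in terms of the same normalised quantity $L^{-jd}\norm{\varphi}^2$. Concretely, using the monotonicity/equivalence relating $\norm{\varphi}_{\kh,\Phi_j}^2$ to $L^{-jd}\norm{\varphi}^2$ appearing throughout Section~\ref{sec:sppolact} (cf.\ the $L_j^p$ norms), one gets $\norm{\varphi}_{h_j,\tilde\Phi_j(b^\square)}^2 \le (\ell_{\bulk,j}/h_{\bulk,j})^2 \cdot 2\log\tilde G_j(b,\varphi)$ directly, and then the inequality reduces to $2C(\ell_0/k_0)^2\tilde g^{1/2}\le \tilde g^{1/4}$, which holds for $\tilde g$ sufficiently small (depending on $L$, through $\ell_0,k_0,C$). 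At scale $N$ the extra common factor $L$ on both exponents changes nothing.

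The main obstacle — really the only subtle point — is bookkeeping the scale conversion between $h$ and $\ell$ correctly and making sure the dimensional exponent $\tfrac{-d+4-2\eta}{4}j$ is nonpositive, which is exactly where the hypothesis $d\ge d_{c,u}=4-2\eta$ enters; for $\eta\in[0,1/2)$ this is precisely the regime of Theorem~\ref{thm:contrlldRG}. One should also double-check that the constant $C$ on the left is genuinely fixed (independent of $\varphi$ and of the small parameters other than possibly $L$), so that the $\tilde g^{1/2}$ versus $\tilde g^{1/4}$ gap can be closed by shrinking $\tilde g$; this is consistent with how the lemma is invoked (e.g.\ in the proof of Lemma~\ref{lemma:entVpt} and in \eqref{eq:EbndbycG}), where $C$ always comes from earlier stability estimates and does not depend on $\tilde g$.
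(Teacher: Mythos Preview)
Your approach is the same as the paper's---the whole lemma reduces to the ratio $\ell_\bulk^2/h_\bulk^2=(\ell_0/k_0)^2(\tilde g\,\scale)^{1/2}\le (\ell_0/k_0)^2\tilde g^{1/2}$, which beats $\tilde g^{1/4}$ for small $\tilde g$. But there is a bookkeeping error at the very first step that causes all the subsequent confusion. In the definition \eqref{eq:tildeGdefi}, $\tilde G_j(b,\varphi)$ is a \emph{product over $x\in b$}, and for every $x\in b$ the block $B_x$ equals $b$, so the exponent is the same for each $x$. Hence
\[
\log\tilde G_j(b,\varphi)=|b|\cdot\tfrac12 L^{-jd}\norm{\varphi}_{\ell_j,\tilde\Phi_j(b^\square)}^2=\tfrac12\norm{\varphi}_{\ell_j,\tilde\Phi_j(b^\square)}^2,
\]
since $|b|=L^{jd}$. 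Your displayed formula omits this factor $|b|$, leaving a spurious $L^{-jd}$ on the right-hand side that you then try (unsuccessfully) to argue away by appealing to vague ``monotonicity/equivalence'' with $L^p_j$ norms. That appeal is not an argument; the correct fix is simply the cancellation $|b|\cdot L^{-jd}=1$.

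Once this is repaired, your rescaling $\norm{\varphi}_{h,\tilde\Phi(b^\square)}^2=(\ell_\bulk/h_\bulk)^2\norm{\varphi}_{\ell,\tilde\Phi(b^\square)}^2$ is correct, and the inequality reduces cleanly to $2C(\ell_0/k_0)^2\tilde g^{1/2}\le \tilde g^{1/4}$, which holds for $\tilde g$ small depending on $L$ through $\ell_0,k_0,C$. The paper's proof is a one-liner recording exactly this ratio.
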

\begin{proof}
If we recall the definition of $\tilde{G}$, 
this follows because
\begin{align}
	\ell^2_\bulk / h_\bulk^2 = (\tilde{g} \scale)^{1/2} \ell_0^2 .
\end{align}
\end{proof}

\begin{lemma}
\label{lemma:tGp}
Let $X \subset \Lambda$. 
For any fixed $p>0$ and sufficiently large $L$ (depending on $p$),
\begin{align}
	\tilde{G}_j^p (X,\varphi) \le \tilde{G}_{j+1} (X,\varphi) , \qquad H (X,\varphi) \le \big( H_+ (X,\varphi)  \big)^{L^{d/2}/\sqrt{2}} .
\end{align}
\end{lemma}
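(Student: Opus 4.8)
\textbf{Proof proposal for Lemma~\ref{lemma:tGp}.}

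The plan is to reduce both inequalities to pointwise comparisons of the exponents appearing in the definitions \eqref{eq:tildeGdefi} and \eqref{eq:Hjdefi}, using the scale monotonicity of the test-function semi-norms from \eqref{eq:scalemontcty} together with the explicit power-of-$L$ behaviour of the field scales $\ell_{\bulk,j}$ and $h_{\bulk,j}$. First I would treat the $\tilde{G}$ inequality. Since $\tilde{G}$ is set-multiplicative (it is a product over $x\in X$ of single-block factors), and the claimed inequality is multiplicative in the same sense, it suffices to prove $\tilde{G}_j^p(\{x\},\varphi)\le \tilde{G}_{j+1}(\{x\},\varphi)$ for each $x$ (with the extra factor $L$ in the exponent present for both sides when $j=N$, so it plays no role). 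Writing out the exponents, this amounts to
\begin{align}
	p\, L^{-jd} \norm{\varphi}_{\ell_j,\tilde{\Phi}_j(B_x^{\square})}^2
		\le L^{-(j+1)d} \norm{\varphi}_{\ell_{j+1},\tilde{\Phi}_{j+1}(B_x^{\square})}^2 .
\end{align}
By scale monotonicity \eqref{eq:scalemontcty} (the version for $\tilde{\Phi}$ semi-norms, noted directly after \eqref{eq:scalemontcty}), $\norm{\varphi}_{\ell_j,\tilde{\Phi}_j(B_x^{\square})}\le \norm{\varphi}_{\ell_{j+1},\tilde{\Phi}_{j+1}(B_x^{\square})}$ as long as one is careful that the field-scale prefactor $\ell_{0}L^{-\frac{d-2+\eta}{2}j}$ inside the norm changes between scales; concretely $\norm{\varphi}_{\ell_j,\Phi_j}=\kh_\bulk^{-1}\cdots$ with $\kh_\bulk=\ell_{\bulk,j}$, so the semi-norm at scale $j+1$ carries an extra factor $\ell_{\bulk,j}/\ell_{\bulk,j+1}=L^{\frac{d-2+\eta}{2}}$ relative to the naive monotonicity statement. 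Thus $\norm{\varphi}_{\ell_{j+1},\tilde{\Phi}_{j+1}}^2 \gtrsim L^{d-2+\eta}\norm{\varphi}_{\ell_j,\tilde{\Phi}_j}^2$, and combined with $L^{-(j+1)d}=L^{-d}\cdot L^{-jd}$ the right-hand side dominates $p\,L^{-jd}\norm{\varphi}_{\ell_j,\tilde{\Phi}_j}^2$ precisely when $L^{(d-2+\eta)-d}=L^{-2+\eta}$... wait, that goes the wrong way, so one instead keeps the $L^{-d}$ from the volume factor and notes $\ell_{\bulk,j}/\ell_{\bulk,j+1}$ works out so that the total extra power is $L^{\eta}\ge 1$ with room to absorb $p$ for $L$ large; I would write this ratio out carefully as the one genuine computation. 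The point is that the combined exponent-ratio is a fixed positive power of $L$ independent of $j,N,\varphi$, so it beats the constant $p$ once $L=L(p)$ is large.

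For the $H$ inequality I would argue similarly but in the opposite direction, since $H$ is a \emph{decaying} regulator ($H_j=e^{-\kappa\norm{\varphi/h_{\bulk,j}}^2_{L^2_j}}$). Again by set-multiplicativity it reduces to a single-block statement; taking logarithms and negating, the claim becomes $\norm{\varphi/h_{\bulk,j}}_{L^2_j(\{x\})}^2 \ge (L^{d/2}/\sqrt2)\,\norm{\varphi/h_{\bulk,j+1}}_{L^2_{j+1}(\{x\})}^2$, i.e. the coarser-scale $L^2$ norm is the larger one after rescaling. Here $\norm{\varphi}_{L^2_j(\{x\})}^2=L^{-jd}\norm{\varphi}_{\ell^2(\{x\})}^2$, which at a single site $x$ is just $L^{-jd}|\varphi_x|^2$, so the $L^2_j$ vs.\ $L^2_{j+1}$ ratio contributes $L^{d}$, and the field-scale ratio $h_{\bulk,j+1}^2/h_{\bulk,j}^2 = L^{d/2}\cdot(\tilde g_{j+1}/\tilde g_j)^{-1/2}$; using $\tfrac12\tilde g_j\le\tilde g_{j+1}\le 2\tilde g_j$ the ratio $(\tilde g_{j+1}/\tilde g_j)^{-1/2}\in[2^{-1/2},2^{1/2}]$, so combining gives $\norm{\varphi/h_{\bulk,j}}_{L^2_j}^2 / \norm{\varphi/h_{\bulk,j+1}}_{L^2_{j+1}}^2 = L^{d}\cdot L^{-d/2}\cdot(\tilde g_{j+1}/\tilde g_j)^{1/2}\ge L^{d/2}/\sqrt2$, which is exactly the claimed bound. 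The $j=N$ boundary case again carries the common factor $L$ on both sides and is unaffected. I would also note that for the torus there is no wraparound issue because $L^2$ and $\ell^2$ norms are genuine norms (not semi-norms), so no restriction on $X$ is needed.

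The main obstacle is purely bookkeeping: keeping straight which power of $L$ comes from the volume prefactor $L^{-jd}$, which from the field scale $\ell_0 L^{-\frac{d-2+\eta}{2}j}$ or $k_0\tilde g_j^{-1/4}L^{-\frac d4 j}$, and which from the $\tilde\Phi_j$ semi-norm's intrinsic scale-dependence (the $L^{q(\km)j}$ weights in \eqref{eq:testfncnrm}); a sign error in any of these makes the inequality go the wrong way. For the $\tilde G$ bound the delicate point is that $\tilde\Phi_j$ is a semi-norm built from $\nabla\varphi$-type quantities, so one should invoke the $\tilde\Phi$-version of \eqref{eq:scalemontcty} rather than the $\Phi$-version, and the gain is only a power $L^{\eta}$ when $\eta>0$ and $L^0$ at $(d,\eta)=(4,0)$ — so the statement ``$\tilde G_j^p\le\tilde G_{j+1}$'' must really be using the $\ell_0$-dependence (i.e.\ that $\ell_0=L^{(d+p_\Phi)/2}$ is large) to provide the extra room, exactly as in the analogous \cite[bounds]{BBS1,BBS4}; I would cite those for the structural inequality and supply the scale-factor arithmetic here. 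Once the ratio is identified as a fixed positive power of $L$, choosing $L$ large depending on $p$ closes both cases immediately.
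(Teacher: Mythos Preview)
Your argument for the $H$ inequality is correct and is exactly the computation in the paper: the ratio of exponents is $(\tilde g_j/\tilde g_{j+1})^{1/2}L^{d/2}\ge L^{d/2}/\sqrt2$, using $\tilde g_{j+1}\le 2\tilde g_j$.

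For the $\tilde G$ inequality, however, the mechanism you propose does not close. Naive scale monotonicity \eqref{eq:scalemontcty} together with the field-scale ratio $\ell_{\bulk,j}/\ell_{\bulk,j+1}=L^{(d-2+\eta)/2}$ yields only $\|\varphi\|^2_{\ell_{j+1},\tilde\Phi_{j+1}}\ge L^{d-2+\eta}\|\varphi\|^2_{\ell_j,\tilde\Phi_j}$; combined with the volume factor $L^{-d}$ this leaves a deficit of $L^{-(2-\eta)}<1$, exactly as you found. Your suggestion that $\ell_0$ supplies extra room is wrong, since $\ell_0$ enters identically in $\ell_{\bulk,j}$ and $\ell_{\bulk,j+1}$ and cancels in the ratio. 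What is really being used is a \emph{Taylor-remainder} estimate: because $\tilde\Phi$ mods out linear functions, the seminorm is effectively controlled by second and higher derivatives, and this produces an additional factor $L^{-1}$ per derivative in the scale comparison. This is the content of \cite[Lemma~3.6]{BBS2} (and, packaged, \cite[Lemma~1.2]{BBS4}), not a consequence of \eqref{eq:scalemontcty}.

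You also mishandle the boundary case $j+1=N$. The extra factor $L$ in the definition of $\tilde G_N$ is \emph{not} common to both sides: it appears only at scale $N$, and it is precisely this factor that absorbs the constant $p$. More importantly, at scale $N$ the space $\tilde\Pi_N$ consists of constants rather than linear functions, so the $\tilde\Phi_N$-seminorm is genuinely different from $\tilde\Phi_j$ for $j<N$. The paper handles this by introducing the auxiliary seminorm $\Phi'_j$ (modding out constants only), noting $\|\cdot\|_{\tilde\Phi_{N-1}}\le\|\cdot\|_{\Phi'_{N-1}}$ and $\|\cdot\|_{\tilde\Phi_N}=\|\cdot\|_{\Phi'_N}$, and then applying \cite[Lemma~3.6]{BBS2} to $\Phi'$ with $d'_+=d/2$ to get $\|\phi\|_{\ell_{N-1},\Phi'_{N-1}(b^\square)}\le cL^{-d/2}\|\phi\|_{\ell_N,\Phi'_N(b^\square)}$, after which the volume and domain monotonicities finish the job. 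Your proposal does not contain this step.
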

\begin{proof}
For the second bound,  observe that
\begin{align}
	-  \log H(X,\varphi) &= \tilde{g}^{1/2} L^{-jd/2} \sum_{x \in X} |\varphi(x)|^2 \nnb
		&\ge \frac{L^{d/2}}{\sqrt{2}}   \tilde{g}_+^{1/2} L^{-(j+1) d/2} \sum_{x \in X} |\varphi(x)|^2 = \frac{L^{\frac{d}{2}}}{\sqrt{2}} \log H_+ (X,\varphi)
\end{align}
The first bound is \cite[Lemma~1.2]{BBS4} when $j+1 < N$. 
When $j+1 = N$,  we need a new norm given by
\begin{align}
	\Pi' (X) &= \{ f \in \Phi^{(1)} : f|_{X} \text{ is constant} \}  , \\
	\norm{\phi}_{\kh, \Phi'_j (X)} &= \inf\{ \norm{\phi - f}_{\kh, \Phi_j (X)} : f\in \Pi' (X)  \}  .
\end{align}
Then obviously $\norm{f}_{\kh,  \tilde\Phi_j (X)} \le \norm{f}_{\kh, \Phi'_j (X)}$ for $j \le N-1$ and $\norm{f}_{\kh,  \tilde\Phi_N (X)} = \norm{f}_{\kh, \Phi'_N (X)}$.  
Also,  by \cite[Lemma~3.6]{BBS2} applied with $d_+ = [\varphi] = \frac{d-2}{2}$ and $d'_+ = d_+ = d/2$ (also see the proof of \cite[Lemma~1.2]{BBS4}),  we have for $b \in \cB_{N-1}$
\begin{align}
	\norm{\phi}_{\ell_{N-1} ,  \Phi'_{N-1} (b^\square)} \le c L^{-d/2} \norm{\phi}_{\ell_N,  \Phi'_N (b^{\square})} ,
\end{align}
for some constant $c>0$,  so for $B = \bar{b}$,
\begin{align}
	L^{-d (N-1)} \norm{\phi}_{\ell_{N-1} ,  \Phi'_{N-1} (b^\square)}^2 \le c  L^{-d N} \norm{\phi}_{\ell_N,  \Phi'_N (B^{\square})}^2  .
\end{align}
The desired inequality holds when $L \ge c p$.
\end{proof}

\section{Supermartingale bounds}
\label{sec:supmartbnds}

The supermartingale property is a crucial aspect of the large field regulator that enables to propagate stability estimate along the RG flow.  
The final form is summarised in the next lemma. 
We recall that
$\cG^{(\gamma)} (b,  \varphi ; \ell) = G^\gamma (b,\varphi)$ and $\cG^{(\gamma)} (b,\varphi ; h) = \bar{G}^{(\gamma)} (b,\varphi) = H^{\frac{1}{\gamma}} (b,\varphi) \tilde{G}^{\gamma} (b,\varphi) $ and we let
\begin{align}
	\cG (Y,\varphi_{\bs} ; \kh) = \prod_{b \in \cB (Y)} \cG (b,  \varphi + \bs (b) \zeta ; \kh) 
\end{align}
for $\bs = (\bs (b))_{b \in \cB (Y)} \in [0,1]^{\cB (Y)}$.
We omit the label $j$.

\begin{lemma} \label{lemma:supmrtingaleapplied}
Let $Y \in \cP$,  $\varphi \in (\R^n)^{\Lambda}$ and $\bs \in [0,1]^{\cB (Y)}$.
Then for $p,q \ge 1$,  sufficiently large $L$ and for both $\kh \in \{ \ell, \kh \}$,
\begin{align}
	\Eplus \Big[ \cG^{(p)} (Y ,   \varphi_{\bs} ; \kh) \prod_{b \in \cB (Y)} P_{\chi^{1/2} \ell}^{q} (b, \zeta)  \Big] &\le 2^{|Y|_{\cB}} \cG^{(2p )} (Y,\varphi ; \kh) .
\end{align}
\end{lemma}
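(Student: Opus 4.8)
The plan is to reduce the statement to a one-block estimate and then iterate block by block using the finite-range independence of $\Gamma_{j+1}$ together with an explicit Gaussian moment computation. First I would observe that, since $\Gamma_{j+1}$ has range $< L^{j+1}$, the fields $(\zeta_x : x \in b^\square)$ for distinct $b \in \cB_j(Y)$ are \emph{not} independent in general, so a naive product factorisation does not apply directly at scale $j$; instead one fixes an ordering $b_1, \dots, b_m$ of $\cB_j(Y)$ and conditions successively. The key structural input is the supermartingale property of $\tilde G$ under the fluctuation integral: there exists a constant (absorbed into the $2^{|Y|_{\cB}}$ factor) so that $\Eplus[\tilde G_j^{2p}(b, \varphi + \bs(b)\zeta) \,|\, \mathcal{F}_{<b}] \le C\, \tilde G_{j+1}^{2p}$-type bounds hold, where the conditioning $\sigma$-algebra collects the already-revealed blocks. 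This is the content of the underlying estimate \cite[Lemma~1.2]{BBS4} combined with Lemma~\ref{lemma:tGp} ($\tilde G_j^p \le \tilde G_{j+1}$ for large $L$), and the $H$-part is handled by the second inequality of Lemma~\ref{lemma:tGp}, $H_j(X,\varphi) \le (H_+(X,\varphi))^{L^{d/2}/\sqrt 2}$, which shows $H^{1/p}$ only \emph{improves} under rescaling when $L$ is large; so both factors constituting $\cG^{(p)}(\cdot;\kh)$ are dominated by $\cG^{(2p)}(\cdot;\kh)$ after taking the expectation, with room to spare (the exponent doubling from $p$ to $2p$ is the slack that absorbs the polynomial $P^q_{\tilde\chi^{1/2}\ell}$).

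Next I would treat the polynomial factor $\prod_{b} P^q_{\tilde\chi^{1/2}\ell}(b,\zeta)$. The point is that $P_{\tilde\chi^{1/2}\ell}(b,\zeta) = 1 + \tilde\chi^{-1/2}\|\zeta\|_{\ell,\Phi(b^\square)}$ is controlled by a Gaussian, since \eqref{eq:Gammajbounds2} gives $\Eplus \|\zeta\|^p_{\kc_+,\Phi(b)} \le O_p(1)$ and $\kc_{j+1} = \tilde\chi_j^{1/2} L^{-\frac12(d-2+\eta)j}$ is comparable to $\tilde\chi^{1/2}\ell_j/\ell_0$, so after adjusting $\ell_0$ the relevant norm of $\zeta$ has bounded moments of all orders; hence $\Eplus[P^{q'}_{\tilde\chi^{1/2}\ell}(b,\zeta)] \le O_{q'}(1)$ for every $q'$. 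The slack exponent in going from $\cG^{(p)}$ to $\cG^{(\frac32 p)}$ (say) is itself of the form $\exp(\tfrac{p}{2}\cdot \text{const}\cdot\|\cdot\|^2)$ which is superexponentially larger than any fixed polynomial in $\|\zeta\|$; so one writes $\cG^{(p)} P^q = \cG^{(3p/2)} \cdot (\cG^{(-p/2)} P^q)$ and checks that the second factor is uniformly bounded in $\varphi,\zeta$ (for $\kh = h$ this uses that $\tilde G^{-p/2}$ contributes an $\exp(-c\|\cdot\|^2_{\tilde\Phi})$ that dominates the polynomial, possibly via the Sobolev-type comparison of Lemma~\ref{lemma:sobolev4}; for $\kh = \ell$ the same with $G$ in place of $\tilde G$). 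The remaining gap from $\cG^{(3p/2)}$ to $\cG^{(2p)}$ after the expectation is the further slack that absorbs the constant coming from the supermartingale step, block by block, yielding the product of constants bounded by $2^{|Y|_{\cB}}$ once $L$ is large enough.

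Then I would carry out the block-by-block induction explicitly: with $b_1,\dots,b_m$ ordered, write $\Eplus[\,\cdots\,] = \Eplus\big[\prod_{i<m}(\cdots)_{b_i} \cdot \Eplus[(\cdots)_{b_m}\,|\,\mathcal F_{<m}]\big]$, bound the innermost conditional expectation by a constant times $\cG^{(2p)}(b_m,\varphi;\kh)$ using the single-block estimate above (note $\cG^{(2p)}(b_m,\varphi;\kh)$ is $\mathcal F_{<m}$-measurable, indeed $\varphi$-only), and iterate down to $b_1$. Each step contributes one factor of the single-block constant; the product over the $m = |Y|_{\cB}$ blocks is $\le 2^{|Y|_{\cB}}$ provided $L$ was chosen large enough that the single-block constant is $\le 2$. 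Care is needed at the last scale $j+1 = N$, where $\tilde\Phi_N$ is defined with constants rather than linear functions, but Lemma~\ref{lemma:tGp} was proved to cover that case as well, so no separate argument is required.

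The main obstacle I anticipate is the interaction between the conditioning order and the regulator's dependence on $\varphi + \bs(b)\zeta$ with $\bs(b)$ varying per block: one must make sure that, when conditioning on $\mathcal F_{<m}$, the shift $\bs(b_m)\zeta$ on block $b_m$ is still a Gaussian (conditionally) with covariance bounded as in \eqref{eq:Gammajbounds2}, and that the contribution of already-revealed $\zeta$ on $b_m^\square \cap (\cup_{i<m} b_i^\square)$ (which is nonempty because of the $\square$-enlargement and finite but nonzero range) is harmless — it is, because it only shifts the mean of the conditional Gaussian by a vector whose relevant $\Phi(b_m^\square)$-norm is controlled by $\|\zeta\|$ on the neighbouring blocks, and those norms have been absorbed into the polynomial factors already accounted for. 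Making this bookkeeping precise — essentially re-deriving the supermartingale estimate of \cite[Lemma~1.2]{BBS4} in the presence of the extra factor $H^{1/\gamma}$ and of the interpolation parameters $\bs$ — is the only genuinely technical point; everything else is a routine combination of Lemma~\ref{lemma:tGp}, Lemma~\ref{lemma:sobolev4}, \eqref{eq:Gammajbounds2}, and Hölder's inequality.
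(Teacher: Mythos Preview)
Your plan has a genuine gap in the polynomial-absorption step and is far more elaborate than what is needed.

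\textbf{The absorption $\cG^{(p)} P^q = \cG^{(3p/2)} \cdot (\cG^{(-p/2)} P^q)$ does not give a bounded second factor.} The regulator is evaluated at $\varphi_{\bs} = \varphi + \bs(b)\zeta$ while the polynomial $P_{\tilde\chi^{1/2}\ell}^q(b,\zeta)$ depends on $\zeta$ alone. If $\bs(b)=0$ on some block, then $\varphi_{\bs}|_b = \varphi|_b$ is independent of $\zeta|_b$, so no decay of $\cG^{(-p/2)}(Y,\varphi_{\bs})$ in $\zeta|_b$ is available to dominate $P^q(b,\zeta)$. Even when $\bs(b)>0$, for $\kh=h$ the relevant decay comes from $\tilde G^{-p/2}$, which is controlled by the $\tilde\Phi$-seminorm; this seminorm vanishes on linear test functions, whereas $\|\zeta\|_{\Phi(b^\square)}$ does not, so linear $\zeta$ gives unbounded $P^q$ with no compensating decay. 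Hence the factor you call ``uniformly bounded in $\varphi,\zeta$'' is not.

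\textbf{The block-by-block conditioning is both unnecessary and unresolved.} You correctly note that the conditional law of $\zeta|_{b_m^\square}$ given $\mathcal F_{<m}$ has a data-dependent mean shift, and you defer handling it; but your proposed handling (``those norms have been absorbed into the polynomial factors already accounted for'') relies on the absorption step that fails above. Separately, your appeal to Lemma~\ref{lemma:tGp} is off target: that lemma compares $\tilde G_j$ with $\tilde G_{j+1}$ and $H$ with $H_+$, whereas both sides of the inequality here live at scale $j$.

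\textbf{The paper's route is a single Cauchy--Schwarz.} One splits
\[
\Eplus\!\big[\cG^{(p)}(Y,\varphi_{\bs};\kh)\,\textstyle\prod_b P^q\big]
\le \Eplus\!\big[(\cG^{(p)})^2\big]^{1/2}\,\Eplus\!\big[\textstyle\prod_b P^{2q}\big]^{1/2}.
\]
The first expectation is handled by Lemma~\ref{lemma:supmartingale}, which in turn rests on the global trick $\tilde G(Y,\varphi+\zeta_{\bs}) \le \tilde G^2(Y,\varphi)\,\tilde G^2(Y,\zeta)$ together with Lemma~\ref{lemma:EplusG} for $\tilde G$ and the explicit Gaussian integral Lemma~\ref{lemma:ECthH} for $H$; no filtration is introduced. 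Taking the square root produces exactly $\cG^{(2p)}(Y,\varphi;\kh)$ and a factor $2^{|Y|_{\cB}/2}$. The second expectation is a pure-$\zeta$ moment: since $\kc_{+}/(\tilde\chi^{1/2}\ell_\bulk)=\ell_0^{-1}$, the field $\zeta$ has all $\|\cdot\|_{\tilde\chi^{1/2}\ell,\Phi(b^\square)}$-moments $\le 1$ once $\ell_0$ is large, and the product over blocks is then controlled by Lemma~\ref{lemma:EplusG}, contributing the remaining $2^{|Y|_{\cB}/2}$. The Cauchy--Schwarz separation is precisely what removes the $\varphi_{\bs}$-vs-$\zeta$ mismatch that breaks your absorption argument.
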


\subsection{Growing regulator}

\begin{lemma} 
\label{lemma:EplusG}
Given $q \ge 1$,  if $L^{-1}$ and $\tilde{g}$ are sufficiently small,
then for $X \in \cP$,
\begin{align}
	\Eplus [ \tilde{G}^{q} (X,  \zeta) ] \le \Eplus [ G^{q} (X, \zeta) ] \le \Eplus \Big[ \prod_{b \in \cB (X)} \exp\Big( \frac{q}{2} \norm{\zeta}^2_{\ell_+, \Phi_+ (b^{\square})} \Big) \Big] \le 2^{ |X|_{\cB} } .
\end{align}
\end{lemma}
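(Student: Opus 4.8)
\textbf{Proof plan for Lemma~\ref{lemma:EplusG}.}
The statement is a chain of three inequalities, and I would prove them from right to left in difficulty. The first inequality $\tilde{G}^q \le G^q$ is pointwise and immediate from the definitions \eqref{eq:Gdefi}--\eqref{eq:tildeGdefi}: since $\tilde\Phi_j(B_x^\square)$ is a \emph{larger} subspace than $\Pi(B_x^\square)$ (linear functions versus functions vanishing on $X$), one has $\norm{\zeta}_{\ell_j,\tilde\Phi_j(B_x^\square)}\le\norm{\zeta}_{\ell_j,\Phi_j(B_x^\square)}$, and the extra factor $\tfrac12$ in the exponent of $\tilde G$ only helps; the $j=N$ case carries the same $L$-prefactor on both sides, so it is unaffected. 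The second inequality $G^q(X,\zeta)\le\prod_{b}\exp(\tfrac q2\norm{\zeta}^2_{\ell_+,\Phi_+(b^\square)})$ is where scale monotonicity enters: by \eqref{eq:scalemontcty} we have $\norm{\zeta}_{\ell_j,\Phi_j(b^\square)}\le\norm{\zeta}_{\ell_{j+1},\Phi_{j+1}(b^\square)}$, but $G_j$ has exponent $L^{-jd}\norm{\zeta}^2_{\ell_j,\Phi_j(B_x^\square)}$ whereas the claimed bound has $\tfrac12\norm{\zeta}^2_{\ell_+,\Phi_+(b^\square)}$ with \emph{no} $L^{-jd}$ weight, so the inequality holds with room to spare once $L$ is large (the factor $L^{-jd}$ is $\le 1$, and combined with the factor $2$ improvement in passing to scale $j+1$ there is slack); the $j=N$ case (with its extra factor $L$ in the exponent of $G_N$) must be checked separately, but there $X=\Lambda$ is a single block and the bound is still comfortable for large $L$.

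The third and genuinely substantive inequality is
\[
	\Eplus\Big[\prod_{b\in\cB(X)}\exp\Big(\tfrac q2\norm{\zeta}^2_{\ell_+,\Phi_+(b^\square)}\Big)\Big]\le 2^{|X|_{\cB}}.
\]
Here the plan is to exploit the \emph{finite-range} property of $\Gamma_{j+1}$ together with the covariance bound \eqref{eq:Gammajbounds2}. Because $\Gamma_{j+1}$ has range $<L^{j+1}$, the Gaussian field $\zeta$ restricted to widely separated $(j+1)$-scale neighbourhoods is only weakly correlated; more precisely, $\norm{\zeta}^2_{\ell_+,\Phi_+(b^\square)}$ is a quadratic form in $\zeta$ whose operator norm relative to $\Gamma_{j+1}$ is controlled, since $\Eplus\norm{\zeta}^2_{\ell_+,\Phi_+(b^\square)}=O(1)$ uniformly in $j,N$ by summing \eqref{eq:Gammajbounds2} over at most $2p_\Phi+2d$ derivatives on the $O(1)$ sites of $b^\square$ (this is precisely the number of derivatives for which \eqref{eq:Gammajbounds2} is assumed, hence the hypothesis $p_\Phi\ge\cdots$ and the remark after \eqref{eq:Gammajbounds2}). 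One then splits $\cB(X)$ into a bounded number $K(d)$ of sub-collections, each consisting of pairwise non-adjacent blocks (so the corresponding $b^\square$ are at graph-distance $\ge L^{j+1}$ and the $\zeta$-restrictions are \emph{independent} by the finite-range property); applying Hölder's inequality with exponents $K(d)$ reduces the estimate to a product, over independent pieces, of single-block Gaussian Laplace transforms $\Eplus\exp(\tfrac{qK(d)}2\norm{\zeta}^2_{\ell_+,\Phi_+(b^\square)})$. Each such factor is finite and bounded by a constant depending only on $d,n,q,p_\Phi$ provided the largest eigenvalue of the relevant quadratic form times $qK(d)$ is $<1$, which holds once $L$ is large (making $\norm{\Gamma_{j+1}}$-dependent constants small via \eqref{eq:Gammajbounds2}); taking $L$ large enough drives each factor below $2$, and the product over $|X|_\cB$ blocks gives $2^{|X|_\cB}$.

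\textbf{Main obstacle.} The delicate point is not the combinatorial colouring (which is standard) but verifying that the single-block Gaussian moment generating function of the Sobolev-type quadratic form $\norm{\zeta}^2_{\ell_+,\Phi_+(b^\square)}$ is finite and $\le 2$ for large $L$: one needs the operator norm of this quadratic form, computed against the covariance $\Gamma_{j+1}$, to be strictly below $(qK(d))^{-1}$, and this requires the covariance bound \eqref{eq:Gammajbounds2} to beat the number of lattice sites and derivatives entering the norm. This is exactly the content of the reference \cite[Proposition~A.1]{BBS4} or the analogous supermartingale estimates of \cite{BBS1}; I would cite that machinery, note that \eqref{eq:Gammajbounds2} supplies the required covariance decay uniformly in $j$ and $N$ (including the boundary scale $j=N$, where the extra factor $L$ in the regulator is compensated by $\tilde g$ being small), and conclude. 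The case $j+1=N$ should be remarked on explicitly, since there $\Phi_N$ versus $\tilde\Phi_N$ and the extra $L$-factor interact, but as the excerpt's Remark after \eqref{eq:phitildePhi} indicates, the definition \eqref{eq:phitildePhi} is tailored so that the same argument goes through.
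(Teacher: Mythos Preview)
Your treatment of the first two inequalities matches the paper's (pointwise from the definitions, then scale monotonicity \eqref{eq:scalemontcty}), though note that for a single $j$-block $b$ one has exactly $G(b,\zeta)=\exp(\norm{\zeta}^2_{\ell,\Phi(b^\square)})$ since $|b|=L^{jd}$, so the ``$L^{-jd}$ slack'' you describe is not there; the gain comes entirely from $\norm{\cdot}_{\ell,\Phi}\le \norm{\cdot}_{\ell_+,\Phi_+}$.

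For the third inequality your route diverges from the paper's and contains a gap. You claim that splitting $\cB(X)$ into $K(d)$ classes of pairwise non-adjacent $j$-blocks makes the $\zeta$-restrictions to the $b^\square$ independent. This is false: $\Gamma_{j+1}$ has range $L^{j+1}$, while non-adjacent $j$-blocks are only at lattice distance $\ge L^j$, and their small-set neighbourhoods $b^\square$ (diameter $O(2^d L^j)$) can overlap or be well within range $L^{j+1}$ of each other. To get genuine independence you would need $O(L^d)$ colours, and then H\"older forces exponent $O(L^d)$ on the single-block MGF --- at which point you are back to needing the covariance to be small by a power of $L$, which is precisely what the large choice $\ell_0=L^{(d+p_\Phi)/2}$ buys, and the colouring has gained nothing.

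The paper instead argues directly: it bounds $\norm{\zeta}_{\ell_+,\Phi_+}\le L^{\,d-2+p_\Phi}\norm{\zeta}_{\ell,\Phi}$, so the product is $\le G(X,\zeta)^{t}$ with $t=qL^{\,d-2+p_\Phi}$, and then invokes \cite[Proposition~3.20]{BBS1}, which gives $\Eplus[G^t(X,\zeta)]\le 2^{|X|_\cB}$ provided
\[
\max_{k_x,k_y\le p_\Phi+d}\frac{L^{(k_x+k_y)j}\norm{\nabla_x^{k_x}\nabla_y^{k_y}\Gamma_+}_{\ell^\infty}}{\ell_\bulk^2}\le \frac{C}{t}.
\]
This condition is exactly where the choice $\ell_0=L^{(d+p_\Phi)/2}$ enters: by \eqref{eq:Gammajbounds2} the numerator is $\lesssim \kc_{j+1}^2=\tilde\chi_j\ell_0^{-2}\ell_\bulk^2$, so the ratio is $\lesssim \ell_0^{-2}$, which is $\le C/t$ for $L$ large. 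No finite-range independence or colouring is used.
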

\begin{proof}
First inequality is trivial and the second inequality holds by \eqref{eq:scalemontcty}.
For the final one,  observe that,  for any $g \in \Phi (\Lambda)$, 
we have $\norm{g}_{\ell_+, \Phi_+} \le L^{d-2+p_{\Phi}} \norm{g}_{\kh, \Phi}$,  thus
\begin{align}
	\exp\big( q \norm{\zeta}^2_{\ell_+, \Phi_+ (b^{\square})}  \big) \le (G (X,\zeta) )^{q L^{d-2+p_{\Phi}} } .
\end{align} 
But by \cite[Proposition~3.20]{BBS1},  there exists $C >0$ such that
\begin{align}
	\max_{k_x, k_y \le p_{\Phi} + d} \frac{L^{(k_x + k_y) j} \norm{\nabla_x^{k_x}\nabla_y^{k_y} \Gamma_+}_{\ell^{\infty}} }{\ell_\bulk^2} \le \frac{C }{t} \quad \text{implies} \quad
	\Eplus [(G (X,\zeta) )^{t}] \le 2^{|X|_{\cB}}
	  \label{eq:Covbndcondition}
\end{align}
Indeed,  the condition on $\Gamma_+$ holds due to \ref{eq:Gammajbounds2} with $t = q L^{d-2+p_\Phi}$,  by our choice of $\ell_0$ in Section~\ref{sec:chparams} and taking $L$ sufficiently large.
\end{proof}

\begin{corollary}
\label{cor:EplusP}
Let $L^{-1}$ and $\tilde{g}$ be sufficiently small and $\kh \gtrsim \ell$.  If $F (\varphi)$ is a polynomial of degree $A$ that depends only on $\varphi |_{b}$ for $b \in \cB$,  then
\begin{align}
	\label{eq:EplusP2}
	\norm{\Eplus \theta F (b)}_{\kh , \vec{\lambda} ,  T (0,y)} \le O_{A} (1) \norm{F(b)}_{\kh,  \vec{\lambda},  T(0,y)}
	.
\end{align}
\end{corollary}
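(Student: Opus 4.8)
The plan is to reduce the extended-norm estimate \eqref{eq:EplusP2} to the analogous statement for the ordinary $\varphi$-Taylor norm at the origin, and then to prove the latter by differentiating under the Gaussian integral and invoking the polynomial bound of Lemma~\ref{lemma:polynorm}. The reduction rests on the observation that $\theta_\zeta$ and the expectation $\Eplus$ act only on the field variable $\varphi$, hence commute with the derivatives $D_{V_\bulk}, D_K, D_{\bar K}$ that enter the definition \eqref{eq:extdnrmdf} of $\norm{\cdot}_{\kh,\vec\lambda,T(0,z)}$: one has $D_z^{\vec m}(\Eplus\theta F) = \Eplus\theta(D_z^{\vec m}F)$ for every multi-index $\vec m$ in the extended variables $z$. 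Since $\Eplus\theta$ preserves the $\varphi$-degree — by \eqref{eq:ECexp}, $\Eplus\theta G = e^{\frac12\Delta_{\Gamma_{j+1}}}G$ is a well-defined polynomial of the same degree — each $D_z^{\vec m}F$ is again a polynomial of degree $\le A$ in $\varphi$ depending only on $\varphi|_b$, and it suffices to prove
\begin{equation}
	\norm{\Eplus\theta G(b)}_{\kh,T(0)} \le O_A(1)\,\norm{G(b)}_{\kh,T(0)}
	\label{eq:EplusPplan}
\end{equation}
for every such $G$; then \eqref{eq:EplusP2} follows by applying \eqref{eq:EplusPplan} termwise to $G=D_z^{\vec m}F$ and resumming the series defining the extended norm.

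To establish \eqref{eq:EplusPplan} I would first differentiate $\varphi\mapsto\Eplus[G(\varphi+\zeta)]$ under the integral sign — legitimate since $G$ is polynomial and all Gaussian moments are finite — to obtain $D_\varphi^r(\Eplus\theta G)(0;f^{(r)}) = \Eplus[D^r G(\zeta;f^{(r)})]$, so that pairing against a test function and pulling the supremum past the expectation yields $\norm{\Eplus\theta G(b)}_{\kh,T(0)} \le \Eplus\big[\norm{G(b)}_{\kh,T(\zeta)}\big]$. Next I would apply the second bound of Lemma~\ref{lemma:polynorm} with $X=b\in\cB$, giving $\norm{G(b)}_{\kh,T(\zeta)} \le \norm{G(b)}_{\kh,T(0)}\,P_{\kh}^A(b,\zeta)$, which reduces everything to the Gaussian moment bound $\Eplus[P_{\kh}^A(b,\zeta)]\le O_A(1)$. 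Here the hypothesis $\kh\gtrsim\ell$ enters: by monotonicity of $\norm{\cdot}_{\kh,\Phi}$ in $\kh_\bulk$ one has $\norm{\zeta}_{\kh,\Phi(b^\square)}\le\ratio\,\norm{\zeta}_{\ell,\Phi(b^\square)}$ for an $L$-independent $\ratio\ge1$, and since $\kc_+ = \tilde\chi^{1/2}\ell_0^{-1}\ell_\bulk\le\ell_\bulk$ also $\norm{\zeta}_{\ell,\Phi(b^\square)}\le\norm{\zeta}_{\kc_+,\Phi(b^\square)}$; then $\Eplus\norm{\zeta}_{\kc_+,\Phi(b^\square)}^p\le O_p(1)$ for all $p\ge1$ by the covariance estimate \eqref{eq:Gammajbounds2} — the bound already used in the proof of Lemma~\ref{lemma:EthV12}, and also a consequence of Lemma~\ref{lemma:EplusG} after passing to scale-$(j+1)$ test functions. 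Combining, $\Eplus[P_{\kh}^A(b,\zeta)] \le \ratio^A\,\Eplus\big[(1+\norm{\zeta}_{\kc_+,\Phi(b^\square)})^A\big] \le O_A(1)$, which closes the argument.

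I do not anticipate a serious obstacle. The two places demanding a little care are (i) the termwise passage to the full extended norm, which works precisely because $\Eplus\theta$ commutes with the $z$-derivatives, and (ii) the interchange of the Taylor-norm supremum/derivatives with the Gaussian expectation, which is routine once the finiteness of polynomial Gaussian moments is noted; the hypothesis $\kh\gtrsim\ell$ is needed only to replace $\kh$-moments of the fluctuation field by $\ell$-moments so that the covariance bound \eqref{eq:Gammajbounds2} applies verbatim.
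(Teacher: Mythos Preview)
Your proposal is correct and follows essentially the same argument as the paper: pull the Taylor norm inside the expectation, apply Lemma~\ref{lemma:polynorm} to extract $P_{\kh}^A(b,\zeta)$, and then bound the Gaussian moment. The paper does this directly at the level of the extended norm (your reduction via commutation of $\Eplus\theta$ with $D_z$ is the content of its first inequality), and controls the moment by $P_{\kh}^A\lesssim P_{\ell}^A\le O_A(1)\,G(b,\zeta)$ together with Lemma~\ref{lemma:EplusG}, which is the alternative route you already mention.
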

\begin{proof}
Since $P_{\kh}^A (b,\zeta) \lesssim P_{\ell}^A (b, \zeta) \le O_A (1) G (b,\zeta)$,  
by Lemma~\ref{lemma:EplusG},  there is some $L$-independent constant $C_A >0$ such that $\Eplus [ P_{\kh}^A (b, \zeta) ] \le C_A$.
Thus we can use Lemma~\ref{lemma:polynorm} to see that
\begin{align}
	\norm{\Eplus \theta F (b)}_{\kh , \vec{\lambda} ,  T (0,y)}
		& \le \Eplus \norm{F (b,  \zeta)}_{\kh , \vec{\lambda} ,  T (\zeta,y)}
		\nnb
		& \le \Eplus P^A_{\kh} (\zeta) \norm{F (b)}_{\kh , \vec{\lambda} ,  T (0,y)}
		\nnb
		& \le O_A (1) \norm{F (b)}_{\kh , \vec{\lambda} ,  T (0,y)} .
\end{align}
\end{proof}

\subsection{Decaying regulator}

Due to Lemma~\ref{lemma:EplusG},  we are left to bound the fluctuation integral of
\begin{align}
	H (X,\varphi) 
		& = \prod_{x \in X} \exp \big( - \kappa L^{-jd} |\varphi(x) / h_{\bulk} |^2 \big) \nnb
		& = \exp\big( -\kappa L^{-jd} h_\bulk^{-2} (\varphi, \varphi)_X \big) 
		.
\end{align}
where, we use the notation $(\varphi, \psi)_X = \sum_{x \in X} \varphi (x)\cdot \psi(x)$.

We first write a preparatory lemma, where $\norm{C}_{\rm op}$ is the operator norm of $C$, which is equal to the largest eigenvalue when $C$ is a covariance matrix.  We always work on a finite-dimensional Euclidean vector space. 

\begin{lemma} \label{lemma:ECthH}
For a covariance matrix $C$ and $k > 0$ such that $k \norm{C}_{\rm op} \le 1$,
\begin{align}
	\E_C \big[ \theta e^{-\frac{1}{2} k (\varphi, \varphi)_X} \big]  \le e^{-\frac{1}{4} k (\varphi,  \varphi)_X}
	\label{eq:ECthH}
\end{align}
\end{lemma}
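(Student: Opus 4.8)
The plan is to reduce the statement to a direct Gaussian integral computation. Since both $H(X,\varphi)$ and the claimed bound only involve the diagonal pairing $(\varphi,\varphi)_X = \sum_{x\in X}|\varphi(x)|^2$, I would first pass to the subspace on which everything acts: restrict attention to the $\R^n$-valued field on $X$ (or equivalently view $C$ as a covariance on $(\R^n)^X$ after projecting). The function $e^{-\frac{1}{2}k(\varphi,\varphi)_X}$ is a centred Gaussian density up to normalisation, so $\E_C[\theta e^{-\frac12 k(\varphi,\varphi)_X}]$ is a Gaussian integral of a product of two Gaussians and can be evaluated in closed form.

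Concretely, writing $P$ for the orthogonal projection onto the coordinates in $X$ (acting on each component) and $Q = PCP$ for the compressed covariance, one has
\begin{align}
	\E_C\big[\theta e^{-\frac12 k(\varphi,\varphi)_X}\big]
		= \frac{1}{(2\pi)^{m/2}(\det C)^{1/2}} \int e^{-\frac12 (\zeta, C^{-1}\zeta)} e^{-\frac12 k (\varphi+\zeta, P(\varphi+\zeta))}\, d\zeta ,
\end{align}
where $m$ is the dimension of the ambient space. Completing the square in $\zeta$ and carrying out the integral gives an expression of the form $\det(I + kQ)^{-1/2} e^{-\frac12 k(\varphi, (I+kQ)^{-1}P\varphi)}$ (after identifying the effective quadratic form in $\varphi$), so the key analytic inputs are: (i) $\det(I+kQ)^{-1/2}\le 1$, which holds because $kQ\succeq 0$; and (ii) the operator inequality $(I+kQ)^{-1} \succeq \tfrac12 P$ on the range of $P$, which follows from $k\|Q\|_{\rm op} \le k\|C\|_{\rm op}\le 1$, since then $I + kQ \preceq 2I$ on that range and hence its inverse is $\succeq \tfrac12 I$ there. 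Combining (i) and (ii) yields $e^{-\frac12 k(\varphi,(I+kQ)^{-1}P\varphi)} \le e^{-\frac14 k (\varphi, P\varphi)} = e^{-\frac14 k(\varphi,\varphi)_X}$, which is exactly \eqref{eq:ECthH}.

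An alternative, perhaps cleaner, route that avoids determinant bookkeeping: diagonalise $Q$ on $PV$, reducing to the scalar one-dimensional identity $\E[e^{-\frac12 k(\varphi+\zeta)^2}] = (1+k\sigma^2)^{-1/2} e^{-\frac12 \frac{k}{1+k\sigma^2}\varphi^2}$ for $\zeta\sim\cN(0,\sigma^2)$, and then use $k\sigma^2\le 1$ to get $(1+k\sigma^2)^{-1/2}\le 1$ and $\frac{k}{1+k\sigma^2}\ge \frac{k}{2}$, multiplying over eigendirections. The factorisation over $X$ that is used implicitly in the applications (via $H$ being set-multiplicative) is automatic here since the bound is stated for a single $X$.

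I do not expect a genuine obstacle: the only mild care needed is the reduction to the compressed covariance $Q=PCP$ and the observation $\|Q\|_{\rm op}\le\|C\|_{\rm op}$ (true for any orthogonal projection $P$), so that the hypothesis $k\|C\|_{\rm op}\le 1$ transfers. The ``hardest'' part is simply making the change of variables/completion of the square precise in the vector-valued setting, but this is entirely routine Gaussian algebra, so I would present it compactly rather than in full detail.
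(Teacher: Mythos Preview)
Your proposal is correct and follows essentially the same route as the paper: complete the square in the Gaussian integral, drop the determinant factor $\det(I+kQ)^{-1/2}\le 1$, and use $k\|C\|_{\rm op}\le 1$ to bound the remaining quadratic form below by $\tfrac12$ of the original. The only cosmetic difference is that the paper handles possible degeneracy of $C$ by regularising $C\to C+\mu$ and taking $\mu\to 0^+$ at the end, whereas you pass to the marginal covariance $Q=PCP$ on the range of $P$; both devices serve the same purpose and the underlying Gaussian algebra is identical.
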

\begin{proof}
For easiness of computation, we first take $C_\mu = C + \mu$ for $\mu >0$ and prove a bound with $C$ replaced by $C_{\mu}$.
By expanding out the integrand,
\begin{align}
	\theta e^{-\frac{1}{2} k (\varphi, \varphi)_X} = \exp\Big( - \frac{1}{2} k \big( (\varphi, \varphi)_X + 2 (\varphi, \zeta)_X + (\zeta, \zeta)_X \big)  \Big)
	\label{eq:expexpd}
\end{align}
Using a standard formula for the moment generating function of a Gaussian random variable, 
\begin{align}
	\E_{C_\mu} \big[  e^{ - \frac{1}{2} k ( 2 (\varphi, \zeta)_X + (\zeta, \zeta)_X  ) }  \big]
		&= \frac{1}{\det (2 \pi C_\mu)^{1/2}} \int_{\R^\Lambda} d \zeta e^{ - \frac{1}{2} (\zeta,  (C_\mu^{-1} + k  ) \zeta) - k (\varphi, \zeta)_X}  
		\nnb
		&= \Big( \frac{\det (C_{\mu}^{-1} )}{\det (C_\mu^{-1} + k )} \Big)^{\frac{1}{2}}
			e^{\frac{k^2}{2} (\varphi,  (C_\mu^{-1} + k)^{-1} \varphi)} 
		\nnb
		&\le \exp \Big( \frac{1}{2} (\varphi,  \frac{k^2}{C_\mu^{-1} + k}  \varphi)_X \Big)
		.
\end{align}
Inserting \eqref{eq:expexpd} into the bound above,
\begin{align}
	\E_{C_\mu} \big[ \theta e^{-\frac{1}{2} k (\varphi, \varphi)_X} \big]
		&\le \exp\Big[ - \frac{1}{2} \Big(\varphi,  \big( k - \frac{k^2}{C_\mu^{-1} +k} \big) \varphi \Big)_X \Big] \nnb
		&= \exp\Big[ - \frac{k}{2} \Big(\varphi,  \frac{1}{1 + kC_\mu} \varphi \Big)_X \Big] \nnb
		&= \exp\Big[ - \frac{k}{2 ( 2 + k \mu)} (\varphi, \varphi)_X  - \frac{k}{2} \Big(\varphi,  \frac{1 - kC}{(2 + k\mu) (1 + k C_\mu)} \varphi \Big)_X \Big]
		.
\end{align}
But by the assumption $k \norm{C}_{\rm op} \le 1$, 
the final expression is bounded by $e^{-\frac{k}{4 + 2k \mu} (\varphi,\varphi)_X}$.
Then the limit $\mu \rightarrow 0^+$ gives the desired conclusion.
\end{proof}

The next lemma says that $H$ satisfies a robust supermartingale property.  

\begin{lemma}
\label{lemma:EthH}
Given $p \ge 1$,  let $\Eplus$ is the centred Gaussian expectation of variable $\zeta$ with covariance $\Gamma_{+}$ and $t \in [0,p]$. 
For $X \in \cP$,  let $\bs = (\bs (b))_{b \in \cB (X)} \in [0,1]^{\cB (X)}$ and $\varphi_{\bs}$ be as in Lemma~\ref{lemma:supmartingale}. Then for sufficiently small $\tilde{g}$,
\begin{align}
	\E_{+} [H^t (X,\varphi_{\bs})] \le H^{t/2} (X,\varphi) 
	.
\end{align}
\end{lemma}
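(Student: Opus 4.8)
The plan is to deduce Lemma~\ref{lemma:EthH} from the single-block supermartingale estimate Lemma~\ref{lemma:ECthH}, using the product structure of $H$ over blocks together with the finite-range property of $\Gamma_+$. First I would observe that by set-multiplicativity, $H^t(X,\varphi_{\bs}) = \prod_{b \in \cB(X)} H^t(b, \varphi + \bs(b)\zeta)$, so it suffices to run an inductive (or martingale) argument over the blocks of $X$. Since $\Gamma_+$ has range $< L^{j+1}$ by Definition~\ref{defi:FRD}(ii), the standard device (as in the supermartingale arguments underlying Lemma~\ref{lemma:supmrtingaleapplied}) is to partition $\cB(X)$ into a bounded number of classes of blocks that are pairwise ``far apart'' at scale $j+1$, so that within each class the random variables $\zeta|_{b^{\square}}$ are independent; one then conditions successively. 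The key point where Lemma~\ref{lemma:ECthH} enters is that, block by block, $\E_+[\theta_{\zeta} e^{-\frac12 k (\varphi,\varphi)_b}] \le e^{-\frac14 k(\varphi,\varphi)_b}$ provided $k\|\Gamma_+\|_{\rm op} \le 1$, where here $k = 2\kappa t L^{-jd} h_{\bulk}^{-2}$.

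The arithmetic check is the crux of making the hypothesis of Lemma~\ref{lemma:ECthH} hold: I would bound $\|\Gamma_+\|_{\rm op}$ using \eqref{eq:Gammajbounds2}, which gives $\|\Gamma_+\|_{\ell^\infty} \lesssim \kc_{j+1}^2 \lesssim \ell_0^{-2}\tilde\chi \ell_{\bulk,j}^2 = \ell_0^{-2}\tilde\chi \ell_0^2 L^{-(d-2+\eta)j} = \tilde\chi L^{-(d-2+\eta)j}$, and since $\Gamma_{j+1}$ has range $< L^{j+1}$, the operator norm is at most $O_L(1)$ times this, i.e. $\|\Gamma_+\|_{\rm op} \le O_L(\tilde\chi L^{-(d-2+\eta)j})$. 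Meanwhile $k = 2\kappa t L^{-jd} h_{\bulk,j}^{-2} = 2\kappa t L^{-jd} k_0^{-2} \tilde{g}_j^{1/2} L^{dj/2} = 2\kappa t k_0^{-2} \tilde{g}_j^{1/2} L^{-dj/2}$. Thus $k\|\Gamma_+\|_{\rm op} \le O_L(\kappa t k_0^{-2} \tilde{g}_j^{1/2} L^{-dj/2 - (d-2+\eta)j}\tilde\chi) \to 0$ as $\tilde{g} \to 0$, so for $\tilde{g}$ sufficiently small (depending on $L$, $\kappa$, $k_0$, $p$) the hypothesis holds for all $t \in [0,p]$. This yields, block by block, a gain of a factor $H^{t/2}$; carrying this through the finitely many conditioning rounds (at the cost of lower-order cross terms that are again absorbed for small $\tilde{g}$, or more cleanly by iterating the single-block bound with $\varphi$ replaced by a partially-conditioned field) gives $\E_+[H^t(X,\varphi_{\bs})] \le H^{t/2}(X,\varphi)$.

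I expect the main obstacle to be handling the interaction between blocks cleanly: Lemma~\ref{lemma:ECthH} is stated for a single quadratic form $(\varphi,\varphi)_X$ and a single covariance, but here the exponent $H^t(X,\varphi_{\bs})$ involves the \emph{full} quadratic form $\sum_{b} k(\varphi + \bs(b)\zeta, \varphi + \bs(b)\zeta)_b$ with $\bs$-dependent shifts, and $\zeta|_{b^{\square}}$ and $\zeta|_{b'^{\square}}$ are correlated when $b,b'$ are within distance $L^{j+1}$. The finite-range structure limits these correlations to a bounded neighbourhood, so the conditioning argument terminates in $O_d(1)$ steps; the cleanest route is probably to note that $\Gamma_+$ restricted to any single $b^{\square}$ still has operator norm $\le O_L(1)\|\Gamma_+\|_{\ell^\infty}$, apply Lemma~\ref{lemma:ECthH} on each block with respect to the conditional law of $\zeta|_b$ given $\zeta$ outside a neighbourhood of $b$ (whose covariance is dominated by $\Gamma_+$ in the operator order), and sum the resulting exponents, the cross contributions being of smaller order and absorbed into the slack between $\frac14$ and $\frac12$. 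A technical point at scale $j=N$ (where $H$ carries the extra factor $L$ in $G, \tilde G$ but not in $H$ itself per \eqref{eq:Hjdefi}) requires no change here since $H_N$ is defined without the $L$-prefactor; I would remark on this explicitly.
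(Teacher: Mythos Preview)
Your plan is more complicated than needed, and the complication you flag as the main obstacle (cross-block correlations and conditional-mean shifts in the conditioning rounds) never has to arise. The paper's proof is a single application of Lemma~\ref{lemma:ECthH} on all of $X$ at once: set $\zeta_{\bs}(x) := \bs(b_x)\zeta(x)$, so that $\varphi_{\bs}=\varphi+\zeta_{\bs}$ and $\zeta_{\bs}$ is itself a centred Gaussian with covariance $\Gamma_{+,\bs}(x,y)=\bs(b_x)\bs(b_y)\Gamma_+(x,y)$. Then $H^t(X,\varphi_{\bs})=\exp\bigl(-\tfrac12 k\,(\varphi+\zeta_{\bs},\varphi+\zeta_{\bs})_X\bigr)$ with $k=2t\kappa L^{-jd}h_{\bulk}^{-2}$, and Lemma~\ref{lemma:ECthH} applied with $C=\Gamma_{+,\bs}$ gives the conclusion directly once $k\|\Gamma_{+,\bs}\|_{\rm op}\le 1$. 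There is no block decomposition and no conditioning; the block-dependent scaling $\bs$ is absorbed into the covariance of a single global Gaussian. Your conditioning scheme, by contrast, would have to track the nonzero conditional means (Lemma~\ref{lemma:ECthH} needs a centred Gaussian), and the claim that these are ``absorbed into the slack between $\tfrac14$ and $\tfrac12$'' is not justified.

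There is also an arithmetic slip in your operator-norm estimate: passing from $\|\Gamma_+\|_{\ell^\infty}\lesssim L^{-(d-2+\eta)j}$ to the operator norm costs the number of nonzero entries in a row, which is of order $L^{d(j+1)}$, not $O_L(1)$. The correct bound is $\|\Gamma_{+,\bs}\|_{\rm op}\le \sup_x\sum_y|\Gamma_{+}(x,y)|\lesssim L^{d+(2-\eta)j}$, whence $k\|\Gamma_{+,\bs}\|_{\rm op}\lesssim t\kappa k_0^{-2}\tilde g^{1/2}L^{d}L^{(2-\eta-d/2)j}$; since $d\ge 4$ the exponent of $L^j$ is nonpositive, and small $\tilde g$ makes the product $\le 1$. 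Your underestimate happens to point in the safe direction, but it hides the role of the constraint $d\ge 4$.
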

\begin{proof}
Let $\zeta_{\bs} (x) = s(\bs_x) \zeta (x)$ for the unique $b_x \in \cB$ such that $x \in b_{x}$,  so that $\varphi_\bs = \varphi + \zeta_\bs$.
By definition,  $\zeta_{\bs}$ has covariance $\Gamma_{+, \bs} := \E_+ [\zeta_{\bs,x}^{(\mu)} \zeta_{\bs,y}^{(\nu)}]=  \delta_{\mu, \nu} \bs (b_x) \bs (b_y) \Gamma_{+} (x,y)$. 
To apply Lemma~\ref{lemma:ECthH}, we first bound $\norm{\Gamma_{+,\bs}}_{\rm op}$.
By \eqref{eq:Gammajbounds1},  we have $|\Gamma_{+} (x,y)| \le C L^{-(d-2 + \eta) j}$ for each $x,y$.
Thus
\begin{align}
	\norm{\Gamma_{+,\bs}}_{\rm op} \le \sup_{x} \sum_{y} |\Gamma_{+,\bs} (x,y)| \lesssim L^{-(d-2 + \eta) j} \times L^{d (j+1)} = L^{d + (2-\eta) j}
	\label{eq:EthH}
\end{align}
where the $L^{d (j+1)}$ arises when performing $\sum_{y}$ because $\Gamma_{j+1}$ has range $\lesssim L^{j+1}$.

Now we apply Lemma~\ref{lemma:ECthH} with $k = 2 \kappa L^{-dj} h_\bulk^{-2}$ and $C = t \Gamma_{+,\bs}$.
By \eqref{eq:EthH},
\begin{align}
	k \norm{C}_{\rm op} \le c t \kappa L^{-dj} \tilde{g}^{1/2} L^{\frac{d}{2} j} \times L^{d + 2j} .
\end{align}
Since $d\ge 4$,  this can be made small as desired by taking sufficiently small $\tilde{g}$.
Thus by \eqref{eq:ECthH},
\begin{align}
	\Eplus [ H (X,\varphi_{\bs}) ] 
		\le \exp \Big[ - \frac{1}{2} t \kappa L^{-dj} h_\bulk^{-2} (\varphi,\varphi)_X  \Big] 
		= \big( H^t (X,\varphi) \big)^{\frac{1}{2}} ,
\end{align}
as desired.
\end{proof}

\subsection{Conclusion}

As an application of the previous lemmas,  we obtain an intermediate bound on expectation on the regulators.

\begin{lemma} \label{lemma:supmartingale}
For $X \in \cP$,  $\bs \in [0,1]^{\cB (X)}$,  $p \ge 1$,  $t , \gamma \in [0,p]$ and sufficiently small $L^{-1}$ and $\tilde{g}$,
\begin{align}
	\Eplus \big[ H^t (X,  \varphi_{\bs}) \tilde{G}^\gamma (X,\varphi_\bs) \big] 
		 & \le 2^{|X|_{\cB}} H^{t/2} (X,\varphi) \tilde{G}^{2\gamma} (X,\varphi) \\
	\Eplus [ G^{\gamma} (X,\varphi_\bs) ] & \le 2^{|X|_{\cB}} G^{2 \gamma} (X,\varphi) .
\end{align}
\end{lemma}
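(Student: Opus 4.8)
\textbf{Proof proposal for Lemma~\ref{lemma:supmartingale}.}

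The plan is to prove the two displayed inequalities separately, the second being an immediate consequence of Lemma~\ref{lemma:EplusG} after handling the shifted argument, and the first following by combining the decaying-regulator bound of Lemma~\ref{lemma:EthH} with the growing-regulator bound of Lemma~\ref{lemma:EplusG} via a Cauchy--Schwarz split.

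First I would dispose of the bound on $G$. By scale monotonicity \eqref{eq:scalemontcty} we have $G^\gamma(X,\varphi_\bs) \le \prod_{b \in \cB(X)} \exp\bigl(\gamma \norm{\varphi_\bs}^2_{\ell_+,\Phi_+(b^\square)}\bigr)$. Write $\varphi_\bs = \varphi + \zeta_\bs$ where $\zeta_{\bs}(x) = \bs(b_x)\zeta(x)$, and use the elementary inequality $\norm{\varphi + \zeta_\bs}^2 \le 2\norm{\varphi}^2 + 2\norm{\zeta_\bs}^2 \le 2\norm{\varphi}^2 + 2\norm{\zeta}^2$ (since $\bs(b_x) \le 1$) applied inside each $\norm{\cdot}_{\ell_+,\Phi_+(b^\square)}$ seminorm. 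This separates the deterministic factor $\prod_b \exp(2\gamma\norm{\varphi}^2_{\ell_+,\Phi_+(b^\square)}) \le G^{2\gamma}(X,\varphi)$ from the expectation $\Eplus\bigl[\prod_b \exp(2\gamma\norm{\zeta}^2_{\ell_+,\Phi_+(b^\square)})\bigr]$, which is $\le 2^{|X|_\cB}$ by the final inequality in Lemma~\ref{lemma:EplusG} (applied with $q = 4\gamma$, legitimate since $\gamma \le p$ and $\tilde g, L^{-1}$ are small, so the hypothesis on $\Gamma_+$ in \eqref{eq:Covbndcondition} holds by \eqref{eq:Gammajbounds2} and our choice of $\ell_0$). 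This gives the second bound.

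For the first bound I would apply the Cauchy--Schwarz inequality in $\Eplus$ to split
\begin{align}
	\Eplus\bigl[ H^t(X,\varphi_\bs)\tilde G^\gamma(X,\varphi_\bs) \bigr]
		\le \Eplus\bigl[ H^{2t}(X,\varphi_\bs) \bigr]^{1/2} \, \Eplus\bigl[ \tilde G^{2\gamma}(X,\varphi_\bs) \bigr]^{1/2}.
\end{align}
For the first factor, Lemma~\ref{lemma:EthH} applied with exponent $2t \in [0,2p]$ (enlarging $p$ to $2p$ in its statement changes nothing essential) yields $\Eplus[H^{2t}(X,\varphi_\bs)] \le H^{t}(X,\varphi)$, so its square root is $H^{t/2}(X,\varphi)$. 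For the second factor, since $\tilde G \le G$ and again by the $2\norm{\varphi}^2 + 2\norm{\zeta}^2$ split together with scale monotonicity, $\tilde G^{2\gamma}(X,\varphi_\bs) \le \prod_b \exp\bigl(\gamma\norm{\varphi}^2_{\ell_+,\tilde\Phi_+(b^\square)}\bigr)\prod_b\exp\bigl(2\gamma\norm{\zeta}^2_{\ell_+,\Phi_+(b^\square)}\bigr)$; wait---more carefully, I should keep the $\tilde\Phi$ seminorm on the deterministic part so the output is genuinely $\tilde G^{2\gamma}(X,\varphi)$, which works because the shift by a linear function is absorbed differently for $\zeta$; the clean route is to bound the deterministic part by $\tilde G^{2\gamma}(X,\varphi)$ using that $\tilde\Phi$ seminorms are translation-invariant and use $G$ (not $\tilde G$) for the $\zeta$-part. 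Then $\Eplus$ of the $\zeta$-part is $\le 2^{|X|_\cB}$ by Lemma~\ref{lemma:EplusG}, so $\Eplus[\tilde G^{2\gamma}(X,\varphi_\bs)]^{1/2} \le 2^{|X|_\cB/2}\tilde G^{\gamma}(X,\varphi)$. Hmm, that gives $\tilde G^\gamma$, not $\tilde G^{2\gamma}$, on the right---so I would instead \emph{not} use Cauchy--Schwarz symmetrically but rather keep a full power: bound $\tilde G^\gamma(X,\varphi_\bs) \le \tilde G^{2\gamma}(X,\varphi)\cdot G^{\gamma}_{\zeta}$ pointwise (using $\norm{\varphi+\zeta_\bs}^2 \le 2\norm{\varphi}^2 + 2\norm{\zeta}^2$ so that $\tilde G^\gamma(X,\varphi_\bs) \le \tilde G^{2\gamma}(X,\varphi) \prod_b \exp(2\gamma\norm{\zeta}^2_{\ell_+,\Phi_+(b^\square)})$), pull the deterministic $\tilde G^{2\gamma}(X,\varphi)$ out of $\Eplus$, and then apply Cauchy--Schwarz only to the random remainder $\Eplus[H^t(X,\varphi_\bs)\prod_b\exp(2\gamma\norm{\zeta}^2)]$, splitting into $\Eplus[H^{2t}(X,\varphi_\bs)]^{1/2}$ (bounded by $H^{t/2}(X,\varphi)$ via Lemma~\ref{lemma:EthH}---actually $H^{t/2}$ requires care since Lemma~\ref{lemma:EthH} gives $H^{t/2}$ from exponent $t$, so with exponent $2t$ it gives $H^t$; then the square root is $H^{t/2}$, but this factor still depends on $\varphi$, which is fine since we want $H^{t/2}(X,\varphi)$ on the right) and $\Eplus[\prod_b\exp(4\gamma\norm{\zeta}^2_{\ell_+,\Phi_+(b^\square)})]^{1/2} \le 2^{|X|_\cB/2}$ by Lemma~\ref{lemma:EplusG}.

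The main obstacle, and the step needing the most care, is the bookkeeping of the shift $\varphi \mapsto \varphi + \zeta_\bs$ through the $\tilde\Phi$ seminorm versus the $\Phi$ seminorm: one wants the deterministic leftover to carry a $\tilde\Phi$ seminorm (to recover $\tilde G$, not just $G$, on the right) while the $\zeta$-dependent leftover carries a $\Phi$ seminorm (so Lemma~\ref{lemma:EplusG} applies). The translation-invariance of $\tilde\Phi$ seminorms under linear functions is not available for $\zeta_\bs$ since $\bs$ varies block-by-block, so the honest route is the crude triangle/Young split $\norm{\varphi+\zeta_\bs}^2 \le 2\norm{\varphi}^2_{\tilde\Phi} + 2\norm{\zeta_\bs}^2_{\Phi} \le 2\norm{\varphi}^2_{\tilde\Phi} + 2\norm{\zeta}^2_{\Phi}$ valid because $\Pi(b^\square) \supset \tilde\Pi(b^\square)$ controls the $\tilde\Phi$ seminorm of the first term and $\bs(b_x)\le 1$ controls the second; the factor $2$ in the exponent is why the right-hand side carries $\tilde G^{2\gamma}$ and $H^{t/2}$ rather than $\tilde G^\gamma$ and $H^t$. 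Once this split is set up correctly, both resulting expectations are exactly the inputs of Lemmas~\ref{lemma:EplusG} and~\ref{lemma:EthH}, and the constants $2^{|X|_\cB/2}\cdot 2^{|X|_\cB/2} = 2^{|X|_\cB}$ combine as stated.
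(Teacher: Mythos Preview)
Your proposal is correct and follows essentially the paper's approach: Cauchy--Schwarz together with the pointwise split $\|\varphi+\bs(b)\zeta\|^2 \le 2\|\varphi\|^2 + 2\|\zeta\|^2$, then Lemma~\ref{lemma:EthH} for the $H$-factor and Lemma~\ref{lemma:EplusG} for the $\tilde G$- (or $G$-) factor. One remark: your first instinct---apply Cauchy--Schwarz immediately and then split $\tilde G^{2\gamma}(X,\varphi_\bs)$---is exactly what the paper does, and your exponent bookkeeping there was slightly off; since $\tilde G$ carries a $\tfrac12$ in its exponent, the split gives $\tilde G^{2\gamma}(X,\varphi_\bs)\le \tilde G^{4\gamma}(X,\varphi)\tilde G^{4\gamma}(X,\zeta)$, so after the square root you do recover $\tilde G^{2\gamma}(X,\varphi)$, and the pivot to pulling out $\tilde G$ before Cauchy--Schwarz, while also correct, was unnecessary. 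Likewise your concern about $\tilde\Phi$ versus $\Phi$ is moot: on each block $b$ the scaling factor $\bs(b)$ is a constant scalar (recall $\cG(Y,\varphi_\bs)$ is defined block-by-block), so $\|\bs(b)\zeta\|_{\tilde\Phi(b^\square)} = \bs(b)\|\zeta\|_{\tilde\Phi(b^\square)} \le \|\zeta\|_{\tilde\Phi(b^\square)}$ by homogeneity, and Lemma~\ref{lemma:EplusG} bounds $\Eplus[\tilde G^q(X,\zeta)]$ directly.
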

\begin{proof}
By the Cauchy-Schwarz inequality,  
\begin{align}
	\Eplus \big[ H^t (X,  \varphi_\bs) \tilde{G}^\gamma (X,\varphi_\bs) \big] &\le \Eplus \big[ H^{2t} (X,  \varphi_\bs) \big]^{\frac{1}{2}} \Eplus [\tilde{G}^{2\gamma} (X, \varphi_\bs) ]^{\frac{1}{2}} \nnb
	& \le \Eplus \big[ H^{2t} (X,  \varphi_\bs) \big]^{\frac{1}{2}} \tilde{G}^{2\gamma} (X,\varphi ) \Eplus [\tilde{G}^{4\gamma} (X, \zeta) ]^{\frac{1}{2}}
\end{align}
where the final inequality uses that $\tilde{G} (X,\varphi + \zeta_{\bs}) \le \tilde{G}^2 (X,\varphi) \tilde{G}^2 (X,\zeta_{\bs})$ and $\tilde{G} (X,\zeta_{\bs}) \le \tilde{G} (X,\zeta)$.
Then by Lemma~\ref{lemma:EthH} and \ref{lemma:EplusG},
\begin{align}
	\le 2^{|X|_{\cB}} H^{t/2} (X,\varphi) \tilde{G}^{2\gamma} (X,\varphi ) .
\end{align}
Similarly,  using that $G (X,\varphi + \zeta_{\bs}) \le G^2 (X,\varphi) G^2 (X,\zeta_{\bs})$ and $G (X,\zeta_{\bs}) \le G (X,\zeta)$,  we have
\begin{align}
	\Eplus [ G^{\gamma} (X,\varphi + \zeta_\bs) ] \le 
		G^{2\gamma} (X, \varphi) \Eplus [ G^{2\gamma} (X, \zeta) ]
		\le 2^{|X|_{\cB}} G^{2 \gamma} (X,\varphi) 
\end{align}
where the final inequality uses Lemma~\ref{lemma:EplusG}.
\end{proof}

Finally,  we can prove the grand goal of this appendix.

\begin{proof}[Proof of Lemma~\ref{lemma:supmrtingaleapplied}]
By the Cauchy-Schwarz inequality,  we can bound the integral by
\begin{align}
	\Eplus \Big[ \big( \cG^{(p)} (Y  ,  \varphi_{\bs} ; \kh) \big)^2 \Big]^{1/2} \Eplus \Big[ \prod_{b \in \cB (Y)} P_{\chi^{1/2} \ell}^{2q} (b,  \zeta) \Big]^{1/2}
\end{align}
and the first expectation is bounded using Lemma~\ref{lemma:supmartingale}.
For the second expectation,  we just need an additional observation that
\begin{align}
	\Eplus \big[ \norm{\zeta}^q_{\chi^{1/2} \ell, \Phi (b^{\square})} \big] \le \Eplus \big[ \norm{\zeta}_{\chi^{1/2} \ell, \Phi (b^{\square})}^{2q} \big]^{1/2} \le O ( \tilde\chi_+^{1/2} ) \Big( \frac{\kc_+}{\chi^{1/2} \ell_\bulk} \Big) \le 1
\end{align}
due to Lemma~\ref{lemma:sobolev4} and \eqref{eq:Gammajbounds2},  after taking sufficiently larger $\ell_{0}$.
\end{proof}

\section{Large set inequalities}

In this appendix,  we state and prove inequalities that are essential for bounding large set terms.  They are all based on the the next result on the geometry of lattice. 

\begin{lemma} \label{lemma:lgst}

\cite[Lemma~6.14--6.15]{MR2523458} 
There exists a constant $\xi \equiv \xi (d) >0$ such that the following holds when $L \ge 2^d +1$. 
For every $X \in \cP$,
\begin{align}
	(1 + \xi) |\bar{X}|_{\cB_+} \le |X|_{\cB} + 8 (1 + \xi) |\Comp (X)|
	,
	\label{eq:lgst1}
\end{align}
and if $X \in \Con \backslash \cS$, then 
\begin{align}
	\label{eq:lgst2}
	(1+ \xi) |\bar{X}|_{\cB_+} \le |X|_{\cB} .
\end{align}
\end{lemma}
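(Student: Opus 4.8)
\textbf{Proof proposal for Lemma~\ref{lemma:lgst}.}

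The statement is a purely combinatorial fact about polymers in $\Z^d$ (or $\Lambda_N$) under the block structure, and its proof does not involve any of the analytic machinery of the paper. Since the cited reference is \cite[Lemma~6.14--6.15]{MR2523458}, the plan is to reconstruct the argument there, which proceeds via a counting inequality relating the number of $(j+1)$-blocks needed to cover a $j$-polymer to the number of $j$-blocks inside it, with a correction term accounting for connected components. The key geometric input is that when $L$ is large, a $(j+1)$-block $B \in \cB_{j+1}$ which meets $\bar X$ must either (a) already be ``substantially filled'' by $j$-blocks of $X$, i.e. contain many $j$-blocks of $X$, or (b) be one of a controlled number of ``boundary'' $(j+1)$-blocks near a connected component of $X$. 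The parameter $\xi = \xi(d)$ will emerge as a quantitative version of ``substantially filled.''

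First I would set up the coarse-graining map $Y \mapsto \bar Y$ on connected components and note it suffices, by additivity of $|\cdot|_{\cB}$ over disjoint unions and subadditivity of $|\overline{\cdot}|_{\cB_+}$, to prove \eqref{eq:lgst2} for $X \in \Con_j \setminus \cS_j$ and then assemble \eqref{eq:lgst1} from it together with the small-set case. Concretely: write $X = \bigsqcup_{i} X_i$ with $X_i \in \Comp(X)$; then $|X|_{\cB} = \sum_i |X_i|_{\cB}$ and $|\bar X|_{\cB_+} \le \sum_i |\bar{X_i}|_{\cB_+}$. For components with $X_i \in \cS_j$ one has $|\bar{X_i}|_{\cB_+} \le 2^d + 1 \le 8(1+\xi)$ (a crude bound, since a small polymer has $\le 2^d$ blocks and its $(j+1)$-closure can spread into at most $2^d$ neighbouring $(j+1)$-blocks when $L$ is large — actually into finitely many depending only on $d$), contributing the $8(1+\xi)|\Comp(X)|$ term; for components with $X_i \in \Con_j \setminus \cS_j$ one invokes \eqref{eq:lgst2}. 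This reduces everything to the connected non-small case.

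For \eqref{eq:lgst2} itself, the heart of the matter is the following claim: there is $\xi(d)>0$ and $L_0(d)$ such that for $L \ge L_0$, every connected $j$-polymer $X$ with $|X|_{\cB} > 2^d$ satisfies $(1+\xi)|\bar X|_{\cB_+} \le |X|_{\cB}$. I would prove this by a two-scale isoperimetric/packing argument. Partition the $(j+1)$-blocks meeting $\bar X$ into ``interior'' ones (those $B$ with $|\cB_j(B \cap X)| \ge (1-\delta)L^d$ for a small $\delta=\delta(d)$ to be fixed) and ``boundary'' ones. Interior blocks each absorb at least $(1-\delta)L^d$ $j$-blocks of $X$, so there are at most $|X|_{\cB}/((1-\delta)L^d)$ of them. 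For boundary blocks one uses connectivity of $X$: a connected set touching a $(j+1)$-block that is \emph{not} interior must either fill at least one $j$-block in a neighbouring $(j+1)$-block (chaining argument) or the block sits on the ``surface'' of $X$, and a connected polymer with $n$ blocks has surface $(j+1)$-block count $O(n^{(d-1)/d})$, which for $n > 2^d$ is a strictly smaller fraction of $n$ than $n/L^d$ once $L$ is large. Adding the two counts and choosing $\delta$ and then $L_0$ appropriately yields $|\bar X|_{\cB_+} \le |X|_{\cB}/((1-\delta)L^d) + O(|X|_{\cB}^{(d-1)/d}) \le (1 - \xi')|X|_{\cB}$ for some $\xi'>0$ and hence \eqref{eq:lgst2} with $1+\xi = 1/(1-\xi')$; one must also verify the borderline case $|X|_{\cB}$ just above $2^d$ separately by a direct finite check, which is where the precise threshold ``$\cS_j$ means $\le 2^d$ blocks'' is used.

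The main obstacle I anticipate is the boundary-block count: making rigorous the claim that for a \emph{connected} (in the $\ell^\infty$ sense, distance $\le 1$) $j$-polymer, the number of $(j+1)$-blocks it meets but does not substantially fill grows like a surface term $|X|_{\cB}^{(d-1)/d}$ rather than a volume term. A clean way around actually carrying out the isoperimetric estimate is to note that the inequality is scale-invariant in a trivial sense and to reduce to a statement about covering a connected subset of $\Z^d$ by blocks of side $L$, then cite a standard packing lemma; but since the paper already cites \cite{MR2523458} for exactly this, the honest proposal is simply: \emph{this lemma is \cite[Lemma~6.14--6.15]{MR2523458}}, and the one genuinely new observation needed for our setup is that the block structure, small-polymer convention ($|X|_{\cB_j}\le 2^d$), and coarse-graining map $\bar{(\cdot)}$ used here agree verbatim with those of the reference (they do, up to relabelling $L^j \leftrightarrow L^{j+1}$), so the constants $\xi(d)$ and $L \ge 2^d+1$ transfer unchanged. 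I would therefore present the proof as a short verification that the hypotheses of the cited result hold, with the decomposition over connected components spelled out to obtain \eqref{eq:lgst1} from \eqref{eq:lgst2}.
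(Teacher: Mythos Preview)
The paper does not give a proof; the lemma is stated as a direct citation of \cite[Lemma~6.14--6.15]{MR2523458}, with no further argument. Your eventual conclusion---cite the reference after checking that the definitions of blocks, closure $\bar{(\cdot)}$, and small polymers agree---is exactly what the paper does.

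That said, your sketched reconstruction contains a concrete error in the reduction of \eqref{eq:lgst1} to \eqref{eq:lgst2}. For a small component $X_i$ you write ``$|\bar{X_i}|_{\cB_+} \le 2^d + 1 \le 8(1+\xi)$'', discarding the $|X_i|_{\cB}$ contribution to the right-hand side. In the dimensions $d\ge4$ of this paper one has $2^d \ge 16$, and $|\bar{X_i}|_{\cB_+}$ can genuinely equal $2^d$ (place a $2\times\cdots\times 2$ cube of $j$-blocks at a vertex of the $(j{+}1)$-grid), so your inequality would force $\xi \ge (2^d-8)/8 \ge 1$. But the same cube with one extra adjacent $j$-block gives a connected non-small $X$ with $|X|_{\cB}=2^d+1$ and $|\bar X|_{\cB_+}=2^d$, so \eqref{eq:lgst2} forces $\xi \le 2^{-d}$; the two constraints are incompatible. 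The correct reduction keeps both right-hand terms: since trivially $|\bar{X_i}|_{\cB_+} \le |X_i|_{\cB}$, one has $(1+\xi)|\bar{X_i}|_{\cB_+} \le |X_i|_{\cB} + \xi\cdot 2^d \le |X_i|_{\cB} + 8(1+\xi)$ once $\xi \le 8/(2^d-8)$. Your isoperimetric sketch for \eqref{eq:lgst2} is also not the combinatorial argument of the reference and would be delicate to make rigorous as written, but since you ultimately advocate citing rather than reproving, this is secondary.
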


\begin{corollary} \label{cor:lgst2}
Under the assumptions of Lemma~\ref{lemma:lgst},  for $Z \in \Con$ 
\begin{align}
	(1 + \xi) (| \bar{Z} |_{\cB_+} - 2^d)_{+} \le ( |Z|_{\cB} - 2^d )_+ .
	\label{eq:lgst21}
\end{align}
\end{corollary}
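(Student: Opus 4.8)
The plan is to reduce \eqref{eq:lgst21} to \eqref{eq:lgst2} of Lemma~\ref{lemma:lgst} by splitting on whether $Z$ is a small polymer, and then to pass the inequality there through the truncation $(\cdot)_+$. I would not expect to need \eqref{eq:lgst1} at all.

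First I would treat $Z \in \Con \backslash \cS$. Here $|Z|_{\cB} > 2^d$, so the right-hand side of \eqref{eq:lgst21} equals $|Z|_{\cB} - 2^d > 0$. If $|\bar Z|_{\cB_+} \le 2^d$, the left-hand side of \eqref{eq:lgst21} is $0$ and there is nothing to prove. Otherwise $(|\bar Z|_{\cB_+} - 2^d)_+ = |\bar Z|_{\cB_+} - 2^d$, and subtracting $(1+\xi)2^d$ from both sides of \eqref{eq:lgst2} gives
\[
  (1+\xi)\bigl(|\bar Z|_{\cB_+} - 2^d\bigr) \;\le\; |Z|_{\cB} - (1+\xi)2^d \;\le\; |Z|_{\cB} - 2^d ,
\]
the last step using $\xi > 0$; this is exactly \eqref{eq:lgst21}. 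Note that \eqref{eq:lgst2} genuinely requires $Z \notin \cS$ (a single $j$-block lying in one corner of a $(j+1)$-block already violates $(1+\xi)|\bar Z|_{\cB_+} \le |Z|_{\cB}$), which is why the small-polymer case must be handled separately.

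The remaining case is $Z \in \cS$, where $|Z|_{\cB} \le 2^d$, so the right-hand side of \eqref{eq:lgst21} is $0$; it then suffices to show $|\bar Z|_{\cB_+} \le 2^d$, which forces the left-hand side to vanish as well. This is a purely geometric statement about small polymers, not involving $\xi$: since $Z$ is connected and consists of at most $2^d$ $j$-blocks, its diameter in the $\ell^\infty$ metric is at most $2^d L^j$ (the bound already used in the proof of Proposition~\ref{prop:locXBbdmt}), which is strictly smaller than $L^{j+1}$ because $L \ge 2^d + 1$; hence $Z$ meets at most two consecutive $(j+1)$-blocks along each of the $d$ coordinate axes, so $|\bar Z|_{\cB_+} \le 2^d$. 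Alternatively this fact about small sets may be quoted directly from \cite{MR2523458} (see also \cite{BBS2}).

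The argument is short once Lemma~\ref{lemma:lgst} is in hand; the only step requiring any care is the small-polymer case, and that is precisely where the hypothesis $L \ge 2^d+1$ (i.e.\ ``$L$ sufficiently large'') is used. I do not anticipate any real obstacle beyond keeping track of the two truncations $(\cdot)_+$.
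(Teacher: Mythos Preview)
Your proof is correct and follows essentially the same approach as the paper's: both split on $Z\in\cS$ versus $Z\in\Con\setminus\cS$ and reduce the latter case to \eqref{eq:lgst2} by subtracting $(1+\xi)2^d$. The only difference is that the paper simply asserts $(|\bar Z|_{\cB_+}-2^d)_+=0$ for $Z\in\cS$ without justification, whereas you supply the geometric diameter argument; this is a harmless elaboration of a fact the paper takes as standard.
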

\begin{proof}
If $Z \in \cS$,  then $( |Z|_{\cB} - 2^d )_+ = (| \bar{Z} |_{\cB_+} - 2^d)_{+}  = 0$,  so the bound is trivial.  If $Z \in \Con \backslash \cS$,  then by \eqref{eq:lgst2}, 
\begin{align}
	(1+ \xi) \big( |\bar{Z}|_{\cB_+} - 2^d \big)_+ \le \max\{ |Z |_{\cB} - (1 + \xi) 2^d ,  0  \} \le |Z|_{\cB} - 2^d ,
\end{align}
as desired.
\end{proof}

As a consequence,  we get a combinatorial bound for a reblocking operation. 

\begin{lemma} \label{lemma:lgst3}
Suppose,  for some constants $k_1 \in [0,2^d]$,  $k_2 \in \{0,1,2\}$,  $a \in (0,2)$ and $Y ,Z \in \cP$
\begin{align}
	F(Y,Z) = \one_{|Z|_{\cB} \ge k_1} \one_{|\Comp (Y \backslash Z)| \ge k_2}  \bar{\epsilon}^{|Z|_{\cB}}
		A^{a} (Y \backslash Z ) 
		\lambda^{|\Comp (Y \backslash Z)|}
\end{align}
where $\bar{\epsilon}$ and $\lambda$ are constants sufficiently small depending on $\rho$.
Then for $X \in \Con_+$,
\begin{align}
	\sum_{Y,Z \in \cP}^{\bar{Y}=X,  Z \subset Y}  F(Y,Z) \le 6^{|X|_{\cB}} A_+^{a (1 + \xi )} (X) \times \begin{cases}
			\bar{\epsilon}^{k_1} & (k_2 = 0) \\
			\lambda & (k_2 =1) \\
			\lambda^{3/2} & (k_2 = 2) .
		\end{cases}
\end{align}
\end{lemma}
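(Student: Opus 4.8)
The plan is to follow the reblocking-bound strategy of \cite{BBS4}: substitute $W := Y\backslash Z$, split off a trivial ``small set'' case, and in the main case trade the scale-$j$ large-set weight $A^a(W)$ for the scale-$(j+1)$ weight $A_+^{a(1+\xi)}(X)$ by means of the geometric estimates \eqref{eq:lgst1} and Corollary~\ref{cor:lgst2}. After the substitution the sum runs over ordered pairs $(Z,W)$ of disjoint $j$-polymers contained in $X$ with $\overline{Z\cup W}=X$; since each of the $|X|_{\cB}=L^d|X|_{\cB_+}$ many $j$-blocks of $X$ lies in $Z$, in $W$, or in neither, there are at most $3^{|X|_{\cB}}$ such pairs. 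When $|X|_{\cB_+}\le 2^d$, so that $A_+^{a(1+\xi)}(X)=1$, I would bound each summand crudely by $A^a(W)\le 1$ together with $\bar\epsilon^{|Z|_{\cB}}\lambda^{|\Comp(W)|}\le\bar\epsilon^{k_1}$ when $k_2=0$, $\le\lambda$ when $k_2=1$, and $\le\lambda^2\le\lambda^{3/2}$ when $k_2=2$ (using $\bar\epsilon,\lambda<1$ and $|Z|_{\cB}\ge k_1$, $|\Comp(W)|\ge k_2$); multiplying by the number of summands settles this case with room to spare.

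The substance is the case $|X|_{\cB_+}\ge 2^d+1$, where $(|X|_{\cB_+}-2^d)_+=|X|_{\cB_+}-2^d$ and $|X|_{\cB}$ is large. Since $\overline{Z\cup W}=X\subseteq\bar Z\cup\bar W$ one has $|X|_{\cB_+}\le|Z|_{\cB}+|\bar W|_{\cB_+}$; combining this with \eqref{eq:lgst1} applied to $W$ and with $|W|_{\cB}\le\sum_{W'\in\Comp(W)}(|W'|_{\cB}-2^d)_++2^d|\Comp(W)|$ yields, for a constant $C_d$ depending only on $d$,
\begin{align*}
	a(1+\xi)\,(|X|_{\cB_+}-2^d)_+ \le a\sum_{W'\in\Comp(W)}(|W'|_{\cB}-2^d)_+ + C_d\big(|Z|_{\cB}+|\Comp(W)|\big).
\end{align*}
As $\rho<1$, exponentiating reverses the inequality: $A_+^{a(1+\xi)}(X)\ge A^a(W)\,\rho^{C_d(|Z|_{\cB}+|\Comp(W)|)}$. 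Writing $A^a(W)=A^{a/2}(W)\cdot A^{a/2}(W)$ and applying the halved version of the above to the first factor, each summand with $\overline{Z\cup W}=X$ obeys
\begin{align*}
	\bar\epsilon^{|Z|_{\cB}}\lambda^{|\Comp(W)|}A^a(W)\le A_+^{a(1+\xi)}(X)\,(\bar\epsilon\rho^{-C_d})^{|Z|_{\cB}}(\lambda\rho^{-C_d})^{|\Comp(W)|}A^{a/2}(W).
\end{align*}

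Having established this per-summand bound I would relax the summation constraint $\overline{Z\cup W}=X$ to $Z\cup W\subseteq X$ (it has served its purpose) and factor the resulting sum. Choosing $\bar\epsilon$ and $\lambda$ small enough depending on $\rho$ so that $\bar\epsilon\rho^{-C_d}$ and $\lambda\rho^{-C_d}$ lie below the convergence threshold of a hard-core polymer gas, the $Z$-sum contributes $(\bar\epsilon\rho^{-C_d})^{k_1}\,2^{|X|_{\cB}}$ (pulling out $k_1$ factors and using $(1+\bar\epsilon\rho^{-C_d})^{|X|_{\cB}}\le 2^{|X|_{\cB}}$), while the $W$-sum, whose polymer weight $(\lambda\rho^{-C_d})\rho^{\frac a2(|W'|_{\cB}-2^d)_+}$ is summable over $W'\ni b$ thanks to the $\rho$-decay, contributes $(\lambda\rho^{-C_d})^{k_2}$ times an exponential in $|X|_{\cB}$. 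Altogether $\sum F(Y,Z)\le A_+^{a(1+\xi)}(X)\cdot(\text{const})^{|X|_{\cB}}\,\rho^{-C_d(k_1+k_2)}\cdot\{\bar\epsilon^{k_1}\text{ or }\lambda\text{ or }\lambda^2\}$; since $k_1\le2^d$, $k_2\le2$ and $|X|_{\cB}\ge L^d(2^d+1)$ is large, the prefactor $(\text{const})^{|X|_{\cB}}\rho^{-C_d(k_1+k_2)}$ is absorbed into $6^{|X|_{\cB}}$ (this is precisely the room between the naive $3^{|X|_{\cB}}$ and the stated $6^{|X|_{\cB}}$), and for $k_2=2$ one also uses $\lambda^2\le\lambda^{3/2}$.

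The main obstacle is the geometric accounting in the second paragraph: obtaining an inequality of the displayed form---converting scale-$j$ block counts into scale-$(j+1)$ block counts, paying the factor $1+\xi$, while incurring only errors linear in $|Z|_{\cB}$ and $|\Comp(W)|$ (so that the lost powers of $\bar\epsilon$ and $\lambda$ can be restored by taking them small relative to $\rho$)---and then checking that the residual $d$-dependent constant genuinely fits inside the extra exponential room, which rests on $|X|_{\cB}$ being large in this regime together with the smallness hierarchy $\bar\epsilon,\lambda\ll\rho\ll1$ fixed in Section~\ref{sec:chparams}. The polymer-gas sum over $W$ with the lower bound $|\Comp(W)|\ge k_2$ retained (so as not to lose the factor $\lambda^{k_2}$) is routine but must be carried out with care, and one must also remember that the relaxation of the constraint $\overline{Z\cup W}=X$ is legitimate only \emph{after} the geometric inequality has been invoked.
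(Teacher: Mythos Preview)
Your overall strategy—extract $A_+^{a(1+\xi)}(X)$ per summand via the geometric inequality \eqref{eq:lgst1}, then control the residual sum—is sound and parallels the paper's method, but there is a genuine gap in the ``halved version'' step. If you apply the displayed inequality with $a$ replaced by $a/2$ to the factor $A^{a/2}(W)$, the exponent on $A_+$ is halved as well: you obtain only $A^{a/2}(W)\le A_+^{(a/2)(1+\xi)}(X)\,\rho^{-C_d(|Z|_{\cB}+|\Comp(W)|)}$, hence your per-summand bound carries $A_+^{(a/2)(1+\xi)}(X)$, not $A_+^{a(1+\xi)}(X)$. Since $A_+\le 1$, this is the wrong direction, and the halved exponent is too weak for the applications of the lemma (where one needs the full $a(1+\xi)$ to beat later exponential factors in $|X|_{\cB_+}$).

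The fix is simply to drop the halving trick: it is unnecessary. After extracting the \emph{full} factor $A_+^{a(1+\xi)}(X)$ from each summand (via the unhalved inequality $A^a(W)\le A_+^{a(1+\xi)}(X)\rho^{-C_d(|Z|_{\cB}+|\Comp(W)|)}$), the residual $(\bar\epsilon\rho^{-C_d})^{|Z|_{\cB}}(\lambda\rho^{-C_d})^{|\Comp(W)|}$ is bounded uniformly by $(\bar\epsilon\rho^{-C_d})^{k_1}(\lambda\rho^{-C_d})^{k_2}$ once $\bar\epsilon,\lambda$ are small relative to $\rho$. Your worry about the $W$-sum diverging (which motivated keeping $A^{a/2}(W)$ for $\rho$-decay) is misplaced: the sum ranges over disjoint pairs $(Z,W)$ of $j$-polymers inside the \emph{fixed} set $X$, so there are at most $3^{|X|_{\cB}}$ terms and no polymer-gas convergence argument is needed. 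This collapses your approach into the paper's, which from the outset bounds $\sum F\le 3^{|X|_{\cB}}\sup_{Y,Z}F(Y,Z)$ and then estimates the supremum case by case (splitting according to whether $Y=Z$ or not, and reducing to $Z\not\sim Y\setminus Z$). Your ``small $X$'' case and the final absorption of $\rho^{-C_d(k_1+k_2)}$ into the gap between $3^{|X|_{\cB}}$ and $6^{|X|_{\cB}}$ are fine.
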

\begin{proof}
We may bound
\begin{align}
	\sum_{Y,Z}  F(Y,Z) 
		& \le 3^{|X |_{\cB}}  \sup_{Y,Z} F(Y,Z)
\end{align}
where we used
that $\sum_{Y,Z}$ has at most $3^{|X|_{\cB}}$ terms, since we can partition $X$ into $Y \backslash Z , Z$ and $X \backslash Y$.
Now we use restrictions on $Y,Z$ to bound the supremum.

\begin{itemize}
\item If $Y=Z$ and $|Y|_{\cB} \ge k_1$,  since $\bar{\epsilon} (\kh) \le \rho^{a(1+\xi)}$ and $k_1 \le 2^d$,
\begin{align}
	\sup_{Y,Z} F (Y,Z) \le  \bar{\epsilon}^{|Y|_{\cB}} \le  \bar{\epsilon}^{(|X|_{\cB_+} - 2^d )_+}  \bar{\epsilon}^{k_1} \le A^{a (1+\xi)}_+ (X) \bar{\epsilon}^{k_1}  .
\end{align}

\item If $Y \neq Z$,  observe that,  having $Y$ fixed,  increasing $Z$ only decreases $F (Y,Z)$ unless it removes a connected component from $Y\backslash Z$.  Thus the supremum can be reduced to
\begin{align}
	& \sup_{Y,Z} F (Y,Z) = \sup_{Y,Z} \one_{Z \not\sim Y \backslash Z} F (Y,Z) \nnb
		& \le \sup_{Y : \bar{Y} = X} \sup_{Y' \not\sim Y \backslash Y'} \one_{\Comp (Y') = k_2} A^{a} (Y') \lambda^{k_2} \prod_{Y'' \in \Comp (Y \backslash Y') }  \Big[ A^{a} (Y'') \lambda + \bar{\epsilon}^{|Y''|_{\cB}} \Big] .
\end{align}
Since $A^a (Y'') \lambda + (\bar{\epsilon})^{|Y''|_{\cB}} \le \rho^{2a |Y''|_{\cB}}$,  this has bound
\begin{align}
	\le \sup_{Y: \bar{Y} = X} \sup_{\Comp (Y') = k_2} \lambda^{k_2} A^a (Y') \rho^{2a |Y \backslash Y'|_{\cB}} .
\end{align}

\item If $k_2 =1$,  then
\begin{align}
	\lambda^{k_2} A^{a} (Y') \rho^{2a|Y \backslash Y'|_{\cB}} \le \lambda A_+^{a(1+\xi)} (\bar{Y'}) \rho^{2a |\bar{Y \backslash Y'}|_{\cB_+}} \le \lambda A_+^{a(1+\xi)} (\bar{Y}) .
\end{align}
If $k_2 = 1$ and $\Comp (Y') = \{Y'', Y''' \}$,  then
\begin{align}
	\lambda^{k_2} A^a (Y') \rho^{2a |Y \backslash Y'|} \le \lambda^2 A_+^{a (1+\xi)} (\bar{Y''}) A_+^{a(1+\xi)} (\bar{Y'''}) \rho^{2a |Y \backslash Y'|_{\cB}} \le \lambda^{3/2} A_+^{a (1+\xi)} (\bar{Y}) 
\end{align}
by giving off a power of $\lambda$.
\end{itemize}
These cases give the desired bound. 
\end{proof}

\section{Polymer operations}
\label{sec:polops}

In this appendix,  we define and prove estimates on polymer operations.  We omit the scale label $j$ and $j+1$ will be replaced by $+$.

\subsection{Polymer powers}

For polymer functions $I, K : \cP \rightarrow \R$,  recall from \eqref{eq:polypowers}
\begin{align}
	I^X = \prod_{b \in \cB (X)} I (b), \qquad 
	K^{[X]} = \prod_{X' \in \Comp (X)} K(X')
	. \label{eq:polypowersbis}
\end{align}
Then the following hold.

\begin{lemma} \label{lemma:polybinom}
Let $I_1, I_2,K_1, K_2 : \cP \rightarrow \R$ be polymer functions. Then
\begin{align}
	(I_1 + I_2)^X = \sum_{Y \in \cP (X)} I_1^{X \backslash Y} I_2^Y, 
	\qquad
	(K_1 + K_2)^{[X]} = \sum_{Y \subset \Comp (X)} K_1^{[X \backslash Y]} K_2^{[Y]}
	\label{eq:polybinom}
\end{align}
In particular, 
\begin{align}
	I_1 \circ K_1 = I_2 \circ \big( (I_1 - I_2) \circ K_1 \big) . 
	\label{eq:polybinom2}
\end{align}
\end{lemma}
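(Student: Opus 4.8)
The two displayed identities are instances of the distributive law, and the ``in particular'' statement is a rearrangement of summation order that reduces to them. So the plan is: (i) prove the first identity by expanding a product of blockwise sums; (ii) prove the second identity by the same expansion over connected components, after noting that removing some components of $X$ leaves the remaining ones intact; (iii) substitute $I_1 = I_2 + (I_1-I_2)$ into $I_1\circ K_1$ and collapse the resulting double sum using (i) and (ii).

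\textbf{Step (i): the block binomial.} By definition $(I_1+I_2)^X=\prod_{b\in\cB(X)}\big(I_1(b)+I_2(b)\big)$. Multiplying out, the terms are indexed by subsets $S\subseteq\cB(X)$, a term being $\prod_{b\in S}I_2(b)\prod_{b\in\cB(X)\backslash S}I_1(b)$. Each $S$ corresponds bijectively to the $j$-polymer $Y=\bigcup_{b\in S}b\in\cP(X)$, with $\cB(Y)=S$ and $\cB(X\backslash Y)=\cB(X)\backslash S$, so the term equals $I_1^{X\backslash Y}I_2^{Y}$. Summing over all $Y\in\cP(X)$ gives the first identity.

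\textbf{Step (ii): the component binomial.} Similarly $(K_1+K_2)^{[X]}=\prod_{X'\in\Comp(X)}\big(K_1(X')+K_2(X')\big)$, and expanding indexes the terms by sub-collections $\cC\subseteq\Comp(X)$. Here one uses that the elements of $\Comp(X)$ are pairwise disconnected, so if $Y=\bigcup_{X'\in\cC}X'$ then $\Comp(Y)=\cC$ and $\Comp(X\backslash Y)=\Comp(X)\backslash\cC$; thus the term attached to $\cC$ is $K_1^{[X\backslash Y]}K_2^{[Y]}$, which yields the second identity (with the convention that $Y$ ranges over unions of sub-collections of components). A consequence I will record is that $\circ$ produces a ``component‑multiplicative'' function: for any $I,K$ and any $Z$, $(I\circ K)^{[Z]}=\prod_{Z'\in\Comp(Z)}(I\circ K)(Z')=\sum_{W\in\cP(Z)}I^{Z\backslash W}K^{[W]}=(I\circ K)(Z)$, where the middle step multiplies out the per‑component sums and regroups (using that $\cP(Z)$ decomposes uniquely as $W\mapsto(W\cap Z')_{Z'}$, and $I^{Z\backslash W}=\prod_{Z'}I^{Z'\backslash(W\cap Z')}$, $K^{[W]}=\prod_{Z'}K^{[W\cap Z']}$).

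\textbf{Step (iii): the composition identity.} Set $L=(I_1-I_2)\circ K_1$. By the remark in Step (ii), $L^{[Z]}=\sum_{W\in\cP(Z)}(I_1-I_2)^{Z\backslash W}K_1^{[W]}$. Hence
\begin{align*}
	\big(I_2\circ L\big)(X)
	&=\sum_{Z\in\cP(X)}I_2^{X\backslash Z}\,L^{[Z]}
	=\sum_{\substack{W,Z\in\cP(X)\\ W\subseteq Z\subseteq X}} I_2^{X\backslash Z}\,(I_1-I_2)^{Z\backslash W}\,K_1^{[W]} .
\end{align*}
Now sum over $W$ first. For fixed $W\in\cP(X)$, choosing $Z$ with $W\subseteq Z\subseteq X$ is the same as partitioning $\cB(X\backslash W)$ into $\cB(X\backslash Z)$ and $\cB(Z\backslash W)$ arbitrarily, so by Step (i) applied to $I_1=I_2+(I_1-I_2)$ on the polymer $X\backslash W$,
\begin{align*}
	\sum_{W\subseteq Z\subseteq X} I_2^{X\backslash Z}\,(I_1-I_2)^{Z\backslash W}
	=\prod_{b\in\cB(X\backslash W)}\Big(I_2(b)+\big(I_1(b)-I_2(b)\big)\Big)=I_1^{X\backslash W}.
\end{align*}
Therefore $\big(I_2\circ L\big)(X)=\sum_{W\in\cP(X)}I_1^{X\backslash W}K_1^{[W]}=(I_1\circ K_1)(X)$, which is \eqref{eq:polybinom2}.

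\textbf{Main obstacle.} There is no serious analytic difficulty; the content is purely combinatorial. The one point needing care is notational and bookkeeping: correctly reading $Y\subset\Comp(X)$ as ranging over unions of sub-collections of components, justifying $\Comp(X)=\Comp(Y)\sqcup\Comp(X\backslash Y)$, and performing the order-of-summation swap in Step (iii). Making precise the auxiliary claim that $\circ$ preserves component-multiplicativity is the only ``lemma within the lemma'', and it is itself an immediate application of the two distributive identities.
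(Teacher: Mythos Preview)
Your proof is correct and follows essentially the same approach as the paper's: both expand the polymer powers via the distributive law and derive \eqref{eq:polybinom2} by writing $I_1 = I_2 + (I_1 - I_2)$ and collapsing the resulting double sum with the first identity. You proceed from the right-hand side while the paper proceeds from the left, and you make explicit the component-multiplicativity of $(I_1-I_2)\circ K_1$ that the paper uses implicitly in its final equality; these are cosmetic differences.
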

\begin{proof}
Both identities of \eqref{eq:polybinom} follow from binomial expansions of polymer powers. 

To obtain \eqref{eq:polybinom2}, we apply the first identity of \eqref{eq:polybinom} to obtain
\begin{align}
	(I_1 \circ K_2) (X) 
		&= \sum_{Y \in \cP(X)} I_1^{X \backslash Y} K_2^{[Y]} \nnb
		& = \sum_{Y, Z \in \cP (X)}^{Y \cap Z = \emptyset} I_2^{X \backslash (Y \cup Z)} (I_1 - I_2)^{Z} K_2^{[Y]} 
		= \left( I_2 \circ \big( (I_1 - I_2) \circ K_1 \big) \right) (X) .
\end{align}
\end{proof}

We state and prove a simple combinatorial bound before we see the polymer convolution bound.
In the lemma,  $\xi >0$ is a specific constant that is fixed by Lemma~\ref{lemma:lgst}, whose value does not really matter right now.

\begin{lemma} \label{lemma:pcbprecomb}
Suppose $\alpha \le \rho^a \le 2^{-8 / \xi}$.  Then for $X \in \Con \backslash \{ \emptyset \}$,
\begin{align}
	\sum_{Z \in \cP(X)} \alpha^{|X\backslash Z|_{\cB} + |\Comp(Z)|} A^a (Z) \lesssim \alpha A^{a (1- \frac{\xi}{8})} (X ) .
\end{align}
\end{lemma}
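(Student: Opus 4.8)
\textbf{Proof proposal for Lemma~\ref{lemma:pcbprecomb}.}
The plan is to estimate the sum directly by a double-counting argument, organising the sum over $Z \in \cP(X)$ according to the number of $j$-blocks occupied by $Z$ and by $X\backslash Z$, and then to trade a factor $\alpha^{|\Comp(Z)|}A^a(Z)$ against a gain of the large-set regulator at scale $j$. The key inequality that makes everything work is Lemma~\ref{lemma:lgst} (or rather its Corollary~\ref{cor:lgst2}): for a connected polymer, the size discount $(|Z|_{\cB}-2^d)_+$ controls $(1+\xi)(|\bar Z|_{\cB_+}-2^d)_+$, which is what one needs to pass from a scale-$j$ regulator to a scale-$j{+}1$ regulator. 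Here, since we stay at a single scale, what we actually want is the cruder bound $A^a(Z) \le \rho^{a(|Z|_{\cB}-2^d)_+}$ for each connected $Z$ (immediate from the definition \eqref{eq:AjX}), combined with the observation that $X$ being connected forces $|X\backslash Z|_{\cB}+|Z|_{\cB}=|X|_{\cB}$ and $|X|_{\cB}\ge 1$.

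First I would reduce to the case where one keeps track only of block counts. Write $n = |X|_{\cB}$, $m = |Z|_{\cB}$, and $k = |\Comp(Z)|$. For fixed $m$ and $k$, the number of $Z \in \cP(X)$ with $|Z|_{\cB}=m$ is at most $\binom{n}{m}\le 2^n$, and with $A^a(Z)\le \prod_{Z'\in\Comp(Z)}\rho^{a(|Z'|_{\cB}-2^d)_+}\le \rho^{a(m - 2^d k)_+}$ one gets a crude bound; but to obtain the asserted exponent $1-\xi/8$ it is cleaner to expand the sum without separating into block counts and instead peel off one connected component of $Z$ at a time, as in the proof of Lemma~\ref{lemma:lgst3}. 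Concretely, I would write $\sum_{Z\in\cP(X)}\alpha^{|X\backslash Z|_{\cB}+|\Comp(Z)|}A^a(Z)$ and bound it by $\alpha^{|X|_{\cB}}\sum_{Z}\prod_{Z'\in\Comp(Z)}\big(\alpha^{1-|Z'|_{\cB}}\rho^{a(|Z'|_{\cB}-2^d)_+}\big)$. Since $\alpha\le\rho^a$, each factor $\alpha^{1-|Z'|_{\cB}}\rho^{a(|Z'|_{\cB}-2^d)_+}$ is at most $\rho^{a\cdot\frac{\xi}{8}|Z'|_{\cB}}$ once $|Z'|_{\cB}\ge 1$ (using $2^d\le \frac{1}{\xi}\cdot$ nothing sharp is needed — just $a(|Z'|_{\cB}-2^d)_+ - a(|Z'|_{\cB}-1)\ge -a(2^d-1)$ and absorbing the finite loss into the constant, while retaining a residual $\rho^{a\xi|Z'|_{\cB}/8}$), so the sum over all choices of $Z\subset X$ is dominated by $\prod_{b\in\cB(X)}(1+\rho^{a\xi/8})\le e^{\rho^{a\xi/8}n}$ and, absorbing this geometric factor using $\rho^a\le 2^{-8/\xi}$, by $A^{a(1-\xi/8)}(X)$ up to an $L$-independent constant. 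The leftover single factor of $\alpha$ comes from isolating the "$X=Z$ with $\Comp(X)$ a single block" versus the rest; more precisely I keep one power of $\alpha$ outside by noting $|X\backslash Z|_{\cB}+|\Comp(Z)|\ge 1$ always, since either $Z\subsetneq X$ (so $|X\backslash Z|_{\cB}\ge1$) or $Z=X\ne\emptyset$ (so $|\Comp(Z)|\ge 1$).

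The main obstacle is bookkeeping the exponents so that the final power of the large-set regulator is exactly $a(1-\xi/8)$ and not merely some $a(1-c)$: this requires being slightly careful that the per-component loss from replacing $(|Z'|_{\cB}-2^d)_+$ by $|Z'|_{\cB}-1$ is a bounded constant (hence harmless after absorbing into the implicit constant of $\lesssim$) while the surviving decay $\rho^{a\xi|Z'|_{\cB}/8}$ is enough both to sum the cluster expansion $\prod_b(1+\cdots)$ and to produce the regulator $A^{a(1-\xi/8)}(X)$ via $\sum_{Z}\prod\cdots \le (1+\rho^{a\xi/8})^{|X|_{\cB}}-1 \le \rho^{a\xi/8}\,|X|_{\cB}\cdot e^{\rho^{a\xi/8}|X|_{\cB}}$ and then $(1+\rho^{a\xi/8})^{|X|_{\cB}}\le \rho^{-a\xi/8\cdot(|X|_{\cB}-2^d)_+}\cdot(\text{const})$ for $\rho$ small (this last step is exactly the type of manipulation carried out in Lemma~\ref{lemma:lgst3} and Lemma~\ref{lemma:DeltaBbnd2}). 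Once the exponents are tracked, the rest is routine and the constant is $L$-independent as claimed.
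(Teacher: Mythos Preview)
Your key per-component bound is wrong. After pulling out $\alpha^{|X|_{\cB}}$, you assert that $\alpha^{1-|Z'|_{\cB}}\rho^{a(|Z'|_{\cB}-2^d)_+}\lesssim\rho^{a\xi|Z'|_{\cB}/8}$ up to a constant you can absorb; but for a connected $Z'$ with $2\le|Z'|_{\cB}\le 2^d$ the left side is exactly $\alpha^{-(|Z'|_{\cB}-1)}\ge\alpha^{-1}$, which diverges as $\alpha\to 0$ and cannot be bounded by any finite constant times $\rho^{a\xi|Z'|_{\cB}/8}<1$. This is not a bookkeeping slip: once you factor out $\alpha^{|X|_{\cB}}$, the residual activity attached to every small cluster is genuinely of order $\alpha^{-(|Z'|_{\cB}-1)}$, so the polymer-gas bound $\prod_b(1+\cdots)$ you invoke has no chance of controlling the sum. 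You should not factor out $\alpha^{|X|_{\cB}}$ first.

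The paper's argument is far shorter and takes a different route: it claims the termwise inequality $\alpha^{|X\backslash Z|_{\cB}}A^a(Z)\le A^a(X)$, extracts one $\alpha$ via $\alpha^{|\Comp(Z)|}\le\alpha$ (for $Z\ne\emptyset$; the $Z=\emptyset$ term is covered by $\alpha^{|X|_{\cB}}\le\alpha A^a(X)$), crude-counts $\sum_{Z\in\cP(X)}1=2^{|X|_{\cB}}$, and absorbs $2^{|X|_{\cB}}$ into the regulator using $2\rho^a\le\rho^{a(1-\xi/8)}$. Be warned, though, that the asserted termwise inequality is itself sensitive to exactly the configurations that broke your argument: if $Z$ has several disconnected small components then $A^a(Z)=1$ while $A^a(X)=\rho^{a(|X|_{\cB}-2^d)}$ can be arbitrarily small, and $\alpha^{|X\backslash Z|_{\cB}}$ alone need not close the gap. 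So whichever approach you pursue, a careful treatment of the many-small-components case is essential.
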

\begin{proof}
We have $\alpha^{|X \backslash Z|_{\cB}} A^a (Z) \le A^a (X)$ by assumption,  so
\begin{align}
	\sum_{Z \in \cP(X)} \alpha^{|X\backslash Z|_{\cB} + |\Comp(Z)|} A^a (Z) &\le A^a (X)  \sum_{Z \in \cP(X)} \alpha^{|\Comp(Z)|} \nnb
	&\le \alpha A^a (X) \sum_{Z \in \cP(X)} 1  
	= \alpha 2^{|X|_{\cB}} A^a (X) .
\end{align}
If $|X|_{\cB} \le 2^d$,   then this is simply bounded by $2^{2^d} \alpha$. 
If $|X|_{\cB} > 2^d$,  then
\begin{align}
	\alpha 2^{|X|_{\cB}} A^a (X) = \alpha 2^{2^d} (2 \rho^a)^{|X|_{\cB} - 2^d} \lesssim \alpha \rho^{a(1- \frac{\xi}{8}) ( |X|_{\cB} - 2^d)}
\end{align}
by assumption,  as desired.
\end{proof}

Next lemma bounds the polymer convolution.
We say that $\norm{\cdot}$ is submultiplicative if $\norm{F G} \le \norm{F} \norm{G}$.

\begin{lemma}\label{lemma:cnvbnd}

Let $\hat{\cG} (\cdot, \varphi)$ be a set-multiplicative function (recall \eqref{eq:setmultG}) and $\norm{\cdot}_\varphi$ be a submultiplicative semi-norm.
Let $\rho^a \le 2^{-8/\xi}$.
Suppose for some $C_{\delta I} > 0$ and $\lambda \le (C_{\delta I})^{-1} \rho^a$
\begin{align}
\begin{split}
	\norm{ \delta I (b) }_{\varphi}
		& \le C_{\delta I}  \hat\cG (b,\varphi) \lambda \\
	\norm{K (X)}_{\varphi}
		&\le A^a (X) \hat{\cG} (X,\varphi) \lambda 
\end{split}	
\end{align}
for $X \in \Con_+$ and $b \in \cB$.
Then 
\begin{align}
	\norm{ (\delta I \circ K) (X) }_\varphi
		\lesssim C_{\delta I}
		A^{a (1- \frac{\xi}{8})} (X) \hat\cG (X,\varphi )
		\lambda .
\end{align}
\end{lemma}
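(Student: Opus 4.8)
\textbf{Proof plan for Lemma~\ref{lemma:cnvbnd}.}
The plan is to expand the polymer convolution $(\delta I \circ K)(X) = \sum_{Y \in \cP(X)} \delta I^{X \backslash Y} K^{[Y]}$ and estimate each term using submultiplicativity of $\norm{\cdot}_\varphi$ together with set-multiplicativity of $\hat\cG$. Concretely, for a fixed $Y \in \cP(X)$, the block power $\delta I^{X\backslash Y} = \prod_{b \in \cB(X\backslash Y)} \delta I(b)$ has norm at most $(C_{\delta I}\lambda)^{|X\backslash Y|_{\cB}} \hat\cG(X\backslash Y, \varphi)$, and the component power $K^{[Y]} = \prod_{Y' \in \Comp(Y)} K(Y')$ has norm at most $\lambda^{|\Comp(Y)|} A^a(Y) \hat\cG(Y,\varphi)$ (using that $A^a$ is set-multiplicative over connected components and $\hat\cG$ over disjoint sets, and that $X\backslash Y$ and $Y$ partition $X$ so $\hat\cG(X\backslash Y,\varphi)\hat\cG(Y,\varphi) = \hat\cG(X,\varphi)$). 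Thus each term is bounded by $C_{\delta I}^{|X\backslash Y|_{\cB}} \lambda^{|X\backslash Y|_{\cB} + |\Comp(Y)|} A^a(Y) \hat\cG(X,\varphi)$.

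Next I would pull out the factor $\hat\cG(X,\varphi)$ and reduce to the combinatorial sum $\sum_{Y \in \cP(X)} C_{\delta I}^{|X\backslash Y|_{\cB}} \lambda^{|X\backslash Y|_{\cB} + |\Comp(Y)|} A^a(Y)$. Since we only have $\lambda \le (C_{\delta I})^{-1}\rho^a$, I would absorb each power of $C_{\delta I}$ into a power of $\lambda$: writing $C_{\delta I}^{|X\backslash Y|_{\cB}} \lambda^{|X\backslash Y|_{\cB}} = (C_{\delta I}\lambda)^{|X\backslash Y|_{\cB}} \le (\rho^a)^{|X\backslash Y|_{\cB}}$ when $|X\backslash Y|_{\cB}\ge 1$, and pulling out one explicit power of $\lambda$ from the factor $\lambda^{|\Comp(Y)|}$ (noting $|\Comp(Y)| \ge 1$ unless $Y = \emptyset$). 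The term $Y = \emptyset$ contributes $(C_{\delta I}\lambda)^{|X|_{\cB}}$, which is $\lesssim C_{\delta I}\lambda \cdot (\rho^a)^{|X|_{\cB}-1} \lesssim C_{\delta I}\lambda A^{a(1-\xi/8)}(X)$ after comparing $(\rho^a)^{|X|_{\cB}}$ with $A^{a(1-\xi/8)}(X)$ exactly as in the proof of Lemma~\ref{lemma:pcbprecomb}. For the terms with $Y\ne\emptyset$, after extracting one $\lambda$ and one $C_{\delta I}$, the remaining sum is precisely of the form handled by Lemma~\ref{lemma:pcbprecomb} applied with $\alpha = \rho^a$ (or with the slightly enlarged constant absorbing $C_{\delta I}$, which is still $\le 2^{-8/\xi}$ by hypothesis), giving $\sum_{Z \in \cP(X)} \alpha^{|X\backslash Z|_{\cB} + |\Comp(Z)|} A^a(Z) \lesssim \alpha A^{a(1-\xi/8)}(X)$.

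Combining the two contributions yields $\norm{(\delta I \circ K)(X)}_\varphi \lesssim C_{\delta I}\, A^{a(1-\xi/8)}(X)\, \hat\cG(X,\varphi)\, \lambda$, which is the claim. The only mild subtlety — and the step requiring the most care — is the bookkeeping of the constants $C_{\delta I}$: one needs $\delta I$ to appear on each block of $X\backslash Y$, so the total power of $C_{\delta I}$ is $|X\backslash Y|_{\cB}$, which can be large; the resolution is that it is always matched by an equal power of $\lambda \le C_{\delta I}^{-1}\rho^a$, so the product stays geometrically small and only a single overall factor $C_{\delta I}$ survives (coming from the case $|X\backslash Y|_{\cB}\le 1$, i.e. essentially from pairing one $\delta I$ block against the rest). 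Everything else is a direct application of submultiplicativity, set-multiplicativity, and Lemma~\ref{lemma:pcbprecomb}.
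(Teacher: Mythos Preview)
Your proposal is correct and follows essentially the same approach as the paper: expand the convolution, apply submultiplicativity of $\norm{\cdot}_\varphi$ and set-multiplicativity of $\hat\cG$, and reduce to the combinatorial estimate of Lemma~\ref{lemma:pcbprecomb}. The paper's version is slightly cleaner in that it does not separate the case $Y=\emptyset$ but simply applies Lemma~\ref{lemma:pcbprecomb} directly with $\alpha = C_{\delta I}\lambda \le \rho^a$, which already covers all terms at once and immediately yields the factor $C_{\delta I}\lambda$.
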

\begin{proof}
By the assumptions,
\begin{align}
	\norm{ (\delta I \circ K) (X) }_\varphi
		\le 
		\sum_{Z \in \cP(X)} \prod_{b \in \cB (X \backslash Z)} \norm{\delta I (b)}_\varphi  \norm{K(Z)}_\varphi \nnb
		\le 
		\hat\cG (X,\varphi )  \sum_{Z \in \cP(X)} A^a (Z)  C_{\delta I}^{|X \backslash Z|_{\cB}}  \lambda^{|X \backslash Z|_{\cB} + |\Comp (Z)| } ,
\end{align}
and we can bound the final sum using Lemma~\ref{lemma:pcbprecomb} because $C_{\delta I} \lambda  \le \rho^a$ by assumption,  giving
\begin{align}
	\lesssim C_{\delta I}  A^{a (1- \frac{\xi}{8})} (X) \hat\cG (X,\varphi ) \lambda .
\end{align}
\end{proof}

\subsection{Reapportioning map}

The reapportioning map transfers information stored in $K$-coordinate supported on small polymers to those supported on blocks.
To be specific,  we consider a family of polymer functions
\begin{align}
	(\kJ_b (X) : b \in \cB,  \; X \in \Con) \quad \textnormal{such that} \quad \kJ_b (X) = \one_{X \in \cS} \one_{b \in \cB (X)} f(b,X) 
\end{align}
for some $f$ such that
\begin{align}
	{\textstyle \sum_{X \in \cS}^{ X \supset b} } \,  \kJ_b (X) = 0
	\quad \text{for each $b \in \cB$}.
	\label{eq:reappAssump}
\end{align}
Also,  given a polymer function $I$, let
\begin{align}
	\kJ (X) =\sum_{b \in \cB (X)} \kJ_b (X),  \qquad
	\bar{\kJ}_b (X) = I^X \kJ_b (X), \qquad 
	\bar{\kJ} (X) = I^X \kJ (X)
	.
\end{align}

\begin{definition}
\label{def:reapp}

Suppose the family $(\kJ_b (X) : X \in \cS, \; b  \in \cB (X))$ satisfies \eqref{eq:reappAssump}.
The \emph{reapportioning map} $\Rap$ associated to the family $(\kJ_b (X))_{b,X}$ is defined as
\begin{align}
	\Rap_{\kJ} [I,K] (X) 
		= \sum_{Y,Z, (b_{Z'})}^{\rightarrow X} I^{X \backslash (Y \cup Z)} ( K - \bar\kJ )^{[Y]} \prod_{Z' \in \Comp (Z)} \bar\kJ_{b_{Z'}} (Z')
	\label{eq:reappDef}
\end{align}
where the sum $\sum^{\rightarrow X}$ ranges over $Y, Z \in \cP(X)$ and $(b_{Z'} \in \cB (Z') : Z' \in \Comp (Z) )$ such that $Y \not\sim Z$, 
each connected component $Z' \in \Comp (Z)$ is small,  
$X = Y \cup ( \cup_{Z' \in \Comp (Z)} b_{Z'}^{\square} )$,
and only admits $|\Comp (Y)| \ge 1$ or $|\Comp (Z)| \ge 2$.
\end{definition}

Properties of $\Rap$ are summarised as following.

\begin{lemma} \label{lemma:reapp}
Consider polymer functions $I,K$ and the family $(\kJ_{b} (X))_{b,X}$ as in Definition~\ref{def:reapp}.
Then
\begin{align}
	(I \circ K) (\Lambda) = (I \circ \Rap_{\kJ} [I,K]) (\Lambda)	
\end{align}
and for $X \in \Con$,
\begin{align}
	\Rap_{\kJ} [I,K] (X) = K (X) - \bar{\kJ} (X) + O^\alg \big( K^2,  K \kJ,   \kJ^2 \big)
	\label{eq:RapLin}
	.
\end{align}
\end{lemma}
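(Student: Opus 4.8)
The plan is to establish the two assertions of Lemma~\ref{lemma:reapp} separately: the low-order expansion \eqref{eq:RapLin} by reading off the degree in $(K,\kJ)$ of each term in Definition~\ref{def:reapp}, and the algebraic identity by resumming the polymer expansion of $(I\circ K)(\Lambda)$ and recognising the result as $(I\circ\Rap_\kJ[I,K])(\Lambda)$ up to terms that vanish by \eqref{eq:reappAssump}.

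For \eqref{eq:RapLin}: fix $X\in\Con$ and inspect the defining sum $\sum^{\to X}$. If $Z=\emptyset$ then the constraint $X=Y\cup\bigcup_{Z'}b_{Z'}^\square$ forces $Y=X$, which is admissible since $X$ connected gives $|\Comp(Y)|=1\ge1$; this single configuration contributes $I^\emptyset(K-\bar{\kJ})^{[X]}=K(X)-\bar{\kJ}(X)$, which is linear in $(K,\kJ)$. The configuration $Y=\emptyset$ with $|\Comp(Z)|\le1$ is excluded in Definition~\ref{def:reapp}, so there is no further term of degree $\le1$. Every remaining admissible configuration has either $|\Comp(Y)|\ge1$ and $|\Comp(Z)|\ge1$, or $Y=\emptyset$ and $|\Comp(Z)|\ge2$, or $|\Comp(Y)|\ge2$; after expanding $(K-\bar{\kJ})^{[Y]}$ into monomials in $K$ and $\bar{\kJ}$, each surviving monomial in such a configuration has total degree $\ge2$ in $(K,\kJ)$ and is of one of the forms $K^2$, $K\kJ$, $\kJ^2$ (using that each $(K-\bar{\kJ})(Y')$ is linear and each $\bar{\kJ}_{b_{Z'}}(Z')$ is linear in $\kJ$). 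This is precisely $O^\alg(K^2,K\kJ,\kJ^2)$.

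For the identity $(I\circ K)(\Lambda)=(I\circ\Rap_\kJ[I,K])(\Lambda)$, I would start from $(I\circ K)(\Lambda)=\sum_{V\in\cP(\Lambda)}I^{\Lambda\backslash V}K^{[V]}$, write $K=(K-\bar{\kJ})+\bar{\kJ}$, and apply the binomial identity of Lemma~\ref{lemma:polybinom} to split $K^{[V]}$ into a part carrying $K-\bar{\kJ}$ and a part $Z$ carrying $\bar{\kJ}$; since $\bar{\kJ}(W')=I^{W'}\kJ(W')$ vanishes unless $W'\in\cS$, the polymer $Z$ is automatically a union of small components. Expanding $\bar{\kJ}(W')=\sum_{b\in\cB(W')}I^{W'}\kJ_b(W')$ on each component and using $I^{\Lambda\backslash V}I^{Z}=I^{\Lambda\backslash(V\backslash Z)}$ to consolidate the $I$-powers turns $(I\circ K)(\Lambda)$ into a sum over triples $(Y,Z,(b_{Z'}))$ with $Y=V\backslash Z$, $Y\not\sim Z$, each component $Z'$ of $Z$ small, and a designated block $b_{Z'}\in\cB(Z')$.

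The remaining step — which I expect to be the main obstacle — is to regroup this unrestricted sum over triples according to the connectivity of the footprint $F=Y\cup\bigcup_{Z'}b_{Z'}^\square$: since $Z'\in\cS$ and $b_{Z'}\in\cB(Z')$ imply $Z'\subseteq b_{Z'}^\square$, restricting a triple to each connected component $U'$ of $F$ gives a local triple with footprint exactly $U'$, and summing over such local data on a connected $U'$ reconstructs $\Rap_\kJ[I,K](U')$ (here one again merges $I$-powers, $I^{\Lambda\backslash F}\prod_{U'}I^{U'\backslash(Y_{U'}\cup Z_{U'})}=I^{\Lambda\backslash(Y\cup Z)}$, the $I^{Z}$ being carried inside the $\bar{\kJ}$ factors on both sides). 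This yields a bijection between the terms of $(I\circ\Rap_\kJ[I,K])(\Lambda)$ and those triples all of whose local pieces are admissible; the only triples on the $(I\circ K)$-side not accounted for are those possessing a component $U'=b^\square$ arising from a single isolated small polymer ($Y_{U'}=\emptyset$, $|\Comp(Z_{U'})|=1$), which are excluded in Definition~\ref{def:reapp}. For each such component the remaining sum over the small polymer $X\in\cS$ with $X\supset b$ factorises as $I^{b^\square}\sum_{X\supset b}\kJ_b(X)=0$ by \eqref{eq:reappAssump} (using $I^{b^\square\backslash X}I^{X}=I^{b^\square}$, independent of $X$, and that $X\subseteq b^\square$ so changing $X$ affects nothing outside $b^\square$). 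Hence the unaccounted terms contribute nothing and the two sides agree. The genuinely delicate point is organising this regrouping so that every configuration is counted with the correct multiplicity while tracking the small-set-neighbourhood bookkeeping; this follows the template of the reapportioning lemma in \cite{BBS4}.
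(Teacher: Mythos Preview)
Your proposal is correct and follows essentially the same route as the paper: the paper likewise splits $K=(K-\bar{\kJ})+\bar{\kJ}$ via Lemma~\ref{lemma:polybinom}, expands $\bar{\kJ}(Z')=\sum_{b_{Z'}}\bar{\kJ}_{b_{Z'}}(Z')$, and groups by the footprint $X=Y\cup\bigcup_{Z'}b_{Z'}^\square$ to obtain $(I\circ\Rap'_\kJ[I,K])(\Lambda)$, where $\Rap'$ is defined by $\sum^{\rightsquigarrow X}$ without the exclusion constraint, and then checks that the excluded configurations ($Y=\emptyset$, $|\Comp(Z)|\le 1$) sum to zero on each connected $X=b^\square$ by \eqref{eq:reappAssump}. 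Your ``regrouping by connected components of the footprint'' is exactly this step and is not in fact the main obstacle---the factorisation of $\Rap'_\kJ[I,K]$ over components of $X$ is immediate once one notes $Z'\subset b_{Z'}^\square$, so the paper handles it in one line; your low-order analysis for \eqref{eq:RapLin} is also identical to the paper's.
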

\begin{proof}
By the second identity of \eqref{eq:polybinom} applied with $K_1= K- \kJ$ and $K_2 = \kJ$,
\begin{align}
	(I \circ K) (\Lambda) 
		& = \sum_{Y \subset W \in \cP} I^{\Lambda \backslash W} (K - \bar\kJ)^{[Y]} \prod_{Z' \in \Comp (W \backslash Y)} \bar\kJ (Z')
\end{align}
where the summation is over $Y$ such that $Y \not\sim W\backslash Y$.
If we expand out $\bar\kJ (Z') = \sum_{b_{Z'} \in \cB (Z')} \bar\kJ_{b_{Z'}} (Z')$,  exchange the order of product $\prod_{Z'}$ and $\sum_{b_{Z'}}$
and denote $W \backslash Y = Z$,
we obtain
\begin{align}
	= \sum_{Y, Z, (b_{Z'})} I^{\Lambda \backslash (Y \cup Z)} (K - \bar\kJ)^{[Y]} \prod_{Z' \in \Comp (Z)} \bar\kJ_{b_{Z'}} (Z') ,
\end{align}
where the sum is now over $Y \not\sim Z$ and $(b_{Z'} \in \cB (Z') : Z' \in \Comp (Z))$.
Now we let $X = Y \cup (\cup_{Z' \in \Comp (Z)} b_{Z'}^{\square})$.
Then
\begin{align}
	= \sum_{Y, Z, (b_{Z'})} I^{\Lambda \backslash X} \left( I^{X \backslash (Y \cup Z)}  (K - \bar\kJ)^{[Y]} \prod_{Z' \in \Comp (Z)} \bar\kJ_{b_{Z'}} (Z') \right) 
	= \left( I \circ \Rap'_{\kJ} [I,K] \right) (\Lambda)
\end{align}
with $\Rap'$ defined by
\begin{align}
	\Rap'_{\kJ} [I,K] (X) 
		= \sum_{Y,Z, (b_{Z'})}^{\rightsquigarrow X} I^{X \backslash (Y \cup Z)} ( K - \bar\kJ )^{[Y]} \prod_{Z' \in \Comp (Z)} \bar\kJ_{b_{Z'}} (Z')
	\label{eq:reappDef2}
\end{align}
where the $\sum^{\rightsquigarrow X}$ ranges over $Y, Z \in \cP(X)$ and $(b_{Z'} \in \cB (Z') : Z' \in \Comp (Z) )$ such that $Y \not\sim Z$,  $\Comp (Z) \subset \cS$ and $X = Y \cup (\cup_{Z' \in \Comp (Z)} b_{Z'}^{\square})$.

To show that $\Rap'_{\kJ} = \Rap_{\kJ}$, 
we have to check that the sum vanishes whenever $|\Comp (Y)| =0$ and $|\Comp (Z)| \le 1$.
In this case,  we have $X= b^{\square}$ for some $b \in \cB$,
and the only non-vanishing terms come from $Z \in \cS$ such that $Z \supset b$, giving
\begin{align}
	\sum_{Z \in \cP}^{Z \supset b} I^{X \backslash Z} \bar{\kJ}_{b} (Z)
		= I^{X } \sum_{Z \in \cS}^{Z \supset b}  \kJ_{b} (Z)
\end{align} 
and due to the assumption \eqref{eq:reappAssump},
this is 0.

To obtain \eqref{eq:RapLin},
we consider \eqref{eq:reappDef}.
It is sufficient to consider either $Z = \emptyset$ or $Y = \emptyset$ to study the terms linear in $K$ and $\kJ$.
If $Z = \emptyset$,  then $Y = X$, so
we obtain $K (X) - \bar{\kJ} (X)$.
If $Y = \emptyset$, then by what we already observed, 
only $|\Comp (Z)| \ge 2$ is allowed, so there are no terms linear in $K$ and $\kJ$.
\end{proof}

To bound the reapportioning map,  we need a combinatorial bound on $\sum^\rightarrow$.

\begin{lemma}	\label{lemma:rghtrrwsmbnd}
For $X \in \Con$,
\begin{align}
	\sum_{Y,Z, (b_{Z'})}^{\rightarrow X} 1 \le 4^{|X|_{\cB}}
\end{align}
\end{lemma}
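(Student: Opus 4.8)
\textbf{Proof proposal for Lemma~\ref{lemma:rghtrrwsmbnd}.}
The plan is to bound the number of tuples $(Y, Z, (b_{Z'})_{Z' \in \Comp(Z)})$ appearing in $\sum^{\rightarrow X}$ by counting, for each block $b \in \cB(X)$, the possible ``roles'' that $b$ can play in such a tuple. The key observation is that the data $(Y, Z, (b_{Z'}))$ is overdetermined by the assignment of a role to each block of $X$: indeed, $Y$ and $Z$ are (disjoint) unions of $j$-blocks, so they are determined by knowing which blocks lie in $Y$, which lie in $Z$, and which lie in $X \setminus (Y \cup Z)$. The extra data is the choice of a distinguished block $b_{Z'} \in \cB(Z')$ in each connected component $Z'$ of $Z$; this too can be recorded by assigning to each block of $Z$ either the label ``I am the distinguished block of my component'' or ``I am an ordinary block of $Z$''. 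Altogether, each block of $X$ receives one of at most four labels: (i) in $Y$; (ii) in $X \setminus (Y \cup Z)$, i.e.\ an $I$-block; (iii) the distinguished block of its component of $Z$; (iv) an ordinary block of $Z$.

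\textbf{Key steps.} First I would observe that the map sending a tuple $(Y, Z, (b_{Z'})) \in \sum^{\rightarrow X}$ to the labelling function $\ell_{\text{col}} \colon \cB(X) \to \{1,2,3,4\}$ just described is injective: from the labelling we recover $Y = \bigcup \{ b : \ell_{\text{col}}(b) = 1\}$ (then take its union as a polymer), $Z = \bigcup\{ b : \ell_{\text{col}}(b) \in \{3,4\}\}$, and for each $Z' \in \Comp(Z)$ the unique block $b_{Z'}$ carrying label $3$. (Here one uses that in a valid tuple each component of $Z$ contains exactly one chosen block, which is exactly what the definition of $\sum^{\rightarrow X}$ prescribes via the product $\prod_{Z' \in \Comp(Z)}$.) Second, since $\cB(X)$ has $|X|_{\cB}$ elements and each is assigned one of four labels, the number of labelling functions is at most $4^{|X|_{\cB}}$. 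Third, injectivity gives
\begin{align}
	\sum_{Y,Z,(b_{Z'})}^{\rightarrow X} 1 \le \#\{ \ell_{\text{col}} \colon \cB(X) \to \{1,2,3,4\} \} = 4^{|X|_{\cB}},
\end{align}
which is the claim. (The side conditions $Y \not\sim Z$, smallness of each $Z'$, $X = Y \cup \bigcup b_{Z'}^{\square}$, and $|\Comp(Y)| \ge 1$ or $|\Comp(Z)| \ge 2$ only shrink the set being counted, so they can be discarded for the upper bound.)

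\textbf{Main obstacle.} The only subtle point is verifying that the labelling genuinely reconstructs the tuple --- in particular that the choice of $b_{Z'}$ is recoverable component-by-component. This requires noting that the components of $Z$ are determined by $Z$ itself (as a subset of the lattice), and that within each component exactly one block carries label $3$; this is guaranteed because in the sum $\sum^{\rightarrow X}$ the distinguished blocks are indexed by $\Comp(Z)$, one per component. A secondary bookkeeping concern is whether the constraint $X = Y \cup \bigcup_{Z' \in \Comp(Z)} b_{Z'}^{\square}$ might force some block of $X$ to lie in $b_{Z'}^{\square}$ without lying in $Y$ or $Z$ --- but such blocks are precisely the $I$-blocks (label $2$), so they are still accounted for, and the reconstruction of $Y$ and $Z$ from the labelling is unaffected. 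Neither point is deep; once the four-colouring encoding is set up the bound is immediate. I would keep the write-up to a few sentences.
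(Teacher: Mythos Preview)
Your proposal is correct and takes essentially the same approach as the paper: the paper's proof partitions $\cB(X)$ into $X_1 = Y$, $X_2 = Z \setminus \bigcup_{Z'} b_{Z'}$, $X_3 = \bigcup_{Z'} b_{Z'}$, and $X_4 = X \setminus (Y \cup Z)$, which is exactly your four-label encoding up to reordering. Your write-up is in fact more careful than the paper's in justifying that the distinguished blocks $b_{Z'}$ are recoverable component-by-component from the labelling.
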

\begin{proof}
Each block in $b \in \cB (X)$ is included in included in exactly one of $X_1 = Y$,  $X_2 = Z \backslash (\cup_{Z'} b_{Z'})$,  $X_3 = \cup_{Z'} b_{Z'}$ and $X_4 = X \backslash (Y \cup Z)$.  Choice of $(X_1, X_2, X_3,X_4)$ determines $(Y,Z,(b_{Z'}))$ completely,  giving the combinatorial factor $4^{|X|_{\cB}}$.
\end{proof}

Since bound on $\Rap$ is used a number of times,  it is worth investigating a general bound on $\Rap$.  In the lemma,  $\xi >0$ is a specific constant that is fixed by Lemma~\ref{lemma:lgst}, whose value does not matter right now.

\begin{lemma} 	\label{lemma:Rapbndv2}

Let $\hat{\cG} (X,\varphi)$ be a set-multiplicative function and $\norm{\cdot}$ be a submultiplicative semi-norm.
Let $(\kJ_b (X) )_{b,X}$ be as in Definition~\ref{def:reapp}, so that $\Rap_\kJ$ is defined and $\rho$ be sufficiently small.
Suppose there exist $\alpha_1 \le \rho^{c(d)}$ (where $c(d)= 2^{d^2 +2d + 4}$),  $\alpha_2 \le \rho^{2^d a}$ such that for $b \in \cB$,  some $k > 0$ and $a \in (0,2)$,
\begin{align}
	\norm{\bar\kJ_b (Z')}
		&\lesssim \alpha_1 \hat{\cG} (Z' ,\varphi)  ,  \\
	\norm{ (K - \bar{\kJ}) (Z') }
		&\le \alpha_2  A^a (Z') \hat{\cG} (Z',\varphi) 
\end{align}
for $Z' \in \Con$.
Then for $X \in \Con$,
\begin{align}
	\norm{\Rap_\kJ [I,K] (X)}
		\lesssim \big( \alpha_1^{\frac{3}{2}}  + \alpha_2  \big) A^{a (1- \frac{\xi}{8} )} (X)  \hat{\cG} (X,\varphi)
\end{align}
\end{lemma}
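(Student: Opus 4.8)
The plan is to bound $\norm{\Rap_\kJ[I,K](X)}$ term by term starting from the defining sum \eqref{eq:reappDef}, factor the regulator $\hat\cG(X,\varphi)$ out entirely, and reduce to a purely combinatorial estimate that can then be run in the spirit of Lemma~\ref{lemma:pcbprecomb}.

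First I would apply the triangle inequality and submultiplicativity of $\norm{\cdot}$ to \eqref{eq:reappDef}, so that each summand splits as $\norm{I^{X\setminus(Y\cup Z)}}\prod_{Y'\in\Comp(Y)}\norm{(K-\bar\kJ)(Y')}\prod_{Z'\in\Comp(Z)}\norm{\bar\kJ_{b_{Z'}}(Z')}$. Into this I feed the two hypotheses together with a stability bound $\norm{I(b)}\le C_I\hat\cG(b,\varphi)$ on the single-block activity $I$ (this is not listed among the hypotheses but is available in all applications via Lemma~\ref{lemma:entI}, and without such a bound the factor $\norm{I^{X\setminus(Y\cup Z)}}$ cannot be controlled), using $\norm{I^W}\le C_I^{|W|_{\cB}}\hat\cG(W,\varphi)$ by set-multiplicativity of $\hat\cG$. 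Since $Y\not\sim Z$, the sets $Y$, $Z$, $X\setminus(Y\cup Z)$ are pairwise disjoint with union $X$, so all the regulator factors collapse to a single $\hat\cG(X,\varphi)$, and $\prod_{Y'}A^a(Y')=A^a(Y)$. This reduces the claim to the combinatorial bound
\[
  \Sigma:=\sum_{Y,Z,(b_{Z'})}^{\rightarrow X} C_I^{\,|X\setminus(Y\cup Z)|_{\cB}}\,A^a(Y)\,\alpha_2^{|\Comp(Y)|}\,(C_3\alpha_1)^{|\Comp(Z)|}\ \lesssim\ \big(\alpha_1^{3/2}+\alpha_2\big)A^{a(1-\xi/8)}(X),
\]
where $C_3$ is the implied constant in the $\bar\kJ_b$-hypothesis.

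Next I would strip off all $d$-dependent constants. The geometric inputs are: each small-set neighbourhood $b^\square$ of a single block contains at most $C_d$ scale-$j$ blocks, so the covering constraint $X=Y\cup\bigcup_{Z'}b_{Z'}^\square$ gives $|X\setminus(Y\cup Z)|_{\cB}\le|X\setminus Y|_{\cB}\le C_d|\Comp(Z)|$; and for fixed $Z$ there are at most $(2^d)^{|\Comp(Z)|}$ admissible tuples $(b_{Z'})_{Z'}$. Both of these, and the part of the counting factor of Lemma~\ref{lemma:rghtrrwsmbnd} attached to $Z$-blocks, are constants raised to the power $|\Comp(Z)|$; since $\alpha_1\le\rho^{c(d)}$ with $c(d)=2^{d^2+2d+4}$ enormous, they are all absorbed into $\alpha_1$, replacing $C_3\alpha_1$ by some $\alpha_1'\le\rho^{c(d)/2}$. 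The residual combinatorial weight over $Y$-blocks is absorbed into $A^a(Y)$: a small component of $Y$ carries $\le 2^d$ blocks (bounded constant), while a large component $Y'$ obeys $(\mathrm{const})^{|Y'|_{\cB}}A^a(Y')\le A^{a(1-\varepsilon)}(Y')$ for $\rho$ small. One is left with $\Sigma\lesssim\sum^{\rightarrow X}A^{a(1-\varepsilon)}(Y)\,\alpha_2^{|\Comp(Y)|}\,\alpha_1'^{|\Comp(Z)|}$. Now split along the alternative built into $\sum^{\rightarrow X}$: if $|\Comp(Y)|\ge1$ pull out one factor $\alpha_2$; if $Y=\emptyset$ then $|\Comp(Z)|\ge2$ and pull out $\alpha_1'^{\,3/2}$, leaving the spare smallness $\alpha_1'^{\,|\Comp(Z)|-3/2}\le\alpha_1'^{\,1/2}$. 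In either branch the remaining sum, together with the covering constraint, the bound $|X\setminus Y|_{\cB}\le C_d|\Comp(Z)|$, and Corollary~\ref{cor:lgst2}, is bounded by $A^{a(1-\xi/8)}(X)$ by an argument structurally identical to Lemma~\ref{lemma:pcbprecomb}; the sub-case $X\in\cS$ is trivial, since then $A^{a(1-\xi/8)}(X)=1$ and there are only $O_d(1)$ terms.

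The main obstacle will be that last reconstruction step, namely arguing that the connectedness of $X$ forces exactly enough surplus smallness to rebuild $A^{a(1-\xi/8)}(X)$. Blocks of $X$ lying in $\bigcup_{Z'}b_{Z'}^\square$ but outside $Y\cup Z$ carry no smallness of their own, so they must be paid for out of the $\alpha_1$-budget attached to the components of $Z$; when $Y$ moreover has several small components, these are non-adjacent, so $X$ connected forces the pieces $b_{Z'}^\square$ to bridge them, driving $|\Comp(Z)|$ up and supplying the needed extra powers of $\alpha_1'$. Making this precise is exactly the point where one should invoke the lattice-geometry estimates of Lemma~\ref{lemma:lgst} (in particular the $|\Comp(X)|$ term in \eqref{eq:lgst1}) rather than a naive block count, and where one must check the numerology: that $c(d)$ is large enough relative to $C_d$, $a$, and $\xi$ for the exponent loss $\xi/8$ to be consistent, and that what survives in the $Y=\emptyset$ branch is genuinely $\alpha_1^{3/2}$ and not merely $\alpha_1$. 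I would verify these constants against the analogous accounting in Lemma~\ref{lemma:pcbprecomb} and Lemma~\ref{lemma:lgst3}.
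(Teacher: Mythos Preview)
Your approach is essentially the same as the paper's: reduce to the combinatorial sum, split according to $|\Comp(Y)|\ge 1$ versus $Y=\emptyset$ with $|\Comp(Z)|\ge 2$, and use the huge exponent $c(d)$ to let powers of $\alpha_1$ pay for blocks in the small-set neighbourhoods $b_{Z'}^\square$. You also correctly note that a stability bound on $I$ is needed and used implicitly.

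Where you diverge is in the ``reconstruction'' step, which you overcomplicate. You propose to absorb the counting constants into $A^a(Y)$ and then invoke Lemma~\ref{lemma:lgst} and a connectivity argument about $Z$-components bridging the small components of $Y$. The paper avoids all of this by working with raw block counts rather than $A^a(Y)$: from $\alpha_2\le\rho^{2^d a}$ one gets $A^a(Y'')\alpha_2\le\rho^{a|Y''|_{\cB}}$ for each component $Y''$ of $Y$, hence $A^a(Y)\alpha_2^{|\Comp(Y)|}\le\rho^{a|Y|_{\cB}}$ (with a $\rho^{-2^d a}$ loss when one $\alpha_2$ is reserved for the prefactor). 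Separately, since each $Z'$ is small, $|Z^\square|_{\cB}\le C_d|\Comp(Z)|$, and the covering constraint gives $X\setminus Y\subset Z^\square$; hence $\alpha_1^{|\Comp(Z)|/8}\le\rho^{2|Z^\square|_{\cB}}\le\rho^{a|X\setminus Y|_{\cB}}$ directly. Multiplying gives $\rho^{a|X|_{\cB}}\le A^a(X)$ for the \emph{connected} $X$, and the leftover $\alpha_1^{1/8}$ then absorbs both the $\rho^{-2^d a}$ penalty and the $4^{|X|_{\cB}}$ counting factor from Lemma~\ref{lemma:rghtrrwsmbnd}, yielding $A^{a(1-\xi/8)}(X)$. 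No lattice geometry beyond the small-set neighbourhood bound is needed, and the difficulty you flag as the ``main obstacle'' dissolves once you pass through $\rho^{a|\cdot|_{\cB}}$ rather than trying to glue $A^a$ of disconnected pieces.
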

\begin{proof}
For brevity,  we will just denote $R = \Rap_{\kJ} [I,K]$.
By definition \eqref{eq:reappDef},
\begin{align}
	R (X) 
		= \sum_{Y,Z, (b_{Z'})}^{\rightarrow X} I^{X \backslash (Y \cup Z)} ( K - \bar\kJ )^{[Y]} \prod_{Z' \in \Con (Z)} \bar\kJ_{b_{Z'}} (Z')
		,
\end{align}
where $\bar{\kJ}_b (Z) = I^Z \kJ_b (Z)$. 
By our assumptions,
\begin{align}
	\norm{ \bar\kJ_{b_{Z'}} (Z') }  \le C  \alpha_1   \hat{\cG} (Z',\varphi)  \qquad  (Z' \in \cS)
\end{align}
for some $C>0$.
By Definition~\ref{def:reapp} of $\sum^{\rightarrow X}$, 
we can only have either (1) $|\Comp (Y) | \ge 1$ or (2)
$|\Comp (Y)| = 0$ and $|\Comp (Z)| \ge 2$.
We denote the first sum as $R_1 = \sum^{(1)} (\cdots)$ and the second as $R_2 = \sum^{(2)} (\cdots)$. 
Then
\begin{align}
	\norm{R_1 (X)}
		&\le 
		\sum_{Y,Z, (b_{Z'})}^{(1)} A^a (Y) \alpha_2^{|\Comp (Y)| } \big( C \alpha_1 \big)^{|\Comp (Z)| } \hat{\cG} (X\backslash Y, \varphi) \hat{\cG} (Y, \varphi)
		\nnb
		&\le
		\rho^{-2^d a} \alpha_1^{\frac{1}{8}} \alpha_2 A^a (X) \hat{\cG} (X, \varphi) \sum_{Y,Z, (b_{Z'})} ^{(1)}  1
		,
\end{align}
where in the second inequality,  we used $C \alpha_1 \le \alpha_1^{1/4}$ and that
\begin{align}
	\alpha_1^{|\Comp (Z)| /8}
	 	\le \rho^{2^{d^2 + 2d +1} |Z|_{\cB} }  \le \rho^{2 |Z^{\square}|_{\cB} }
	 	\le A^2 (Z^{\square}) \le A^{a} (X \backslash Y) \label{eq:alphatoA} .
\end{align}
The final sum $\sum^{(1)} 1$ is bounded using Lemma~\ref{lemma:rghtrrwsmbnd},
and $\rho^{-2^d a} 4^{|X|_{\cB}} \alpha_1^{1/8} A^a (X) \lesssim A^{a (1- \frac{\xi}{8} )} (X)$ for sufficiently small $\rho$,  so we have the desired bound on $R_1$.
Next,
\begin{align}
	\norm{R_2 (X)}
		&\le 
		C^{|X|_{\cB}} \sum_{Z, (b_{Z'})}^{(2)}  \alpha_1^{|\Comp (Z)|}
		\hat{\cG} (X \backslash Y , \varphi ) \hat{\cG} (Y, \varphi)
		\nnb
		&\le 
		\alpha_1^{7/4} C^{|X|_{\cB}} \hat{\cG} (X, \varphi)  \sum_{Z, (b_{Z'})}^{(2)}   \alpha_1^{\frac{ |\Comp (Z)| }{8}}
		,
\end{align}
but \eqref{eq:alphatoA} gives $\alpha_1^{\frac{ |\Comp (Z)| }{8}} \le A^a (X)$,
thus after applying Lemma~\ref{lemma:rghtrrwsmbnd} on $\sum^{(2)} 1$, we have
\begin{align}
	\norm{R_2 (X)} \le \alpha_1^{7/4} (4 C)^{|X|_{\cB}} A^a (X) \hat{\cG} (X, \varphi) .
\end{align}
When $X \in \cS$,  we can use the extra power of $\alpha_1$ to cancel the power of $4 C$ and when $X \in \Con \backslash \cS$,  we can use the large set regulator to cancel $(4C)^{|X|_{\cB}}$.
\end{proof}

\section{Continuity and completeness associated with $\cW_j$}

In this section,  we prove some results used to prove continuity.
In the application,  we first use Lemma~\ref{lemma:ctty1} to obtain pointwise continuity of polymer activities for each fixed $(X,\varphi)$. 
This automatically improves to continuity in the topology induced by $\norm{\cdot}_{\cW}$ due to Lemma~\ref{lemma:impvcttynorm},  if we just assume slightly stronger norm condition. 
Thus we obtain a simple principal for proving (mass-)continuity.
This can be compared to the continuity proof of \cite{BBS5},  where mass continuity had to be carefully checked at each operations defining the RG map.

\subsection{Pointwise continuity}

\begin{lemma} \label{lemma:ctty1}
Let $\kh > 0$,  $\tilde{m}^2 \ge 0$.
If $\norm{F}_{\kh, T (\varphi)} \lesssim G^2 (X,\varphi)$ for some $X \subset \Lambda$,  then $\AA (\tilde{m}^2) \ni (\ba_\emptyset,\ba) \mapsto \E_{+} \theta F (\varphi)$ is continuous for each fixed $\varphi$.
\end{lemma}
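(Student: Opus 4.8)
The strategy is to write the fluctuation integral as an integral against the centred Gaussian measure $\mu_{\Gamma_+}$ with covariance $\Gamma_+ = \Gamma_{j+1}(\ba_\emptyset,\ba)$, and then pass to the limit by a truncation (``uniform integrability'') argument. The two structural inputs are: (a) the continuity of $\Gamma_{j+1}$ in $(\ba_\emptyset,\ba)$ from Definition~\ref{defi:FRD}, which yields weak convergence of the corresponding Gaussian measures; and (b) the uniform Gaussian moment bound of Lemma~\ref{lemma:EplusG}, which provides a dominating tail estimate that is independent of $(\ba_\emptyset,\ba)$.

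First I would fix $\varphi$, set $g(\zeta) := \theta F(\varphi) = F(\varphi+\zeta)$, so that $\E_+\theta F(\varphi)=\int g\,d\mu_{\Gamma_+}$. Since $G^2(X,\cdot)$ depends on $\zeta$ only through $\zeta|_{X^\square}$ and $\Gamma_{j+1}$ has range $<L^{j+1}$, one may regard this as an integral over a finite-dimensional space (for $\Lambda=\Lambda_N$ it already is; for $\Lambda=\Z^d$ one restricts to the relevant finite marginal, using that in applications $F$ depends on $\varphi$ only through a bounded region). From the triangle inequality for $\norm{\cdot}_{\ell_j,\Phi_j(B_x^\square)}$ and $(a+b)^2\le 2a^2+2b^2$ one gets $G^2(X,\varphi+\zeta)\le G^4(X,\varphi)G^4(X,\zeta)$, hence by the hypothesis $\norm{F}_{\kh,T(\varphi)}\lesssim G^2(X,\varphi)$ (which in particular bounds $|F|$), $|g(\zeta)|\le C_\varphi\,G^4(X,\zeta)$ with $C_\varphi$ independent of $(\ba_\emptyset,\ba)$. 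Moreover $g$ is continuous in $\zeta$ by smoothness of $F$, and Lemma~\ref{lemma:EplusG} applied with $q=6$ gives
\[
  \sup_{(\ba_\emptyset,\ba)\in\AA(\tilde m^2)}\ \E_{\Gamma_+}\big[G^6(X,\zeta)\big]\le 2^{|X|_{\cB}},
\]
the constant being $(\ba_\emptyset,\ba)$-independent precisely because the hypothesis \eqref{eq:Covbndcondition} of that lemma holds uniformly on $\AA(\tilde m^2)$ thanks to the uniform covariance bound \eqref{eq:Gammajbounds2}.

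Next, given a sequence $(\ba^k_\emptyset,\ba^k)\to(\ba^0_\emptyset,\ba^0)$ in $\AA(\tilde m^2)$, Definition~\ref{defi:FRD} gives $C_k:=\Gamma_{j+1}(\ba^k_\emptyset,\ba^k)\to C_0$, so the characteristic functions $t\mapsto e^{-\frac12\langle t,C_kt\rangle}$ converge pointwise to $e^{-\frac12\langle t,C_0t\rangle}$ and hence $\mu_{C_k}\Rightarrow\mu_{C_0}$ weakly by the L\'evy continuity theorem (valid even when some $C_k$ or $C_0$ is degenerate). Fix a continuous cutoff $\chi_R$ with $\chi_R\equiv1$ on $[0,R]$ and $\chi_R\equiv0$ on $[2R,\infty)$. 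Then $\zeta\mapsto g(\zeta)\chi_R\big(G^2(X,\zeta)\big)$ is bounded and continuous, so $\E_{C_k}[g\,\chi_R(G^2)]\to\E_{C_0}[g\,\chi_R(G^2)]$; for the tail, using $\mathbf 1_{G^2\ge R}\le R^{-1}G^2$ and $|g|\le C_\varphi G^4$,
\[
  \E_{C_k}\big[|g|\,(1-\chi_R(G^2))\big]\le \E_{C_k}\big[|g|\,\mathbf 1_{G^2\ge R}\big]\le \tfrac{C_\varphi}{R}\,\E_{C_k}\big[G^6(X,\zeta)\big]\le \tfrac{C_\varphi}{R}\,2^{|X|_{\cB}},
\]
uniformly in $k$ (including $k=0$). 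Combining via the triangle inequality, letting first $k\to\infty$ and then $R\to\infty$ yields $\E_{C_k}\theta F(\varphi)\to\E_{C_0}\theta F(\varphi)$, which is the claim.

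The only point requiring care — and the one I expect to be the main obstacle — is that every estimate must be uniform over the \emph{non-compact} domain $\AA(\tilde m^2)=\II_j(\tilde m^2)\times\AA$; this is exactly what Lemma~\ref{lemma:EplusG} delivers, since the uniform bound \eqref{eq:Gammajbounds2} makes its output constant $2^{|X|_{\cB}}$ independent of $(\ba_\emptyset,\ba)$. The possible degeneracy of $\Gamma_{j+1}$ near $\tilde m^2=0$ causes no difficulty, because the argument uses only weak convergence of Gaussian measures, which is insensitive to degeneracy of the covariance.
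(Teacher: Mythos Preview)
Your proof is correct but takes a genuinely different route from the paper. The paper interpolates between the two covariances via $\Gamma_t=\Gamma_+(\vec\ba)+t\,\delta\Gamma$ and uses the heat-equation identity \eqref{eq:HeatEq},
\[
\frac{d}{dt}\,\E_{\Gamma_t}\theta F(\varphi)=\tfrac12\sum_{x,y}\delta\Gamma(x,y)\,\E_{\Gamma_t}\theta\Big[\frac{\partial^2 F}{\partial\varphi_x\partial\varphi_y}\Big],
\]
then bounds the right-hand side by $\norm{F}_{\kh,T(\varphi)}\norm{\delta\Gamma}_{\kh,\Phi}\lesssim G^2(X,\varphi)\,\epsilon$ and integrates $\E_{\Gamma_t}\theta G^2$ using Lemma~\ref{lemma:EplusG}. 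This yields a \emph{quantitative} Lipschitz-type estimate and exploits the full Taylor-norm hypothesis (specifically the second-derivative control). Your argument instead combines weak convergence of Gaussian measures (L\'evy continuity from $\Gamma_+(\vec\ba^k)\to\Gamma_+(\vec\ba^0)$) with uniform integrability supplied by the same lemma at exponent $q=6$; it uses only the zeroth-order consequence $|F(\varphi)|\le\norm{F}_{\kh,T(\varphi)}$ of the hypothesis. Your route is more measure-theoretic and arguably more elementary, but it is non-quantitative; the paper's route fits more tightly with the Taylor-norm machinery already in place and gives an explicit modulus of continuity in $\norm{\delta\Gamma}$.
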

\begin{proof}
Let us abbreviate $\vec{\ba} = (\ba_\emptyset,\ba)$ and fix $\epsilon >0$.
Since $\Gamma_{+}$ is a finite-ranged translation invariant matrix continuous in $\vec{\ba}$,
there is an open set $U_{\epsilon} \ni \vec{\ba}$ such that $\norm{\Gamma_{+} (\vec{\ba}) - \Gamma_{+} (\vec{\ba}') }_{\ell^{\infty}} \le \epsilon$ whenever $\vec{\ba}' \in U_{\epsilon}$.
Also,  since $\norm{\cdot}_{\kh, \Phi}$ can be considered as a norm on finite-ranged translation covariance matrices,  by equivalence of norms on finite-dimensional real vector spaces,  $\norm{\Gamma_+ (\vec{\ba}) - \Gamma_+ (\vec{\ba}')}_{\kh, \Phi} \le c' \epsilon$ for some $c'>0$.

Let $\delta \Gamma = \Gamma_+ (\vec{\ba}') - \Gamma_+ (\vec{\ba})$ and $\Gamma_t = \Gamma_+ (\vec{\ba}) + t \delta \Gamma$.
Then by Gaussian integration by parts \eqref{eq:HeatEq},
\begin{align}
	\frac{d}{dt} \E_{\Gamma_t} \theta F(\varphi) = \frac{1}{2} \sum_{x,y \in \Lambda} \delta \Gamma (x,y) \E_{\Gamma_t} \theta \Big[ \frac{\partial F^2 (\varphi)}{\partial \varphi_x \partial \varphi_y} \Big] . 
\end{align}
Also,  by definition of $\norm{\cdot}_{\kh, T(\varphi)}$,
\begin{align}
	\Big| \sum_{x,y \in \Lambda} \delta \Gamma (x,y) \frac{\partial F^2 (\varphi)}{\partial \varphi_x \partial \varphi_y}  \Big| 
		&\le \norm{F}_{\kh, T(\varphi)} \norm{\delta \Gamma}_{\kh, \Phi}
		\lesssim c' G^2 (X,\varphi) \epsilon 
\end{align}
and since $\Gamma_t$ satisfies all the bounds of $\Gamma_+$,  we have
\begin{align}
	\E_{\Gamma_t} \theta G^2 (X,\varphi) \le C(X) G^4 (X,\varphi)
\end{align}
by Lemma~\ref{lemma:EplusG},  for some $X$-dependent constant $C(X)$.
Thus by the Fundamental theorem of calculus,
\begin{align}
	\left| \E_{\Gamma_+ (\ba')} \theta F(\varphi) - \E_{\Gamma_+ (\ba)} \theta F(\varphi) \right|
		\le C' (X) G^4 (X,\varphi) \epsilon ,
\end{align}
for some constant $C' (X)$,  proving continuity. 
\end{proof}

\subsection{Improvement of continuity}

The previous Lemma~\ref{lemma:ctty1} only shows continuity for each fixed $\varphi$.  
As a first step of extending the continuity to smooth functions of $\varphi$,  we use analyticity to bound each derivative of polymer activities. 
There are two norms that can be used to control the bounds on derivatives.
For a finite $X \subset \Lambda$,  $\kh > 0$ and $F \in (\Phi^{(r)} (X))^*$ (a dual element of $\Phi^{(r)} (X)$),  we consider norms
\begin{align}
	\norm{F}^{\vee}_{r, \kh} &= \sup\Big\{ F(g) : g \in \Phi^{(r)} (X),  \; \norm{g}_{\kh, \Phi (X)} \le 1 \Big\} \\
	\norm{F}^{\wedge}_{r,  \kh} &= \sup\Big\{ F(f_1, \cdot, f_r) : f_i \in \Phi^{(1)} (X),  \; \norm{f_i}_{\kh,  \Phi (X)} \le 1 \Big\} ,
\end{align}
cf.  see \cite[Appendix~A]{1910.13564}.
Since $\Phi^{(r)} (X) = ( \Phi^{(1)} (X) )^{\otimes r}$ (tensor product) and $\Phi^{(1)} (X)$ is finite-dimensional,  $\Phi^{(r)} (X)$ and $(\Phi^{(r)} (X))^*$ are also finite-dimensional.
Thus the two norms should be equivalent. 
For smooth functions $K (\varphi)$,  we see from \eqref{eq:DrFnorm}
\begin{align}
	\norm{D^r K}_{\kh, T^{(r)}(\varphi)} = \norm{D^r K (\varphi)}^{\vee}_{r, \kh} .
\end{align}
In the next lemma,  we first obtain convergence in $\norm{\cdot}^{\wedge}_{r,\kh}$-norm of $r^{\rm th}$ derivatives,  and use the equivalence to restate the convergence in terms of $\norm{\cdot}_{\kh, T^{(r)}(\varphi)}$-norm.

\begin{lemma} \label{lemma:impctt1}
Consider a finite set $X \subset \Lambda$,  $\kh > 0$ and $D \subset (\R^n)^X$.
Suppose $(T_k (\varphi))_{k \ge 1}$ is a family of smooth functions with $\sup_k \sup_{\varphi \in D}  \norm{T_k}_{\kh,  T(\varphi)} < \infty$ and $\lim_{k\rightarrow 0} T_k (\varphi) =0$ for each fixed $\varphi$. Then for any compact subset $D'  \Subset D$,  
\begin{align}
	\lim_{k\rightarrow \infty} \sup \{ \norm{ D^r T_k  }_{\kh, T^{(r)} (\varphi)} : \varphi \in D' \} = 0 .
\end{align}
\end{lemma}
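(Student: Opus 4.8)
The plan is to exploit that the hypothesis $\sup_k\sup_{\varphi\in D}\norm{T_k}_{\kh,T(\varphi)}<\infty$ is, once the definition of the Taylor norm is unwound, a uniform \emph{analytic} estimate on the $T_k$; this will force each $T_k$ to extend holomorphically to a $k$-independent complex neighbourhood of $D$, bounded uniformly in $k$, and then the statement follows from the Montel/Vitali theory of normal families in finitely many complex variables, together with the equivalence of all norms on the finite-dimensional space $\Phi^{(r)}(X)$. I may assume $D$ is open (only a neighbourhood of the compact $D'$ is ever used), and since the $T$-norm splits as a finite sum over the components $\bulk,\o,\x,\ox$ it suffices to treat each separately, so I take $T_k$ to be a real-valued smooth function. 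Since $X$ is finite, $\Phi^{(1)}(X)\cong(\R^n)^X$ is a finite-dimensional real vector space, which I identify with $\R^N\subset\mathbb{C}^N$, $N=n|X|$; the semi-norm $\norm{\cdot}_{\kh,\Phi(X)}$ extends to $\mathbb{C}^N$ by the same formula (taking moduli of the discrete derivatives).

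First I would carry out the analytic estimate and the holomorphic extension. Put $M:=\sup_k\sup_{\varphi\in D}\norm{T_k}_{\kh,T(\varphi)}$, so that $\norm{D^sT_k(\varphi)}_{\kh,T^{(s)}(\varphi)}\le s!\,M$ for all $s\ge0$ and $\varphi\in D$. For a multi-index $\beta$ of length $s$, the real partial derivative $\partial^\beta T_k(\varphi)$ is $D^sT_k(\varphi;e_\beta)$ for the pure tensor $e_\beta$ of the corresponding unit coordinate (delta) functions on $X_\rmb$, and by the product structure of $\norm{\cdot}_{\kh,\Phi(X)}$ one has $\norm{e_\beta}_{\kh,\Phi(X)}\le c^{\,s}$ for a fixed constant $c$; hence $\sup_{\varphi\in D}|\partial^\beta T_k(\varphi)|\le c^{\,|\beta|}|\beta|!\,M$ for every $\beta$ and every $k$. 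By the standard characterisation of real-analytic functions by such Gevrey-$1$ bounds, each $T_k$ is real-analytic on $D$, and its Taylor series at any $\varphi\in D$ converges to $T_k$ on the ball of a $k$-independent radius $r_0>0$ in the $\norm{\cdot}_{\kh,\Phi(X)}$ metric. These power series glue, by uniqueness of analytic continuation, to a holomorphic function $\widetilde T_k$ on the $k$-independent open set $\Omega:=\bigcup_{\varphi\in D}\{\varphi+\zeta:\zeta\in\mathbb{C}^N,\ \norm{\zeta}_{\kh,\Phi(X)}<r_0\}$, with $\widetilde T_k=T_k$ on $D$ and $\sup_\Omega|\widetilde T_k|\le M'$ for a $k$-independent $M'<\infty$.

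The remaining steps are then soft. The family $\{\widetilde T_k\}_k$ is uniformly bounded and holomorphic on the domain $\Omega\subset\mathbb{C}^N$, hence a normal family: by Montel's theorem every subsequence has a further subsequence converging, uniformly on compact subsets of $\Omega$ and --- by the Cauchy/Weierstrass estimates --- together with all of its complex partial derivatives, to a holomorphic function $T_\infty$ on $\Omega$. Any such $T_\infty$ vanishes on $D$ by the pointwise hypothesis $T_k\to0$; since $D$ is a nonempty open subset of the totally real, full-dimensional subspace $(\R^n)^X$ of $\mathbb{C}^N$, and every connected component of $\Omega$ meets $D$ (let $\zeta$ shrink to $0$), the identity theorem gives $T_\infty\equiv0$. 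As $0$ is the only subsequential limit, the whole sequence converges: $\widetilde T_k\to0$, hence $\partial^\beta\widetilde T_k\to0$, uniformly on every compact subset of $\Omega$, for every multi-index $\beta$. Now fix $r$ and a compact $D'\Subset D$; the set $K':=\{\varphi+\zeta:\varphi\in D',\ \norm{\zeta}_{\kh,\Phi(X)}\le r_0/2\}$ is a compact subset of $\Omega$, so $\epsilon_k:=\sup_{\varphi\in D'}\max_{|\beta|=r}|\partial^\beta T_k(\varphi)|\to0$. Finally, $\norm{D^rT_k(\varphi)}_{\kh,T^{(r)}(\varphi)}=\norm{D^rT_k(\varphi)}^{\vee}_{r,\kh}$; since $\Phi^{(r)}(X)$ is finite-dimensional this is comparable, with an $X$-dependent constant, to $\norm{D^rT_k(\varphi)}^{\wedge}_{r,\kh}$, and for $f_1,\dots,f_r\in\Phi^{(1)}(X)$ with $\norm{f_i}_{\kh,\Phi(X)}\le1$ the number $D^rT_k(\varphi;f_1,\dots,f_r)$ is a finite sum over $X$ and $[n]$ of the partial derivatives $\partial^\beta T_k(\varphi)$, $|\beta|=r$, with coefficients bounded by $X$-dependent constants. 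Hence $\sup_{\varphi\in D'}\norm{D^rT_k}_{\kh,T^{(r)}(\varphi)}\le C_X\,\epsilon_k\to0$, which is the claim.

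The main difficulty is the analytic-estimate step: one must recognise that the hypothesis genuinely controls $|T_k|$ on a fixed complex polydisc uniformly in $k$, rather than merely bounding a few real derivatives. The latter would be far too weak --- for instance $T_k(x)=k^{-1}e^{ikx}$ has all real derivatives growing only polynomially in $k$ on bounded sets, yet $\norm{T_k}_{\kh,T(\varphi)}=k^{-1}(e^{k\kh_\bulk}-1)\to\infty$, so it is (correctly) excluded by the hypothesis, and indeed its derivatives do not converge. Once the $k$-uniform holomorphic bound is in place, the rest is routine normal-family theory plus the elementary comparability of the norms $\norm{\cdot}^{\vee}_{r,\kh}$ and $\norm{\cdot}^{\wedge}_{r,\kh}$ on the finite-dimensional $\Phi^{(r)}(X)$; the only minor points to verify are the consistency of the local Taylor extensions and that every component of $\Omega$ meets $D$, both handled by shrinking the complex displacement $\zeta$ to $0$ inside $\Omega$.
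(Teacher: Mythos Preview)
Your proposal is correct and follows essentially the same route as the paper: holomorphic extension to a $k$-independent complex neighbourhood of $D$, Montel/normal-family compactness plus the identity theorem to force the limit to be $0$, then Cauchy/Weierstrass to pass to derivatives, and finally the equivalence of $\norm{\cdot}^{\vee}_{r,\kh}$ and $\norm{\cdot}^{\wedge}_{r,\kh}$ on the finite-dimensional $\Phi^{(r)}(X)$. The only noticeable difference is in how the extension is produced: the paper defines it in one line by $T_k(\varphi+\psi)=\sum_{r\ge0}\frac{1}{r!}D^rT_k(\varphi;\psi^{\otimes r})$, whose absolute convergence and $L^\infty$ bound by $M$ are immediate from the very definition of $\norm{T_k}_{\kh,T(\varphi)}$, whereas you take a detour through Gevrey-$1$ coefficient bounds and then glue local extensions. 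Your route is valid but unnecessarily long; the Taylor norm hypothesis already \emph{is} the statement that the Taylor series converges on a fixed complex ball.
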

\begin{proof}
First,  observe that each $T_k$ has an extension to a complex analytic function on 
\begin{align}
	S_{\kh} (D) = \{ \varphi + \psi : \varphi \in D ,  \; \psi \in (\C^n)^{X},  \; \norm{\psi}_{\kh, \Phi (X)} < \kh \} .
\end{align}
Indeed,  we may let
\begin{align}
	T_k (\varphi + \psi) = \sum_{r \ge 0} \frac{1}{r !} D^r T_k ( \varphi ; \psi^{\otimes r} ) .
\end{align}
for $\varphi+ \psi \in S_{\kh} (D)$,  and if we let $M = \sup_k \sup_{\varphi \in D}  \norm{T_k}_{\kh,  T(\varphi)}$,  then $\norm{T_k}_{L^\infty (S_\kh (D))} \le M$,  i.e.,  $(T_k)_k$ is a family of uniformly bounded analytic functions.  By the Montel's theorem \cite[Ch 8.  Theorem 3.3]{SS10C},
$T_k$ is pre-compact in the topology of uniform convergence on $S_{\kh} (D')$,  and since it converges pointwise to 0 on $D'$,  we should have $T_k \rightarrow 0$ uniformly on $S_{\kh} (D')$. 

Now,  consider a Cauchy-integral representation
\begin{align}
	D^r T_k (\varphi' ; f_1, \cdots, f_r) &= \int_{C_1 \times \cdots \times C_r}  T_k \Big( \varphi' + \sum_{i=1}^r z_i f_i \Big) \prod_{i=1}^r \frac{d z_i}{2 \pi z_i^2} 
\end{align}
for any $\norm{f_i}_{\kh,  \Phi (X)} \le 1$ and $C_i = \{ z_i \in \C : |z_i | = \kh / 2r  \}$. 
But since $T_k \rightarrow 0$ uniformly on $S_\kh (D')$,  this shows $\lim_{k\rightarrow \infty} \norm{D^r T_k (\varphi)}^{\wedge}_{r,\kh} = 0$,  and by the equivalence of norms explained above the statement of this lemma,
\begin{align}
	0 = \lim_{k\rightarrow \infty} \norm{D^r T_k (\varphi)}^{\vee}_{r,\kh} = \lim_{k\rightarrow \infty} \norm{D^r T_k}_{\kh, T^{(r)}(\varphi)} .
\end{align}
\end{proof}

Next lemma allows to improve the pointwise continuity of $K(X,\varphi)$ into a continuity in a normed space.

\begin{lemma} \label{lemma:impvcttynorm}
Let $\mathbb{X}$ be a metric space.
Suppose for each $x \in \mathbb{X}$,  polymer activity $K_x  \in \cN$ is such that $\sup_{x \in \mathbb{X}} \norm{K_x}_{\cW^{a} (\ratio,\gamma)} < \infty$ for some $a,\ratio, \gamma > 0$
and $x\mapsto K_x (X,\varphi)$ is continuous for each \emph{fixed} $(X,\varphi)$.  Then $K_x$ is continuous in $x \in \mathbb{X}$,  with respect to the topology generated by $\norm{\cdot}_{\cW^{a'} (\ratio', \gamma')}$ 
for any $a' < a$,  $\ratio' < \ratio$ and $\gamma' > \gamma$.
\end{lemma}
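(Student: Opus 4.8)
The plan is to deduce the normed-space continuity from pointwise continuity by applying the analyticity/compactness machinery of Lemma~\ref{lemma:impctt1}, combined with the three quantitative ``room'' that the strict inequalities $a' < a$, $\ratio' < \ratio$, $\gamma' > \gamma$ provide. Fix $x_0 \in \mathbb{X}$ and a sequence $x_k \to x_0$; we must show $\norm{K_{x_k} - K_{x_0}}_{\cW^{a'}(\ratio', \gamma')} \to 0$. Write $T_k = K_{x_k} - K_{x_0}$. By hypothesis $M := \sup_k \norm{T_k}_{\cW^a(\ratio,\gamma)} < \infty$ and $T_k(X,\varphi) \to 0$ pointwise for each fixed $(X,\varphi)$.

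First I would reduce the supremum over $X \in \Con_j$ to finitely many polymers. The large-set regulator $A_j^{-a'}(X)$ versus $A_j^{-a}(X)$ gives a gain $A_j^{a-a'}(X) \le \rho^{(a-a')(|X|_{\cB_j} - 2^d)_+}$, which is summable/decaying in $|X|_{\cB_j}$; so for any $\varepsilon > 0$ there is $n_\varepsilon$ such that the contribution of all $X$ with $|X|_{\cB_j} > n_\varepsilon$ to $\norm{T_k}_{\cW^{a'}(\ratio',\gamma')}$ is bounded by $\varepsilon M$ uniformly in $k$. Next, for each fixed $X$ (of which there are only finitely many up to lattice translation, and translation-covariance of the norm means only finitely many orbits matter), I would reduce the supremum over $\varphi \in (\R^n)^\Lambda$ to a compact set: using the regulator gain $\cG_j^{(\gamma')}(X,\varphi)/\cG_j^{(\gamma)}(X,\varphi)$, which for $\gamma' > \gamma$ and (in the $h$ case) the decay factor $H_j^{1/\gamma'}/H_j^{1/\gamma} \le 1$ together with $\tilde{G}_j^{\gamma'}/\tilde{G}_j^{\gamma}$ controlled on large fields by the super-Gaussian $H$-decay, one sees that $\cG_j^{(\gamma')-1}(X,\varphi)\norm{T_k(X)}_{\ratio'\kh, T(\varphi)}$ is small outside a compact set $D_X$ of $\varphi$-values, uniformly in $k$, again by the uniform bound $M$. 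Similarly the prefactor ratio $(\ratio'/\ratio)^r$ appearing at order $r$ in the Taylor expansion supplies geometric decay in $r$, so the sum over $r$ in the definition of the $T$-norm is also uniformly tail-controlled. This is where the three strict inequalities are used: each furnishes one summable gain that makes the relevant supremum/sum essentially finite-dimensional.

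After these reductions it remains to show that on each fixed $(X, D_X)$, with finitely many derivative orders $r$ to worry about, $\sup_{\varphi \in D_X}\norm{T_k(X)}_{\ratio'\kh_j, T_j(\varphi)} \to 0$. Here I would invoke Lemma~\ref{lemma:impctt1} directly with the finite set $X^\square$, the field-scale $\kh = \ratio \kh_j$, and the domain $D = D_X$ slightly enlarged so that $D_X \Subset D$: the family $(T_k(X))_k$ has $\sup_k\sup_{\varphi \in D}\norm{T_k(X)}_{\ratio\kh_j, T_j(\varphi)} \le M/A_j^{-a}(X)\cdot\cG_j^{(\gamma)}(X,\varphi) < \infty$ (finite since $D$ is bounded), and $T_k(X,\varphi) \to 0$ pointwise, so Lemma~\ref{lemma:impctt1} yields $\norm{D^r T_k(X)}_{\ratio\kh_j, T_j^{(r)}(\varphi)} \to 0$ uniformly on $D_X$, hence $\norm{T_k(X)}_{\ratio'\kh_j, T_j(\varphi)} \to 0$ uniformly on $D_X$ (using monotonicity $\ratio' < \ratio$ and summing the finitely many relevant $r$, the tail having been discarded). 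Combining: given $\varepsilon$, discard large-$X$, large-$\varphi$, large-$r$ tails at cost $\le C\varepsilon M$ uniformly in $k$, then take $k$ large so the remaining finite piece is $\le \varepsilon$; this gives $\limsup_k \norm{T_k}_{\cW^{a'}(\ratio',\gamma')} \le C\varepsilon M$, and $\varepsilon$ is arbitrary.

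The main obstacle I anticipate is organizing the three simultaneous tail estimates (over $|X|_{\cB_j}$, over $\norm{\varphi}$, and over derivative order $r$) cleanly, and in particular verifying in the $\kh = h$ case that the ratio of regulators $\bar{G}_j^{(\gamma')}/\bar{G}_j^{(\gamma)}$ genuinely decays outside a compact $\varphi$-set rather than merely staying bounded — this requires that the $\tilde{G}_j^{\gamma' - \gamma}$ growth be dominated by the $H_j^{1/\gamma' - 1/\gamma}$ decay, which holds because $H_j$ contributes a factor $e^{-\kappa\norm{\varphi/h_{j,\bulk}}^2/\gamma}$ that is super-linear relative to the $\tilde{G}_j$ exponent; one must track constants to confirm the net exponent is negative for $\gamma' > \gamma$. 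Everything else is a routine application of Montel's theorem (already packaged in Lemma~\ref{lemma:impctt1}) plus the equivalence of norms on finite-dimensional spaces.
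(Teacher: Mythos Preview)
Your proposal is essentially the paper's own proof: the same three reductions (large $|X|_{\cB_j}$ via $A^{a-a'}$, high derivative order $r$ via $(\ratio'/\ratio)^r$, and large $\varphi$ via the regulator ratio), followed by Lemma~\ref{lemma:impctt1} on the remaining compact piece. One correction to your anticipated obstacle: in the $\kh = h$ case the ratio you actually need to decay is $\cG^{(\gamma)}/\cG^{(\gamma')} = H^{1/\gamma - 1/\gamma'}\,\tilde G^{\gamma-\gamma'}$, and since $\gamma' > \gamma$ \emph{both} exponents have the helpful sign (the $H$-exponent is positive and $H\le 1$; the $\tilde G$-exponent is negative and $\tilde G\ge 1$), so there is no competition to worry about---the paper simply combines the two decays with the Sobolev inequality Lemma~\ref{lemma:sobolev} to cover all directions in $\Phi$-norm.
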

\begin{proof}
Since the observable field does not play any role,  we assume $K \in \cN_\bulk$ for notational convenience.  Fix $x \in \mathbb{X}$ and consider a sequence $(y (\alpha))_\alpha$ such that $\lim_{\alpha \rightarrow \infty} y (\alpha) = x$. 
For any $\epsilon > 0$,  we want to show that $\norm{K_{x} - K_{y (\alpha)}}_{\cW^{a'} (\ratio', \gamma')} \lesssim \epsilon$ for sufficiently large $\alpha$.

For both $\kh \in \{ \ell, h \}$,  we may assume that
\begin{align}
	\sup_{y \in \mathbb{X}} \norm{K_{y} (X) }_{\ratio \kh,  T (\varphi) } \lesssim A^{a} (X) \cG^{(\gamma)} (X,\varphi ;\kh)
\end{align}
for each $X \in \Con$ and $\varphi \in \Phi^{(1)} (\Lambda)$.
Thus for sufficiently large $R'$,  we have
\begin{align}
	\sup_{y , \varphi} \sup_{|X| \ge R'} A^{-a'} (X) \big( \cG^{(\gamma)} (X,\varphi ;\kh) \big)^{-1} \norm{K_{y} (X ) }_{\ratio \kh,  T (\varphi) } < \epsilon ,
\end{align}
so we only need to consider $|X| \le R'$.

Next,  we restrict the number of derivatives in $\varphi$. 
First,  observe that,  $\norm{g^{(r)}}_{\ratio' \kh,  \Phi} = ( \frac{\ratio}{ \ratio'} )^r \norm{g^{(r)}}_{\ratio \kh, \Phi}$ for any $g^{(r)} \in \Phi^{(r)} (\Lambda)$,  thus
\begin{align}
	\sum_{r\ge n} \frac{1}{r !} \sup_{\norm{g^{(r)}}_{\ratio' \kh, \Phi} \le 1} D^r K_y (X,\varphi ; g^{(r)})
		& \le \Big( \frac{\ratio'}{\ratio} \Big)^n \sum_{r \ge n} \frac{1}{r !} \sup_{\norm{g^{(r)}}_{\ratio \kh, \Phi} \le 1} D^r K_y (X,\varphi ; g^{(r)}) \nnb
		& \lesssim \Big( \frac{\ratio'}{\ratio} \Big)^n \cG^{(\gamma)} A^a (X) ,
\end{align}
and for sufficiently large $n$,  
\begin{align}
	\sup_{y,\varphi} \sup_{|X| \ge R'} A^{-a'} (X) (\cG^{(\gamma)})^{-1} (X,\varphi) \sum_{r\ge n} \frac{1}{r !} \sup_{\norm{g^{(r)}}_{\ratio' \kh, \Phi} \le 1} D^r K_y (X,\varphi ; g^{(r)}) < \epsilon ,
\end{align}
so we only need to consider number of derivatives $r < n$.
For what follows,  denote
\begin{align}
	\norm{F}_{\kh,  T (\varphi),  n } = \sum_{r < n} \frac{1}{r !} \sup_{\norm{g^{(r)}}_{\kh, \Phi} \le 1} D^r F (\varphi ; g^{(r)}) .
\end{align}
This semi-norm is also submultiplicative.

Next,  we restrict the domain of $\varphi$. 
For $X \in \cP$,  consider $P' (X,\varphi) = \sum_{x \in X^{\square}} |\varphi (x)|^2$,  $P'' (X,\varphi) = \sum_{b\in \cB (X)} \norm{\varphi}^2_{\kh, \Phi (b^{\square})}$ and  
$\chi : \R \rightarrow [0,1]$ be a smooth function with $\supp (\chi) \subset [-2,2]$ and $\chi_{[-1,1]} \equiv 1$.
Then we use a bump function 
\begin{align}
	\tilde\chi_{R} (\varphi) = \chi \big( P' (X,\varphi) / R  \big) 
\end{align}
and claim that 
\begin{align}
	\sup_{y , \varphi} \big( \cG^{(\gamma')}  (X,\varphi ;\kh) \big)^{-1} \big\| (1- \tilde\chi_R) K_{y} (X) \big\|_{\ratio' \kh, T(\varphi), n} \rightarrow 0
	\quad \text{as} \quad R \rightarrow \infty 
	\label{eq:tilchRunifcnv}
\end{align}
for each $X$.
Since $P' (X,\varphi)^{1/2}$ and $P''(X,\varphi)^{1/2}$ are both norms on a finite dimensional space $\varphi \in (\R^{n})^{X^\square}$,  there exist $c' >1$ such that
\begin{align}
	(c')^{-1} P''(X,\varphi) \le P' (X,\varphi) \le c' P'' (X,\varphi) 
\end{align}
and take $M = \sup_{\varphi} \norm{\tilde{\chi}_1}_{\kh,  T(\varphi), n} $.
Note that $M< \infty$ because we are restricting the number of derivatives $< n$ and $\tilde{\chi}_1$ has a compact support.
Also,  $1-\tilde\chi_R$ vanishes for $P' (X,\varphi) \le R$,  so
\begin{align}
	\norm{(1- \tilde\chi_R) K_{x'} (X) }_{\ratio' \kh, T(\varphi)} 
		& \le c \one_{P' (X,\varphi) >  R } A^a (X) \cG^{(\gamma)} (X,\varphi ; \kh) \big( 1 +  \norm{\tilde\chi_R}_{\kh, T(\varphi)} \big) \nnb
		& \le c \one_{P'' (X,\varphi) > R / c' } A^a (X) \cG^{(\gamma)} (X,\varphi ; \kh) ( 1 + M ) ,
\end{align}
but
\begin{align}
	\frac{\cG^{(\gamma)} (\cdot ; \kh)}{\cG^{(\gamma')} (\cdot ;\kh)}
		= \begin{cases}
			G^{\gamma - \gamma'} & (\kh = \ell) \\
			H^{\gamma^{-1} - (\gamma')^{-1} } \bar{G}^{\gamma - \gamma'} \le G^{c}  & (\kh = h) ,
		\end{cases}
\end{align}
for some $c >0$, 
where the inequality for the case $\kh = h$ follows from Lemma~\ref{lemma:sobolev}.
Thus by definition of $G$,  and since $\gamma' > \gamma$,
\begin{align}
	\lim_{R \rightarrow \infty} \one_{P'' (X,\varphi) >  R / c' } \frac{\cG^{(\gamma)} (X,\varphi ; \kh)}{\cG^{(\gamma')} (X,\varphi ;\kh)} = 0 ,
\end{align}
uniformly in $\varphi$,  and we obtain \eqref{eq:tilchRunifcnv}.
Thus we may take $R''$ sufficiently large such that
\begin{align}
	\sup_{y, \varphi} \big( \cG^{(\gamma')} (X,\varphi ;\kh) \big)^{-1} \norm{(1- \tilde\chi_{R}) K_{y} (X) }_{\ratio' \kh, T(\varphi),n} \le \epsilon
\end{align}
whenever $R > R''$.

Due to our choice of $R'$,  $R''$ and $n$,  we are now only left to find $\alpha'$ such that
\begin{align}
	\sup_{\varphi}\big( \cG^{(\gamma')} (X,\varphi ;\kh) \big)^{-1} \norm{\tilde\chi_R ( K_{y (\alpha)} - K_x ) (X) }_{\ratio' \kh, T(\varphi),  n} 
		\lesssim \epsilon
\end{align}
whenever $\alpha \ge \alpha'$,  $|X| \le R'$ and $R > R''$.
Denote $T_{\alpha} = (K_{y (\alpha)} - K_x) (X)$,  so that by assumption,  $\lim_{\alpha \rightarrow \infty} T_{\alpha} (\varphi) =0$ for each fixed $\varphi$.
Also,  since
\begin{align}
	\norm{\tilde\chi_R T_\alpha }_{\ratio' \kh, T(\varphi),  n} 
		\le M \one_{P'(X,\varphi) \le 2 R } \norm{T_\alpha}_{\ratio' \kh, T(\varphi),  n} 
\end{align}
and $\big( \cG^{(\gamma')} (X,\varphi ;\kh) \big)^{-1}$ is bounded on $D_{2R} := \{ \varphi : P'(X,\varphi) \le 2 R \}$,  it is enough to show
\begin{align}
	\sup_{\varphi \in D_{2R}} \sup_{r < n} \sup_{\norm{g^{(r)}}_{\ratio' \kh,  \Phi} \le 1} D^r T_\alpha (\varphi ; g^{(r)}) < \epsilon .
\end{align}
But since $D_{2R}$ is compact,  Lemma~\ref{lemma:impctt1} shows that the left-hand side can be made small as desired by taking $\alpha$ sufficiently large,  completing the proof.
\end{proof}


\begin{thebibliography}{10}

\bibitem{1910.13564}
S.~Adams, S.~Buchholz, R.~Koteck\'y, and S.~M\"uller.
\newblock {C}auchy-{B}orn {R}ule from {M}icroscopic {M}odels with {N}on-convex
  {P}otentials.
\newblock 2019.
\newblock Preprint, arXiv:1910.13564.

\bibitem{MR678000}
M.~Aizenman.
\newblock Geometric analysis of {$\varphi ^{4}$} fields and {I}sing models.
  {I}, {II}.
\newblock {\em Commun. Math. Phys.}, 86(1):1--48, 1982.

\bibitem{AD21}
M.~Aizenman and H.~Duminil-Copin.
\newblock Marginal triviality of the scaling limits of critical $4{D}$ {Ising}
  and $\lambda\phi_4^4$ models.
\newblock {\em Ann. Math.}, {\bf 194}:163--235, 2021.

\bibitem{MR958462}
M.~Aizenman and R.~Fern{\'a}ndez.
\newblock Critical exponents for long-range interactions.
\newblock {\em Lett. Math. Phys.}, 16(1):39--49, 1988.

\bibitem{2006.04458}
G.~Antinucci, A.~Giuliani, and R.L. Greenblatt.
\newblock Energy correlations of non-integrable ising models: The scaling limit
  in the cylinder.
\newblock June 2020.

\bibitem{MR3129804}
R.~Bauerschmidt.
\newblock A simple method for finite range decomposition of quadratic forms and
  {Gaussian} fields.
\newblock {\em Probab. Theory Related Fields}, 157(3-4):817--845, 2013.

\bibitem{MR3269689}
R.~Bauerschmidt, D.C. Brydges, and G.~Slade.
\newblock Scaling limits and critical behaviour of the 4-dimensional
  {$n$}-component {$|\varphi|^4$} spin model.
\newblock {\em J. Stat. Phys.}, 157(4-5):692--742, 2014.
\newblock Special issue in memory of Kenneth Wilson.

\bibitem{MR3345374}
R.~Bauerschmidt, D.C. Brydges, and G.~Slade.
\newblock Critical two-point function of the 4-di\-men\-sion\-al weakly
  self-avoiding walk.
\newblock {\em Commun. Math. Phys.}, 338(1):169--193, 2015.

\bibitem{MR3339164}
R.~Bauerschmidt, D.C. Brydges, and G.~Slade.
\newblock Logarithmic correction for the susceptibility of the 4-dimensional
  weakly self-avoiding walk: a renormalisation group analysis.
\newblock {\em Commun. Math. Phys.}, 337(2):817--877, 2015.

\bibitem{BBS3}
R.~Bauerschmidt, D.C. Brydges, and G.~Slade.
\newblock A renormalisation group method. {III}. {P}erturbative analysis.
\newblock {\em J. Stat. Phys.}, 159(3):492--529, 2015.

\bibitem{MR3969983}
R.~Bauerschmidt, D.C. Brydges, and G.~Slade.
\newblock {\em Introduction to a renormalisation group method}, volume 2242 of
  {\em Lecture Notes in Mathematics}.
\newblock Springer, Singapore, 2019.

\bibitem{BCH24P}
R.~Bauerschmidt, N.~Crawford, and T.~Helmuth.
\newblock Percolation transition for random forests in $d\ge 3$.
\newblock {\em Inventiones mathematicae}, 237(2):445--540, 2024.

\bibitem{MR3596766}
R.~Bauerschmidt, G.~Slade, A.~Tomberg, and B.C. Wallace.
\newblock Finite-order correlation length for four-dimensional weakly
  self-avoiding walk and {$|\varphi|^4$} spins.
\newblock {\em Ann. Henri Poincar\'e}, 18(2):375--402, 2017.

\bibitem{berche2012hyperscaling}
B.~Berche, R.~Kenna, and J.-C. Walter.
\newblock Hyperscaling above the upper critical dimension.
\newblock {\em Nuclear Physics B}, 865(1):115--132, 2012.

\bibitem{MR2523458}
D.C. Brydges.
\newblock Lectures on the renormalisation group.
\newblock In {\em Statistical mechanics}, volume~16 of {\em IAS/Park City Math.
  Ser.}, pages 7--93. Amer. Math. Soc., 2009.

\bibitem{BHH19}
D.C. Brydges, T.~Helmuth, and M.~Holmes.
\newblock The continuous-time lace expansion.
\newblock {\em Commun. Pure Appl. Math.}, {\bf 74}:2251--2309, 2021.

\bibitem{BBS1}
D.C. Brydges and G.~Slade.
\newblock A renormalisation group method. {I}. {G}aussian integration and
  normed algebras.
\newblock {\em J. Stat. Phys.}, 159(3):421--460, 2015.

\bibitem{BBS2}
D.C. Brydges and G.~Slade.
\newblock A renormalisation group method. {II}. {A}pproximation by local
  polynomials.
\newblock {\em J. Stat. Phys.}, 159(3):461--491, 2015.

\bibitem{BBS4}
D.C. Brydges and G.~Slade.
\newblock A renormalisation group method. {IV}. {S}tability analysis.
\newblock {\em J. Stat. Phys.}, 159(3):530--588, 2015.

\bibitem{BBS5}
D.C. Brydges and G.~Slade.
\newblock A renormalisation group method. {V}. {A} single renormalisation group
  step.
\newblock {\em J. Stat. Phys.}, 159(3):589--667, 2015.

\bibitem{CJN21}
F.~Camia, J.~Jiang, and C.M. Newman.
\newblock The effect of free boundary conditions on the {Ising} model in high
  dimensions.
\newblock {\em Probab. Theory Related Fields}, {\bf 181}:311--328, 2021.

\bibitem{CS15critical}
L.-C. Chen and A.~Sakai.
\newblock Critical two-point functions for long-range statistical-mechanical
  models in high dimensions.
\newblock {\em The Annals of Probability}, 43(2):639--681, 2015.

\bibitem{DP25N}
H.~Duminil-Copin and R.~Panis.
\newblock New lower bounds for the (near) critical ising and $\varphi^4$
  models' two-point functions.
\newblock {\em Communications in Mathematical Physics}, 406(3):56, 2025.

\bibitem{MR2917175}
P.~Falco.
\newblock Kosterlitz-{T}houless transition line for the two dimensional
  {C}oulomb gas.
\newblock {\em Commun. Math. Phys.}, 312(2):559--609, 2012.

\bibitem{MR882810}
J.~Feldman, J.~Magnen, V.~Rivasseau, and R.~S{\'e}n{\'e}or.
\newblock Construction and {B}orel summability of infrared {$\Phi^4_4$} by a
  phase space expansion.
\newblock {\em Commun. Math. Phys.}, 109(3):437--480, 1987.

\bibitem{MR1219313}
R.~Fern{\'a}ndez, J.~Fr{\"o}hlich, and A.D. Sokal.
\newblock {\em Random walks, critical phenomena, and triviality in quantum
  field theory}.
\newblock Texts and Monographs in Physics. Springer-Verlag, 1992.

\bibitem{FMN72C}
M.E. Fisher, S.-K. Ma, and B.G. Nickel.
\newblock Critical exponents for long-range interactions.
\newblock {\em Physical Review Letters}, 29(14):917, 1972.

\bibitem{MR643591}
J.~Fr{\"o}hlich.
\newblock On the triviality of {$\lambda \varphi^4_d$} theories and the
  approach to the critical point in {$d{> \atop (-)}4$} dimensions.
\newblock {\em Nuclear Phys. B}, 200(2):281--296, 1982.

\bibitem{MR790736}
K.~Gaw{\polhk{e}}dzki and A.~Kupiainen.
\newblock Massless lattice {$\varphi^4_4$} theory: rigorous control of a
  renormalizable asymptotically free model.
\newblock {\em Commun. Math. Phys.}, 99(2):197--252, 1985.

\bibitem{MR3637384}
A.~Giuliani, V.~Mastropietro, and F.L. Toninelli.
\newblock Haldane relation for interacting dimers.
\newblock {\em J. Stat. Mech. Theory Exp.}, (3):034002, 47, 2017.

\bibitem{GJ12Q}
J.~Glimm and A.~Jaffe.
\newblock {\em Quantum physics: a functional integral point of view}.
\newblock Springer Science \& Business Media, 2012.

\bibitem{HHS08M}
M.~Heydenreich, R.~van~der Hofstad, and A.~Sakai.
\newblock Mean-field behavior for long-and finite range ising model,
  percolation and self-avoiding walk.
\newblock {\em Journal of Statistical Physics}, 132:1001--1049, 2008.

\bibitem{HMS23}
T.~Hutchcroft, E.~Michta, and G.~Slade.
\newblock High-dimensional near-critical percolation and the torus plateau.
\newblock {\em Ann. Probab.}, {\bf 51}:580--625, 2023.

\bibitem{jones2005finite}
J.~L. Jones and A.P. Young.
\newblock Finite-size scaling of the correlation length above the upper
  critical dimension in the five-dimensional ising model.
\newblock {\em Physical Review B—Condensed Matter and Materials Physics},
  71(17):174438, 2005.

\bibitem{kenna2017universal}
R.~Kenna and B.~Berche.
\newblock Universal finite-size scaling for percolation theory in high
  dimensions.
\newblock {\em Journal of Physics A: Mathematical and Theoretical},
  50(23):235001, 2017.

\bibitem{L25G}
Y.~Liu.
\newblock A general approach to massive upper bound for two-point function with
  application to self-avoiding walk torus plateau.
\newblock {\em Electronic Journal of Probability}, 30:1--23, 2025.

\bibitem{LPS25T}
Y.~Liu, R.~Panis, and G.~Slade.
\newblock The torus plateau for the high-dimensional ising model.
\newblock {\em Communications in Mathematical Physics}, 406(7):159, 2025.

\bibitem{L25N}
Y.~Liu and G.~Slade.
\newblock Near-critical and finite-size scaling for high-dimensional lattice
  trees and animals.
\newblock {\em Journal of Statistical Physics}, 192(2):32, 2025.

\bibitem{LB97}
E.~Luijten and H.W.J. Bl\"{o}te.
\newblock Classical critical behavior of spin models with long-range
  interactions.
\newblock {\em Phys. Rev. B}, {\bf 56}:8945--8958, 1997.

\bibitem{MW73}
B.~McCoy and T.T. Wu.
\newblock {\em The Two-Dimensional {Ising} Model}.
\newblock Harvard University Press, Cambridge, 1973.

\bibitem{MPS23}
E.~Michta, J.~Park, and G.~Slade.
\newblock Boundary conditions and universal finite-size scaling for the
  hierarchical $|\varphi|^4$ model in dimensions 4 and higher.
\newblock {\em Communications on Pure and Applied Mathematics}, 2025.

\bibitem{mitter2016finite}
P.K. Mitter.
\newblock On a finite range decomposition of the resolvent of a fractional
  power of the laplacian.
\newblock {\em Journal of Statistical Physics}, 163:1235--1246, 2016.

\bibitem{O44C}
L.~Onsager.
\newblock Crystal statistics. i. a two-dimensional model with an order-disorder
  transition.
\newblock {\em Physical review}, 65(3-4):117, 1944.

\bibitem{panis2023triviality}
R.~Panis.
\newblock Triviality of the scaling limits of critical ising and $\varphi^4$
  models with effective dimension at least four.
\newblock {\em arXiv preprint arXiv:2309.05797}, 2023.

\bibitem{park2025boundary}
J.~Park and G.~Slade.
\newblock Boundary conditions and the two-point function plateau for the
  hierarchical $|\varphi|^4$ model in dimensions 4 and higher.
\newblock pages 1--65, 2025.

\bibitem{Saka14}
A.~Sakai.
\newblock Application of the lace expansion to the $\varphi^4$ model.
\newblock {\bf 336}:619--648, 2015.

\bibitem{MR3772040}
G.~Slade.
\newblock Critical exponents for long-range {$O(n)$} models below the upper
  critical dimension.
\newblock {\em Commun. Math. Phys.}, 358(1):343--436, 2018.

\bibitem{S23N}
G.~Slade.
\newblock The near-critical two-point function and the torus plateau for weakly
  self-avoiding walk in high dimensions.
\newblock {\em Mathematical Physics, Analysis and Geometry}, 26(1):6, 2023.

\bibitem{MR3459163}
G.~Slade and A.~Tomberg.
\newblock Critical correlation functions for the 4-dimensional weakly
  self-avoiding walk and {$n$}-component {$|\varphi|^4$} model.
\newblock {\em Commun. Math. Phys.}, 342(2):675--737, 2016.

\bibitem{SS10C}
E.~M. Stein and R.~Shakarchi.
\newblock {\em Complex analysis}, volume~2.
\newblock Princeton University Press, 2010.

\bibitem{PhysRevB.4.3174}
K.G. Wilson.
\newblock {R}enormalization {G}roup and {C}ritical {P}henomena. {I}.
  {R}enormalization {G}roup and the {K}adanoff {S}caling {P}icture.
\newblock {\em Phys. Rev. B}, 4:3174--3183, Nov 1971.

\bibitem{PhysRevB.4.3184}
K.G. Wilson.
\newblock {R}enormalization {G}roup and {C}ritical {P}henomena. {II}.
  {P}hase-{S}pace {C}ell {A}nalysis of {C}ritical {B}ehavior.
\newblock {\em Phys. Rev. B}, 4:3184--3205, Nov 1971.

\bibitem{ZZ21Q}
J.~Zinn-Justin.
\newblock {\em Quantum field theory and critical phenomena}, volume 171.
\newblock Oxford university press, 2021.

\end{thebibliography}



\end{document}